\newcommand{\q}[1]{``#1''}
\DeclarePairedDelimiter\set{\{}{\}}
\DeclareMathOperator{\Inc}{Inc}
\DeclareMathOperator{\se}{se}
\newcommand{\OR}{R}
\newcommand{\OL}{L}
\DeclareMathOperator{\Int}{int}
\DeclareMathOperator{\maxsw}{max-start-weight}
\DeclareMathOperator{\maxew}{max-end-weight}
\DeclareMathOperator{\maxsp}{max-start-path}
\DeclareMathOperator{\maxep}{max-end-path}
\newcommand{\maxpath}{\operatorname{max\text{-}path}}
\DeclareMathOperator{\colorin}{in}
\DeclareMathOperator{\colorout}{out}
\DeclareMathOperator{\Image}{Im}
\DeclareMathOperator{\gce}{gcpe}
\newcommand{\Wleft}{W_L}
\newcommand{\Wright}{W_R}
\newcommand{\sd}{\operatorname{sd}}
\newcommand{\shad}{\operatorname{shad}}
\newcommand{\shadz}{\operatorname{shad}}
\newcommand{\eno}{\operatorname{eno}} 
\newcommand{\HOO}{H_{\mathrm{OO}}}
\newcommand{\HIILR}{H_{\mathrm{IILR}}}
\newcommand{\HII}{H_{\mathrm{II}}}
\newcommand{\HIIL}{H_{\mathrm{IIL}}}
\newcommand{\HIIR}{H_{\mathrm{IIR}}}
\newcommand{\HIO}{H_{\mathrm{IO}}}
\newcommand{\HOI}{H_{\mathrm{OI}}}
\newcommand{\Hin}{H_{\colorin}}
\newcommand{\Hout}{H_{\colorout}}
\newcommand{\spine}{W^\star}
\let\le\leqslant
\let\ge\geqslant
\let\leq\leqslant
\let\geq\geqslant
\let\preceq\preccurlyeq
\let\subset\subseteq
\let\subsetneq\varsubsetneq
\let\epsilon\varepsilon
\newcommand{\WR}{W_{\OR}}
\newcommand{\WL}{W_{\OL}}
\newcommand{\bfs}{\textsc{bfs}}
\newcommand{\bbR}{\mathbb{R}}
\newcommand{\NN}{\mathbb{N}}
\newcommand{\calB}{\mathcal{B}} 
\newcommand{\calC}{\mathcal{C}}
\newcommand{\calD}{\mathcal{D}}
\newcommand{\calI}{\mathcal{I}}
\newcommand{\calL}{\mathcal{L}}
\newcommand{\calM}{\mathcal{M}}
\newcommand{\calP}{\mathcal{P}} 
\newcommand{\calR}{\mathcal{R}}
\newcommand{\calS}{\mathcal{S}}
\newcommand{\cgB}{\mathcal{B}} 
\newcommand{\cgD}{\mathcal{D}}
\newcommand{\Oh}{\mathcal{O}}
\newcommand{\cgR}{\mathcal{R}}
\newcommand{\cgS}{\mathcal{S}}
\newcommand{\myitem}[1]{%
\item[#1]\protected@edef\@currentlabel{#1}%
}
\newcommand{\leqnomode}{\tagsleft@true\let\veqno\@@leqno}
\newcommand{\reqnomode}{\tagsleft@false\let\veqno\@@eqno}
\renewenvironment{enumerate}{\begin{enumorig}[label=\textup{(\roman*)}, noitemsep, 
topsep=2pt plus 2pt, labelindent=.2em, leftmargin=*, widest=iii]}{\end{enumorig}}
\newenvironment{enumerateAlph}{\begin{enumorig}[label=\textup{(\alph*)}, noitemsep, 
topsep=2pt plus 2pt, labelindent=.2em, leftmargin=*, widest=10]}{\end{enumorig}}
\newenvironment{enumerateShadowBlock}{\begin{enumorig}[label=\textup{(sb\arabic*)}, noitemsep, 
topsep=2pt plus 2pt, labelindent=.2em, leftmargin=*, widest=sb3]}{\end{enumorig}}
\newenvironment{enumerateRegion}{\begin{enumorig}[label=\textup{(rg\arabic*)}, noitemsep, 
topsep=2pt plus 2pt, labelindent=.2em, leftmargin=*, widest=rg3]}{\end{enumorig}}
\newenvironment{enumerateNumI}{\begin{enumorig}[label=\textup{(I\arabic*)}, 
noitemsep, topsep=2pt plus 2pt, labelindent=.2em, leftmargin=*, widest=I10]}{\end{enumorig}}
\newenvironment{enumerateNumd}{\begin{enumorig}[label=\textup{(d\arabic*)}, 
noitemsep, topsep=2pt plus 2pt, labelindent=.2em, leftmargin=*, widest=d10]}{\end{enumorig}}
\newenvironment{enumerateNumr}{\begin{enumorig}[label=\textup{(r\arabic*)}, 
noitemsep, topsep=2pt plus 2pt, labelindent=.2em, leftmargin=*, widest=r10]}{\end{enumorig}}
\newenvironment{enumerateNumLRinout}{\begin{enumorig}[label=\textup{(L\arabic*)}, 
noitemsep, topsep=2pt plus 2pt, labelindent=.2em, leftmargin=*, widest=L12, align=left]}{\end{enumorig}}
\newenvironment{enumerateNumIIL}{\begin{enumorig}[label=\textup{(iil\arabic*)}, 
noitemsep, topsep=2pt plus 2pt, labelindent=.2em, leftmargin=*, widest=IIL3]}{\end{enumorig}}
\newenvironment{enumerateNumIIR}{\begin{enumorig}[label=\textup{(iir\arabic*)}, 
noitemsep, topsep=2pt plus 2pt, labelindent=.2em, leftmargin=*, widest=IIR3]}{\end{enumorig}}
\newenvironment{enumerateNumIILR}{\begin{enumorig}[label=\textup{(iilr\arabic*)}, 
noitemsep, topsep=2pt plus 2pt, labelindent=.2em, leftmargin=*, widest=IILR3]}{\end{enumorig}}
\newenvironment{enumerateNumIO}{\begin{enumorig}[label=\textup{(io\arabic*)}, 
noitemsep, topsep=2pt plus 2pt, labelindent=.2em, leftmargin=*, widest=io3]}{\end{enumorig}}
\newenvironment{enumerateNumOI}{\begin{enumorig}[label=\textup{(oi\arabic*)}, 
noitemsep, topsep=2pt plus 2pt, labelindent=.2em, leftmargin=*, widest=oi3]}{\end{enumorig}}
\renewenvironment{itemize}{\begin{itemorig}[label=\tiny\textbullet, noitemsep, 
topsep=2pt plus 2pt, labelindent=.2em, leftmargin=*, widest=ii]}{\end{itemorig}}
\def\thm@space@setup{
  \thm@preskip=4mm
  \thm@postskip=0mm
}
\newmdtheoremenv[style=dontsplit]{theorem}{Theorem}
\newmdtheoremenv[style=dontsplit]{lemma}[theorem]{Lemma}
\newmdtheoremenv[style=dontsplit]{obs}[theorem]{Observation}
\newmdtheoremenv[style=dontsplit]{remark}[theorem]{Remark}
\newmdtheoremenv[style=dontsplit]{proposition}[theorem]{Proposition}
\newmdtheoremenv[style=dontsplit]{question}[theorem]{Question} 
\newmdtheoremenv[style=dontsplit]{corollary}[theorem]{Corollary} 
\newmdtheoremenv[style=dontsplit]{problem}[theorem]{Problem}
\newmdtheoremenv[style=dontsplit]{conjecture}[theorem]{Conjecture}
\newtheorem*{conjecture*}{Conjecture}
\theoremstyle{remark}
\newmdtheoremenv[style=dontsplit]{claim}[theorem]{Claim}
\crefname{claim}{Claim}{Claims}
\newmdtheoremenv[style=dontsplit]{example}[theorem]{Example}
\newenvironment{proofclaim}[1][]
	{\vspace{-\topsep}\begin{proof}[Proof] }{\end{proof}}
\let\eqref\cref
\newcommand{\piotr}[1]{{\color{orange} Piotr: #1}}
\newcommand{\jedrzej}[1]{\textcolor{ProcessBlue}{Jędrzej: #1}}
\newcommand{\michal}[1]{\textcolor{Bittersweet}{Michał: #1}}
\newcommand{\heather}[1]{\textcolor{LimeGreen}{Heather: #1}}
\newcommand{\fig}{\textcolor{magenta}{TODO: figure}}
\title{Planarity and dimension I}
\author[Blake]{Heather Smith Blake}
\address[Blake]{Mathematics \& Computer Science Department, Davidson College, Davidson, North Carolina 28035}
\email{hsblake@davidson.edu}
\author[Hodor]{Jędrzej Hodor}
\address[Hodor]{Theoretical Computer Science Department, 
Faculty of Mathematics and Computer Science and  Doctoral School of Exact and Natural Sciences, Jagiellonian University, Krak\'ow, Poland}
\email{jedrzej.hodor@gmail.com}
\author[Micek]{Piotr Micek}
\address[Micek]{Theoretical Computer Science Department, 
Faculty of Mathematics and Computer Science, Jagiellonian University, Krak\'ow, Poland}
\email{piotr.micek@uj.edu.pl}
\author[Seweryn]{Michał T.~Seweryn}
\address[Seweryn]{Computer Science Institute, Charles University, Prague, Czech Republic}
\email{seweryn@iuuk.mff.cuni.cz}
\author[Trotter]{William T. Trotter}
\address[Trotter]{School of Mathematics, Georgia Institute of Technology, Atlanta, Georgia 30332}
\email{trotter@math.gatech.edu}
\thanks{H.~S.~Blake is supported by a grant from the Simons Foundation. 
J.~Hodor and P.~Micek are supported by the National Science Center of Poland under grant UMO-2022/47/B/ST6/02837 within the OPUS 24 program. 
M.~T.~Seweryn is supported by ERC-CZ project LL2328 of the Ministry of Education of Czech Republic.
W.~T.~Trotter is supported by a grant from the Simons Foundation.}
\begin{document}

\begin{abstract}
The dimension of a partially ordered set $P$ (poset for short) is the least positive integer $d$ such that $P$ is isomorphic to a subposet of $\mathbb{R}^d$ with the natural product order.
Dimension is arguably the most widely studied measure of complexity for posets, and standard examples in posets are the canonical structure forcing dimension to be large. 
%\michal{current phrasing suggests that the reader should know what is a standard example} \jedrzej{Any ideas how to improve it?}
%\michal{How about: ``the canonical structure forcing dimension to be large is called standard example''?}
%\piotr{I would keep it as it is. It feels more natural to me.}
%\piotr{Decision. We keep it as it is.}
In many ways, dimension for posets is analogous to chromatic number for graphs 
with standard examples in posets playing the role of cliques in graphs. 
However, planar graphs have chromatic number at most four, while 
posets with planar diagrams may have arbitrarily large dimension. 
The key feature of all known constructions of such posets is that large dimension is forced by a large standard example.
%Since the early 1980s, the question of whether every poset of large dimension and a planar diagram contains a large standard example has been a critical challenge in posets theory with very little progress over the years.
%The question of whether every poset of large dimension and with a planar diagram contains a large standard example has been a critical challenge in posets theory since the early 1980s, with very little progress over the years.
% \piotr{Decision: We do not mention diagrams in the abstract.}
The question of whether every poset of large dimension and with a planar cover graph contains a large standard example has been a critical challenge in posets theory since the early 1980s, with very little progress over the years.
We answer the question in the affirmative. 
Namely, we show that every poset $P$ with a planar cover graph has dimension $\mathcal{O}(s^8)$, where $s$ is the maximum order of a standard example in $P$.
%Namely, we show that if $P$ has a planar cover graph, then $\dim(P) \leq  64s^6(s+1)(s+3)^2 + 16$, where $\dim(P)$ stands for the dimension of $P$ and $s=\se(P)$ stands for the maximum order of a standard example in $P$.
% \michal{I suggest, we do not introduce notation (\(\dim(P), \se(P)\)) in the last sentence of the abstract. Also, including a precise bound gives an impression that it somehow matters. I would like more \(\mathcal{O}(s^8)\) (or maybe \(\mathcal{O}(\se(P)^8)\)) in the abstract. Alternative:
% ``Namely, we show that every poset $P$ with a planar cover graph has dimension $\mathcal{O}(s^8)$, where $s$ is the maximum order of a standard example in $P$.''
% }
% \jedrzej{I like the alternative sentence.}
% \piotr{OK. I will replace it now.}
\end{abstract}

\maketitle

\newpage
\parskip   1.5mm
\tableofcontents
\parskip   2mm

\newpage

\section{Introduction}
\label{sec:intro}

In this paper, we study finite partially ordered sets, called \emph{posets} for short.
The \emph{dimension} of a poset $P$, denoted $\dim(P)$, is the least positive integer $d$ 
such that
$P$ is isomorphic to a subposet of $\mathbb{R}^d$ equipped with the product order.\footnote{In the product order of $\mathbb{R}^d$, for $(x_1,\dots,x_d),(y_1,\dots,y_d) \in \mathbb{R}^d$, we have $(x_1,\dots,x_d) \leq (y_1,\dots,y_d)$ if and only if $x_i \leq y_i$ for every $i \in \{1,\dots,d\}$.}  
Dimension is arguably the most widely studied measure of complexity for posets.
It captures important concepts in graph theory such as planarity~\cite{S89} and nowhere denseness~\cite{JMOdMW19}.
The computational complexity of testing whether a poset has dimension at most $d$ appeared 
%as an open problem 
on the famous Garey-Johnson list of problems~\cite{GJ79}.
%\michal{we should make it explicit that it appeared as a problem with an unknown NP-hardness status, e.g.: The computational complexity of testing whether a poset has dimension at most $d$ appeared as an open problem on the famous Garey-Johnson list of problems~\cite{GJ79}.}
%\jedrzej{The next sentence makes it clear that the status was unknown when it appeared on the list.}
Nowadays, we know that dimension is \textsf{NP}-hard to compute~\cite{Y82,FMP17} and 
hard to approximate in a strong sense~\cite{CLN13}. 

% LONG VERSION of "capturing concepts capturing..."
% The dimension captures important concepts in graph theory.
% For instance, a graph is planar if and only if the dimension of its incidence poset is at most 3~\cite{S89}.
% Another example is a characterization of nowhere dense classes of graphs (as defined in~\cite{NOdM12}) in terms of dimension of posets with cover graphs in the class~\cite{JMOdMW19}.

%  discussion:

%% -- definition via singletons-cosingletons doesn't work for $n = 2$

%% -- S_n or S_d
% we go with S_n for now

%% -- shall we delete "for all $d\geq2$" in the penultimate sentence?
% Ok

%% -- make explicit that a_i \parallel b_i
% parallel undefined!!

%% -- do we want to add an interpretation via hypecubes (singletons-cosingletons)?
% \jedrzej{I don't like it, I think that standard examples are simple enough and it is a distraction.}

Dimension was introduced in a foundational paper by Dushnik and Miller~\cite{DM41} in 1941. 
This paper also includes the canonical structure in posets forcing dimension to be large, 
namely, the family of standard examples.
For each integer $n$ with $n \geq 2$,
the \emph{standard example} of order $n$, denoted by $S_n$, is a poset on $2n$ elements 
$a_1,\dots,a_n,b_1,\dots,b_n$
such that 
$a_1,\dots,a_n$ are pairwise incomparable, 
$b_1,\dots,b_n$ are pairwise incomparable, and
for all $i,j \in \set{1,\ldots,n}$, 
we have $a_i < b_j$ in $S_n$ if and only if $i\neq j$.
%we have $a_i < b_j$ in $S_n$ if $i\neq j$ and \(a_i \parallel b_j\) in $S_n$ if \(i = j\).
%\michal{Made it explicit that \(a_i \parallel b_j\) if \(i=j\). }
%\piotr{No: $\parallel$ is undefined here.}
See~\cref{fig:se}.
%Note that for \(n\ge 3\), the standard example \(S_n\) can be seen as the poset induced by $1$-element and $(n-1)$-element subsets of $\set{1,\ldots,n}$ ordered by inclusion. 
It is one of the first exercises in dimension theory to show that $\dim(S_n)=n$. % for all $n$ with $n\geq2$.
Since dimension is a monotone parameter, 
%, i.e., 
%if $Q$ is a subposet of $P$, then $\dim(Q)\leq \dim(P)$. 
$\dim(P)\geq n$ whenever $P$ contains a subposet isomorphic to $S_n$.

\begin{figure}[h]
  \centering
  \includegraphics{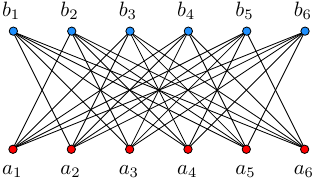}
  \caption{
        The standard example of order $6$. 
    }
   \label{fig:se}
\end{figure}

However, large standard examples are not the only way to drive dimension up. 
There are families of posets with arbitrarily large dimension such that for some integer $n$ with \(n \ge 2\), no poset in the family contains \(S_n\),
%such that each poset in the family contains no $S_d$ for some fixed small integer $d$,
%\michal{it is slightly ambiguous whether \(d\) is fixed for the poset or the family. Alternative: ``... such that for some integer \(d \ge 2\), no poset in the family contains a copy of \(S_d\),...''}
%\piotr{OK.}
e.g., 
incidence posets of complete graphs (as proved by Dushnik and Miller~\cite{DM41}), interval orders (see a tight asymptotic bound on their dimension by 
Füredi, Hajnal, Rödl, and Trotter~\cite{FHRT91}), adjacency posets of triangle-free graphs with large chromatic number (as shown by Felsner and Trotter \cite{FT00}).
%\michal{these `as proved by', `as shown by', etc.\ feel awkward to me. Alternative: ``Families of posets with unbounded dimension and without large standard examples include incidence posets of complete graphs% ~\cite{DM41}, interval graphs~\cite{BRT76,FHRT91}, and adjacency posets of triangle-free graphs with large chromatic number.''}
%\piotr{I prefer my version and I think it is better if we start to provide names from here.}
These results motivate the following definitions.
% \michal{Maybe we should first mention that there are natural classes of posets where all
% known posets of large dimension contain large standard examples?}
% \piotr{Like what?}
% \michal{like posets with planar diagrams/cover graphs? I think that once you learn that all known constructions of posets with planar diagrams and large dimension, the definitions are better motivated.} 
% \piotr{This would be another intro.}
The \emph{standard example number} of a poset $P$, denoted $\se(P)$, 
is set to be $1$ if $P$ does not contain a subposet isomorphic to a standard example; 
otherwise, 
%$\se(P)$ is the maximum order of a standard example isomorphic to a subposet of $P$. 
$\se(P)$ is the maximum integer $n$ such that $P$ contains a subposet isomorphic to $S_{n}$.
Clearly, for every poset $P$, we have $\se(P) \leq \dim(P)$. 
A class of posets $\calC$ is \emph{$\dim$-bounded} if there is a function $f$ 
such that $\dim(P) \leq f(\se(P))$ for every $P$ in $\calC$. 
%\michal{Is it intentional that we sometimes call them classes and sometimes families?}
%\piotr{Isn't it interchangeable?}
As we discussed, the class of all posets is not $\dim$-bounded.
% This reflects the notion of $\chi$-boundedness in graph theory, we refer readers to the recent survey by Scott and Seymour~\cite{SS2020} on the extensive body of research done on this topic.
%\michal{Should we say that interval graphs are exactly posets without $S_2$? How about we change ``triangle-free'' to ``with bounded clique-number'', so that not all examples are with $d\le3$?}
%\piotr{I think it is an unnecessary complication.}

%The dimension of a poset $P$ can be defined equivalently as the chromatic number of a certain hypergraph on the set of all incomparable pairs of $P$.

The dimension of a poset $P$ can be defined equivalently as the chromatic number of the hypergraph on the set of all incomparable pairs of $P$ with the edge set given by the set of all alternating cycles in~$P$ (see~\Cref{sec:dim} for the definition of an alternating cycle and~\Cref{prop:dim-alternating-cycles} for the equivalence).
%\michal{I suggest we drop the mention of the hypergraph, and explain the parallel between dimension and chromatic number without it. If you we keep it, we should explain what hypergraph we mean, or give a reference, otherwise the sentence can cofuse a reader. Also, we can illustrate the parallel between dimension and chromatic number with the fact that bounded maximum degree implies bounded dimension [Scott and Wood].}
%\jedrzej{We modified the sentence to be more specific. We think that the reference to the bounded degree case is unnecessary.}
%This establishes an analogy between dimension of posets and chromatic number of graphs.
%\ugo{Graphs and hypergraphs confusing.}
This links the notion of dimension of posets with graph colorings.
The inequality $\se(P) \leq \dim(P)$ for all posets $P$ parallels the inequality $\omega(G) \leq \chi(G)$ for all graphs~$G$.\footnote{Here, for a graph $G$, $\chi(G)$ is its chromatic number and $\omega(G)$ is its clique number.}
Both inequalities are far from tight.
%Similarly to $\dim$-boundedness, there is a well-established notion of $\chi$-boundedness.
A class of graphs $\calC$ is \emph{$\chi$-bounded} if there is a function $f$ such that for every $G$ in $\calC$, we have $\chi(G) \leq f(\omega(G))$.
We refer readers to the recent survey by Scott and Seymour~\cite{SS20} on
the extensive body of research done on this topic.
%As well as for posets, for graphs, there are many families of triangle-free graphs with unbounded chromatic number \cite[Section~2]{SS20}, which shows that the family of all graphs is not $\chi$-bounded.
The analogy breaks down with the celebrated Four Color Theorem, which states that planar graphs have chromatic number at most four, while \q{planar} posets may have arbitrarily large dimension as we now explain.

\begin{figure}[tp]
  \centering
  \includegraphics{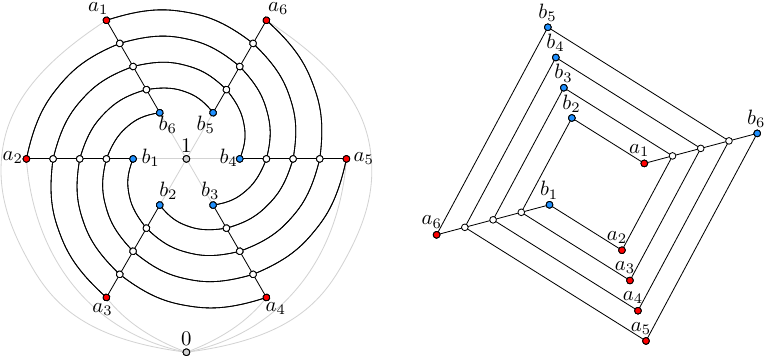}
  \caption{
  Left: The wheel of order $6$: it has a planar cover graph (the order relation goes inwards) and it contains a subposet isomorphic to the Kelly poset of order $6$.
  Right: The Kelly poset of order $6$: it has a planar diagram and contains a subposet isomorphic to $S_6$. 
}
   \label{fig:se_kelly_wheel}
\end{figure}

An element $y$ in a poset $P$ \emph{covers} an element $x$ in~$P$ if $x < y$ in~$P$ and there is no $z$ in~$P$ with $x<z<y$ in~$P$.
The \emph{cover graph} of a poset $P$ is the graph whose vertices are the
elements of $P$ and two elements are adjacent if one covers the other.
%with the edge set $\{xy\mid x<y \text{ in~$P$ and there is no $z$ with $x<z<y$ in~$P$}\}$.
% \michal{Alternative: `The \emph{cover graph} of a poset is its Hasse diagram interpreted as an abstract
% undirected graph. The edges of the cover graph have a natural orientation such that
% the comparabilities in the poset are captured by directed paths in the oriented graph.'} 
% \piotr{I don't like it. Hasse diagram is not a pedestrain thing. And this notion is critical in order to understand our two theorems.}
% \michal{In my experience, people do know what Hasse diagram is, especially if you mention that it is the standard way to represent a poset as a drawing. And `This could be evident for each person at least slightly interested in the paper.'} 
Somewhat unexpectedly, posets with planar cover graphs can have
arbitrarily large dimension, as we learned\footnote{
\emph{We learned} is a bit of an exaggeration as only one author of the manuscript in hand was alive in 1978.} 
in 1978~\cite{T78}, see \cref{fig:se_kelly_wheel}.  
% This construction shows that there are posets with planar cover graphs, a unique minimal element, a unique maximal element, and arbitrarily large dimension.  
A \emph{diagram} of $P$ is a drawing of the cover graph of $P$ in the plane such that whenever $xy$ is an edge in the cover graph and $x<y$ in~$P$, 
the relation is represented by a curve from $x$ to $y$ going upwards. 
In 1981,
Kelly~\cite{K81} published a seminal construction of posets with planar diagrams and arbitrarily large dimension, see~\cref{fig:se_kelly_wheel} again. 
A key remark is that all known constructions of planar posets with large dimension contain large standard examples. 
%in all known constructions of planar posets,  the fact that dimension is large stems from the fact that the posets contain large standard examples. 
%\michal{The phrasing ``stems from the fact'' suggests that the dimension is large only because the poset contains a large standard example. The fact that the dimension can be greater than the standard example number means that there is something more than can drive the dimension up. I suggest we simply write ``in all known constructions of planar posets with large dimension contain large standard examples''}
%\piotr{OK. Done.}
Thus, since the early 1980's, it remained a challenge and perhaps the most important problem in poset theory to settle the following.
%\michal{It is a bit strange that `the problem' (singular!) is two questions.
%Alternative:`Thus, perhaps the most important problem in poset theory since early 80's
%is to find the answer to the following questions'}:
\begin{conjecture*}\hfill
%  \label{con:dim-bounded}
  \begin{enumerate}
  \item The class of posets with planar diagrams is $\dim$-bounded.
  \label{item:conj:diagram}
  \item The class of posets with planar cover graphs is $\dim$-bounded.
  \label{item:conj:cover-graph}
\end{enumerate}
\end{conjecture*}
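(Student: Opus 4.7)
The plan is to work through the alternating-cycle characterization (\cref{prop:dim-alternating-cycles}): $\dim(P)$ equals the chromatic number of the hypergraph whose vertices are the incomparable pairs of $P$ and whose hyperedges are the alternating cycles. It therefore suffices to color these incomparable pairs with $\calO(s^8)$ colors, where $s = \se(P)$, so that no alternating cycle is monochromatic. I would proceed by contradiction, taking a minimum counterexample $P$ with planar cover graph $G$, $\se(P)\le s$, and $\dim(P)$ much larger than a suitable polynomial in $s$. Minimality lets us assume $G$ is connected and that every element participates in some monochromatic alternating cycle in the best available coloring.

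First I would fix a plane embedding of $G$ together with a BFS layering from a base vertex. The known bounds for posets of bounded height with planar cover graphs take care of the \q{short} alternating cycles, so at the cost of $\calO(\mathrm{poly}(s))$ colors we may assume every remaining bad cycle traverses many layers. Each such cycle splits into up-chains and down-chains, and the embedding lets us label each incomparable pair of the cycle by its side (\q{left}/\q{right}) and its \q{in}/\q{out} relation with respect to a distinguished spine $\spine$. These labels partition bad alternating cycles into a constant number of types, mirroring the $\HII$, $\HIIL$, $\HIIR$, $\HIO$, $\HOI$, $\HOO$, $\HXO$ zoo declared at the top, and I would bound the chromatic contribution of each type independently.

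For the \emph{pure} types, in which all incomparable pairs of a cycle share the same side/direction labels, I would chain together $s+1$ bad cycles and use planarity to nest their minima and maxima monotonically along the embedding; this monotonicity delivers $s+1$ incomparabilities on the diagonal together with the off-diagonal comparabilities required to realize a copy of $S_{s+1}$, contradicting $\se(P)\le s$. This is the step where planarity is really spent, and it is essentially a Kelly-style straightening of a long sequence of witnesses.

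The hard part is the \emph{mixed}, zig-zag types, which do not yield a standard example on their own. The core of the argument must be a \q{shadow} reduction---strongly suggested by the macros $\shad$, $\maxsw$, $\maxew$, $\maxsp$, $\maxep$---that iteratively replaces a mixed bad cycle by a simpler one of a smaller type, at the cost of a polynomial-in-$s$ factor per stage. The main obstacle is to keep both the number of reduction stages and the polynomial degree at each stage constant, so that their product telescopes to $\calO(s^8)$. Arranging this bookkeeping cleanly---verifying that every shadow step preserves the alternating-cycle structure modulo a small polynomial blow-up in colors, and that no step reintroduces complexity already eliminated in an earlier round---is where essentially all of the real work should live.
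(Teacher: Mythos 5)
Your outline reproduces the paper's skeleton (cover $\Inc(P)$ by reversible sets, use planarity to order witnessing structures left-to-right, extract standard examples from the ``pure'' cycle types), but the two places where the theorem is actually hard are missing or misdescribed. First, the layering step: the paper does not dispose of ``short'' alternating cycles via bounded-height results and then fight ``long'' ones. It does the opposite: the unfolding (\cref{prop:unfolding}, \cref{lem:PlanarCoverGraphReduction}) shows, at a cost of a factor $2$, that every strict alternating cycle can be assumed to be \emph{singly constrained} by one contracted element $x_0$, i.e.\ localized at a single layer boundary; and a second factor-$2$ reduction via escape numbers and addresses (\cref{lem:cgI-comprehensive}, \cref{cor:interface}) pins all pairs of a cycle to a common shadow $\shad_0$. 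Your appeal to height-based bounds at cost $\calO(\mathrm{poly}(s))$ colors is unsupported (those bounds depend on height, not on $\se$), and nothing in your plan explains how alternating cycles of \emph{arbitrary} size are reduced to the size-$2$ classification: in the paper this is the Coloring Lemma (\cref{lem:coloring}), which needs the maximality condition \ref{item:instance:maximal} to manufacture new pairs $(a_\alpha,b_\beta)\in I$ inside a long cycle and the ``shifted'' edges of $\HIIL$, $\HIIR$, $\HIILR$, $\HIO$, $\HOI$ precisely so that some two pairs of any cycle become adjacent in one of the six graphs. Labelling pairs of a long cycle by side/in--out does not by itself produce a size-$2$ configuration to which the type analysis applies.

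Second, and more seriously, for the mixed In-Out/Out-In types you propose an ``iterative shadow reduction'' replacing a mixed bad cycle by a simpler one of smaller type, and you concede this is where all the real work lives. That is not an argument, and it is not what the paper does: shadows are static topological regions used to define addresses and the left-of order, not a reduction device, and $\HIO$, $\HOI$ are \emph{not} tamed by shrinking them to another type (indeed $\maxpath(\HIO)$ cannot be bounded in $\se$, cf.\ \cref{fig:path-HIO}). The paper's mechanism is quantitative: edges get weights in $\set{0,1}$, and \cref{lemma:HIO} shows that along any weight-$1$ edge the quantity $\maxsw(\HIO,\cdot)$ drops by strictly less than $m=2\se_P(I)(2\se_P(I)+6)$, so coloring by $\maxsw \bmod m$ (and $\maxew$ for $\HOI$) is proper; proving that bounded drop consumes the long region/going-in/going-out analysis of \cref{sec:aux-proofs}. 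Since your proposal neither supplies this step nor a viable substitute for it, the proof as proposed does not go through.
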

We believe that the first published reference to Conjecture~\ref{item:conj:diagram} is
an informal comment on page 119 in~\cite{Tro-book} published
in 1992. However, the conjecture was circulating among researchers soon
after the constructions illustrated in \cref{fig:se_kelly_wheel} appeared.
Accordingly, Conjecture~\ref{item:conj:diagram} is more than~40 years old and obviously Conjecture~\ref{item:conj:cover-graph} is a stronger statement.
In this paper, we show that both statements are true and prove the following theorem.

\begin{theorem}\label{thm:cover-graph_se}
  For every poset \(P\) with a planar cover graph, 
  $\dim(P) \le 64s^6(s+3)^2 + 12$ where $s = \se(P)$.
\end{theorem}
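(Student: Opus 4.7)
The plan is to reduce to a nicely embedded setting and then extract structure from the planar drawing. By standard arguments one can first reduce to the case where the cover graph is connected and $P$ has a unique minimum $\hat 0$: if $P$ is disconnected or has several minimal elements, attach a new minimum $\hat 0$ below everything, note that the new cover graph is still planar (place $\hat 0$ in the outer face and route edges to the minimal elements along the outer face), and observe that this operation changes dimension by at most a small additive constant (accounting for the $+12$ in the bound). Fix a plane drawing of this cover graph and run a BFS from $\hat 0$, partitioning elements into layers $L_0,L_1,\dots$; the cyclic order around $\hat 0$ together with this layering is the topological backbone for everything that follows.

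Next, recall (as noted just before the theorem) that $\dim(P)$ equals the chromatic number of the hypergraph whose vertices are incomparable pairs of $P$ and whose hyperedges are alternating cycles. The goal becomes: color the critical pairs of $P$ using $\mathcal{O}(s^8)$ colors so that no color class contains an alternating cycle. The strategy is to assign each critical pair $(a,b)$ a \emph{type} — a tuple of combinatorial/topological invariants read off the drawing — such that (a) the number of possible types is $\mathcal{O}(s^6)$, and (b) within a single type, the pairs can be further colored with $\mathcal{O}((s+3)^2)$ colors. Natural ingredients for the type are: the BFS layers of $a$ and $b$, a chosen pair of cover paths from $a,b$ up to a common ancestor witnessing incomparability, and the \emph{left/right} sides of these paths in the drawing (the parameters encoded in the preamble by $\OR,\OL,\HIO,\HOI,\HIIL,\HIIR$, the shadow operator $\operatorname{shad}$, etc.).

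The main obstacle, and the technical heart of the argument, will be step (b): showing that within a single type a large monochromatic alternating structure forces a large standard example in $P$. The point is that the planar embedding constrains how two witness configurations can interact — roughly, two cover paths in a common plane drawing are forced to ``nest'' or ``cross'' in a controlled way — so a long alternating cycle among pairs of a fixed type produces an unavoidable combinatorial pattern that one can recognize as $S_{s+1}$. This is where one expects a Ramsey-style cleaning: given many critical pairs of a given type, pass to a subfamily on which all side/ordering data agree, and then read off $S_{s+1}$ from the witness paths together with the alternating structure, contradicting $\se(P)=s$. Hence each type supports at most $\mathcal{O}((s+3)^2)$ critical pairs per color class, contributing the factor $(s+3)^2$ to the bound.

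Combining the two counts gives $\dim(P)\le (\text{number of types})\cdot(\text{bound per type})+\text{reduction constant}$, which after bookkeeping yields $\dim(P)\le 64s^6(s+3)^2+12$. The parts I expect to be routine are the reduction to a rooted, connected planar drawing and the counting of types; the hard part, and presumably what occupies most of the paper, is proving the extraction lemma: any sufficiently large family of same-type critical pairs contained in an alternating cycle must contain a standard example of order exceeding $s$. Everything else in the proof should be driven by this structural dichotomy applied type-by-type.
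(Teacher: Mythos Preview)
Your proposal has genuine gaps, and the overall shape does not match what the paper actually does.

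First, the reduction step is incorrect as stated. You cannot in general add a new minimum $\hat 0$ below all minimal elements of $P$ and keep the cover graph planar: the minimal elements need not all lie on a common face. The paper instead uses the Streib--Trotter \emph{unfolding}: it takes a BFS layering of the \emph{comparability graph} from a minimal element, identifies two consecutive layers supporting the dimension, and then \emph{contracts} all earlier layers to a single vertex $x_0$. Planarity survives because the contracted set induces a connected subgraph of the (planar) cover graph. This is how one arrives at an instance where every pair $(a,b)\in I$ has $x_0<b$.

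Second, and more seriously, your ``type'' scheme and the claimed extraction lemma are not an argument but a restatement of the problem. The numbers $s^6$ and $(s+3)^2$ are reverse-engineered from the theorem; nothing in your outline explains why those particular exponents arise. The paper's mechanism is quite different and much more specific. After the reductions (which cost the factor $4$ and the $+12$), one works in a \emph{maximal good instance}, where for every strict alternating cycle the $b$'s are linearly ordered by a planar ``left of'' relation. One then classifies each alternating cycle of size $2$ into one of four types --- In-In, In-Out, Out-In, Out-Out --- according to whether each $a_i$ lies inside or outside a certain region associated to the other pair. From this classification one builds \emph{six} auxiliary acyclic oriented graphs on vertex set $I$ (roughly, $\HOO,\HIIL,\HIIR,\HIILR,\HIO,\HOI$), proves that any strict alternating cycle in $I$ contains an edge of one of them, and bounds the chromatic number of each: four of them have longest directed path at most $s$ (a path in each yields a standard example), while $\HIO$ and $\HOI$ require a subtler argument showing that along a weight-$1$ edge the max-weight path drops by less than $m=2s(2s+6)$, so coloring by this value $\bmod\ m$ works. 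The product of the six chromatic-number bounds gives $s^4\cdot m^2 = 16s^6(s+3)^2$, and the Coloring Lemma shows each color class is reversible. None of this structure --- shadows, regions, the In/Out classification, the six graphs, or the weighted-path argument for $\HIO$ --- appears in your outline, and the ``Ramsey-style cleaning'' you invoke is precisely the part that requires all of it.
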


The theorem has an important algorithmic aspect. 
Following our proof, one can design a polynomial time algorithm that given a poset $P$ with a planar cover graph on the input, 
returns an embedding of $P$ into $\mathbb{R}^d$, where $d$ is in $\Oh(\se(P)^8)$. 
%Interestingly, the algorithm does not need to know the value of $\se(P)$. 
Since $\se(P)\leq \dim(P)$, this constitutes an approximation algorithm for poset dimension in the planar setting. 
We believe that it is not known whether computing dimension of posets with planar cover graphs is \textsf{NP}-hard.
% \michal{I like it, a small issue is that typically by an approximation algorithm people mean a constant-factor approximation, and in our case the approximation factor is $s^8 \in O(n^{8/9})$. The approximation is more natural in the planar diagram case (linear dim-boundedness gives a constant factor approximation)} 
% \piotr{I would leave it. We write the exact output so one cannot get it wrong.}

We remark that if $P$ has a planar diagram we can prove that 
$\dim(P)$ is bounded by a linear function of $\se(P)$. 
Since this requires separate proof techniques crafted for the planar diagram setup, we leave this line of research for a separate manuscript: Planarity and dimension~II.
Rephrasing~\cref{thm:cover-graph_se}, a large dimensional poset with a planar cover graph contains a large standard example. 
In another manuscript in preparation, Planarity and dimension III, we show that a poset with a planar cover graph containing a large standard example must contain 
a large Kelly poset.
Thus, large dimensional posets with planar cover graphs contain large Kelly posets.

It seems that for many years, there were no tools to force large standard examples in highly dimensional posets. 
Hence, there are very little results on dim-boundedness. 
Blake, Micek, and Trotter~\cite{BMT22}, 
proved that each family of posets with planar cover graphs and a unique minimal element (called a zero) is dim-bounded. 
Note that posets with planar cover graphs and a zero may have arbitrarily large dimension as they contain all the wheels, recall~\Cref{fig:se_kelly_wheel}.  
Recently, 
Joret, Micek, Pilipczuk, and Walczak~\cite{JMPW24} proved that 
posets with cover graphs of bounded treewidth 
(or even posets of bounded cliquewidth) are dim-bounded. 
Note that posets with cover graphs of treewidth at most $3$ may have arbitrarily large dimension as they contain all the Kelly posets, again recall~\Cref{fig:se_kelly_wheel}.  
The proof of the main result in~\cite{JMPW24} builds on 
 Colcombet’s deterministic version of Simon’s factorization theorem, which
is a fundamental tool in formal language and automata theory -- therefore, it is very different from the combinatorics in~\cite{BMT22} and this paper.

Since planar graphs exclude $K_5$ as a minor and graphs of treewidth less than $t$ exclude $K_{t+1}$ as a minor, the following statement generalizes both~\cref{thm:cover-graph_se} and the main result of~\cite{JMPW24}.
\begin{conjecture*}
    For every positive integer $t$, the class of posets with cover graphs excluding $K_t$ as a minor is $\dim$-bounded.
\end{conjecture*}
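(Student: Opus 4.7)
The natural route to this conjecture is through the Robertson--Seymour graph minor structure theorem applied to the cover graph of $P$. That theorem decomposes any $K_t$-minor-free graph along adhesion sets of bounded size (so that the torsos form a tree-decomposition) into pieces each of which is a $k$-almost-embedding in a surface of Euler genus at most $g=g(t)$, with at most $a=a(t)$ apex vertices and a bounded number of vortices of bounded depth. The plan is to establish $\dim$-boundedness for each kind of piece separately, then propagate the bound through the tree of torsos using the Simon--Colcombet factorization technology developed in~\cite{JMPW24}.

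First I would lift~\cref{thm:cover-graph_se} from the plane to any fixed surface $\Sigma$. The route I have in mind is to iteratively cut $\Sigma$ along a short noncontractible curve: after making each cut, the cover graph gains a few new boundary-tracking vertices (paid for by a constant-sized contribution to the standard example), and after $g$ cuts one is looking at a poset whose cover graph is drawn in a disk, where the tools of this paper should apply essentially verbatim. The expected bound would be $\dim(P)\le f_g(\se(P))$ with $f_g$ polynomial in $\se(P)$ and degree depending on $g$.

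Next I would absorb apex vertices and vortices. For apices, the idea is to peel off a bounded number of chains and antichains covering all elements related through cover-edges to apex vertices (in the Dilworth/Mirsky style), at an additive loss of $\Oh(a)$ in the dimension; this decouples the apex pairs from the embedded part of the poset. Bounded-depth vortices are essentially local bounded-treewidth attachments and can be folded into the torso using a treewidth-style argument from~\cite{JMPW24}. Finally, gluing along adhesion sets is precisely the setting where the factorization theorem of~\cite{JMPW24} propagates $\dim$-boundedness from torsos to the whole cover graph, at the cost of an elementary function of the adhesion size and the per-torso bound.

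The main obstacle, I expect, is the very first step: adapting the planar arguments of this paper to a surface of genus $\geq 1$. The proofs here rely on the Jordan curve theorem, on the cyclic order of edges around each vertex of a canonical plane embedding, and on the duality between faces and vertices; on a handle these structures no longer behave globally, and the wheel/Kelly obstructions may have genuinely new cousins that have to be identified. It is even conceivable that the bounded-genus case requires a new family of unavoidable configurations forcing large standard examples, in which case one should expect the bounded-genus statement itself to be a theorem whose proof rivals the main result of this paper in complexity; the conjecture in its full generality is therefore likely to unfold in stages, with $\dim$-boundedness for bounded-genus cover graphs forming the critical intermediate target.
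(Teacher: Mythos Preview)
This statement is presented in the paper as an open \emph{conjecture}, not as a theorem; the paper does not prove it and offers no proof sketch. There is therefore nothing in the paper to compare your proposal against. What you have written is not a proof but a research programme, and you yourself recognise this in the final paragraph: the bounded-genus step alone is flagged as potentially ``a theorem whose proof rivals the main result of this paper in complexity''.

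As a programme the outline is the natural one, but several of the steps are more speculative than your wording suggests. The apex step (``peel off a bounded number of chains and antichains \ldots\ at an additive loss of $\mathcal{O}(a)$ in the dimension'') is not justified: an apex vertex of the cover graph can be comparable to arbitrarily many elements, and there is no general mechanism that turns deletion of a single cover-graph vertex into a bounded additive loss in dimension. The gluing step also overstates what~\cite{JMPW24} provides: that paper handles bounded treewidth (and cliquewidth) of the cover graph via Colcombet's factorization, but it does not give a black-box that propagates $\dim$-boundedness across a tree-decomposition with bounded adhesion when the torsos themselves have unbounded treewidth. Finally, your surface-cutting idea for genus is plausible in spirit, but the planar proof here leans heavily on global left/right orderings of witnessing paths anchored at a single exterior-face vertex $x_0$; cutting a handle does not obviously preserve a single coherent instance of this structure, so ``apply essentially verbatim'' is optimistic. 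In short: reasonable directions, but each bullet hides a genuine open subproblem, which is exactly why the paper states this as a conjecture.
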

We remark that the class of posets with cover graphs excluding $K_t$ as a topological minor is not $\dim$-bounded~\cite[Section~6]{FMM20}.
%\michal{More generally, if \(\mathcal{C}\) is a monotone class of graphs which do not exclude any $K_t$ as a minor, the posets with cover graphs in \(\mathcal{C}\) are not \(\dim\)-bounded.}\jedrzej{We need more explanation what do you mean.} \michal{That the conjecture is tight in the sense that a positive answer to the conjecture implies the characterization of all monotone classes \(\mathcal{C}\) such that posets with a cover graph in the class are dim-bounded: If for every \(t\) there exists a graph in \(\mathcal{C}\) which contains \(K_t\) as a minor, then posets with cover graph in \(\mathcal{C}\) are not dim-bounded}
%\jedrzej{We think that it is too technical to write about it so early. Maybe later?}

In contrast to dim-boundedness, it is well understood for which minor-closed classes of graphs, posets with cover graphs in this class have dimension bounded by an absolute constant.
%Much effort was put into understanding which posets with sparse cover graphs have dimension bounded by an absolute constant. 
%large dimensional posets with structurally sparse cover graphs, see~\cite{TM77, FTW14, JMTWW17, S20, HJMSW21}. 
The very first result in this line is by Trotter and Moore~\cite{TM77} who showed that posets whose cover graphs are forests (so exactly those excluding $K_3$ as a minor) have dimension at most~$3$. 
Felsner, Trotter, and Wiechert~\cite{FTW14} proved that posets with outerplanar cover graphs have dimension at most $4$. 
Seweryn~\cite{S20} proved that posets with cover graphs excluding $K_4$ as a minor have dimension at most $12$. 
Note that posets with cover graphs excluding $K_5$ as a minor contain all Kelly posets, so they may have arbitrarily large dimension. 
Huynh, Joret, Micek, Seweryn, and Wollan~\cite{HJMSW21} proved that 
posets with cover graphs excluding a $2 \times t$ grid as a minor have dimension bounded by a function of $t$. 
% ladder
All the above is qualitatively generalized by the following statement~\cite{JMPW24}: for a minor-closed class of graphs $\calC$ there exists a constant such that every poset with a cover graph in $\calC$ has dimension bounded by this constant if and only if $\calC$ excludes the cover graph of some Kelly poset.

Poset dimension was also studied with respect to other poset invariants.
Already in 1950, Dilworth~\cite{Dilworth1950} proved that 
dimension of a poset is always bounded by its width (i.e.\ the largest size of an antichain). 
This is a consequence of his famous chain decomposition result: 
every poset of width $w$ can be partitioned into $w$ chains. 
On the other hand, there are posets of height (i.e.\ the largest size of a chain) equal to $2$ and arbitrarily large dimension, e.g.\ standard examples.
This leads to a line of research initiated by Streib and Trotter~\cite{ST14} who proved that posets with planar cover graphs have dimension bounded by a function of their height.
In other words, large dimensional posets with planar cover graphs are not only wide but also tall.
This result was later improved and generalized in many directions~\cite{JMMTWW16,W17,MW17,JMW17,JMW18,KMT19,GS21}.
 
Finally, let us mention that in fact we prove a stronger result, which implies~\Cref{thm:cover-graph_se} (see~\Cref{thm:technical} and the necessary definitions in~\Cref{sec:dim}).
In particular, the stronger result implies that the class of posets that are subposets of posets with planar cover graphs is dim-bounded (see~\cref{cor:subposets-dim-bounded}).

\section{Outline of the proof}
Our proof of~\Cref{thm:cover-graph_se} is self-contained and does not rely on any external material beyond the fundamentals of graph theory and poset theory.
In this section, we discuss the main ideas and sketch the structure of the proof. 
%This material is aimed for readers with basic background in graphs and posets. 
The readers unfamiliar with the basics of poset dimension may want to skip this high-level overview and and proceed directly to~\Cref{sec:preliminaries}.
%The content of this section is not essential for understanding the remainder of the paper and may be skipped at the reader’s discretion.
All the notation introduced in the outline is later introduced again. %Henceforth, one can freely skip reading the outline. 

We begin with an alternative perspective on the dimension of posets, presented in detail in~\cref{sec:dim}, which is more amenable to combinatorial arguments.
Let $P$ be a poset.
We denote by $\Inc(P)$ all incomparable pairs of elements of $P$.
%with $a \parallel b$ in~$P$.
For an integer $k$ with $k \geq 2$, a sequence $((a_1,b_1), \dots, (a_k,b_k))$ of pairs in $\Inc(P)$ is a \emph{strict alternating cycle} of size $k$ in~$P$ 
if $a_i\leq b_{j}$ in~$P$ for all $i,j\in[k]$ if and only if $j = i +1$, cyclically (that is, $(a_{k+1},b_{k+1}) = (a_1,b_1)$).
Let $I \subset \Inc(P)$.
We say that such a strict alternating cycle is \emph{contained} in $I$ if $(a_i,b_i) \in I$ for all $i \in [k]$. 
The subset $I$ is \emph{reversible} if $I$ does not contain a strict alternating cycle. 
A family \(\calS\) of subsets of \(\Inc(P)\) \emph{covers $I$} if $I \subset \bigcup \calS$.
%We say that $I \subset \Inc(P)$ is \emph{reversible} if and only if $I$ does not contain a strict alternating cycle (usually the definition is different and the above is given as an equivalent condition, see~\Cref{prop:dim-alternating-cycles}).
We define $\dim_P(I)$ as the minimum positive integer $d$ such that $I$ can be covered by $d$ reversible sets.
One can show that $\dim(P)=\dim_P(\Inc(P))$, see \Cref{prop:dim-alternating-cycles}. 

%The proof of the main result starts with two reductions.
Let $P$ be a poset with a planar cover graph. 
The proof of the main theorem starts with an application of an unfolding (explained below) to $P$ in order to pinpoint a fragment of $P$ responsible for its dimension.  
The idea of an \q{unfolding of a poset} was introduced by Streib and Trotter~\cite{ST14} in 2014, and it is inspired by the following classical application of layerings to graph colorings. 
A \emph{layering} of a graph $G$ is a family $(Z_i : i\in \NN)$ of pairwise disjoint subsets of $V(G)$ such that for every edge $uv$ of $G$, there exists $i \in \NN$ with $\{u,v\} \subset Z_i \cup Z_{i+1}$.
It is well-known that
given a graph $G$ and a
layering of $G$, 
if every layer induces a $k$-colorable graph, 
then $G$ is \(2k\)-colorable (one can use two disjoint
palettes of \(k\) colors each, one for even layers, and one for odd layers).
The counterpart of the above for posets is formulated in terms of the unfolding.
An \emph{unfolding} of a poset $P$ is a layering of the comparability graph of $P$.
If the union of any two consecutive sets in the unfolding induces a subposet of dimension at most \(d\), then the dimension of \(P\) is at most \(2d\), see~\cref{prop:unfolding}.
We use a specific unfolding of $P$ that emerges from a \textsc{bfs}-layering of the comparability graph of $P$ that starts in a minimal element of $P$. 
Given such an unfolding, we localize two consecutive layers whose union maximizes dimension, and we contract all the elements of $P$ that lie in layers before, see~\cref{lem:PlanarCoverGraphReduction}.
In this way, paying a multiplicative factor of $2$, we reduce the problem of bounding $\dim(P)$ to a simpler setting.
Namely, we have
\[
\dim(P) \leq 2\cdot\dim_Q(I),
\]
where $Q$ is a poset and $I \subset \Inc(Q)$ such that: $Q$ has a planar cover graph; 
$Q$ has a distinguished minimal element $x_0$; 
the cover graph $G$ of $Q$ has a fixed planar drawing with $x_0$ in the exterior face;
for every $(a,b) \in I$, we have $x_0 < b$ in $Q$;
and $\se(Q) \leq \se(P)$.
% \michal{We should explicitly mention that \(Q\) contains no alternating cycle larger than \(\se)\)}
% \jedrzej{Ok}
This is encapsulated in the notion of an \emph{instance}, see \ref{item:instance:planar_cover_graph}--\ref{item:instance:I_singly_constrained} in~\Cref{sec:topology_instance} and~\Cref{cor:poset-to-instance}.
An instance is a tuple $(Q,x_0,G,e_{-\infty},I)$, where $Q$, $x_0$, $G$, and $I$ are as above, and additionally, $e_{-\infty}$ is a simple curve in the plane that is contained in the exterior face of the drawing and has one endpoint in $x_0$.
This will be an \q{anchor} determining \q{directions} in the drawing, as we explain below.

Once an instance $(P,x_0,G,e_{-\infty},I)$ is fixed, we discuss the topology of the drawing, see~\Cref{sec:topology_instance}. 
This part of the material draws from concepts introduced by Blake, Micek, and Trotter~\cite{BMT22}.
%Note that $e_{-\infty}$ is a simple curve in the plane that is contained in the exterior face of the drawing and has one endpoint in $x_0$.
%This will be an \q{anchor} determining \q{directions} in the drawing.
Given an element $u$ in~$P$ and an edge (or the anchor) $e$ incident to $u$, all the edges incident to $u$ are ordered from left to right (i.e.\ clockwise) around $u$ starting from $e$ (see more details and the notation in~\Cref{ssec:ordering-edges}).
A \emph{witnessing path} in~$P$ is a path in the cover graph of $P$ of ascending elements in~$P$.
Let $B$ be the set of all elements $b$ in~$P$ with $x_0 \leq b$ in~$P$.
%$=\set{b : x_0\leq b\ \textrm{in~$P$}}$.
Given $b \in B$, we consider all the witnessing paths from $x_0$ to $b$ in~$P$.
Using the ordering of edges described above, we order these paths.
Namely, given two such witnessing paths, we consider their longest common prefix starting from $x_0$.
Next, we look at the last edge of this prefix (or $e_{-\infty}$ if the prefix is trivial), and according to this edge, we compare the next edges of the paths to decide which one is \emph{left} and which one is \emph{right} of the other.
This ordering is in fact a linear ordering of all witnessing paths from $x_0$ to $b$, see~\Cref{obs:ue_ordering_is_usually_linear}. 
This way, we obtain the \emph{leftmost} witnessing path from $x_0$ to $b$, denoted by $W_L(b)$, and the \emph{rightmost} witnessing path from $x_0$ to $b$, denoted by $W_R(b)$, see~\Cref{sec:leftmost-rightmost} and~\Cref{fig:leftmost-rightmost}.

Given $b\in B$, the paths $W_L(b)$ and $W_R(b)$ start from $x_0$ and then they may split and rejoin multiple times before they both end in~$b$. 
Let $z_0,\ldots,z_{n}$ be all the common elements of $W_L(b)$ and $W_R(b)$ in the natural order. 
Thus, $z_0=x_0$ and $z_n=b$.  
For $i\in[n]$, consider a region enclosed by $W_L(b)$ and $W_R(b)$ between $z_{i-1}$ and $z_{i}$.
Note that, in case $i\neq n$ both paths may continue outside this region or inside this region. 
In the latter case, $z_i$ is called a \emph{reversing} element of $b$. 
Let $x_1,\ldots,x_d$ be all the reversing elements of $b$ and let $x_{d+1}=b$ (recall that $x_0$ is already defined).
The region enclosed by $W_L(b)$ and $W_R(b)$ from $x_i$ up to $x_{i+1}$ is denoted by $\shad_i(b)$ for each $i \in \{0,\dots,d\}$. 
We additionally set $\shad_{d+1}(b) = \emptyset$.
We say that $x_i$ is the \emph{initial} element of $\shad_i(b)$. 
See~\Cref{ssec:shadows} and~\Cref{fig:shadows}.
In the next step of the proof, we reduce the problem to an instance in which any two pairs $(a, b)$ and $(a', b')$ belonging to one strict alternating cycle in $I$ satisfy $b \notin \shad_0(b')$, and $b' \notin \shad_0(b)$. 
Therefore, we use the leftmost and rightmost witnessing paths and shadows to topologically order the elements in~$B$, see~\Cref{sec:ordering_elements_in_B}.
Namely, for $b,b' \in B$, we say that $b$ is \emph{left} of $b'$ if $W_L(b)$ is left of $W_L(b')$, $W_R(b)$ is left of $W_R(b')$, $b \notin \shad_0(b')$, and $b' \notin \shad_0(b)$. 
% \michal{It is worth emphasizing that the notion of being left of will not be used in the current instance, but in the one we obtain in the next reduction. We could postpone the definition, until the paragraph where we actually use this notion, or at least write something along the lines: ``In the next step of the proof, we reduce the problem to an instance in which any two pairs \((a, b), (a', b')\) belonging to one alternating cycle we have $b \notin \shad_0(b')$, and $b' \notin \shad_0(b)$. For such elements \(b,b'\), we will say that \(b\) is \emph{left} of \(b'\) if $W_L(b)$ is left of $W_L(b')$, $W_R(b)$ is left of $W_R(b')$.'' This sentence also is a hint for the reader to what we are trying to achieve in the next paragraph.}
% \jedrzej{Incorporated.}

Within the next step of the argument, we remove twice from $I$ a subset of pairs of dimension at most $2$ (non-risky pairs -- see~\Cref{ssec:risky}, and non-dangerous pairs -- see~\cref{ssec:dangerous}).
Let us skip discussing this detail here and just mention that this is where the additive terms \q{$+2$} in the inequality displayed below come from.
To each pair $(a,b) \in I$, we assign an \emph{address} $(j,x)$, where $j$ is the minimum nonnegative integer such that $a \notin \shad_j(b)$ and $x$ is the initial element of $\shad_j(b)$.
We split the pairs in $I$ into $I_0$ and $I_1$ depending on the parity of the first coordinate of their address: 
$I_{\theta}=\set{(a,b)\in I: \textrm{$(j,x)$ is the address of $(a,b)$ and $j\equiv\theta\bmod 2$}}$. 
Note that $\dim_P(I) \leq \dim_P(I_0) + \dim_P(I_1)$.
Let $\theta \in \{0,1\}$.
We prove that all pairs in a strict alternating cycle contained in $I_\theta$ have the same address, see~\cref{lem:cgI-comprehensive}. 
This property, together with an elementary fact from poset dimension theory, see~\cref{prop:dim_of_sum_incomparable_sets}, gives $\dim_P(I_\theta) = \max \dim_P(I(j,x))$, where the maximum goes over every possible addresses $(j,x)$ with \mbox{$j\equiv\theta\bmod2$} and $I(j,x)$ is the set of all pairs in $I$ with the address $(j,x)$.
We fix an address $(j,x)$ such that $\dim_P(I_\theta) = \dim_P(I(j,x))$.
% \[
% \dim_P(I) \leq \dim_P(I_0) + \dim_P(I_1) + 2 \leq 2\cdot\dim_P(I(j,x))+2. 
% \]
We define $P'$ as the poset obtained from $P$ by removing all elements strictly less than $x$ in~$P$. 
Clearly, all pairs of $I(j,x)$ survive in $P'$. 
Moreover, $x$ lies on the exterior face of the inherited drawing of the cover graph of $P'$. 
Now, the element $x$ in $P'$ plays the role of $x_0$ in~$P$.
In particular, $\shad_j$ in~$P$ becomes $\shad_0$ in $P'$, and every pair in $I(j,x)$ has address $(0,x)$ in the new setting.

In other words, again paying a multiplicative factor of $2$ (splitting $I$ into $I_0$ and $I_1$), we reduce the problem to a simpler setting.
Namely, we obtain an instance $(P',x_0',G',e_{-\infty}',I')$ with
\[
\dim_P(I) \leq 2\cdot(\dim_{P'}(I')+2)+2,
\]
and satisfying the following properties.
First, $a \notin \shad_0(b)$ for every $(a,b) \in I'$. 
Next, for every strict alternating cycle $((a_1,b_1),\ldots,(a_k,b_k))$ contained in $I'$, the elements $\set{b_1,\ldots,b_k}$ are linearly ordered by the \q{left of} relation, see the second assertion in~\Cref{lem:cgI-comprehensive}.
Finally, all the pairs in $I'$ are dangerous, which is a detail that we omit in the outline.
An instance is \emph{good} if it satisfies these additional properties, see~\ref{item:instance:not_in_shadow}--\ref{item:instance:dangerous} in~\cref{ssec:interface} and~\Cref{cor:interface}. 
For technical reasons, we insist that a good instance that we fix is also \emph{maximal} in a certain sense that we also omit in the outline, see~\ref{item:instance:maximal} in~\cref{ssec:interface} and~\Cref{prop:good_instance_to_maximal_good_instance}.

Given a maximal good instance $(P,x_0,G,e_{-\infty},I)$, we split $I$ into reversible sets along the following plan. 
We devise six auxiliary graphs $H_1,\dots,H_6$ on the vertex set $I$ such that for every strict alternating cycle $((a_1,b_1),\dots,(a_k,b_k))$ contained in $I$, there exist distinct $i,j \in [k]$ such that $(a_i,b_i)$ and $(a_j,b_j)$ are adjacent in one of the graphs, see~\Cref{lem:coloring}.
Suppose that we are given a proper vertex coloring $c_i$ of $H_i$, for each $i \in [6]$. 
Consider the product coloring of $I$, i.e., define $\kappa((a,b)) = (c_1((a,b)), \dots, c_6((a,b)))$ 
for each $(a,b) \in I$. 
It follows that each color class of $\kappa$ yields a reversible subset of $I$.
Namely, for each color \(\xi\) in the coloring \(\kappa\), 
the set $I_\xi = \{(a,b) \in I : \kappa((a,b)) = \xi\}$ is reversible.
% \michal{the meaning \(\Image(\kappa)\) is not clear. How about ``Namely, for each color \(\xi\) in the coloring \(\kappa\) \ldots''?}
% \jedrzej{Ok}
In particular,
\[\dim_P(I) \leq \chi(H_1) \cdots \chi(H_6).
\]

The construction of suitable graphs $H_1,\dots,H_6$ that fit the framework above is an essential part of this paper. 
All six graphs share some fundamental properties.
E.g., they all admit the following orientation.
Let $H\in\set{H_1,\ldots,H_6}$. 
When $(a,b)$ are adjacent $(a',b')$ in $H$, then either $b$ is left of $b'$ or $b'$ is left of $b$. 
Hence, we can orient all edges of $H$ so that when $((a,b),(a',b'))$ is an edge in $H$, then $b$ is left of $b'$.
In particular, this orientation is acyclic.
The natural way of coloring a graph $H$ with a fixed acyclic orientation is to assign to each vertex $v$ of $H$, the value $\maxsp(H,v)$, i.e., the maximum order of a directed path in $H$ starting in $v$.
This coloring witnesses $\chi(H) \leq \maxpath(H)$, where $\maxpath(H)$ is the maximum order of a directed path in $H$.

Four of the six auxiliary graphs admit a suitable upper bound on $\maxpath$. 
Namely, for each $H \in \{H_1,\dots H_4\}$, we have $\maxpath(H) \leq \se(P)$, see~\Cref{prop:HOO,prop:HIIL,prop:HIIR,prop:HIILR} proved in~\Cref{sec:aux-proofs}, and so, $\chi(H) \leq \se(P)$.
The remaining two graphs must be treated with more care.
Let $H = H_5$.
When $((a,b),(a',b'))$ is an edge in $H$, then
\[\maxsp(H,(a,b)) > \maxsp(H,(a',b')) > \maxsp(H,(a,b)) - m,\]
where $m = 2\se(P)\cdot(2\se(P)+6)$.
The first inequality is trivial, while proving the second one is a substantial portion of the whole proof. 
It follows that 
\[\maxsp(H,(a,b)) \not\equiv \maxsp(H,(a',b')) \bmod m.\]
%Let $c$ be a vertex-coloring of $H$ defined $c((a,b)) = \maxsp(H,(a,b)) \bmod m$.
We color each vertex $(a,b)$ of $H$ with the value $\maxsp(H,(a,b)) \bmod m$.
Therefore, two adjacent vertices in $H$ are assigned distinct colors, and so, the coloring is proper.
In conclusion, $\chi(H_5) \leq m$. 
The argument for $H_6$ is symmetric 
(roughly, we replace each occurrence of $\maxsp$ by $\maxep$). 
Eventually, we also obtain $\chi(H_6) \leq m$. 
%Note that the reasoning for $H_6$ is symmetric: we replace each $\maxsp$ by $\maxep$.
In this outline, we omit a detail that the edges in $H_5$ and $H_6$ are weighted with values from $\set{0,1}$, see~\Cref{lemma:HIO} for a precise statement. 
Altogether, we obtain
\[\dim_P(I) \leq \chi(H_1) \cdots \chi(H_6) \leq \se(P)^4\cdot \left(\se(P)\cdot(2\se(P)+6)\right)^2.
\]

The definitions of the six auxiliary graphs are based on the topology of the drawing. 
We associate each strict alternating cycle $((a_1,b_1),(a_2,b_2))$ contained in $I$ with two regions $\calR_1$ and $\calR_2$ in the plane, see~\Cref{ssec:classification}. 
Next, we classify these cycles 
according to the statements \q{$a_1 \in \calR_2$} and \q{$a_2 \in \calR_1$} being satisfied or not. 
As a result, each strict alternating cycle of size $2$ in $I$ is of one of the four types: In-In, In-Out, Out-In, and Out-Out.
This classification inspires the definitions of the six auxiliary graphs: $\HOO$, $\HIIL$, $\HIIR$, $\HIILR$, $\HIO$, and $\HOI$, see~\Cref{sec:aux-definitions}. 
It also lays the foundations for the key property discussed above: 
every strict alternating cycle (of arbitrary size) contained in $I$ contains two incomparable pairs adjacent in one of the auxiliary graphs, see~\Cref{lem:coloring}.

\section{Preliminaries}
\label{sec:preliminaries}
%We let $\bbN$ denote the set of all positive integers, and
%when $k\in\bbN$, 
We denote by $\mathbb{R}$ the set of real numbers and by $\NN$ the set of nonnegative integers.
For a positive integer $k$, we write $[k]$ as a
compact form of $\{1,\dots,k\}$.

\subsection{Graphs}
We consider simple and finite graphs.  
Unless we say otherwise, all graphs are undirected. 
The vertex set of a graph $G$ is denoted by \(V(G)\) and the edge set of $G$ is denoted by \(E(G)\).
% \jedrzej{It seems, that we don't use this notation.}
% If $e=uv$ and $u,v\in V(G)$, then $G\cup e$ \michal{\(G+e\) is a more common (see Diestel)}
% \piotr{$G+e$} \piotr{Check if we are using it} is the graph with the vertex set $V(G)$ and the edge set $E(G)\cup \set{e}$.
Let $G_1,G_2$ be two graphs. 
The \emph{union} of $G_1$ and $G_2$, denoted by $G_1 \cup G_2$ is the graph with the vertex set $V(G_1)\cup V(G_2)$ and the edge set $E(G_1)\cup E(G_2)$.
The \emph{intersection} of $G_1$ and $G_2$, denoted by $G_1 \cap G_2$ is the graph with vertex set $V(G_1)\cap V(G_2)$ and edge set $E(G_1)\cap E(G_2)$.

Let $k$ be a nonnegative integer.
A \emph{path} is a graph 
with the vertex set $\set{v_0,\ldots,v_k}$ and the 
edges $\set{v_{i-1}v_{i} : i \in [k]}$, where $v_0,\dots,v_k$ are pairwise distinct. 
%The number of vertices of a path is its \emph{order}.
We often refer to a path by a natural sequence of its vertices, 
writing, say, $v_0\cdots v_k$ or equivalently $v_k \cdots v_0$.
 %\michal{Is there an ambiguity with the edge notation \(uv\)?}
 %\jedrzej{It's the same in Diestel, we think that we should just leave it as it is.}
 % \michal{I am fine with that, although we may consider making a small remark. I leave it up to your taste}
Writing a path as $v_0\cdots v_k$ fixes an underlying orientation of the path, where $v_0$ is the first vertex and $v_k$ is the last vertex.
We say that $v_0$ and $v_k$ are the \emph{endpoints} of $v_0 \cdots v_k$.
We also say that $v_0 \cdots v_k$ \emph{starts} in $v_0$ and \emph{ends} in $v_k$.
%When $W=v_0\cdots v_k$ is a path and $i,j \in \{0,\dots,k\}$ we write $v_i[W]v_j$ for the \emph{subpath} $v_i\cdots v_j$.
A \emph{cycle} is a graph with at least three vertices such that removing each of its edges gives a path. 
% \jedrzej{We changed the notation for paths. Now, we use "between" for paths and "from to" for witnessing path when we know the ordering. Keep this in mind and try to change succesively.}
A \emph{forest} is a graph with no cycles.
A \emph{tree} is a connected forest.
Let $T$ be a tree, and let $u,v$ be two vertices in $T$.
We write $u[T]v$ to denote the unique path in $T$ with endpoints $u$ and $v$ and with an orientation from $u$ to $v$. 
%We consider $u[T]v$ to have a fixed underlying orientation from $u$ to $v$.

We use the following convenient notation for manipulating paths in graphs. 
Let $G$ be a graph.
%For two paths $U$ and $V$ in $G$, we say that $U$ and $V$ are \emph{internally disjoint} if every vertex common to $U$ and $V$ is an endpoint of both $U$ and $V$.
Let \(v_0, \ldots, v_k\) be vertices of \(G\), and let \(T_1, \ldots, T_k\) be trees in \(G\) such that each \(T_i\) contains the vertices \(v_{i-1}\) and \(v_i\). Then we denote by \(v_0 [T_1] v_1 [T_2] \cdots [T_k] v_k\) the union of the paths \(v_{i-1} [T_i] v_i\). See \cref{fig:concatenating_paths}.
%Whenever we use this notation, the resulting graph \(u_0 [U_1] u_1 [U_2] \cdots [U_k] u_k\) is either a path or a cycle. %\heather{In general, I don't think it's restricted to paths and cycles. Consider \(a[U]b[V]c[W]a[X]d\) where the four subpaths here are pairwise internally disjoint. But perhaps we are just saying that every time we use the notation, the result will be a path or a cycle.}
%\michal{Good point! this could be a bit confusing. I removed the mention that the paths \(u_{i-1}[U_i]u_i\) are pairwise internally disjoint.}
When $T_i = v_{i-1}v_i$, then we may omit \q{\([T_i]\)} in this notation. 
When the resulting graph is a path, we consider it with the orientation from $v_0$ to $v_k$.
For example, for a path \(W = v_0 \cdots v_5\), we have \(v_0[W]v_2 v_3 [W] v_5 = W\).

%\piotr{We discussed it with Michał. Michał has a green light to edit this paragraph. We don't know yet if we want $T_L$.}

% \begin{align*}
%     \maxsw(H,(a,b)) &= \text{the largest weight of a directed path in $H$ starting in $(a,b)$},\\
%     \maxew(H,(a,b)) &= \text{the largest weight of a directed path in $H$ ending in $(a,b)$}.
% \end{align*}

\subsection{Orders and posets}
% \jedrzej{We rewrote this section a little bit. Some decisions were made. 
%(1) We don't want to talk about $\prec$ at this point. 
%(2) We want to define "partially orders" and "poset" as close to each other as possible as they are the same things.
%(3) We removed some stuff that we believe is too much (a footnote).
%(4) We define incomparable elements.
%}
Let $X$ be a set. 
A \emph{partial order} on $X$ is a binary relation $R$ on \(X\)
that is \emph{reflexive} (for every \(x \in X\), \((x,x) \in R\)), \emph{antisymmetric} (for all \(x,y\in X\), if \((x,y) \in R\) and \((y,x) \in R\), then \(x=y\)), and \emph{transitive} (for all \(x,y,z \in X\), if \((x,y) \in R\) and \((y,z) \in R\), then \((x,z) \in R\)). 
A partial order $R$ is \emph{linear} if
for all $x,y \in X$, we have $(x,y) \in R$ or $(y,x) \in R$.
When \(R\) is a partial order (resp.\ linear order) on \(X\), we say that \(R\) \emph{partially orders} (resp.\ \emph{linearly orders}) \(X\).
When $R$ partially orders (resp.\ linearly orders) $X$, we say that $P = (X, R)$ is a \emph{partially ordered set}, or \emph{poset} for short (resp.\ \emph{linearly ordered set}).
% \footnote{Note that a poset \(P = (X, \le)\) carries the same information as the order relation \(\le\). More formally, we can restore the ground set \(X\) from the relation \(\le\) itself: by reflexivity we have \(X = \{x : x \le x\}\).
% However, in this paper posets will always have a planar cover graph. We use the symbol \(\preccurlyeq\) to denote an arbitrary partial order.}
We refer to the set \(X\) as the \emph{ground set} of \(P\), and to the members of \(X\) as the \emph{elements} of \(P\).
Note that $(X,R^{-1})$ is also a poset, we call it the \emph{dual} of~$P$ and denote it by~$P^{-1}$.

Let $x$ and $y$ be elements of~$P$.
We say that $x$ and $y$ are \emph{comparable} in~$P$ (or in $R$)  if \((x,y) \in R\) or \((y, x) \in R\).
We use the notation \q{$x \leq y$ in~$P$} whenever $(x,y) \in R$, and \q{$x < y$ in~$P$} whenever $(x,y) \in R$ and $x \neq y$.
Furthermore, we say that $x$ and $y$ are \emph{incomparable} in~$P$ (or in $R$) if they are not comparable in $R$. 
In this case, we write \q{$x \parallel y$ in~$P$}.

An element $x$ of~$P$ is \emph{minimal} in~$P$ if for every element $y$ of~$P$, $y \leq x$ in~$P$ implies $y = x$.
Symmetrically, an element $x$ of~$P$ is \emph{maximal} in~$P$ if for every element $y$ of~$P$, $x \leq y$ in~$P$ implies $y = x$.
A \emph{chain} in~$P$ is a set of elements in~$P$ such that every two elements are comparable in~$P$, 
and an \emph{antichain} in \(P\) is a set of elements in~$P$ such that every two distinct elements are incomparable in~$P$. 
%\jedrzej{I don't see why we need this.}
% A chain in \(P\) consists of elements \(x_1, \ldots, x_n\) with \(x_1 < \cdots < x_n\) in \(P\). 
% The elements \(x_1\) and \(x_n\) are respectively the \emph{minimal} and the \emph{maximal} element of the chain.

\begin{figure}[tp]
  \begin{center}
    \includegraphics{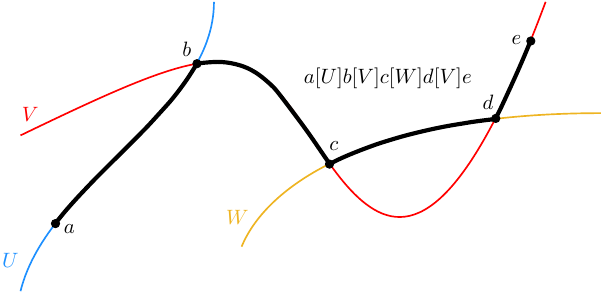}
  \end{center}
  \caption{
    A sample application of the notation for path concatenation.
  }
  \label{fig:concatenating_paths}
\end{figure}

%\jedrzej{Move this to the dimension subsection.}
% All posets in this paper are finite, except for \(\mathbb{R}^d\) which is always equipped with the product order: \((x_1, \ldots, x_d) \le (y_1, \ldots, y_d)\) in \(\mathbb{R}^d\) if and only if \(x_i \le y_i\) for each \(i \in [d]\).

The \emph{comparability graph} of \(P\) is a graph on the ground set of \(P\) in which two distinct vertices are adjacent if and only if they are comparable in \(P\). 
An element \(x\) of~$P$ is \emph{covered} by an element \(y\) of~$P$ (and \(y\) \emph{covers} \(x\)) in \(P\) if
\(x < y\) in~$P$ and there is no element \(z\) of~$P$ with \(x < z < y\) in \(P\).
The \emph{cover graph} of \(P\)
is a subgraph of the comparability graph in which two vertices are adjacent if either one of them covers the other.
Observe that the dual \(P^{-1}\) has the same comparability graph and cover graph as \(P\).

Two posets $P$ and $Q$ are \emph{isomorphic} if there exists a bijection $f$ from the ground set of~$P$ to the ground set of $Q$ such that for all elements $x$ and $y$ of~$P$, we have $x \leq y$ in~$P$ if and only if $f(x) \leq f(y)$ in $Q$.
For a subset $Y$ of the ground set of~$P$, the \emph{subposet induced} by $Y$ in~$P$ is a poset $Q$ with the ground set $Y$ such that for all $x,y \in Y$, we have $x \leq y$ in $Q$ if and only if $x \leq y$ in~$P$.
A poset $Q$ is a \emph{subposet} of a poset \(P\) if it is a subposet induced by some subset of the ground set of~$P$ in~$P$.
A subposet \(Q\) of a poset \(P\) is \emph{convex} if for all elements \(x,y,z\) with \(x < y < z\) in \(P\), if \(x\) and \(z\) are elements of \(Q\), then also \(y\) is an element of \(Q\). 
Note that if \(Q\) is a convex subposet of \(P\), then the cover graph of \(Q\) is a subgraph of the cover graph of \(P\).

For an element \(x\) of \(P\), we denote by \(P - x\)
the subposet of \(P\) induced by the set of all elements of \(P\) except \(x\).

A \emph{finite} poset is a poset with a finite ground set.
All posets in this paper are finite, unless stated otherwise. 
A \emph{component} of a poset is a subposet induced by a connected component of its cover graph.
A poset is \emph{connected} if it has exactly one component.

\subsection{Dimension of posets}\label{sec:dim}
In the introduction, we presented a concise geometric definition of
dimension of posets. 
However, we (and most other researchers) work with a combinatorial equivalent. 
In this section, we discuss equivalent definitions of dimension and prove its basic properties.

Let \(P\) be a poset.
A \emph{linear extension} of \(P\) is a linearly ordered set $L$ on the same ground set as $P$ such that $x\leq y$ in~$P$ implies $x\leq y$ in $L$ for all elements $x$ and $y$ of~$P$.
Every poset has a linear extension, and furthermore, for all incomparable elements \(x\) and \(y\) of~$P$ there exists a linear extension $L$ with \(x < y\) in $L$
%, and a linear extension $L'$ in which \(y < x\) in $L'$ 
(we prove an even stronger statement in \cref{prop:dim-alternating-cycles}). 
Therefore, if $\calL$ is the set of all linear extensions of~$P$ then for all elements $x$ and $y$ of~$P$, we have
\begin{equation}
\label{eq:all-linear-extensions}
\textrm{$x\leq y$ in~$P$ if and only if  $x\leq y$ in $L$ for each $L\in\calL$.} 
\end{equation}

Recall that the dimension of a poset \(P\) is the least positive integer \(d\) such that \(P\) is isomorphic to
a subposet of \(\mathbb{R}^d\) (ordered by the product order).
If \(Q\) is a finite subposet of \(\mathbb{R}^d\), then by slightly perturbing the coordinates of the points we can obtain an isomorphic subposet \(Q'\) of \(\mathbb{R}^d\) such that for each \(i \in [d]\), the \(i\)th coordinates of the elements of \(Q'\) are pairwise distinct. 
%For example, if \(Q\) is a subposet of \(\mathbb{R}^d\) such that for each \(i \in [d]\), the \(i\)th coordinates of the elements of \(Q\) are pairwise distinct, 
Then for each \(i \in [d]\), the linear order on the ground set of \(Q'\) given by the increasing \(i\)th coordinate yields a linear extension of \(Q'\).
This justifies an equivalent, combinatorial definition of dimension.

The dimension of \(P\) is the least positive integer \(d\) for which there exist \(d\) linear extensions
\(L_1, \ldots, L_d\) of \(P\) such that for all elements $x$ and $y$ of \(P\), we have
\begin{align*}
\textrm{$x\leq y$ in~$P$ if and only if  $x\leq y$ in $L_i$ for each $i\in[d]$.} 
\end{align*}
Indeed, a linear extension \(L_i\) can be seen as the order of elements in the \(i\)th coordinate in \(\mathbb{R}^d\),
and conversely, the order of elements in the \(i\)th coordinate in \(\mathbb{R}^d\) forms a linear extension. 
By~\eqref{eq:all-linear-extensions}, the dimension is well-defined.
In fact, Hiraguchi~\cite{H55} showed that an \(n\)-element poset with \(n \ge 4\), has dimension at most \( n/2 \). 
%\michal{check the reference}

For a poset \(P\), let \(\Inc(P)\) denote the set of all ordered pairs \((x, y)\) of elements of~$P$ with \(x \parallel y\) in \(P\).
A subset $I \subseteq \Inc(P)$ is \emph{reversible} in \(P\) if there exists a linear extension $L$ of~$P$ with
$y < x$ in $L$ for all $(x,y)\in I$.
A family \(\calS\) of subsets of \(\Inc(P)\) \emph{covers $I$} if $I \subset \bigcup \calS$.
In this case, sometimes we say that $\calS$ is a \emph{covering} of $I$.
We state arguably the most useful equivalent definition of dimension of posets.

% \jedrzej{We decided not to prove this.}

\begin{obs} \label{prop:redefine_dimension}
    For every poset \(P\), the dimension of~$P$ is the least positive integer $d$ for which 
          $\Inc(P)$ can be covered by $d$ reversible sets.
\end{obs}

% \begin{proposition} \label{prop:redefine_dimension}
%     Dimension of a poset $P$ is equal to the minimum positive integer $d$ such that $\Inc(P)$ can be covered by $d$ reversible sets.
% \end{proposition}
% \begin{proof}
%     TODO
%     \heather{Here's a first draft. Feel free to edit/trim:\\ 
%     Let $\{I_1, \ldots, I_d\}$ be a minimum size cover of $\Inc(P)$ by reversible sets. For each $i\in [d]$, let $\leq_i$ be the linear extension of~$P$ which reverses each pair in $I_i$. Let $x,y\in P$. If $x$ and $y$ are comparable in~$P$ with $x\leq_P y$ then $x\leq_i y$ for each $i\in [d]$ by the definition of linear extension, so $(x,y)\in \bigcap_{i=1}^d \leq_i$ as desired. Now suppose $x$ and $y$ are incomparable in~$P$. Then $(x,y), (y,x)\in \Inc(P)$. So there exists $j,k\in [d]$ with $(x,y)\in I_j$ and $(y,x)\in I_k$. So $y\leq_j x$ and $x\leq_k y$. So $(x,y)\not\in \bigcap_{i=1}^d \leq_i$. Therefore $\leq_P = \bigcap_{i=1}^d \leq_i$, so $\dim(P) \leq d$.
% \newline
%     Next, let $\{\leq_1, \ldots, \leq_{\dim(P)}\}$ be a smallest set of linear extensions of~$P$ such that $\leq_P = \bigcap_{i=1}^{\dim(P)} \leq_i$. For each $i \in [\dim(P)]$, set $I_i = \{(x,y)\in \Inc(P): y\leq_i x\}$. Let $(x,y)\in \Inc(P)$ be arbitrary. Then $x\not\leq_P y$, so there exists $j\in [\dim(P)]$ with $y\leq_j x$. So $(x,y)\in I_j$. Since this is holds for all pairs in $\Inc(P)$, $\{I_1, \ldots, I_{\dim(P)}\}$ is a cover of $\Inc(P)$. So $d\leq \dim(P)$.
%     }
% \end{proof}

Let $P$ be a poset.
Motivated by the above, for every \(I \subseteq \Inc(P)\), we define the \emph{dimension} of $I$ in~$P$, denoted by $\dim_P(I)$, as the minimum positive integer $d$ such that $I$ can be covered by $d$ reversible sets.
%\piotr{Maybe $\dim(P,I)$?}
Note that by \cref{prop:redefine_dimension}, $\dim(P)=\dim_P(\Inc(P))$. 

Similarly, we define a restricted version of the standard example number.
We say that a set $J\subseteq \Inc(P)$ \emph{induces} a standard example in~$P$ if $|J| \geq 2$ and for all distinct $(a,b), (a',b')\in J$, 
we have $a < b'$ and $a'<b$ in~$P$.  
For every $I\subseteq\Inc(P)$, 
let $\se_P(I)$ be defined as $1$ when there is no subset of $I$ inducing a standard example in~$P$; otherwise, $\se_P(I)$ is the maximum size of a subset of $I$ that induces a standard example in~$P$.
Note that $\se(P) = \se_P(\Inc(P))$.

It turns out that there is a relatively simple criterion to verify if a given $I \subset \Inc(P)$ is reversible.
%In order to introduce it, we need the following notion.
For an integer $k$ with $k \geq 2$, a sequence $((a_1,b_1), \dots, (a_k,b_k))$ of pairs in $\Inc(P)$ is an \emph{alternating cycle} of size $k$ in~$P$ 
if $a_i\leq b_{i+1}$ in~$P$ for all $i\in[k]$, cyclically 
(that is, $b_{k+1} = b_1$).
We say that $I \subset \Inc(P)$ \emph{contains} an alternating cycle \(((a_1, b_1), \ldots, (a_k, b_k))\) if all pairs \((a_i, b_i)\) belong to  $I$.
%\michal{We often write ``contains an alternating cycle '', this is undefined with current definition.}
%Observe that if $((x_1,y_1), \dots, (x_k,y_k))$ is an alternating 
%cycle in~$P$, then any subset $I\subseteq \Inc(P)$ containing
%all the pairs on this cycle are not reversible.
An alternating cycle $((a_1,b_1),\dots,(a_k,b_k))$ is \emph{strict} if for all $i,j \in [k]$, we have
$a_i\le b_{j}$ in~$P$ if and only if $j=i+1$ (cyclically).
Note that in this case, $\{a_1, \dots, a_k\}$ and $\{b_1, \dots, b_k\}$ are $k$-element antichains in~$P$.
Note also that in alternating cycles, we allow that $a_i=b_{i+1}$ for some or even all values of $i$.
Trotter and Moore~\cite{TM77} made the following elementary observation that has proven over time to be far-reaching in nature.
\begin{proposition}\label{prop:dim-alternating-cycles}
Let $P$ be a poset and let $I \subset \Inc(P)$.
The following conditions are equivalent:
\begin{enumerate}
    \item\label{item:I-reversible} $I$ is reversible,
    \item\label{item:I-with-no-alternating-cycle} $I$ does not contain an alternating cycle,
    \item\label{item:I-with-no-strict-alternating-cycle} $I$ does not contain a strict alternating cycle.
\end{enumerate}
%A subset $I\subseteq\Inc(P)$ is reversible if and only if $I$ contains no strict alternating cycle.
\end{proposition}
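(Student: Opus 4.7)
The plan is to prove the cyclic chain of implications \ref{item:I-reversible}$\Rightarrow$\ref{item:I-with-no-alternating-cycle}$\Rightarrow$\ref{item:I-with-no-strict-alternating-cycle}$\Rightarrow$\ref{item:I-reversible}. The first implication is immediate from the definitions: if $L$ is a linear extension of $P$ witnessing reversibility of $I$ and $((a_1,b_1),\ldots,(a_k,b_k))$ is an alternating cycle contained in $I$, then $b_i < a_i$ in $L$ for each $i$, while also $a_i \leq b_{i+1}$ in $L$ (because $L$ extends the order of $P$); chaining cyclically yields $b_1 < b_1$, a contradiction. The second implication is trivial, since every strict alternating cycle is in particular an alternating cycle.

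For the substantive implication \ref{item:I-with-no-strict-alternating-cycle}$\Rightarrow$\ref{item:I-reversible} I would proceed through two auxiliary steps. First, a \emph{shrinking lemma}: if $I$ contains an alternating cycle, then $I$ contains a strict alternating cycle. To prove this, take a shortest alternating cycle $((a_1,b_1),\ldots,(a_k,b_k))$ contained in $I$. If it fails to be strict, there exist indices $i,j \in [k]$ with $j \not\equiv i+1 \pmod{k}$ and $a_i \leq b_j$ in $P$; moreover $j \neq i$, since otherwise $a_i \leq b_i$ would violate $(a_i,b_i) \in \Inc(P)$. Then
\[
((a_i,b_i),(a_j,b_j),(a_{j+1},b_{j+1}),\ldots,(a_{i-1},b_{i-1}))
\]
(indices read cyclically mod $k$) is a strictly shorter alternating cycle contained in $I$: the opening relation $a_i \leq b_j$ is given, and the remaining relations $a_j \leq b_{j+1},\ldots,a_{i-1} \leq b_i$ are inherited from the original cycle. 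This contradicts minimality, and by contraposition gives \ref{item:I-with-no-strict-alternating-cycle}$\Rightarrow$\ref{item:I-with-no-alternating-cycle}.

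The second auxiliary step is \ref{item:I-with-no-alternating-cycle}$\Rightarrow$\ref{item:I-reversible}. I define a new relation $R'$ on the ground set of $P$ as the reflexive-transitive closure of the union of the order of $P$ with $\{(b,a) : (a,b) \in I\}$. Reflexivity and transitivity are immediate by construction; the point is antisymmetry. Suppose for contradiction that $x \neq y$ and $(x,y),(y,x) \in R'$. Unfolding the closure yields a closed sequence $x_0 = x, x_1, \ldots, x_n = x$ passing through $y$, each consecutive step being either a comparability in $P$ or a pair $(b,a)$ with $(a,b) \in I$. Collapsing maximal runs of $P$-steps by transitivity and discarding any trivial equalities leaves an alternating list of $P$-steps and reversed-$I$-steps, which must contain at least two reversed-$I$-steps (zero would give $x \leq y \leq x$ in $P$; one would give $a \leq b$ in $P$ for some $(a,b) \in I$, violating incomparability). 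Reading off the endpoints of the reversed-$I$-steps in order then produces an alternating cycle contained in $I$, contradicting the assumption. Therefore $R'$ is a partial order, and any linear extension of $R'$ is simultaneously a linear extension of $P$ reversing every pair in $I$, so $I$ is reversible.

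The main obstacle is the careful cyclic bookkeeping in both lemmas — checking that the truncated sequence in the shrinking lemma is genuinely shorter and still cycles correctly (which hinges on $j \neq i$ and $j \not\equiv i+1\pmod k$ both holding), and verifying that the antisymmetry violation in the second step really collapses to a bona fide alternating cycle rather than a closed walk trivialized by cancellation. These are elementary combinatorial manipulations but must be carried out with care; no further structural property of $P$ beyond transitivity and the definition of incomparable pairs is used.
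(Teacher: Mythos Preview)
Your proof is correct and follows essentially the same route as the paper: the same contradiction for \ref{item:I-reversible}$\Rightarrow$\ref{item:I-with-no-alternating-cycle}, the same shortest-cycle shrinking argument for the equivalence of \ref{item:I-with-no-alternating-cycle} and \ref{item:I-with-no-strict-alternating-cycle}, and the same transitive-closure construction (your $R'$ is exactly the paper's relation $R$) for \ref{item:I-with-no-alternating-cycle}$\Rightarrow$\ref{item:I-reversible}. A small tidiness bonus: in the shrinking step you correctly note that failure of strictness for an alternating cycle means precisely that some $a_i \le b_j$ with $j \not\equiv i+1$, so only that case needs handling, whereas the paper also treats the case $b_j \le a_i$ (harmless but unnecessary under the stated definition).
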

\begin{proof}
For the implication \ref{item:I-reversible} to \ref{item:I-with-no-alternating-cycle}, suppose that $I\subseteq \Inc(P)$ is a reversible set which contains an alternating cycle $((a_1,b_1), \ldots, (a_k, b_k))$. 
Since $I$ is reversible, there is a linear extension $L$ of~$P$ with $b_i < a_i$ in $L$ and $a_i \leq b_{i+1}$ in $L$ (cyclically) for all $i\in [k]$. 
Therefore, \[b_1 < a_1 \leq b_2 < a_2 \leq\cdots < a_k \leq b_1 \text{ in $L$}\] which is a contradiction. 

For the implication \ref{item:I-with-no-alternating-cycle} to \ref{item:I-reversible}, consider a set $I\subseteq \Inc(P)$ which does not contain an alternating cycle.  
Let $X$ be the ground set of~$P$. 
Define an auxiliary binary relation $R$ on $X$ where $(x,y) \in R$ if and only if $x\leq y$ in~$P$ or there is a sequence $((a_1, b_1), \ldots, (a_k,b_k))$ of elements from $I$ where $a_i \leq b_{i+1}$ in~$P$ for each $i\in [k-1]$, $x\leq b_1$ in~$P$, and $a_k \leq y$ in~$P$. 
We show that $R$ is a partial order.
Clearly $R$ is reflexive. 

Now suppose $(x,y) \in R$ and $(y,z) \in R$ for some elements $x,y,z$ in~$P$. 
If $x\leq y$ in~$P$ or $y\leq z$ in~$P$, it is quick to verify $(x,z) \in R$.
Otherwise, the concatenation of the witnessing sequences for $(x,y) \in R$ and $(y,z) \in R$ is a sequence that shows $(x,z) \in R$. 
Therefore, $R$ is transitive.

Next, we prove that $R$ is antisymmetric. 
Consider $x,y$ in~$P$ with $(x,y) \in R$ and $(y,x) \in R$. 
In the case $x< y$ in~$P$, then $y\not\leq x$ in~$P$ and so there is a sequence $((a_1, b_1), \ldots, (a_k,b_k))$ witnessing $(y,x) \in R$. 
Since $a_k \leq x < y \leq b_1$ in~$P$, the sequence is actually an alternating cycle with all of its pairs in $I$, a contradiction.
Thus, $x\not< y$ in~$P$ and similar reasoning shows that $y\not< x$ in~$P$.
As a result, we assume that $x \parallel y$ in~$P$.
So each of $(x,y) \in R$ and $(y,x) \in R$ is witnessed by a sequence of pairs in $I$.
The concatenation of these sequences is again an alternating cycle in $I$, a contradiction. 
Therefore, $x=y$ and $R$ is antisymmetric.

We have verified that $R$ partially orders $X$.
Let $Q = (X,R)$ and let $L$ be a linear extension of $Q$. 
Note that if $x\leq y$ in~$P$, then $x \leq y$ in $Q$.
Hence, $L$ is also a linear extension of~$P$. 
Furthermore, for each $(a,b)\in I$, the sequence $((a,b))$ witnesses that $b \leq a$ in $Q$, thus, $L$ reverses all pairs in $I$. 
Therefore, $I$ is reversible as desired.

The implication \ref{item:I-with-no-alternating-cycle} to \ref{item:I-with-no-strict-alternating-cycle} is clear.
To prove the implication \ref{item:I-with-no-strict-alternating-cycle} to \ref{item:I-with-no-alternating-cycle}, consider $I\subseteq \Inc(P)$ which contains an alternating cycle. 
Let $C = ((a_1, b_1), \ldots, (a_k,b_k))$ be such a cycle with minimum size. 
Suppose $C$ is not a strict alternating cycle. 
So there exist $i,j \in [k]$ such that $a_{i}$ is comparable to $b_j$ and $j \neq i+1$ (cyclically).
Without loss of generality, by rotating the cycle, we can assume that $i = 1$, and so, $j \notin \{1,2\}$.
If $a_1 \leq b_j$ in~$P$, then $((a_1,b_1), (a_j, b_j), \ldots, (a_k, b_k))$ is an alternating cycle in $I$ shorter than $C$. 
If instead $b_j \leq a_1$ in~$P$, then $a_{j-1} \leq b_{2}$ in~$P$, and so $2<j-1$. 
Thus $((a_{2},b_{2}), \ldots, (a_{j-1},b_{j-1}))$ is an alternating cycle shorter than $C$. 
In each case, we have a contradiction, so $C$ must be a strict alternating cycle in $I$. 
\end{proof}

% When $u$ and $v$ are distinct vertices in a tree $T$,
% we denote by $uTv$ the unique path in $T$ from $u$ to $v$. 
% This notation allows for the natural notion of concatenation when
% working with paths, i.e., 
% when $N$ and $N'$ are paths in a graph $G$, $a,b\in N$ and $b,c\in N'$,
% then $aNbN'c$ denotes a walk in $G$ formed by concatenating $aNb$ with $bN'c$. 
% When this convention is applied later in this paper, the
% paths $aNb$ and $bN'c$  will have no vertices in common other than
% $b$.  As a result, $aNbN'c$ will also be a path.

% and also that 
% $W$ is a \emph{witnessing path in~$P$ for $u_0\leq u_m$}.
% \michal{How about a witnessing \(u_0\)--\(u_m\) path?} 
% \piotr{I don't know which one is better.}
% \jedrzej{I think that "from $u_0$ to $u_m$" reads better (in terms of a flow).}
% \michal{We should make explicit that there is a witnessing \(a\)--\(b\) path
% if and only if \(a \le_P b\).}
% \piotr{Yes, let's make it explicit.}

Next, we develop some easy properties of dimension.
For every poset $P$ and for each $I \subset \Inc(P)$, recall that $I^{-1} = \{(b,a) : (a,b) \in I\}$.
Note that if $I \subset \Inc(P)$, then $I^{-1} \subset \Inc(P^{-1})$.
The following observation is straightforward.

\begin{obs}
    For every poset $P$ and $I \subset \Inc(P)$, we have $\dim_{P}(I) = \dim_{P^{-1}}(I^{-1})$.
\end{obs}

Let $P$ be a poset, $I\subseteq \Inc(P)$, 
and let $\{I_i : i \in [s]\}$ be a covering of $I$. 
The following inequalities are trivial but still very useful
\[
\textstyle\dim_P(I) \leq \dim_P\left(\bigcup_{i\in[s]} I_i \right) \leq \sum_{i\in[s]} \dim_P(I_i).
\]
The next proposition describes a situation when we get a stronger bound, i.e.\ $\dim_P(I)\leq \max\{\dim_P(I_i): i\in[s]\}$ (so somehow the problem becomes local with respect to the covering).

\begin{proposition}\label{prop:dim_of_sum_incomparable_sets}
    Let $P$ be a poset, let $I \subseteq \Inc(P)$, and let $I_1,\dots,I_s \subseteq I$ be pairwise disjoint.
    Assume that for every strict alternating cycle in~$P$ contained in $I$, there is $i \in [s]$ such that all the pairs in the strict alternating cycle are in $I_i$.
    Then, $\dim_P(I) = \max\{ \dim_P(I_i): i \in [s]\}$.
\end{proposition}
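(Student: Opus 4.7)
The inequality $\max_i \dim_P(I_i) \le \dim_P(I)$ is immediate: any reversible covering of $I$ restricts to a reversible covering of each $I_i$, since the property of being reversible is preserved under taking subsets (equivalently, via \Cref{prop:dim-alternating-cycles}, a subset cannot acquire a strict alternating cycle). So the content lies in the reverse inequality, and the plan is to glue reversible coverings of the $I_i$ into a reversible covering of $I$ of the same size.

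Set $d = \max\{\dim_P(I_i) : i \in [s]\}$. For each $i \in [s]$, fix a covering of $I_i$ by reversible sets $R_{i,1}, \dots, R_{i,d}$ (padding with empty sets if $\dim_P(I_i) < d$), and define
\[
R_j = \bigcup_{i=1}^{s} R_{i,j} \qquad \text{for each } j \in [d].
\]
The key step is to show that each $R_j$ is reversible. By \Cref{prop:dim-alternating-cycles} it suffices to check that $R_j$ contains no strict alternating cycle. If $C = ((a_1,b_1), \dots, (a_k,b_k))$ were such a cycle, then $C$ is contained in $I$, so by hypothesis there is some $i^\ast \in [s]$ with all pairs of $C$ lying in $I_{i^\ast}$. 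Each pair also lies in $R_j = \bigcup_i R_{i,j}$, and the assumption that the $I_i$ are pairwise disjoint together with $R_{i,j} \subseteq I_i$ forces each such pair to be in $R_{i^\ast,j}$. But then $R_{i^\ast,j}$ contains the strict alternating cycle $C$, contradicting its reversibility.

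It remains to cover the leftover set $I_0 = I \setminus \bigcup_i I_i$. Any strict alternating cycle contained in $I_0$ would be a strict alternating cycle contained in $I$, hence by hypothesis would lie entirely in some $I_i$, contradicting $I_0 \cap I_i = \emptyset$. Thus $I_0$ contains no strict alternating cycle, and by \Cref{prop:dim-alternating-cycles} it is reversible. A symmetric argument shows that $R_1 \cup I_0$ is still reversible: a strict alternating cycle in this union would lie in some $I_{i^\ast}$, hence could not use pairs from $I_0$, and would therefore already sit in $R_1$. Replacing $R_1$ by $R_1 \cup I_0$, the family $\{R_1, \dots, R_d\}$ is a covering of $I$ by $d$ reversible sets, so $\dim_P(I) \le d$, as required.

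The only part that requires any care is the reversibility of the merged sets $R_j$, and even this is a short argument once one notices that pairwise disjointness of the $I_i$ localises every strict alternating cycle contained in $I$ to a single $R_{i,j}$. The handling of $I_0$ is then an essentially free consequence of the same localisation principle.
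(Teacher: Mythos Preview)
Your proof is correct and follows essentially the same approach as the paper: both take optimal reversible coverings of the $I_i$, merge the $j$th sets across all $i$, and use disjointness plus the hypothesis to localise any strict alternating cycle in the merged set to a single $R_{i,j}$. Your argument is in fact slightly more careful than the paper's, which tacitly assumes $I = \bigcup_i I_i$; your explicit treatment of the leftover set $I_0 = I \setminus \bigcup_i I_i$ (showing it is reversible and can be absorbed into $R_1$) fills that small gap.
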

\begin{proof}
    Let $d = \max  \{\dim_P(I_i) : i \in [s]\} $. 
    Clearly, $\dim_P(I) \geq d$, thus, we focus on the other inequality. 
    Since for each $i \in [s]$, $\dim_P(I_{i}) \leq d$, there exists a covering $\{I_{i,j} : j\in [d]\}$ of $I_{i}$ consisting of reversible sets in~$P$.
    For each $j \in [d]$, let $J_j = \bigcup_{i=1}^s I_{i,j}$.
    Fix $j\in[d]$. 
    We claim that $J_j$ is reversible.
    Suppose to the contrary that there is a strict alternating cycle in~$P$ contained in $J_j$. 
    Since $I_1,\dots,I_s$ are pairwise disjoint, for every $i \in [s]$, we have $J_j\cap I_i \subset I_{i,j}$.
    Now, by the assumption, there is $i \in [s]$ such that all the pairs from the cycle are in $I_i$. 
    Therefore, all pairs of the cycle are in $I_{i,j}$. 
    This contradicts the fact that $I_{i,j}$ is reversible and completes the proof.
\end{proof}

Back in the 1950s, Hiraguchi~\cite{H55} proved that if a poset \(P\) is the union of
disjoint chains, then \(\dim(P) \le 2\), and otherwise
$\dim(P)$ equals the maximum dimension of a component of~$P$. 
We will use a slightly refined version of this statement devised for subsets of incomparable pairs.

% \tom{We don't use this special version. Either remove this and make it inline or change it to a simpler version (just with the poset).}
% \jedrzej{This can be useful in PDII, make sure that we don't lose this before removing it.}
% \jedrzej{After strengthening the assertion, we actually need this statement.}

\begin{proposition}\label{prop:dim-components}
Let $P$ be a poset, and let $I\subseteq \Inc(P)$ with $\dim_P(I) \geq 3$. Then there exists a component \(C\) of \(P\)
such that $\dim_P(I) = \dim_C(I_C)$,
where \(I_C = I \cap \Inc(C)\).
\end{proposition}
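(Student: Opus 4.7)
The plan is to establish the two-sided inequality $\max_C \dim_C(I_C) \leq \dim_P(I) \leq \max(2, \max_C \dim_C(I_C))$, where $C$ ranges over components of $P$; the hypothesis $\dim_P(I) \geq 3$ then forces the right-hand maximum to equal $\max_C \dim_C(I_C)$, which yields $\dim_P(I) = \dim_C(I_C)$ for any component $C$ attaining this maximum.

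First I would verify the lower bound. Given a covering of $I$ by $\dim_P(I)$ reversible sets in $P$, the restriction of each witnessing linear extension of $P$ to the ground set of a component $C$ is a linear extension of $C$, and the restrictions of the reversible sets cover $I_C$ by reversible sets in $C$. Hence $\dim_C(I_C) \leq \dim_P(I)$ for every component $C$.

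The substance of the argument is the upper bound. Set $d^* = \max(2, \max_C \dim_C(I_C))$ and enumerate the components as $C_1, \ldots, C_m$. For each $j \in [m]$, fix a covering $I_{C_j, 1}, \ldots, I_{C_j, d^*}$ of $I_{C_j}$ by reversible sets (padding with empty sets as needed), and for each $k \in [d^*]$ pick a linear extension $L_{C_j}^k$ of $C_j$ reversing $I_{C_j, k}$. I would now assemble $d^*$ linear extensions of $P$ by stacking these component-level extensions in two opposite global orderings of the components. Specifically, define $M_1$ by using $L_{C_j}^1$ within each $C_j$ and placing the components in the forward order $C_1 < \cdots < C_m$; define $M_2$ by using $L_{C_j}^2$ within each $C_j$ and placing the components in the reverse order $C_m < \cdots < C_1$; and for $k \geq 3$ define $M_k$ by using $L_{C_j}^k$ within each $C_j$ and any fixed global ordering of components. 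Each $M_k$ is a genuine linear extension of $P$ because elements in distinct components are incomparable in $P$, so any interleaving that respects each component is admissible. By construction, $M_k$ reverses $\bigcup_j I_{C_j, k}$. For cross-component pairs $(a,b) \in I$ with $a \in C_i$, $b \in C_j$, $i \neq j$: if $i > j$, then $M_1$ places $b$ before $a$ and so reverses $(a,b)$; if $i < j$, then $M_2$ does. Thus every pair in $I$ is reversed by at least one $M_k$, yielding a covering of $I$ by $d^*$ reversible sets and hence $\dim_P(I) \leq d^*$.

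No step looks genuinely hard. The only points meriting care are confirming that stacking produces valid linear extensions of $P$ (immediate from pairwise incomparability across components), and checking that cross-component pairs are absorbed by $M_1$ and $M_2$ without needing additional reversible sets. The role of the hypothesis $\dim_P(I) \geq 3$ is precisely to guarantee $d^* = \max_C \dim_C(I_C)$, so that the two "boundary" extensions $M_1$ and $M_2$ fit inside the existing budget rather than forcing a $+2$ overhead.
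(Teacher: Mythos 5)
Your proposal is correct and follows essentially the same route as the paper: both cover $I$ by the component-level reversible sets together with the two cross-component sets (pairs going ``forward'' and ``backward'' in a fixed enumeration of components), merge these two into the first two classes, and use $\dim_P(I)\geq 3$ to absorb the $\max(2,\cdot)$. The only difference is presentational: you certify reversibility of the merged classes by explicitly stacking component linear extensions in forward and reverse component order, whereas the paper checks reversibility via the alternating-cycle criterion; the decomposition and the role of the hypothesis are identical.
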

\begin{proof}
    Let $C_1,\dots,C_s$ be components of~$P$, 
    let $I_{C_i} = I \cap \Inc(C_i)$, and
    $d_i = \dim_{C_i}(I_{C_i})$, for each $i \in [s]$.
    %\michal{You can end the proof by applying \cref{prop:dim_of_sum_incomparable_sets} now.}
    Let $d = \max  (\{d_i : i \in [s]\} \cup \{2\})$. 
    Since for each $i \in [s]$, $\dim_P(I_{C_i}) \leq d$, there exists a covering $\{I_{i,j} : j \in [d]\}$ of $I_{C_i}$ consisting of $d$ reversible sets in $C_i$.
    Next, we define $J_1 = \{(a,b) \in I : a \text{ in } C_i, \ b \text{ in } C_j, \ \text{and} \ i < j\}$ and $J_2 = \{(a,b) \in I : a \text{ in } C_i, b \text{ in } C_j, \ \text{and} \ i > j\}$.
    Note that $\{J_1,J_2\} \cup \{I_{i,j} : i \in [s], j \in [d]\}$ is a covering of $I$.

    We need the following abstract observation. 
    Consider $K \subset \Inc(P)$ such that 
    \begin{enumerate}
    \item if $(a,b) \in K$ with $a \text{ in } C_i$, $b \text{ in } C_j$, then $i \leq j$;
    \item $K \cap \Inc(C_i)$ is reversible in~$P$, for each $i \in [s]$.
    \end{enumerate}
    We claim that $K$ is reversible in~$P$.
    Let $((a_1,b_1),\dots,(a_k,b_k))$ be an alternating cycle in~$P$ contained in $K$. 
    Let $i_{\alpha}\in[s]$ be such that $a_{\alpha}\text{ is in } C_{i_\alpha}$, for each $\alpha\in[k]$. 
    Fix $\alpha \in [k]$.
    Since $a_{\alpha}\leq b_{\alpha+1}$ in~$P$, it follows that $b_{\alpha + 1} \text{ is in } C_{i_\alpha}$, and therefore, $i_{\alpha+1} \leq i_\alpha$ (cyclically).
    Thus, $i_1 = \dots = i_k$. However, this is a contradiction with the fact that $K \cap \Inc(C_{i_\alpha})$ is reversible.
    This completes the proof that $K$ is reversible in~$P$.

    For each $j \in [d]$, let
    \[
        I_j = 
        \begin{cases}
            \bigcup_{i=1}^s I_{i,j} \cup J_j \ &\text{if} \ j \in \{1,2\},\\
            \bigcup_{i=1}^s I_{i,j} \ &\text{otherwise}.
        \end{cases}
    \]
    Note that $\set{I_j: j\in[d]}$ is a covering of $I$. 
    By the observation above, $I_j$ is reversible in~$P$ for each $j \in [d]$, and so $\dim_P(I) \leq d$.
    It follows that $3 \leq \dim_P(I) \leq d$, and so, $d \geq 3$.
    In particular, $d = d_i$ for some $i \in [s]$.
    Since $\dim_P(I) \geq d_i$ for every $i \in [s]$ and $\dim_P(I) \geq 2$, we have $\dim_P(I) \geq d$.
    Altogether, $\dim_P(I) = d$, and thus, $\dim_P(I) = d_i = \dim_{C_i}(I_{C_i})$ for some $i \in [s]$.
\end{proof}

\subsection{Topology and planarity}
For a set $S$ of points in the plane, we write $\partial S$ for the topological boundary of $S$ and $\Int S$ for the topological interior of $S$.
Additionally, we define the \emph{exterior} of $S$ as the interior of the complement of $S$ in the plane.

A \emph{simple curve} $\gamma$ in the plane is the image of an injective continuous map of a closed segment into the plane. 
% \michal{in the case of a closed curve it is an image of any circle, and in the case of a non-closed curve it is the image of the specific interval \([0, 1]\). Also, why is it a `map' rather than a simple `function'?}
% \jedrzej{(1) map is traditionally used for functions going to the plane (2) Now it is any circle and any segment.}
In this case, the \emph{endpoints} of $\gamma$ are the images of the endpoints of the segment, while the \emph{interior} of $\gamma$ is the set of non-endpoint points in \(\gamma\).
We say that $\gamma$ \emph{connects} its endpoints.
%\michal{A small issue: the interior of a segment in the plane is empty. Alternative: ``the \emph{interior} of $\gamma$ is the set of non-endpoint points in \(\gamma\) (the interior of a curve is a different concept than the topological interior).''}
%\jedrzej{Ok, but I would omit the bracket as we use emph, so it is clear that this is a new definition.}
A \emph{simple closed curve} in the plane is the image of an injective continuous map of a circle into the plane. 
Since all combinatorial objects considered in this paper are finite, we always assume that curves are finite unions of segments in the plane.
%\michal{I don't understand this sentence. What is the connection between ``combinatorial objects'' and curves on the plane?}
%\michal{Is it ever actually useful to assume that curves are piecewise linear?}

Let $\gamma$ be a simple closed curve.
The Jordan Curve Theorem\footnote{Since in this paper we only consider curves that are finite unions of segments, we only need the Jordan Curve Theorem for Polygons~\cite[Chapter~4]{Diestel-book}.} states that the complement of~$\gamma$ in the plane consists of two arc-connected components, one bounded $B$ and one unbounded~$U$. 
Moreover, the boundary of each of the components is equal to $\gamma$.
%\michal{arc-connected components undefined.}
%\jedrzej{Continous functions undefined... I think that there is some limit :)}
We define the \emph{region} of $\gamma$ as the union of $\gamma$ and $B$. %, and the \emph{exterior} of the region of $\gamma$ as $U$.
%Note that again by Jordan Curve Theorem, the boundary of the region of $\gamma$ is exactly $\gamma$. 
% \michal{I don't see how JCT implies it.} 
% \jedrzej{It doesn't imply that, it states that. See above.}
%\heather{I believe we are defining ``boundary'' here, so should we use emph?} \michal{I would rather not have a notion of boundary different than the topological boundary=frontier. And if we assume that the reader knows what is a continuous function, it is safe to assume that they also know what is boundary.} 
%\michal{Do we use the fact that one of the regions is homeomorphich to a closed disk? If yes, then the Jordan-Schoenflies Theorem is more appropriate}
%\jedrzej{I don't think so}
In particular, if $\calR$ is the region of $\gamma$, then $\Int \calR = B$, $\partial\calR = \gamma$, and the exterior of $\calR$ is $U$.

Let us state and prove one simple topological observation.
\begin{proposition}\label{obs:region_containment}
    Let $\gamma_1$ and $\gamma_2$ be simple closed curves and let $\Gamma_1$ and $\Gamma_2$ be the regions of $\gamma_1$ and $\gamma_2$ respectively.
    If $\partial \Gamma_1 \subset \Gamma_2$, then $\Gamma_1 \subset \Gamma_2$.
\end{proposition}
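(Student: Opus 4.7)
The plan is to argue by contradiction using the Jordan Curve Theorem applied to $\gamma_2$. Suppose $\Gamma_1 \not\subset \Gamma_2$ and pick a point $p \in \Gamma_1 \setminus \Gamma_2$. Since $\partial\Gamma_1 = \gamma_1 \subset \Gamma_2$, the point $p$ cannot lie on $\gamma_1$, so $p \in \Int\Gamma_1$. Moreover $p \notin \Gamma_2$ means $p$ lies in the exterior $U_2$ of $\Gamma_2$, i.e.\ the unbounded arc-connected component of the complement of $\gamma_2$.

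Next, I would exploit arc-connectedness of $U_2$ to produce a long curve avoiding $\gamma_2$. Fix a closed disk $D$ large enough to contain $\Gamma_1 \cup \Gamma_2$ in its interior, and pick any point $q$ outside $D$. Then $q \in U_2$ as well, and since $U_2$ is arc-connected (by the Jordan Curve Theorem for $\gamma_2$), there is a simple curve $\alpha$ from $p$ to $q$ entirely contained in $U_2$; in particular $\alpha$ is disjoint from $\gamma_2$, and by the choice of $D$ the endpoint $q$ lies in the exterior of $\Gamma_1$.

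Now I would invoke the Jordan Curve Theorem for $\gamma_1$: the curve $\alpha$ starts in $\Int\Gamma_1$ and ends in the exterior of $\Gamma_1$, so it must intersect $\partial\Gamma_1 = \gamma_1$. Pick any $x \in \alpha \cap \gamma_1$. On the one hand, $x \in \alpha \subset U_2$, so $x$ lies in the exterior of $\Gamma_2$, hence $x \notin \Gamma_2$. On the other hand, the hypothesis gives $x \in \gamma_1 \subset \Gamma_2$, a contradiction.

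The argument is short, but the one mildly delicate step is justifying the existence of the curve $\alpha$: one needs that $U_2$, the unbounded complementary region of $\gamma_2$, is arc-connected and unbounded. Since the paper restricts to curves that are finite unions of segments, this follows from the Jordan Curve Theorem for polygons already cited in the preliminaries, so no additional topological machinery is required.
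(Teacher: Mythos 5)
Your proof is correct and rests on exactly the same ingredients as the paper's: the exterior of $\Gamma_2$ is connected, unbounded, and (by the hypothesis $\gamma_1 \subset \Gamma_2$) disjoint from $\gamma_1$, so it must sit inside the exterior of $\Gamma_1$. The paper states this directly as a set containment using connectedness of the exterior of $\Gamma_2$, while you repackage it as a contradiction with an explicit arc crossing $\gamma_1$; the topological content is identical, so this counts as essentially the same argument.
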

%\michal{Is it possible to use only one of \(\calR\) and \(\Gamma\) for regions?}
%\jedrzej{I like to have both. $\Gamma$ is for more abstract regions and $\calR,\calS$ are for regions coming from posets.}

\begin{proof}
    Since $\partial \Gamma_1 \subset \Gamma_2$, the exterior of \(\Gamma_2\) is disjoint from \(\partial \Gamma_1\). Since the exterior of \(\Gamma_2\) is connected, it must be contained in some component of the complement of $\partial \Gamma_1$ in the plane. 
    Since the exterior of \(\Gamma_2\) is unbounded, it must be contained in the exterior of \(\Gamma_1\). 
    Thus, \(\Gamma_1 \subseteq \Gamma_2\).
    % Since $\Gamma_1$ and $\Gamma_2$ are bounded sets, there exists a point $y$ in the plane such that $y \notin \Gamma_1$ and $y \notin \Gamma_2$.
    % Suppose to the contrary that $\partial \Gamma_1 \subseteq \Gamma_2$ and there exists $x \in \Int \Gamma_1 \setminus \Gamma_2$.
    % The exterior of $\Gamma_2$ is arc-connected.
    % Therefore, there exists an arc $\gamma$ connecting $x$ with $y$.
    % However, since $x \in \Int \Gamma_1$ and $y \notin \Gamma_1$, $\gamma$ must intersect $\gamma_1 = \partial \Gamma_1$, and so, $\gamma$ must intersect $\Gamma_2$, which is a contradiction.
\end{proof}
 %\michal{As a rule, I prefer direct proofs than by contradiction. We could go with something like: ``Since $\partial \Gamma_1 \subset \Gamma_2$, the exterior of \(\Gamma_2\) is disjoint from \(\partial \Gamma_1\). Since the exterior of \(\Gamma_2\) is connected, it must be contained in some component of \(\mathbb{R}^2 \setminus \partial\Gamma_1\). Since the exterior of \(\Gamma_2\) is unbounded, it must be contained in the exterior od \(\Gamma_1\). Thus, \(\Gamma_1 \subseteq \Gamma_2\).}
%\jedrzej{Ok, implemented.}

A \emph{drawing} of a graph $G$ is a function $f$ assigning a subset of the plane to each vertex and each edge of $G$ such that $f(u)$ is a point in the plane for every vertex $u$ in $G$ and no two vertices are assigned to the same point; for every edge $uv$ in $G$, $f(uv)$ is a curve in the plane connecting $f(u)$ and $f(v)$ disjoint from $f(w)$ for every vertex $w$ of $G$ distinct from $u$ and $v$.
 %\michal{For the sake of precision, we can define what ``connecting'' means.} 
 %\piotr{OK. Done.}
We say that a drawing $f$ of a graph $G$ is \emph{planar} if the interiors of the images of edges of $G$ under $f$ are pairwise disjoint.
A graph \(G\) is \emph{planar} if it admits a planar drawing.
% A \emph{planar drawing} $\calD$ of a graph $G$ is a function assigning a subset of the plane to vertices and edges of $G$ such that $\calD(u)$ is a point in the plane for every vertex $u$ in $G$ and no two vertices are assigned to the same point; for every edge $uv$ in $G$, $\calD(uv)$ is a curve in the plane connecting $\calD(u)$ and $\calD(v)$ so that the interior of $\calD(uv)$ is disjoint from the points assigned to vertices and curves assigned to other edges by $\calD$.
A \emph{plane graph} is a planar graph with a fixed planar drawing.
In plane graphs, we usually identify vertices with the assigned points in the plane and edges with the assigned curves in the plane.
More generally, we identify subgraphs of plane graphs with unions of respective vertices and edges in the plane.
For example, every path in a plane graph is a simple curve, and every cycle in a plane graph is a simple closed curve.
% When the drawing is clear from the context, we often implicitly identify the vertices of $G$ with the assigned points in the plane and the edges of $G$ with the assigned curves in the plane.
% Sometimes, naturally abusing the notation, by the drawing of $G$, we really mean the image of $V(G)\cup E(G)$ under this drawing.
% i.e.\ a drawing on the plane,
% where each vertex \(u\) of \(G\) is represented by a distinct point \(p(u)\) on the plane,
% and each edge \(uv\) of \(G\) is represented by a curve%
% \footnote{For two points \(x, y \in \mathbb{R}^2\), a \emph{curve connecting \(x\) and \(y\)} is the image \(\gamma\)
% of a continuous injective function \(f \colon [0, 1] \to \mathbb{R}^2\) with
% \(f(0) = x\) and \(f(1) = y\).
% The \emph{interior} of \(\gamma\) is the set \(\gamma \setminus \{x, y\}\).}
% \(\gamma(uv)\) connecting \(p(u)\) and \(p(v)\), so that
% the interior of \(\gamma(uv)\) is disjoint from the points representing the vertices and the curves representing other edges.
% We often implicitly identify the vertices of $G$ with the corresponding points in the drawing and the edges of $G$ with the corresponding curves in the drawing.
% Moreover, when a drawing of $G$ is fixed, then by the \emph{drawing of $G$} we mean the union of all points and curves on the plane corresponding to vertices and edges of $G$.
The complement of a plane graph $G$ in the plane is a union of arc-connected components.
The topological closure of such a component is a
\emph{face} of $G$.
%\jedrzej{I don't think that we ever use boundary.}
%and the topological boundary of the region is called the \emph{boundary} of that face.
%The \emph{boundary} of such a region is the union of all edges contained in the closure of the region.
%Each region with its boundary is called a \emph{face of $G$}.
The only unbounded face is called the \emph{exterior face} of $G$.

\subsection{Ordering edges in a plane graph} \label{ssec:ordering-edges}
% \michal{this section doesn't seem to fit here. it concerns graphs and topology,
% but is sandwiched between two sections about posets. Not sure what should be the right order though.}
% \jedrzej{Any suggestions where to put it? It has to be after topology and orders and posets. It concerns graphs only so it's strange to put it after unfolding. We believe that Section 3 is too far because this is abstract framework working for any plane graphs.}
% \michal{Hmmm. How about 1. Graphs, 2. Orders and posets, 3. Dimension, 4. Topology and planarity, 5. Ordering edges, 6. Unfolding?}
% \jedrzej{Ok}

Let $G$ be a plane graph. 
Let $u$ be a vertex of $G$.
There is a natural clockwise cyclic ordering of 
the edges of $G$ incident to $u$.
When $e_0,e_1,\dots,e_\ell$ are edges (not necessarily distinct) 
incident to $u$, we will write \emph{$e_0\preccurlyeq e_1\preccurlyeq \dots \preccurlyeq e_\ell$ in the $u$-ordering}
if starting with $e_0$ and proceeding in the clockwise manner around $u$, stopping at $e_\ell$ (i.e.\ at the first occurrence of $e_{\ell}$ or at the first return to $e_0$ when $e_\ell = e_0$), we have visited the edges $e_1,\dots,e_{\ell-1}$ in this order.
We can replace some of the $\preccurlyeq$ symbols with~$\prec$ when the corresponding edges are distinct.

% A curve $e$ in the plane is a \emph{half-edge incident to $u$} if either $e$ corresponds to an edge in $G$ incident to $u$ or the only common point of $e$ and the drawing of $G$ is $u$.

Sometimes, it is handy to cut the $u$-ordering.
That is, for a given edge $e_0$ incident to $u$, there is a clockwise linear order on the edges incident to $u$ with $e_0$ the least element.
When $e$ and $e'$ are edges incident to $u$ 
% \michal{to get a linear order, the edges should be distinct from \(e_0\)}
% \jedrzej{Yes, fixed.}
, we say that \emph{$e \preccurlyeq e'$ in the $(u,e_0)$-ordering} if either $e_0 = e$ or $e_0\prec e\preccurlyeq e' \prec e_0$ in the $u$-ordering.
We can replace $\preccurlyeq$ with $\prec$ when the edges $e,e'$ are distinct.
See an example on the left side of \cref{fig:ue-orderings}.

\begin{figure}[tp]
     \centering
     \begin{subfigure}[t]{0.49\textwidth}
         \centering
         \includegraphics{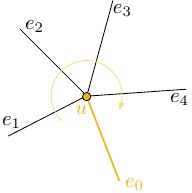}
     \end{subfigure}
     \begin{subfigure}[t]{0.49\textwidth}
         \centering
         \includegraphics{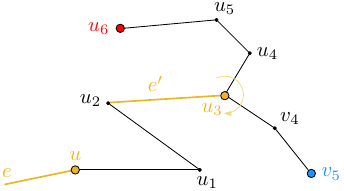}
     \end{subfigure}
        \caption{
            Left: We have $e_0 \prec e_1 \prec e_2 \prec e_3 \prec e_4$ in the $u$-ordering as well as in the $(u,e_0)$-ordering.
            On the other hand, $e_4 \prec e_0 \prec e_1$ in the $u$-ordering, which is not the case in the $(u,e_0)$-ordering.
            Right: Let $U=uu_1u_2u_3u_4u_5u_6$ and let $V = uu_1u_2u_3v_4v_5$. 
            Both paths start in $u$, and neither is a prefix of the other.
            We have, $u[U]u_3 = u[V]u_3$.
            Let $e' = u_2u_3$.
            Then, $u_3u_4 \prec u_3v_4$ in the $(u_3,e')$-ordering, hence, $U \prec V$ in the $(u,e)$-ordering.
            Moreover, the sets $\{u_4,u_5,u_6\}$ and $\{v_4,v_5\}$ are disjoint, thus, $U$ and $V$ are $u$-consistent.
            % \heather{Since labels $u_0$, $u_1$, $u_2$, $u_5$, $v_0$, etc are not in the picture, we could just say $U$ is the path from $u$ to $u_6$ and $V$ is the path from $u$ to $v_5$.}
            % \jedrzej{But then we use some of the labels. Perhaps, we should add them to the figure, I'll try something.}
            % \jedrzej{I tried something.}
            % \piotr{It's OK for me.}
        }
        \label{fig:ue-orderings}
\end{figure}

Let $u$ be a vertex of $G$, and let $e$ be 
an edge % a half-edge 
incident to $u$.
The $(u,e)$-ordering can be extended to the family $\calP$ of paths in $G$ starting in $u$ %\michal{and not containing \(e\)} \jedrzej{Maybe, to be discussed further.} 
%\piotr{We believe that adding 'and not containing \(e\)' is unecessary and would have to be added in a number of places in paragraphs below. So maybe we don't do it.} 
in the following way.
Let $U,V \in \calP$ with $U=u_0 \cdots u_\ell$, $V = v_0\cdots v_{m}$.
Let $i$ be the maximum nonnegative integer such that $u_0[U]u_i = v_0[V]v_i$, and let $e_0 = e$ if $i = 0$ and $e_0 = u_{i-1}u_i$ otherwise.
We say that \emph{$U \preccurlyeq V$ in the $(u,e)$-ordering} if 
\[\text{$U = V$ \ \ or \ \ $i < \ell$, $i < m$, and $u_iu_{i+1} \prec u_iv_{i+1}$ in the $(u_i,e_0)$-ordering.}\]
In the latter case, we write $U \prec V$ in the $(u,e)$-ordering.
Note that if one of $U$ and $V$ is a proper prefix of the other, then neither of $U \prec V$ and $V \prec U$ holds in the $(u,e)$-ordering.
Moreover, if $\{u_{i+1},\dots,u_\ell\}$ is disjoint from $\{v_{i+1},\dots,v_m\}$, then we say that $U$ and $V$ are \emph{$u$-consistent}. 
See an illustration of these notions on the right side of \cref{fig:ue-orderings}.

\begin{proposition}\label{prop:sandwiched-paths-basic}
    Let $G$ be a plane graph, let $u$ be a vertex of $G$, let $e$ be an edge incident to~$u$.
    Let $U,V,W$ be paths in $G$ starting in $u$ such that $U \prec V$ and  $V \prec W$ in the $(u,e)$-ordering.
    If $v$ is a common vertex of $U$ and $W$ with $u[U]v = u[W]v$, then $v$ is a vertex of $V$ and $u[U]v = u[V]v =u[W]v$.
\end{proposition}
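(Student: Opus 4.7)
The plan is to identify the common prefix of all three paths and show that $U$ and $W$ split precisely at its end. Write $U = u_0 \cdots u_\ell$, $V = v_0 \cdots v_m$, $W = w_0 \cdots w_n$ with $u_0 = v_0 = w_0 = u$, and let $i_{UV}$, $i_{VW}$, $i_{UW}$ denote the split indices (the maximum $i$ with $u_0[U]u_i$ equal to the corresponding prefix of the other path). Since $U \prec V$ and $V \prec W$ force that no path is a prefix of another, each split index is strictly less than the relevant path length, and the corresponding \emph{splitting edges} at the split vertex are well-defined. Set $j = \min(i_{UV}, i_{VW})$; the first $j+1$ vertices of $U$, $V$, $W$ all agree, producing a common prefix $Q = u_0 \cdots u_j$.

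The key step is to show that $i_{UW} = j$, i.e.\ that $u_{j+1} \ne w_{j+1}$. Let $e_0 = e$ if $j = 0$ and $e_0 = u_{j-1} u_j$ otherwise. I distinguish three cases. First, if $j = i_{UV} = i_{VW}$, then the definitions of $U \prec V$ and $V \prec W$ give $u_j u_{j+1} \prec u_j v_{j+1}$ and $u_j v_{j+1} \prec u_j w_{j+1}$ in the $(u_j, e_0)$-ordering; since this is a linear order on the edges incident to $u_j$, transitivity yields $u_j u_{j+1} \prec u_j w_{j+1}$, and in particular these edges are distinct. Second, if $j = i_{UV} < i_{VW}$, then $V$ and $W$ agree at position $j+1$, so $w_{j+1} = v_{j+1} \ne u_{j+1}$. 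Third, if $j = i_{VW} < i_{UV}$, symmetrically $u_{j+1} = v_{j+1} \ne w_{j+1}$. In all three cases $u_{j+1} \ne w_{j+1}$, so $i_{UW} \le j$; combined with the obvious $i_{UW} \ge j$ (both paths contain the prefix $Q$), this gives $i_{UW} = j$.

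To conclude, suppose $v$ is a common vertex of $U$ and $W$ with $u[U]v = u[W]v$. Then $v$ appears as the $k$-th vertex on each path for some common $k$, and the matching prefix has length $k+1$; by the definition of $i_{UW}$, this forces $k \le i_{UW} = j$. Hence $v \in \{u_0, \ldots, u_j\}$, which lies in the common prefix $Q$ of all three paths, so $v$ is a vertex of $V$ and $u[U]v = u[V]v = u[W]v$, as required.

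The main obstacle is keeping the case analysis airtight: correctly using the fact that the local $(u_j, e_0)$-ordering is a linear order on the edges at $u_j$ (so that transitivity applies in the three-edge case), and correctly tracking which two of $u_{j+1}, v_{j+1}, w_{j+1}$ coincide when only one of the pairs $(U,V)$ or $(V,W)$ splits at $u_j$. No genuine topology enters beyond the well-definedness of the $(u_j, e_0)$-ordering supplied by the earlier subsection.
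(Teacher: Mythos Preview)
Your proof is correct and follows essentially the same idea as the paper's: both pin down where $U$ and $W$ split relative to where $U,V$ and $V,W$ split, using the linearity of the $(u_j,e_0)$-ordering on edges at the split vertex. The paper argues by contradiction (fixing $j=i_{UV}$ and showing $j<i$ forces $u_ju_{j+1}\prec u_jv_{j+1}\prec u_ju_{j+1}$), while you compute $i_{UW}=\min(i_{UV},i_{VW})$ directly via a clean three-case analysis; one small imprecision is the phrase ``no path is a prefix of another,'' which is not a priori known for the pair $(U,W)$, but your argument never actually uses it for that pair.
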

\begin{proof}
    Let $U = u_0\cdots u_\ell$, $V = v_0 \cdots v_m$, and let $i$ be such that $u_i = v$.
    Let $j$ be the maximal nonnegative integer such that $u[U]u_j = u[V]u_j$.
    If $i \leq j$, the assertion holds.  
    Thus, suppose that $j < i$.
    In this case $u_ju_{j+1}$ lies in both $U$ and $W$. 
    Let $e_0 = e$ if $j = 0$ and $e_0 = u_{j-1}u_j$ otherwise.
    Since $U \prec V$ in the $(u,e)$-ordering, $j<m$ and 
    $u_ju_{j+1} \prec u_jv_{j+1}$ in the $(u_j,e_0)$-ordering.
    Since $V \prec W$ in the $(u,e)$-ordering, $u_jv_{j+1} \prec  u_ju_{j+1} $ in the $(u_j,e_0)$-ordering. 
    This contradiction completes the proof.
\end{proof}

\begin{proposition}\label{prop:u_e_is_poset}
    Let $G$ be a plane graph, let $u$ be a vertex of $G$, let $e$ be 
    an edge % a half-edge 
    incident to~$u$, and let $\calP$ be a family of paths in $G$ starting in $u$.
    Then, the $(u,e)$-ordering partially orders $\calP$.
\end{proposition}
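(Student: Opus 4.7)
The plan is to verify each of the three axioms of a partial order in turn. Reflexivity is immediate from the definition, since the clause ``$U=V$'' is built into the base case of $U \preccurlyeq V$. So the real work lies in antisymmetry and transitivity, and I expect transitivity to be the main obstacle because one must reconcile three different longest common prefixes simultaneously.

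For antisymmetry, suppose $U \preccurlyeq V$ and $V \preccurlyeq U$ but $U \neq V$. Then $U \prec V$ and $V \prec U$. Let $i$ be the maximum nonnegative integer with $u_0[U]u_i = v_0[V]v_i$ (this $i$ is symmetric in $U$ and $V$), and let $e_0$ be defined as in the definition of the $(u,e)$-ordering. The condition $U \prec V$ yields $u_i u_{i+1} \prec u_i v_{i+1}$ in the $(u_i,e_0)$-ordering, while $V \prec U$ yields $u_i v_{i+1} \prec u_i u_{i+1}$ in the same $(u_i,e_0)$-ordering. This contradicts the fact that $(u_i,e_0)$ linearly orders the edges of $G$ incident to $u_i$.

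For transitivity, assume $U \preccurlyeq V \preccurlyeq W$; the nontrivial situation is $U \prec V$ and $V \prec W$. Write $U = u_0 \cdots u_\ell$, $V = v_0 \cdots v_m$, $W = w_0 \cdots w_n$, and let $j,k,h$ be the maximal indices with $u_0[U]u_j = u_0[V]u_j$, $v_0[V]v_k = v_0[W]v_k$, and $u_0[U]u_h = u_0[W]u_h$ respectively. The key step is to show that $h = \min(j,k)$ and that the ``anchor edge'' $e_0$ used when comparing $U$ with $W$ agrees with the one used in the smaller of the two given comparisons. I would split into three cases. If $j = k$, both witnessing inequalities live in the same $(u_j,e_0)$-ordering, so transitivity of that linear order gives $u_j u_{j+1} \prec u_j w_{j+1}$, and then $u_{j+1} \neq w_{j+1}$ forces $h = j$. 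If $j < k$, then $v_{j+1} = w_{j+1}$, so $u_j u_{j+1} \prec u_j v_{j+1} = u_j w_{j+1}$ directly, and again $h = j$. If $j > k$, then symmetrically $u_{k+1} = v_{k+1}$, so $u_k u_{k+1} = v_k v_{k+1} \prec v_k w_{k+1} = u_k w_{k+1}$, and $h = k$. In every case we conclude $U \prec W$.

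As an alternative, one can streamline the case analysis by invoking Proposition~\ref{prop:sandwiched-paths-basic}: once $U \prec V$ and $V \prec W$ are assumed, that proposition tells us that the common vertex $u_h = w_h$ must also occur in $V$ with the same prefix, which pins down $e_0$ for the $(u,e)$-ordering comparison of $U$ and $W$ and immediately identifies it with either the $UV$ or the $VW$ anchor edge. Either route reduces transitivity to transitivity of a single $(u_i,e_0)$-ordering of incident edges, which is a linear order. The main obstacle I anticipate is simply keeping the three longest-common-prefix indices $j,k,h$ coordinated; once that bookkeeping is done, each case is essentially one line.
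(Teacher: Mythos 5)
Your proof is correct. The only substantive difference from the paper is in how transitivity is organized: the paper fixes the divergence index $k$ of $U$ and $W$, invokes Proposition~\ref{prop:sandwiched-paths-basic} to conclude that $u_k$ lies on $V$ with the same prefix, and then reads off $u_ku_{k+1}\preccurlyeq v_kv_{k+1}\preccurlyeq w_kw_{k+1}$ in a single $(u_k,e_0)$-ordering; you instead work with the two divergence indices $j$ (of $U,V$) and $k$ (of $V,W$) and run the three cases $j=k$, $j<k$, $j>k$, establishing along the way that the $U$--$W$ divergence index is $\min(j,k)$ with the same anchor edge. These are really two presentations of the same reduction to transitivity of one linear edge ordering around a vertex — indeed your ``alternative'' via the sandwiched-paths proposition is exactly the paper's argument — and your elementary case analysis has the mild advantage of not needing that proposition, at the cost of the prefix bookkeeping. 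Two small points worth making explicit if you write this up: in the case $j=k$, strictness of $u_ju_{j+1}\prec u_jw_{j+1}$ (hence $u_{j+1}\neq w_{j+1}$) needs the one-line observation that equality of these edges would contradict antisymmetry of the edge ordering at $u_j$; and in the cases $j\neq k$ one should note the index bounds $\min(j,k)<\ell$ and $\min(j,k)<n$, which follow immediately from the definitions of $U\prec V$ and $V\prec W$, so that the comparison $U\prec W$ is well formed. Your antisymmetry argument, which the paper dismisses as clear, is also fine.
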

\begin{proof}
    Let $U,V,W \in \calP$.
    The $(u,e)$-ordering is clearly reflexive and antisymmetric on $\calP$.
    It suffices to show that it is transitive, namely, if $U \prec V$ and $V \prec W$ in the $(u,e)$-ordering, then  $U \prec W$ in the $(u,e)$-ordering.
    Let $U = u_0\cdots u_\ell, V = v_0\cdots v_m, W = w_0\cdots w_n$, and let $k$ be the maximum nonnegative integer such that $u_0[U]u_k = w_0[W]w_k$.
    By~\Cref{prop:sandwiched-paths-basic}, $u_k$ is a vertex of $V$ and $u_0[U]u_k = v_0[V]v_k = w_0[W]w_k$.
    Since $U \prec V$ and $V \prec W$ in the $(u,e)$-ordering, $k < \ell$, $k < m$, and $k < n$.
    Let $e_0 = e$ if $k = 0$ and $e_0 = u_{k-1}u_k$ otherwise.
    We have $u_ku_{k+1} \preccurlyeq v_k v_{k+1}$ and $v_kv_{k+1} \preccurlyeq w_kw_{k+1}$ in the $(u_k,e_0)$-ordering.
    Therefore, $u_ku_{k+1} \preccurlyeq w_kw_{k+1}$ in the $(u_k,e_0)$-ordering, and so, by the definition of $k$, $u_ku_{k+1} \prec w_kw_{k+1}$ in the $(u_k,e_0)$-ordering.
    This yields $U \prec W$ in the $(u,e)$-ordering as desired.    
    % let $i,j$ be maximum nonnegative integers such that $u_0[U]u_i = v_0[V]v_i$ and $v_0[V]v_j = w_0[W]w_j$, and for each $k \in \{i,j\}$, let $e_k = e$ if $k = 0$ and $e_k = v_{k-1}v_k$ otherwise.
    % Note that $a = \min\{i,j\}$ is the maximum nonnegative integer such that $u_0[U]u_a = w_0[W]w_a$. 
    % Let $v = u_a = v_a = w_a$.
    % First, suppose that $a = i = j$.
    % Then, since $vu_{a+1} \prec vv_{a+1}$ in the $(v,e_a)$-ordering and since $vv_{a+1} \prec vw_{a+1}$ in the $(v,e_a)$-ordering, we have that $vu_{a+1} \prec vw_{a+1}$ in the $(v,e_a)$-ordering, and therefore, $U \prec W$ in the $(u,e)$-ordering.
    % Next, assume that $i \neq j$.
    % If $a = i < j$, then since $vu_{a+1} \prec vv_{a+1}$ in the $(v,e_a)$-ordering and $vv_{a+1}=vw_{a+1}$, we have that $vu_{a+1} \prec vw_{a+1}$ in the $(v,e_a)$-ordering, and therefore, $U \prec W$ in the $(u,e)$-ordering.
    % If $a = j < i$, then since $vu_{a+1}=vv_{a+1}$ and $vv_{a+1} \prec vw_{a+1}$ in the $(v,e_a)$-ordering, we have that $vu_{a+1} \prec vw_{a+1}$ in the $(v,e_a)$-ordering, and therefore, $U \prec W$ in the $(u,e)$-ordering.
\end{proof}
% \michal{The above proposition implicitly proves that if \(U \prec V\) and \(V \prec W\), then \(V\) contains the longest common initial segment of \(U\) and \(W\) (we reprove this again later anyway). We can optimize the proof by proving this property first, and then use it for a shorter proof of transitivity.
% }
% \piotr{Good point. We are implementing it.}
% \jedrzej{Done}

% \begin{proposition}
%     Let $P$ be a poset, let $u$ be an element of~$P$, let $e$ be a half-edge incident to $u$, and let $U=(u_0,\ldots,u_\ell)$, $V = (v_0,\dots,v_{m}), W = (w_0,\dots,w_{n})$ be three pairwise consistent paths in~$P$ with $u_0 = v_0 = w_0 = u$. 
%     If $U$ is $(u,e)$-left of $V$ and $V$ is $(u,e)$-left of $W$, then $U$ is $(u,e)$-left of $W$.
% \end{proposition}

For emphasis, we state the following straightforward observation.

\begin{obs} \label{obs:ue_ordering_is_usually_linear}
    Let $G$ be a plane graph, let $u$ be a vertex of $G$, and let $e$ be 
    an edge % a half-edge 
    incident to $u$. 
    Every pair of paths in $G$ starting in $u$ is comparable in the $(u,e)$-ordering unless one is a subpath of the other.
    In particular, if $\calP$ is a family of paths in $G$ starting in $u$ such that no path is a subpath of the other, then the $(u,e)$-ordering linearly orders $\calP$.
\end{obs}
%\michal{I think it is worth noting the fun fact that the \((u, e)\)-ordering restricted to \(\calP\) has dimension at most two because the complement of its comparability graph is a comparability graph ;)}

\subsection{Unfolding}
\label{ssec:unfolding}

Given a poset \(Q\) and an element \(x\) of \(Q\), we say that a set \(J \subseteq \Inc(Q)\) is \emph{singly constrained} by \(x\) in $Q$ if for every \((a, b) \in J\) we have \(x \le b\) in \(Q\).
In 2014, Streib and Trotter \cite{ST14} introduced a concept that is now known as ``poset unfolding'', 
which allows reducing the problem of bounding the dimension of a poset to bounding the dimension
of a singly constrained set of pairs in a poset whose cover graph is a minor of the cover graph of the original poset. 
In particular, they showed that for every poset \(P\) with a planar cover graph, there exist a poset \(Q\) with a planar cover graph, an element \(x\) in $Q$ such that \(Q - x\) is a subposet of \(P\) or \(P^{-1}\), and a set \(J \subseteq \Inc(Q)\) that is singly constrained by $x$ in $Q$ with 
    \[\dim(P) \le 2\dim_Q(J).\]
In this subsection, we introduce all the necessary tools, and we reprove (with a slightly adjusted setup) the reduction of Streib and Trotter.

%\jedrzej{We decided to omit this definition.}
% Given a graph \(G\) and \(u,v \in V(G)\), the \emph{distance} between $u$ and $v$ is the length
% of a shortest path in \(G\) with endpoints \(u\)
% and \(v\) if such path exists, and \(+\infty\) otherwise.
% Let $G$ be a connected graph and $v$ be a vertex of $G$.
% The \emph{\bfs-layering} of $G$ from $v$ is the sequence
% \((Z_i : i \in \NN)\), where \(Z_i\)
% is the set of all vertices at distance \(i\) from \(v\) in \(G\) for every nonnegative integer $i$.

Let $P$ be a poset. 
An \emph{upset} in \(P\) is a subset \(U\) of elements of \(P\) such that for
all elements \(x, y\) with \(x \le y\) in \(P\), if \(x \in U\), then \(y \in U\).
For each element \(x\) of \(P\), we denote by \(U_P[x]\) the upset in \(P\) consisting
of all elements \(y\) such that \(x \le y\) in \(P\).
Dually, a \emph{downset} in \(P\) is a subset \(D\) of elements of \(P\) such that for
all elements \(x, y\) with \(x \le y\) in \(P\), if \(y \in D\), then \(x \in D\).
For each element \(y\) of \(P\), we denote by \(D_P[y]\) the downset in \(P\) consisting
of all elements \(x\) such that \(x \le y\) in \(P\).
For a set of elements \(Z\) in \(P\), we denote by \(D_P[Z]\) and \(U_P[Z]\) the
unions \(\bigcup_{z \in Z} D_P[z]\) and \(\bigcup_{z \in Z} U_P[z]\), respectively.
Note that every upset and downset in a poset induces a convex subposet.

An \emph{unfolding} of a poset \(P\) is a family \((Z_i : i \in \NN)\) of pairwise disjoint subsets of the ground set of \(P\) such that for all nonnegative integers $i,j$ and elements \(x \in Z_i, y \in Z_j\) with \(x \le y\) in \(P\), either
\(i = j\), or \(|i-j| = 1\) and \(i\) is even (and thus \(j\) is odd).
For convenience, for every nonnegative integer $k$, we write $Z_{\geq k} = \bigcup_{i \geq k} Z_i$ and $Z_{< k} = \bigcup_{0 \leq i < k} Z_i$ where the unions go over integers.
Note that for each \(k\), the set \(Z_k\) is a downset if \(k\) is even, and
an upset if \(k\) is odd. 
Moreover,
$Z_{\geq k}$ and $Z_{< k+1}$ are downsets in~$P$ whenever $k$ is even, and they are upsets in~$P$ whenever $k$ is odd.
In particular, $Z_{\geq k}$ and $Z_{< k+1}$ induce convex subposets of~$P$.
% It follows, that for every nonnegative integer \(k\), the set
% \(Z_0 \cup \cdots \cup Z_k\) is a downset in \(P\) if \(k\) is even,
% and an upset in \(P\) if \(k\) is odd. 
% Furthermore,
% for every nonnegative integer \(k\), the set \(Z_k\) itself is a downset in \(P\)
% if \(k\) is even, and a downset in \(P\) if \(k\) is odd.
% Observe that for all nonnegative integers $i,j$ and elements \(a \in Z_i,b \in Z_j\) with
% \(a \le b\) in \(P\), we have \(|i-j| \le 1\). 
% \piotr{Do we need that every interval $(Z_i,\ldots,Z_j)$ is convex in~$P$?}
% \michal{only \(Z_{\le j}\) and \(Z_{\ge j}\).}

An unfolding of a poset is somewhat a parallel of a layering of a graph.\footnote{A \emph{layering} of a graph $G$ is a family $(Z_i : i\in \NN)$ of pairwise disjoint subsets of $V(G)$ such that for every edge $uv$ of $G$, there exists $i \in \NN$ such that $\{u,v\} \subset Z_i \cup Z_{i+1}$.
Given a connected graph \(G\) and \(u,v \in V(G)\), the \emph{distance} between $u$ and $v$ is the minimum number of edges in a path in \(G\) with endpoints \(u\)
and \(v\).
Now, given a connected graph $G$ and a vertex $v$ of $G$, the \bfs\emph{-layering} of $G$ from $v$ is the sequence
\((Z_i : i \in \NN)\), where \(Z_i\)
is the set of all vertices at distance \(i\) from \(v\) in \(G\) for every nonnegative integer $i$.
} 
Next, we discuss a parallel of \textsc{bfs}-layerings for posets.
Let \(P\) be a connected poset, and let \(z_0\) be a minimal
element of \(P\). 
Let \((Z_i : i\in \NN)\) be the \textsc{bfs}-layering of the comparability 
graph of \(P\) from \(z_0\), that is, \(Z_0 = \{z_0\}\), and for each positive integer $k$,
% \jedrzej{The display below is now only illustrative. We should consider removing it.}
% \michal{I would move it before the sentece `Clearly...', and replace the word `clearly' with `Thus'. The word `clearly' should be avoided in mathematical writing.}
% \jedrzej{Ok}
\[
Z_k =
\begin{cases}
  U_P[Z_{k-1}] \setminus (Z_0 \cup \cdots \cup Z_{k-1})&\textrm{if \(k\) is odd,}\\
  D_P[Z_{k-1}] \setminus (Z_0 \cup \cdots \cup Z_{k-1})&\textrm{if \(k\) is even.}  
\end{cases}
\]
Hence, \((Z_i : i\in \NN)\) is an unfolding of~$P$, and we call it the unfolding of \(P\) \emph{from} \(z_0\).
See an example of an unfolding from a minimal element in \cref{fig:unfolding}.

\begin{figure}[tp]
    \centering 
    \includegraphics[scale=1]{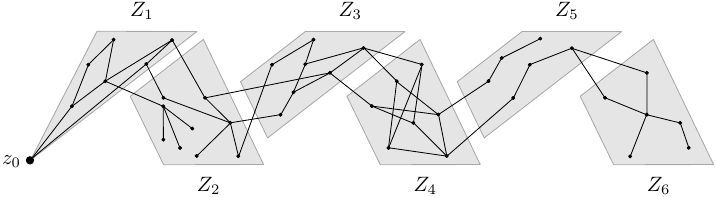} 
    \caption{The unfolding of a poset from $z_0$.
    The drawing is a diagram, that is, for two elements connected by an edge, the lower is less than the higher in the poset.
    In all the following figures, when a segment has no direction and it is not stated otherwise, the comparability is diagram-like (go upwards).
    } 
    \label{fig:unfolding} 
\end{figure} 

Let \((Z_i : i \in \NN)\) be an unfolding of~$P$.
A set \(I \subseteq \Inc(P)\) is
\emph{supported} by
\((Z_i : i \in \NN)\) if there exists a nonnegative integer \(k\) such that 
\[
    \bigcup_{(a,b) \in I}\set{a,b} \subseteq Z_{\geq k}\ \ \ \textrm{and}\ \ \ \begin{cases}\set{b : (a,b) \in I} \subseteq Z_k & \ \ \text{if} \ k \ \text{is odd},\\
        \set{a : (a,b) \in I} \subseteq Z_k & \ \ \text{if} \ k \ \text{is even}.\end{cases}
\]
% \begin{itemize}
%     \item \(k\) is odd and for each
%     \((a, b) \in I\) we have \(b \in Z_k\)
%     and \(a \in \bigcup_{i\ge k}Z_i\), or
%     \item \(k\) is even, and for each
%     \((a, b) \in I\) we have \(a \in Z_k\)
%     and \(b \in \bigcup_{i\ge k} Z_i\).
% \end{itemize}
If $k$ is odd, then we say that \(I\) is \emph{supported
from below} by $(Z_i : i \in \NN)$, if $k$ is even, then we say that \(I\) is \emph{supported
from above} by $(Z_i : i \in \NN)$.
%Note that sometimes we do not specify whether the set is supported from below or from above.
In both cases we say that \(k\) \emph{witnesses} that \(I\) is supported by $(Z_i : i \in \NN)$.
See the set $J'$ in~\Cref{fig:unfolding-contracting}, which is supported from below, witnessed by $k = 3$.
%\tom{Say that this is technical but this is because we want to capture simultaneously one-cell and two-cells problems and cite \cite{BMT22}. It seems like something is lacking about the motivation.}
%\jedrzej{We decided that this remark would be too technical.}

% \begin{figure}[!htbp]
%     \centering 
%     \includegraphics[scale=1]{figs/supported.pdf} 
%     \caption{The set of incomparable pairs $J$ is denoted by yellow lines, the second element in each pair is blue and the first element is red, note that some elements are red and blue simultaneously.
%     The set $J$ is supported from below by $(Z_0,Z_1,\dots)$.
%     Indeed, all red and blue elements are in $Z_{\geq 3}$, and all the blue elements are in $Z_3$.
%     \michal{Do we need this figure? I think that the definition of a set supported by an unfolding is simple and clear.}
%     \jedrzej{It underlines a few subtleties that one may miss. E.g. that red elements can be far away (for people who know unfolding very well or that an element can be red and blue at the same time.}
%     \michal{This can be achieved by writing something like ``For example, the set \(J\) of all pairs \((a, b) \in \Inc(P)\) with \(b \in Z_3\) and \(a \in Z_{\ge 3}\) is supported by the unfolding. Note that an element in \(Z_3\) may play the role of \(a\) and \(b\) in distinct pairs \((a, b) \in J\).''}
%     } 
%     \label{fig:unfolding_supporting} 
% \end{figure} 

%The notion of an unfolding mimics \textsc{bfs}-layerings of graphs. 
It is well-known that
given a graph $G$ and a
layering of $G$, 
if every layer induces a $k$-colorable graph, 
then $G$ is \(2k\)-colorable (one can use two disjoint
palettes of \(k\) colors each, one for even layers, and one for odd layers).
The counterpart of the above for posets is formulated in terms of the unfolding.
Given a connected poset \(P\) and an unfolding of \(P\),
if the union of any two consecutive sets in the unfolding induces a subposet of dimension at most \(d\),
then the dimension of \(P\) is at most \(2d\).
We adapt the proof of this fact to reduce the problem of bounding \(\dim_P(I)\) for any \(I \subseteq \Inc(P)\)
to bounding \(\dim_P(I')\) for some \(I' \subseteq I\) which is supported by the unfolding of \(P\).

\begin{proposition}\label{prop:unfolding}
    Let \(P\) be a poset, 
    and let \((Z_i : i \in \NN)\) be an
    unfolding of \(P\).
    Then every set \(I \subseteq \Inc(P)\) 
    contains subsets \(I_0,I_1 \subseteq I\), where
    $I_0$ is supported from above by \((Z_i : i \in \NN)\) in~$P$,  
    $I_1$ is supported from below by \((Z_i : i \in \NN)\) in~$P$, and
    % Then for every set \(I \subseteq \Inc(P)\) there exist a nonnegative integer \(k\)
    % and a subset \(I' \subseteq I \cap \Inc_P(Z_k \cup Z_{k+1})\) such that \(k\)
    % witnesses that \(I'\) is supported by \((Z_0, Z_1, \ldots, )\), and
    \[
      \dim_P(I) \le \dim_P(I_0)+\dim_P(I_1).
    \]
\end{proposition}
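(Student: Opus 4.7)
The plan is to partition \(I\) into two explicit pieces \(I_A, I_B\) whose dimensions are each pinned down by a single ``supported'' slice of \(I\) via \cref{prop:dim_of_sum_incomparable_sets}, pick \(I_0, I_1\) to be those slices, and then use subadditivity of dimension.

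For every even integer \(k \ge 0\) set \(I^+_k := \set{(a,b) \in I : a \in Z_k,\ b \in Z_{\ge k}}\), and for every odd integer \(k \ge 0\) set \(I^-_k := \set{(a,b) \in I : b \in Z_k,\ a \in Z_{\ge k}}\); these are supported from above and from below, respectively, with witness \(k\). Pick an even \(k_0\) and an odd \(k_1\) maximizing \(\dim_P(I^+_{k_0})\) and \(\dim_P(I^-_{k_1})\), set \(I_0 := I^+_{k_0}\), \(I_1 := I^-_{k_1}\), and write \(d_0 := \dim_P(I_0)\), \(d_1 := \dim_P(I_1)\); it suffices to prove \(\dim_P(I) \le d_0+d_1\). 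After reducing to a single component using \cref{prop:dim-components} and working with the \textsc{bfs}-unfolding from a minimal element, we may assume every element appearing in a pair of \(I\) lies in some \(Z_i\). Classify the remaining pairs: let \(I_Q := \set{(a,b) \in I : a \in Z_i,\, b \in Z_j,\, i \text{ odd},\, j>i}\) and \(I_W := \set{(a,b) \in I : a \in Z_i,\, b \in Z_j,\, j \text{ even},\, i>j}\). A parity check shows \(I = \bigsqcup_{k \text{ even}} I^+_k \sqcup \bigsqcup_{k \text{ odd}} I^-_k \sqcup I_Q \sqcup I_W\). Set \(I_A := (\bigcup_{k \text{ even}} I^+_k) \cup I_Q\) and \(I_B := (\bigcup_{k \text{ odd}} I^-_k) \cup I_W\), so \(I = I_A \sqcup I_B\).

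The crucial step is a cyclic monotonicity lemma: every strict alternating cycle contained in \(I_A\) lies inside a single \(I^+_k\). For such a cycle \(((x_1,y_1),\dots,(x_n,y_n))\), set \(L_\ell\) to be the layer of \(x_\ell\); by construction \(L_\ell\) is even exactly when \((x_\ell,y_\ell) \in I^+_{L_\ell}\) and odd exactly when \((x_\ell,y_\ell) \in I_Q\). The unfolding clause applied to \(x_\ell \le y_{\ell+1}\) gives, through a four-case check on the parities of \(L_\ell\) and \(L_{\ell+1}\), the inequality \(L_{\ell+1} \le L_\ell\), strict whenever at least one of the two consecutive pairs comes from \(I_Q\); cyclicity then forces \(L_1 = \cdots = L_n\) with common even value, so the cycle sits inside a single \(I^+_L\). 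A symmetric analysis tracking \(M_\ell := \lambda(y_\ell)\) shows every strict alternating cycle in \(I_B\) lies inside a single \(I^-_k\). Applying \cref{prop:dim_of_sum_incomparable_sets} to the partitions \(\set{I^+_k}_k \cup \set{I_Q}\) of \(I_A\) and \(\set{I^-_k}_k \cup \set{I_W}\) of \(I_B\) yields
\[
\dim_P(I_A) = \max_{k \text{ even}} \dim_P(I^+_k) = d_0 \qquad \text{and} \qquad \dim_P(I_B) = \max_{k \text{ odd}} \dim_P(I^-_k) = d_1,
\]
after which the trivial subadditivity bound \(\dim_P(I) \le \dim_P(I_A) + \dim_P(I_B) = d_0 + d_1\) closes the argument.

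I expect the main obstacle to be the four-case transition analysis in the cyclic monotonicity lemma. The crucial subtlety is that when \(L_\ell\) is odd (an \(I_Q\)-pair) the unfolding's parity clause forces \(\lambda(y_{\ell+1}) = L_\ell\) exactly, and combining this with the structural description of the next pair converts the plain \(L_{\ell+1} \le L_\ell\) into a \emph{strict} inequality. One has to track carefully which of the four transition patterns \(I^+ \to I^+\), \(I^+ \to I_Q\), \(I_Q \to I^+\), \(I_Q \to I_Q\) (and their \(I^-/I_W\) analogues) are strict, in order to rule out cycles that mix \(I^+_k\) with \(I_Q\) or that live entirely in \(I_Q\); the rest of the proof -- the parity classification, the subadditivity estimate, and invoking \cref{prop:dim_of_sum_incomparable_sets} -- is essentially bookkeeping once this lemma is in hand.
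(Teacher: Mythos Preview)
Your approach is essentially the paper's: your partition $I_A, I_B$ coincides with the paper's $I_0', I_1'$ (check the definitions), and your cyclic monotonicity lemma is exactly the paper's argument that every alternating cycle in $I_0'$ (resp.\ $I_1'$) lies in a single slice $I_{0,k}$ (resp.\ $I_{1,k}$). The only cosmetic difference is that you name the leftover sets $I_Q, I_W$ explicitly and rule them out via strict inequalities, whereas the paper argues directly that the common layer index in a cycle must have the correct parity.

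There is, however, one genuine error: the sentence ``After reducing to a single component using \cref{prop:dim-components} and working with the \textsc{bfs}-unfolding from a minimal element, we may assume every element appearing in a pair of $I$ lies in some $Z_i$'' is illegitimate. The proposition is stated for a \emph{given} unfolding $(Z_i)$, and the sets $I_0, I_1$ you output must be supported by that unfolding; you are not permitted to replace it with a different one. Fortunately the step is also unnecessary: your four-case monotonicity check uses only the defining parity clause of an unfolding, so it works for an arbitrary unfolding with no connectivity or \textsc{bfs} assumption. (The paper's proof, like yours, tacitly assumes each element appearing in a pair of $I$ lies in some $Z_i$; this is automatic in the paper's sole application of the proposition.) Simply delete the reduction sentence and the rest of your argument goes through unchanged.
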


\begin{proof}
  Define
  \begin{align*}
      I_0'&=\{(a, b) \in I:\textrm{\(a\in Z_i,b\in Z_j\) with \(i < j\) or (\(i=j\) and \(i\) is even)}\}\\
      I_1'&=\{(a, b) \in I:\textrm{\(a\in Z_i,b\in Z_j\) with \(i > j\) or (\(i=j\) and \(i\) is odd)}\}.
  \end{align*}
    Note that $I=I_0'\cup I_1'$. 
    We will find 
    $I_0\subset I_0'$ supported from above by \((Z_i : i \in \NN)\) in~$P$ and
    $I_1 \subset I_1'$ supported from below by \((Z_i : i \in \NN)\) in~$P$,
    such that 
    $\dim_P(I_0') = \dim_P(I_0)$ and $\dim_P(I_1') = \dim_P(I_1)$. 
    This way, we will obtain
    \[
        \dim_P(I) \le \dim_P(I_0') + \dim_P(I_1') = \dim_P(I_0) + \dim_P(I_1),
    \]
    as desired.

    We start with the construction of $I_1\subset I_1'$.
    For each odd positive integer \(j\), define $I_{1,j} = \set{(a,b) \in I_1' : b \in Z_j}$.
    We show that each alternating cycle in \(P\) that has all pairs in \(I_1'\) is contained
    in \(I_{1,j}\) for some odd positive integer \(j\).
    Let $((a_1,b_1),\dots,(a_k,b_k))$ be an alternating cycle in \(P\) with all pairs in $I_1'$.
    For each $\alpha \in [k]$, let $i_{\alpha}, j_{\alpha} \in \NN$ be so that $a_\alpha \in Z_{i_{\alpha}}$ and $b_\alpha \in Z_{j_\alpha}$.
    We claim that for each $\alpha \in [k]$, we have \(j_\alpha \le j_{\alpha+1}\) cyclically.
    Since \((a_\alpha, b_\alpha) \in I_1'\), either \(i_\alpha > j_\alpha\), or
    \(i_\alpha = j_\alpha\) and \(i_\alpha\) is odd.
    Since \(a_\alpha \le b_{\alpha+1}\) in \(P\), we have \(|i_\alpha - j_{\alpha+1}| \le 1\),
    so if \(i_\alpha > j_\alpha\), then \(j_\alpha \le j_{\alpha+1}\).
    On the other hand, if \(i_\alpha = j_\alpha\) and \(i_\alpha\) is odd,
    then \(Z_{i_\alpha}\) is an upset in \(P\), so
    \(a_\alpha \le b_{\alpha+1}\) in \(P\) implies that
    \(b_{\alpha+1} \in Z_{i_\alpha}\), and thus, \(j_\alpha = i_\alpha = j_{\alpha+1}\).
    In both cases, \(j_\alpha \le j_{\alpha+1}\).
    This holds cyclically for all \(\alpha \in [k]\),
    so \(j_1 = \cdots = j_k\).
    In order to conclude the claim, it suffices to show that $j_1$ is odd.
    Suppose to the contrary that \(j_1\) is even.
    Since \(Z_{j_1}\) is a downset in~$P$ and $a_k\leq b_1$ in~$P$, 
    we have \(a_k \in Z_{j_1}\), so
    \(i_k = j_1 = j_k\).
    As \(j_1\) is even, this contradicts \((a_k, b_k) \in I_1'\).
    Hence, \(j_1\) is odd and \((a_\alpha, b_\alpha) \in I_{1,j_1}\) for each \(\alpha \in [k]\).
    By \cref{prop:dim_of_sum_incomparable_sets}, there exists
    an odd positive integer \(j\) such that \(\dim_P(I_{1,j}) = \dim_P(I_1')\). 
    Let $I_1 = I_{1,j}$.
    For each \((a, b) \in I_1\) we have \(b \in Z_j\) and \(a \in Z_{\geq j}\),
    so \(j\) witnesses that \(I_1\) is supported from below by \((Z_i : i \in \NN)\) in~$P$.
    
    Next, we construct $I_0 \subseteq I_0'$.
    Note that the construction is symmetric.
    For each even positive integer \(i\), define $I_{0,i} = \set{(a,b) \in I_0 : a \in Z_i}$.
    We show that each alternating cycle in \(P\) that has all pairs in \(I_0'\) is contained
    in \(I_{0,i}\) for some even positive integer \(i\).
    Let $((a_1,b_1),\dots,(a_k,b_k))$ be an alternating cycle in \(P\) with all pairs in $I_0'$.
    For each $\alpha \in [k]$, let $i_{\alpha}, j_{\alpha} \in \NN$ be so that $a_\alpha \in Z_{i_{\alpha}}$ and $b_\alpha \in Z_{j_\alpha}$.
    We claim that for each $\alpha \in [k]$, we have \(i_\alpha \le i_{\alpha-1}\) cyclically.
    Since \((a_\alpha, b_\alpha) \in I_0'\), either \(i_\alpha < j_\alpha\), or
    \(i_\alpha = j_\alpha\) and \(i_\alpha\) is even.
    Since \(a_{\alpha-1} \le b_{\alpha}\) in \(P\), we have \(|i_{\alpha-1} - j_{\alpha}| \le 1\),
    so if \(i_\alpha < j_\alpha\), then \(i_\alpha \le i_{\alpha-1}\).
    On the other hand, if \(i_\alpha = j_\alpha\) and \(j_\alpha\) is even,
    then \(Z_{j_\alpha}\) is a downset in \(P\), so
    \(a_{\alpha-1} \le b_{\alpha}\) in \(P\) implies that
    \(a_{\alpha-1} \in Z_{j_\alpha}\), and thus, \(i_\alpha = j_\alpha = i_{\alpha-1}\).
    In both cases, \(i_\alpha \le i_{\alpha-1}\).
    This holds cyclically for all \(\alpha \in [k]\),
    so \(i_1 = \cdots = i_k\).
    In order to conclude the claim, it suffices to show that $i_1$ is even.
    Suppose to the contrary that \(i_1\) is odd.
    Since \(Z_{i_1}\) is an upset in~$P$ and $a_1\leq b_2$ in~$P$, 
    we have \(b_2 \in Z_{i_1}\), so
    \(j_2 = i_1 = i_2\).
    As \(i_1\) is odd, this contradicts \((a_2, b_2) \in I_0'\).
    Hence, \(i_1\) is even and \((a_\alpha, b_\alpha) \in I_{0,i_1}\) for each \(\alpha \in [k]\).
    By \cref{prop:dim_of_sum_incomparable_sets}, there exists
    an even positive integer \(i\) such that \(\dim_P(I_{0,i}) = \dim_P(I_0')\). 
    Let $I_0 = I_{0,i}$.
    For each \((a, b) \in I_0\) we have \(a \in Z_i\) and \(b \in Z_{\geq i}\),
    so \(i\) witnesses that \(I_0\) is supported from above by \((Z_i : i \in \NN)\) in~$P$.
%
%
%
    %\jedrzej{Do we really gain much from doing this trick? Indeed, it is slightly shorter but introduces a new idea. We could just repeat the proof dually warning the reader that a paragraph does not contain anything new. (I'm not sure what is the right way.)}
%
  %   Next, we construct $I_0' \subseteq I_0$.
  %   Now consider the unfolding \((Z_i' : i \in \NN)\) of \(P^{-1}\) where \(Z_0' = \emptyset\)
  %   and \(Z_i' = Z_{i-1}\) for each positive integer \(i\). Observe that 
  % \begin{align*}
  %     I_0^{-1}&=\{(b, a) \in I^{-1}:\textrm{\(b\in Z_j,a\in Z_i\) with \(i < j\) or (\(i=j\) and \(i\) is even)}\}\\
  %     I_1^{-1}&=\{(a, b) \in I^{-1}:\textrm{\(a\in Z_i',b\in Z_j'\) with \(i > j\) or (\(i=j\) and \(i\) is odd)}\}
  % \end{align*}
  %   (note that the last line is the same as the definition of \(I_1\) but with \(I,Z_i,Z_j\) replaced by
  %   \(I^{-1},Z_i',Z_j'\) respectively).
  %   Repeating the above argument for \(P^{-1}\), \((Z_0', Z_1', \ldots)\), and \(I_0^{-1}\), we obtain $I_0'' \subset I_0^{-1}$ with 
  %   $\dim_{P^{-1}}(I_0'') = \dim_{P^{-1}}(I_0^{-1}) = \dim_{P}(I_0)$, and $I_0''$ supported from below by $(Z_0',Z_1',\ldots)$ in $P^{-1}$.
  %   In particular, the set $I_0' = (I_0'')^{-1}$ is supported from above by $(Z_0,Z_1,\dots)$ in~$P$ and
  %   $\dim_{P}(I_0') = \dim_{P^{-1}}(I_0'') = \dim_{P}(I_0)$ as desired.
\end{proof}

Finally, we apply \cref{prop:unfolding} to posets with planar cover graphs.
Application of the following result is the initial step in the proof of the main result of this paper: \cref{thm:cover-graph_se}.

\begin{figure}[tp]
    \centering 
    \includegraphics{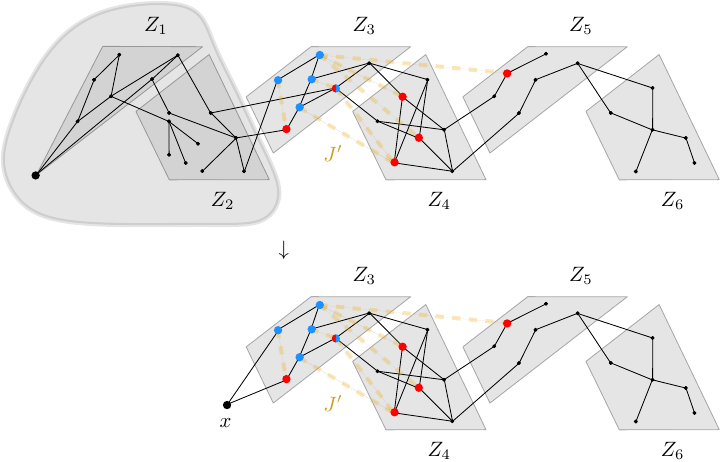} 
    \caption{
    With yellow lines, we denote the incomparable pairs in $J'$, which is supported from below by $(Z_i : i \in \NN)$. 
    We contract $Z_0 \cup Z_1 \cup Z_2$ into a vertex $x$. The cover graph is still planar, and $J'$ is singly constrained by $x$.
    } 
    \label{fig:unfolding-contracting} 
\end{figure} 

\begin{lemma}\label{lem:PlanarCoverGraphReduction}
    Let \(P\) be a poset with a planar cover graph and let $I \subset \Inc(P)$.
    Then there exist a poset \(Q\),
    a set \(J \subseteq \Inc(Q)\), and a minimal element \(x\)
    of \(Q\)
    such that
    \begin{enumerate}
        \item the cover graph of $Q$ is planar,
        \item \(Q - x\) is a convex subposet of \(P\) and $J \subset I$ or $Q-x$ is a convex subposet of \(P^{-1}\) and $J \subset I^{-1}$,
        \item \(J\) is singly constrained by \(x\) in $Q$, and
        \item \(\dim_P(I) \le 2\dim_Q(J)\).
    \end{enumerate}
\end{lemma}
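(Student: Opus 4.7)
The plan is to apply \Cref{prop:unfolding} and then contract a lower piece of the unfolding to a single new minimal element $x$, which will serve as the minimal element of $Q$. First I reduce to the case that $P$ is connected: if $\dim_P(I) \le 2$, the one-element poset $Q = \{x\}$ with $J = \emptyset$ trivially satisfies the conclusion, so I assume $\dim_P(I) \ge 3$; \Cref{prop:dim-components} then lets me pass to a single component $C$ of $P$ with $\dim_P(I) = \dim_C(I \cap \Inc(C))$ without losing planarity of the cover graph.

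Fix a minimal element $z_0$ of $P$ and let $(Z_i : i \in \NN)$ be the unfolding of $P$ from $z_0$. \Cref{prop:unfolding} produces $I_0, I_1 \subseteq I$ with $\dim_P(I) \le \dim_P(I_0) + \dim_P(I_1) \le 2 \max\{\dim_P(I_0), \dim_P(I_1)\}$, where $I_1$ is supported from below by some odd $k$ and $I_0$ from above by some even $k$. I handle first the case that $I_1$ attains the maximum. Define $Q$ on the ground set $Z_{\ge k} \cup \{x\}$, where $x$ is a new element, by inheriting the $P$-order on $Z_{\ge k}$ and declaring $x < y$ in $Q$ precisely when $y \in Z_k$, and take $J := I_1$. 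Transitivity of $Q$ reduces to observing that if $y \in Z_k$ and $y \le y'$ in $P$ with $y' \in Z_{\ge k}$, then $y' \in Z_k$; this follows from the unfolding condition combined with the oddness of $k$. Antisymmetry is immediate since no element of $Z_{\ge k}$ lies below $x$ in $Q$. Because $Z_{\ge k}$ is an upset in $P$, the subposet $Q - x$ is convex in $P$, and hence alternating cycles in $Q$ contained in $J$ coincide with alternating cycles of $P$ contained in $I_1$; by \Cref{prop:dim-alternating-cycles} this yields $\dim_Q(J) = \dim_P(I_1)$. Finally, $x$ is clearly minimal in $Q$, and $x \le b$ in $Q$ for every $(a,b) \in J$, so $J$ is singly constrained by $x$.

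For planarity of the cover graph of $Q$, the key step is showing that $Z_{<k}$ induces a connected subgraph of the cover graph of $P$; once that is established, contracting this connected subgraph in a fixed planar drawing of the cover graph of $P$ produces a plane graph from which the cover graph of $Q$ is obtained by deleting multiple edges and any edges at $x$ that do not correspond to cover relations in $Q$ (namely those reaching non-minimal elements of $Z_k$). The connectedness claim follows by induction on the layer index: every $z \in Z_j$ with $1 \le j < k$ is, by the BFS construction of the unfolding, comparable in $P$ to some $z' \in Z_{j-1}$, and a maximal chain in $P$ between them consists of cover edges whose endpoints all lie in $Z_{j-1} \cup Z_j \subseteq Z_{<k}$ by the unfolding condition.

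The remaining case, where $I_0$ attains the maximum, is handled dually: the sequence $(\emptyset, Z_0, Z_1, Z_2, \ldots)$ is readily checked to be an unfolding of $P^{-1}$, and $I_0^{-1}$ is supported from below in this shifted unfolding by the odd index $k+1$. Applying the construction above to $P^{-1}$ and this unfolding produces $Q, x, J$ satisfying the second alternative of clause~(ii). The step I expect to be the main obstacle is the planarity verification, because one must simultaneously argue that $Z_{<k}$ collapses to a single vertex in a planarity-preserving way and keep careful track of which edges in the contracted plane graph remain as cover edges of $Q$ after the deletion of superfluous edges.
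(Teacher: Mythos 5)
Your proposal is correct and follows essentially the same route as the paper: reduce to a connected poset via \cref{prop:dim-components}, apply \cref{prop:unfolding} to the unfolding from a minimal element, keep the half of $I$ of larger dimension, and contract the lower layers $Z_{<k}$ (shown connected) into a new minimal element, dualizing to handle the supported-from-above case; your BFS/saturated-chain argument for connectedness of $Z_{<k}$ is a valid substitute for the paper's convexity argument. The only point you gloss over is the degenerate case $k=0$ in the dual branch, where the set to be contracted ($Z'_{<1}=\emptyset$) is empty and the contraction argument does not literally apply; the paper excludes this explicitly, and in your setup it is ruled out in one line since $k=0$ forces the supported set to be reversible, so its dimension is at most $1$, contradicting that it attains the maximum when $\dim_P(I)\geq 3$.
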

\begin{proof}
  If \(\dim_P(I) \le 2\), then the lemma is satisfied by a poset \(Q\)
  consisting of one element \(x\) and \(J = \emptyset\).
  Hence, we assume that $\dim_P(I) \geq 3$.
  By \cref{prop:dim-components}, there is a component $C$ of~$P$ with $\dim_P(I) = \dim_C(I_C)$, where $I_C = \Inc(C) \cap I$.
  Thus, without loss of generality, we assume that $P$ is connected.
  Let \(z_0\) be a minimal element of \(P\), 
  and let \((Z_i : i \in \NN)\) be the unfolding of \(P\) from \(z_0\).
  By \cref{prop:unfolding} applied to \(I\),
  there exists a set \(J' \subseteq I\)
  supported by \((Z_i : i \in \NN)\), which is witnessed by the integer $k$ such that
  \[
    \dim_P(I) \le 2\dim_P(J').
  \]
  % If \(J\) is supported from below by \((Z_0, Z_1, \ldots)\), then let \((P^*, J^*) = (P, J)\),
  % and if \(J\) is supported from above by \((Z_0, Z_1, \ldots)\), then let \((P^*, J^*) = (P^{-1}, J^{-1})\).
  % Therefore, \(\dim(P) \le 2\dim_{P^*}(J^*)\), and there exists a positive integer \(k\)
  % such that for each \((a, b) \in J^*\) we have \(b \in Z_k\) and \(a \in Z_{\geq k}\).
  % Fix such a number $k$.

  If $k = 0$, then for each $(a,b) \in J'$, we have $a = z_0$.
  Therefore, $J'$ is reversible in~$P$ as there is no strict alternating cycle in~$P$ containing only pairs in $J'$.
  It follows that $\dim_{P}(J') \leq 1$, hence, $\dim_{P}(I) \leq 2 \dim_P(J') \leq 2$, which contradicts $\dim_P(I)\geq3$.

  Next, we assume that $k > 0$. 
  Let $Y=Z_{< k}$. 
  Since $k > 0$, the subgraph of the cover graph of~$P$ 
  induced by $Y$ is nonempty.
  We claim that it is also connected.
  Indeed, let $P_Y$ be the subposet of~$P$ induced by $Y$. 
  By the properties of the unfolding from $z_0$, the comparability graph of $P_Y$ is connected, and $P_Y$ is a convex subposet of~$P$. 
  Since $P_Y$ is a convex subposet of~$P$, the cover graph of $P_Y$ coincides with the subgraph of the cover graph of~$P$ induced by~$Y$. 
  Since $P_Y$ is connected, its cover graph is connected as well, and thus, the subgraph of the cover graph of~$P$ induced by $Y$ is connected, as claimed.

  We obtain $Q'$ from $P$ by ``contracting'' the set \(Y\) into a single element \(x\).
  Formally, let the ground set of $Q'$ consist of all the elements of $Z_{\geq k}$ and a new element $x$.
  The order relations within $Z_{\geq k}$ are the same as in~$P$.
  If $k$ is odd, then let $x < z$ in $Q'$ for all $z\in Z_k$ and if $k$ is even, let $z < x$ in $Q'$ for all $z \in Z_k$, while $x\parallel z$ in $Q'$ for all $z\in Z_{\geq k+1}$ in both cases.
  See \cref{fig:unfolding-contracting}.

  Note that the neighborhood of $x$ in the cover graph of $Q'$ is the set of all minimal elements in the subposet of~$P$ induced by $Z_k$ if $k$ is odd and the set of all maximal elements in the subposet of~$P$ induced by $Z_{k}$ if $k$ is even.
  Moreover, since $Z_{\geq k}$ induces a convex subposet of~$P$, 
  the cover graph of $Q'$ is a subgraph of the graph $H$ obtained from the cover graph $G$ of~$P$ by contracting $Y$ into $x$ (note that it can be a proper subgraph as in~\Cref{fig:unfolding-contracting}).
  Since the subgraph of $G$ induced by $Y$ is nonempty and connected, $H$ is planar.
  In particular, the cover graph of $Q'$ is planar.
  
  Each $(a,b) \in J'$ remains an incomparable pair in $Q'$, moreover, the subposets induced by $\bigcup_{(a,b) \in J'} \{a,b\}$ are the same in~$P$ and $Q'$, and so, $\dim_P(J') = \dim_{Q'}(J')$.
  Finally, in the case, where $k$ is odd, $x \leq b$ in $Q'$ for all $(a,b) \in J'$, and we take $Q = Q', J = J'$.
  In the case, where $k$ is even, $a \leq x$ in $Q'$ for all $(a,b) \in J'$.
  Let $Q = (Q')^{-1}$ and let $J = (J')^{-1}$. 
  In both cases, $J$ is singly constrained by $x$ in $Q$ and $\dim_P(I) \leq 2 \dim_{Q'}(J') = 2\dim_Q(J)$.
  Note that $x$ is a minimal element in $Q$.
  %
  % Let \(Z_{<j} = \bigcup_{i<j} Z_i\).
  % Observe that \(Z_{<j}\) is a connected downset in the cover graph of \(P^*\).
  % We construct the poset \(Q\) from \(P^*\)
  % by ``contracting'' the set \(Z_{<j}\) into a minimal element \(x\) (see \cref{fig:unfolding-contracting}).
  % Formally, the poset \(Q\) is obtained from the poset \(P^* - Z_{<j}\)
  % by adding a minimal element \(x\) covered by the elements in \(\Min_{P^*}(Z_j)\).
  % Since every element in \(\Min_{P^*}(Z_j)\) covers an element in \(Z_{<j}\),
  % the cover graph of \(Q\) is a minor of
  % the cover graph of \(P\), and thus a planar graph.
  % Moreover, for every \((a, b) \in J^*\), we have \(x < b\) and \(a \in P^* - Z_{<j}\),
  % so in any planar drawing of the cover graph of \(Q\) with \(x\) on the exterior face, the set \(J\) is singly exposed by \(x\).
  %
    % \jedrzej{A respective paragraph from "old-reduction":}
    % \michal{In general, I think that unused parts of text should be simply removed. It is very rare that we uncomment a section of text, and overleaf keeps the history anyway.}
    % \jedrzej{Only in premium version! We have only one day history}
    % \jedrzej{This is for us for work in progress to seek inspiration :)}
    % \michal{yes, this was meant as a general comment :)}
    % \michal{We do we use overleaf instead of git anyway?}
\end{proof}

\section{Topology of an instance}
\label{sec:topology_instance}
%It will be convenient to capture the whole proof setup in a concise way, therefore we introduce a notion of an instance.

%In the next section, we show the main theorem ($\dim$-boundedness) holds for a good instance.
We say that $(P,x_0,G,e_{-\infty},I)$ is an \emph{instance} if
\begin{enumerateNumI}
    \item $P$ is a poset with a planar cover graph,\label{item:instance:planar_cover_graph}
    \item $x_0$ is a minimal element in~$P$,\label{item:instance:x_0_minimal}
    \item $G$ is a plane graph obtained from the cover graph of $P$ by fixing a planar drawing of $G$ with $x_0$ in the exterior face,\label{item:instance:plane-graph}
    %\item  $\calD$ is a planar drawing of the cover graph of $P$ with $x_0$ in the exterior face,\label{item:instance:drawing}
    \item $e_{-\infty}$ is a curve in the plane contained in the exterior face of $G$ such that the only common point of the curve and $G$ is $x_0$,\label{item:instance:e_infty}
    \item $I \subset \Inc(P)$ and $I$ is singly constrained by $x_0$ in~$P$.\label{item:instance:I_singly_constrained}
\end{enumerateNumI}

The notion of unfolding introduced in~\Cref{ssec:unfolding} and~\Cref{lem:PlanarCoverGraphReduction} allow us to reduce the main problem to studying instances as captured in the following statement.

\begin{corollary}\label{cor:poset-to-instance}
    For every poset $P$ with a planar cover graph and $I \subset \Inc(P)$, there exists an instance $(P',x_0,G,e_{-\infty},I')$ such that
    \[
    \dim_P(I) \leq 2\dim_{P'}(I').
    \]
    Moreover, $P' - x_0$ is a convex subposet of $P$ and $I' \subset I$, or $P' - x_0$ is a convex subposet of $P^{-1}$ and $I' \subset I^{-1}$.
\end{corollary}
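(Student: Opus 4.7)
The proof is essentially a bookkeeping exercise on top of \cref{lem:PlanarCoverGraphReduction}: that lemma delivers all of the combinatorial content, and what remains is just to equip the resulting poset with the topological data $G$ and $e_{-\infty}$ demanded by the definition of an instance.

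The plan is as follows. First, I would apply \cref{lem:PlanarCoverGraphReduction} to $P$ and $I$ to obtain a poset $Q$, a set $J \subseteq \Inc(Q)$, and a minimal element $x_0$ of $Q$ such that $Q$ has a planar cover graph, $J$ is singly constrained by $x_0$ in $Q$, either ($Q - x_0$ is a convex subposet of $P$ and $J \subseteq I$) or ($Q - x_0$ is a convex subposet of $P^{-1}$ and $J \subseteq I^{-1}$), and $\dim_P(I) \leq 2\dim_Q(J)$. This already provides the candidates $P' := Q$, $I' := J$ together with the minimality, convexity, single-constraint, and inequality conditions demanded by the corollary.

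Next I would produce the planar drawing $G$. Fix any planar drawing $G_0$ of the cover graph of $P'$; since $x_0$ is incident to at least one face of $G_0$ (pick, say, any face whose boundary contains $x_0$), one may re-embed $G_0$ so that this face becomes the exterior face. Formally, this is the standard \emph{change of exterior face} argument: one passes through the sphere via stereographic projection placing a point of the chosen face at the north pole, and projects back to the plane; this yields a planar drawing of the cover graph of $P'$ in which $x_0$ lies on the boundary of the exterior face, which we take as $G$. This establishes \ref{item:instance:plane-graph}.

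Finally, since $x_0$ is on the boundary of the exterior face of $G$, there is an open disk around $x_0$ whose intersection with the exterior face is nonempty. Pick any point $p$ in this intersection distinct from $x_0$, and let $e_{-\infty}$ be a straight segment from $x_0$ to $p$ chosen short enough to avoid all other vertices and edges of $G$ (such a choice exists because only finitely many edges of $G$ are incident to $x_0$, and we can route the segment inside a narrow cone between two consecutive incident edges that lies within the exterior face). Then $e_{-\infty}$ is a simple curve contained in the exterior face of $G$ meeting $G$ only at $x_0$, verifying \ref{item:instance:e_infty}. Conditions \ref{item:instance:planar_cover_graph}, \ref{item:instance:x_0_minimal}, and \ref{item:instance:I_singly_constrained} are inherited directly from \cref{lem:PlanarCoverGraphReduction}, and the inequality $\dim_P(I) \leq 2\dim_{P'}(I')$ together with the ``moreover'' clause are exactly what the lemma provides. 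No step presents a real obstacle; the only point requiring care is the re-embedding so that $x_0$ lies on the exterior face, which is a classical fact about planar drawings.
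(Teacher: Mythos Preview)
Your proposal is correct and follows the same approach as the paper: apply \cref{lem:PlanarCoverGraphReduction} to obtain $P'$, $x_0$, and $I'$ with all the combinatorial properties, then equip the resulting poset with a planar drawing having $x_0$ on the exterior face and an anchor curve $e_{-\infty}$. The paper's own proof simply asserts the existence of such $G$ and $e_{-\infty}$ without the stereographic-projection justification you supply, so your write-up is in fact more thorough on those routine topological points.
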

\begin{proof}
    Let $P$ be a poset with a planar cover graph and $I \subset \Inc(P)$.
    By \cref{lem:PlanarCoverGraphReduction}, there exists a poset $P'$, a set $I' \subset \Inc(P')$, and a minimal element $x_0$ of $P'$ such that the cover graph of $P'$ is planar, \(P' - x_0\) is a subposet of \(P\) and $I' \subset I$ or $P'-x_0$ is a subposet of \(P^{-1}\) and $I' \subset I^{-1}$, $I'$ is singly constrained by $x_0$ in $P'$, and $\dim_P(I) \leq 2 \dim_{P'}(I')$.
    Let $G$ be a plane graph obtained from the cover graph of $P$ by fixing a planar drawing of $G$ with $x_0$ in the exterior face, and let $e_{-\infty}$ be a curve in the plane contained in the exterior face of $G$ such that the only common point of the curve and $G$ is $x_0$.
    It follows that $(P',x_0,G,e_{-\infty},I')$ is a desired instance.
\end{proof}

In this section, we develop a basic understanding of what an instance looks like.
We discuss the theory of shadows and formulate many useful properties of instances.
To this end, we fix an instance $(P,x_0,G,e_{-\infty},I)$ for the remainder of this section.
%We state several definitions, each of them in the context of this fixed instance.

\subsection{Orientations and regions}\label{sec:orienting}
Let $G_{-\infty}$ be the plane graph obtained from $G$ by adding a new vertex $x_{-\infty}$ and a new edge $x_{-\infty}x_0$, which is identified with $e_{-\infty}$.
We call $G_{-\infty}$ the \emph{rooted graph} of the instance $(P,x_0,G,e_{-\infty},I)$.
When an instance is fixed (like in this section), every $(v,e)$-ordering of edges or paths (as defined in~\Cref{ssec:ordering-edges}) is considered in the rooted graph of the instance.

Let $\gamma$ be a cycle in $G$ and let $\Gamma$ be the region of $\gamma$.
%By \emph{edges of $\gamma$} we mean the edges of the cover graph of $P$ contained in $\gamma$.
An \emph{orientation} of $\gamma$ is a cyclic ordering of the edges of $\gamma$ such that incident edges are consecutive.
In particular, $\gamma$ has exactly two orientations: \emph{clockwise} and \emph{counterclockwise}.
If $e_1,\dots,e_n$ (cyclically) is an orientation of $\gamma$, then we say that $e_{i+1}$ \emph{follows} $e_i$ in this orientation and $e_{i-1}$ \emph{precedes} $e_i$ in this orientation (indices are considered cyclically in $[n]$).
% For a fixed orientation of $\gamma$ and for two edges $e$, $f$ of $\gamma$, 
% we say that $f$ \emph{follows} $e$, 
% if $f$ immediately follows $e$ in the orientation.\michal{this definition is self-referential. I suggest we instead write \(C = x_0 \cdots x_\ell\) of the cycle, and say that \(x_i x_{i+1}\) follows \(x_{i-1}x_i\).}
% In this case, we also say that $e$ \emph{precedes} $f$.
%
% Note that $e_{-\infty}$ does not lie in $\Gamma$ by~\ref{item:instance:e_infty}.
% Let $w$ be a vertex of $\gamma$ and let $W$ be a path between $x_0$ and $w$ in $G$ such that the only common element of $W$ and $\Gamma$ is $w$.
% If $w = x_0$ let $e^- = e_{-\infty}$ and if $w \neq x_0$, let $e^-$ be the edge of $W$ incident to $w$.
% Let $e$ and $f$ be the edges of $\gamma$ incident to $w$ such that $e \prec f$ in the $(w,e^-)$-ordering.
% An orientation of $\gamma$ is \emph{counterclockwise} if $e$ precedes $f$.
% Note that this definition does not depend on the choice of $w$ and $W$. \michal{I don't think this is obvious. I suggest that instead, we define a consistent/counterclockwise orientation as a one for which there exist \(w\) and \(W\) as above.}
%A reader might find it helpful to think of the consistent orientation of $\gamma$ as the counterclockwise traversal of its edges. 
The main feature of the counterclockwise orientation is enclosed in the following observation.

\begin{obs}\label{obs:when_in_gamma}
    Let $\gamma$ be a cycle in $G$ and let $\Gamma$ be the region of $\gamma$.
    Let $w$ be a vertex of $\gamma$ and let $e^-$ and $e^+$ be the edges of $\gamma$ incident to $w$ such that $e^+$ follows $e^-$ in the counterclockwise orientation in $\gamma$.
    Then, for every edge $e$ incident to $w$,
    \begin{align*}
        e \text{ is contained in } \Gamma \ \text{ if and only if } \ e^- \preccurlyeq e \preccurlyeq e^+  \text{ in the $w$-ordering}.
    \end{align*}
\end{obs}

\subsection{Leftmost and rightmost witnessing paths}\label{sec:leftmost-rightmost}
For two elements $u$ and $v$ of \(P\), a \emph{witnessing path} from \(u\) to \(v\) in~$P$ is a path \(w_0 \cdots w_n\) in \(G\) such that $w_0 = u$, $w_n = v$, and
\(w_{i-1}\) is covered by \(w_{i}\) for every \(i \in [n]\).
Observe that there exists a witnessing path from \(u\) to \(v\) in \(P\) if and only if \(u \le v\) in \(P\).
%and the vertex sets of witnessing paths from \(x\) to \(y\) are the inclusion-maximal chains containing \(x\) as a minimal element and \(y\) as a maximal element.
%\michal{nitpick: when we defined paths, they were indexed from 0, not from 1.}
%n \jedrzej{Ok.}

Let $U$ and $V$ be two paths in $G$ starting in $x_0$. 
We say that $U$ is \emph{left} of $V$ if $U \prec V$ in the $(x_0,e_{-\infty})$-ordering.
If $U$ is left of $V$, then we say that $V$ is \emph{right} of $U$.
Moreover, we define $\gce(U,V)$ to be the element $w$ in $U \cap V$, where $x_0[U]w = x_0[V]w$ and this prefix is the longest possible.
The abbreviation stands for \q{greatest common prefix-element}.

Let 
    \[B = U_P[x_0] =  \set{b \text{ in } P : x_0 \leq b \text{ in } P}.\]
%\michal{We should emphasize that this is not a local definition, but we use it in other subsection as well. Maybe we should even include it in the instance tuple?}
%\jedrzej{I wouldn't add it to the tuple.}
Let $b \in B$ and let $\calP$ be the family of all witnessing paths from $x_0$ to $b$ in~$P$.
Note that $\calP$ is nonempty.
By \cref{obs:ue_ordering_is_usually_linear}, $\calP$ has a unique minimal element and a unique maximal element in the $(x_0,e_{- \infty})$-ordering. 
We call the minimal element the \emph{leftmost} witnessing path from $x_0$ to $b$ in~$P$ and we call the maximal element the \emph{rightmost} witnessing path from $x_0$ to $b$ in~$P$.
We denote the former $W_L(b)$ and the latter $W_R(b)$.
See \cref{fig:leftmost-rightmost}.
Note that by definition, for all witnessing paths $W$ from $x_0$ to $b$, either $W = W_L(b)$ or $W_L(b)$ is left of $W$.
Similarly, for all witnessing paths $W$ from $x_0$ to $b$, either $W = W_R(b)$ or $W_R(b)$ is right of $W$.
Next, we develop some properties of the leftmost and rightmost witnessing paths.
%\jedrzej{Maybe add: in the next proposition we have items L,R, we always prove only L and R is symmetric.}

\begin{figure}[tp]
     \centering
     \begin{subfigure}[t]{1\textwidth}
         \centering
         \includegraphics{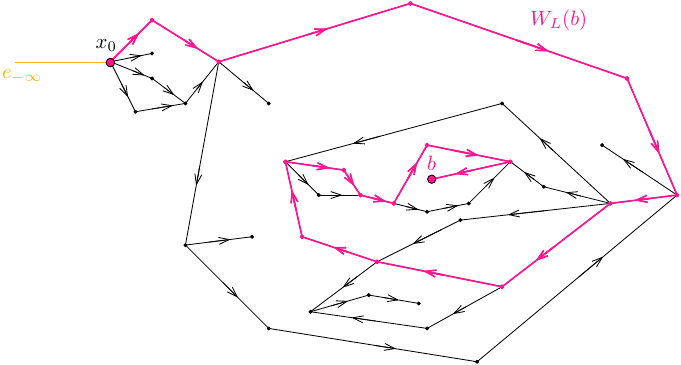}
     \end{subfigure}
     \begin{subfigure}[t]{1\textwidth}
         \centering
         \includegraphics{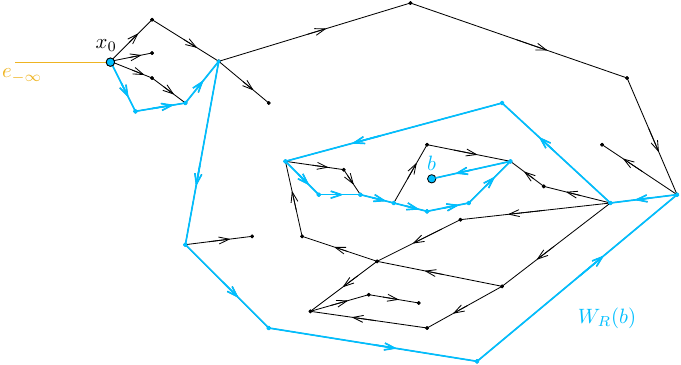}
     \end{subfigure}
        \caption{
            Recall that we fixed an instance $(P,x_0,G,e_{-\infty},I)$. 
            The figure illustrates a part of the drawing of $G$ induced by elements in $B = U_P[x_0]$.
            In the top part of the figure, we mark the edges and vertices of the leftmost witnessing path from $x_0$ to $b$, and in the bottom part, we mark the edges and vertices of the rightmost witnessing path from $x_0$ to $b$.
            We will use the poset depicted in this figure several times to illustrate various notions.
        }
     \label{fig:leftmost-rightmost}
\end{figure}

\begin{proposition}\label{prop:Tleft-and-Tright} \label{prop:W-consistent}
  Let $b,b'\in B$. 
  \begin{enumerate}
      \myitem{$(L)$} The paths $\Wleft(b)$ and $\Wleft(b')$ are $x_0$-consistent. \label{prop:W-consistent:left}
      \myitem{$(R)$} The paths $\Wright(b)$ and $\Wright(b')$ are $x_0$-consistent. \label{prop:W-consistent:right}
  \end{enumerate}
\end{proposition}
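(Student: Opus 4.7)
The plan is to prove (L) by contradiction, exploiting the minimality of the leftmost witnessing path; (R) follows by a symmetric argument using the maximality of the rightmost witnessing path. Assume $W_L(b)$ and $W_L(b')$ are not $x_0$-consistent. If $b = b'$, they are equal and hence trivially consistent, so $b \neq b'$; since both paths start at $x_0$ and end at distinct vertices, neither is a subpath of the other, so by \cref{obs:ue_ordering_is_usually_linear} they are comparable in the $(x_0, e_{-\infty})$-ordering. Without loss of generality, $W_L(b') \prec W_L(b)$.

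Write $W_L(b) = u_0 \cdots u_\ell$, $W_L(b') = v_0 \cdots v_m$, and let $z = u_i = v_i$ be their greatest common prefix-element. Since consistency fails, the portions of the two paths beyond $z$ share a vertex; the key step is to pick this shared vertex \emph{carefully}, namely take $k$ minimum with $k > i$ and $v_k \in W_L(b)$, set $w = v_k$, and let $j$ be the index with $u_j = w$. I would then form the concatenation
\[
W^{*} = x_0 [W_L(b')] w [W_L(b)] b.
\]
The argument proceeds in three steps: (i) verify $W^{*}$ is a simple path, (ii) verify it is a witnessing path from $x_0$ to $b$, (iii) verify $W^{*} \prec W_L(b)$ in the $(x_0, e_{-\infty})$-ordering, contradicting the leftmost property of $W_L(b)$.

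For (ii), each edge of $W^{*}$ is a cover edge coming from one of $W_L(b)$ or $W_L(b')$, so $W^{*}$ is ascending in $P$. For (iii), observe that the greatest common prefix of $W^{*}$ and $W_L(b)$ is exactly $u_0 \cdots u_i$ (the next edges differ at $u_i$, namely $u_i v_{i+1}$ on $W^{*}$ and $u_i u_{i+1}$ on $W_L(b)$, and $v_{i+1} \neq u_{i+1}$ since $z$ is the greatest common prefix-element). Because $W_L(b') \prec W_L(b)$ in the $(x_0, e_{-\infty})$-ordering, we have $u_i v_{i+1} \prec u_i u_{i+1}$ in the local ordering at $u_i$, which transfers directly to $W^{*} \prec W_L(b)$.

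The main obstacle is step (i): ensuring no repeated vertices in $W^{*}$. This is precisely where the minimality of $k$ matters. By the choice of $k$, the internal vertices $v_{i+1}, \ldots, v_{k-1}$ avoid $W_L(b)$ entirely, so the middle segment of $W^{*}$ cannot collide with the tail $u_j, \ldots, u_\ell$; the prefix $u_0, \ldots, u_i$ cannot collide with either of the other two segments because $W_L(b)$ and $W_L(b')$ are themselves simple. For (R), the symmetric construction $x_0[W_R(b')]w[W_R(b)]b$ (assuming $W_R(b) \prec W_R(b')$) yields a witnessing path to $b$ strictly right of $W_R(b)$, contradicting maximality.
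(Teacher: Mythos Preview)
Your proof is correct and follows essentially the same approach as the paper: assume non-consistency, take a vertex shared beyond the greatest common prefix, and splice the two paths to produce a witnessing path strictly left of one of the $W_L$'s, contradicting minimality. You are actually more careful than the paper, which picks an arbitrary shared vertex $v$ and writes $W = x_0[W_L(b)]v[W_L(b')]b'$ without justifying simplicity; your choice of the \emph{first} crossing $v_k$ is exactly what makes step~(i) go through cleanly.

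One small correction: the claim ``since both paths start at $x_0$ and end at distinct vertices, neither is a subpath of the other'' is not valid---if $b$ lies on $W_L(b')$ then $W_L(b)$ is a prefix of $W_L(b')$. The right justification is simply that if one were a subpath of the other then the post-prefix vertex sets would be trivially disjoint, hence the paths would be $x_0$-consistent, contrary to assumption. This does not affect the rest of your argument.
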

% ----------------------------
\begin{proof} 
    We prove that $W_L(b)$ and $W_L(b')$ are $x_0$-consistent.
    The proof for $W_R(b)$ and $W_R(b')$ is symmetric. 
    Suppose to the contrary that $W_L(b)$ and $W_L(b')$ are not $x_0$-consistent.
    Let $u = \gce(W_L(b),W_L(b'))$. 
    Since the paths are not $x_0$-consistent there exists an element $v$ distinct from $u$ in the intersection of $u[W_L(b)]b$ and $u[W_L(b')]b'$.
    Let $e_0 = e_{-\infty}$ when $u = x_0$ and let $e_0$ be the last edge in $x_0[W_L(b)]u$ when $u \neq x_0$.
    Let $e$ be the first edge of $u[W_L(b)]b$ and let $e'$ be the first edge of $u[W_L(b')]b'$.
    It follows that $e \neq e'$.
    Without loss of generality, assume that $e \prec e'$ in the $(u,e_0)$-ordering. 
    Consider the path $W = x_0[W_L(b)]v[W_L(b')]b'$.
    By definition, $W$ is left of $W_L(b')$, which is a contradiction.
\end{proof}
By \cref{prop:Tleft-and-Tright}, it follows that for all $u,b\in B$ if $u$ lies in $W_L(b)$, then $W_L(u)$ is a prefix of $W_L(b)$. 
Similarly, if $u$ lies in $W_R(b)$, then $W_R(u)$ is a prefix of $W_R(b)$. 
We will use this property implicitly many times.
The next statement is an immediate corollary of~\Cref{prop:sandwiched-paths-basic}.

\begin{corollary}\label{prop:sandwiched_paths}
    Let $W_1,W_2,W_3$ be pairwise $x_0$-consistent witnessing paths in~$P$ such that $W_1$ is left of $W_2$ and  $W_2$ is left of $W_3$.
    If an element $u$ in~$P$ lies in both $W_1$ and $W_3$, then $u$ lies in $W_2$.
\end{corollary}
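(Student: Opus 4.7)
The plan is to deduce this as a direct application of \Cref{prop:sandwiched-paths-basic} to the rooted graph of the fixed instance, with $U = W_1$, $V = W_2$, $W = W_3$, the distinguished vertex being $x_0$, and the distinguished edge being $e_{-\infty}$. Since \emph{left of} is defined as $\prec$ in the $(x_0, e_{-\infty})$-ordering, the hypotheses $W_1$ left of $W_2$ and $W_2$ left of $W_3$ translate verbatim into $W_1 \prec W_2$ and $W_2 \prec W_3$ in the $(x_0, e_{-\infty})$-ordering, which is exactly what \Cref{prop:sandwiched-paths-basic} requires.

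The only thing that is not literally in the hypothesis is the equality $x_0[W_1]u = x_0[W_3]u$, which is the remaining input needed to invoke \Cref{prop:sandwiched-paths-basic}. First I would establish this from the $x_0$-consistency of $W_1$ and $W_3$: let $w = \gce(W_1, W_3)$, so by the definition of $x_0$-consistency the vertex sets of $w[W_1]$-suffix after $w$ and $w[W_3]$-suffix after $w$ are disjoint. Hence any common vertex of $W_1$ and $W_3$ (in particular $u$) must lie in the common prefix $x_0[W_1]w = x_0[W_3]w$, and in particular $x_0[W_1]u = x_0[W_3]u$.

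Having secured this equality, \Cref{prop:sandwiched-paths-basic} immediately yields that $u$ is a vertex of $W_2$ and that $x_0[W_1]u = x_0[W_2]u = x_0[W_3]u$; the first conclusion is exactly what the corollary asserts.

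There is no genuine obstacle here: the corollary is essentially a rebranding of \Cref{prop:sandwiched-paths-basic} in the language native to the instance, with the $x_0$-consistency serving as the bridge that supplies the missing \q{same initial prefix} assumption. The one point to be careful about is not to forget that $W_2$ itself need not be $x_0$-consistent with $W_1$ or with $W_3$; fortunately \Cref{prop:sandwiched-paths-basic} makes no such demand on the middle path, so this never enters the argument.
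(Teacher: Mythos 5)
Your proof is correct and is exactly the argument the paper intends: the paper states this as an immediate consequence of \Cref{prop:sandwiched-paths-basic}, with the $x_0$-consistency of $W_1$ and $W_3$ supplying the common-prefix hypothesis $x_0[W_1]u = x_0[W_3]u$, precisely as you do (applied, as you note, in the rooted graph with the $(x_0,e_{-\infty})$-ordering).
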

% \begin{proof}
%     Suppose that some $u$ in~$P$ lies in both $W_1$ and $W_3$.
%     Since $W_1$ and $W_3$ are $x_0$-consistent, we have $x_0[W_1]u = x_0[W_3]u$.
%     Let $w = \gce(W_2,x_0[W_1]u)$.
%     If $u = w$, then the assertion follows, hence, assume that $w < u$ in~$P$.
%     If $w$ is an endpoint of $W_2$, then $W_2$ is a subpath of $W_1$ and $W_3$, which contradicts the assumptions.
%     It follows that either $W_2$ is left of both $W_1,W_3$ or $W_2$ is right of both $W_1,W_3$.
%     In both cases, we contradict the assumptions, which implies that $w \not< u$ in~$P$ and ends the proof.
% \end{proof}

% ----------------------------
\vbox{
\begin{proposition}\label{prop:shortcuts} 
  Let $b,b'\in B$.
  \begin{enumerate}
      \myitem{$(L)$} If $W_L(b)$ is left of $W_L(b')$, then $b \not\leq b'$ in~$P$. \label{prop:item:shortcuts-left-tree} 
      \myitem{$(R)$} If $W_R(b)$ is right of $W_R(b')$, then $b \not\leq b'$ in~$P$. \label{prop:item:shortcuts-right-tree} 
  \end{enumerate}
\end{proposition}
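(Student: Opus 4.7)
The plan is to derive a contradiction in each part by exhibiting a witnessing path from $x_0$ to $b'$ that is both a proper extension of $W_L(b)$ (resp.\ $W_R(b)$) and strictly comparable to it in the $(x_0,e_{-\infty})$-ordering. Since proper prefixes are incomparable in this ordering, this will be impossible.

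For part \ref{prop:item:shortcuts-left-tree}, I would assume toward a contradiction that $b \leq b'$ in~$P$. The hypothesis $W_L(b) \prec W_L(b')$ in the $(x_0,e_{-\infty})$-ordering forces $b \neq b'$, hence $b < b'$ in~$P$. Therefore, there is a witnessing path $W'$ from $b$ to $b'$ in~$P$. The concatenation $W := x_0[W_L(b)]b[W']b'$ is a walk whose vertices are strictly ascending in~$P$: the vertices of $W_L(b)$ lie in $D_P[b]$ and those of $W'$ after $b$ lie strictly above $b$, so all vertices are distinct. Hence $W$ is a witnessing path from $x_0$ to $b'$, and $W_L(b)$ is a proper prefix of $W$. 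By the definition of the leftmost witnessing path, $W_L(b') \preccurlyeq W$ in the $(x_0,e_{-\infty})$-ordering. Combining this with the hypothesis $W_L(b) \prec W_L(b')$ and the transitivity established in Proposition~\ref{prop:u_e_is_poset}, we obtain $W_L(b) \prec W$. But by the definition of the $(x_0,e_{-\infty})$-ordering, a proper prefix is not strictly comparable to its extension, a contradiction.

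Part \ref{prop:item:shortcuts-right-tree} follows by an entirely symmetric argument: one again assumes $b \leq b'$ in~$P$ (so $b < b'$), concatenates $W_R(b)$ with a witnessing path from $b$ to $b'$ to obtain a witnessing path $W$ from $x_0$ to $b'$ that properly extends $W_R(b)$, and this time invokes the maximality of $W_R(b')$ to get $W \preccurlyeq W_R(b')$. Combined with the hypothesis $W_R(b') \prec W_R(b)$ via transitivity, this yields $W \prec W_R(b)$, contradicting the fact that $W_R(b)$ is a proper prefix of $W$.

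I do not expect any real obstacle: given the $(x_0,e_{-\infty})$-ordering machinery already developed (in particular the \emph{noncomparability of proper prefixes} baked into its definition and the transitivity from Proposition~\ref{prop:u_e_is_poset}), the argument is essentially a one-line application. The only minor point to verify carefully is that the concatenation of a witnessing path with another witnessing path starting at its endpoint is again a path---not merely a walk---which is immediate from the strict $<$-monotonicity along witnessing paths.
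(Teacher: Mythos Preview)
Your proof is correct and follows essentially the same approach as the paper: both concatenate $W_L(b)$ with a witnessing path from $b$ to $b'$ and derive a contradiction with the leftmost property of $W_L(b')$. The only cosmetic difference is that the paper observes directly that the concatenation $x_0[W_L(b)]b[W]b'$ is left of $W_L(b')$ (since the divergence from $W_L(b')$ occurs already within $W_L(b)$), whereas you route through $W_L(b') \preccurlyeq W$, transitivity, and the prefix-incomparability property to reach the same contradiction.
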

}
% ----------------------------
\begin{proof}
  We prove statement~\ref{prop:item:shortcuts-left-tree}.  
  The argument for statement~\ref{prop:item:shortcuts-right-tree} is symmetric.
  Suppose to the contrary that $W_L(b)$ is left of $W_L(b')$ and $b \leq b'$ in~$P$.
  Let $W$ be a witnessing path from $b$ to $b'$.
  Since $W_L(b)$ is left of $W_L(b')$, we have $x_0[W_L(b)]b[W]b'$ left of $W_L(b')$, which is a contradiction.
\end{proof}

We finish the subsection on witnessing paths with a statement which is useful in a definition of some specific regions in $G$ that we study in~\cref{ssec:regions}.

\begin{proposition}\label{prop:W_L-left-of-W_R}
Let $u,v\in B$ such that $W_L(u)$ is left of $W_L(v)$. 
Let $q$ be a common element of $W_L(u)$ and $W_R(v)$, 
let $e$ be the first edge of $q[W_L(u)]u$, and let $f$ be the first edge of $q[W_R(v)]v$.
Let $e^-$ be the last edge of $x_0[W_L(u)]q$ if $q\neq x_0$, 
and $e^-=e_{-\infty}$ if $q=x_0$. 
Then,
    \[e \preccurlyeq f \text{ in the $(q,e^-)$-ordering}.\]
\end{proposition}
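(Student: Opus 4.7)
The plan is to argue by contradiction, using a concatenation of sub-paths of $W_L(u)$ and $W_R(v)$ to build a witnessing path to $v$ that would be strictly to the left of $W_L(v)$, contradicting the leftmost-ness of $W_L(v)$.

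Concretely, suppose toward a contradiction that $f \prec e$ in the $(q,e^-)$-ordering. I would consider the concatenation
\[
W' := x_0[W_L(u)]q[W_R(v)]v.
\]
The first small step is to verify that $W'$ is a genuine witnessing path from $x_0$ to $v$. The vertices visited by $W'$ are strictly increasing in $P$ (the prefix $x_0[W_L(u)]q$ is $P$-increasing up to $q$, and the suffix $q[W_R(v)]v$ is $P$-increasing from $q$ onwards), so they are pairwise distinct; hence $W'$ is a simple path, all of whose consecutive vertices are in a cover relation.

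The next step is to compare $W'$ with $W_L(u)$ in the $(x_0,e_{-\infty})$-ordering. Both paths share the common prefix $x_0[W_L(u)]q$, and they split at $q$: the first edge of $W'$ after $q$ is $f$, while the first edge of $W_L(u)$ after $q$ is $e$. The last edge of the common prefix is exactly $e^-$ (or $e_{-\infty}$ if $q = x_0$). Hence, by the very definition of the $(x_0,e_{-\infty})$-ordering, the assumption $f \prec e$ in the $(q,e^-)$-ordering directly translates to $W' \prec W_L(u)$ in the $(x_0,e_{-\infty})$-ordering.

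Finally, I would invoke the hypothesis $W_L(u) \prec W_L(v)$ together with transitivity of the $(x_0,e_{-\infty})$-ordering (\cref{prop:u_e_is_poset}) to conclude $W' \prec W_L(v)$. But $W'$ is a witnessing path from $x_0$ to $v$, and since $W_L(v)$ and $W'$ share the endpoint $v$, neither is a proper prefix of the other, so they are comparable in the $(x_0,e_{-\infty})$-ordering; minimality of $W_L(v)$ then forces $W_L(v) \preccurlyeq W'$, contradicting $W' \prec W_L(v)$. The only slightly subtle point in the whole argument is making sure that $W'$ really is a simple path, which, as noted above, is immediate from the fact that the $P$-values strictly increase along the concatenation.
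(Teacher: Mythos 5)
Your proposal is correct and follows essentially the same route as the paper: assume $f \prec e$, form the concatenated witnessing path $x_0[W_L(u)]q[W_R(v)]v$, observe it is left of $W_L(u)$, and derive a contradiction with the leftmost-ness of $W_L(v)$ given that $W_L(u)$ is left of $W_L(v)$. The minor differences (your explicit check that the concatenation is a witnessing path and your use of transitivity plus antisymmetry rather than the paper's phrasing) are cosmetic.
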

\begin{proof}
    % First, we claim that $W_L(v)$ contains $q$. 
    % Indeed, if $\gce(W_L(u),W_L(v)) < q$ in~$P$, then the path $x_0[W_L(u)]q[W_R(v)]v$ is left of $W_L(v)$ (as $W_L(u)$ is left of $W_L(v)$), which is false.
    % Since $W_L(u)$ and $W_L(v)$ are $x_0$-consistent (by~\cref{prop:W-consistent}.\ref{prop:W-consistent:left}), $x_0[W_L(u)]q = x_0[W_L(v)]q$.
    % Let $e'$ be the first edge of $q[W_L(v)]v$.
    % Since $W_L(u)$ is left of $W_L(v)$, we obtain that $e \preccurlyeq e'$ in the $(q,e^-)$-ordering.
    % Furthermore, $W_L(v)$ is either left of or equal to $x_0[W_L(v)]q[W_R(v)]v$, and so, $e' \preccurlyeq f$ in the $(q,e^-)$-ordering.
    % Therefore, $e \preccurlyeq e' \preccurlyeq f$ in the $(q,e^-)$-ordering, as desired.
    Suppose to the contrary that \(f \prec e\) in the \((q, e^-)\)-ordering. 
    Consequently, the witnessing path \(x_0[W_L(u)]q[W_R(v)]v\) is left of the witnessing path \(x_0[W_L(u)]q[W_L(u)]u = W_L(u)\).
    Hence \(W_L(v)\), which is the leftmost witnessing path from \(x_0\) to \(v\) in~$P$, is also left of \(W_L(u)\), a contradiction.
\end{proof}

% -----------------------------------------------
\subsection{Blocks and shadows}
\label{ssec:shadows}
% -----------------------------------------------
%\michal{A lot of material here concerns properties of single shadow blocks. I suggest splitting this subsection into two smaller ones: one concerning properties of shadow blocks, and another defining shadows and their properties}
%\jedrzej{The distinction is not that clear, thinks interchange. We decided to leave it as one subsection with a new title.}
Let $x,y$ be elements of $P$ with $x < y$ in~$P$. 
%\michal{I suggest ``\(x_0 \le x < y\)'' since this is always the case. We can also mention that \(y\) determines \(x\).}
%\jedrzej{We think it is okay not to assume that.}
Let $U,V$ be two witnessing paths from $x$ to $y$ in~$P$ such that the only common elements of $U$ and $V$ are $x$ and $y$.  
Note that there are two possibilities:
(a) $x$ is covered by $y$ in~$P$ and $U = V = xy$; and (b) each of $U$ and $V$ has some internal elements, and $U \cup V$ is a simple closed curve\footnote{Recall that witnessing paths are identified with curves in the drawing.} contained in $G$.
The \emph{block} enclosed by $U$ and $V$ is a subset $\cgB$ of the plane defined as $\cgB=U$ if (a) holds; 
$\cgB$ is the region of $U \cup V$ if (b) holds.
In the case, where (a) holds, we say that $\calB$ is \emph{degenerate}, and if (b) holds, we say that $\calB$ is \emph{non-degenerate}.
%See examples in \cref{fig:blocks}.
We will refer to the element $x$ as $\min\cgB$, while
the element $y$ will be $\max\cgB$.  
% Note that for each element $u$ of $P$ with $u \in \calB$, either $u$ is in the boundary of $\calB$, i.e.\ $u 
% \in \partial \cgB$ or $u$ is in the interior of $\calB$, i.e.\ $u \in \Int \calB$.
Since $\partial\calB = U \cup V$, for every element $w \in \partial \calB$, we have $\min\calB \leq w \leq \max\calB$ in~$P$.
In particular, the following holds.

\begin{obs}\label{obs:paths_and_blocks}
Let $\cgB$ be a block, let $u$ and $v$ be elements of $P$ with $u \leq v$ in~$P$ such that 
%\michal{I would replace the existence of \(W\) simply by \(u \le v\).} \piotr{OK, done.} 
one of $u$ and $v$ is in $\cgB$ and the other is not in $\Int \cgB$. 
Then $\min\cgB \leq v$ in~$P$ and $u\leq \max\cgB$ in~$P$.
\end{obs}

Let $\cgB$ be a non-degenerate block enclosed by $U$ and $V$ that are witnessing paths from 
$\min\cgB$ to $\max\cgB$.  
We will designate one of $U$ and $V$ to be the \emph{left side} of $\cgB$, while the other path will be the \emph{right side} of $\cgB$. 
The distinction is made using the following convention.
If $e$ and $f$ are the edges of $U \cup V$ incident to $\min \cgB$ such that $f$ follows $e$ in the counterclockwise orientation of $U \cup V$, then the path (one out of $U$ and $V$) containing $e$ is the left side and the path containing $f$ is the right side.
The elements $\min\cgB$ and $\max\cgB$ belong to
both sides.  
An element $u$ on the left side of $\cgB$, with $\min\cgB< u< \max\cgB$ in~$P$
is said to be \emph{strictly} on the left side of $\cgB$.  
Symmetrically, an element $u$ on the right side of $\cgB$, with $\min\cgB< u< \max\cgB$ in~$P$ 
is said to be \emph{strictly} on the right side of $\cgB$.  
When $\cgB$ is a degenerate block, we consider the whole block to be both the left side and the right side of 
$\cgB$, and there are no elements that are strictly on one of the two sides.

For the discussion to follow, we fix an element $b \in B$. 
We will introduce notation that defines sequences, paths, blocks, and elements of $P$, all depending on the choice of $b$.  
%However, to maintain some reasonable level in the complexity of notation, we will not indicate the dependence on $b$. 
%With $b$ fixed, we let $W_L=\Wleft(b)$, $W_R=\Wright(b)$.  
The elements of $P$ in $W_L(b)\cap W_R(b)$ form a chain containing
$x_0$ and $b$.
The elements of this chain can be listed
sequentially as $(z_0,\dots,z_m)$ such that $z_0 < \cdots < z_m$ in~$P$.  We will refer to
$(z_0,\dots,z_m)$ as the \emph{sequence of common points} of $b$.  Note that
$z_0=x_0$ and $z_m=b$.  
For all $i\in [m]$, 
we define $\cgB_i$ to be the block enclosed by 
$z_{i-1}[W_L(b)]z_i$ and $z_{i-1}[W_R(b)]z_i$. 
We will refer to the blocks obtained in such a way as \emph{shadow blocks} of~$b$. 
A \emph{shadow block} is a shadow block of an element in $B$. %\michal{For real?} \jedrzej{Yes.}  
The sequence $(\cgB_1,\dots,\cgB_m)$ is called the \emph{sequence of blocks} of $b$.  
See \cref{fig:shadow-blocks}.
%We also call $\cgB_i$ the block \emph{between $z_{i-1}$ and $z_i$}.
We prove the following seemingly obvious statement.

\begin{proposition}
\label{prop:seemingly-obvious}
    Let $\calB$ be a shadow block, 
    let $x=\min \calB$ and $y = \max \calB$. 
    Then $x[W_L(y)]y$ is the left side of $\calB$ while $x[W_R(y)]y$ is the right side of $\calB$.
\end{proposition}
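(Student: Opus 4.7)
The plan is to first identify the two sides of \(\calB\) with subpaths of \(W_L(b)\) and \(W_R(b)\), where \(b\) is the element of \(B\) for which \(\calB\) is a shadow block, with \(x = z_{i-1}\) and \(y = z_i\) consecutive in the sequence of common points of \(b\). Because \(y\) lies on both \(W_L(b)\) and \(W_R(b)\), Proposition~\ref{prop:W-consistent} gives that \(W_L(y)\) is a prefix of \(W_L(b)\) and \(W_R(y)\) is a prefix of \(W_R(b)\); hence \(x[W_L(y)]y = x[W_L(b)]y =: U\) and \(x[W_R(y)]y = x[W_R(b)]y =: V\), and the claim reduces to showing that \(U\) is the left side and \(V\) is the right side of \(\calB\). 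In the degenerate case, \(U = V\) and the conclusion holds by convention, so assume \(\calB\) is non-degenerate; let \(\gamma = U \cup V\) and let \(e_U\), \(e_V\) be the edges of \(U\), \(V\) incident to \(x\) (these are distinct since \(z_{i-1}, z_i\) are consecutive common vertices).

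The next step is to pin down the cyclic order of \(e_U\) and \(e_V\) at \(x\). Set \(e^- = e_{-\infty}\) if \(x = x_0\), and otherwise let \(e^-\) be the last edge of \(W_L(x) = x_0[W_L(b)]x\). Consider the witnessing path
\[
W = x_0[W_L(b)]x[W_R(b)]b,
\]
which is simple because the common vertices of \(W_L(b)\) and \(W_R(b)\) form the chain \(z_0 < \cdots < z_m\) and so the two concatenated pieces of \(W\) share only \(x = z_{i-1}\). Since \(W \neq W_L(b)\) (they first differ after \(x\), where \(W_L(b)\) takes \(e_U\) and \(W\) takes \(e_V\)) and \(W_L(b)\) is the leftmost witnessing path from \(x_0\) to \(b\) in~\(P\), we have \(W_L(b) \prec W\) in the \((x_0, e_{-\infty})\)-ordering. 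Unpacking this at the first divergence yields \(e_U \prec e_V\) in the \((x, e^-)\)-ordering.

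To conclude, I would verify that \(e^-\) lies, just off \(x\), in the exterior of \(\calB\). When \(x = x_0\), this is because \(e_{-\infty}\) is contained in the exterior face of \(G\), which is disjoint from \(\Int \calB\). When \(x \neq x_0\), the set \(W_L(x) \setminus \{x\}\) is arc-connected and meets \(\gamma\) only at \(x\) (its other vertices lie strictly before \(x\) on the simple path \(W_L(b)\), so they are not on \(U\); they are also not on \(V\) since the only vertices of \(V\) common to \(W_L(b)\) are \(x\) and \(y\)), so it lies in a single component of the complement of \(\gamma\); as it contains \(x_0\), which lies in the exterior face of \(G\) and hence not in \(\Int \calB\), this component is the exterior of \(\calB\). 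Then Observation~\ref{obs:when_in_gamma} applied to \(\gamma\) at \(x\) gives \(e^-_\gamma \prec e^+_\gamma\) in the \((x, e^-)\)-ordering, where \(e^+_\gamma\) follows \(e^-_\gamma\) in the counterclockwise orientation of \(\gamma\). Combined with \(e_U \prec e_V\) and \(\{e_U, e_V\} = \{e^-_\gamma, e^+_\gamma\}\), we deduce \(e_U = e^-_\gamma\) and \(e_V = e^+_\gamma\), so \(U\) is the left side and \(V\) is the right side, as required. The main obstacle is this planar-topology verification that \(e^-\) escapes \(\calB\) when \(x \neq x_0\); everything else is bookkeeping with the definitions of leftmost witnessing path and counterclockwise orientation through Observation~\ref{obs:when_in_gamma}.
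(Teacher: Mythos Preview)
Your proof is correct and follows essentially the same approach as the paper: show that the incoming edge $e^-$ at $x$ lies in the exterior of $\calB$, establish that the $W_L$-edge precedes the $W_R$-edge in the $(x,e^-)$-ordering, and combine these via Observation~\ref{obs:when_in_gamma} to pin down the counterclockwise orientation at $x$. You spell out the middle step (via the auxiliary path $W = x_0[W_L(b)]x[W_R(b)]b$) more carefully than the paper, which compresses it into the single line ``Therefore, by Observation~\ref{obs:when_in_gamma}, $e^- \prec e \prec f$''; your detour through $b$ rather than working directly with $y$ is unnecessary but harmless.
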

\begin{proof}
    The statement holds when $\calB$ is degenerate, thus, assume otherwise.
    Let $e$ be the edge of $x[W_L(y)]y$ incident to $x$, and let $f$ be the edge of $x[W_R(y)]y$ incident to $x$.
    Since $x\leq z$ in~$P$ for every element $z$ in $\partial\calB$, 
    we get that the only common element of $W_L(x) = x_0[W_L(y)]x$ and $\partial\calB$ is $x$.
    Let $e^- = e_{-\infty}$ if $x = x_0$ and let $e^-$ be the last edge of $W_L(x)$ otherwise.
    By~\ref{item:instance:e_infty}, $e^-$ is contained in the exterior of $\calB$.
    Therefore, by~\Cref{obs:when_in_gamma}, $e^- \prec e \prec f$ in the $x$-ordering. In particular, $f$ follows $e$ in the counterclockwise orientation of $\partial \calB$, so $x[W_L(y)]y$ is the left side of $\calB$ and $x[W_R(y)]y$ is the right side of $\calB$.
\end{proof}
%By definition of the consistent orientation of $\partial\calB_i$, the left side of $\calB_i$ is contained in $W_L(b)$, and the right side of $\calB_i$ is contained in $W_R(b)$.

\begin{figure}[tp]
  \begin{center}
    \includegraphics{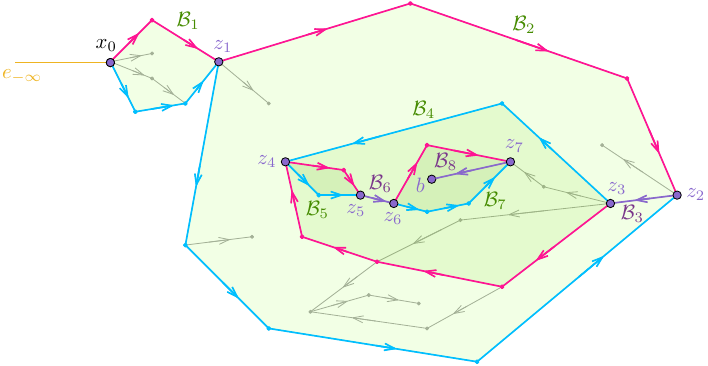}
  \end{center}
  \caption{
    We mark in purple the common edges and vertices of the paths $W_L(b)$ and $W_R(b)$, and we mark in red (resp.\ blue) the remaining edges and vertices of the path $W_L(b)$ (resp.\ $W_R(b)$).
    The purple vertices $z_1,\dots,z_7$ along with $z_0 = x_0$ and $z_8 = b_1$ form the sequence of common points of $b$.
    The shadow block $\calB_1$ is a non-degenerate block with the initial element $x_0$ and the terminal element $z_1$, the shadow block $\calB_2$ is a non-degenerate block with the initial element $z_1$ and the terminal element $z_2$, the shadow block $\calB_3$ is a degenerate block consisting of the edge $z_2z_3$, and so on.
    We obtain non-degenerate shadow blocks $\calB_1,\calB_2,\calB_4,\calB_5,\calB_7$ marked in green, and degenerate shadow blocks $\calB_3,\calB_6,\calB_8$ marked in purple as common edges of $W_L(b)$ and~$W_R(b)$.
  }
  \label{fig:shadow-blocks}
\end{figure}

%We now make some elementary but nonetheless important observations about the sequence $(\cgB_1,\dots,\cgB_m)$, which we call the \emph{sequence of blocks} of $b$.  
%Let $i \in [m]$, let $\calB = \calB_i$, let $x = \min\calB = z_{i-1}$, and let $y = \max\calB = z_i$.
Let $\calB$ be a shadow block of $b$, let $x = \min\calB$, and let $y = \max\calB$.
Let $u$ be an element of $P$ in $\partial\calB$.
If $u$ is on the left side of $\cgB$, let $e^+_L$ and $e^-_L$ be, respectively, the edges (provided they exist) immediately after and immediately before $u$ on the path $\Wleft(b)$. 
%In the case $u=x_0$ we set $e^-_L=e_{-\infty}$ and in the case $u=b$, $e^+_L$ remains undefined.
Also, if $u$ is on the right side of $\cgB$, let $e^+_R$ and $e^-_R$ be, respectively, the edges (provided they exist) immediately after and immediately before $u$ on $\Wright(b)$. 
%Again, if $u=x_0$ we set $e^-_R=e_{-\infty}$ and in the case $u=b$, $e^+_R$ remains undefined.
Note that if $u \in \{x,y\}$, then $u$ is on both sides of~$\cgB$. 
By~\cref{obs:when_in_gamma}, an edge $e$ incident to $u$ lies in $\calB$ if and only if one of the following holds (see \cref{fig:characterize-block}):
\begin{enumerateShadowBlock}
    \item $u = x$ and $e^+_L\preccurlyeq e \preccurlyeq e^+_R$ in the $x$-ordering;\label{items:leaving_shadows:x}
    \item $u = y$ and $e^-_R\preccurlyeq e \preccurlyeq e^-_L$ in the $y$-ordering;\label{items:leaving_shadows:y}
    \item $u$ is strictly on the left side and $e^+_L \preccurlyeq e \preccurlyeq e^-_L$ in the $u$-ordering;\label{items:leaving_shadows:left}
    \item $u$ is strictly on the right side and $e^-_R \preccurlyeq e \preccurlyeq e^+_R$ in the $u$-ordering.\label{items:leaving_shadows:right}
\end{enumerateShadowBlock}

\begin{figure}[tp]
  \begin{center}
    \includegraphics{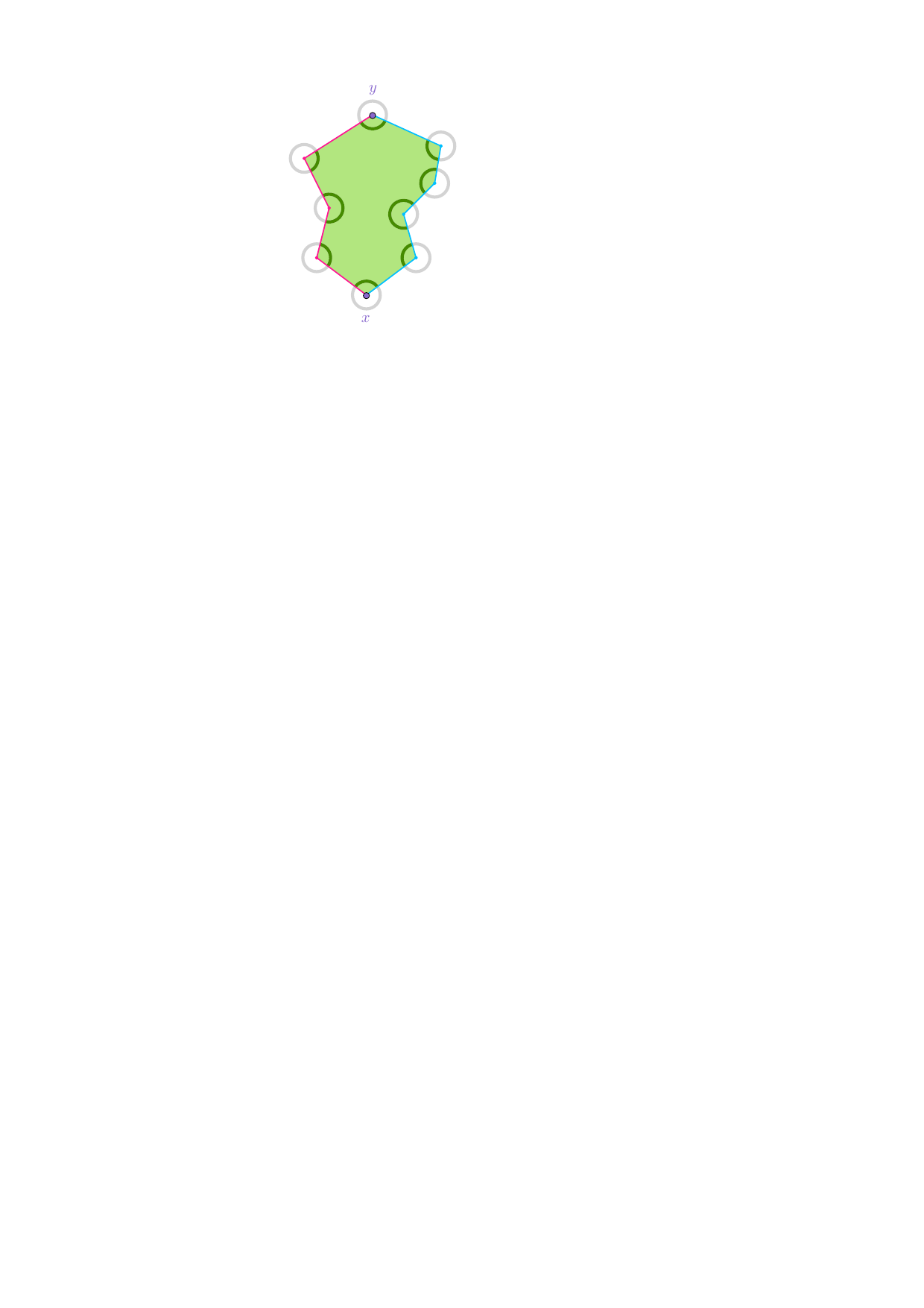}
  \end{center}
  \caption{
    For each vertex on the boundary of a non-degenerate block, we mark the angle, where the incident edges are in the interior of the block (dark shade) and in the exterior of the block (light shade).
    This illustrates, where the items \ref{items:leaving_shadows:x}--\ref{items:leaving_shadows:right} come from.
  }
  \label{fig:characterize-block}
\end{figure}

Since $x_0$ is in the exterior face of $G$, we have $x_0 \notin \Int\calB_i$ for all $i\in[m]$.
Furthermore, let $i,j \in [m]$ with $i < j$.
The boundaries of $\cgB_i$ and $\cgB_j$ have at most one common point.
In fact, their only common point can be $z_i$, and this is the case if and only if $j = i+1$.
In particular, if $j \neq i + 1$, then $\calB_i$ and $\calB_j$ are either disjoint or one is contained in the other.
% Moreover, since $x_0$ is not in the interior of $\calB_j$, and every witnessing path from $x_0$ to $z_{i-1}$ is disjoint from the boundary of $\calB_j$, it is not possible that $\calB_j$ contains $\calB_i$.
Finally, we claim that $\calB_i$ is not contained in $\calB_j$.
Suppose to the contrary that $\calB_i \subset \calB_j$.
Since $x_0 \notin \Int \calB_j$ and $z_{i-1} \in \calB_i$, by~\Cref{obs:paths_and_blocks}, $z_{j-1} = \min \calB_j \leq z_{i-1}$ in~$P$, which is a contradiction as $z_{i-1} < z_{j-1}$ in~$P$ by definition.
We conclude that either $\calB_i$ and $\calB_j$ are disjoint or $\calB_j \subset \calB_i$.
%\michal{What are we proving here? can we put it in a theorem-style environment?}
%\jedrzej{We added 'we claim that' and a conclusion.}
% This follows from the above observations and the fact that no witnessing path from $x_0$ to $z_{i-1}$ intersects the boundary of $\calB_j$.

%\heather{The previous sentence ends with "and the fact that no witnessing path from $x_0$ to $z_{i-1}$ intersects the boundary of $\calB_j$." A similar fact is used in Case 1 below: "then $z_{i+1}$ is in the interior of $\calB_{i}$, however, $z_{i+1} \leq b$ in~$P$ and $b$ is in the exterior of $\calB_{i}$, which is not possible." Later, the proof of Proposition~\ref{prop:paths_stay_in_blocks} uses similar facts. Would it be useful to state this as a fact somewhere? 
%e.g. Let $\cgB$ be a shadow block and let $a,b\in P$. If there is a witnessing path from $a$ to $b$ that intersects the boundary of $\cgB$, then $a \leq \max\cgB$ and $\min\cgB \leq b$. OR 
%Let $\cgB$ be a shadow block and let $a,b\in P$ with $a\leq b$. If one is in the interior of $\cgB$ and the other is in the exterior, then $a \leq \max\cgB$ and $\min\cgB \leq b$. 
%}
%\jedrzej{We added~\Cref{obs:paths_and_blocks}, modified the argument above, in Case 1. If you can spot more places with arguments like this please let us know (or implement it yourself :)).}

Let $i \in [m-1]$.
Let $e^+_L$ and $e^-_L$ be, respectively, the edges immediately after and immediately before $z_{i}$ on the path $\Wleft(b)$ and let $e^+_R$ and $e^-_R$ be, respectively, the edges immediately after and immediately before $z_{i}$ on $\Wright(b)$.
Since $i<m$, we know $z_i\neq b$, and therefore, $z_i< b$ in~$P$.
We have two cases (illustrated in~\cref{fig:blocks_cases}):

Case 1:  $b$ is in the exterior of $\cgB_i$.

We claim that in this case, the interiors of $\calB_{i}$ and $\calB_{i+1}$ are disjoint.
Since we know that $z_i$ is the only common point of the boundaries of $\calB_{i}$ and $\calB_{i+1}$, in order to prove the claim, we have to exclude the situation where $\calB_{i+1} \subset \calB_{i}$.
If $\calB_{i+1} \subset \calB_{i}$, then $z_{i+1} \in \Int \calB_i$.
Since $b \notin \calB_i$, by~\Cref{obs:paths_and_blocks}, $z_{i+1} \leq \max \calB_i = z_i$ in~$P$, which is a contradiction.
Therefore, as claimed, the interiors of $\calB_{i}$ and $\calB_{i+1}$ are disjoint.
Moreover, by simple induction, we obtain that $\calB_i$ and $\calB_j$ are disjoint for every $j$ with $i + 1 < j \leq m$.

Furthermore, we claim that 
$e^-_L \prec e^+_L \preccurlyeq e_R^+ \prec e^-_R$
%$e^-_R \preccurlyeq e^-_L \prec e^+_L \preccurlyeq e^+_R$ 
in the $z_i$-ordering.
Indeed, by~\ref{items:leaving_shadows:y} for $\calB_i$ and each $e\in\set{e^+_L,e^+_R}$, we obtain $ e^-_L \prec e \prec e^-_R$ in the $z_i$-ordering.
Moreover, by~\ref{items:leaving_shadows:x} for $\calB_{i+1}$ and $e^-_L$, we obtain $e^+_R \prec e_L^- \prec e^+_L$ in the $z_i$-ordering, in other words, $e^-_L \prec e^+_L \preccurlyeq e_R^+$ in the $z_i$-ordering.
Combining the above, we obtain $e^-_L \prec e^+_L \preccurlyeq e_R^+ \prec e^-_R$ in the $z_i$-ordering as desired.
%, which is equivalent to $e^-_R \preccurlyeq e^-_L \prec e^+_L \preccurlyeq e^+_R$ in the $z_i$-ordering as desired.
%\jedrzej{Make sure that we need this order of edges (it's not the most natural one).}

% In this case, it follows that if $W$ is a witnessing path from $z_i$ to $b$, all 
% edges and vertices of $W$, except the starting point $z_i$ are in the exterior
% of $\cgB_i$. 
% We apply this reasoning twice, first when $W=z_i[W_L]b$, and second
% when $W=z_i[W_R]b$. 
% In particular, we conclude that the boundaries of $\cgB_i$ and $\cgB_{i+1}$ stay disjoint apart from $z_i$. 
% Note also that $\cgB_{i+1}$ cannot contain $\cgB_i$.
% Indeed, since $x_0$ is in the exterior face of $P$, every witnessing path from
% $x_0$ to $z_i$ would intersect the boundary of $\cgB_{i+1}$ but all the elements 
% at the boundary of $\cgB_{i+1}$ are 
% greater than $z_i$ in~$P$.
% We conclude that (1)~$z_i$ is the only point in the plane common to 
% $\cgB_i$ and $\cgB_{i+1}$; and (2)~if $i+1<j\le m$, then $\cgB_i$ and $\cgB_j$ are disjoint.

Case 2:  $b$ is in the interior of $\cgB_i$.

We claim that in this case, the interiors of $\calB_{i}$ and $\calB_{i+1}$ are not disjoint, and thus, $\calB_{i+1} \subset \calB_i$.
If $\calB_{i}$ and $\calB_{i+1}$ have disjoint interiors, then $z_{i+1} \notin \Int \calB_i$.
Since $b \in \calB_i$, by~\Cref{obs:paths_and_blocks}, $z_{i+1} \leq \max \calB_i = z_i$ in~$P$, which is a contradiction.
Therefore, as claimed, $\calB_{i+1} \subset \calB_i$.
Moreover, by simple induction, we obtain that $\calB_j \subset \calB_i$ contains for every $j$ with $i + 1 < j \leq m$.
Furthermore, again by~\ref{items:leaving_shadows:x} and~\ref{items:leaving_shadows:y}, $e^-_R \prec e^+_L \preccurlyeq e^+_R \prec e^-_L$ in the $z_i$-ordering.

\begin{figure}[tp]
  \begin{center}
    \includegraphics{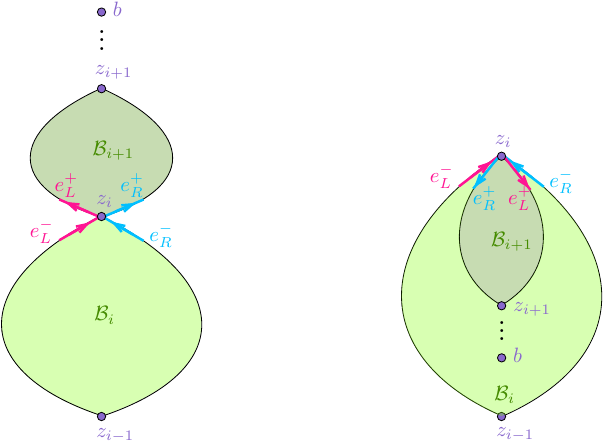}
  \end{center}
  \caption{
    In some figures, for simplicity, instead of drawing all elements and edges, we depict witnessing paths as curves and mark only some \q{important} elements and edges.
    Here, on the left-hand side, we depict the case when $b$ is in the exterior of $\calB_i$, and on the right-hand side, we depict the case when $b$ is in the interior of $\calB_i$.
    In the latter case, $z_i$ is a reversing element.
}
  \label{fig:blocks_cases}
\end{figure}

% In this case, it follows that if $W$ is a witnessing path from $z_i$ to $b$, all 
% edges and vertices of $W$, except the starting point $z_i$ are in the interior
% of $\cgB_i$.  Again, we apply this reasoning twice, first when $W=z_iW_Lb$, and second
% when $W=z_iW_Rb$.  Now we conclude that (1)~$z_i$ is the only point in the plane common to 
% the boundaries of $\cgB_i$ and $\cgB_{i+1}$; (2)~all points in the plane belonging
% to $\cgB_{i+1}$, except $z_i$, are in the interior of $\cgB_i$; and (3)~if $i+1<j\le m$, 
% then all points of $\cgB_j$ are in the interior of $\cgB_i$.

When the second case holds, i.e., $b$ is in the interior of $\cgB_i$, we call the
element $z_i$ a \emph{reversing element} of $b$.  The 
number of reversing elements in the sequence of common elements
of $b$ is the \emph{shadow depth} of $b$, denoted $\sd(b)$.
Note that $b$ may have no reversing elements and then, the shadow depth of $b$ is $0$.

Next, when $\sd(b)=r$, we define the sequence $(\shad_0(b),\dots,\shad_r(b))$
of sets called the \emph{shadow sequence} of $b$.
% First, let $\calB_{m+1}$ be a one element block containing $b$ with $z_{m+1} = b$.
First, let $(i_1,\dots,i_r)$ be the subsequence of $(1,\dots,m-1)$ determined by
the subscripts of the reversing elements of $b$.
We expand this sequence by adding $i_0=0$ at the beginning and adding $i_{r+1}=m$ at the end.
For each $j \in \{0,\dots,r\}$, we then set
\[
  \shad_j(b) = \bigcup_{\ \ \, i_{j}\, <\, i\, \le\, i_{j+1}} \cgB_i.
\] 
Also, we refer to $(\cgB_{i_{j}+1},\ldots, \cgB_{i_{j+1}})$ as the \emph{sequence of blocks} of $\shad_j(b)$%\michal{\(j\)-shadow undefined} \piotr{Fixed.}
.
We call $\cgB_{i_{j+1}}$ the \emph{terminal block} of $\shad_j(b)$.
Also, we call the element
call $z_{i_{j}}$ the \emph{initial element} of $\shad_j(b)$,
and we call $z_{i_{j+1}}$ the \emph{terminal element} of $\shad_j(b)$.
Additionally, we define $\shad_{r+1}(b) = \emptyset$.
See an example in \cref{fig:shadows}.

\begin{figure}[tp]
  \begin{center}
    \includegraphics{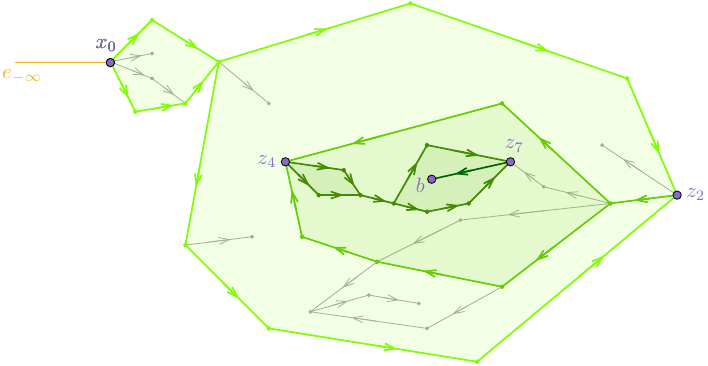}
  \end{center}
  \caption{
    %The shadows of the element $b$. \michal{shadows undefined}
    %Each shadow is marked with a different shade of green.
    There are three reversing elements of $b$, namely, $z_2,z_4,z_7$. 
    Hence, $\sd(b) = 3$.
    $\set{\shad_j(b)}_{j\in[3]}$ are depicted with various shades of green.
    Recall that in~\Cref{fig:shadow-blocks}, we discussed the shadow blocks of $b$.
    In particular, $\shad_0(b) = \calB_1 \cup \calB_2$, $\shad_1(b) = \calB_3 \cup \calB_4$, $\shad_2(b) = \calB_5 \cup \calB_6 \cup \calB_7$, and $\shad_3(b) = \calB_8$, and finally, $\shad_4(b) = \emptyset$.
    % \jedrzej{We can still consider replacing this one with~\cref{fig:shadows-alternative} but I quite like this one in the current version.}
  }
  \label{fig:shadows}
\end{figure}

% \begin{figure}[tp]
%   \begin{center}
%     \includegraphics{figs/shadows-alternative.pdf}
%   \end{center}
%   \caption{
%   \jedrzej{Alternative, to think.}
%   \jedrzej{Name some elements in the bottom figure}
%   }
%   \label{fig:shadows-alternative}
% \end{figure}

% These
% definitions are illustrated in Figure~\ref{fig:shadows}.  Note that this
% is our familiar poset from Figure~\ref{fig:setup}, with certain elements relabeled.

% ----------------------------
% \begin{figure}[!h]
%   \begin{center}
%     \includegraphics[scale=.8]{figs/msbt-1S.pdf}
%   \end{center}
%   \caption{
%     The sequence of common points of $z=z_7$ is $(z_0,\dots,z_7)$.  
%     The shadow depth of $z$ is $3$, and its reversing elements are $z_2$, $z_4$, 
%     and $z_6$.  Shadow blocks $\cgB_3,\cgB_5,\cgB_7$ are degenerate, while 
%     $\cgB_1,\cgB_2,\cgB_4,\cgB_6$ are non-degenerate.  
%     In this figure, 
%     the black and red edges are oriented left-to-right in the plane, 
%     while the blue and green edges are oriented right-to-left.
%   } 
%   \label{fig:shadows}
% \end{figure}
% ----------------------------

We continue with a series of statements concerning the combinatorics of shadow blocks and shadows.
\Cref{obs:paths_and_blocks} immediately gives the following facts.

% ----------------------------
\begin{obs}\label{prop:paths_stay_in_blocks}
     Let $\cgB$ be a shadow block and let $u$ and $v$ be elements of $P$ with $u \leq v$ in~$P$ and $u \not\leq \max \calB$ in~$P$.
     Then, $u \in \cgB$ if and only if $v \in \cgB$.
     More precisely, either $u,v \in \Int \calB$ or $u,v \notin \calB$.
\end{obs}

\begin{obs}\label{obs:equivalence_for_shadows}
    Let $b \in B$, let $u$ and $v$ be elements of $P$, and let $j$ be a nonnegative integer.
    Assume that $u \not\leq b$ and $u \leq v$ in~$P$.
    Then, $u \in \shad_j(b)$ if and only if $v \in \shad_j(b)$.
    More precisely, either $u,v \in \Int \shad_j(b)$ or $u,v \notin \shad_j(b)$.
\end{obs}
% \begin{proof}
%     It suffices to note that since $a \not\leq y$ in~$P$, every witnessing path from $a$ to $b$ in~$P$ has to stay disjoint from the boundary of $\cgB$.
% \end{proof}

The next proposition is an abstraction of two corollaries stated right after the proof.
Namely, we show that if a witnessing path has both endpoints in a shadow block (or in $\shad_j(b)$ for some $b\in B$ and $j\in\set{0,\ldots,\sd(b)}$), then the path lies entirely in the shadow block (or $\shad_j(b)$, respectively).
% \michal{
% We can reformulate the proposition and the corollaries to say that the set of points of \(P\) in \(\calD\)/\(\calB\)/\(\shad_j(b)\) induces a convex subposet of \(P\).}
% \jedrzej{We think that this makes applications more clumsy. Also the current statement is more precise (imagine an edge leaving the region and going back).}
\begin{proposition}\label{prop:witnessing-paths-in-shadow-blocks-abstract}
  Let $b \in B$ and let $j$ be a nonnegative integer.
  Let $(\calD_1,\dots,\calD_m)$ be a subsequence of consecutive elements of the sequence of blocks of $\shad_j(b)$ and let $\calD = \bigcup_{i \in [m]} \calD_i$.
  Let $W$ be a witnessing path in~$P$ with both endpoints in $\calD$.
  Then all edges of $W$ lie in $\cgD$. 
\end{proposition}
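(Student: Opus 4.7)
I plan to argue by contradiction: I assume some edge of \(W\) is not contained in \(\calD\), and I will construct a witnessing path from \(x_0\) to \(b\) that is strictly to the left of \(W_L(b)\) (or strictly to the right of \(W_R(b)\)), contradicting the leftmost (resp.\ rightmost) property of that path.

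Before the main argument, I will record two preparatory facts. Topologically, for any shadow block \(\calE\) and any element \(w \in \Int \calE\), every edge of \(G\) incident to \(w\) is contained in \(\calE\), because such an edge starts in the open region \(\Int\calE\) and, by planarity, can only cross \(\partial\calE\) at a vertex of \(G\); in particular, a witnessing path can enter or leave a block only through a vertex of its boundary. Structurally, since \((\calD_1,\ldots,\calD_m)\) are consecutive blocks of the same shadow \(\shad_j(b)\), the intermediate common points \(z_{a+1},\ldots,z_{a+m-1}\) are not reversing elements of \(b\); by the Case~1 analysis in \Cref{ssec:shadows}, the blocks \(\calD_i\) have pairwise disjoint interiors and \(\partial\calD\) is contained in \(z_a[W_L(b)]z_{a+m} \cup z_a[W_R(b)]z_{a+m}\).

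Let \(w_{k-1}w_k\) be the first edge of \(W\) not contained in \(\calD\). By the topological fact applied to the block containing \(w_{k-1}\), the vertex \(w_{k-1}\) lies on \(\partial\calD\); by the structural fact, \(w_{k-1}\) lies on \(W_L(b)\) or on \(W_R(b)\). I assume without loss of generality that \(w_{k-1}\in W_L(b)\), the other case being symmetric. By \cref{prop:W-consistent}\ref{prop:W-consistent:left}, \(W_L(w_{k-1}) = x_0[W_L(b)]w_{k-1}\). Using the characterizations \ref{items:leaving_shadows:x}--\ref{items:leaving_shadows:right} at \(w_{k-1}\), the hypothesis that \(w_{k-1}w_k\) is not in \(\calD\) will translate into
\[
  w_{k-1}w_k \prec e^+_L \quad \text{in the } (w_{k-1}, e^-_L)\text{-ordering},
\]
where \(e^+_L\) and \(e^-_L\) are the edges of \(W_L(b)\) leaving and entering \(w_{k-1}\) (with \(e^-_L\) replaced by \(e_{-\infty}\) when \(w_{k-1}=x_0\)). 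This requires a short case analysis depending on whether \(w_{k-1}\) lies strictly on the left side of a single block \(\calD_i\), is an intermediate common point (Case~1), or is an endpoint \(z_a\) or \(z_{a+m}\) potentially shared with a block outside \(\calD\) (Case~2).

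To produce the promised witnessing path, I let \(w_\ell\) be the first vertex of \(w_k[W]v\) that lies in \(\calD\); such a \(w_\ell\) exists because \(v \in \calD\). The topological fact forces \(w_\ell \in \partial\calD\), so \(w_\ell\) lies on some \(W_\bullet(b) \in \{W_L(b), W_R(b)\}\) and in particular \(w_\ell \leq b\) in \(P\). The concatenation
\[
  W^\ast \;=\; x_0[W_L(b)]w_{k-1}\,\cdot\,w_{k-1}[W]w_\ell\,\cdot\,w_\ell[W_\bullet(b)]b
\]
is an upward walk in \(P\), and because every step is strictly upward it has no repeated vertex, so \(W^\ast\) is a genuine witnessing path from \(x_0\) to \(b\). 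The paths \(W^\ast\) and \(W_L(b)\) agree along \(x_0[W_L(b)]w_{k-1}\) and diverge at \(w_{k-1}\) along the edges \(w_{k-1}w_k\) and \(e^+_L\), respectively; combined with the displayed inequality this gives \(W^\ast \prec W_L(b)\) in the \((x_0,e_{-\infty})\)-ordering, contradicting the leftmost property of \(W_L(b)\). The main technical obstacle I anticipate is precisely the case analysis for the local angle computation at \(w_{k-1}\), in particular when \(w_{k-1}\) is a reversing endpoint \(z_a\) or \(z_{a+m}\); there one must combine the angle descriptions \ref{items:leaving_shadows:x}--\ref{items:leaving_shadows:right} for the two blocks meeting at that point (one inside \(\calD\), the other outside) to confirm that the only edges outside \(\calD\) at \(w_{k-1}\) lie strictly to the left of \(W_L(b)\) or strictly to the right of \(W_R(b)\) in the appropriate \((w_{k-1},\cdot)\)-ordering.
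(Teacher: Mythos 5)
Your overall strategy is the same as the paper's (pick the exit vertex, use the angle characterizations \ref{items:leaving_shadows:x}--\ref{items:leaving_shadows:right} there, and contradict extremality of $W_L(b)$ or $W_R(b)$ with a detour path), but there is a genuine gap at exactly the step you flag as the ``main technical obstacle''. Your claimed dichotomy --- that every edge leaving $\calD$ at a boundary vertex is strictly left of $W_L(b)$ or strictly right of $W_R(b)$ --- is false when the exit vertex is $\min\calD_1$ and this element is a \emph{non-reversing} common point, i.e.\ when $\calD_1$ is not the first block of $\shad_j(b)$ and the preceding block $\calB'$ of the same shadow lies outside $\calD$. At such a vertex the clockwise order is $e_L^-\prec e_L^+\preccurlyeq e_R^+\prec e_R^-$, and by \ref{items:leaving_shadows:y} the edges strictly between $e_R^-$ and $e_L^-$ go into $\Int\calB'$: they are outside $\calD$, yet in the $(w_{k-1},e_L^-)$-ordering they come \emph{after} $e_L^+$ and in the $(w_{k-1},e_R^-)$-ordering \emph{before} $e_R^+$, so they are neither left of $W_L(b)$ nor right of $W_R(b)$. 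Consequently your path $W^\ast$ need not be left of $W_L(b)$ (nor right of $W_R(b)$), and no contradiction with extremality is available; indeed a path diving into $\calB'$ and re-emerging is perfectly compatible with $W_L(b),W_R(b)$ being extremal. (The symmetric escape at $\max\calD_m$ into the following block, when that common point is non-reversing, also evades your dichotomy, though there an easy comparability argument closes it; and if $w_{k-1}=b$ there is no $e_L^+$ at all.)

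What rules out the dive into $\calB'$ is not planarity-plus-extremality but the order relation, and this is the extra case in the paper's proof that your proposal is missing: since $\min\calD_1$ is non-reversing, $\Int\calB'$ and $\Int\calD_1$ are disjoint and, more generally, $\calD$ is disjoint from $\Int\calB'$; hence a witnessing path that enters $\Int\calB'$ right after $w_{k-1}$ and later returns to $\calD$ must meet $\partial\calB'$ at some element $w\neq w_{k-1}$, and every element of $\partial\calB'$ satisfies $w\le\max\calB'=w_{k-1}$, which together with $w_{k-1}<w$ yields a directed cycle in $P$ --- a contradiction of a different nature than the one your construction aims for. Adding this case (and the trivial treatment of exits at $\max\calD_m$) would make your argument essentially coincide with the paper's proof.
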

\begin{proof}
    We argue by contradiction.  
    Let $u$ and $v$ be elements in the intersection of $W$ and $\partial\calD$ with $u \leq v$ in~$P$ such that $u[W]v$ is in the exterior of $\calD$ except for the elements $u$ and $v$.
    For each $i \in [m]$, let $x_i = \min \calD_i$.
    Additionally, let $x=x_1$ and $y=\max\cgD_m$.
    Then $x\le u< v\le y$ in~$P$.  
    In particular, $u \neq y$.
    Let $W'$ be a witnessing path from $v$ to $y$ in~$P$.
    For convenience, let $U = u[W]v[W']y$.
    Let $e$ be the first edge of $u[W]v$.
    Let $i \in [m]$ be the least integer such that $u \in \calD_i$ and let $\calB = \calD_i$.
  
  By~\Cref{prop:seemingly-obvious}, the left side of $\cgB$ is contained in 
  $\Wleft(y)$ and the right side of $\cgB$ is contained in $\Wright(y)$. 
  If $u$ is on the left side of $\cgB$, let
  $e^+_L$ and $e^-_L$ be, respectively (provided they exist), the edges immediately after and
  immediately before $u$ on the path $\Wleft(y)$. 
  Of course, in the case $u=x_0$ we set $e^-_L=e_{-\infty}$. 
  Also, if $u$ is on the
  right side of $\cgB$, let $e^+_R$ and
  $e^-_R$ be, respectively (provided they exist), the edges immediately after and immediately
  before $u$ on $\Wright(y)$. 
  Again, if $u=x_0$ we set $e^-_R=e_{-\infty}$. 
  Note that if $u=x_i$, then $u$ is on both
  sides of $\cgB$. 

    First, assume that 
    \begin{equation} \label{eq:on-the-left}
        \text{$u$ is on the left side of $\calB$ and $e_L^- \prec e \prec e_L^+$ in the $u$-ordering.}
    \end{equation}
    In particular, $e \prec e^+_L$ in the $(u,e^-_L)$-ordering. 
    It follows that $x_0[W_L(y)]u[U]y$ is left of $W_L(y)$, which is a contradiction.

    Next, symmetrically assume that 
    \begin{equation} \label{eq:on-the-right}
        \text{$u$ is on the right side of $\calB$ and $e_R^+ \prec e \prec e_R^-$ in the $u$-ordering.}
    \end{equation}
    In particular, $e^+_R \prec e$ in the $(u,e^-_R)$-ordering.
    It follows that $x_0[W_R(y)]u[U]y$ is right of $W_R(y)$, which is a contradiction. 

    Now, assume that neither~\eqref{eq:on-the-left} nor~\eqref{eq:on-the-right} hold.
    If $u$ was strictly on the left side of $\cgB$, then since $e$ is not in $\cgB$, by \ref{items:leaving_shadows:left},~\eqref{eq:on-the-left} would hold.
    If $u$ was strictly on the right side of $\cgB$, then since $e$ is not in $\cgB$, by \ref{items:leaving_shadows:right},~\eqref{eq:on-the-right} would hold.
    If $i \neq 1$ and $u = x_i$, then~\ref{items:leaving_shadows:x} applied to $\calD_i$ and~\ref{items:leaving_shadows:y} applied to $\calD_{i-1}$, one of~\eqref{eq:on-the-left} and~\eqref{eq:on-the-right} would hold.
    If $i = 1$ and $u = x_1 = x$ and $x$ was a reversing element, then $e^-_R \prec e^+_L \prec e^+_R \prec e^-_L$ in the $x$-ordering, and since $e$ is not in $\calB$, by \ref{items:leaving_shadows:x}, $e_R^+ \prec e \prec e_L^+$ in the $x$-ordering; altogether,~\eqref{eq:on-the-left} or~\eqref{eq:on-the-right} or both hold.
    Therefore, we may assume that $i = 1$, $u = x$, and $x$ is not a reversing element.
    In this case, $e_L^- \prec e^+_L\prec e^+_R\prec e^-_R$ in the $x$-ordering.
    Since $e$ is not in $\calB$, by \ref{items:leaving_shadows:x}, we obtain that $e_R^+ \prec e \prec e_L^+$.
    It follows that one of the three holds:~\eqref{eq:on-the-left} or~\eqref{eq:on-the-right} or $e^-_R\prec e \prec e^-_L \prec e_R^-$ in the $x$-ordering.
    Thus, we assume the latter.
    In particular, $\calB$ is not the first block in the sequence of blocks of $y$, and the preceding block $\calB'$ is non-degenerate.
    By~\ref{items:leaving_shadows:y}, $e$ is in the interior of $\calB'$.
    Since $x$ is not reversing, the interiors of $\calB$ and $\calB'$ are disjoint.
    Therefore, $u[W]v$ must intersect the boundary of $\calB'$.
    This produces a cycle in~$P$, which is a contradiction.
\qedhere

\end{proof}

As indicated before,~\Cref{prop:witnessing-paths-in-shadow-blocks-abstract} yields the following two statements.

\begin{corollary}\label{cor:path-in-block}
    Let $\cgB$ be a shadow block, and let $W$ be a witnessing path in~$P$ with both endpoints in $\cgB$.
    Then all edges of $W$ lie in $\cgB$. 
\end{corollary}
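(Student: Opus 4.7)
The plan is to derive this as an immediate special case of \cref{prop:witnessing-paths-in-shadow-blocks-abstract}. By the definition of a shadow block, $\calB$ is a shadow block of some element $b \in B$, so $\calB = \calB_i$ for some index $i$ in the sequence of blocks of $b$. Since the sequence of blocks of $b$ is partitioned among the sequences of blocks of $\shad_0(b),\dots,\shad_{\sd(b)}(b)$, there is some $j \in \{0,\dots,\sd(b)\}$ such that $\calB$ belongs to the sequence of blocks of $\shad_j(b)$.

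The only thing to do is to verify that a single block forms a valid choice of ``subsequence of consecutive elements'' for \cref{prop:witnessing-paths-in-shadow-blocks-abstract}. Taking the subsequence $(\calD_1) = (\calB)$ of length one, we obtain $\calD = \calB$. Applying the proposition to the witnessing path $W$, whose endpoints lie in $\calD = \calB$ by hypothesis, yields that every edge of $W$ lies in $\calB$, which is exactly the conclusion.

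There is no real obstacle here; the substantive argument already lives in \cref{prop:witnessing-paths-in-shadow-blocks-abstract}, and the corollary is a bookkeeping step. If desired, the proof can be written in a single sentence invoking the proposition with a trivial (length-one) subsequence.
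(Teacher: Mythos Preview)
Your proposal is correct and matches the paper's approach exactly: the paper states this corollary immediately after \cref{prop:witnessing-paths-in-shadow-blocks-abstract} without a separate proof, indicating it is the special case obtained by taking the length-one subsequence $(\calD_1)=(\calB)$.
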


\begin{corollary}\label{cor:path-in-shadow}
    Let $b \in B$ and let $j$ be a nonnegative integer.
  Let $W$ be a witnessing path in~$P$ with both endpoints in $\shad_j(b)$.
  Then all edges of $W$ lie in $\shad_j(b)$. 
\end{corollary}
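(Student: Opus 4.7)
The plan is to derive this as an immediate specialization of Proposition~\ref{prop:witnessing-paths-in-shadow-blocks-abstract}. Recall that $\shad_j(b)$ is defined as the union of the blocks in its sequence of blocks $(\cgB_{i_j+1},\ldots,\cgB_{i_{j+1}})$. This entire sequence is trivially a subsequence of consecutive elements of itself, so Proposition~\ref{prop:witnessing-paths-in-shadow-blocks-abstract} applies directly.

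Concretely, I would set $(\calD_1,\ldots,\calD_m) = (\cgB_{i_j+1},\ldots,\cgB_{i_{j+1}})$, so that $\calD = \bigcup_{i \in [m]} \calD_i = \shad_j(b)$ by the definition in~\Cref{ssec:shadows}. Given a witnessing path $W$ in $P$ with both endpoints in $\shad_j(b) = \calD$, Proposition~\ref{prop:witnessing-paths-in-shadow-blocks-abstract} yields that all edges of $W$ lie in $\calD = \shad_j(b)$, which is exactly the conclusion of the corollary.

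There is no real obstacle here: the corollary is literally the $m = i_{j+1}-i_j$ case of the proposition, and all substantial work (the case analysis on which side of a shadow block the endpoint $u$ lies, together with the use of~\ref{items:leaving_shadows:x}--\ref{items:leaving_shadows:right} and the $(u,e^-)$-orderings) has already been carried out in the proof of the proposition. Hence the proof is a single line invoking Proposition~\ref{prop:witnessing-paths-in-shadow-blocks-abstract} with the choice of subsequence described above.
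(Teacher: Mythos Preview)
Your proposal is correct and matches the paper's approach exactly: the paper states Corollary~\ref{cor:path-in-shadow} (together with Corollary~\ref{cor:path-in-block}) immediately after Proposition~\ref{prop:witnessing-paths-in-shadow-blocks-abstract} with the remark that the proposition yields both statements, and your instantiation of $(\calD_1,\ldots,\calD_m)$ as the full sequence of blocks of $\shad_j(b)$ is precisely the intended specialization.
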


When $\calB$ is a shadow block and $u \in B$, then we somehow control the behavior of $W_L(u)$ and $W_R(u)$ outside $\calB$.
In~\Cref{prop:uWLWR}, we show that $\min \calB$ lies in both of these paths.
In particular, by~\Cref{prop:W-consistent}, 
$W_L(\min \calB)$ is a prefix of $W_L(u)$ and 
$W_R(\min \calB)$ is a prefix of $W_R(u)$.
Moreover, in~\Cref{prop:uWLWR_max}, we show that if we also assume $\max \calB < u$ in~$P$, then $\max \calB$ lies in both $W_L(u)$ and $W_R(u)$.
\Cref{prop:top-shadows} is an adjustment of this statement to shadows.

\begin{proposition}\label{prop:uWLWR}
  Let $\cgB$ be a shadow block and let $u \in B$. 
  Assume that $u \in \cgB$. 
  Then $\min\cgB$ is in both $\Wleft(u)$ and $\Wright(u)$.
\end{proposition}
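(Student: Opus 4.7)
The plan is to prove that $x := \min \calB$ lies in $W_L(u)$ by induction on the length of $W_L(u)$ (the argument for $W_R(u)$ is entirely symmetric). Write $\calB = \calB_j$ as a shadow block of some $b$, with $x = z_{j-1}$ and $y = z_j$. The base case $|W_L(u)| = 0$ forces $u = x_0$; since $u \in \calB$ and $x_0$ lies in the exterior face of $G$ (hence not in $\Int \calB$), we get $x_0 \in \partial \calB$, so $x \leq x_0$ in $P$, and minimality of $x_0$ gives $x_0 = x$. For the inductive step, let $w$ be the first vertex of $W_L(u)$ lying in $\calB$; this $w$ exists because $u \in \calB$. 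If $w \neq u$, then the $x_0$-consistency of leftmost witnessing paths (\cref{prop:W-consistent}\ref{prop:W-consistent:left}) forces $W_L(w) = x_0[W_L(u)]w$, a proper prefix of $W_L(u)$, hence strictly shorter; applying the inductive hypothesis to $w \in \calB$ yields $x \in W_L(w) \subseteq W_L(u)$.

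The delicate case is $w = u$: the path $W_L(u)$ enters $\calB$ only at its terminal vertex. Since the penultimate vertex of $W_L(u)$ lies outside $\calB$ and planarity precludes an edge of $G$ from crossing $\partial \calB$ except at a shared vertex, $u$ itself must lie on $\partial \calB$. If $u$ lies on the left side of $\calB$ (a subcase that absorbs $u = x$ and $u = y$), then $u \in W_L(y)$ and consistency gives $W_L(u) = x_0[W_L(y)]u$, which contains $x$. The main obstacle is therefore showing that $u$ cannot lie strictly on the right side of $\calB$; the plan is to derive a contradiction. Pick any witnessing path $U$ from $x$ to $u$ inside $\calB$ (it exists because $x \leq u$ by \cref{obs:paths_and_blocks} applied to $(x_0, u)$, and is contained in $\calB$ by \cref{cor:path-in-block}), and form $P^* := x_0[W_L(b)]x[U]u$. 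Planarity, together with the fact that $W_L(b) \cap \calB$ equals precisely the left side $x[W_L(b)]y$, forces $x_0[W_L(b)]x$ to meet $\calB$ only at $x$, so $P^*$ is a simple witnessing path from $x_0$ to $u$. Leftmost gives $W_L(u) \preccurlyeq P^*$; the single-entry assumption rules out $x \in W_L(u)$, so $v^* := \gce(W_L(u), P^*)$ lies strictly before $x$ on the $W_L(b)$-portion of $P^*$, and the next edge of $W_L(u)$ at $v^*$ is strictly left of the next edge of $W_L(b)$.

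To finish, form $Q := x_0[W_L(u)]u[W_R(b)]y$; this is simple because $W_L(u)$ meets $\calB$ only at $u$ while $u[W_R(b)]y$ lies on the right side of $\calB$. By leftmost applied to $y$, $W_L(y) \preccurlyeq Q$. But $W_L(y) = x_0[W_L(b)]y$ by consistency, so $W_L(y)$ and $Q$ agree on the common prefix up to $v^*$, after which $W_L(y)$ follows $W_L(b)$ while $Q$ follows $W_L(u)$. Since the next edge of $W_L(u)$ at $v^*$ is strictly left of the next edge of $W_L(b)$, we obtain $Q \prec W_L(y)$ in the $(x_0, e_{-\infty})$-ordering, contradicting $W_L(y) \preccurlyeq Q$. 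This eliminates the remaining subcase and completes the induction.
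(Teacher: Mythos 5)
Your proof is correct, and although it follows the same skeleton as the paper's argument (force $W_L(u)$ to hit $\partial\calB$; the left-side case is immediate from $x_0$-consistency; the right-side case is killed by exhibiting a witnessing path from $x_0$ to $y=\max\calB$ that is left of $W_L(y)$), the mechanism in the hard case is genuinely different. The paper takes any element $w$ of $W_L(u)\cap x[W_R(y)]y$, observes $x\notin W_L(w)$, and invokes \cref{prop:shortcuts} to conclude that $W_L(w)$ is left of $W_L(x)$ and hence of $W_L(y)$; an arbitrary witnessing path from $w$ to $y$ then gives the contradiction. You instead reduce, via an induction on the length of $W_L(u)$ that the paper does not need, to the case where $W_L(u)$ meets $\calB$ only at $u$, and then use leftmost-minimality twice: once against $P^*=x_0[W_L(b)]x[U]u$ to force $W_L(u)$ to branch strictly left of $W_L(b)$ before $x$, and once against $Q=x_0[W_L(u)]u[W_R(b)]y$ to contradict the minimality of $W_L(y)$. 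Your route avoids \cref{prop:shortcuts} entirely, at the price of the induction and of two explicit path constructions whose simplicity has to be verified (which you do correctly, using \cref{obs:paths_and_blocks}, \cref{cor:path-in-block}, and \cref{prop:W-consistent}). One small inaccuracy: the asserted fact that $W_L(b)\cap\calB$ equals exactly the left side $x[W_L(b)]y$ is false in general, since when $y$ is a reversing element of $b$ the path $W_L(b)$ re-enters $\Int\calB$ after $y$. Fortunately you only use the weaker, true sub-claim that the prefix $x_0[W_L(b)]x$ meets $\calB$ only in $x$: every element of $\partial\calB$ is $\ge x$ in $P$ while every element of that prefix other than $x$ is $<x$, and by planarity the prefix cannot reach $\Int\calB$ without first passing through a vertex of $\partial\calB$; so the step stands as written.
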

% ----------------------------
%\heather{Back when we defined blocks/interior/exterior, should we clarify that $u\in \cgB$ means $u$ is on the boundary of $\cgB$ or in the interior?}
%\piotr{Done.}
\begin{proof}
  We set $x=\min\cgB$ and $y=\max\cgB$.  
  We show that $x$ is in $\Wleft(u)$.
  The proof that $x$ is in $\Wright(u)$ is symmetric.
  Since $x_0$ is in the exterior face of $G$ and $u$ is in $\cgB$, 
  the path $\Wleft(u)$ intersects $\partial\cgB$. 
  Recall that $\partial\cgB$ is the union of $x[W_L(y)]y$ and $x[W_R(y)]y$.
  In particular, every element $w$ in $\partial\cgB$ satisfies $x \leq w \leq y$ in~$P$.
  If $\Wleft(u)$ intersects $x[W_L(y)]y$ in an element $w$, then by the fact that $W_L(u)$ and $W_L(y)$ are $x_0$-consistent (\cref{prop:Tleft-and-Tright}.\ref{prop:W-consistent:left}), we obtain $x_0[W_L(u)]w = x_0[W_L(y)]w$, and so, $x$ lies in $W_L(u)$.
  %Since $x \leq w$ in~$P$ and $x$ belongs to $W_L(y)$, we obtain that $x$ belongs to $x_0[W_L(y)]w$, and so, to $W_L(u)$.

  Therefore, for the remainder of the proof we assume that $\Wleft(u)$ does not intersect $x[W_L(y)]y$, and so, let $w$ be an element in the intersection of $W_L(u)$ and $x[W_R(y)]y$ with $w \notin \{x,y\}$.
  We will show that this leads to a contradiction.
  Note that in this setting, $W_L(w)$ is a prefix of $W_L(u)$.
  Since $x$ is not in $W_L(w)$, the path $W_L(x)$ is not a subpath of $W_L(w)$.
  Therefore, since $x < w$ in~$P$ and by \cref{prop:shortcuts}.\ref{prop:item:shortcuts-left-tree}, $W_L(w)$ is left of $W_L(x)$.
  Since $W_L(w)$ is left of $W_L(x)$ and $W_L(x)$ is a subpath of $W_L(y)$, we obtain that $W_L(w)$ is left of $W_L(y)$.
  However, $w < y$ in~$P$, thus for every witnessing path $W$ from $w$ to $y$ in~$P$, we have $x_0[W_L(w)]w[W]y$ left of $W_L(y)$, which is a desired contradiction.
\end{proof}

\begin{proposition}\label{prop:uWLWR_max}
  Let $\cgB$ be a shadow block and let $u \in B$. 
  Assume that $u \in \cgB$ and $\max\calB < u$ in~$P$.
  Then $\max\cgB$ belongs to both $\Wleft(u)$ and $\Wright(u)$.
\end{proposition}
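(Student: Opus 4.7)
The plan is to prove $\max\calB \in W_L(u)$; the statement $\max\calB \in W_R(u)$ will follow by a symmetric argument in which every \emph{left} is replaced by \emph{right}. Write $x = \min\calB$ and $y = \max\calB$. Since $y < u$ in $P$, I would fix any witnessing path $W'$ from $y$ to $u$ in $P$ and consider the concatenation $\sigma = x_0[W_L(y)]y[W']u$, which is a witnessing path from $x_0$ to $u$. The leftmost property of $W_L(u)$ then gives $W_L(u) \preccurlyeq \sigma$ in the $(x_0,e_{-\infty})$-ordering. Setting $w = \gce(W_L(u),\sigma)$, the goal becomes to show that $w$ lies on the subpath $y[W']u$ of $\sigma$, which is equivalent to $y \in W_L(u)$. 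Using \Cref{prop:uWLWR} to place $x$ on $W_L(u)$ and the $x_0$-consistency of $W_L(u)$ and $W_L(y)$ from \Cref{prop:W-consistent}.\ref{prop:W-consistent:left}, $x$ belongs to the common prefix, so $w$ is at or past $x$ on $W_L(y)$.

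I would then suppose for contradiction that $w \in x[W_L(y)]y \setminus \{y\}$, which puts $w$ either at $x$ or strictly on the left side of $\calB$. Let $e_u$ be the edge of $W_L(u)$ immediately after $w$, let $e^+_L$ be the edge of $W_L(y)$ immediately after $w$ (on the left side of $\calB$), and let $e_0$ be the edge of the common prefix immediately before $w$, with the convention $e_0 = e_{-\infty}$ when $w = x_0$. Because $w$ is the greatest common prefix element, $e_u \neq e^+_L$, and $W_L(u) \preccurlyeq \sigma$ yields $e_u \prec e^+_L$ in the $(w,e_0)$-ordering. If $w$ is strictly on the left side, then $e_0$ coincides with the edge $e^-_L$ preceding $w$ on $W_L(y)$, and by \ref{items:leaving_shadows:left} the edges of $\calB$ incident to $w$ are those on the arc from $e^+_L$ clockwise to $e^-_L = e_0$; the open arc from $e_0$ clockwise to $e^+_L$ in the $w$-cyclic order thus lies outside $\calB$ and contains $e_u$. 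If $w = x$, then the computation inside the proof of \Cref{prop:seemingly-obvious} shows $e_0$ itself lies outside $\calB$ and $e_0 \prec e^+_L \prec e^+_R$ in the $(x,e_0)$-ordering (where $e^+_R$ is the first edge of the right side of $\calB$); combining this with \ref{items:leaving_shadows:x}, the open arc from $e_0$ to $e^+_L$ is again outside $\calB$ and traps $e_u$.

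To close the loop, I would invoke \Cref{cor:path-in-block} for $x[W_L(u)]u$, whose endpoints $x$ and $u$ both lie in $\calB$: every edge of this witnessing path, including $e_u$, must lie in $\calB$, contradicting the previous step. Hence $w$ cannot lie in $x[W_L(y)]y \setminus \{y\}$, so $w \in y[W']u$ and $y \in W_L(u)$, as required. I expect the main obstacle to be the subcase $w = x$: since $x$ belongs to both the left and the right sides of $\calB$, the edges $e_0$, $e^+_L$, and $e^+_R$ interleave in the $x$-cyclic order, and one must invoke \Cref{prop:seemingly-obvious} to place $e_0$ in the exterior arc so that the interval between $e_0$ and $e^+_L$ in the $(x,e_0)$-ordering really lies outside $\calB$; the strictly-on-the-left subcase, by contrast, is a direct unpacking of \ref{items:leaving_shadows:left}.
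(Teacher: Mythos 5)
Your proposal is correct and follows essentially the same route as the paper's proof: place $x=\min\calB$ on $W_L(u)$ via \Cref{prop:uWLWR} and $x_0$-consistency, take the greatest common prefix element $w$ of $W_L(u)$ and the concatenation through $W_L(y)$, and rule out $w\in x[W_L(y)]y\setminus\{y\}$ by playing the boundary characterizations \ref{items:leaving_shadows:x}/\ref{items:leaving_shadows:left} against \Cref{cor:path-in-block} and the leftmost property. The only difference is presentational — you phrase the contradiction as the edge $e_u$ escaping $\calB$, while the paper phrases it as the concatenated path being left of $W_L(u)$ — which uses exactly the same ingredients.
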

\begin{proof}
    We set $x=\min\cgB$, and $y=\max\cgB$.
    By \cref{prop:uWLWR}, $x$ lies in $W_L(u)$. 
    Therefore, $x_0[W_L(u)]x = x_0[W_L(y)]x$.
    %We claim that $W_L(u)$ and $W_L(y)$ coincide even further up to $y$. 
    Note that $u$ does not lie on $W_L(y)$ as this yields $u\leq y$ in~$P$ contrary to the assumption.
    Let $w = \gce(W_L(u),W_L(y))$. 
    Thus, $x\leq w$ in~$P$ and we claim that $y\leq w$ in~$P$.
    Suppose otherwise that $x\leq w< y$ in~$P$.
    By \cref{cor:path-in-block}, 
    the whole path $x[W_L(u)]u$ lies in $\calB$. 
    Let $W$ be an arbitrary witnessing path from $y$ to $u$ in~$P$.
    If $x=w$ then by \ref{items:leaving_shadows:x}, $x_0[W_L(y)]y[W]u$ is left of $W_L(u)$ which is a contradiction.
    If $x<w<y$ in~$P$ then $w$ is strictly on the left side, and so, by \ref{items:leaving_shadows:left}, again $x_0[W_L(y)]y[W]u$ is left of $W_L(u)$, which is a contradiction.
    This shows that $y$ belongs to $W_L(u)$.
    The proof that $y$ belongs to $W_R(u)$ is symmetric.
\end{proof}

\begin{proposition}\label{prop:top-shadows}
    Let $u,y \in B$ and let $j$ be a nonnegative integer with $\sd(y) = j$.
    Assume that $u \in \shad_j(y)$ and $y < u$ in~$P$.
    Then, $u$ lies in the terminal block of $\shad_j(y)$, and $y$ lies in both $W_L(u)$ and $W_R(u)$.
\end{proposition}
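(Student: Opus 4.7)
The plan is to first pinpoint $u$'s location among the shadow blocks of $\shad_j(y)$, showing that the only block that can contain $u$ is the terminal block $\calB_m$ (where $(z_0,\dots,z_m)$ is the sequence of common points of $y$, so $z_m = y$); the conclusion about $y$ lying in $W_L(u)$ and $W_R(u)$ will then follow immediately from \cref{prop:uWLWR_max}.

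First, I would note that because $j = \sd(y)$, the last reversing element of $y$ is $z_{i_j}$, and hence none of the intermediate common points $z_i$ with $i_j < i < m$ is a reversing element of $y$. By the Case~1 analysis carried out in \cref{ssec:shadows}, this means that the blocks $\calB_{i_j+1},\dots,\calB_m$ have pairwise disjoint interiors, and in fact $\calB_i$ and $\calB_\ell$ are disjoint whenever $i_j < i < \ell \leq m$. In particular, $\calB_m \cap \calB_i = \emptyset$ for every $i$ with $i_j < i < m$, so $y = z_m \in \calB_m$ while $y \notin \calB_i$ for all such $i$.

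Next, fix an index $i$ with $i_j < i < m$ and suppose towards a contradiction that $u \in \calB_i$. I would apply \cref{prop:paths_stay_in_blocks} to the comparable pair $y \leq u$ (in the roles of the observation's $u$ and $v$). Since $\max \calB_i = z_i < z_m = y$ in~$P$, we have $y \not\leq \max \calB_i$ in~$P$, so the observation yields that $y \in \calB_i$ if and only if $u \in \calB_i$. But $y \notin \calB_i$, contradicting our assumption. Since $u \in \shad_j(y) = \bigcup_{i_j < i \leq m} \calB_i$, this forces $u \in \calB_m$.

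Finally, $\calB_m$ is a shadow block with $\max \calB_m = y$, and we have $u \in \calB_m$ together with $\max \calB_m = y < u$ in~$P$, so \cref{prop:uWLWR_max} applied to $\cgB = \calB_m$ gives that $y$ belongs to both $W_L(u)$ and $W_R(u)$, completing the argument. I do not anticipate any genuine obstacle here: the whole proof is essentially an unpacking of the definitions of $\shad_j(y)$ and $\sd(y)$ combined with two observations already available from \cref{ssec:shadows}.
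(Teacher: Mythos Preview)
Your proof is correct and follows essentially the same approach as the paper: both arguments observe that $y$ lies only in the terminal block of $\shad_j(y)$, deduce that $u$ must lie in that terminal block as well, and then invoke \cref{prop:uWLWR_max}. The only minor imprecision is your claim that $\calB_i$ and $\calB_\ell$ are \emph{disjoint} for all $i_j < i < \ell \le m$ (consecutive blocks share the single point $z_i$), but this does not affect the needed conclusion $y \notin \calB_i$ for $i_j < i < m$, which follows directly since $\max \calB_i = z_i < y$.
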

\begin{proof}
    Let $\calB$ be the block of $\shad_j(b)$ such that $u \in \calB$.
    Suppose to the contrary that $\calB$ is not the terminal block of $\shad_j(y)$.
    By the construction of $\shad_j(b)$, $y$ lies only in the terminal block of $\shad_j(b)$, and so, $y \notin \calB$.
    Therefore, a witnessing path from $y$ to $u$ in~$P$ intersects $\partial \calB$.
    This yields a directed cycle in~$P$, which is a contradiction.
    Thus, $\calB$ is the terminal block of $\shad_j(u)$.
    This gives the first part of the assertion.
    To get the second part, it suffices to apply~\Cref{prop:uWLWR_max}.
\end{proof}

We conclude this subsection with an exhaustive description of what we know about $\sd(u)$ and $\shad_i(u)$ for $i\in\set{0,\ldots,\sd(u)}$ given that 
$u$ lies in a shadow block of an element $b\in B$.

\begin{proposition}\label{prop:shadow-comp}
  Let $u,b\in B$, let $j \in \{0,\dots,\sd(b)\}$, let $\cgB$ be a block of $\shad_j(b)$.
  If $u \in \calB$, then the following statements hold:
  \begin{enumerate}
    \item if $u$ is the initial element of $\shad_j(b)$, then $\sd(u)=j-1$, unless $j=0$ and $u=x_0$; \label{prop:shadow-comp:item:sd_of_base} %\michal{This corner case can be avoided with a more consistent definition of shadow depth.}
    \item if $u$ is not the initial element of $\shad_j(b)$, then $\sd(u)\ge j$; \label{prop:shadow-comp:item:sd_of_non_base}
    \item if $u$ is not the initial element of $\shad_j(b)$ and $u$ lies in $\partial\calB$, then $\sd(u) = j$;
    \label{prop:shadow-comp:item:sd_boundary_of_block}
    \item $\shad_i(u) = \shad_i(b)$ for all $i\in\set{0,\ldots,j-1}$;  \label{prop:shadow-comp:item:equality_of_small_shadows}
    \item $\shad_j(\min\calB) \subseteq \shad_j(u) \subseteq \shad_j(\max\calB) \subseteq \shad_j(b)$;  \label{prop:shadow-comp:item:inclusions_of_j_shadows}
    \item $\shad_j(u) = \shad_j(\max\calB)$ if
      and only if $\max\calB \leq u$ in~$P$.  \label{prop:shadow-comp:item:hanging_element}
  \end{enumerate}
\end{proposition}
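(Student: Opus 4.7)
The plan is to set $\calB = \calB_k$ with $i_j < k \leq i_{j+1}$, where $(z_0,\dots,z_m)$ is $b$'s sequence of common points and $z_{i_1},\dots,z_{i_r}$ its reversing elements, and then derive all six items from a unified description of $u$'s sequence of common points. The starting observation is that $\min\calB = z_{k-1}$, so \cref{prop:uWLWR} places it on both $W_L(u)$ and $W_R(u)$; combined with \cref{prop:W-consistent}, this forces $(z_0, \dots, z_{k-1})$ to be an initial segment of $u$'s sequence of common points and $\calB_1, \dots, \calB_{k-1}$ to be the first $k-1$ blocks of $u$. I will then decide which $z_\ell$ with $\ell \in [k-1]$ is reversing for $u$ using the nested-or-disjoint dichotomy established in the Case~1/Case~2 analysis preceding \cref{fig:blocks_cases}: if $z_\ell$ is reversing for $b$ (i.e.\ $\ell \in \{i_1,\dots,i_j\}$), then $\calB_k \subseteq \calB_\ell$ and a boundary check gives $u \in \Int\calB_\ell$ unless $(\ell,u)=(i_j,z_{i_j})$; if $z_\ell$ is not reversing for $b$, then $\calB_\ell$ and $\calB_k$ are essentially disjoint and $u \notin \Int\calB_\ell$. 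Hence the reversing elements of $u$ among $\{z_1,\dots,z_{k-1}\}$ are $\{z_{i_1},\dots,z_{i_{j-1}}\}$ when $u = z_{i_j}$, and $\{z_{i_1},\dots,z_{i_j}\}$ otherwise; from this (i), (ii), and (iv) follow immediately.

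For (iii) I must rule out that $u$ develops any \emph{additional} reversing elements coming from common points of $u$ lying strictly inside $\calB$. If $u \in \{z_{k-1}, z_k\}$, then $u$'s sequence of common points terminates at once and $\sd(u) = j$ is automatic. Otherwise, with $u \in \partial\calB \setminus \{z_{i_j}, z_{k-1}, z_k\}$, say on the left side of $\calB$, one has $W_L(u) = x_0[W_L(b)]u$, so any intermediate common point $w$ of $u$ past $z_{k-1}$ lies on $W_L(b)$ strictly between $z_{k-1}$ and $u$, and the block $\calB^u_\star$ of $u$'s sequence with $\max = w$ is contained in $\calB$ by \cref{cor:path-in-block}. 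The continuation of $W_L(b)$ past $w$ lies on $\partial\calB$, while the right side of $\calB^u_\star$ lies in $\Int\calB$; since $W_L(b)$ is a simple curve, it cannot re-enter $\calB^u_\star$ after exiting at $w$. Consequently $u \notin \calB^u_\star$, so $w$ is not reversing for $u$, and $\sd(u) = j$. The case where $u$ lies strictly on the right side is symmetric.

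For (v) and (vi), I will instantiate the description above with $u$ replaced by $\min\calB = z_{k-1}$ and $\max\calB = z_k$: a direct computation gives $\shad_j(z_{k-1}) = \calB_{i_j+1}\cup\cdots\cup\calB_{k-1}$ (read as $\emptyset$ when $k = i_j+1$), $\shad_j(z_k) = \calB_{i_j+1}\cup\cdots\cup\calB_k$, and $\shad_j(u)$ equals $\calB_{i_j+1}\cup\cdots\cup\calB_{k-1}$ together with the inner blocks of $u$ inside $\calB$, from which the chain of inclusions in (v) is immediate. For (vi), $\shad_j(u) = \shad_j(\max\calB)$ is equivalent to the $(j+1)$-th reversing element of $u$ being $z_k$, hence to $z_k$ being a common point of $u$; by \cref{prop:uWLWR_max} (or trivially when $u = z_k$), this is in turn equivalent to $\max\calB \leq u$ in $P$. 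The main obstacle is the simple-curve argument in (iii), which is what prevents the inner structure of $u$ inside $\calB$ from contributing extra reversing elements to $u$.
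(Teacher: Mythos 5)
Your proposal is correct in substance and, for most items, runs along the same lines as the paper's proof: you get the prefix $(z_0,\dots,z_{k-1})$ of $u$'s common points from \cref{prop:uWLWR} and $x_0$-consistency (\cref{prop:W-consistent}), you decide which prefix elements are reversing for $u$ via the nested-vs-disjoint Case~1/Case~2 dichotomy, you obtain the inclusions in (v) from \cref{cor:path-in-block} together with \cref{obs:region_containment}, and you settle (vi) via \cref{prop:uWLWR_max} plus the fact that $u\in\calB$. The genuine divergence is item (iii): the paper argues locally at a candidate reversing element, comparing the cyclic order at that vertex of the four incident edges of $W_L(u)$ and $W_R(u)$ via \ref{items:leaving_shadows:left} against the edge-order pattern that a reversing element would force; you instead argue globally that the block $\calB^u_\star$ of $u$ with maximum $w$ is contained in $\calB$, so $u$ cannot lie in its interior. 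Your global route is a legitimate and arguably cleaner alternative.

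Two soft spots in your write-up of (iii) should be repaired, though both are fixable from ingredients you already have. First, the claim that the right side of $\calB^u_\star$ lies in $\Int\calB$ is not true in general: $W_R(u)$ may run along part of the right side of $\calB$ before returning to the left side through an interior chord, so $w'[W_R(u)]w$ can meet $\partial\calB$ away from its endpoints. Second, the phrase ``cannot re-enter $\calB^u_\star$ after exiting at $w$'' presupposes that the edge of $W_L(b)$ following $w$ is outside $\calB^u_\star$, which is essentially the point at issue (if that edge pointed into $\Int\calB^u_\star$, the continuation could stay inside and $w$ would be reversing). The clean finish is: $\calB^u_\star\subseteq\calB$ by \cref{cor:path-in-block} applied to both bounding paths together with \cref{obs:region_containment}; hence $\Int\calB^u_\star\subseteq\Int\calB$, while $u$ lies on $\partial\calB$, which is disjoint from $\Int\calB$; so $u\notin\Int\calB^u_\star$ and $w$ is not reversing for $u$. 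A similar small correction applies to your formula for $\shad_j(u)$ in (v)--(vi): only an initial segment of the ``inner blocks'' of $u$ inside $\calB$ belongs to $\shad_j(u)$ when $u$ has further reversing elements inside $\calB$, but this does not affect the stated inclusions or the equivalence in (vi).
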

% ----------------------------
\begin{proof}
    Let $x=\min\cgB$ and $y=\max\cgB$, let $(z_0,\ldots,z_m)$ be the sequence of common points of~$b$.
    By definition, $x$ is a common point of $b$, so let $i$ be an integer such that $z_i = x$.
    In particular, $x$ is an element of $W_L(b) \cap W_R(b)$, hence, $W_L(x)$ is a prefix of $W_L(b)$ and $W_R(x)$ is a prefix of $W_R(b)$.
    It follows that $(z_0,\ldots,z_i)$ is the sequence of common points of $x$.
    Since $\calB$ is a block of $\shad_j(b)$ and $x = \min\calB$, exactly $j$ elements of $(z_0,\ldots,z_i)$ are reversing elements of $b$.
    For every reversing element $z$ of $b$ that is contained in $(z_0,\ldots, z_{i})$, either 
    $z$ is a reversing element of $x$, or $z=z_i=x$ and this element coincides with the initial element of $\shad_j(b)$.
    If $x$ is the initial element of $\shad_j(b)$, then either $x = x_0$ (and $j=0$) or $x$ has $j-1$ reversing elements, thus, $\sd(x)=j-1$.
    If $x$ is not the initial element of $\shad_j(b)$, then $x$ has $j$ reversing elements, so $\sd(x)=j$.
    
    \cref{prop:uWLWR} implies that $x$ is a vertex of $W_L(u) \cap W_R(u)$, thus, $x$ is also a common point of $u$, and moreover, $(z_0,\ldots,z_i)$ is a prefix of the sequence of common points of $u$.
    Therefore, every block of the sequence of shadow blocks of $b$ that precedes $\calB$ is also a shadow block of $x$, $u$, and $y$.

    If $u$ is the initial element of $\shad_j(b)$, then $u = x$, and $x$ is a reversing element of $b$, hence, $\sd(x) = \sd(u) = j-1$ (unless $j = 0$ and $x = u = x_0$), which proves~\ref{prop:shadow-comp:item:sd_of_base}.
    If $u$ is not the initial element of $\shad_j(b)$, then either $u \neq x$ or $x$ is not the initial element of $\shad_j(b)$, hence, $(z_0,\ldots,z_i)$ contains $j$ reversing elements of $u$, and so, $\sd(u) \geq j$, which proves~\ref{prop:shadow-comp:item:sd_of_non_base}.

    Next, we prove \ref{prop:shadow-comp:item:sd_boundary_of_block}. Suppose that $u$ is not the initial element of $\shad_j(b)$, but $u$ lies in $\partial\calB$.
    If $u = x$ or $u = y$, then $(z_0,\dots,z_i)$ or resp.\ $(z_0,\dots,z_i,y)$ is exactly the sequence of common elements of $u$, hence, $\sd(u) = j$.
    Now, assume that $u$ is strictly on the left side of $\calB$ (the proof in the case, where $u$ is strictly on the right side of $\calB$ is symmetric).
    We know that $\sd(u) \geq j$ and we want to prove that $\sd(u) \leq j$.
    To this end, we need to show that there are no reversing elements $z$ of $u$ with $z_i = x < z < u$ in~$P$.
    Note that $W_L(u)$ is a subpath of $W_L(b)$.
    In particular, $x[W_L(u)]u$ is a segment of $\partial\calB$.
    Since $W_R(u)$ contains $x$, by \cref{cor:path-in-block}, $x[W_R(u)]u$ is contained in $\calB$.
    Let $z$ be an element of $W_L(u) \cap W_R(u)$ with $x < z < u$ in~$P$.
    Let $e_L^+$ and $e_L^-$ be, respectively, the edges immediately after and immediately before $z$ on $W_L(u)$, and let  $e_R^+$ and $e_R^-$ be, respectively, the edges immediately after and immediately before $z$ on $W_R(u)$.
    Since $z$ lies strictly on the left side of $\cgB$ and both $e_R^-$ and $e_R^+$ lie in $\calB$, by~\ref{items:leaving_shadows:left}, $e^+_L \preccurlyeq e_R^- \preccurlyeq e_L^-$ and $e^+_L \preccurlyeq e_R^+ \preccurlyeq e_L^-$ in the $z$-ordering.
    Recall that if $z$ was a reversing element of $u$, then $e^-_R \prec e^+_L \preccurlyeq e^+_R \prec e^-_L$ in the $z$-ordering.
    These two sets of inequalities can not hold simultaneously, which shows that $z$ is not a reversing element of $u$, and so, completes the proof of \ref{prop:shadow-comp:item:sd_boundary_of_block}.

    Since the sequence of common points of $b$ and the sequence of common points of $u$ agree up to at least $x$, and thus, up to at least the initial element of $\shad_j(b)$, we obtain~\ref{prop:shadow-comp:item:equality_of_small_shadows}.

    The union of all shadow blocks of $\shad_j(b)$ that precede $\calB$ is equal to $\shad_j(x)$, hence, $\shad_j(x) \subseteq \shad_j(u)$.
    Moreover, $\shad_j(x) \cup \calB = \shad_j(y)$, thus, $\shad_j(y) \subseteq \shad_j(b)$.
    In order to prove~\ref{prop:shadow-comp:item:inclusions_of_j_shadows}, it remains to prove that $\shad_j(u) \subseteq \shad_j(y)$.
    Consider a block $\calD$ of $\shad_j(u)$ such that $x \leq \min\calD$.
    % Both sides of $\calD$ are witnessing paths from $\min\calD$. 
    % \heather{The phrasing "witnessing paths from $\min\calD$" seems non-standard. Perhaps we could remove that sentence and, instead, after the next sentence, write "Both sides of $\calD$ are subpaths of witnessing paths from $x$ to $u$."}
    We have, $x\le \min\cgD\le \max\cgD\le u$ in~$P$.
    Both sides of $\calD$ are subpaths of witnessing paths from $x$ to $u$.
    It follows from Proposition~\ref{cor:path-in-block} that all edges and vertices of the boundary of $\cgD$ are in $\cgB$.  
    Thus by~\Cref{obs:region_containment}, $\cgD\subseteq\cgB\subseteq\shad_j(y)$, which yields $\shad_j(u) \subseteq \shad_j(y)$, and completes the proof of~\ref{prop:shadow-comp:item:inclusions_of_j_shadows}.

    Finally, we prove~\ref{prop:shadow-comp:item:hanging_element}.
    If $\shad_j(y) = \shad_j(u)$, then since $y$ is on the boundary of $\shad_j(u)$, we have $y \leq u$ in~$P$.
    In order to prove the reverse implication, suppose that $y \leq u$ in~$P$.
    It suffices to justify that $y$ is a common point of $u$ and that $y$ is a reversing element of $u$.  The former follows from \cref{prop:uWLWR_max} and the latter follows from the fact that $u$ is in $\cgB$.
\end{proof}

% ----------------------------
\subsection{Ordering elements in \texorpdfstring{$B$}{B}}\label{sec:ordering_elements_in_B}

Given two elements in $B$, 
we inspect the interaction of their leftmost and rightmost witnessing paths. 
This establishes four scenarios, and we relate each case with the comparability status of the elements and their shadows as defined in~\Cref{ssec:shadows}.
Let $b_1,b_2 \in B$.
We say that $(b_1,b_2)$ is 
\begin{itemize}
    \item an \emph{inside pair} whenever $W_L(b_2)$ is left of $W_L(b_1)$ and $W_R(b_1)$ is left of $W_R(b_2)$,
    \item an \emph{outside pair} whenever $W_L(b_1)$ is left of $W_L(b_2)$ and $W_R(b_2)$ is left of $W_R(b_1)$,
    \item a \emph{left pair} whenever $W_L(b_1)$ is left of $W_L(b_2)$ and $W_R(b_1)$ is left of $W_R(b_2)$,
    \item a \emph{right pair} whenever $W_L(b_2)$ is left of $W_L(b_1)$ and $W_R(b_2)$ is left of $W_R(b_1)$.
\end{itemize}
See \cref{fig:pairs} for some examples.
Note that a pair $(b_1,b_2)$ is an outside pair whenever $(b_2,b_1)$ is an inside pair and $(b_1,b_2)$ is a right pair whenever $(b_2,b_1)$ is a left pair. 
The above distinction is in fact a classification of all incomparable pairs of elements in $B$.
That is, if $b_1\parallel b_2$ in~$P$, the pair $(b_1,b_2)$ is always of one of the four types: inside, outside, left, right.
% On the other hand, if $(b_1,b_2)$ is a left (right) pair, then by~\cref{prop:shortcuts}, we have $b_1\parallel b_2$ in~$P$. 
% Moreover, a comparable pair of elements in $B$ is not necessarily of one of the four types.
On the other hand, a comparable pair of elements in $B$ is not necessarily of one of the four types.
However, all left and right pairs are incomparable by~\cref{prop:shortcuts}.

\begin{figure}[tp]
     \centering
     \begin{subfigure}[t]{1\textwidth}
         \centering
         \includegraphics{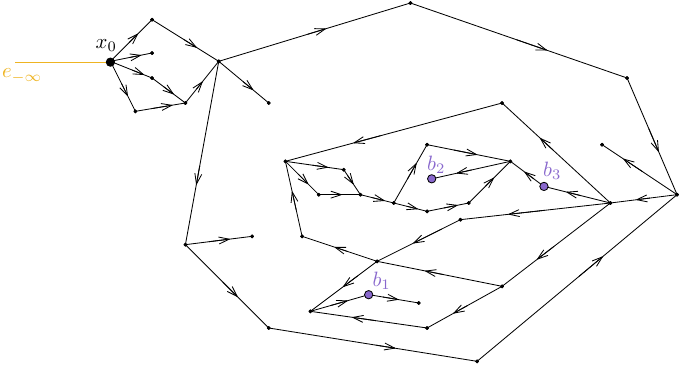}
     \end{subfigure}
     \begin{subfigure}[t]{1\textwidth}
         \centering
         \includegraphics{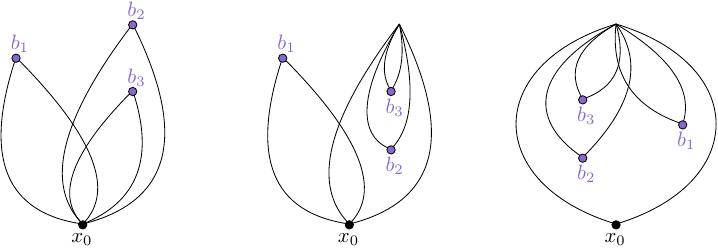}
     \end{subfigure}
  \caption{
    In the poset on the top of the figure, the pairs $(b_1,b_2)$ and $(b_1,b_3)$ are left pairs while $(b_2,b_3)$ is an outside pair.
    It follows that $(b_2,b_1)$ and $(b_3,b_1)$ are right pairs and $(b_3,b_2)$ is an inside pair.
    Below, we present more schematic drawings in which we draw only the witnessing paths forming boundaries of shadows of respective elements in $B$.
    For readability, we omit the arrows in such drawings.
    We also do not mark intersection of curves, although each such intersection must be an element of the poset.
    Still, $(b_1,b_2)$ and $(b_1,b_3)$ are left pairs while $(b_2,b_3)$ is an outside pair. 
  }
  \label{fig:pairs}
\end{figure}

Within the next two propositions, we show that under some mild assumptions, two elements $b_1,b_2 \in B$ form an inside pair $(b_1,b_2)$ if and only if there is a nonnegative integer $j$ such that $\shad_j(b_1) \subsetneq \shad_j(b_2)$.
Next in~\Cref{prop:hard_left_right_characterization}, we give some necessary conditions on shadows of elements $b_1,b_2 \in B$ forcing them to form a left or right pair.

\begin{proposition}
\label{prop:paths_directions_in_shadows}
    Let $j$ be a nonnegative integer, let $u,y\in B$ with $\sd(y) = j$ and $u \in \shad_j(y)$.
    \begin{enumerate}
        \myitem{$(L)$} $W_L(u)$ is not left of $W_L(y)$. \label{prop:paths_directions_in_shadows:left}
        \myitem{$(R)$} $W_R(u)$ is not right of $W_R(y)$. \label{prop:paths_directions_in_shadows:right}
    \end{enumerate}
    Moreover, if $u$ is in the interior of $\shad_j(y)$, then either $y < u$ in~$P$ or $(u,y)$ is an inside pair.
\end{proposition}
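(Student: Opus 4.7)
The plan is to prove part $(L)$ directly, derive $(R)$ by symmetry, and handle the \q{moreover} clause as a corollary. For $(L)$, I will take a block $\calB$ of $\shad_j(y)$ that contains $u$ and set $x = \min \calB$. By \cref{prop:uWLWR} and the $x_0$-consistency of leftmost witnessing paths (\cref{prop:Tleft-and-Tright}.\ref{prop:W-consistent:left}), $W_L(x) = x_0[W_L(u)]x = x_0[W_L(y)]x$ is a common prefix of $W_L(u)$ and $W_L(y)$. If one of these two paths is a prefix of the other, then \cref{obs:ue_ordering_is_usually_linear} already tells us that neither is strictly left of the other, and we are done. Otherwise the paths properly diverge at $w = \gce(W_L(u), W_L(y))$, and the goal becomes showing that $W_L(u)$ is in fact \emph{right} of $W_L(y)$.

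\cref{cor:path-in-block} applied to $x[W_L(u)]u$, whose endpoints both lie in $\calB$, forces this subpath to stay inside $\calB$. Consequently, $w$ sits on the left side $x[W_L(y)]\max\calB$ of $\calB$. I rule out $w = \max\calB$: if $w = \max\calB$, then $w \in W_L(u)$ gives $w \leq u$ in~$P$, while $u \in \calB$ gives $u \leq \max\calB = w$ in~$P$, so $w = u$, which makes $W_L(u)$ a prefix of $W_L(y)$, a case already handled. Let $e$ be the first edge of $w[W_L(u)]u$, $f$ the first edge of $w[W_L(y)]y$, and $e^-$ the last edge of $W_L(y)$ before $w$ (or $e_{-\infty}$ if $w = x_0$). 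Note that $e$ is contained in $\calB$. I will prove $f \prec e$ in the $(w, e^-)$-ordering, which yields $W_L(y) \prec W_L(u)$.

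If $w$ is strictly on the left side of $\calB$, item~\ref{items:leaving_shadows:left} places $e$ into the cyclic interval from $e^+_L = f$ to $e^-_L = e^-$ in the $w$-ordering; since $e \neq f$, this directly gives $f \prec e$ in the $(w, e^-)$-ordering. If $w = x$, item~\ref{items:leaving_shadows:x} yields $f = e^+_L \preccurlyeq e \preccurlyeq e^+_R$ in the $x$-ordering, and the three sub-cases ($x = x_0$ with $e^- = e_{-\infty}$, $x$ a reversing element of $y$, or $x$ a non-reversing common point of $y$) follow from the cyclic orders around $x$ worked out in the paragraphs leading up to \cref{prop:seemingly-obvious}; in each, $f$ is encountered before $e$ going clockwise from $e^-$. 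Part $(R)$ is proved by the symmetric argument with the roles of left and right swapped.

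For the \q{moreover} clause, suppose $u \in \Int\shad_j(y)$. If $y \leq u$ in~$P$, then $u \neq y$ (as $y \in \partial\shad_j(y)$), so $y < u$ in~$P$. Otherwise $W_L(y)$ cannot be a prefix of $W_L(u)$ (else $y \in W_L(u)$ would force $y \leq u$ in~$P$), and $W_L(u)$ cannot be a prefix of $W_L(y)$ (else $u$ would lie on $\partial\shad_j(y)$); by $(L)$, $W_L(u)$ is then strictly right of $W_L(y)$, and symmetrically $W_R(u)$ is strictly left of $W_R(y)$, making $(u, y)$ an inside pair. The main bookkeeping burden is the sub-case analysis at $w = x$ for part $(L)$, but the needed cyclic positions of edges around $x$ have already been recorded in the preceding discussion of shadow blocks, so the verification is essentially mechanical.
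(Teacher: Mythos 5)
There is a genuine gap in your treatment of part $(L)$: the step used to dismiss the case $w = \max\calB$ is false. You argue that $u \in \calB$ forces $u \leq \max\calB$ in~$P$, but membership in a shadow block does \emph{not} imply being below its terminal element; the interior of $\calB$ can contain elements incomparable to $\max\calB$ and, crucially here, elements strictly above it (this is exactly the situation addressed by \cref{prop:uWLWR_max} and \cref{prop:shadow-comp}.\ref{prop:shadow-comp:item:hanging_element}). Indeed, in the very configuration you are trying to exclude, $w = \max\calB$ lies on $W_L(u)$ with the paths properly diverging at $w$, so $\max\calB < u$ in~$P$ — directly contradicting the inequality you invoke. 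Hence the case $\max\calB < u$ with $u$ in the interior of a non-terminal block is not vacuous (it is the first case treated in the paper's own proof), and your case analysis, which only covers $w$ strictly on the left side and $w = \min\calB$ via \ref{items:leaving_shadows:left} and \ref{items:leaving_shadows:x}, never reaches it.

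The gap is repairable in the same spirit as the rest of your argument, and this is what the paper does: when $w = \max\calB$ (so $\calB$ is not the terminal block and $w$ is a non-reversing common point of $y$), the first edge $e$ of $w[W_L(u)]u$ lies in $\calB$ by \cref{cor:path-in-block}, while item \ref{items:leaving_shadows:y} applied to $\calB$ together with the Case-1 cyclic order $e_L^- \prec e_L^+ \preccurlyeq e_R^+ \prec e_R^-$ around $w$ shows that $f = e_L^+$ precedes $e$ in the $(w,e^-)$-ordering, giving $W_L(y)$ left of $W_L(u)$ just as in your other cases. (A smaller point: your claim that $w$ must lie on the left side of $\calB$ silently uses that the later blocks of $\shad_j(y)$ are disjoint from $\calB$ except at $\max\calB$; this is true since $\sd(y)=j$ makes every $\max$ of a non-terminal block of $\shad_j(y)$ non-reversing, but it deserves a citation to that discussion.) The remaining parts — the strictly-left-side and $w=\min\calB$ sub-cases, the symmetry for $(R)$, and the \q{moreover} clause — are correct and essentially match the paper's argument, which instead splits on whether $\max\calB' < u$ or not for the block $\calB'$ containing $u$.
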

\begin{proof}
    We prove statement~\ref{prop:paths_directions_in_shadows:left}. 
    The argument for statement~\ref{prop:paths_directions_in_shadows:right} is symmetric.
    If $u$ lies in $W_L(y)$, then the assertion is clear since $W_L(u)$ is a subpath of $W_L(y)$.
    If $y < u$ in~$P$, then by \cref{prop:top-shadows}, $y$ lies in $W_L(u)$, hence, $W_L(y)$ is a subpath of $W_L(u)$ and the assertion holds as well.
    Therefore, we can assume that $u$ is not an element of $W_L(y)$ and $y \not< u$ in~$P$.
    In particular, either $W_L(y)$ is left of $W_L(u)$ or $W_L(u)$ is left of $W_L(y)$.
    In order to finish the proof we have to prove that $W_L(y)$ is left of $W_L(u)$.
    
    Let $\calB'$ be a shadow block of $\shad_j(y)$ containing $u$, and let $x',y'$ be the initial and terminal elements of $\calB'$, respectively.
    By \cref{prop:uWLWR}, $x'$ belongs to $W_L(u)$.
    First, assume that $y' < u$ in~$P$.
    Note that this implies that $\calB'$ is not the terminal block of $\shad_j(y)$.
    By~\Cref{prop:uWLWR_max}, $y'$ lies in $W_L(u)$.
    In particular, $y'$ lies in both $W_L(y)$ and $W_L(u)$.
    % \heather{Here, do we need to use \cref{prop:uWLWR_max} to conclude $y'\in W_L(u)$? Then we'll have $y'\in W_L(u) \cap W_L(y)$ before we move into arguing left/right for the edges on $W_L(u)$ and $W_L(y)$ after $y'$.}
    % \jedrzej{Fixed.}
    In this case, $u \in \Int\calB'$ and $y \notin \calB'$.
    Moreover, $y'[W_L(u)]u$ is contained in $\calB'$ and $y'[W_L(y)]y$ intersects $\calB'$ only in $y'$.
    Therefore, by \ref{items:leaving_shadows:y}, $W_L(y)$ is left of $W_L(u)$, as desired. 
    % \heather{I believe only \ref{items:leaving_shadows:y} is used in the previous sentence because the argument only considers the ordering around $y'=\max \calB'$.}
    % \jedrzej{Good point, yes, fixed.}
    Next, assume that $y' \not< u$ in~$P$.
    By \cref{cor:path-in-block}, $x'[W_L(u)]u$ is contained in $\calB'$. 
    Observe that neither of the paths $W_L(y')$ and $W_L(u)$ is a subpath of the other.
    Let $w = \gce(W_L(y'),W_L(u))$.
    We have $x' \leq w < y'$ in~$P$ and $w[W_L(u)]u$ lies in $\calB'$.
    If $w=x'$, then by \ref{items:leaving_shadows:x}, $W_L(y')$ is left of $W_L(u)$.
    Similarly, if $w$ is strictly on the left side, then by \ref{items:leaving_shadows:left}, $W_L(y')$ is left of $W_L(u)$.
    Since $W_L(y')$ is a subpath of $W_L(y)$, we conclude the proof of~\ref{prop:paths_directions_in_shadows:left}.

    Finally, let us prove the \q{moreover} part.
    Assume that $u$ lies in the interior of $\shad_j(y)$, and that $y \not< u$ in~$P$.
    To complete the proof, we will show that $(u,y)$ is an inside pair.
    Since $y \not< u$ in~$P$, $W_L(y)$ is not a subpath of $W_L(u)$ and $W_R(y)$ is not a subpath of $W_R(u)$.
    On the other hand, since $u$ lies in the interior of $\shad_j(y)$, $W_L(u)$ is not a subpath of $W_L(y)$ and $W_R(u)$ is not a subpath of $W_R(y)$.
    It follows that $(u,y)$ is a pair of one of the four types.
    Furthermore, by \ref{prop:paths_directions_in_shadows:left} and \ref{prop:paths_directions_in_shadows:right}, $(u,y)$ is an inside pair.
\end{proof}

\begin{proposition}\label{prop:inside_pair_implies_containment}
    Let $j$ be a nonnegative integer, let $u,b\in B$ with $\sd(u),\sd(b) \geq j$, and let $x$ be the initial element of $\shad_j(b)$.
    Assume that $x \leq u$ in~$P$.
    If $(u,b)$ is an inside pair, then $u \in \shad_j(b)$.
\end{proposition}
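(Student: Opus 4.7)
The plan is to argue by contradiction: assume $u\notin\shad_j(b)$, and derive a contradiction with the hypothesis that $(u,b)$ is an inside pair. Let $y$ denote the terminal element of $\shad_j(b)$, and fix a witnessing path $\sigma$ from $x$ to $u$, which exists because $x\leq u$ in~$P$. Since $x\in\shad_j(b)$ while $u\notin\shad_j(b)$, the path $\sigma$ crosses $\partial\shad_j(b)$. Let $p$ be the last vertex of $\sigma$ on $\partial\shad_j(b)$, so that $p[\sigma]u$ lies in the exterior of $\shad_j(b)$ apart from the vertex $p$. Since $\partial\shad_j(b)\subseteq x[W_L(b)]y\cup x[W_R(b)]y$, the vertex $p$ lies on at least one of these two subpaths.

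The heart of the argument is a local analysis at~$p$ using \ref{items:leaving_shadows:x}--\ref{items:leaving_shadows:right}, which characterize the edges at a boundary vertex of a shadow block that lie in the block. A case split on the location of~$p$---strictly interior to the left or right side of some block of $\shad_j(b)$, a non-reversing common point of $b$ strictly between $x$ and~$y$, or one of the endpoints $x$ or~$y$---shows that the first edge of $p[\sigma]u$ must fall into one of two ``non-shadow'' arcs around~$p$. In the ``left-arc'' subcase, set $\tau = x_0[W_L(b)]p[\sigma]u$. Vertices of $x_0[W_L(b)]p$ are $\leq p$ in~$P$ while vertices of $p[\sigma]u$ other than~$p$ are strictly above~$p$ in~$P$, so these subpaths meet only at~$p$ and $\tau$ is a genuine witnessing path from $x_0$ to~$u$. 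At the divergence vertex~$p$, the first edge of $p[\sigma]u$ is strictly smaller than the next edge of $W_L(b)$ in the appropriate $(p,\cdot)$-ordering, which gives $\tau\prec W_L(b)$ in the $(x_0,e_{-\infty})$-ordering. Combined with $W_L(u)\preceq\tau$ (by definition of $W_L(u)$), we obtain $W_L(u)\prec W_L(b)$, contradicting the inside-pair condition $W_L(b)\prec W_L(u)$. The ``right-arc'' subcase is symmetric: setting $\tau' = x_0[W_R(b)]p[\sigma]u$ yields $W_R(b)\prec\tau'\preceq W_R(u)$, again contradicting the inside-pair condition.

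The only situation requiring separate treatment is $p=y=b$, which can occur only when $j=\sd(b)$. In this case $b\leq u$ in~$P$ via $b[\sigma]u$, and considering the leftmost extension of $W_L(b)$ to~$u$ shows that either $W_L(b)$ is a prefix of $W_L(u)$, or else $W_L(u)\prec W_L(b)$; neither possibility is compatible with $W_L(b)\prec W_L(u)$.

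The main obstacle will be the careful bookkeeping of the local edge orderings across the subcases, since the cyclic arrangement of the special edges at~$p$ differs depending on whether $p$ is a reversing element of $b$ (namely $p=x$ with $j\geq 1$, or $p=y$ with $j<\sd(b)$) or not. In each subcase, however, the membership characterization \ref{items:leaving_shadows:x}--\ref{items:leaving_shadows:right} pins down precisely which arcs at~$p$ lie outside $\shad_j(b)$, making the comparison of $\tau$ with $W_L(b)$ (respectively $\tau'$ with $W_R(b)$) essentially mechanical.
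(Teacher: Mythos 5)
Your proposal is correct and follows essentially the same route as the paper's proof: assume $u\notin\shad_j(b)$, take the exit point of a witnessing path from $x$ to $u$ on $\partial\shad_j(b)$, use the local edge characterizations \ref{items:leaving_shadows:x}--\ref{items:leaving_shadows:right} (with the case split on whether the exit point is strictly on a side, an intermediate common point, $x$, or $y$) to build a witnessing path to $u$ that is left of $W_L(b)$ or right of $W_R(b)$, and contradict the inside-pair condition via the extremality of $W_L(u)$ and $W_R(u)$. The only cosmetic difference is your direct handling of the case $p=y=b$ (the paper excludes $y=b$ by citing \cref{prop:shortcuts}), which amounts to the same argument.
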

\begin{proof}
    Suppose to the contrary that $(u,b)$ is an inside pair and $u \notin \shad_j(b)$.
    Let $y$ be the terminal element of $\shad_j(b)$.
    Let $W$ be a witnessing path from $x$ to $u$, and let $w$ be the last element of $W$ that is in $\shad_j(b)$.
    In particular, $w$ lies either in $x[W_L(b)]b$ or $x[W_R(b)]b$.
    Note that $w \neq u$.
    If $w$ lies in $W_L(b)$, then let $W_1 = x_0[W_L(b)]w[W]u$, and if $w$ lies in $W_R(b)$, then let $W_2 = x_0[W_R(b)]w[W]u$.    
    Recall that $W_L(u)$ is either equal or left of any witnessing path from $x_0$ to $u$, and $W_R(u)$ is either equal or right of any witnessing path from $x_0$ to $u$.
    Moreover, the relation of being left/right is transitive (by \cref{prop:u_e_is_poset}).
    In particular, since $(u,b)$ is an inside pair, $W_1$ (if defined) can not be left of $W_L(b)$ and $W_2$ (if defined) can not be right of $W_R(b)$.

    Remember that $w \neq u$, and so, let $e$ be the edge following $w$ in $W$.
    By definition, $e$ is in the exterior of $\shad_j(b)$.
    If $w$ is in $W_L(b)$, let $e^+_L$ and $e^-_L$ be, respectively (provided they exist), the edges immediately after and immediately before $w$ on the path $W_L(b)$. 
    Of course, in the case $w=x_0$ we set $e^-_L=e_{-\infty}$. 
    Also, if $w$ is in $W_R(b)$, let $e^+_R$ and $e^-_R$ be, respectively (provided they exist), the edges immediately after and immediately before $w$ on $W_R(b)$. 
    Again, if $w=x_0$ we set $e^-_R=e_{-\infty}$. 

    We split the reasoning into cases depending on where $w$ lies.
    If $w$ lies strictly on the left (resp.\ right) side of some shadow block of $\shad_j(b)$, then by \ref{items:leaving_shadows:left} (resp.\ \ref{items:leaving_shadows:right}), we obtain $e \prec e_L^+$ in the $(w,e_L^-)$-ordering (resp.\ $e_R^+ \prec e$ in the $(w,e_R^-)$-ordering).
    This yields that $W_1$ is left of $W_L(b)$ (resp.\ $W_2$ is right of $W_R(b)$), which is a contradiction. 
    
    Next, assume that  $w$ is an element in $W_L(b) \cap W_R(b)$ with $w \notin\{x,y\}$.
    Let $\calB'$ and $\calB$ be the blocks of $\shad_j(b)$ such that $w = \max \calB'$ and $w = \min \calB$.
    We have $e_L^- \prec e_L^+ \preccurlyeq e_R^+ \prec e_R^-$ in the $w$-ordering.
    By~\ref{items:leaving_shadows:x} applied to $\calB$ and \ref{items:leaving_shadows:y} applied to $\calB'$, either $e^-_L \prec e \prec e_L^+$ in the $w$-ordering or $e_R^+ \prec e \prec e_R^-$ in the $w$-ordering.
    In other words, either $e \prec e_L^+$ in the $(w,e_L^-)$-ordering or $e_R^+ \prec e$ in the $(w,e_R^-)$-ordering.
    In the former case, we obtain that $W_1$ is left of $W_L(b)$ and in the latter case, we obtain that $W_2$ is right of $W_R(b)$.
    Each of these outcomes leads to a contradiction.

    Now, consider the case where $w = x$.
    If $j = 0$, then $w = x = x_0$.
    Note that $e_L^- = e_R^- = e_{-\infty}$.
    By~\ref{items:leaving_shadows:x}, either $e \prec e_L^+$ in the $(w,e_L^-)$-ordering or $e_R^+ \prec e$ in the $(w,e_R^-)$-ordering.
    Similarly as in the other cases, this gives a contradiction.
    Thus, we assume that $j > 0$.
    In particular, $x$ is a reversing element of $b$.
    We have $e_R^+ \prec e_L^- \prec e_R^- \prec e_L^+$ in the $w$-ordering.
    By~\ref{items:leaving_shadows:x} applied to the first block of $\shad_j(b)$, we have $e_R^+ \prec e \prec e_L^-$ in the $w$-ordering.
    We obtain that either $e_R^+ \prec e \prec e_R^-$ in $w$-ordering or $e_L^- \prec e \prec e_L^+$ in the $w$-ordering.
    In other words, either $e \prec e_L^+$ in the $(w,e_L^-)$-ordering or $e_R^+ \prec e$ in the $(w,e_R^-)$-ordering.
    Again, this is a contradiction.
    
    Finally, assume that $w = y$.
    Note that $y \neq b$ as otherwise $b \leq u$ in~$P$, which contradicts~\cref{prop:shortcuts}.\ref{prop:item:shortcuts-left-tree} (since $W_L(b)$ is left of $W_L(u)$).
    In particular, $y$ is a reversing element of $b$, and so, $e_R^+ \prec e_L^- \prec e_R^- \prec e_R^+$ in the $w$-ordering.
    By~\ref{items:leaving_shadows:y}, $e_L^- \prec e \prec e_R^-$ in the $w$-ordering.
    We obtain that $e \prec e_L^+$ in the $(w,e_L^-)$-ordering, and so, $W_1$ is left of $W_L(b)$, which is a contradiction.
    This completes the proof.
    \qedhere

\end{proof}

\begin{proposition}\label{prop:hard_left_right_characterization}
    Let $j$ be a nonnegative integer, let $b_1,b_2\in B$ with $\sd(b_1),\sd(b_2) \geq j$, and assume that $\shad_j(b_1)$ and $\shad_j(b_2)$ have the same initial element. 
    If $b_1 \notin \shad_j(b_2)$ and $b_2 \notin \shad_j(b_1)$, then either $(b_1,b_2)$ is a left pair or $(b_1,b_2)$ is a right pair.
\end{proposition}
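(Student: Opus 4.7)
The plan is to decompose into four cases — $b_1\le b_2$ in $P$, $b_2\le b_1$ in $P$, $(b_1,b_2)$ an inside pair, and $(b_1,b_2)$ an outside pair — and rule each out. The first two fall to \Cref{cor:path-in-shadow} and the last two to \Cref{prop:inside_pair_implies_containment}. Since the text preceding the proposition states that every incomparable pair in $B$ is inside, outside, left, or right, anything that survives must be left or right.

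First I would record two preliminary facts that follow immediately from the assumption that $x$ is the initial element of both $\shad_j(b_1)$ and $\shad_j(b_2)$: (i) $x$ lies on $W_L(b_i)$, so $x\le b_i$ in $P$ for $i\in\{1,2\}$; and (ii) both $x$ and $b_i$ belong to $\shad_j(b_i)$. For (ii), $x$ is in the first shadow block of $\shad_j(b_i)$ by definition, while $b_i$ lies in the terminal block $\calB_{i_{j+1}}$ of $\shad_j(b_i)$: if $j=\sd(b_i)$ then $b_i=\max\calB_{i_{j+1}}$, while if $j<\sd(b_i)$ the element $z_{i_{j+1}}$ is a reversing element of $b_i$, and so all subsequent shadow blocks of $b_i$ — up to the one containing $b_i$ — are nested inside $\calB_{i_{j+1}}$.

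Next I would rule out comparability. Suppose toward contradiction that $b_1\le b_2$ in $P$; the case $b_2\le b_1$ is symmetric. Take any witnessing path $W$ from $b_1$ to $b_2$ in $P$ and form the witnessing path $W' = x[W_L(b_1)]b_1[W]b_2$ from $x$ to $b_2$. Its endpoints both lie in $\shad_j(b_2)$ by~(ii), so \Cref{cor:path-in-shadow} places all of $W'$ inside $\shad_j(b_2)$, and in particular $b_1\in\shad_j(b_2)$, a contradiction. Hence $b_1\parallel b_2$ in $P$, and $(b_1,b_2)$ is of one of the four types. If $(b_1,b_2)$ is an inside pair, \Cref{prop:inside_pair_implies_containment} applied with $u=b_1$ and $b=b_2$ — the hypotheses $\sd(b_1),\sd(b_2)\ge j$ and $x\le b_1$ from~(i) are satisfied, and $x$ is the initial element of $\shad_j(b_2)$ by assumption — forces $b_1\in\shad_j(b_2)$, a contradiction; the outside case is symmetric with the roles of $b_1$ and $b_2$ swapped, giving $b_2\in\shad_j(b_1)$. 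Therefore $(b_1,b_2)$ is a left or a right pair.

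I do not anticipate any serious obstacle; every ingredient is already prepared in this section. The only point that requires a little care is~(ii), establishing $b_i\in\shad_j(b_i)$, and even that is a direct consequence of the nesting properties of shadow blocks collected in~\Cref{ssec:shadows}.
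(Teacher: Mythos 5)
Your proof is correct and follows essentially the same route as the paper: reduce to the four-type classification of the pair $(b_1,b_2)$ and then eliminate the inside and outside cases with \cref{prop:inside_pair_implies_containment} applied in both directions. The only (cosmetic) difference is the preliminary step—the paper rules out the subpath relations among $W_L(b_1),W_L(b_2)$ and $W_R(b_1),W_R(b_2)$ directly by noting that such a relation would already place $b_1$ in $\shad_j(b_2)$ (or $b_2$ in $\shad_j(b_1)$), whereas you rule out comparability via \cref{cor:path-in-shadow} together with your fact~(ii); both rest on the same nesting observation about the terminal block and are equally valid.
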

\begin{proof}
    If $W_L(b_1)$ was a subpath of $W_L(b_2)$, then $b_1 \in \shad_{j}(b_2)$, hence, this is not the case.
    By reversing the roles of $b_1$ and $b_2$ and/or replacing $W_L$ with $W_R$ in the above sentence, we obtain that neither of $W_L(b_1)$ and $W_L(b_2)$ is a subpath of the other and neither of $W_R(b_1)$ and $W_R(b_2)$ is a subpath of the other.
    Therefore, $(b_1,b_2)$ is a pair of one of the four types: inside, outside, left, right.
    By \cref{prop:inside_pair_implies_containment}, if $(b_1,b_2)$ is an inside pair, then $b_1 \in \shad_j(b_2)$, and if $(b_1,b_2)$ is an outside pair, then $b_2 \in \shad_j(b_1)$.
    It follows that neither of the above holds, thus, $(b_1,b_2)$ is either a left pair or a right pair.
\end{proof}

In~\Cref{sec:good_instance}, we give a reduction that allow us to restrict our attention to instances $(P,x_0,G,e_{-\infty},I)$ such that for every $(a,b)\in I$ we have $a\notin\shad_0(b)$. 
Therefore, we develop a number of statements just for $\shad_0(b)$ for $b\in B$. 
Also, to simplify the notation, for every $b \in B$, we write
    \[\shadz(b) = \shad_0(b).\]

The next proposition is a very intuitive fact about shadows that is quite technical to prove directly but follows nicely from the classification of incomparable pairs.

\begin{proposition}\label{prop:comparability_implies_shadow_containment}
    Let $b_1,b_2 \in B$.
    If $b_1 \leq b_2$ in~$P$, then $\shadz(b_1) \subset \shadz(b_2)$.
\end{proposition}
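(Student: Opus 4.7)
The strategy is to first reduce the statement to proving $b_1 \in \shadz(b_2)$, after which \cref{prop:shadow-comp}.\ref{prop:shadow-comp:item:inclusions_of_j_shadows} applied with $j = 0$ and $\cgB$ any block of $\shadz(b_2)$ containing $b_1$ immediately gives $\shadz(b_1) \subseteq \shadz(\max\cgB) \subseteq \shadz(b_2)$, as required.

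The case $b_1 = b_2$ is trivial, so assume $b_1 < b_2$ in~$P$. First I would record the preliminary fact that $W_L(b_2) \cup W_R(b_2) \subseteq \shadz(b_2)$. Letting $(z_0,\dots,z_m)$ be the sequence of common points of $b_2$ and $i_1$ the smallest index of a reversing element of $b_2$ (taking $i_1 = m$ if none exists), the Case~2 discussion in \cref{ssec:shadows} gives by induction that $\cgB_j \subseteq \cgB_{i_1}$ for every $j > i_1$, whence $\bigcup_{j=1}^m \cgB_j = \bigcup_{j=1}^{i_1} \cgB_j = \shadz(b_2)$, and this union clearly contains both $W_L(b_2)$ and $W_R(b_2)$.

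Next I would classify the pair $(b_1,b_2)$ by inspecting how their leftmost and rightmost witnessing paths relate. By \cref{prop:shortcuts}.\ref{prop:item:shortcuts-left-tree}, the hypothesis $b_1 \le b_2$ rules out $W_L(b_1)$ being left of $W_L(b_2)$; and $W_L(b_2)$ being a strict prefix of $W_L(b_1)$ would force $b_2 \le b_1$ and hence $b_1 = b_2$, a contradiction. Hence either $W_L(b_1)$ is a prefix of $W_L(b_2)$, or $W_L(b_2)$ is left of $W_L(b_1)$; an analogous dichotomy holds on the right via \cref{prop:shortcuts}.\ref{prop:item:shortcuts-right-tree}. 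If $W_L(b_1)$ is a prefix of $W_L(b_2)$ (or symmetrically $W_R(b_1)$ a prefix of $W_R(b_2)$), then $b_1$ lies on $W_L(b_2) \cup W_R(b_2) \subseteq \shadz(b_2)$ and we are done. Otherwise $W_L(b_2)$ is left of $W_L(b_1)$ and $W_R(b_1)$ is left of $W_R(b_2)$, which is exactly the definition of $(b_1,b_2)$ being an inside pair; then \cref{prop:inside_pair_implies_containment} with $j = 0$ (its hypothesis $x_0 \le b_1$ being immediate from $b_1 \in B$) delivers $b_1 \in \shadz(b_2)$.

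No step poses a real obstacle. The only point requiring some care is the preliminary absorption $W_L(b_2) \cup W_R(b_2) \subseteq \shadz(b_2)$, which is what unifies the three \q{generic} configurations into a single argument and rests essentially on the nesting of the blocks of $b_2$ past its first reversing element.
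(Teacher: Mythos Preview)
Your proof is correct and follows the same approach as the paper's: reduce to showing $b_1 \in \shadz(b_2)$, classify the pair $(b_1,b_2)$, and invoke \cref{prop:inside_pair_implies_containment} in the inside case. In fact your argument is slightly tighter than the paper's: you observe that \cref{prop:shortcuts} already rules out the outside case (it requires $W_L(b_1)$ left of $W_L(b_2)$), so only inside remains; the paper leaves outside open and then spends an extra paragraph handling the resulting possibility $b_2 \in \shadz(b_1)$ via \cref{prop:top-shadows} and \cref{prop:shadow-comp}.\ref{prop:shadow-comp:item:hanging_element}, which your observation makes unnecessary.
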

\begin{proof}
    If $W_L(b_1)$ is a subpath of $W_L(b_2)$ or $W_R(b_1)$ is a subpath of $W_R(b_2)$, then $b_1 \in \shadz(b_2)$, and the inclusion $\shadz(b_1) \subset \shadz(b_2)$ follows from \cref{prop:shadow-comp}.\ref{prop:shadow-comp:item:inclusions_of_j_shadows}.
    Otherwise, $(b_1,b_2)$ is of one of the four types: left, right, inside, outside.
    By \cref{prop:shortcuts}, $W_L(b_1)$ is not left of $W_L(b_2)$ and $W_R(b_1)$ is not right of $W_R(b_2)$, hence, $(b_1,b_2)$ is neither left nor right.
    Therefore, $(b_1,b_2)$ is either inside or outside.
    Hence, by \cref{prop:inside_pair_implies_containment}, either $b_1 \in \shadz(b_2)$ or $b_2 \in \shadz(b_1)$. 
    If $b_1 \in \shadz(b_2)$ then by \cref{prop:shadow-comp}.\ref{prop:shadow-comp:item:inclusions_of_j_shadows} $\shadz(b_1)\subseteq\shadz(b_2)$ as desired. 
    So we assume that $b_2 \in \shadz(b_1)$.
    Let $y$ be the terminal element of $\shadz(b_1)$.
    In particular, $b_2 \in \shadz(y)$ and $y \leq b_1 \leq b_2$ in~$P$.
    Thus, by~\Cref{prop:top-shadows}, $b_2$ lies in the terminal block of $\shadz(y) = \shadz(b_1)$.
    % Let $\cgB$ be the terminal block of $\shadz(b_1)$.
    % We claim that $b_2$ lies in $\cgB$. 
    % Suppose to the contrary that $b_2$ lies in a different block $\calB'$ of $\shadz(b_1)$.
    % Let $W$ be a witnessing path from $b_1$ to $b_2$ in~$P$.
    % Note that $b_1$ lies in $\calB$ and $b_1 \neq \min \calB$ so $b_1$ does not lie in $\calB'$.
    % Thus, $W$ intersects the boundary of $\calB'$ in an element $w$.
    % However, $w < b_1$ in~$P$, and so, there is a cycle in~$P$, which is a contradiction.
    % Thus, indeed $b_2$ lies in $\cgB$.
    Finally, by \cref{prop:shadow-comp}.\ref{prop:shadow-comp:item:hanging_element}, $\shadz(b_1) = \shadz(y) = \shadz(b_2)$, which concludes the proof.
\end{proof}

Let $b_1,b_2 \in B$.
We say that $b_1$ is \emph{left} of $b_2$ if $(b_1,b_2)$ is a left pair, and $b_1 \notin \shadz(b_2)$, $b_2 \notin \shadz(b_1)$.
We say that $b_1$ is \emph{right} of $b_2$ if $b_2$ is left of $b_1$.
In the bottom part of~\Cref{fig:pairs}, $b_1$ is left of $b_2$ in the examples on the left and in the middle, however, it is not the case in the example on the right.

Next, we prove simple facts about the left of and right of notions.
The goal (achieved in \cref{prop:left_porders_bs}) is to show that the relation of being left of (right of) partially orders $B$.
We start with a straightforward corollary of \cref{prop:hard_left_right_characterization} stated for emphasis.

\begin{corollary}\label{cor:notin_shads_implies_left_or_right}
    Let $b_1,b_2 \in B$.
    If $b_1 \notin \shadz(b_2)$ and $b_2 \notin \shadz(b_1)$, then either $b_1$ is left $b_2$ or $b_1$ is right of $b_2$. 
\end{corollary}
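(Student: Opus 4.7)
The plan is to apply \cref{prop:hard_left_right_characterization} directly with $j = 0$. Let me verify that all the hypotheses are met. First, $\sd(b_1) \ge 0$ and $\sd(b_2) \ge 0$ hold trivially, since the shadow depth is a nonnegative integer by definition. Second, by construction the initial element of $\shad_0(b)$ for any $b \in B$ is $z_0 = x_0$ (the first entry in the sequence of common points of $b$), so $\shad_0(b_1)$ and $\shad_0(b_2)$ share the same initial element $x_0$. Third, the hypotheses $b_1 \notin \shadz(b_2)$ and $b_2 \notin \shadz(b_1)$ translate literally to $b_1 \notin \shad_0(b_2)$ and $b_2 \notin \shad_0(b_1)$.

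Thus \cref{prop:hard_left_right_characterization} applies and yields that $(b_1, b_2)$ is either a left pair or a right pair. Combining this conclusion with the hypotheses $b_1 \notin \shadz(b_2)$ and $b_2 \notin \shadz(b_1)$, we immediately match the definition of \emph{$b_1$ left of $b_2$} in the first case and the definition of \emph{$b_1$ right of $b_2$} (equivalently, $b_2$ left of $b_1$) in the second case. This gives the desired conclusion.

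There is essentially no obstacle here, as the corollary is just the specialization of the preceding proposition to $j = 0$ together with the observation that $x_0$ is the common initial element of all $0$-th shadows. The proof therefore reduces to invoking \cref{prop:hard_left_right_characterization} and unfolding the definition of \q{left of} and \q{right of}.
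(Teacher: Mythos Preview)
Your proof is correct and matches the paper's approach exactly: the paper simply states this as ``a straightforward corollary of \cref{prop:hard_left_right_characterization} stated for emphasis'' without giving an explicit proof, and your argument spells out precisely that specialization to $j=0$ together with the observation that $x_0$ is the common initial element of all $0$-th shadows.
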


Note that the above corollary allows us to employ the following reasoning.
Say that we know that $b_1 \notin \shadz(b_2)$ and $b_2 \notin \shadz(b_1)$ for some $b_1,b_2 \in B$ and we want to prove that $b_1$ is left of $b_2$.
Then, by \cref{cor:notin_shads_implies_left_or_right}, either $b_1$ is left $b_2$ or $b_1$ is right of $b_2$.
This means that $(b_1,b_2)$ is either a left pair or a right pair.
At this point, it suffices to prove only one of the conditions required for a pair to be a left pair (or a right pair).
That is, the fact that $W_L(b_1)$ is left of $W_L(b_2)$ (or that $W_R(b_1)$ is left of $W_R(b_2)$) implies that $b_1$ is left of $b_2$.

\begin{proposition}\label{prop:charaterization_left_right_by_ys}
    Let $b_1,b_2 \in B$. 
    Let $y_1$ be the terminal element of $\shadz(b_1)$, and let $y_2$ be the terminal element of $\shadz(b_2)$.
  \begin{enumerate}
      \myitem{$(L)$} $b_1$ is left of $b_2$ if and only if $y_1$ is left of $b_2$. \label{prop:charaterization_left_right_by_ys:left} 
      \myitem{$(R)$} $b_1$ is left of $b_2$ if and only if $b_1$ is left of $y_2$.\label{prop:charaterization_left_right_by_ys:right} 
  \end{enumerate}
  Note that the above statements imply that $b_1$ is left of $b_2$ if and only if $y_1$ is left of $y_2$.
\end{proposition}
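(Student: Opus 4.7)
The plan rests on two elementary facts about the terminal element $y$ of $\shadz(b)$ for any $b \in B$. First, $\shadz(y) = \shadz(b)$: if $\sd(b) = 0$, this is trivial since $y = b$; otherwise, $y$ is the initial element of $\shad_1(b)$, so \cref{prop:shadow-comp}.\ref{prop:shadow-comp:item:equality_of_small_shadows} with $j = 1$ applies. Second, since $y$ is a common point of $b$, the path $W_L(y)$ is a prefix of $W_L(b)$ and $W_R(y)$ is a prefix of $W_R(b)$. I will also use that $b \in \shadz(b)$, which is immediate from the shadow-block construction ($b$ lies in the interior of the terminal block of $\shad_0(b)$ when $\sd(b) > 0$, and on its boundary when $\sd(b) = 0$).

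For the forward direction of statement (L), assume $b_1$ is left of $b_2$. The equality $\shadz(y_1) = \shadz(b_1)$ immediately yields $b_2 \notin \shadz(y_1)$. For $y_1 \notin \shadz(b_2)$, observe that $y_1 \le b_2$ in $P$ would force $\shadz(y_1) \subseteq \shadz(b_2)$ via \cref{prop:comparability_implies_shadow_containment}, contradicting $b_1 \in \shadz(y_1)$ and $b_1 \notin \shadz(b_2)$; while $y_1 \not\le b_2$ in $P$ combined with $y_1 \le b_1$ would let \cref{obs:equivalence_for_shadows} transfer membership in $\shadz(b_2)$ from $y_1$ to $b_1$, again a contradiction. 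For the left-pair property, set $v = \gce(W_L(b_1), W_L(b_2))$. Since $y_1 \notin \shadz(b_2)$, $y_1$ cannot lie on $W_L(b_2)$ (else $y_1 \le b_2$ in $P$, contradicting the previous step), so $v$ lies strictly before $y_1$ on $W_L(b_1)$, hence on the prefix $W_L(y_1)$, and the next edges of $W_L(y_1)$ and $W_L(b_1)$ past $v$ agree. Therefore $W_L(y_1) \prec W_L(b_2)$ in the $(x_0, e_{-\infty})$-ordering, and symmetrically $W_R(y_1) \prec W_R(b_2)$.

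The reverse direction of (L) uses the same ingredients in reverse: $b_2 \notin \shadz(b_1)$ is immediate from $\shadz(y_1) = \shadz(b_1)$, $b_1 \notin \shadz(b_2)$ follows from $y_1 \notin \shadz(b_2)$ by the same containment and equivalence arguments, and the left-pair property transfers by a dual $\gce$ analysis starting at $v = \gce(W_L(y_1), W_L(b_2))$, which again lies strictly before $y_1$ on $W_L(y_1)$ by the same reasoning. Statement (R) is proved by an entirely analogous argument, now using $\shadz(y_2) = \shadz(b_2)$ and that $W_L(y_2), W_R(y_2)$ are prefixes of $W_L(b_2), W_R(b_2)$. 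The concluding remark that $b_1$ is left of $b_2$ iff $y_1$ is left of $y_2$ then follows by chaining (L) with (R) applied to the pair $(y_1, b_2)$.

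The only delicate point is ensuring that the relevant $\gce$ does not \q{jump} past $y_i$ when swapping $b_i$ for $y_i$: such a jump would force $y_i$ to lie on the opposite side's witnessing path, placing $y_i \in \shadz(b_{3-i})$ via \cref{prop:comparability_implies_shadow_containment}. The shadow-nonmembership conditions derived from the \q{left of} hypothesis are precisely what rules out this scenario, making the $\gce$-transfer between $(b_1, b_2)$, $(y_1, b_2)$, and $(b_1, y_2)$ go through cleanly.
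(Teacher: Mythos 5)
Your proposal is correct and follows essentially the same route as the paper's proof: both rest on the shadow equality $\shadz(y_1)=\shadz(b_1)$, the equivalence of the shadow non-membership conditions for $(b_1,b_2)$ versus $(y_1,b_2)$, and the $\gce$/prefix argument showing the divergence point lies strictly below $y_1$ so that the left-pair property transfers along the prefix $W_L(y_1)$ of $W_L(b_1)$ (and likewise for $W_R$). The only cosmetic differences are that the paper invokes \cref{cor:notin_shads_implies_left_or_right} so it needs to check only the leftmost-path condition, whereas you verify both conditions directly, and that your case split via \cref{obs:equivalence_for_shadows} replaces the paper's appeal to the shadow-inclusion statements of \cref{prop:shadow-comp}; both variants are valid.
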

\begin{proof}
    We prove statement \ref{prop:in_one_shad_but_not_other_then_left:left}. 
    The argument for statement \ref{prop:in_one_shad_but_not_other_then_left:right} is symmetric.
    Note that $\shadz(b_1) = \shadz(y_1)$.
    It follows that $b_2 \notin \shadz(b_1)$ if and only if $b_2 \notin \shadz(y_1)$.
    Additionally, we claim that $b_1 \notin \shadz(b_2)$ if and only if $y_1 \notin \shadz(b_2)$.
    Indeed, by~\cref{prop:shadow-comp}.\ref{prop:shadow-comp:item:inclusions_of_j_shadows}, if $y_1 \in \shadz(b_2)$, then $\shadz(y_1) \subset \shadz(b_2)$ (by~\Cref{prop:comparability_implies_shadow_containment}), and so, $b_1 \in \shadz(b_2)$.
    On the other hand, if $b_1 \in \shadz(b_2)$, then $\shadz(b_1) \subseteq \shadz(b_2)$, and so, $y_1 \in \shadz(b_2)$.
    Summarizing, $b_1 \notin \shadz(b_2)$ and $b_2 \notin \shadz(b_1)$ if and only if $y_1 \notin \shadz(b_2)$ and $b_2 \notin \shadz(y_1)$.

    Assume that $y_1$ is left of $b_2$.
    By the above, to prove that $b_1$ is left of $b_2$, it suffices to show that $(b_1,b_2)$ is a left pair.
    Since $(y_1,b_2)$ is a left pair, $W_L(y_1)$ is a subpath of $W_L(b_1)$, and $W_R(y_1)$ is a subpath of $W_R(b_1)$, we obtain that $(b_1,b_2)$ is a left pair.
    
    Finally, assume that $b_1$ is left of $b_2$. 
    We will prove that $y_1$ is left of $b_2$.
    Again by the equivalence above, $y_1 \notin \shadz(b_2)$ and $b_2 \notin \shadz(y_1)$.
    Hence, by \cref{cor:notin_shads_implies_left_or_right}, either $y_1$ is left $b_2$ or $y_1$ is right of $b_2$.
    It suffices to prove that $W_L(y_1)$ is left of $W_L(b_2)$.
    Let $w = \gce(W_L(b_1),W_L(b_2))$.
    We claim that $w < y_1$ in~$P$.
    Indeed, if $y_1 \leq w$ in~$P$, then since $y_1$ lies in $W_L(b_1)$, we obtain that $y_1$ lies in $W_L(b_2)$.
    However, this implies $\shadz(y_1) \subset \shadz(b_2)$ (by~\cref{prop:comparability_implies_shadow_containment}).
    This is false, hence, indeed $w < y_1$ in~$P$.
    Since $w < y_1$ in~$P$ and $W_L(y_1)$ is a subpath of $W_L(b_1)$, we obtain that $W_L(y_1)$ left of $W_L(b_2)$.
    This completes the proof.
\end{proof}

\begin{proposition} \label{prop:in_one_shad_but_not_other_then_left}
    Let $b_1,b_2,d \in B$ with $b_1$ left of $b_2$.
  \begin{enumerate}
      \myitem{$(L)$} If $d \in \shadz(b_1)$ and $d \notin \shadz(b_2)$, then $d$ is left of $b_2$. \label{prop:in_one_shad_but_not_other_then_left:left} 
      \myitem{$(R)$} If $d \in \shadz(b_2)$ and $d \notin \shadz(b_1)$, then $d$ is right of $b_1$. \label{prop:in_one_shad_but_not_other_then_left:right} 
  \end{enumerate}
\end{proposition}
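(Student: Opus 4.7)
My plan is to prove statement (L); statement (R) follows from an entirely symmetric argument (swap left/right everywhere and the roles of $b_1, b_2$). I would first reduce to the terminal elements of the shadows of $b_1$ and $b_2$. Set $y_i$ to be the terminal element of $\shadz(b_i)$ for $i \in \{1, 2\}$. A short patching argument using Proposition \ref{prop:shadow-comp} (specifically items \ref{prop:shadow-comp:item:sd_boundary_of_block}, \ref{prop:shadow-comp:item:inclusions_of_j_shadows} and \ref{prop:shadow-comp:item:hanging_element}, applied to the terminal block of $\shadz(b_i)$) gives the two structural facts $\sd(y_i) = 0$ and $\shadz(y_i) = \shadz(b_i)$. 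Proposition \ref{prop:charaterization_left_right_by_ys} then implies both that $y_1$ is left of $y_2$ and that the goal "$d$ left of $b_2$" is equivalent to "$d$ left of $y_2$", so it suffices to prove the latter.

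Next, I would set up Corollary \ref{cor:notin_shads_implies_left_or_right} to deduce that $d$ is either left or right of $y_2$. The condition $d \notin \shadz(y_2) = \shadz(b_2)$ is a hypothesis. For $y_2 \notin \shadz(d)$, let $\calB$ be the shadow block of $\shadz(b_1)$ containing $d$; Proposition \ref{prop:shadow-comp}.\ref{prop:shadow-comp:item:inclusions_of_j_shadows} gives the chain $\shadz(d) \subseteq \shadz(\max\calB) \subseteq \shadz(b_1) = \shadz(y_1)$, while $(y_1, y_2)$ being a left pair forces $y_2 \notin \shadz(y_1)$, whence $y_2 \notin \shadz(d)$.

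It then remains to rule out that $d$ is right of $y_2$. Supposing this for contradiction, one has $W_R(y_2) \prec W_R(d)$ in the $(x_0, e_{-\infty})$-ordering; combining with $W_R(y_1) \prec W_R(y_2)$ (from $y_1$ left of $y_2$) yields $W_R(y_1) \prec W_R(d)$ by transitivity of the $(x_0, e_{-\infty})$-ordering (Proposition \ref{prop:u_e_is_poset}). However, since $d \in \shad_0(y_1)$ and $\sd(y_1) = 0$, Proposition \ref{prop:paths_directions_in_shadows}.\ref{prop:paths_directions_in_shadows:right} applied with $y = y_1$, $j = 0$, $u = d$ gives $W_R(d) \preceq W_R(y_1)$, a contradiction. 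Hence $d$ is left of $y_2$, and by the reduction above, $d$ is left of $b_2$.

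The main obstacle I anticipate is the initial reduction: Proposition \ref{prop:shadow-comp} is formulated for arbitrary elements inside a shadow block, but does not directly state that the terminal element $y$ of $\shadz(b)$ satisfies $\sd(y) = 0$ and $\shadz(y) = \shadz(b)$, so I would have to extract these facts carefully, and handle the degenerate corner case $d = x_0$ (which is excluded anyway, since $x_0$ is the initial element of every $\shadz(b)$, so $d = x_0$ would contradict $d \notin \shadz(b_2)$). Once that reduction is in place, the remaining argument is a clean three-line topological contradiction against $W_R(d) \preceq W_R(y_1) \prec W_R(y_2) \prec W_R(d)$.
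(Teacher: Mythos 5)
Your argument is correct, but it is organized differently from the paper's proof. The paper first gets the dichotomy ``$d$ left of $b_2$ or $d$ right of $b_2$'' from \cref{cor:notin_shads_implies_left_or_right} (exactly as you do, via $\shadz(d)\subseteq\shadz(b_1)$ from \cref{prop:shadow-comp}.\ref{prop:shadow-comp:item:inclusions_of_j_shadows}), and then splits into cases according to whether $d$ lies on $\partial\shadz(b_1)$ or in its interior: in the boundary case it compares $W_L(d)$ (resp.\ $W_R(d)$) with $W_L(b_2)$ directly, and in the interior case it invokes the ``moreover'' clause of \cref{prop:paths_directions_in_shadows} (either $y_1<d$ in $P$, handled through \cref{prop:shadow-comp}.\ref{prop:shadow-comp:item:hanging_element} and \cref{prop:charaterization_left_right_by_ys}, or $(d,y_1)$ is an inside pair, handled through transitivity of the path order). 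You instead reduce the whole statement to the terminal elements $y_1,y_2$, run the dichotomy on the pair $(d,y_2)$, and kill the ``right of'' alternative by a single contradiction: $W_R(y_1)\prec W_R(y_2)\prec W_R(d)$ against \cref{prop:paths_directions_in_shadows}.\ref{prop:paths_directions_in_shadows:right} applied at depth $0$ to $y_1$. This avoids the boundary/interior case split and never uses the inside-pair clause, at the price of explicitly needing $\sd(y_1)=0$ and $\shadz(y_i)=\shadz(b_i)$; these are indeed extractable from \cref{prop:shadow-comp} as you indicate (items \ref{prop:shadow-comp:item:sd_boundary_of_block} and \ref{prop:shadow-comp:item:hanging_element}, with the trivial case $y_i=b_i$ when $\sd(b_i)=0$), and the paper itself uses $\shadz(b_i)=\shadz(y_i)$ without ceremony in the proof of \cref{prop:charaterization_left_right_by_ys}. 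Two small points of hygiene: the conclusion of \cref{prop:paths_directions_in_shadows}.\ref{prop:paths_directions_in_shadows:right} is ``$W_R(d)$ is not right of $W_R(y_1)$'', which is weaker than your shorthand ``$W_R(d)\preceq W_R(y_1)$'' (it also allows one path to be a prefix of the other), but this does not matter since what you actually contradict is precisely ``$W_R(y_1)\prec W_R(d)$''; and your worry about $d=x_0$ is moot, as you note, since $x_0\in\shadz(b_2)$ whenever $b_1$ is left of $b_2$. So the proof stands; it trades the paper's case analysis for a terminal-element reduction and a one-shot transitivity contradiction.
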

\begin{proof}
    We prove statement \ref{prop:in_one_shad_but_not_other_then_left:left} -- see~\Cref{fig:prop_shadows_d}. 
    The argument for statement \ref{prop:in_one_shad_but_not_other_then_left:right} is symmetric.
    Assume that $d \in \shadz(b_1)$ and $d \notin \shadz(b_2)$.
    Note that by \cref{prop:shadow-comp}.\ref{prop:shadow-comp:item:inclusions_of_j_shadows}, $\shadz(d) \subset \shadz(b_1)$.
    Since $b_1$ is left of $b_2$, we have $b_2 \notin \shadz(d)$, and so, $b_2 \notin \shadz(d)$.
    By \cref{cor:notin_shads_implies_left_or_right}, either $d$ is left $b_2$ or $d$ is right of $b_2$.

\begin{figure}[tp]
  \begin{center}
    \includegraphics{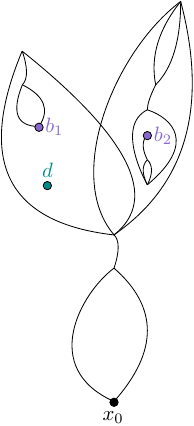}
  \end{center}
  \caption{
    An illustration of the statement of \cref{prop:in_one_shad_but_not_other_then_left}.\ref{prop:in_one_shad_but_not_other_then_left:left}. 
    We only draw the paths forming boundaries of shadows of $b_1$ and $b_2$.
  }
  \label{fig:prop_shadows_d}
\end{figure}

    First, assume that $d$ is on the boundary of $\shadz(b_1)$.
    If $d$ lies in $W_L(b_1)$, then $W_L(d)$ is a subpath of $W_L(b_1)$.
    Moreover, $\gce(W_L(b_1),W_L(b_2))$ is strictly less than $d$ in~$P$, since otherwise $d \in \shadz(b_2)$.
    Since $W_L(b_1)$ is left of $W_L(b_2)$, we obtain $W_L(d)$ left of $W_L(b_2)$.
    Symmetrically, if $d$ lies in $W_R(b_1)$, we obtain $W_R(d)$ left of $W_R(b_2)$.
    In both cases, we conclude that $d$ is left of $b_2$.

    % Next, assume that $d$ is in the interior of $\shadz(b_1)$.
    % Let $y_1$ be the terminal element of $\shadz(b_1)$.
    % In particular, $\shadz(b_1) = \shadz(y_1)$.
    % By \cref{prop:paths_directions_in_shadows}, either $y_1 < d$ in~$P$ or $(d,y_1)$ is an inside pair.
    % If $y_1 < d$ in~$P$ \heather{We don't need $y_1<d$ in order to use \cref{prop:charaterization_left_right_by_ys}.\ref{prop:charaterization_left_right_by_ys:left}.}, then by \cref{prop:charaterization_left_right_by_ys}.\ref{prop:charaterization_left_right_by_ys:left}, we obtain $y_1$ left of $b_2$.
    % By \cref{prop:shadow-comp}.\ref{prop:shadow-comp:item:hanging_element}, $\shadz(y_1) = \shadz(d)$, thus, by \cref{prop:charaterization_left_right_by_ys}.\ref{prop:charaterization_left_right_by_ys:left}, $d$ is left of $b_2$ as desired.
    % If $(d,y_1)$ is an inside pair, then $W_L(d)$ is left of $W_L(y_1)$ \heather{Something's not right here. If $(d,y_1)$ is an inside pair, then $W_L(y_1)$ is left of $W_L(d)$. See my rewrite below.}, and so, $W_L(d)$ is left $W_L(b_1)$.
    % Since $W_L(b_1)$ is left of $W_L(b_2)$, by transitivity (\cref{prop:u_e_is_poset}), we obtain $W_L(d)$ left of $W_L(b_2)$, which ends the proof.

    Next, assume that $d$ is in the interior of $\shadz(b_1)$.
    Let $y_1$ be the terminal element of $\shadz(b_1)$.
    Since $b_1$ is left of $b_2$, by \cref{prop:charaterization_left_right_by_ys}.\ref{prop:charaterization_left_right_by_ys:left}, $y_1$ is left of $b_2$.
    By \cref{prop:paths_directions_in_shadows}, either $y_1 < d$ in~$P$ or $(d,y_1)$ is an inside pair.
    In the case $y_1<d$, \cref{prop:shadow-comp}.\ref{prop:shadow-comp:item:hanging_element} implies $\shadz(y_1) = \shadz(d)$, and so, $y_1$ is the terminal element of $\shadz(d)$. 
    Then by \cref{prop:charaterization_left_right_by_ys}.\ref{prop:charaterization_left_right_by_ys:left}, $d$ is left of $b_2$ as desired. Otherwise, assume $(d,y_1)$ is an inside pair. Therefore $W_R(d)$ is left of $W_R(y_1)$. And since $y_1$ is left of $b_2$, $W_R(y_1)$ is left of $W_R(b_2)$. By transitivity (\cref{prop:u_e_is_poset}), we obtain $W_R(d)$ left of $W_R(b_2)$, which completes the proof. 
\end{proof}

Next, we argue that the relation of being left (and as follows right as well) is transitive, and henceforth, its reflexive closure partially orders $B$.

\begin{proposition}\label{prop:left_porders_bs}
    For all $b_1,b_2,b_3 \in B$, if $b_1$ is left of $b_2$ and $b_2$ is left of $b_3$, then $b_1$ is left of~$b_3$.
    % \jedrzej{We call this each time we use it. Sounds a little too much to me. Maybe we should just write that we will use this implicitly? ($\sim 20$ call thus far...)}
\end{proposition}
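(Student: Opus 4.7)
The plan is to verify the two defining conditions of ``left of'' for the pair $(b_1, b_3)$. Unpacking the hypotheses, we have: (i) $W_L(b_1)$ left of $W_L(b_2)$ and $W_L(b_2)$ left of $W_L(b_3)$, (ii) $W_R(b_1)$ left of $W_R(b_2)$ and $W_R(b_2)$ left of $W_R(b_3)$, (iii) $b_i \notin \shadz(b_j)$ for $\{i,j\} \in \{\{1,2\}, \{2,3\}\}$. From Proposition~\ref{prop:u_e_is_poset}, the $(x_0, e_{-\infty})$-ordering on paths starting at $x_0$ is a partial order, hence transitive, so (i) and (ii) immediately give that $W_L(b_1)$ is left of $W_L(b_3)$ and $W_R(b_1)$ is left of $W_R(b_3)$. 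Thus $(b_1, b_3)$ is a left pair.

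It remains to establish that $b_1 \notin \shadz(b_3)$ and $b_3 \notin \shadz(b_1)$; both will follow by contradiction using Proposition~\ref{prop:in_one_shad_but_not_other_then_left} with $b_2$ playing the role of the intermediate element. Suppose, for contradiction, that $b_1 \in \shadz(b_3)$. Since $b_1 \notin \shadz(b_2)$ and $b_2$ is left of $b_3$, Proposition~\ref{prop:in_one_shad_but_not_other_then_left}\ref{prop:in_one_shad_but_not_other_then_left:right} (applied with $b_2, b_3$ in the role of $b_1, b_2$ and $d = b_1$) yields that $b_1$ is right of $b_2$, i.e., $b_2$ is left of $b_1$. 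Combined with the hypothesis that $b_1$ is left of $b_2$, this makes both $(b_1, b_2)$ and $(b_2, b_1)$ left pairs, which forces $W_L(b_1) \prec W_L(b_2)$ and $W_L(b_2) \prec W_L(b_1)$ in the $(x_0, e_{-\infty})$-ordering, contradicting the antisymmetry established in Proposition~\ref{prop:u_e_is_poset}. Hence $b_1 \notin \shadz(b_3)$.

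The argument for $b_3 \notin \shadz(b_1)$ is symmetric: if $b_3 \in \shadz(b_1)$, then since $b_3 \notin \shadz(b_2)$ and $b_1$ is left of $b_2$, Proposition~\ref{prop:in_one_shad_but_not_other_then_left}\ref{prop:in_one_shad_but_not_other_then_left:left} (with $d = b_3$) gives that $b_3$ is left of $b_2$, contradicting the assumption that $b_2$ is left of $b_3$ by the same antisymmetry argument. Combining the two conclusions with the fact that $(b_1, b_3)$ is a left pair, the definition of ``left of'' gives that $b_1$ is left of $b_3$, completing the proof.

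I do not anticipate any genuine obstacle here: the proposition is a clean transitivity statement, and the real work was done in Propositions~\ref{prop:u_e_is_poset} and~\ref{prop:in_one_shad_but_not_other_then_left}. The only subtle point is noting that ``$b_1$ left of $b_2$'' and ``$b_1$ right of $b_2$'' are mutually exclusive, which one extracts from antisymmetry of the $(x_0, e_{-\infty})$-ordering applied to $W_L$ (or equivalently $W_R$).
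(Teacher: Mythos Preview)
Your proof is correct and follows essentially the same approach as the paper: use transitivity of the $(x_0,e_{-\infty})$-ordering (Proposition~\ref{prop:u_e_is_poset}) to get that $(b_1,b_3)$ is a left pair, then rule out $b_1\in\shadz(b_3)$ and $b_3\in\shadz(b_1)$ via Proposition~\ref{prop:in_one_shad_but_not_other_then_left} with $b_2$ as the pivot. The paper's version simply states the contradictions without spelling out the antisymmetry step, but the argument is the same.
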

\begin{proof}
    Assume that $b_1$ is left of $b_2$ and $b_2$ is left of $b_3$.
    By \cref{prop:u_e_is_poset}, we immediately obtain that $(b_1,b_3)$ is a left pair.
    It suffices to argue that $b_1 \notin \shadz(b_3)$ and $b_3 \notin \shadz(b_1)$.
    Suppose to the contrary that $b_1 \in \shadz(b_3)$.
    Since $b_1 \notin \shadz(b_2)$ and $b_2$ is left of $b_3$, by \cref{prop:in_one_shad_but_not_other_then_left}.\ref{prop:in_one_shad_but_not_other_then_left:right}, we obtain that $b_1$ is right of $b_2$, which is a contradiction.
    Similarly, if $b_3 \in \shadz(b_1)$, then we obtain that $b_3$ is left of $b_2$, which is again a contradiction and ends the proof. 
\end{proof}

% VERSION WITH ORDER
% \begin{proposition}\label{prop:left_porders_bs}
%     For every $b_1,b_2 \in B$, let $b_1 \preccurlyeq_L b_2$ (resp.\ $b_1 \preccurlyeq_R b_2$) whenever $b_1 = b_2$ or $b_1$ is left (resp.\ right) of $b_2$.
%     Each of the relations $\preccurlyeq_L$ and $\preccurlyeq_R$ partially orders $B$.
% \end{proposition}
% \begin{proof}
%     We will prove that $\preccurlyeq_L$ partially orders $B$.
%     The proof for $\preccurlyeq_R$ is symmetric.
%     The relation $\preccurlyeq_L$ is clearly reflexive.
%     Note that if $b_1 \prec_L b_2$, then in particular, $W_L(b_1)$ is left of $W_L(b_2)$.
%     Therefore, by \cref{prop:u_e_is_poset}, the relation $\preccurlyeq_L$ is antisymmetric.
%     It remains to prove that $\preccurlyeq_L$ is transitive.
%     Let $b_1,b_2,b_3 \in B$ and assume that $b_1 \prec_L b_2$ and $b_2 \prec_L b_3$.
%     Again by \cref{prop:u_e_is_poset}, we immediately obtain that $(b_1,b_3)$ is a left pair.
%     It suffices to argue that $b_1 \notin \shadz(b_3)$ and $b_3 \notin \shadz(b_1)$.
%     Suppose to the contrary that $b_1 \in \shadz(b_3)$.
%     Since $b_1 \notin \shadz(b_2)$ and $b_2$ is left of $b_3$, by \cref{prop:in_one_shad_but_not_other_then_left}, we obtain that $b_2$ is left of $b_1$, which is a contradiction.
%     Similarly, if $b_3 \in \shadz(b_1)$, then we obtain that $b_3$ is left of $b_2$, which is again a contradiction and ends the proof.
% \end{proof}

% \section{Reduction to a maximal good instance (alternative)}
% \label{sec:good_instance}
% \input{s.reduction-to-good-alt}

\section{Reduction to a maximal good instance}
\label{sec:good_instance}
In~\Cref{sec:topology_instance}, we defined instances and developed a small theory describing them.
In~\cref{cor:poset-to-instance}, we showed that the main difficulty of~\cref{thm:cover-graph_se} is captured by the notion of instances.
The goal of this section is a further reduction from instances to %so-called maximal 
good instances, defined in~\Cref{ssec:interface}.
Given an instance $(P,x_0,G,e_{-\infty},I)$, we focus only on pairs in $I$ that we call \q{risky} as two reversible sets can cover all the non-risky pairs, see~\Cref{prop:risky_low_dim}.
In~\Cref{ssec:escape-address}, for a risky pair $(a,b) \in I$, we define its escape number to be the least nonnegative integer such that $a \notin \shad_j(b)$.
Next, we split $I$ into $I_0$ and $I_1$ depending on the parity of their escape number.
The main technical statement of this section is~\Cref{lem:cgI-comprehensive}, which describes the structure of strict alternating cycles with all pairs in $I_{\theta}$ for a fixed $\theta\in\set{0,1}$. 
E.g., all pairs in such an alternating cycle must have the same escape number.
This structure allows us eventually to reduce the problem to instances that carry additional properties \ref{item:instance:not_in_shadow}--\ref{item:instance:dangerous} and also~\ref{item:instance:maximal}. 
The final outcome of this section is~\Cref{cor:interface}.

We fix an instance $(P,x_0,G,e_{-\infty},I)$ within Subsections~\ref{ssec:risky} to \ref{ssec:dangerous}.

% ----------------------------
\subsection{Risky pairs}\label{ssec:risky}
% ----------------------------
\vbox{We say that $(a,b) \in I$ is a \emph{risky} pair if
\begin{enumerateNumr}
    \item there exists $b' \in B$ with $a \leq b'$ in~$P$ such that $W_L(b')$ is left of $W_L(b)$, and \label{items:risky_b'}
    \item there exists $b'' \in B$ with $a \leq b''$ in~$P$ such that $W_R(b)$ is left of $W_R(b'')$. \label{items:risky_b''}
\end{enumerateNumr}
}
See \Cref{fig:risky}.
In fact, the \q{interesting} pairs are risky pairs, which is formally proved in the next proposition.
Later, in~\Cref{prop:property_of_risky_pairs}, we establish a key property of risky pairs.

\begin{figure}[tp]
  \begin{center}
    \includegraphics{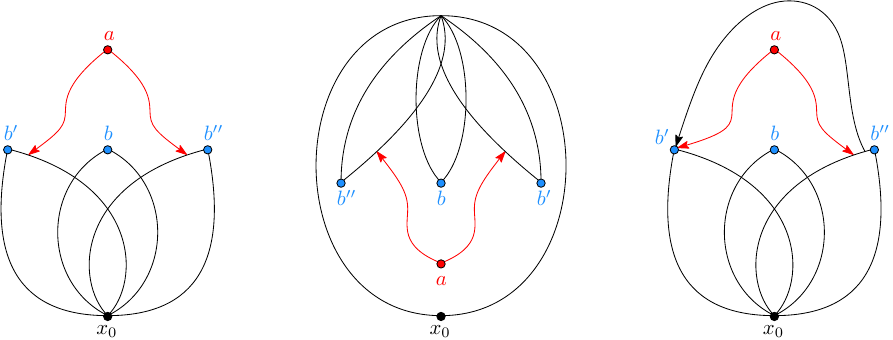}
  \end{center}
  \caption{
    In all three drawings the pair $(a,b)$ is a risky pair witnessed by $b'$ and $b''$.
    Note that $b'$ is not necessarily left of $b''$ (the middle drawing), in fact, $(b',b)$ is not even necessarily a left pair (the right drawing).
    %\michal{In the right drawing, you can make \((b', b)\) a left pair by replacing \(b'\) by the endpoint of the red. It would be nice to make the two endpoints of arrows near \(b'\) the same point so that we cannot make \((b', b)\) a left pair. } \jedrzej{Done.}
    Note that in the right drawing, the element $b'$ also can play a role of $b''$, that is, $W_R(b)$ is left of $W_R(b'')$.
    The situation in the left drawing is visibly the \q{cleanest} one.
    Further in the paper, we will introduce the notion of \q{dangerous} pairs, the left drawing is the only one among the three depicting a dangerous pair.
    Note that $a$ is not necessarily in $B$.
    We tend to mark such elements in red and the elements in $B$ in blue.
    %\tom{Maybe add the case, where $b' = b''$ (more or less like in the next figure).} \jedrzej{A comment added.}
    %\piotr{Maybe shadows of $b'$ and $b''$ should intersect.} \jedrzej{Ok}right
  }
  \label{fig:risky}
\end{figure}

\begin{proposition}\label{prop:risky_low_dim} 
    Let $I'$ be the set of all non-risky pairs in $I$.
    %Let $I' \subset I$ be a set of pairs that are not risky.
    Then $\dim_P(I') \leq 2$.
\end{proposition}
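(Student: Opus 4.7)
The natural strategy is to write $I' = I'_1 \cup I'_2$ with both $I'_j$ reversible; by~\cref{prop:dim-alternating-cycles} this yields $\dim_P(I')\leq 2$. Since a pair is non-risky exactly when~\ref{items:risky_b'} or~\ref{items:risky_b''} fails, I would let $I'_1$ consist of those $(a,b)\in I$ for which $W_L(b')$ is not left of $W_L(b)$ for every $b' \in B$ with $a \leq b'$ in~$P$, and define $I'_2$ symmetrically using the rightmost witnessing paths.

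The two cases are symmetric, so I would focus on showing that $I'_1$ contains no strict alternating cycle. Assume for contradiction that $((a_1,b_1),\dots,(a_k,b_k))$ is such a cycle in $I'_1$. Since $\set{b_1,\dots,b_k}$ is a $k$-element antichain in~$P$, the paths $W_L(b_1),\dots,W_L(b_k)$ are pairwise distinct. For every $i$ (cyclically), we have $a_i \leq b_{i+1}$ in~$P$ and $b_{i+1}\in B$ (as $I$ is singly constrained by $x_0$ in~$P$), so the defining property of $I'_1$ applied to $(a_i,b_i)$ with the witness $b'=b_{i+1}$ forces $W_L(b_{i+1})$ to be not left of $W_L(b_i)$.

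By~\cref{obs:ue_ordering_is_usually_linear}, for each $i$ one of the following holds: $W_L(b_{i+1})$ is strictly right of $W_L(b_i)$; $W_L(b_{i+1})$ is a proper prefix of $W_L(b_i)$; or $W_L(b_i)$ is a proper prefix of $W_L(b_{i+1})$. The second case gives $b_{i+1} < b_i$ in~$P$, hence $a_i \leq b_{i+1} < b_i$ in~$P$, contradicting $a_i \parallel b_i$ in~$P$. The third case gives $b_i < b_{i+1}$ in~$P$, hence $a_{i-1} \leq b_i < b_{i+1}$ in~$P$, which contradicts strictness of the alternating cycle (forcing $a_{i-1}\leq b_j$ in~$P$ only for $j = i$). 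Thus $W_L(b_{i+1})$ must be strictly right of $W_L(b_i)$ for every $i$, and then transitivity of the $(x_0,e_{-\infty})$-ordering (\cref{prop:u_e_is_poset}) yields $W_L(b_1)$ strictly right of itself, a contradiction.

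The main obstacle is the case analysis eliminating the two \emph{prefix} subcases; once these are ruled out, the cyclic comparison closes immediately by transitivity. The argument for $I'_2$ is completely symmetric, with rightmost witnessing paths in place of leftmost ones and the roles of \q{left} and \q{right} swapped.
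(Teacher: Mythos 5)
Your proposal is correct and follows essentially the same route as the paper: the same split of $I'$ into the pairs failing~\ref{items:risky_b'} and those failing~\ref{items:risky_b''}, and for a hypothetical strict alternating cycle the same cyclic \q{$W_L(b_{i+1})$ right of $W_L(b_i)$} contradiction via transitivity. The only (cosmetic) difference is that the paper rules out the prefix cases at once by observing $b_i \parallel b_{i+1}$ in~$P$, whereas you dispatch them separately via $a_i \parallel b_i$ and strictness through $a_{i-1}$.
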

\begin{proof}
    Let $I_1'$ be all the pairs in $I$ that do not satisfy~\ref{items:risky_b'} and let $I_2'$ be all the pairs in $I$ that do not satisfy~\ref{items:risky_b''}.
    Note that $I' = I_1' \cup I_2'$.
    We claim that $I_1'$ and $I_2'$ are reversible.
    We will prove that $I_1'$ is reversible, the proof for $I_2'$ is symmetric.
    Suppose to the contrary that $I_1'$ is not reversible, and so by~\cref{prop:dim-alternating-cycles}, $I_1'$ contains a strict alternating cycle $((a_1,b_1),\dots,(a_k,b_k))$.
    %Since the alternating cycle is strict, $\{b_1,\dots,b_k\}$ is an antichain in~$P$.
    Let $i \in [k]$.
    Since $(a_i,b_i)$ does not satisfy~\ref{items:risky_b'}, $a_i \leq b_{i+1}$ in~$P$ (we consider the indices cyclically, e.g.\ $b_{k+1} = b_1$), and $b_i \parallel b_{i+1}$ in~$P$ we have $W_L(b_{i+1})$ right of $W_L(b_{i})$. 
    However, this cannot hold cyclically for all $i\in[k]$. 
    The contradiction completes the proof.
    % We define two binary relations $\preccurlyeq_L$ and $\preccurlyeq_R$ on $B$.
    % For all $b_1,b_2 \in B$, let $b_1 \prec_L b_2$ whenever $W_L(b_1)$ is left of $W_L(b_2)$ and let $b_1 \prec_R b_2$ whenever $W_R(b_1)$ is left of $W_R(b_2)$.
    % Let $\preccurlyeq_L$ and $\preccurlyeq_R$ be the reflexive closures of $\prec_L$ and $\prec_R$ respectively.
    % By \cref{prop:u_e_is_poset}, each of $\preccurlyeq_L$ and $\preccurlyeq_R$ partially orders $B$.
    % Note that $I' = \ext(\preccurlyeq_L,I) \cup \ext(\preccurlyeq_R,I)$.
    % By \cref{obs:ue_ordering_is_usually_linear}, whenever $b_1 \parallel b_2$ in~$P$, the elements $b_1,b_2$ are comparable in both $\preccurlyeq_L$ and $\preccurlyeq_R$.
    % Therefore, by \cref{prop:reversible-from-order},
    %     \[\dim(I')=\dim(\ext(\preccurlyeq_L,I) \cup \ext(\preccurlyeq_R,I)) \leq 2.\qedhere\]
    % \heather{To match the definition of $\ext$ in \cref{prop:reversible-from-order}, the set of incomparable pairs should be the first input: $\ext(I, \preccurlyeq)$ instead of $\ext(\preccurlyeq, I)$. Also, $\ext$ hasn't been used since \cref{prop:reversible-from-order}, so it may be good to remind the reader of the definition.}
\end{proof}

\begin{proposition}\label{prop:property_of_risky_pairs}
    Let $(a,b) \in I$ be a risky pair, let $b_0 \in B$ with $a \parallel b_0$ in~$P$, and suppose that for some nonnegative integer $j$, $\shad_j(b_0)$ and $\shad_j(b)$ have the same initial element.
    If $a \in \shad_j(b_0)$, then $b \in \shad_j(b_0)$.
\end{proposition}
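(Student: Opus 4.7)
The plan is to assume $a \in \shad_j(b_0)$ and deduce $b \in \shad_j(b_0)$ by applying \cref{prop:inside_pair_implies_containment} with $u = b$. The technical hypotheses ($\sd(b_0), \sd(b) \geq j$, and $x \leq b$ in~$P$ where $x$ is the common initial element of $\shad_j(b_0)$ and $\shad_j(b)$) are immediate, so everything reduces to verifying that $(b, b_0)$ is an inside pair. First, since $a \parallel b_0$ gives $a \not\leq b_0$ in~$P$, two applications of \cref{obs:equivalence_for_shadows} (to $a \leq b'$ and $a \leq b''$) yield $b', b'' \in \shad_j(b_0)$. Let $y_0$ be the terminal element and $\calT$ the terminal block of $\shad_j(b_0)$. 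By \cref{prop:shadow-comp}, $\shad_j(y_0) = \shad_j(b_0)$ and $\sd(y_0) = j$ (or $y_0 = b_0$ when $\sd(b_0) = j$), so \cref{prop:paths_directions_in_shadows} gives that $W_L(b')$ is not left of $W_L(y_0)$, and symmetrically $W_R(b'')$ is not right of $W_R(y_0)$.

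I would then prove the left-side claim $W_L(b_0)$ is left of $W_L(b)$ (the right-side claim being fully symmetric). By \cref{obs:ue_ordering_is_usually_linear}, the relation ``$W_L(b')$ not left of $W_L(y_0)$'' splits into three subcases: $W_L(y_0) \preccurlyeq W_L(b')$, $W_L(b') \subsetneq W_L(y_0)$, or $W_L(y_0) \subsetneq W_L(b')$ in the $(x_0, e_{-\infty})$-ordering. In the first two subcases, the divergence vertex of $W_L(b')$ and $W_L(b)$ lies on $W_L(y_0)$ and hence on $W_L(b_0)$; at that vertex $W_L(b_0)$ shares the outgoing edge with the left-facing path among $W_L(y_0)$ and $W_L(b')$, so the strict inequality $W_L(b') \prec W_L(b)$ (or transitively $W_L(y_0) \prec W_L(b)$) transfers to $W_L(b_0) \prec W_L(b)$.

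The delicate subcase is $W_L(y_0) \subsetneq W_L(b')$: then $y_0 < b'$ in~$P$ with $y_0$ on $W_L(b')$, so by \cref{prop:top-shadows} we have $b' \in \Int \calT$ and $y_0$ lies on both $W_L(b')$ and $W_R(b')$. A further split on the divergence vertex $v$ of $W_L(b')$ and $W_L(b)$ is needed. If $v$ sits at or before $y_0$ on $W_L(b')$, the transfer argument above again yields $W_L(b_0)$ left of $W_L(b)$. Otherwise $v$ sits strictly past $y_0$, which forces $y_0$ onto the common prefix of $W_L(b')$ and $W_L(b)$; in particular, $y_0$ lies on $W_L(b)$ with $y_0 \leq b$ in~$P$, and the next edge of $W_L(b)$ at $y_0$ equals that of $W_L(b')$, which enters $\Int \calT$ on its way to $b' \in \Int \calT$. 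From $y_0$ onward, $W_L(b)$ visits only elements strictly greater than $y_0$ in~$P$, whereas $\partial \calT \subseteq D_P[y_0]$; hence $W_L(b)$ cannot exit $\calT$ through its boundary and is trapped inside $\calT$, yielding $b \in \calT \subseteq \shad_j(b_0)$ already in this branch. Combining the left- and right-side dichotomies, either $b \in \shad_j(b_0)$ outright or $(b, b_0)$ is an inside pair, and \cref{prop:inside_pair_implies_containment} finishes the proof. The main obstacle is this exit argument in the delicate subcase, which crucially exploits that witnessing paths are monotonically increasing in $P$ and that $\partial \calT \subseteq D_P[y_0]$.
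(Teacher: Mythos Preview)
Your approach is essentially the paper's: use the risky witnesses $b',b''$ to squeeze $W_L(b)$ and $W_R(b)$ between the extremal paths of the shadow, and then invoke \cref{prop:inside_pair_implies_containment}. Two issues are worth flagging.

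First, targeting the inside pair $(b,b_0)$ rather than $(b,y_0)$ creates needless friction. The paper works with $y$ (your $y_0$) throughout: since $\shad_j(y_0)=\shad_j(b_0)$, proving $(b,y_0)$ is an inside pair already suffices, and you never have to worry about what $W_L(b_0)$ does past $y_0$. Your ``transfer'' from $W_L(y_0)\prec W_L(b)$ to $W_L(b_0)\prec W_L(b)$ only works when the divergence with $W_L(b)$ occurs strictly below $y_0$; that step is simply unnecessary if you stay with $y_0$. (Relatedly, your claim in subcases 1--2 that $\gce(W_L(b'),W_L(b))$ lies on $W_L(y_0)$ is false in subcase 1 when $W_L(y_0)\prec W_L(b')$ strictly; the fix you hint at---use transitivity and work with $\gce(W_L(y_0),W_L(b))$ instead---is the right move. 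Also note subcase 2 cannot actually occur: $b'\in\Int\shad_j(y_0)$ rules out $b'$ sitting on $W_L(y_0)$.)

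Second, there is a real gap at the boundary $v=y_0$ of your split 3a/3b. You place $v=y_0$ in 3a, but the transfer argument fails there (the outgoing edge of $W_L(b_0)$ at $y_0$ need not coincide with that of $W_L(b')$). And 3b's exit argument, as written, uses that the next edge of $W_L(b)$ at $y_0$ \emph{equals} that of $W_L(b')$, which is false precisely when $v=y_0$. What is missing is the observation that this next edge nonetheless lies in $\calT$: since the next edge $g$ of $W_L(b')$ at $y_0$ is in $\calT$ and $g\prec f$ in the $(y_0,e_L^-)$-ordering (where $f$ is the next edge of $W_L(b)$), criterion~\ref{items:leaving_shadows:y} forces $f\in\calT$ as well. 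The paper handles this case by a short contradiction using~\ref{items:leaving_shadows:y} (assuming $b\notin\shad_j(y)$, it shows $x_0[W_L(y)]y[W]b$ is left of $W_L(b')$, hence left of $W_L(b)$). Once you patch this boundary case, your argument goes through and is equivalent to the paper's.
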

\begin{proof}
    See~\Cref{fig:risky_trapping}.
    Since $(a,b)$ is a risky pair, there exist $b',b'' \in B$ as in \ref{items:risky_b'} and \ref{items:risky_b''}.
    By \cref{obs:equivalence_for_shadows}, we obtain that
    % Let $W'$, $W''$ be witnessing paths in~$P$ from $a$ to $b'$ and $b''$, respectively. 
    % Note that $W'$ (and $W''$) does not intersect the boundary of $\shad_j(b_0)$, 
    % as such an element $z$ in the intersection would give $a\leq z \leq b_0$ in~$P$, a contradiction.
    % Since $a$ lies in the interior of $\shad_j(b_0)$, the whole paths $W'$ and $W''$ lie in $\shad_j(b_0)$. 
    % In particular, 
    $b'$ and $b''$ lie in the interior of $\shad_j(b_0)$.

\begin{figure}[tp]
  \begin{center}
    \includegraphics{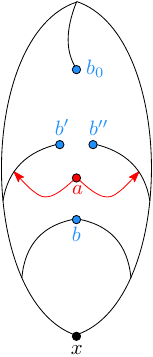}
  \end{center}
  \caption{
    Illustration of the statement of~\cref{prop:property_of_risky_pairs}.
    The element $x$ is the initial element of $\shad_j(b_0)$ and $\shad_j(b)$.
    The brief intuition of why the proposition holds is the following.
    Since $a \in \shad_j(b_0)$, the elements $b'$ and $b''$ witnessing that $(a,b)$ is a risky pair are in $\shad_j(b_0)$ and in turn \q{trap} $b$ in $\shad_j(b_0)$.
    %We depict the \q{cleanest} case.
    %For the other cases, see the proof.
  }
  \label{fig:risky_trapping}
\end{figure}

    Let $x$ and $y$ be the initial and terminal elements of $\shad_j(b_0)$, respectively.
    By \cref{prop:paths_directions_in_shadows} applied to $b'$, either $W_L(y)$ is left of $W_L(b')$ (as $(b',y)$ is an inside pair) or $y<b'$ in~$P$.
    In the case where $y<b'$ in~$P$, by~\cref{prop:top-shadows}, $W_L(y)$ is a subpath of $W_L(b')$.
    Similarly, by \cref{prop:paths_directions_in_shadows} applied to $b''$, either $W_R(b'')$ is left of $W_R(y)$ or $W_R(y)$ is a subpath of $W_R(b'')$.
    We claim that
    \begin{equation}
    \begin{split}
    &\textrm{no witnessing path from $x_0$ to $b$ in~$P$ is left of $W_L(y)$, and}\\
    &\textrm{no witnessing path from $x_0$ to $b$ in~$P$ is right of $W_R(y)$.}
    \end{split}
    \label{eq:no-witnessing-paths-on-wrong-side-of-y}
    \end{equation}
    Indeed, if $U$ is a witnessing path from $x_0$ to $b$ in~$P$ and $U$ is left of $W_L(y)$, then $U$ is left of $W_L(b')$.
    However, this implies that $W_L(b)$ is left of $W_L(b')$, a contradiction. 
    The proof of the statement for rightmost paths goes analogously.

    First, suppose that neither of $W_L(b),W_L(y)$ is a subpath of the other and that neither of $W_R(b),W_R(y)$ is a subpath of the other.
    By \eqref{eq:no-witnessing-paths-on-wrong-side-of-y}, it follows that $W_L(y)$ is left of $W_L(b)$ and $W_R(b)$ is left of $W_R(y)$.
    Therefore, $(b,y)$ is an inside pair, and by \cref{prop:inside_pair_implies_containment}, $b \in \shad_j(y) = \shad_j(b_0)$.

    If $W_L(b)$ is a subpath of $W_L(y)$, then $b \in \shad_j(b_0)$.
    Similarly, if $W_R(b)$ is a subpath of $W_R(y)$, then $b \in \shad_j(b_0)$.

    Finally, it remains to consider the case, where $W_L(y)$ is a subpath of $W_L(b)$ or $W_R(y)$ is a subpath of $W_R(b)$.
    Assume that $W_L(y)$ is a subpath of $W_L(b)$, the proof in the other case is symmetric.
    It follows that $y \leq b$ in~$P$.
    Recall that either $W_L(y)$ is left of $W_L(b')$ or $W_L(y)$ is a subpath of $W_L(b')$.
    If $W_L(y)$ is left of $W_L(b')$, then $W_L(b)$ is left of $W_L(b')$, which is false as $b'$ witnesses \ref{items:risky_b'} for $(a,b)$.
    Hence, $W_L(y)$ is a subpath of $W_L(b')$.
    Since $b'$ is in the interior of $\shad_j(y)$, by \cref{cor:path-in-shadow} the path $y[W_L(b')]b'$ lies in $\shad_j(y)$.
    Suppose to the contrary that $b \notin \shad_j(y)$ and let $W$ be a witnessing path from $y$ to $b$ in~$P$.
    Note that the only element of $W$ in $\shad_j(y)$ is $y$.
    By \ref{items:leaving_shadows:y}, this yields that $x_0[W_L(y)]y[W]b$ is left of $W_L(b')$, which is left of $W_L(b)$ (by \ref{items:risky_b'}), which is a contradiction that ends the proof.
\end{proof}

\subsection{Escape number and address.}\label{ssec:escape-address}
Let $(a,b) \in I$.
We define the \emph{escape number} of $(a,b)$ to be the least nonnegative
integer $j$ such that $a\notin\shad_j(b)$.  
Note that this is well-defined as if $j = \sd(b) + 1$, then $\shad_j(b) = \emptyset$.
For each $\theta\in\{0,1\}$, we let 
    \[I_\theta = \{(a,b) \in I : \ \text{the escape number of $(a,b)$ is } j \text{ with } j\equiv\theta\text{ mod }2 \}.\]
Note that $I$ is partitioned into $I_0$ and $I_1$.

We define the \emph{address} of $(a,b)$ to be the pair $(j,x)$, where $j$ is the escape number of $(a,b)$ and $x$ is the initial element of $\shad_j(b)$.
See \Cref{fig:address}.

\begin{figure}[tp]
  \begin{center}
    \includegraphics{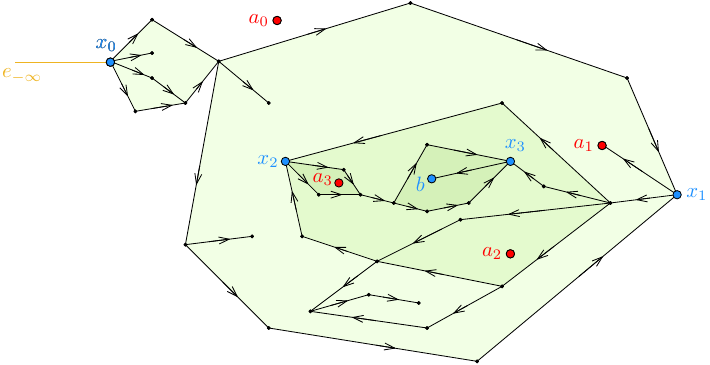}
  \end{center}
  \caption{
    For every $j \in \{0,1,2,3\}$, the escape number of $(a_j,b)$ is equal to $j$, and the address is equal to $(j,x_j)$.
  }
  \label{fig:address}
\end{figure}

We prove now a comprehensive lemma that provides 
structural information about strict alternating cycles with all pairs being risky and in $I_\theta$ for a fixed $\theta\in\set{0,1}$. 
See~\Cref{fig:comprehensive_lemma}.

\begin{figure}[tp]
  \begin{center}
    \includegraphics{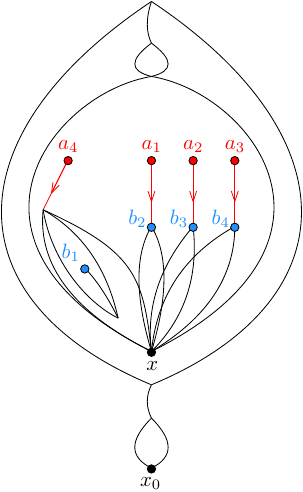}
  \end{center}
  \caption{
    An example of a strict alternating cycle with all the pairs in $I_0$ (the address of $(a_i,b_i)$ for each $i \in [4]$ is equal to $(2,x)$).
    In particular, in order to study the cycle, it suffices to study the terminal block of $\shad_2(x)$.
    \Cref{lem:cgI-comprehensive} shows that every strict alternating cycle with all the pairs in $I_\theta$ for some $\theta \in \{0,1\}$ \q{looks like this}.
    Note, however, that the shadow depth of each $b_i$ may be much higher than the number in the common address of the pairs in the cycle (as it indeed is for $b_1$).
  }
  \label{fig:comprehensive_lemma}
\end{figure}

\begin{lemma}\label{lem:cgI-comprehensive}
  Let $\theta\in\{0,1\}$, and let $((a_1,b_1),\dots,(a_k,b_k))$ be a strict
  alternating cycle in~$P$ with all the pairs being risky pairs in $I_\theta$. 
  Then there is a nonnegative integer $j$ and an element $x$ of~$P$ such that for every $\alpha\in[k]$, the address of $(a_\alpha,b_\alpha)$ is $(j,x)$; and for all distinct $\alpha,\beta \in [k]$, $b_\beta \notin \shad_j(b_\alpha)$.
\end{lemma}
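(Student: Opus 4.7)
The plan begins with a reformulation via \cref{obs:equivalence_for_shadows}. Since $a_\alpha \le b_{\alpha+1}$ and $a_\alpha \not\le b_\alpha$ for each $\alpha$, the observation gives $a_\alpha \in \shad_i(b_\alpha) \Leftrightarrow b_{\alpha+1} \in \shad_i(b_\alpha)$ for every $i \ge 0$; hence $j_\alpha$ equals the least $i$ with $b_{\alpha+1} \notin \shad_i(b_\alpha)$. More generally, whenever $\gamma \ne \alpha+1$, $a_\alpha \parallel b_\gamma$ yields $a_\alpha \in \shad_i(b_\gamma) \Leftrightarrow b_{\alpha+1} \in \shad_i(b_\gamma)$. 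Combined with the risky pair property \cref{prop:property_of_risky_pairs}, this produces a \emph{decrement rule} that drives the whole argument: whenever $\shad_i(b_\alpha)$ and $\shad_i(b_\gamma)$ share an initial element and $\gamma \ne \alpha+1$, $b_{\alpha+1} \in \shad_i(b_\gamma) \Rightarrow b_\alpha \in \shad_i(b_\gamma)$, or contrapositively $b_\alpha \notin \shad_i(b_\gamma) \Rightarrow b_{\alpha+1} \notin \shad_i(b_\gamma)$.

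Next, let $j := \min_\alpha j_\alpha$, attained at some index (after relabelling, say $\alpha = 1$). I will show $j_\alpha = j$ and $x_\alpha = x_1 =: x$ for all $\alpha$. From $b_{\alpha+1} \in \shad_{j_\alpha-1}(b_\alpha) \subseteq \shad_{j-1}(b_\alpha)$ (using the nested structure $\shad_0(b) \supseteq \shad_1(b) \supseteq \cdots$) and \cref{prop:shadow-comp}.\ref{prop:shadow-comp:item:inclusions_of_j_shadows}, $\shad_{j-1}(b_{\alpha+1}) \subseteq \shad_{j-1}(b_\alpha)$ holds for every $\alpha$. Closing the cycle forces all sets $\shad_{j-1}(b_\alpha)$ to coincide as planar regions, so their common terminal element --- precisely the initial element of each $\shad_j(b_\alpha)$ --- is a single element $x$. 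With matching initial elements at depth $j$, the decrement rule is valid there, and iterating its contrapositive around the cycle from the base case $b_{\alpha+1} \notin \shad_j(b_\alpha)$ (which holds when $j_\alpha = j$, by the first paragraph) rules out $j_\alpha > j$, since that would yield $b_{\alpha+1} \in \shad_j(b_\alpha)$ in conflict with the propagated $\notin$ conditions. For $k=2$ a direct shortcut is available: $a_2 \le b_1$ gives $b_1 \in \shad_{j_2-1}(b_2)$ via \cref{obs:equivalence_for_shadows}; applying \cref{prop:shadow-comp}.\ref{prop:shadow-comp:item:equality_of_small_shadows} in both directions yields $\shad_{j_1}(b_1) = \shad_{j_1}(b_2)$ whenever $j_1 < j_2$, and then $b_2 \in \shad_{j_1}(b_2) = \shad_{j_1}(b_1)$ contradicts $b_2 \notin \shad_{j_1}(b_1)$.

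With the common address $(j,x)$ established, the disjointness assertion ($b_\beta \notin \shad_j(b_\alpha)$ for distinct $\alpha,\beta$) follows from the decrement rule directly: the case $\beta = \alpha+1$ is the first paragraph's reformulation of $a_\alpha \notin \shad_j(b_\alpha)$, and iterating the contrapositive of the decrement rule around the cycle (valid because initial elements now match everywhere) propagates $\notin$ from $\beta = \alpha+1$ forward to every $\beta \ne \alpha$.

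The main obstacle will be the cyclic closure that forces all $\shad_{j-1}(b_\alpha)$ to coincide as planar regions and the subsequent verification that this shared structure aligns the initial elements of the $\shad_j(b_\alpha)$'s to a single $x$; once these are secured, the decrement rule produces both the common-address conclusion and the disjointness almost mechanically. The subtlety lies in correctly tracking boundary behaviour when degenerate shadow blocks occur, and in handling the separate case $k=2$ where the cycle is too short for the decrement rule to propagate.
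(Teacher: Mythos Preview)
Your overall strategy matches the paper's: establish that all $\shad_{j-1}(b_\alpha)$ coincide cyclically (hence all $\shad_j(b_\alpha)$ share the initial element $x$), then use the risky-pair property to propagate containment/non-containment. Your decrement rule is exactly the paper's ``walking down'' step (their Claim~\ref{claim:walking_down}), and your disjointness argument at the end is correct once $j_\alpha=j$ is known for every $\alpha$.

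The gap is in your argument that $j_\alpha=j$ for all $\alpha$ when $k\ge 3$. Your decrement rule, in contrapositive form, propagates $b_\beta\notin\shad_j(b_\gamma)$ forward in $\beta$ \emph{for a fixed} $\gamma$. Starting from an index $\gamma$ with $j_\gamma=j$ (so $b_{\gamma+1}\notin\shad_j(b_\gamma)$) you obtain $b_\beta\notin\shad_j(b_\gamma)$ for every $\beta\ne\gamma$---but this says nothing about $\shad_j(b_{\alpha^*})$ for an index $\alpha^*$ with $j_{\alpha^*}>j$. The supposed conflict ``$b_{\alpha^*+1}\in\shad_j(b_{\alpha^*})$ versus the propagated $\notin$ conditions'' does not exist: the propagated conditions concern $\shad_j(b_\gamma)$, not $\shad_j(b_{\alpha^*})$.

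The missing ingredient is precisely what you used in the $k=2$ shortcut and what the paper uses in Claim~\ref{claim:j_is_constant}: if $j_{\alpha^*}\ge j+2$, then $b_{\alpha^*+1}\in\shad_{j+1}(b_{\alpha^*})$, so \cref{prop:shadow-comp}.\ref{prop:shadow-comp:item:equality_of_small_shadows} gives $\shad_j(b_{\alpha^*})=\shad_j(b_{\alpha^*+1})$. Choosing $\alpha^*$ so that $j_{\alpha^*+1}=j$, one then observes $a_{\alpha^*-1}\le b_{\alpha^*}\in\shad_j(b_{\alpha^*})=\shad_j(b_{\alpha^*+1})$ and $a_{\alpha^*-1}\parallel b_{\alpha^*+1}$, hence $a_{\alpha^*-1}\in\Int\shad_j(b_{\alpha^*+1})$; now your decrement rule (walking down from $\alpha^*-1$ to $\alpha^*+1$ inside $\shad_j(b_{\alpha^*+1})$) yields $a_{\alpha^*+1}\in\shad_j(b_{\alpha^*+1})$, contradicting $j_{\alpha^*+1}=j$. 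So the fix is simply to extend your $k=2$ use of \cref{prop:shadow-comp}.\ref{prop:shadow-comp:item:equality_of_small_shadows} to general $k$.
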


\begin{proof}
  The indices in $[k]$ are considered cyclically, e.g.\ $a_{k+1} = a_{1}$.
  For each $\alpha\in[k]$, let $(j_\alpha,x_{\alpha})$ be the address of $(a_\alpha,b_\alpha)$. 
  Set $j=\min\{j_\alpha:\alpha\in[k]\}$. 
  Fix $\gamma\in[k]$ such that $j=j_{\gamma}$ and let $x=x_{\gamma}$.
  Eventually, we will show that $(j,x)$ is the common address of $(a_{\alpha},b_{\alpha})$ for all $\alpha\in[k]$.
  The proof is split into three intermediate statements encapsulated by the following claims.

\begin{claim}\label{claim:statements_for_minimum_j}
For all $\alpha\in[k]$, if $j>0$, then $x$ is the terminal element of $\shad_{j-1}(b_{\alpha})$.
\end{claim}
\begin{proofclaim}
If $j=0$, then the claim holds vacuously, thus, suppose that $j>0$ and let $\alpha \in [k]$.
Since $j$ is at most the escape number of $(a_\alpha,b_\alpha)$, the element $a_\alpha$ lies in $\shad_{j-1}(b_\alpha)$.
Let $\cgB_{\alpha}$ be the block of $\shad_{j-1}(b_{\alpha})$ such that $a_{\alpha}$ lies in $\cgB_{\alpha}$.
Since $a_{\alpha}\leq b_{\alpha+1}$ and $a_{\alpha}\parallel b_{\alpha}$ in~$P$, by \cref{prop:paths_stay_in_blocks}, $b_{\alpha+1}$ lies in the interior of~$\cgB_{\alpha}$. 
By~\cref{prop:shadow-comp}.\ref{prop:shadow-comp:item:inclusions_of_j_shadows},
\[
\shad_{j-1}(b_{\alpha+1})
\subseteq \shad_{j-1}(\max\cgB_{\alpha})
\subseteq \shad_{j-1}(b_{\alpha}).
\]
Since these inclusions hold for all $\alpha\in[k]$ cyclically, we conclude that for all $\alpha\in[k]$,
\[
\shad_{j-1}(b_{\alpha+1})
= \shad_{j-1}(\max\cgB_{\alpha})
= \shad_{j-1}(b_{\alpha}).
\]
Since $\shad_{j-1}(b_{\alpha})$ are the same for all $\alpha\in[k]$, 
their terminal blocks also coincide. 
Let $\cgB$ be the common terminal block. 
Recall that for every $\alpha \in [k]$, $b_{\alpha+1}$ lies in the terminal block of $\shad_{j-1}(b_{\alpha+1})$, thus, $\calB = \calB_{\alpha}$.
Since $x=x_{\gamma}$ is the terminal element of $\shad_{j-1}(b_{\gamma})$, we conclude that $x=\max \cgB$ and the assertion holds.
\end{proofclaim}

\begin{claim}\label{claim:walking_down}
    Let $\alpha,\delta \in [k]$ with $\delta \neq \alpha - 1$ and such that $a_{\delta}$ lies in the interior of $\shad_j(b_\alpha)$.
    Then, $a_\alpha$ lies in the interior of $\shad_j(b_\alpha)$. 
    % Let $\alpha \in [k]$ and let $y \in B$ with $\sd(y) = j$.
    % Assume that $a_{\delta-1} \parallel y$ in~$P$ for every $\delta \in [k] \setminus \set{\alpha}$.
    % If $b_\alpha$ lies in the interior of $\shad_j(y)$, 
    % then $a_\alpha$ lies in the interior of $\shad_j(y)$.
    %then the elements $a_\delta$ and $b_\delta$ lie in the interior of $\shad_j(y)$ for every $\delta \in [k]$.
\end{claim}
\begin{proofclaim}
    Without loss of generality, assume that $\alpha = 1$.    Note that $\delta \neq k$. 
    We show that if $\beta\in[k-1]$ and 
    $a_\beta$ lies in the interior of $\shad_j(b_1)$, then 
    either $\beta=1$ or $a_{\beta-1}$ is also in the interior of $\shad_j(b_1)$.
    The statement of the claim immediately follows.

    Suppose that $\beta\in\set{2,\ldots,k-1}$ and $a_\beta$ lies in the interior of $\shad_j(b_1)$. 
    Recall that by~\cref{claim:statements_for_minimum_j}, $\shad_j(b_1)$ and $\shad_j(b_\beta)$ have a common initial element (namely $x$).
    Since the pair $(a_\beta,b_\beta)$ is a risky pair, $a_\beta \parallel b_1$ in~$P$, and $a_\beta$ is in the interior of $\shad_j(b_1)$, by \cref{prop:property_of_risky_pairs}, $b_\beta$ lies in the interior of $\shad_j(b_1)$.
    Since $a_{\beta-1} \leq b_\beta$ in~$P$ and $1 \neq \beta$, by \cref{obs:equivalence_for_shadows}, we obtain that $a_{\beta-1}$ is in the interior of $\shad_j(b_1)$, as desired.
    %which is a contradiction, and proves that in fact $1 = \beta$, which ends the proof.
\end{proofclaim}

\begin{figure}[tp]
  \begin{center}
    \includegraphics{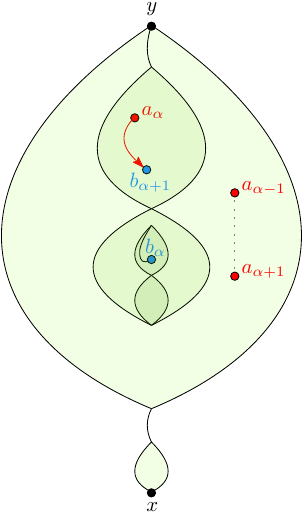}
  \end{center}
  \caption{
    An illustration of the proof of~\Cref{claim:j_is_constant}.
    We show $\shad_j(b_\alpha)$ (equal to $\shad_{j}(b_{\alpha+1})$).
    We arrive at $a_{\alpha+1} \in \shad_j(b_{\alpha+1})$, which is a contradiction. 
  }
  \label{fig:comprehensive_lemma_claim}
\end{figure}

\begin{claim}\label{claim:j_is_constant}
For all $\alpha\in[k]$, we have $j_\alpha = j$.
\end{claim}
\begin{proofclaim}
    Suppose to the contrary that there is some $\alpha \in [k]$ with $j_\alpha \neq j$.
    Since we consider only pairs in $I_\theta$ and $j$ is the minimum value of the escape number among all the pairs in the strict alternating cycle, we have $j_\alpha \geq j + 2$.
    Without loss of generality, we can assume that $j_{\alpha +1} = j$.

    The argument is illustrated in~\Cref{fig:comprehensive_lemma_claim}.
    By definition of the escape number, the element $a_\alpha$ lies in the interior of $\shad_{j+1}(b_\alpha)$.
    Since $a_\alpha \leq b_{\alpha+1}$ in~$P$, by \cref{obs:equivalence_for_shadows}, $b_{\alpha+1}$ lies in the interior of $\shad_{j+1}(b_\alpha)$.
    By \cref{prop:shadow-comp}.\ref{prop:shadow-comp:item:equality_of_small_shadows}, we have $\shad_j(b_\alpha) = \shad_j(b_{\alpha + 1})$. 

    Let $y$ be the terminal element of $\shad_j(b_\alpha) = \shad_j(b_{\alpha + 1})$.
    It follows that $y \leq b_\alpha$ and $y \leq b_{\alpha + 1}$ in~$P$, hence, by definition of strict alternating cycles, for every $\beta \in [k]$, we have $a_\beta \parallel y$ in~$P$.
    In particular, $a_{\alpha-1} \parallel y$ in~$P$.
    Since $a_{\alpha-1} \leq b_{\alpha}$ in~$P$, by \cref{obs:equivalence_for_shadows}, the element $a_{\alpha-1}$ lies in the interior of $\shad_j(b_\alpha) = \shad_j(b_{\alpha + 1})$.
    Therefore, by \cref{claim:walking_down}, $a_{\alpha+1}$ lies in the interior of $\shad_{j}(b_{\alpha+1})$, which contradicts $j_{\alpha + 1} = j$ and ends the proof of the claim.
\end{proofclaim}

To complete the proof of the lemma, it suffices to prove that for all distinct $\alpha,\beta \in [k]$, we have $b_\beta \notin \shad_j(b_\alpha)$.
Suppose to the contrary that there are distinct $\alpha,\beta\in[k]$ such that $b_\beta \in \shad_j(b_\alpha)$.
Since $b_\alpha \parallel b_\beta$ in~$P$, we obtain a stronger statement that $b_\beta$ lies in the interior of $\shad_j(b_\alpha)$.
Since $\alpha \neq \beta$, we have $a_{\beta-1} \parallel b_\alpha$ in~$P$.
This combined with the fact that $a_{\beta-1} \leq b_\beta$ in~$P$ and \cref{prop:paths_stay_in_blocks} implies that $a_{\beta-1}$ lies in the interior of $\shad_j(b_\alpha)$.
Therefore, by \cref{claim:walking_down}, we obtain that $a_\alpha$ lies in the interior of $\shad_j(b_\alpha)$, which yields $j_\alpha > j$, which contradicts \cref{claim:j_is_constant}.
This completes the proof of the lemma.
\end{proof}

\subsection{Dangerous pairs}\label{ssec:dangerous}
In this subsection, we refine the notion of risky pairs to the so-called dangerous pairs.
We say that $(a,b) \in I$ is a \emph{dangerous} pair if
\begin{enumerateNumd}
    \item there exists $b' \in B$ with $a \leq b'$ in~$P$ such that $b'$ is left of $b$, and \label{items:dangerous_b'}
    \item there exists $b'' \in B$ with $a \leq b''$ in~$P$ such that $b$ is left of $b''$. \label{items:dangerous_b''}
\end{enumerateNumd}

See examples of dangerous and non-dangerous pairs in \Cref{fig:risky}.
Dangerous pairs are the only \q{interesting} pairs in the following sense. 
Later on, we will apply \cref{lem:cgI-comprehensive}, to reduce the problem only to pairs so that in every strict alternating cycle, the elements $b$ are ordered left-to-right in the sense introduced in \cref{sec:ordering_elements_in_B}. 
We show that, assuming this property, the non-dangerous pairs have low dimension.

\begin{proposition}\label{prop:dangerous_low_dim}
    Let $I' \subset I$ be a set of pairs that are not dangerous such that for every strict alternating cycle $((a_1,b_1),\dots,(a_k,b_k))$ in~$P$ contained in $I'$, for all $\alpha,\beta \in [k]$ with $\alpha \neq \beta$, either $b_\alpha$ left of $b_{\beta}$ or $b_{\alpha}$ right of $b_\beta$.
    Then, $\dim_P(I') \leq 2$.
\end{proposition}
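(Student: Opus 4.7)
The plan is to split $I'$ into two subsets according to which clause of the dangerousness definition fails, and show that each of them is reversible using the hypothesis about strict alternating cycles. Since the left-of relation partially orders $B$ (by \cref{prop:left_porders_bs}), within any strict alternating cycle contained in $I'$ the elements $b_1,\ldots,b_k$ are linearly ordered by left-of, so there is a unique leftmost and a unique rightmost.

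More concretely, define
\[
I_1' = \{(a,b) \in I' : \text{no } b' \in B \text{ with } a \leq b' \text{ in } P \text{ is left of } b\},
\]
\[
I_2' = \{(a,b) \in I' : \text{no } b'' \in B \text{ with } a \leq b'' \text{ in } P \text{ has } b \text{ left of } b''\}.
\]
Since every pair in $I'$ fails \ref{items:dangerous_b'} or \ref{items:dangerous_b''}, we have $I' = I_1' \cup I_2'$. Thus $\dim_P(I') \leq \dim_P(I_1') + \dim_P(I_2')$, so it suffices to show that each $I_\ell'$ is reversible, i.e., contains no strict alternating cycle (\cref{prop:dim-alternating-cycles}).

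For $I_1'$, suppose toward a contradiction that $((a_1,b_1),\dots,(a_k,b_k))$ is a strict alternating cycle in $P$ contained in $I_1'$. By hypothesis, the elements $b_1,\dots,b_k$ are pairwise ordered by left-of; since this is a partial order on $B$, let $\gamma \in [k]$ be the index such that $b_\gamma$ is the rightmost among $\{b_1,\dots,b_k\}$. Because $\{b_1,\dots,b_k\}$ is a $k$-antichain in $P$ with $k \geq 2$, there is some $b_{\gamma+1} \neq b_\gamma$ with $b_{\gamma+1}$ left of $b_\gamma$; note $b_{\gamma+1} \in B$ because $I$ is singly constrained by $x_0$. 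The alternating cycle property gives $a_\gamma \leq b_{\gamma+1}$ in $P$, so $b' := b_{\gamma+1}$ witnesses that $(a_\gamma,b_\gamma)$ satisfies~\ref{items:dangerous_b'}, contradicting $(a_\gamma,b_\gamma) \in I_1'$. The argument for $I_2'$ is symmetric: pick $\gamma$ with $b_\gamma$ leftmost; then $b_\gamma$ is left of $b_{\gamma+1}$, while $a_\gamma \leq b_{\gamma+1}$ in $P$, so $b'' := b_{\gamma+1}$ witnesses~\ref{items:dangerous_b''} for $(a_\gamma,b_\gamma)$, contradicting $(a_\gamma,b_\gamma) \in I_2'$.

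The only subtlety is to verify that the cyclic successor $b_{\gamma+1}$ is indeed distinct from $b_\gamma$ and still lies in $B$, which both follow immediately from strictness of the alternating cycle and from $I \subseteq \Inc(P)$ being singly constrained by $x_0$. No real obstacle here: once one realizes that the extremal element in the left-of order produces the missing witness on the opposite side of the cycle, the two cases close themselves.
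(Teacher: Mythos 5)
Your proposal is correct and follows essentially the same route as the paper: the same split of $I'$ into the pairs failing \ref{items:dangerous_b'} and those failing \ref{items:dangerous_b''}, and the same key observation that in a strict alternating cycle contained in one of these parts, the comparability $a_\alpha \leq b_{\alpha+1}$ would produce the missing witness. The only cosmetic difference is how the contradiction is closed: the paper notes that "$b_{\alpha+1}$ is right of $b_\alpha$" cannot hold cyclically, whereas you select the rightmost (resp.\ leftmost) $b_\gamma$ and contradict membership in $I_1'$ (resp.\ $I_2'$) directly — the same argument in extremal-element form.
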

\begin{proof}
    Let $I_1'$ be all the pairs in $I'$ that do not satisfy~\ref{items:dangerous_b'} and let $I_2'$ be all the pairs in $I'$ that do not satisfy~\ref{items:dangerous_b''}.
    Note that $I' = I_1' \cup I_2'$.
    We claim that $I_1'$ and $I_2'$ are reversible.
    We will prove that $I_1'$ is reversible, the proof for $I_2'$ is symmetric.
    Suppose to the contrary that $I_1'$ is not reversible, and so by~\cref{prop:dim-alternating-cycles}, $I_1'$ contains a strict alternating cycle $((a_1,b_1),\dots,(a_k,b_k))$.
    %Since the alternating cycle is strict, $\{b_1,\dots,b_k\}$ is an antichain in~$P$.
    Let $\alpha \in [k]$.
    By the assumption, either $b_\alpha$ is left of $b_{\alpha+1}$ or $b_{\alpha+1}$ is left of $b_\alpha$ (we consider the indices cyclically, e.g.\ $b_{k+1} = b_1$).
    Since $(a_\alpha,b_\alpha)$ does not satisfy~\ref{items:dangerous_b'}, $a_\alpha \leq b_{\alpha+1}$ in~$P$ we have that $b_{\alpha+1}$ is right of $b_\alpha$. 
    However, this cannot hold cyclically for all $\alpha\in[k]$. 
    The contradiction completes the proof.
    % We define two binary relations $\preccurlyeq_L$ and $\preccurlyeq_R$ on $B$.
    % Let $b_1,b_2 \in B$.
    % Let $b_1 \prec_L b_2$ whenever $b_1$ is left of $b_2$ and let $b_1 \prec_R b_2$ whenever $b_1$ is right of $b_2$.
    % Let $\preccurlyeq_L$ and $\preccurlyeq_R$ be the reflexive closures of $\prec_L$ and $\prec_R$ respectively.
    % By \cref{prop:left_porders_bs}, each of $\preccurlyeq_L$ and $\preccurlyeq_R$ partially orders $B$.
    % Note that $I' = \ext(\preccurlyeq_L,I) \cup \ext(\preccurlyeq_R,I)$.
    % By the assumed property of $I$, the premise of \cref{prop:reversible-from-order} is satisfied, and so,
    %     \[\dim(I')=\dim(\ext(\preccurlyeq_L,I) \cup \ext(\preccurlyeq_R,I)) \leq 2.\qedhere\]
    % \heather{To match the definition of $\ext$ in \cref{prop:reversible-from-order}, the set of incomparable pairs should be the first input: $\ext(I, \preccurlyeq)$ instead of $\ext(\preccurlyeq, I)$.}
\end{proof}

%The next statement states that if a pair $(a,b)\in I$ is dangerous, then $a$ cannot lie in a shadow of an element $d$ that is left of $b$ or right of $b$. 

\begin{proposition}\label{prop:d_not_in_shadow_when_dangerous}
    Let $(a,b) \in I$ be a dangerous pair and let $d \in B$ such that either $d$ is left of $b$ or $b$ is left of $d$.
    Then $a \notin \shadz(d)$.
\end{proposition}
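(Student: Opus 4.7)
My plan is to argue by contradiction. Assume $a \in \shadz(d)$. Without loss of generality I will treat the case $d$ is left of $b$; the case $b$ left of $d$ is handled by the symmetric argument using the witness $b'$ from \ref{items:dangerous_b'} in place of $b''$. Applying \ref{items:dangerous_b''} yields $b'' \in B$ with $a \leq b''$ in $P$ and $b$ left of $b''$. Then by transitivity of the left-of relation (\Cref{prop:left_porders_bs}), $d$ is left of $b''$; in particular $b'' \notin \shadz(d)$ and, because left pairs are incomparable by \Cref{prop:shortcuts}, also $d \parallel b''$ in $P$.

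Next, I would split on the comparability of $a$ and $d$. The sub-case $d \leq a$ gives $d \leq a \leq b''$, which contradicts $d \parallel b''$ and is immediate. The sub-case $a \not\leq d$ is handled by \Cref{obs:equivalence_for_shadows}, applied with the observation's $b$ set to $d$, $u = a$, $v = b''$, and $j = 0$: it gives the equivalence $a \in \shadz(d) \iff b'' \in \shadz(d)$, which, combined with $a \in \shadz(d)$ and $b'' \notin \shadz(d)$, is a direct contradiction. Both of these sub-cases are short, and together they are consistent with the intuition that if $a$ were to have the same order-comparability pattern with $d$ as with some element outside $\shadz(d)$, the two would have to agree on shadow membership.

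The main obstacle is the remaining sub-case $a < d$ strictly, where neither \Cref{obs:equivalence_for_shadows} nor \Cref{prop:property_of_risky_pairs} (which needs $a \parallel d$) applies directly to~$d$. My plan is to show that under $a < d$ and $a \in \shadz(d)$ the dangerous hypothesis itself cannot hold, by comparing the leftmost witnessing paths $W_L(d)$, $W_L(b)$, $W_L(b'')$ (and symmetrically $W_R$) and showing that both witnesses $b'$ and $b''$ would then have to lie on the same side of $b$. Concretely, using that any witnessing path from $a$ to $d$ lies in $\shadz(d)$ (\Cref{cor:path-in-shadow}), and the fact that $W_L(b'')$ is leftmost among witnessing paths from $x_0$ to $b''$ so $W_L(b'') \preceq W_L(a) \cdot W$ for any witnessing path $W$ from $a$ to $b''$ (when $a \in B$), I would argue via \Cref{prop:Tleft-and-Tright}, \Cref{prop:sandwiched_paths}, and the partial-ordering of paths in \Cref{prop:u_e_is_poset} that $a$ must lie on the common prefix of $W_L(d)$ and $W_L(b'')$, which in turn forces $W_L(b'') \prec W_L(b)$, contradicting $b$ left of $b''$. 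The main obstacle will be executing this ordering analysis cleanly: it splits naturally into two sub-sub-cases according to whether $a$ lies on $\partial\shadz(d)$ or in its interior, and there is the additional subtlety that $a$ need not belong to $B$, so that $W_L(a)$ is undefined and one must instead argue via the terminal element of the shadow block of $\shadz(d)$ containing $a$, invoking \Cref{prop:uWLWR} and \Cref{prop:shadow-comp}.
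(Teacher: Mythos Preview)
Your first two sub-cases are fine, but the case split is both unnecessary and leaves you with a hard residual case that your sketch does not actually handle. In the sub-case $a < d$, your plan hinges on showing that $a$ lies on the common prefix of $W_L(d)$ and $W_L(b'')$, but you give no mechanism for this, and indeed it need not hold: $a$ is not assumed to be in $B$, so it need not lie on any leftmost witnessing path at all. Your fallback of passing to the terminal element of a shadow block of $\shadz(d)$ containing $a$ is not developed, and it is unclear how it would force $W_L(b'') \prec W_L(b)$; the path-ordering propositions you cite (\Cref{prop:Tleft-and-Tright}, \Cref{prop:sandwiched_paths}) operate on elements of $B$ and do not obviously transfer information about $a$ to a contradiction with $b$ left of $b''$.

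The paper avoids the case split altogether with one extra idea: instead of reasoning about $a$ directly, pass to the first element $u \in B$ on a witnessing path from $a$ to $b''$. Since the only element of $B$ on the exposed segment $a[W]u$ is $u$, and $\partial\shadz(d) \subset B$, the assumption $a \in \shadz(d)$ forces $u \in \shadz(d)$. On the other hand, from $b'' \notin \shadz(b)$, $a \le b''$, and $a \parallel b$ one gets $a \notin \shadz(b)$ via \Cref{obs:equivalence_for_shadows}, and then the same observation with $v = u$ gives $u \notin \shadz(b)$. Now $u \in \shadz(d) \setminus \shadz(b)$ with $d$ left of $b$ yields, by \Cref{prop:in_one_shad_but_not_other_then_left}\ref{prop:in_one_shad_but_not_other_then_left:left}, that $u$ is left of $b$, hence (transitivity) $u$ is left of $b''$; but $u \le b''$ gives $u \in \shadz(b'')$ by \Cref{prop:comparability_implies_shadow_containment}, a contradiction. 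This argument is uniform in the comparability status of $a$ and $d$, so your problematic third case simply does not arise.
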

\begin{proof}
    Assume that $d$ is left of $b$.
    When $b$ is left of $d$, the proof is symmetric.
    Let $b'' \in B$ witness \ref{items:dangerous_b''} for $(a,b)$.
    Since $b'' \notin \shadz(b)$ and $a \leq b''$ in~$P$, by~\Cref{obs:equivalence_for_shadows}, we obtain $a \notin \shadz(b)$.
    % First, we prove that $a \notin \shadz(b)$.
    % Since $b''$ is right of $b$, we have $b'' \notin \shadz(b)$.
    % Since $a \leq b''$ in~$P$, there is a witnessing path from $a$ to $b''$ in~$P$. 
    % If $a \in \shadz(b)$, this path must intersect $\partial \shadz(b)$, thus, $a \leq b$ in~$P$, which is a contradiction.
    % This proves that $a \notin \shadz(b)$.
%    \jedrzej{This can be stated as a property if needed.}

    Suppose to the contrary that $a \in \shadz(d)$.
    Since $a \leq b''$ in~$P$, there is a witnessing path $W$ from $a$ to $b''$ in~$P$. Let $u$ be the first element in $B$ on $W$.
    It follows that $u \in \shadz(d)$.
    Moreover, since $a \notin \shadz(b)$ and $a \leq u$ in~$P$, by~\Cref{obs:equivalence_for_shadows}, we obtain $u \notin \shadz(b)$.
    % Now we argue that $a \notin \shadz(b)$ implies $u \notin \shadz(b)$.
    % Suppose to the contrary that $u \in \shadz(b)$. 
    % Thus, $W$ must intersect the boundary of $\shadz(b)$, which implies $a < b$ in~$P$, a contradiction.
    Since $d$ left of $b$, by \cref{prop:in_one_shad_but_not_other_then_left}.\ref{prop:in_one_shad_but_not_other_then_left:left}, $u$ is left of $b$.
    Since $u$ is left of $b$ and $b$ is left of $b''$, $u$ is left of $b''$ (by~\cref{prop:left_porders_bs}).
    In particular, $u \notin \shadz(b'')$, however, $u \leq b''$ in~$P$, which is a contradiction with \cref{prop:comparability_implies_shadow_containment}, hence, indeed, $a \notin \shadz(d)$.
\end{proof}

\begin{corollary}\label{prop:dangerous_a_not_in_shadows_b'_b''}\label{prop:dangerous_a_not_in_B}
    Let $(a,b) \in I$ be a dangerous pair, let \ref{items:dangerous_b'} be witnessed by $b' \in B$, and let \ref{items:dangerous_b''} be witnessed by $b'' \in B$. Then
        \[a \notin \shadz(b'), \ \ a \notin \shadz(b''),  \ \ \text{and} \ \ a \notin B.\]    
\end{corollary}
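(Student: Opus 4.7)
For the first two assertions, my plan is simply to invoke \cref{prop:d_not_in_shadow_when_dangerous} twice. Taking $d = b'$ is legitimate since $b' \in B$ and $b'$ is left of $b$ by~\ref{items:dangerous_b'}, and yields $a \notin \shadz(b')$. Taking $d = b''$ is legitimate since $b'' \in B$ and $b$ is left of $b''$ by~\ref{items:dangerous_b''}, and yields $a \notin \shadz(b'')$.

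For the third assertion, I would argue by contradiction and suppose $a \in B$. Since $a \leq b'$ in~$P$, \cref{prop:comparability_implies_shadow_containment} gives $\shadz(a) \subset \shadz(b')$; combined with the first assertion already proved, this will produce a contradiction as soon as I establish the auxiliary fact that every element $u \in B$ lies in its own shadow $\shadz(u)$.

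To prove this auxiliary fact I would just unpack the definitions from~\cref{ssec:shadows}. Let $(\calB_1,\dots,\calB_m)$ be the sequence of blocks of $u$, so that $u = z_m \in \calB_m$. If $\sd(u) = 0$, then $\shadz(u)$ is the union of all the $\calB_i$, and in particular contains $u$. If $\sd(u) \ge 1$, let $z_{i_1}$ be the first reversing element of $u$; the nesting analysis for reversing elements recalled in~\cref{ssec:shadows} (Case~2) gives $\calB_j \subset \calB_{i_1}$ for all $j > i_1$, and in particular $u \in \calB_m \subset \calB_{i_1} \subset \shadz(u)$. The only mildly nontrivial step of the whole argument is this auxiliary observation; once it is granted, applying it to $u = a$ yields $a \in \shadz(a) \subset \shadz(b')$, contradicting the first assertion and giving $a \notin B$.
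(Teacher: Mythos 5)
Your proposal is correct and follows essentially the same route as the paper: the first two assertions are exactly the paper's direct applications of \cref{prop:d_not_in_shadow_when_dangerous}, and the third uses the same contradiction via \cref{prop:comparability_implies_shadow_containment}. The only difference is that you spell out the (true, and correctly verified) fact that $u \in \shadz(u)$ for $u \in B$, which the paper's proof uses implicitly when it declares $\shadz(a) \subset \shadz(b')$ to contradict $a \notin \shadz(b')$.
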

\begin{proof}
    The first two items are direct corollaries of~\cref{prop:d_not_in_shadow_when_dangerous}.
    For the last item, if $a \in B$, then  since $a \leq b'$ in~$P$, by \cref{prop:comparability_implies_shadow_containment}, $\shadz(a) \subset \shadz(b')$, which contradicts $a \notin \shadz(b')$, and proves that $a \notin B$.
\end{proof}

\subsection{Interface}\label{ssec:interface}

In order to capture the consequences of the theory developed in this section, we introduce the following notion. 
We say that an instance $(P,x_0,G,e_{-\infty},I)$ is a \emph{good instance} if
\begin{enumerateNumI}
    \setcounter{enumi}{5}
    \item $a \notin \shadz(b)$ for every $(a,b) \in I$;\label{item:instance:not_in_shadow}
    \item for every strict alternating cycle $((a_1,b_1),\dots,(a_k,b_k))$ in~$P$ contained in $I$, for all distinct  $\alpha,\beta \in [k]$, either $b_\alpha$ is left of $b_{\beta}$ or $b_{\alpha}$ is right of $b_\beta$;\label{item:instance:sacs} 
    \item every pair in $I$ is dangerous.\label{item:instance:dangerous}
\end{enumerateNumI}

Next, we show that for every instance, there is a good instance such that the dimension does not vary much.

\begin{corollary}\label{cor:interface}
    For every instance $(P,x_0,G,e_{-\infty},I)$, there exists a good instance $(P',x_0',G',e_{-\infty}',I')$ such that 
    $P'$ is a convex subposet of $P$, $I' \subset I$, and
    \[\dim_P(I) \leq 2\dim_{P'}(I') + 6.\]
%    \jedrzej{$I' \subset I$ is needed in Planarity and Dimension III.}
\end{corollary}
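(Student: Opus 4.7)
The plan is to follow the three-step reduction sketched after~\cref{sec:good_instance}. First, let $I_R\subseteq I$ be the set of risky pairs; by~\cref{prop:risky_low_dim}, $\dim_P(I\setminus I_R)\le 2$, and so $\dim_P(I)\le\dim_P(I_R)+2$. Partition $I_R = I_0\cup I_1$ by the parity of the escape number; then $\dim_P(I_R)\le \dim_P(I_0)+\dim_P(I_1)\le 2\max(\dim_P(I_0),\dim_P(I_1))$. Pick $\theta\in\{0,1\}$ attaining that maximum. By \cref{lem:cgI-comprehensive}, every strict alternating cycle contained in $I_\theta$ has all its pairs sharing one address, so \cref{prop:dim_of_sum_incomparable_sets}, applied to the decomposition of $I_\theta$ by addresses, furnishes an address $(j^*,x^*)$ with $j^*\equiv\theta\pmod 2$ and $\dim_P(I_\theta) = \dim_P(I(j^*,x^*))$.

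Next, set $P' = \{y\in P : y\not<x^*\text{ in }P\}$. A direct check shows that $P'$ is convex in $P$, $x^*$ is minimal in $P'$, and the cover graph of $P'$ is the subgraph of the cover graph of $P$ induced by $P'$ (hence planar). Every $(a,b)\in I(j^*,x^*)$ satisfies $x^*\le b$ in $P$, and $a\parallel b$ in $P$ rules out $a<x^*$, so $a,b\in P'$; combined with convexity this yields $\dim_P(I(j^*,x^*))\le \dim_{P'}(I(j^*,x^*))$. To lift $P'$ to an instance, I will keep the drawing inherited from $G$ and construct $e_{-\infty}'$ as a curve starting at $x^*$ and escaping to infinity along the \emph{corridor} opened by removing the internal vertices of $x_0[W_L(x^*)]x^*$ (all of which lie in $D_P[x^*]\setminus\{x^*\}$ and thus vanish in $P'$); this also places $x^*$ on the exterior face of the resulting drawing $G'$. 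With $e_{-\infty}'$ chosen to \q{continue $W_L$ from infinity into $x^*$}, the $(x^*,e_{-\infty}')$-ordering on witnessing paths in $P'$ starting at $x^*$ will match, for every $b\in U_{P'}[x^*]$, the $(x_0,e_{-\infty})$-ordering on their extensions by $x_0[W_L(x^*)]x^*$; consequently, the leftmost and rightmost witnessing paths in the new instance satisfy $W_L^{P'}(b) = x^*[W_L^P(b)]b$ and $W_R^{P'}(b) = x^*[W_R^P(b)]b$, and hence $\shad_0^{P'}(b) = \shad_{j^*}^P(b)$ as subsets of the plane.

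From this shadow identification, property~\ref{item:instance:not_in_shadow} is immediate: $a\notin\shad_{j^*}^P(b)$ by the definition of escape number, hence $a\notin\shad_0^{P'}(b)$. Property~\ref{item:instance:sacs} follows from the second conclusion of \cref{lem:cgI-comprehensive} together with \cref{cor:notin_shads_implies_left_or_right} applied inside the new instance: for any two distinct $b_\alpha,b_\beta$ from pairs of a strict alternating cycle contained in $I(j^*,x^*)$, we have $b_\alpha\notin\shad_{j^*}^P(b_\beta)$ and $b_\beta\notin\shad_{j^*}^P(b_\alpha)$, i.e.\ the analogous $\shad_0^{P'}$ statements in $P'$, so one is left of the other. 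With \ref{item:instance:sacs} established for $I(j^*,x^*)$, \cref{prop:dangerous_low_dim} applied in the new instance to its non-dangerous pairs yields $\dim_{P'}(I(j^*,x^*))\le \dim_{P'}(I')+2$, where $I'\subseteq I(j^*,x^*)$ is the set of pairs that are dangerous in the new instance. All three properties \ref{item:instance:not_in_shadow}, \ref{item:instance:sacs}, \ref{item:instance:dangerous} pass to $I'\subseteq I(j^*,x^*)$, so $(P',x^*,G',e_{-\infty}',I')$ is a good instance, and concatenating the inequalities gives
\[\dim_P(I)\le 2\dim_P(I(j^*,x^*))+2\le 2\dim_{P'}(I(j^*,x^*))+2\le 2(\dim_{P'}(I')+2)+2 = 2\dim_{P'}(I')+6.\]

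The main obstacle will be the topological step: defining the corridor rigorously, checking that $x^*$ genuinely ends up on the exterior face of $G'$, and verifying the identity $W_L^{P'}(b) = x^*[W_L^P(b)]b$ (and its $R$-counterpart) for every $b\in U_{P'}[x^*]$. Once this shadow alignment is locked in, the rest is just bookkeeping: all of the combinatorial input is already packaged in \cref{lem:cgI-comprehensive}, \cref{prop:risky_low_dim}, \cref{prop:dangerous_low_dim}, and \cref{cor:notin_shads_implies_left_or_right}.
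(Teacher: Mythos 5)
Your proposal is correct and follows essentially the same route as the paper's proof: the risky-pair reduction via \cref{prop:risky_low_dim}, the parity split of escape numbers, \cref{lem:cgI-comprehensive} together with \cref{prop:dim_of_sum_incomparable_sets} to fix a single address $(j^*,x^*)$, restriction to the convex subposet of elements not strictly below $x^*$ with the inherited drawing, and the final dangerous-pair reduction via \cref{prop:dangerous_low_dim}, yielding the same $2\dim_{P'}(I')+6$ bound. The only real difference is cosmetic: the paper takes $e_{-\infty}'$ to be simply the last edge of $W_L(x^*)$ (no corridor curve needed) and, like you, records the identification $\shad_{j^*}^P(b)=\shad_0^{P'}(b)$ as a remark without carrying out the leftmost/rightmost-path verification that you explicitly flag as the remaining obstacle.
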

\begin{proof}
    Let $\calI = (P,x_0,G,e_{-\infty},I)$ be an instance. 
    %If $I = \emptyset$, the instance $(P,x_0,\calD,e_{-\infty},\emptyset)$ is already a good instance, thus, we assume $I \neq \emptyset$.
    For every pair $(j,x)$ where $j$ is a nonnegative integer and $x$ is an element of $P$, 
    let 
    \begin{align*}
    I(j,x)&=\set{(a,b)\in I : \textrm{$(a,b)$ is risky and the address of $(a,b)$ is $(j,x)$}}.
    \intertext{We define}
    I_\theta &= \bigcup\{I(j,x) : \ j \text{ with } j\equiv\theta\text{ mod }2 \text{ and } x \text{ in } P \}, \textrm{ for each $\theta\in\set{0,1}$ and}\\
%    I_1 &= \bigcup\{I(j,x) : \ j \text{ with } j\equiv1\text{ mod }2 \text{ and } x \text{ in } P \},\\
    I_2 &= \set{(a,b)\in I : \textrm{ $(a,b)$ is not risky}}.
    \end{align*}
%    Let $I_2$ be the set of all non-risky pairs in $I$.
%    For each $\theta \in \{0,1\}$, let
%        \[I_\theta = \bigcup\{I(j,x) : \ j \text{ with } j\equiv\theta\text{ mod }2 \text{ and } x \text{ in } P \}.\]
    Note that $I_0,I_1,I_2$ are pairwise disjoint and their union is $I$. 
    Hence, applying \cref{prop:risky_low_dim}, we fix $\theta \in \{0,1\}$ such that
        \[\dim_P(I) \leq \dim_P(I_0) + \dim_P(I_1) + \dim_P(I_{2}) \leq 2\dim_P(I_\theta) + 2.\]
    By \cref{lem:cgI-comprehensive}, for every strict alternating cycle in~$P$ contained in~$I_\theta$, there exists $(j,x)$ such that all the pairs in the alternating cycle are in $I(j,x)$.
    Therefore, applying \cref{prop:dim_of_sum_incomparable_sets}, we fix a pair $(j,x)$ such that $\dim_P(I_\theta) = \dim_P(I(j,x))$. 
    Let $J = I(j,x)$.
    %Let $I' \subset I(j,x)$ be the set of all dangerous pairs in $I(j,x)$.
    %Note that by \cref{lem:cgI-comprehensive} and \cref{prop:dangerous_low_dim} we have

    Let $P'$ be the subposet of $P$ induced by all elements $p$ in $P$ with $p \not< x$ in~$P$.
    Note that $P'$ is a convex subposet of $P$.
    Indeed, if $p,q,r$ in~$P$ with $p \leq q \leq r$ in~$P$ and $p,r$ in $P'$, then $r \not<x$ in~$P$ implies $q \not< x$ in~$P$, and so, $q$ is in $P'$.
    Since $P'$ is a convex subposet of $P$, we can set $G'$ to be the plane graph isomorphic to the cover graph of $P'$ inherited from $G$.
    Since $x_0$ lies in the exterior face of $G$ and the only element of $W_L(x)$ in $P$ that is in $P'$ is $x$, the element $x$ lies in the exterior face of $G'$.
    We set $x_0' = x$.
    Finally, let $e_{-\infty}' = e_{-\infty}$ when $x_0 = x$ and let $e_{-\infty}$ be the last edge of $W_L(x)$ otherwise.

    We claim that $\calI' = (P',x_0',G',e_{-\infty}',J)$ satisfies \ref{item:instance:planar_cover_graph}--\ref{item:instance:sacs}.
    Items \ref{item:instance:planar_cover_graph}, \ref{item:instance:x_0_minimal}, \ref{item:instance:plane-graph}, and \ref{item:instance:e_infty} are clear from the definitions.
    Note that $J \subset \Inc(P')$.
    Indeed, for every $(a,b) \in J$, the address of $(a,b)$ is $(j,x)$, and so, $x < b$ in~$P$; it follows that $a \not< x$ in~$P$, and finally both $a$ and $b$ are in~$P'$.
    Since $x < b$ in~$P$ for every $(a,b) \in J$, we find that $J$ is singly constrained by $x$ in~$P$. 
    This implies~\ref{item:instance:I_singly_constrained}.
    Let $b$ be an element of $P'$ with $x < b$ in~$P$.
    Note that $\shad_j(b)$ with respect to $\calI$ is equal to $\shad_0(b)$ with respect to $\calI'$.
    Since for every $(a,b) \in J$, the escape number with respect to $\calI$ is $j$, we have $a \notin \shad_j(b)$ with respect to $\calI$, and so, $a \notin \shad_0(b)$ with respect to $\calI'$.
    This proves \ref{item:instance:not_in_shadow}.
    Finally,~\ref{item:instance:sacs} follows from \cref{lem:cgI-comprehensive} and~\Cref{cor:notin_shads_implies_left_or_right}.

    In the last step, we enforce the condition \ref{item:instance:dangerous} on a slightly smaller instance.
    Let $I'$ be the set of all the pairs in~$J$ that are dangerous with respect to $\calI'$.
    By \ref{item:instance:sacs} and \cref{prop:dangerous_low_dim}, we have $\dim_{P'}(J \setminus I') \leq 2$.
    Note that replacing the set of pairs in an instance by its subset preserves conditions \ref{item:instance:planar_cover_graph}--\ref{item:instance:sacs}.
    It follows that $(P',x_0',G',e_{-\infty}',I')$ is a good instance.
    Moreover, $P'$ is a convex subposet of $P$, $I' \subset I$, and 
    \begin{align*}
        \dim_P(I) \leq 2\dim_P(I_\theta)+2 = 2\dim_{P'}(J)+2 &\leq 2(\dim_{P'}(I') + \dim_{P'}(J\setminus I')) + 2\\
        &\leq 2\dim_{P'}(I') + 6. \qedhere
    \end{align*}
\end{proof}

%Since $I$ in an instance is given, sometimes it may be too small for our purposes.
%Therefore, we slightly refine the notion of good instances.

For a given set $X$ and $I \subset X \times X$, we define
\begin{align*}
    \pi_1(I) &= \{a \in X : \text{ there exists $b' \in X$ with $(a,b') \in I$} \} \text{ and }\\
    \pi_2(I) &= \{b \in X : \text{ there exists $a' \in X$ with $(a',b) \in I$} \}.
\end{align*}
We say that a good instance $(P,x_0,G,e_{-\infty},I)$ is \emph{maximal} if
\begin{enumerateNumI}
    \setcounter{enumi}{8}
    \item $I$ is the set of all pairs $(a,b)\in\Inc(P)$ such that $a \in \pi_1(I)$, $b \in \pi_2(I)$, $a\not\in\shadz(b)$, and $(a,b)$ is dangerous. \label{item:instance:maximal}
        % \begin{enumerate}
        %     \item there exist elements $a'$, $b'$ in~$P$ with $(a',b),(a,b')\in I$;
        %     \item $a\not\in\shadz(b)$; and
        %     \item $(a,b)$ is dangerous.
        % \end{enumerate}
\end{enumerateNumI}

% \[
% I = \left\{(a,b) \in \Inc(P) :  \ \begin{array}{l}
% \text{there exist $a',b'$ in~$P$ with } (a,b'),(a',b) \in I,\\ 
% a \notin \shadz(b) \text{, and } (a,b) \text{ is dangerous}
% \end{array}\right\}.
% \]
% \[
% I = \left\{ (a,b) \in \Inc(P) :  \ \begin{array}{l}
% \text{there exists $a'$ in~$P$ with } (a',b) \in I,\\ 
% \text{there exist $b'$ in~$P$ with } (a,b') \in I,\\
% a \notin \shadz(b),\\
% (a,b) \text{ is dangerous}
% \end{array}\right\}.
% \]

% $I$ is the set of all pairs $(a,b) \in \Inc(P)$ such that
% \[\text{$x_0 \leq b$ in~$P$, \ \ $a \notin \shadz(b)$, \ \ and $(a,b)$ is dangerous.}\]
% \begin{itemize}
%     \item $x_0 \leq b$ in~$P$,
%     \item $a \notin \shadz(b)$, and 
%     \item $(a,b)$ is dangerous.
% \end{itemize}

%In other words, $I$ is the maximal set in $\Inc(P)$ such that the instance satisfies \ref{item:instance:I_singly_constrained},~\ref{item:instance:not_in_shadow}, and~\ref{item:instance:dangerous}.
Note that it is not immediate if a good instance is \q{contained} in some maximal good instance. %exist since we omitted~\ref{item:instance:sacs} in the definition.
This issue is resolved in the next proposition.

\begin{proposition}\label{prop:good_instance_to_maximal_good_instance}
    For every good instance $(P,x_0,G,e_{-\infty},I)$, there exists $I^+ \subset \Inc(P)$ such that $I \subset I^+ \subset \Inc(P) \cap (\pi_1(I) \times \pi_2(I))$ and $(P,x_0,G,e_{-\infty},I^+)$ is a maximal good instance.
\end{proposition}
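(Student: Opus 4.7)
The plan is to define $I^+$ explicitly as the set of all pairs $(a,b)\in\Inc(P)$ with $a\in\pi_1(I)$, $b\in\pi_2(I)$, $a\notin\shadz(b)$, and $(a,b)$ dangerous, and then to verify all the required conditions for this particular choice.

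First I would check the chain of inclusions $I\subseteq I^+\subseteq \Inc(P)\cap(\pi_1(I)\times\pi_2(I))$. The right inclusion is immediate from the definition. The left inclusion uses \ref{item:instance:not_in_shadow} and \ref{item:instance:dangerous}: every $(a,b)\in I$ satisfies $a\notin\shadz(b)$ and is dangerous, and trivially $a\in\pi_1(I)$, $b\in\pi_2(I)$. Next I would verify that $(P,x_0,G,e_{-\infty},I^+)$ is an instance. Items \ref{item:instance:planar_cover_graph}--\ref{item:instance:e_infty} are inherited from the original instance. For \ref{item:instance:I_singly_constrained}, every $b$ appearing as a second coordinate in $I^+$ lies in $\pi_2(I)$, so there exists $a'$ with $(a',b)\in I$; since $I$ is singly constrained by $x_0$, we get $x_0\leq b$ in $P$.

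Next I would verify the good-instance conditions for $I^+$. Conditions \ref{item:instance:not_in_shadow} and \ref{item:instance:dangerous} hold by construction. The real work is condition \ref{item:instance:sacs}: given a strict alternating cycle $((a_1,b_1),\dots,(a_k,b_k))$ contained in $I^+$, I must show that the $b_\alpha$'s are pairwise related by left-of or right-of. Two preliminary observations: every dangerous pair is risky (since left-of implies the corresponding leftmost/rightmost paths are left of each other, by definition of a left pair), and every pair in $I^+$ has escape number $0$ by \ref{item:instance:not_in_shadow}. Hence, viewing the cycle inside the instance $(P,x_0,G,e_{-\infty},I^+)$, all its pairs are risky and belong to the even-parity class, so \cref{lem:cgI-comprehensive} applies with $\theta=0$ and yields $b_\beta\notin\shadz(b_\alpha)$ for every pair of distinct indices $\alpha,\beta\in[k]$. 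Combined with $b_\alpha\parallel b_\beta$ (which holds in any strict alternating cycle), \cref{cor:notin_shads_implies_left_or_right} provides the desired left-of/right-of dichotomy.

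Finally, for maximality \ref{item:instance:maximal}, the inclusions $I\subseteq I^+\subseteq \pi_1(I)\times\pi_2(I)$ immediately give $\pi_1(I^+)=\pi_1(I)$ and $\pi_2(I^+)=\pi_2(I)$, so the defining formula for $I^+$ coincides with the set prescribed by \ref{item:instance:maximal} written in terms of $I^+$ itself. The main obstacle is the verification of \ref{item:instance:sacs}, but this reduces, as sketched, to a direct invocation of \cref{lem:cgI-comprehensive} together with \cref{cor:notin_shads_implies_left_or_right}; the only subtlety is to note that \cref{lem:cgI-comprehensive} can be applied with $I^+$ in place of $I$, since its proof depends only on the pairs being risky and having the same parity of escape number, not on any further properties of the ambient set of incomparable pairs.
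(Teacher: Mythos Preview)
Your proposal is correct and follows essentially the same approach as the paper: define $I^+$ directly as the set of pairs $(a,b)\in\Inc(P)$ with $a\in\pi_1(I)$, $b\in\pi_2(I)$, $a\notin\shadz(b)$, and $(a,b)$ dangerous, then verify all the conditions, with the only nontrivial one being \ref{item:instance:sacs}, handled by invoking \cref{lem:cgI-comprehensive} (escape number zero for all pairs) and \cref{cor:notin_shads_implies_left_or_right}. You are in fact more explicit than the paper about two points it glosses over: that dangerous implies risky (needed for the hypothesis of \cref{lem:cgI-comprehensive}), and that $\pi_1(I^+)=\pi_1(I)$ and $\pi_2(I^+)=\pi_2(I)$ (needed so the defining formula for $I^+$ matches \ref{item:instance:maximal} stated in terms of $I^+$).
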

\begin{proof}
    Let $\calI = (P,x_0,G,e_{-\infty},I)$ be a good instance and let $I^+$ be the set of all pairs $(a,b) \in \Inc(P)$ such that there exist elements $a'$, $b'$ in~$P$ with $(a',b),(a,b')\in I$, $a\not\in\shadz(b)$, and $(a,b)$ is dangerous.
    Since $\calI$ satisfies \ref{item:instance:not_in_shadow} and \ref{item:instance:dangerous}, we have $I \subset I^+$.
    Let $\calI^+ = (P,x_0,G,e_{-\infty},I^+)$.
    It is clear that $\calI^+$ satisfies \ref{item:instance:planar_cover_graph}--\ref{item:instance:not_in_shadow} and~\ref{item:instance:dangerous}.
    By definition, $\calI^+$ also satisfies~\ref{item:instance:maximal}.
    Thus, it suffices to argue that $\calI^+$ satisfies~\ref{item:instance:sacs}.
    Let $((a_1,b_1),\dots,(a_k,b_k))$ be a strict alternating cycle in~$P$ contained in $I^+$.
    We plan to apply~\cref{lem:cgI-comprehensive}.
    To this end, note that since $a \notin \shadz(b)$ for every $(a,b) \in I^+$, the escape number of each pair $(a,b) \in I^+$ is $0$.
    Therefore, by~\cref{lem:cgI-comprehensive} applied in the context of $\calI^+$, for all distinct $\alpha,\beta \in [k]$, either $b_\alpha$ is left of $b_\beta$ or $b_\alpha$ is right of $b_\beta$.
    This shows that $\calI^+$ satisfies~\ref{item:instance:sacs}, and thus, ends the proof.
\end{proof}

\section{Topology of a maximal good instance}
\label{sec:topology-good-instance}
The tools built so far give ground for the first two steps of the proof of~\Cref{thm:cover-graph_se}.
The unfolding technique presented in the preliminaries (see~\cref{lem:PlanarCoverGraphReduction})
allows to reduce the statement of~\Cref{thm:cover-graph_se} to a statement on dimension of an instance (see~\Cref{cor:poset-to-instance}). 
In~\Cref{sec:good_instance}, we saw how to further reduce the problem to a statement on a good instance (see~\Cref{cor:interface}) and a maximal good instance (see~\Cref{prop:good_instance_to_maximal_good_instance}).
It remains to prove the following bound on dimension in maximal good instances. 

\begin{theorem}\label{thm:maximal_instance_imply_dim_boundedness}
    For every maximal good instance $(P,x_0,G,e_{-\infty},I)$, we have 
        \[\dim_P(I) \leq 16\se_P(I)^6 \cdot (\se_P(I)+3)^2.\]
%where $s = \se_P(\Inc(P) \cap (\pi_1(I) \times \pi_2(I))$.
\end{theorem}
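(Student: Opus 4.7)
The plan is to follow the blueprint sketched in Section~2 and construct six auxiliary graphs $\HOO,\HIIL,\HIIR,\HIILR,\HIO,\HOI$ on the vertex set $I$ whose product chromatic number bounds $\dim_P(I)$. I would first classify strict alternating cycles of size~$2$ contained in $I$. Given such a cycle $((a_1,b_1),(a_2,b_2))$, by~\ref{item:instance:sacs} the elements $b_1$ and $b_2$ are comparable in the \q{left of} relation on $B$, say $b_1$ is left of $b_2$. Using the leftmost/rightmost witnessing paths $W_L(b_1),W_R(b_1),W_L(b_2),W_R(b_2)$ and~\ref{item:instance:not_in_shadow}, one can associate to the pair two natural \q{regions} $\calR_1,\calR_2$ in the plane, and the four cases \q{$a_1\in\calR_2$ vs.\ not} and \q{$a_2\in\calR_1$ vs.\ not} split the size-$2$ cycles into types In-In, In-Out, Out-In, Out-Out. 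These types dictate which of the six graphs receives the edge $\{(a_1,b_1),(a_2,b_2)\}$, with the type In-In further being subdivided (IIL/IIR/IILR) according to how the relevant regions nest. The key combinatorial step is the \q{every cycle contains an edge} statement (\cref{lem:coloring} in the outline): one analyses an arbitrary strict alternating cycle $((a_1,b_1),\dots,(a_k,b_k))\subset I$, uses \ref{item:instance:sacs} to linearly order the $b_\alpha$ by \q{left of}, and shows that among any three consecutive pairs in this left-to-right order one finds two that sit in a configuration covered by one of the six types; here \ref{item:instance:dangerous} and \ref{item:instance:maximal} are used to produce the auxiliary witnesses $b',b''$ needed to detect which regions host which $a_\alpha$.

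Once the six graphs are defined, I would orient each $H\in\{\HOO,\HIIL,\HIIR,\HIILR,\HIO,\HOI\}$ acyclically by directing every edge $\{(a,b),(a',b')\}$ toward the pair whose second coordinate is larger in the \q{left of} order; that this is a well-defined acyclic orientation follows from~\Cref{prop:left_porders_bs}. For the four \q{easy} graphs $\HOO,\HIIL,\HIIR,\HIILR$ I would then show that a directed path of length $n$ in $H$ forces a standard example of order $n$ inside $I$: walking along such a path produces a chain $b_1$ left of $\cdots$ left of $b_n$ of second coordinates together with first coordinates $a_1,\dots,a_n$, and using the topological information attached to each edge (which region the $a_i$'s lie in) one reads off, via~\Cref{obs:paths_and_blocks,prop:paths_stay_in_blocks,prop:comparability_implies_shadow_containment}, the comparabilities $a_i<b_j$ for $i\ne j$ and the incomparabilities $a_i\parallel b_i$ that together constitute a copy of $S_n$. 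This gives $\chi(H)\leq \maxpath(H)\leq \se_P(I)$ for each of the four graphs.

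The main obstacle, and the most delicate part of the argument, is the bound for $\HIO$ and $\HOI$. Here a directed path does not directly produce a standard example, because the edges are of mixed \q{In} and \q{Out} type. The plan is to follow the modular-coloring scheme from the outline: color each vertex $(a,b)$ of $H\in\{\HIO,\HOI\}$ by $\maxsp(H,(a,b))\bmod m$ where $m=2\se_P(I)(2\se_P(I)+6)$. For an edge $((a,b),(a',b'))$ of $H$ the inequality $\maxsp(H,(a,b))>\maxsp(H,(a',b'))$ is automatic from acyclicity; what must be proved, and what will require the bulk of the topological and combinatorial work, is the complementary bound $\maxsp(H,(a,b))-\maxsp(H,(a',b'))<m$. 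To establish this I would take an optimal outgoing directed path from $(a',b')$ of length $\maxsp(H,(a',b'))$, prepend $(a,b)$ to obtain an \q{almost-optimal} path from $(a,b)$, and show that any substantially longer path from $(a,b)$ would, via the shadow structure of~\cref{ssec:shadows}, force $m/2=\se_P(I)(2\se_P(I)+6)$ pairs whose $a$- and $b$-coordinates can be combined with the witnesses produced by~\ref{item:instance:dangerous} and~\ref{item:instance:maximal} into a standard example of order exceeding $\se_P(I)$, a contradiction. The difficulty is that the \q{In/Out} alternation allows nontrivial re-entries into shadows, so bounding the path length requires carefully tracking how the second coordinates progress with respect to the left-of order and the nesting of shadows.

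Putting the pieces together gives $\dim_P(I)\leq \chi(\HOO)\chi(\HIIL)\chi(\HIIR)\chi(\HIILR)\chi(\HIO)\chi(\HOI) \leq \se_P(I)^4\cdot m^2$, and since $m^2 = 4\se_P(I)^2(2\se_P(I)+6)^2 = 16\se_P(I)^2(\se_P(I)+3)^2$, we obtain the claimed bound $\dim_P(I)\leq 16\se_P(I)^6(\se_P(I)+3)^2$. The reduction of~\Cref{thm:cover-graph_se} then follows by combining this with~\cref{cor:poset-to-instance}, \cref{cor:interface}, and~\cref{prop:good_instance_to_maximal_good_instance}, absorbing the additive constants $2,2,6$ from the three reductions into the additive~$12$ of~\cref{thm:cover-graph_se}.
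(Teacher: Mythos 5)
Your overall architecture is the paper's: six auxiliary oriented graphs on $I$, acyclic orientation via the \q{left of} order, $\maxpath\le\se_P(I)$ for $\HOO,\HIIL,\HIIR,\HIILR$ (by extracting standard examples from directed paths), a mod-$m$ coloring for $\HIO,\HOI$ with $m=2\se_P(I)(2\se_P(I)+6)$, and the product coloring $\kappa$ whose color classes are reversible by a cycle-hits-an-edge lemma. (A minor definitional slip: $\HIIL,\HIIR,\HIILR$ are not subdivisions of the In-In type by region nesting; they are supergraphs of $\HII$ obtained by adding \emph{shifted} edges whose witnesses $s,t\in B$ come from longer alternating cycles — exactly what the $k\ge 3$ cases of the coloring lemma need.)

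The genuine gap is in your treatment of $\HIO$ and $\HOI$, which is where essentially all the work of the theorem lives. First, you color by the unweighted quantity $\maxsp(H,\cdot)\bmod m$ and therefore need the gap bound $\maxsp(H,(a,b))-\maxsp(H,(a',b'))<m$ across edges; but the paper's argument only controls the number of \emph{weight-one} edges (cycle or shifted edges) skipped along a maximum path, not the total number of vertices, which is why the paper weights the edges by $0/1$ and proves the non-congruence of $\maxsw$ (resp.\ $\maxew$) only for weight-one edges — weight-zero edges (those satisfying \ref{Rin} and \ref{Lout} with no cycle/shifted witness) supply no element $v$ with $v\le b'$ or $v\le t$, so the regions $\calR_i$ that drive the whole argument cannot even be defined for them, and your unweighted gap claim is unsupported (and the coloring lemma does not require it: only weight-one edges ever arise there). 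Second, your proposed mechanism for the gap — that a path longer by $m$ would directly force a standard example of order exceeding $\se_P(I)$ — is not how the bound is obtained and I do not see how to make it work: in the paper, standard examples are extracted only from the $\Hin$ part of a two-coloring of the first $m$ weight-one edges (giving $\maxpath(\Hin)\le\se_P(I)$, hence a long path in $\Hout$), while the $\Hout$ part yields \emph{nested} regions $\calR_i\subset\calR_{i,j}\subset\calR_j$, and the conclusion is reached constructively, by the going-in/going-out lemmas, by exhibiting a new edge of $\HIO$ from $(a',b')$ to a pair $(a_{j_8},b_{j_8})$ sitting within the first $m$ weight-one edges of the fixed maximum-weight path; no contradiction with $\se_P(I)$ is derived at that stage. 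As it stands, the proposal reproduces the blueprint but leaves the decisive estimate (the paper's \cref{lemma:HIO}) both unproved and set up with the wrong invariant.
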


Before we prove~\Cref{thm:maximal_instance_imply_dim_boundedness}, we need some more theory.
In this section, we work with a fixed maximal good instance $(P,x_0,G,e_{-\infty},I)$.

\subsection{Exposed paths}
Let 
    \begin{align*}
    B &= \{b \text{ in } P : x_0 \leq b \text{ in $P$} \} \text{ and}\\
    A &= \{a \text{ in } P : \textrm{there exists $(a,b)\in I$}\}.
    \end{align*}
Since every $(a,b)\in I$ is dangerous (see~\ref{item:instance:dangerous}), by \cref{prop:dangerous_a_not_in_B}, $A \cap B = \emptyset$.
Let $a \in A$ and $b \in B$ with $a < b$ in $P$.
We say that a witnessing path $W$ from $a$ to $b$ in $P$ is \emph{exposed} if $b$ is the only element of $W$ in $B$.
For every $a \in A$, we define
    \begin{align*}
    Y(a) &= \{y \in B : a < y \text{ in } P \text{ and } a \notin \shadz(y)\} \text{ and}\\
    Z(a) &= \{z \in Y(a) :\text{there exists an exposed witnessing path from $a$ to $z$ in $P$}\}.
    \end{align*}
Observe that every element in $Y(a)$ generates at least one element in $Z(a)$.
Indeed, let $y \in Y(a)$ and let $W$ be a witnessing path from $a$ to $y$ in $P$.
Let $z$ be the first element in $B$ in $W$.
Since $z \leq y$ in $P$, we have $\shadz(z) \subset \shadz(y)$ (by~\Cref{prop:comparability_implies_shadow_containment}), and so, $z \in Y(a)$.
Moreover, $a[W]z$ is an exposed witnessing path from $a$ to $z$ in $P$, and so, $z \in Z(a)$.
% That is, for every $y \in Y(a)$ and every witnessing path from $a$ to $y$ in $P$, 
% the path contains an element $z\in Z(a)$ with $a<z\leq y$ in $P$.
%In general, two elements of $Y(a)$ can generate the same element of $Z(a)$.
Note that when 
$z\in Z(a)$, there can be a witnessing path from $a$ to $z$ that is not 
exposed.
In particular, the set $Z(a)$ is not necessarily an antichain. 
See~\Cref{fig:exposed_paths}.

Before, we proceed with the material on exposed paths and elements in $Z(a)$, we state a technical corollary of~\Cref{prop:dangerous_a_not_in_shadows_b'_b''}.
It follows since (by~\ref{item:instance:dangerous}) all the pairs in $I$ are dangerous.
We will use it extensively.

\begin{corollary}\label{prop:dangerous-implies-in-Y}
    Let $(a,b) \in I$ and let $d \in B$ such that either $b$ is left of $d$ or $d$ is left of $b$.
    Then, $a \notin \shadz(d)$.
    Moreover, if $a < d$ in $P$, then $d \in Y(a)$.
\end{corollary}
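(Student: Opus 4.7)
The plan is to observe that this corollary is essentially immediate from what has already been established. Since $(P,x_0,G,e_{-\infty},I)$ is a maximal good instance, it satisfies condition~\ref{item:instance:dangerous}, which guarantees that every pair in $I$ is dangerous. In particular, $(a,b) \in I$ is a dangerous pair.

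First I would invoke \cref{prop:d_not_in_shadow_when_dangerous}: it takes as input a dangerous pair $(a,b)$ and an element $d \in B$ with either $d$ left of $b$ or $b$ left of $d$, and concludes $a \notin \shadz(d)$. Our hypotheses match exactly, so this yields the first part of the conclusion.

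For the \q{moreover} clause, I would just unfold the definition of $Y(a)$. By assumption $d \in B$ and $a < d$ in $P$, and by the first part $a \notin \shadz(d)$. These three conditions are exactly the defining conditions for membership in
\[
Y(a) = \{y \in B : a < y \text{ in } P \text{ and } a \notin \shadz(y)\},
\]
so $d \in Y(a)$.

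There is no real obstacle here; the corollary is stated for emphasis because it packages together \ref{item:instance:dangerous} and \cref{prop:d_not_in_shadow_when_dangerous} in a form that will be convenient to cite repeatedly in the next sections.
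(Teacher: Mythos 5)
Your proof is correct and is essentially the paper's own argument: the paper also derives this by noting that \ref{item:instance:dangerous} makes $(a,b)$ dangerous and then invoking the dangerous-pair shadow statement (your citation of \cref{prop:d_not_in_shadow_when_dangerous} is the direct source, even though the paper's text nominally points to \cref{prop:dangerous_a_not_in_shadows_b'_b''}), with the \q{moreover} part being just the definition of $Y(a)$. Nothing is missing.
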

% \begin{proof}
%     Since $(a,b)$ is dangerous (by~\ref{item:instance:dangerous}), the assertion follows from~\Cref{prop:d_not_in_shadow_when_dangerous}.
% \end{proof}

%In the following proposition, we give some more properties of elements in $Z(a)$.

\begin{figure}[tp]
  \begin{center}
    \includegraphics{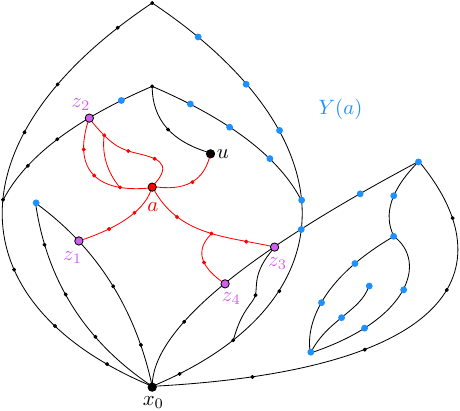}
  \end{center}
  \caption{
        $Z(a) = \{z_1,z_2,z_3,z_4\}$.
        Note that $u \notin Y(a)$ as $a \in \shadz(u)$.
        Note also that $Z(a)$ is not an antichain in $P$.
        We mark the elements in $Y(a)$ with blue.
  }
  \label{fig:exposed_paths}
\end{figure}

\begin{proposition}\label{prop:properties_of_Z}
    Let $a \in A$, $b \in B$, $y \in Y(a)$, and $z,z' \in Z(a)$. Then,
    \begin{enumerate}
        \item if $a \notin \shadz(b)$, then $z \notin \Int \shadz(b)$; \label{prop:properties_of_Z:item:not_interior_of_y} 
        % \item if $a \notin \shadz(b)$ and $a \parallel b$ in $P$, then $z \notin \shadz(b)$; \label{prop:properties_of_Z:item:not_in_shadow_of_of_y} 
        \item if $z\leq y$ in $P$, then $z \in \partial\shadz(y)$; \label{prop:properties_of_Z:item:boundary_of_y}
        \item $\sd(z) = 0$; \label{prop:properties_of_Z:item:sd_of_z}
        %\item $z$ is not in the interior of $\shadz(z')$; \label{prop:properties_of_Z:z_interior_z'}
        \item if $z \parallel z'$ in $P$, then either $z$ is left of $z'$ or $z$ is right of $z'$. \label{prop:properties_of_Z:left_right}
    \end{enumerate}
\end{proposition}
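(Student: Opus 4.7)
The plan is to prove the four items in sequence, with item (i) being the workhorse from which the others essentially follow.

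For (i), the key observation is that every element on $\partial\shadz(b)$ lies on $W_L(b) \cup W_R(b)$ and hence is comparable to $x_0$, so $\partial\shadz(b) \cap V(G) \subset B$. Suppose toward contradiction that $z \in \Int\shadz(b)$. Let $W$ be an exposed witnessing path from $a$ to $z$. Since $a \notin \shadz(b)$ and $z \in \Int\shadz(b)$, the path $W$ cannot stay outside $\shadz(b)$. Walking along $W$ from $a$, let $w$ be the first vertex of $W$ lying in $\shadz(b)$. Because the drawing of $G$ is planar, an edge cannot cross the closed curve $\partial\shadz(b)$ except at a common vertex, so $w \in \partial\shadz(b)$, hence $w \in B$. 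But $W$ is exposed, so $z$ is the only vertex of $W$ in $B$; thus $w = z$, contradicting $z \in \Int\shadz(b)$.

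Item (ii) follows easily from (i): since $z \leq y$ in $P$, \Cref{prop:comparability_implies_shadow_containment} gives $\shadz(z) \subset \shadz(y)$ and in particular $z \in \shadz(y)$. On the other hand, because $y \in Y(a)$ satisfies $a \notin \shadz(y)$, item (i) applied with $b = y$ gives $z \notin \Int\shadz(y)$. Combining the two yields $z \in \partial\shadz(y)$.

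For (iii), I will show that $\sd(z) \geq 1$ leads to a contradiction via (i) applied with $b = z$ (which is legitimate since $z \in Y(a)$ gives $a \notin \shadz(z)$). If $\sd(z) \geq 1$, then the first reversing element $z_{i_1}$ of $z$ exists and serves as the terminal element of $\shadz(z)$, and by the definition of a reversing element $z$ lies in the interior of the terminal block of $\shadz(z)$, hence in $\Int\shadz(z)$. But (i) then forces $z \notin \Int\shadz(z)$, a contradiction. Thus $\sd(z) = 0$.

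For (iv), apply (i) with $b = z'$ (valid because $z' \in Y(a)$ gives $a \notin \shadz(z')$) to obtain $z \notin \Int\shadz(z')$, and symmetrically $z' \notin \Int\shadz(z)$. Moreover, if $z \in \partial\shadz(z')$, then $z$ lies on $W_L(z')$ or $W_R(z')$, which would force $z \leq z'$ in $P$, contradicting $z \parallel z'$; hence $z \notin \partial\shadz(z')$. The symmetric argument gives $z' \notin \partial\shadz(z)$. Therefore $z \notin \shadz(z')$ and $z' \notin \shadz(z)$, and \Cref{cor:notin_shads_implies_left_or_right} yields that either $z$ is left of $z'$ or $z$ is right of $z'$. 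The main technical point throughout is (i), and its proof is a straightforward planar topology argument once one notes that $\partial\shadz(b) \subset B$ and exploits the exposedness of $W$; the remaining items are short corollaries of (i) combined with results already established in the paper.
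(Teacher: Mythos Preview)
Your proof is correct and follows essentially the same approach as the paper for items (i), (ii), and (iv). The only minor difference is in item (iii): the paper derives it in one line by applying (ii) with $y=z$ (so $z\in\partial\shadz(z)$) and then invoking \cref{prop:shadow-comp}.\ref{prop:shadow-comp:item:sd_boundary_of_block}, whereas you argue directly that $\sd(z)\ge 1$ would force $z\in\Int\shadz(z)$ (since $z$ lies in the interior of the terminal block of $\shadz(z)$, which is an open subset of $\shadz(z)$), contradicting (i). Both arguments are short and valid; yours avoids the reference to the earlier proposition at the cost of unpacking the definition of a reversing element.
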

\begin{proof}
    In order to prove~\ref{prop:properties_of_Z:item:not_interior_of_y}, suppose  that $a\notin \shadz(b)$ and $z\in\shadz(b)$. 
    We argue that $z$ must be in the boundary of $\shadz(b)$.
    %First note that every witnessing path from $a$ to $z$ in $P$ intersects $\partial\shadz(b)$.
    Since $z \in Z(a)$, there is an exposed witnessing path $W$ from $a$ to $z$ in $P$.
    Note that $W$ intersects $\partial \shadz(b)$.
    The only element of $W$ in $B$ is $z$, and all the elements of $\partial\shadz(b)$ are in $B$, hence, $z \in \partial\shadz(b)$. This completes the proof of \ref{prop:properties_of_Z:item:not_interior_of_y}.

    %Item \ref{prop:properties_of_Z:item:not_in_shadow_of_of_y} follows from \ref{prop:properties_of_Z:item:not_interior_of_y} and the fact that every element on the boundary of $\shadz(b)$ is comparable with $b$ in $P$. 
    In order to prove~\ref{prop:properties_of_Z:item:boundary_of_y} suppose that $z \leq y$ in $P$.
    By~\cref{prop:comparability_implies_shadow_containment}, we have $\shadz(z) \subset \shadz(y)$.
    In particular, $z \in \shadz(y)$.
    By~\ref{prop:properties_of_Z:item:not_interior_of_y}, $z$ lies on the boundary of $\shadz(y)$, which proves~\ref{prop:properties_of_Z:item:boundary_of_y}.
    Item \ref{prop:properties_of_Z:item:sd_of_z} follows from \ref{prop:properties_of_Z:item:boundary_of_y} and \cref{prop:shadow-comp}.\ref{prop:shadow-comp:item:sd_boundary_of_block}.
    Finally, suppose that $z \parallel z'$ in $P$.
    Since $z' \in Z(a) \subset Y(a)$, $a \notin \shadz(z)$.
    If $z' \in \shadz(z)$, then $z' \in \Int \shadz(z)$, which is a contradiction with~\ref{prop:properties_of_Z:item:not_interior_of_y}.
    It follows that $z' \notin \shadz(z)$, and symmetrically, we obtain $z \notin \shadz(z')$.
    Therefore, by \cref{cor:notin_shads_implies_left_or_right}, either $z$ is left of $z'$ or $z$ is right of $z'$, which proves~\ref{prop:properties_of_Z:left_right}.
\end{proof}

\begin{proposition}\label{prop:left_is_preserved_from_y_to_z}
  Let $a \in A$, $b \in B$ with $a \parallel b$ in $P$ and $a \notin \shadz(b)$.
  Let $y,y' \in Y(a)$ with $y' \leq y$ in $P$. 
  \begin{enumerate}
      \myitem{$(L)$} If $y$ is left of $b$, then $y'$ is left of $b$.
      \label{prop:left_is_preserved_from_y_to_z:left} 
      \myitem{$(R)$} If $y$ is right of $b$, then $y'$ is right of $b$. 
      \label{prop:left_is_preserved_from_y_to_z:right} 
  \end{enumerate}
\end{proposition}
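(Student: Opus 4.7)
The plan is to ruse out the two geometric obstructions $y'\in\shadz(b)$ and $b\in\shadz(y')$, invoke Corollary~\ref{cor:notin_shads_implies_left_or_right} to conclude that $y'$ is either left of or right of $b$, and then eliminate the second possibility using the comparability $y'\le y$ together with the shortcuts lemma. I will write out the argument for (L); statement (R) follows by the symmetric argument in which the roles of $W_L$ and $W_R$ are swapped.

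First I would check that $y'\notin\shadz(b)$. We have $a\le y'$ in $P$ and $a\parallel b$ in $P$ (so in particular $a\not\le b$ in $P$), hence Observation~\ref{obs:equivalence_for_shadows} gives $a\in\shadz(b)\iff y'\in\shadz(b)$; since $a\notin\shadz(b)$ by hypothesis, $y'\notin\shadz(b)$. Next, since $y$ is left of $b$ we have $b\notin\shadz(y)$, and since $y'\le y$ in $P$, Proposition~\ref{prop:comparability_implies_shadow_containment} yields $\shadz(y')\subseteq\shadz(y)$, so $b\notin\shadz(y')$ as well. Now Corollary~\ref{cor:notin_shads_implies_left_or_right} forces $y'$ to be either left of $b$ or right of $b$.

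To rule out the second alternative, I would argue by contradiction. Assume $y'$ is right of $b$. Unfolding the definition, $(b,y')$ is a left pair, so $W_L(b)$ is left of $W_L(y')$ and, crucially, $W_R(b)$ is left of $W_R(y')$. Combined with the fact that $y$ left of $b$ gives $W_R(y)$ left of $W_R(b)$, transitivity of the $(x_0,e_{-\infty})$-ordering (Proposition~\ref{prop:u_e_is_poset}) yields that $W_R(y)$ is left of $W_R(y')$, equivalently $W_R(y')$ is right of $W_R(y)$. But then Proposition~\ref{prop:shortcuts}.\ref{prop:item:shortcuts-right-tree} forces $y'\not\le y$ in $P$, contradicting the hypothesis $y'\le y$ in $P$.

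There is no serious obstacle; the only subtle point is the directional asymmetry. To derive the contradiction from $y'\le y$ in the \emph{left} statement~(L), one must transport the \q{left of} relation to the \emph{right} witnessing paths, because the shortcuts lemma in the form $W_R(\cdot)$ right of $W_R(\cdot)$ is exactly what yields $y'\not\le y$; the $W_L$ half of \q{left of} alone is useless here since it would produce $y\not\le y'$, which is consistent with $y'\le y$.
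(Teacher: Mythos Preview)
Your proof is correct. The paper takes a slightly different route: after establishing $y'\notin\shadz(b)$ (same first step as yours), it observes that $y'\le y$ gives $y'\in\shadz(y)$ via Proposition~\ref{prop:comparability_implies_shadow_containment}, and then invokes Proposition~\ref{prop:in_one_shad_but_not_other_then_left}.\ref{prop:in_one_shad_but_not_other_then_left:left} directly (with $b_1=y$, $b_2=b$, $d=y'$) to conclude $y'$ is left of $b$. Your argument instead establishes $b\notin\shadz(y')$, applies Corollary~\ref{cor:notin_shads_implies_left_or_right}, and eliminates the \q{right} alternative via the shortcuts lemma on $W_R$. Both are short; the paper's version packages the left/right dichotomy and the elimination step into the single call to Proposition~\ref{prop:in_one_shad_but_not_other_then_left}, whereas your version unpacks that reasoning by hand and thereby avoids depending on that proposition. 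Your closing remark about needing the $W_R$ side to contradict $y'\le y$ is exactly right and worth keeping.
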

\begin{proof}
    We prove only \ref{prop:left_is_preserved_from_y_to_z:left} as the proof of \ref{prop:left_is_preserved_from_y_to_z:right} is symmetric.
    Assume that $y$ is left of $b$.
    %By~\ref{item:instance:not_in_shadow} and
    Since $a \notin \shadz(b)$, by \cref{obs:equivalence_for_shadows}, we have $y' \notin \shadz(b)$.
    Furthermore, since $y' \leq y$ in $P$, we have, $y' \in \shadz(y)$ (by \cref{prop:comparability_implies_shadow_containment}).
    Finally, by \cref{prop:in_one_shad_but_not_other_then_left}.\ref{prop:in_one_shad_but_not_other_then_left:left}, since $y$ is left of $b$, we obtain that $y'$ is left of~$b$.
\end{proof}

\begin{corollary}\label{cor:left_is_preserved_from_y_to_z}
    Let $(a,b) \in I$ and let $y,y' \in B$ with $a < y' \leq y$ in $P$. 
  \begin{enumerate}
      \myitem{$(L)$} If $y$ is left of $b$, then $y'$ is left of $b$.
      \label{cor:left_is_preserved_from_y_to_z:left} 
      \myitem{$(R)$} If $y$ is right of $b$, then $y'$ is right of $b$. 
      \label{cor:left_is_preserved_from_y_to_z:right} 
  \end{enumerate}
\end{corollary}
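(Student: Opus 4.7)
The plan is to derive this corollary directly from \Cref{prop:left_is_preserved_from_y_to_z} by verifying its hypotheses in the present setting. Since $(a,b) \in I$ and we are in a (maximal) good instance, condition~\ref{item:instance:not_in_shadow} gives $a \notin \shadz(b)$, and of course $a \parallel b$ in $P$. So the only thing to check is that both $y$ and $y'$ belong to $Y(a)$, i.e.\ that $a < y$ in $P$, $a < y'$ in $P$, $a \notin \shadz(y)$, and $a \notin \shadz(y')$.

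The strict inequalities $a < y$ and $a < y'$ in $P$ are immediate from the hypothesis $a < y' \leq y$ in $P$. For the shadow conditions, I would use \cref{prop:dangerous-implies-in-Y}, which says precisely that if $d \in B$ is left of $b$ or right of $b$, then $a \notin \shadz(d)$; this in turn relies on $(a,b)$ being dangerous, which holds by condition~\ref{item:instance:dangerous}.

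For part~\ref{cor:left_is_preserved_from_y_to_z:left}, suppose $y$ is left of $b$. Then \cref{prop:dangerous-implies-in-Y} gives $a \notin \shadz(y)$, so $y \in Y(a)$. To handle $y'$, observe that $y' \leq y$ in $P$ combined with \cref{prop:comparability_implies_shadow_containment} yields $\shadz(y') \subseteq \shadz(y)$; therefore $a \notin \shadz(y)$ forces $a \notin \shadz(y')$, and $y' \in Y(a)$. Now \cref{prop:left_is_preserved_from_y_to_z}.\ref{prop:left_is_preserved_from_y_to_z:left} applies and yields that $y'$ is left of $b$. Part~\ref{cor:left_is_preserved_from_y_to_z:right} follows by the symmetric argument using \cref{prop:left_is_preserved_from_y_to_z}.\ref{prop:left_is_preserved_from_y_to_z:right}.

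There is no real obstacle here: the corollary is essentially a packaging statement that replaces the somewhat technical hypotheses of the underlying proposition (namely $a \notin \shadz(b)$ and $y, y' \in Y(a)$) with the cleaner, more usable hypothesis that $(a,b) \in I$. The only subtlety is to confirm that the comparability $y' \leq y$ in $P$ transfers the property $a \notin \shadz(y)$ down to $y'$, which is a one-line application of \cref{prop:comparability_implies_shadow_containment}.
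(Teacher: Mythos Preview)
Your proposal is correct and follows essentially the same route as the paper: verify $a \notin \shadz(b)$ via~\ref{item:instance:not_in_shadow}, use dangerousness of $(a,b)$ together with $y$ being left of $b$ to get $a \notin \shadz(y)$, push this down to $y'$ via \cref{prop:comparability_implies_shadow_containment}, and then invoke \cref{prop:left_is_preserved_from_y_to_z}. The only cosmetic difference is that the paper cites \cref{prop:d_not_in_shadow_when_dangerous} directly rather than its wrapper \cref{prop:dangerous-implies-in-Y}, but these are equivalent here.
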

\begin{proof}
    We prove only \ref{prop:left_is_preserved_from_y_to_z:left} as the proof of \ref{prop:left_is_preserved_from_y_to_z:right} is symmetric.
    Assume that $y$ is left of $b$.
    By~\ref{item:instance:dangerous}, $(a,b)$ is dangerous, and so by~\cref{prop:d_not_in_shadow_when_dangerous}, $a \notin \shadz(y)$.
    Since $y' \leq y$ in $P$, we have $\shadz(y') \subset \shadz(y)$ (by~\cref{prop:comparability_implies_shadow_containment}). 
    It follows that $a \notin \shadz(y')$.
    In particular, $y,y' \in Y(a)$.
    By~\ref{item:instance:not_in_shadow}, $a \notin \shadz(b)$.
    Finally, we apply~\cref{prop:left_is_preserved_from_y_to_z}.\ref{prop:left_is_preserved_from_y_to_z:left} to obtain that $y'$ is left of~$b$. 
\end{proof}

% Note that by~\ref{item:instance:not_in_shadow},~\Cref{prop:left_is_preserved_from_y_to_z} can be applied to $(a,b) \in I$.

For every $a \in A$, let $\calM(a)$ be the family of all the paths of the form $M = x_0[U]z[V]a$, where
\begin{itemize}
    \item $z \in Z(a)$ (we call the element $z$ the \emph{peak} of $M$),
    \item $U$ is a witnessing path from $x_0$ to $z$ in $P$,
    \item $V$ is an exposed witnessing path from $a$ to $z$ in $P$.
\end{itemize}
Since every path in $\calM(a)$ starts in $x_0$ and ends in $a$, no path in $\calM(a)$ contains another path from $\calM(a)$ as a subpath.
Therefore, by \cref{obs:ue_ordering_is_usually_linear}, the relation of being left linearly orders $\calM(a)$.
Note also that $\calM(a)$ is nonempty as there exists $b\in B$ such that $(a,b)\in I$,  so by \ref{item:instance:dangerous} and \ref{items:dangerous_b'}, there exists $b'\in B$ with $a< b'$ in $P$, thus, $b' \in Y(a)$, and therefore, $Z(a)$ is nonempty.
In particular, $\calM(a)$ has a minimum and a maximum element.
Let $M_L(a) \in \calM(a)$ be such that $M_L(a)$ is left of $M$ for every $M \in \calM(a)$ with $M\neq M_L(a)$, and let $M_R(a) \in \calM(a)$ be such that $M_R(a)$ is right of $M$ for every $M \in \calM(a)$ with $M\neq M_R(a)$.
Moreover, denote by $z_L(a)$ the peak of $M_L(a)$ and by $z_R(a)$ the peak of $M_R(a)$.
See~\Cref{fig:paths_M(a)}.

We finish this subsection with a series of simple and intuitive propositions on properties of $M_L(a)$ and $z_L(a)$ (resp.\ $M_R(a)$ and $z_R(a)$) for $a\in A$. 
\Cref{prop:paths_M_consistent} says that $M_L(a)$ contains $W_L(z_L(a))$. 
\Cref{prop:ML_consistent} gives $x_0$-consistency of paths in $\set{M_L(a) : a\in A}$. 
Propositions~\ref{prop:order_on_Ms_induces_order_in_zs} and \ref{prop:z_L_b_z_R} describe the position of $z_L(a)$ within $Z(a)$ and with respect to $b$ when $(a,b)\in I$.

\begin{figure}[tp]
  \begin{center}
    \includegraphics{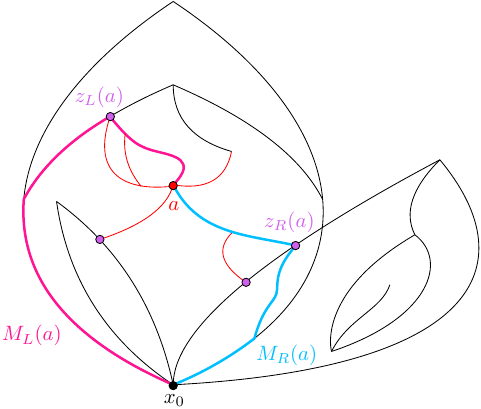}
  \end{center}
  \caption{
    The paths $M_L(a)$ and $M_R(a)$ drawn in the same poset as in~\Cref{fig:exposed_paths}.
    }
  \label{fig:paths_M(a)}
\end{figure}

\begin{proposition}\label{prop:paths_M_consistent}
    %Let $a,a_1,a_2 \in A$.
    Let $a\in A$. Then
    \[x_0[M_L(a)]z_L(a) = W_L(z_L(a)) \ \ \text{ and } \ \ x_0[M_R(a)]z_R(a) = W_R(z_R(a)).\]
\end{proposition}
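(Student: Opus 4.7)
The plan is to prove both equalities by essentially the same argument, so I will focus on the leftmost case and indicate that the rightmost one is symmetric. Write $M_L(a) = x_0[U]z_L(a)[V]a$ in the decomposition given by the definition, so $U$ is a witnessing path from $x_0$ to $z_L(a)$ and $V$ is an exposed witnessing path from $a$ to $z_L(a)$. The goal is to show $U = W_L(z_L(a))$.

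Suppose for contradiction that $U \neq W_L(z_L(a))$. By definition of $W_L(z_L(a))$ as the minimum element of its family in the $(x_0,e_{-\infty})$-ordering, this forces $W_L(z_L(a))$ to be strictly left of $U$. I then propose to build a path
\[
M' = x_0[W_L(z_L(a))]z_L(a)[V]a
\]
and to derive a contradiction with the minimality of $M_L(a)$ in $\calM(a)$. The first key step is verifying that $M'$ is genuinely a simple path. Every vertex of $W_L(z_L(a))$ lies in $B$ because it is the tail of a witnessing path starting at $x_0$, whereas $V$ is exposed, so the only element of $V$ in $B$ is $z_L(a)$. Hence $W_L(z_L(a)) \cap V = \{z_L(a)\}$, so $M'$ is a simple path, and it belongs to $\calM(a)$ with peak $z_L(a)$.

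The second step is to compare $M'$ with $M_L(a)$ in the $(x_0,e_{-\infty})$-ordering. Since $W_L(z_L(a))$ is strictly left of $U$, there is a greatest common prefix vertex $w$ of $W_L(z_L(a))$ and $U$ with $w \prec z_L(a)$, such that at $w$, the outgoing edge in $W_L(z_L(a))$ is strictly left of the outgoing edge in $U$ (in the relevant $(w,e^-)$-ordering). Because the two paths $M'$ and $M_L(a)$ share precisely that common prefix up to $w$, and differ at the next edge in exactly the same way, this implies $M' \prec M_L(a)$ in the $(x_0,e_{-\infty})$-ordering. This contradicts the defining property of $M_L(a)$, and completes the proof after applying a symmetric argument for $M_R(a)$ (with $W_R$ replacing $W_L$ and the roles of left/right interchanged).

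The main obstacle, though a mild one, is the verification that $M'$ is a simple path: if $W_L(z_L(a))$ hit $V$ at some vertex other than $z_L(a)$, the construction would collapse. This is precisely the place where the definition of \emph{exposed} and the observation that $W_L(z_L(a)) \subseteq B$ are used crucially; everything else is a direct unwinding of the $(x_0,e_{-\infty})$-ordering on $\calM(a)$ and the leftmost/rightmost witnessing-path extremality.
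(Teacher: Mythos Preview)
Your proof is correct and follows essentially the same approach as the paper's: suppose the prefix is not $W_L(z_L(a))$, replace it by $W_L(z_L(a))$ to build a new member of $\calM(a)$ strictly left of $M_L(a)$, and derive a contradiction. You are simply more explicit than the paper in verifying that the replacement path is simple (the paper just says ``it is immediate to see that $M \in \calM(a)$'').
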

\begin{proof}
    We prove only the first identity as the proof of the second one is symmetric.
    Suppose to the contrary that $x_0[M_L(a)]z_L(a) \neq W_L(z_L(a))$. 
    Note that both paths are witnessing paths from $x_0$ to $z_L(a)$. 
    Thus, $W_L(z_L(a))$ is left of $x_0[M_L(a)]z_L(a)$.
    Consider the path $M = x_0[W_L(z_L(a))]z_L(a)[M_L(a)]a$.
    It is immediate to see that $M \in \calM(a)$ and $M$ is left of $M_L(a)$, which is a contradiction.
\end{proof}

\begin{proposition}\label{prop:ML_consistent}
    Let $a_1,a_2 \in A$.
  \begin{enumerate}
      \myitem{$(L)$} The paths $M_L(a_1)$ and $M_L(a_2)$ are $x_0$-consistent. \label{prop:ML_consistent:left}
      \myitem{$(R)$} The paths $M_R(a_1)$ and $M_R(a_2)$ are $x_0$-consistent.\label{prop:ML_consistent:right}
  \end{enumerate}
\end{proposition}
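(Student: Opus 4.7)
The proof closely parallels~\cref{prop:Tleft-and-Tright}; I present only (L), as (R) is symmetric. Suppose for contradiction that $M_L(a_1)$ and $M_L(a_2)$ are not $x_0$-consistent, and let $u = \gce(M_L(a_1),M_L(a_2))$. Choose $v$ to be the last common vertex of $M_L(a_1)$ and $M_L(a_2)$ encountered along $M_L(a_2)$, so $v\neq u$ and $v[M_L(a_2)]a_2$ meets $M_L(a_1)$ only at $v$. Set $e_0 = e_{-\infty}$ if $u=x_0$ and otherwise let $e_0$ be the last edge of $x_0[M_L(a_1)]u$, and let $e_1,e_2$ be the first edges of $u[M_L(a_1)]a_1$ and $u[M_L(a_2)]a_2$; without loss of generality $e_1\prec e_2$ in the $(u,e_0)$-ordering. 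Consider
\[
  N = x_0[M_L(a_1)]v[M_L(a_2)]a_2,
\]
which by the choice of $v$ is a simple path and, exactly as in~\cref{prop:Tleft-and-Tright}, is left of $M_L(a_2)$ in the $(x_0,e_{-\infty})$-ordering. The plan is to show $N\in \calM(a_2)$, contradicting the minimality of $M_L(a_2)$.

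Since each $E_i := z_L(a_i)[M_L(a_i)]a_i$ is exposed and $a_i\notin B$, the $B$-vertices of $M_L(a_i)$ are exactly those of $W_L(z_L(a_i))$. If $v\in B$ then $v$ lies on both $W_L(z_L(a_1))$ and $W_L(z_L(a_2))$, and by~\cref{prop:Tleft-and-Tright}.\ref{prop:W-consistent:left} the prefixes of these two paths up to $v$ coincide, whence $x_0[M_L(a_1)]v = x_0[M_L(a_2)]v$, contradicting $v$ being strictly beyond $u$. Hence $v\notin B$, so $v$ lies on $E_i$ strictly past $z_L(a_i)$ for both $i$. Writing $N = W_L(z_L(a_1))\cdot z_L(a_1)[E_1]v[E_2]a_2$, the last $B$-vertex of $N$ is $z_L(a_1)$, because $v[E_2]a_2$ is strictly past $z_L(a_2)$ on $E_2$ and thus contains no $B$-vertex. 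The suffix $V' := z_L(a_1)[E_1]v[E_2]a_2$ is a descending walk in $P$ and hence a simple path (descending walks in finite posets visit strictly decreasing elements), whose only $B$-vertex is $z_L(a_1)$; reversing $V'$ gives an exposed witnessing path from $a_2$ to $z_L(a_1)$. Thus $N\in \calM(a_2)$ will follow once we verify $z_L(a_1)\in Z(a_2)$, i.e.\ $a_2\notin\shadz(z_L(a_1))$.

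The main obstacle is this last verification, which I resolve topologically. Suppose for contradiction $a_2\in\shadz(z_L(a_1))$. Since also $z_L(a_1)\in\shadz(z_L(a_1))$, the witnessing path $V'$ from $a_2$ to $z_L(a_1)$ has both endpoints in $\shadz(z_L(a_1))$, so by~\cref{cor:path-in-shadow} all of $V'$ lies in $\shadz(z_L(a_1))$; in particular $v\in\shadz(z_L(a_1))$. On the other hand, $z_L(a_1)\in Z(a_1)\subset Y(a_1)$ gives $a_1\notin\shadz(z_L(a_1))$, and since the exposed witnessing path $E_1$ from $a_1$ to $z_L(a_1)$ can meet $\partial\shadz(z_L(a_1)) = W_L(z_L(a_1))\cup W_R(z_L(a_1))$ only at $B$-vertices of $E_1$, of which $z_L(a_1)$ is the only one, the set $E_1\setminus\{z_L(a_1)\}$ lies entirely in the exterior of $\shadz(z_L(a_1))$ (the same topological reasoning is used in the proof of~\cref{prop:properties_of_Z}.\ref{prop:properties_of_Z:item:not_interior_of_y}). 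As $v\in E_1\setminus\{z_L(a_1)\}$, this contradicts $v\in\shadz(z_L(a_1))$, and the proof is complete.
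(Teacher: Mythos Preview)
Your proof is correct and follows essentially the same approach as the paper's: assume a common vertex $v$ past $u=\gce(M_L(a_1),M_L(a_2))$, observe $v\notin B$ via $x_0$-consistency of the $W_L$ paths, and build a path $N\in\calM(a_2)$ strictly left of $M_L(a_2)$ with peak $z_L(a_1)$. The only substantive difference is in the verification that $a_2\notin\shadz(z_L(a_1))$: the paper argues directly that the concatenated path $a_1[M_L(a_1)]v[M_L(a_2)]a_2$ has no $B$-vertices and hence cannot cross $\partial\shadz(z_L(a_1))$, so $a_1\notin\shadz(z_L(a_1))$ forces $a_2\notin\shadz(z_L(a_1))$; you instead derive a contradiction on the location of $v$ using \cref{cor:path-in-shadow}. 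Both work and encode the same topological observation.
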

\begin{proof}
    We prove only~\ref{prop:ML_consistent:left} as the proof of~\ref{prop:ML_consistent:right} is symmetric.
    Let $z_i=z_L(a_i)$ and $M_i=M_L(a_i)$ for each $i\in[2]$. 
    If one of the paths $M_1,M_2$ is a subpath of the other, then the assertion follows.
    Hence, assume without loss of generality that $M_1$ is left of $M_2$.
    Let $u = \gce(M_1,M_2)$.
    Suppose to the contrary that the paths are not $x_0$-consistent.
    Then, there exists an element $v$ common to both paths with $v$ not in $x_0[M_1]u=x_0[M_2]u$.
    Note that by~\Cref{prop:paths_M_consistent} and~\Cref{prop:W-consistent}.\ref{prop:W-consistent:left}, $v \notin B$.
    The path $M = x_0[M_1]v[M_2]a_2$ is left of $M_2$.
    We claim that $M \in \calM(a_2)$, which contradicts the definition of $M_2$.
    The path $a_1[M_L(a_1)]v[M_L(a_2)]a_2$ does not have elements in $B$, hence, $a_1 \notin \shadz(z_1)$ implies $a_2 \notin \shadz(z_1)$.
    Since additionally the path $z_1[M_1]v[M_2]a_2$ is an exposed witnessing path from $a_2$ to $z_1$ in $P$, we have $z_1 \in Z(a_2)$.
    Moreover, $M = x_0[M_1]z_1[M_1]v[M_2]a_2$, and so, $M \in \calM(a_2)$.
\end{proof}

\begin{proposition}\label{prop:order_on_Ms_induces_order_in_zs}
    Let $a \in A$ and let $z \in Z(a)$.
  \begin{enumerate}
      \myitem{$(L)$} Either $z_L(a)$ is left of $z$ or $z_L(a)$ and $z$ are comparable in $P$. \label{prop:order_on_Ms_induces_order_in_zs:left}
      \myitem{$(R)$} Either $z_R(a)$ is right of $z$ or $z_R(a)$ and $z$ are comparable in $P$. \label{prop:order_on_Ms_induces_order_in_zs:right}
  \end{enumerate}
\end{proposition}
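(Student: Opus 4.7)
The plan is to prove \ref{prop:order_on_Ms_induces_order_in_zs:left} by contradiction; part \ref{prop:order_on_Ms_induces_order_in_zs:right} will then follow by the usual left/right symmetry. If $z$ and $z_L(a)$ are comparable in~$P$, there is nothing to prove, so I would assume $z \parallel z_L(a)$ in~$P$. Since both $z$ and $z_L(a)$ lie in $Z(a)$, \cref{prop:properties_of_Z}.\ref{prop:properties_of_Z:left_right} forces one of them to be left of the other. The case where $z_L(a)$ is left of $z$ is already the desired conclusion, so I would assume toward contradiction that $z$ is left of $z_L(a)$ and build a path in $\calM(a)$ strictly left of $M_L(a)$.

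The candidate is $M = x_0[W_L(z)]z[V]a$, where $V$ is any exposed witnessing path from $a$ to $z$ in~$P$; such a $V$ exists because $z \in Z(a)$. I would first verify that $M$ is a path and belongs to $\calM(a)$: every vertex of $W_L(z)$ lies in $B$, whereas every internal vertex of $V$ lies outside $B$ by exposedness, so $z$ is the only vertex common to $W_L(z)$ and $V$, and the three defining conditions of $\calM(a)$ are then immediate.

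It remains to show $M \prec M_L(a)$ in the $(x_0, e_{-\infty})$-ordering, which contradicts the minimality of $M_L(a)$. By \cref{prop:paths_M_consistent}, $M_L(a)$ coincides with $W_L(z_L(a))$ on its initial segment up to $z_L(a)$, while $M$ coincides with $W_L(z)$ on its initial segment up to $z$. Incomparability of $z$ and $z_L(a)$ in~$P$ prevents either of $W_L(z), W_L(z_L(a))$ from being a subpath of the other (any such inclusion would force a comparability), so by the $x_0$-consistency given by \cref{prop:W-consistent}.\ref{prop:W-consistent:left} these two leftmost witnessing paths diverge at some element $w$ distinct from both $z$ and $z_L(a)$. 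The assumption that $z$ is left of $z_L(a)$ means the next edge of $W_L(z)$ at $w$ is strictly left of the next edge of $W_L(z_L(a))$ at $w$; because $M$ and $M_L(a)$ still follow these two leftmost paths past $w$, we conclude $\gce(M, M_L(a)) = w$ and $M \prec M_L(a)$. The main step to write carefully is precisely this last one: one must make sure that the divergence of $M$ and $M_L(a)$ occurs exactly at $w$ and not earlier, which is ensured by the fact that $V$ has no internal elements in $B$ and hence cannot coincide with anything on $M_L(a)$ before $z_L(a)$.
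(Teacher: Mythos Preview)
Your proof is correct and uses essentially the same idea as the paper: build the path $M = x_0[W_L(z)]z[V]a \in \calM(a)$ and compare it with $M_L(a)$. The paper runs the comparison in the opposite direction: instead of assuming $z$ is left of $z_L(a)$ and deducing $M \prec M_L(a)$, it uses the minimality of $M_L(a)$ directly to get $M_L(a) \prec M$, locates the divergence $w = \gce(M_L(a),M)$, and argues $w \in B$ (otherwise the peaks would coincide), which immediately gives $W_L(z_L(a))$ left of $W_L(z)$. This avoids your final caveat entirely: since both paths agree with $W_L(z_L(a))$ and $W_L(z)$ on the stretch $x_0$ to $w$, the divergence cannot occur earlier than $w$, and the exposed path $V$ plays no role at that stage.
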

\begin{proof}
    We prove only \ref{prop:order_on_Ms_induces_order_in_zs:left} as the proof of \ref{prop:order_on_Ms_induces_order_in_zs:right} is symmetric.
    Assuming that $z\parallel z_L(a)$ in $P$, it suffices to prove that $z_L(a)$ is left of $z$.
    By~\cref{prop:properties_of_Z}.\ref{prop:properties_of_Z:left_right}, either $z_L(a)$ is left of $z$ or $z$ is left of $z_L(a)$, hence, it is enough to prove that $W_L(z_L(a))$ is left of $W_L(z)$.
    Let $M = x_0[W_L(z)]z[V]a$, where $V$ is an exposed witnessing path from $a$ to $z$ in $P$.
    Note that $M \in \calM(a)$ and $z$ is the peak of $M$.
    If $M_L(a) = M$, then $z_L(a) = z$, which contradicts the assumption that $z \parallel z_L(a)$ in $P$. 
    Thus, $M_L(a)$ is left of $M$.
    Let $w = \gce(M_L(a),M)$.
    %That is, the left/right relation of $M_L(a)$ and $M$ is decided by the edges leaving $w$.
    If $w \notin B$, then the peaks of $M_L(a)$ and $M$ coincide, that is, $z_L(a) = z$, which is a contradiction again, hence, $w \in B$.
    Therefore, $W_L(z_L(a))$ is left of $W_L(z)$, 
    which ends the proof.
    % Since $a \leq z_L(a)$ and $a \leq z$ in $P$, we have  $a \notin \shadz(z_L(a))$ and $a \notin \shadz(z)$.
    % Therefore, by \cref{obs:equivalence_for_shadows}, $z_L(a) \notin \shadz(z)$ and $z \notin \shadz(z_L(a))$.
    % By \cref{cor:notin_shads_implies_left_or_right}, either $z_L(a)$ is left of $z$ or $z$ is left of $z_L(a)$.
    % However, we already obtained that $W_L(z_L(a))$ is left of $W_L(z)$, hence, $z_L(a)$ is left of $z$ as desired.
\end{proof}

\begin{proposition}\label{prop:z_L_b_z_R}
    Let $(a,b) \in I$. Then $z_L(a)$ is left of $b$, and $z_R(a)$ is right of $b$.
\end{proposition}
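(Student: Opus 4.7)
The statement has two symmetric halves and I outline only the argument that $z_L(a)$ is left of $b$; the conclusion for $z_R(a)$ right of $b$ follows by running the same reasoning with the roles of $W_L$ and $W_R$ swapped, and with the witness $b''$ from \ref{items:dangerous_b''} in place of $b'$. The plan is to extract from the dangerous condition a specific $z' \in Z(a)$ that is already known to be left of $b$, and then to locate $z_L(a)$ relative to $z'$.

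By \ref{item:instance:dangerous} the pair $(a,b)$ is dangerous; fix $b' \in B$ witnessing \ref{items:dangerous_b'}, so that $a \le b'$ in $P$ and $b'$ is left of $b$. By \cref{prop:dangerous_a_not_in_B} we have $a \notin B$, hence $a < b'$, and by \cref{prop:dangerous-implies-in-Y} we have $b' \in Y(a)$. Pick any witnessing path from $a$ to $b'$ in $P$ and let $z'$ be its first element in $B$; then $z' \in Z(a)$ with $z' \le b'$ in $P$, and \cref{cor:left_is_preserved_from_y_to_z}.\ref{cor:left_is_preserved_from_y_to_z:left} yields that $z'$ is left of $b$. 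The path $M' = x_0[W_L(z')]z'[V']a$ (with $V'$ the reverse of the chosen exposed subpath from $a$ to $z'$) belongs to $\calM(a)$, so $M_L(a) \preceq M'$ in the $(x_0, e_{-\infty})$-ordering. By \cref{prop:order_on_Ms_induces_order_in_zs}.\ref{prop:order_on_Ms_induces_order_in_zs:left}, either $z_L(a)$ is left of $z'$, or $z_L(a)$ and $z'$ are comparable in $P$. If $z_L(a)$ is left of $z'$, transitivity (\cref{prop:left_porders_bs}) gives $z_L(a)$ left of $b$; if $z_L(a) \le z'$ in $P$, then $a < z_L(a) \le z' \le b'$ and \cref{cor:left_is_preserved_from_y_to_z}.\ref{cor:left_is_preserved_from_y_to_z:left} again yields $z_L(a)$ left of $b$.

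The delicate case is $z' < z_L(a)$ in $P$. Here the first step is to deduce $W_L(z_L(a)) \preceq W_L(z')$ in the $(x_0, e_{-\infty})$-ordering: by \cref{prop:paths_M_consistent} the leading segments of $M_L(a)$ and $M'$ are exactly $W_L(z_L(a))$ and $W_L(z')$, so the reverse inequality would force $M' \prec M_L(a)$, contradicting the leftmost property of $M_L(a)$. Since $z_L(a) \ne z'$, the inequality is strict, and combined with $W_L(z') \prec W_L(b)$ (from $z'$ left of $b$) this gives $W_L(z_L(a)) \prec W_L(b)$. Next, \cref{prop:properties_of_Z}.\ref{prop:properties_of_Z:item:boundary_of_y} places $z'$ in $\partial \shadz(z_L(a))$; and $\sd(z_L(a)) = 0$ by \cref{prop:properties_of_Z}.\ref{prop:properties_of_Z:item:sd_of_z}, so this boundary is $W_L(z_L(a)) \cup W_R(z_L(a))$. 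The element $z'$ cannot lie on $W_L(z_L(a))$, since then $W_L(z')$ would be a prefix of $W_L(z_L(a))$, contradicting $W_L(z_L(a)) \prec W_L(z')$. Hence $z'$ lies strictly on $W_R(z_L(a))$, so $W_R(z')$ is a prefix of $W_R(z_L(a))$, and combining with $W_R(z') \prec W_R(b)$ (again from $z'$ left of $b$) yields $W_R(z_L(a)) \prec W_R(b)$. Thus $(z_L(a), b)$ is a left pair.

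To upgrade this to $z_L(a)$ being left of $b$, it remains to verify the two shadow conditions. For $z_L(a) \notin \shadz(b)$, \cref{prop:properties_of_Z}.\ref{prop:properties_of_Z:item:not_interior_of_y} rules out the interior, and a boundary placement would force $z_L(a) \le b$, contradicting $a < z_L(a)$ together with $a \parallel b$. For $b \notin \shadz(z_L(a))$, \cref{prop:paths_directions_in_shadows}.\ref{prop:paths_directions_in_shadows:right} applied with $y = z_L(a)$ and $j = 0$ would give $W_R(b) \preceq W_R(z_L(a))$ whenever $b \in \shadz(z_L(a))$, directly contradicting the strict inequality $W_R(z_L(a)) \prec W_R(b)$ established above. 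The main obstacle throughout is this last case $z' < z_L(a)$, whose resolution depends on squeezing geometric information from the leftmost property of $M_L(a)$ to pin $z'$ onto the right boundary of $\shadz(z_L(a))$.
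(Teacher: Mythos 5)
Your overall route is the same as the paper's: extract $z'\in Z(a)$ with $z'\le b'$ from the dangerous witness, get $z'$ left of $b$ via \cref{cor:left_is_preserved_from_y_to_z}, and split on the relation between $z_L(a)$ and $z'$ via \cref{prop:order_on_Ms_induces_order_in_zs}; the two easy cases are handled exactly as in the paper. The problem is the case $z'<z_L(a)$ in $P$, where your argument is circular. By \cref{obs:ue_ordering_is_usually_linear} there are \emph{three} possibilities for $W_L(z_L(a))$ versus $W_L(z')$: one is left of the other, or one is a subpath of the other (here only $W_L(z')$ a prefix of $W_L(z_L(a))$ is possible, i.e.\ $z'$ lies on $W_L(z_L(a))$). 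Your leftmost-$M_L(a)$ argument only excludes $W_L(z')\prec W_L(z_L(a))$; in the prefix case the two paths are simply incomparable in the $(x_0,e_{-\infty})$-ordering, so the strict inequality $W_L(z_L(a))\prec W_L(z')$ is not established there. You then use precisely that unproven inequality to rule out the prefix case (``$z'$ cannot lie on $W_L(z_L(a))$, since then \dots contradicting $W_L(z_L(a))\prec W_L(z')$''), which is circular. The configuration you are dismissing is exactly the hard one: $z'$ strictly on the left side of a shadow block of $\shadz(z_L(a))$; nothing you have written excludes it, and if it occurred you could not obtain the comparison $W_R(z_L(a))\prec W_R(b)$ on which the rest of your argument (including $b\notin\shadz(z_L(a))$) depends.

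The missing ingredient is the paper's geometric argument: if $z'$ lay strictly on the left side of a shadow block $\calB$ of $\shadz(z_L(a))$, then, comparing $M_L(a)$ with $M=x_0[W_L(z')]z'[W]a$ (where $W$ is an exposed witnessing path from $a$ to $z'$) and using $x_0[M_L(a)]z_L(a)=W_L(z_L(a))$ from \cref{prop:paths_M_consistent}, the leftmost property of $M_L(a)$ forces the first edge of $z'[M]a$ to satisfy the condition of \ref{items:leaving_shadows:left}, hence to lie in $\Int\calB$; since $W$ is exposed it cannot cross $\partial\calB$, so $a\in\shadz(z_L(a))$, contradicting $z_L(a)\in Z(a)$. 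This shows $z'$ lies on $W_R(z_L(a))$ (possibly as a common point), after which your $W_R$-chain and the application of \cref{prop:paths_directions_in_shadows}.\ref{prop:paths_directions_in_shadows:right} and \cref{cor:notin_shads_implies_left_or_right} go through. Without this block-trapping step (or an equivalent), the delicate case is not closed.
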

\begin{proof}
    We will show that $z_L(a)$ is left of $b$.
    The proof that $z_R(a)$ is right of $b$ is symmetric.
    The pair $(a,b)$ is a dangerous pair by \ref{item:instance:dangerous}.
    By \ref{items:dangerous_b'}, there exists $b' \in B$ with $a < b'$ in $P$ such that $b'$ is left of $b$.
    By \cref{prop:dangerous_a_not_in_shadows_b'_b''}, $a \notin \shadz(b')$.
    It follows that $b' \in Y(a)$.
    Let $z' \in Z(a)$ with $z' \leq b'$ in $P$.
    By~\cref{cor:left_is_preserved_from_y_to_z}, $z'$ is left of $b$.
    By~\ref{item:instance:not_in_shadow}, $a \notin \shadz(b)$.
    Since $a < z_L(a)$ in $P$, by~\cref{obs:equivalence_for_shadows}, we have 
    \begin{equation}\label{eq:z_L-notin-b}
        z_L(a) \notin \shadz(b).
    \end{equation}
    By \cref{prop:order_on_Ms_induces_order_in_zs}.\ref{prop:order_on_Ms_induces_order_in_zs:left}, either $z_L(a)$ is left of $z'$ or $z_L(a)$ and $z'$ are comparable in $P$.
    If $z_L(a)$ is left of $z'$, then by transitivity (\cref{prop:left_porders_bs}), $z_L(a)$ is left of $b$.
    Thus, we assume that $z_L(a)$ and $z'$ are comparable in $P$.
    First, assume that $z_L(a) \leq z'$ in $P$.
    Since $(a,b) \in I$, $a \leq z_L(a) \leq z' \leq b'$ in $P$, and $b'$ is left of $b$, by~\cref{cor:left_is_preserved_from_y_to_z}.\ref{cor:left_is_preserved_from_y_to_z:left}, we obtain that $z_L(a)$ is left of~$b$.
    % By \cref{prop:comparability_implies_shadow_containment}, $z_L(a) \in \shadz(z') \subset \shadz(b')$.
    % Therefore, since $z_L(a) \notin \shadz(b)$ and $b'$ is left of $b$, by \cref{prop:in_one_shad_but_not_other_then_left}.\ref{prop:in_one_shad_but_not_other_then_left:left}, $z_L(a)$ is left of $b$.
    
    Finally, assume that $z' < z_L(a)$ in $P$.
    In particular, $z' \in \shadz(z_L(a))$ (by \cref{prop:comparability_implies_shadow_containment}), however, by \cref{prop:properties_of_Z}.\ref{prop:properties_of_Z:item:not_interior_of_y}, $z'$ does not lie in the interior of $\shadz(z_L(a))$, thus, $z'$ is an element of $W_L(z_L(a)) \cup W_R(z_L(a))$.
    Next, we claim that 
    \begin{equation}\label{eq:b-notinz_L}
        b \notin \shadz(z_L(a)).
    \end{equation}
    Suppose to the contrary that $b \in \shadz(z_L(a))$.
    By \cref{prop:paths_directions_in_shadows}.\ref{prop:paths_directions_in_shadows:right}, $W_R(b)$ is not right of $W_R(z_L(a))$.
    If $z'$ is an element of $W_R(z_L(a))$, then $W_R(z')$ is a subpath of $W_R(z_L(a))$, although, since $z'$ is left of $b$, this implies that $W_R(b)$ is right of $W_R(z_L(a))$, which is a contradiction.
    Therefore, $z'$ is an element of $W_L(z_L(a))$ and it is not an element of $W_R(z_L(a))$.
    In other words, $z'$ is strictly on the left side of some shadow block $\calB$ of $\shadz(z_L(a))$.
    The path $M_L(a)$ is left of $M = x_0[W_L(z')]z'[W]a$, where $W$ is an exposed witnessing path from $a$ to $z'$ in $P$.
    By~\Cref{prop:paths_M_consistent}, $x_0[M_L(a)]z_L(a) = W_L(z_L(a))$.
    It follows that $x_0[M_L(a)]z' = x_0[M]z'$ and the edges following $z'$ in $M_L(a)$ and $M$ are distinct.
    Moreover, by \ref{items:leaving_shadows:left}, the first edge of $z'[M]a = W$ is in the interior of $\calB$. Since the path $W$ is exposed, all the vertices of $W$ except $z'$ are in the interior of $\calB$, and so, in $\shadz(z_L(a))$.
    In particular, $a \in \shadz(z_L(a))$, which is a contradiction.
    This contradiction completes the proof of~\eqref{eq:b-notinz_L}, that is, $b \notin \shadz(z_L(a))$.
    Recall that also $z_L(a) \notin \shadz(b)$ (by~\eqref{eq:z_L-notin-b}).
    Therefore, by \cref{cor:notin_shads_implies_left_or_right}, either $z_L(a)$ is left of $b$ or $b$ is left of $z_L(a)$.
    Recall that $z'$ is an element of $W_L(z_L(a)) \cup W_R(z_L(a))$.
    If $z'$ lies on $W_L(z_L(a))$, then $W_L(z')$ is a subpath of $W_L(z_L(a))$, and since $z'$ is left of $b$, $W_L(z_L(a)))$ is left of $W_L(b)$, which implies that $z_L(a)$ is left of $b$.
    Similarly, if $z'$ lies on $W_R(z_L(a))$, then $W_R(z')$ is a subpath of $W_R(z_L(a))$, and since $z'$ is left of $b$, $W_R(z_L(a)))$ is left of $W_R(b)$, which implies that $z_L(a)$ is left of $b$.
\end{proof}

% In~\Cref{fig:pairs_in_I}, we depict how one can imagine the pairs in $I$.

% \begin{figure}[!h]
%   \begin{center}
%     \includegraphics[scale=0.1]{photos/453359146_1023511335585651_1000140515306936382_n.jpg}
%   \end{center}
%   \caption{
%     \fig 
%   }
%   \label{fig:pairs_in_I}
% \end{figure}

% ----------------------------
\subsection{Regions}\label{ssec:regions}
% ----------------------------

Let $a \in A$, $u,v \in Z(a)$ with $u$ left of $v$, let $U$ be an exposed  witnessing path from $a$ to $u$, let $V$ be an exposed witnessing path from $a$ to $v$, 
and let
% \begin{align*}
% q&=\gce(\WL(z_1),\WR(z_2)),\\
% m&=\gce(W_1,W_2),\\
% \gamma&=m[W_1]z_1[\WL(z_1)]q[\WR(z_2)]z_2[W_2]m.
% \end{align*}
\begin{align*}
q&=\text{the maximal common element of $\WL(u)$ and $\WR(v)$ in $P$},\\
m&=\text{the maximal common element of $U$ and $V$ in $P$},\\
\gamma&=q[\WR(v)]v[V]m[U]u[\WL(u)]q.
\end{align*}
Note that $\gamma$ is a simple closed curve.
See~\Cref{fig:mixed_regions}.

Since $u$ is left of $v$, we have $q \notin \{u,v\}$, and in particular, both $q[W_L(u)]u$ and $q[W_R(v)]v$ contain at least one edge.
Let $e$ be the first edge of $q[W_L(u)]u$ and 
$f$ be the first edge of $q[W_R(v)]v$. 
Note that the only common element of $\gamma$ and  $x_0[W_L(u)]q$ is $q$.
By~\cref{prop:W_L-left-of-W_R}, $f$ follows $e$ in the counterclockwise orientation of $\gamma$.
In other words, the cyclic ordering of the edges in $q[\WR(v)]v[V]m[U]u[\WL(u)]q$ is the counterclockwise orientation of $\gamma$.

We define $\calR(a,u,v,U,V)$ to be the region of $\gamma$.
Let $\calR = \calR(a,u,v,U,V)$.
%\jedrzej{Maybe sth else instead of $\calR$? E.g. $\text{Reg}(\dots)$? Just because we often use $\calR$ for particular regions.}
We say that $q$ is the \emph{lower-min} of $\calR$ and $m$ is the \emph{upper-min} of $\calR$.
We associate with $\calR$ two paths $\gamma_L = x_0[W_L(u)]u[U]m$ and $\gamma_R = x_0[W_R(v)]v[V]m$.
%The consistent orientation of $\gamma$ justifies the following definitions of left and right on the boundary of $\calR$.
We say that the elements of $q[\gamma_L]m$  and $q[\gamma_R]m$ are on the \emph{left side} of $\calR$ and on the \emph{right side} of $\calR$ respectively.
All elements on the left (right) side of $\calR$ except $q$ and $m$ are said to be \emph{strictly} on the left (right) side of $\calR$.

For convenience, we use the following notation.
A tuple $(a,u,v,U,V,\calR,q,m,\gamma_L,\gamma_R)$ is a \emph{region tuple} if
$a \in A$, $u,v \in Z(a)$ with $u$ left of $v$, $U$ and $V$ are exposed witnessing paths in $P$ from $a$ to $u$ and $a$ to $v$ respectively, $\calR = \calR(a,u,v,U,V)$, $q$ is the lower-min of $\calR$, $m$ is the upper-min of $\calR$, $\gamma_L = x_0[W_L(u)]u[U]m$, and $\gamma_R = x_0[W_R(v)]v[V]m$.

\begin{figure}[tp]
  \begin{center}
    \includegraphics{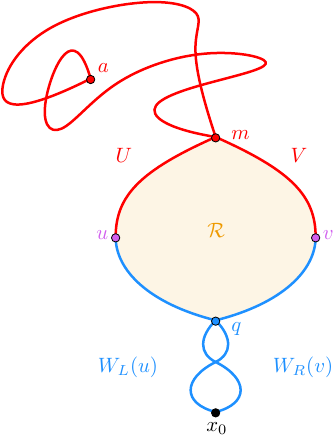}
  \end{center}
  \caption{
    The region $\calR = \calR(a,z_1,z_2,W_1,W_2)$.
    Note that $a$ can be in the interior of $\calR$, in the exterior of $\calR$, or equal to $m$.
  }
  \label{fig:mixed_regions}
\end{figure}

Let $(a,u,v,U,V,\calR,q,m,\gamma_L,\gamma_R)$ be a region tuple.
Given an edge incident to the boundary of a region, we characterize when it lies in the region.
Note similarities to \ref{items:leaving_shadows:x}--\ref{items:leaving_shadows:right} and recall~\Cref{fig:characterize-block}.

Let $w$ be an element in $\partial \calR$.
If $w$ is on the left side of $\cgR$, let $e^+_L$ and $e^-_L$ be, respectively, the edges (provided they exist) immediately after and immediately before $w$ on the path $\gamma_L$.
Also, if $w$ is on the right side of $\cgR$, let $e^+_R$ and $e^-_R$ be, respectively, the edges (provided they exist) immediately after and immediately before $w$ on $\gamma_R$.
By~\cref{obs:when_in_gamma}, an edge $e$ incident to $w$ lies in $\calR$ if and only if one of the following holds 
\begin{enumerateRegion}
    \item $w = q$ and $e^+_L\preccurlyeq e \preccurlyeq e^+_R$ in the $q$-ordering;\label{items:leaving_regions:x}
    \item $w = m$ and $e^-_R\preccurlyeq e \preccurlyeq e^-_L$ in the $m$-ordering;\label{items:leaving_regions:y}
    \item $w$ is strictly on the left side and $e^+_L \preccurlyeq e \preccurlyeq e^-_L$ in the $w$-ordering;\label{items:leaving_regions:left}
    \item $w$ is strictly on the right side and $e^-_R \preccurlyeq e \preccurlyeq e^+_R$ in the $w$-ordering.\label{items:leaving_regions:right}
\end{enumerateRegion}

% \begin{figure}[!h]
%   \begin{center}
%     \includegraphics{figs/characterize-block.pdf}
%   \end{center}
%   \caption{
%     \fig \ Adjust this.
%   }
%   \label{fig:characterize-region}
% \end{figure}

Assume that $w = q$.
If $u = x_0$, we set $e_L^- = e_R^- = e_{-\infty}$.
Otherwise, we set $e_L^-$ and $e_R^-$ to be the edges preceding $w$ in $W_L(u)$ and $W_R(v)$ respectively. 
%Note that since $x_0[W_L(u)]q$ and $x_0[W_R(u)]q$ are disjoint from the interior of $\calR$, we have $e_R^+ \prec e_R^- \preccurlyeq e_L^- \prec e_L^+$ in the $q$-ordering.
We claim that
\begin{enumerateRegion}
\setcounter{enumi}{4}
    \item $e_R^+ \prec e_R^- \preccurlyeq e_L^- \prec e_L^+$ in the $q$-ordering.
    \label{item:ordering-around-q}
\end{enumerateRegion}
When $q = x_0$, by~\ref{item:instance:e_infty}, we know that $e_{-\infty}$ does not lie in $\calR$, and so,~\ref{item:ordering-around-q} holds.
Next, assume that $q \neq x_0$.
Let $W$ be a witnessing path from $x_0$ to $q$ in $P$.
This path can intersect $\partial\calR$ only in $q$.
Recall that $x_0 \notin \Int \calR$.
Therefore, the last edge $e$ of $W$ does not lie in $\calR$.
Applying the above to $W = x_0[W_L(u)]q = W_L(q)$ and $W = x_0[W_R(v)]q = W_R(q)$, we obtain that $e_L^-$ and $e_R^-$ do not lie in $\calR$.
In particular, by~\ref{items:leaving_regions:x}, $e_R^+ \prec e_L^- \prec e_L^+$ and $e_R^+ \prec e_R^- \prec e_L^+$ in the $q$-ordering.
These two combined give two possibilities: $e_R^+ \prec e_R^- \preccurlyeq e_L^- \prec e_L^+$ or $e_R^+ \prec e_L^- \prec e_R^- \prec e_L^+ $ in the $q$-ordering.
The former coincides with~\ref{item:ordering-around-q}, thus, suppose to the contrary that the latter holds.
Reordering the edges, we obtain $e_R^- \prec e_L^+ \prec e_L^-$ in the $q$-ordering.
This, by~\ref{items:leaving_shadows:y}, yields that $e_L^+$ lies in the terminal block $\calB$ of $\shadz(q)$.
Since $q[W_L(u)]u$ and $\partial\calB$ intersect only in $q$, and since the first edge of $q[W_L(u)]u$ lies in $\calB$, we conclude that $q[W_L(u)]u$ lies in $\calB$. 
In particular, $u \in \Int \calB$,
Thus, by~\Cref{prop:shadow-comp}.\ref{prop:shadow-comp:item:hanging_element}, we have $\shadz(q) = \shadz(u)$.
Since $\shadz(q) = \shadz(u)$ and $u \in \Int \shadz(q)$, 
we obtain that $\sd(u) > 0$.
However, $u \in Z(a)$, so $\sd(u) > 0$ contradicts~\Cref{prop:properties_of_Z}.\ref{prop:properties_of_Z:item:sd_of_z}. 
Thus,~\ref{item:ordering-around-q} follows.

Within the remaining statements of this subsection, we characterize where an element $b \in B$ lies relative to $\calR$. 
In~\Cref{prop:shad-disjoint-from-calR}, we show that if $b \in \shadz(q) \setminus \{q\}$, then $b \notin \calR$.
\Cref{prop:path_does_not_leave_regions} is an analog of~\cref{cor:path-in-block} for regions defined as above: we show that a witnessing path with both endpoints in $\calR$ lies entirely in $\calR$.
In~\Cref{prop:b_relative_to_region}, we give equivalent conditions for being in $\calR$.
\Cref{cor:sandwitch_b_in_region} is a simpler although less precise statement inferred from~\Cref{prop:b_relative_to_region}.
See~\cref{fig:characterization-regions}.

% Given the region $\calR(a,u,v,U,V)$, one can characterize where an element $b \in B$ lies relative to the region in a handy way.
% We do this in the following propositions. 

\begin{proposition}\label{prop:shad-disjoint-from-calR}
    Let $(a,u,v,U,V,\calR,q,m,\gamma_L,\gamma_R)$ be a region tuple and let $b \in B$.
    If $b \in \calR \cap \shadz(q)$, then $b = q$.
\end{proposition}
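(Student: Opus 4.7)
The plan is to suppose, for contradiction, that $b \in B$ satisfies $b \in \calR \cap \shadz(q)$ and $b \neq q$, and derive a contradiction purely from the topology of the two regions.

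First I would show that $b$ must lie in the topological interior of some (non-degenerate) shadow block of $\shadz(q)$. Since $\partial\shadz(q) \subseteq W_L(q) \cup W_R(q)$ regardless of $\sd(q)$, it suffices to place $W_L(q) \setminus \{q\}$ and $W_R(q) \setminus \{q\}$ in the exterior of $\calR$. Consider $W_L(q) = x_0[W_L(u)]q$: this is a simple curve from $x_0$ (in the exterior face of $G$, hence in the exterior of $\calR$) to $q$, and I claim its only intersection with $\gamma$ is $q$. Indeed, it meets $u[W_L(u)]q$ only at $q$ because $W_L(u)$ is simple; it meets $q[W_R(v)]v$ only at $q$ by the maximality of $q$ as a common element of $W_L(u)$ and $W_R(v)$ in $P$; and its vertices (all in $B$) cannot coincide with internal vertices of the exposed paths $U,V$ (whose only $B$-vertices are $u,v>q$). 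By planarity this forces $W_L(q) \subseteq (\text{exterior of } \calR) \cup \{q\}$, and symmetrically for $W_R(q)$. Thus $b \in \Int\shadz(q)$, so $b \in \Int\calB$ for a non-degenerate block $\calB$ of $\shadz(q)$.

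Next I would split on the position of $\calB$. In every case, every vertex of $\partial\calB$ lies on $W_L(q) \cup W_R(q)$ and is $\le \max\calB$, whereas every $B$-vertex of $\gamma$ is $\ge q$ and the remaining vertices of $\gamma$ are not in $B$; so $\gamma \cap \partial\calB \subseteq \{q\}$, with equality exactly when $\max\calB = q$. If $\calB$ is non-terminal, or $\calB$ is terminal with $\sd(q)=0$, then $q \notin \Int\calB$, so the connected set $\calB \setminus \{q\}$ is disjoint from $\gamma$ and lies in a single component of $\mathbb{R}^2 \setminus \gamma$; since $\min\calB \in W_L(q) \setminus \{q\}$ is in the exterior of $\calR$ by the previous paragraph, all of $\calB \setminus \{q\}$ is in the exterior of $\calR$, contradicting $b \in \Int\calB \cap \calR$.

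The main obstacle is the remaining subcase, where $\calB$ is the terminal block of $\shadz(q)$ and $\sd(q) \ge 1$. Then $\max\calB = y < q$, so $q \in \Int\calB$, and the vertex argument above gives $\gamma \cap \partial\calB = \emptyset$. The connected set $\gamma$ meets $\Int\calB$ (at $q$) and misses $\partial\calB$, hence $\gamma \subseteq \Int\calB$. Since the definition of reversing element forces $\Int\calB \neq \emptyset$, $\calB$ is a non-degenerate block, so $\Int\calB$ is a simply-connected open disk; thus the bounded component of $\mathbb{R}^2 \setminus \gamma$ is contained in $\Int\calB$, giving $\calR \subseteq \Int\calB \subseteq \shadz(q)$. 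In particular $u \in \calR \subseteq \shadz(q)$. Combined with $q \le u$ (from $q \in W_L(u)$), \cref{prop:comparability_implies_shadow_containment} and \cref{prop:shadow-comp}.\ref{prop:shadow-comp:item:inclusions_of_j_shadows} force $\shadz(u) = \shadz(q)$; by \cref{prop:properties_of_Z}.\ref{prop:properties_of_Z:item:sd_of_z}, $\sd(u)=0$, so the terminal element of $\shadz(u)$ is $u$ itself. Hence $u$ is the terminal element of $\shadz(q)$, giving $u = y < q$ and contradicting $q \le u$.
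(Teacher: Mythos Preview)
Your approach is different from the paper's and mostly sound, but there is a real gap in the implication ``$q \notin \Int\calB$, so the connected set $\calB \setminus \{q\}$ is disjoint from $\gamma$''. From $\gamma \cap \partial\calB \subseteq \{q\}$ you only get that the arc $\gamma \setminus \{q\}$ is connected and disjoint from $\partial\calB$, hence lies entirely in $\Int\calB$ or entirely in the exterior of $\calB$; knowing $q \notin \Int\calB$ does not decide which. In the non-terminal subcase you in fact have more---$q$ lies in the \emph{open} exterior of $\calB$ (this is the ``Case~1'' block analysis in \cref{ssec:shadows})---so points of $\gamma$ near $q$ are outside $\calB$ and the argument goes through. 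But in the terminal-$\sd(q)=0$ subcase $q \in \partial\calB$, and nothing you wrote rules out $\gamma \setminus \{q\} \subset \Int\calB$. The fix is immediate: if that happens then $u \in \gamma \setminus \{q\} \subset \Int\calB \subset \shadz(q)$, and your ``main obstacle'' argument applies verbatim (with $\shadz(u)=\shadz(q)$ forcing $u \in \Int\shadz(u)$, contradicting $\sd(u)=0$). Alternatively, one can invoke \ref{item:ordering-around-q}, proved just before this proposition, whose justification is precisely the $\sd(u)=0$ argument you rediscovered; it shows both edges of $\gamma$ at $q$ leave $\shadz(q)$, hence leave $\calB$.

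For comparison, the paper's proof avoids the block-by-block analysis: it takes any witnessing path to $b$, lets $w$ be its last point on $\partial\calR$, routes a path $W'$ from $q$ along $\partial\calR$ to $w$ and then to $b$, and uses \ref{item:ordering-around-q} together with \ref{items:leaving_shadows:y} at $q$ to see that the first edge of $W'$ leaves $\shadz(q)$; since every element of $W'$ exceeds $q$, the path never re-enters $\shadz(q)$, so $b \notin \shadz(q)$. Your topological decomposition is more geometric and self-contained, but it ends up re-deriving the content of \ref{item:ordering-around-q} in your final case, and (as noted) you need it once more to close the $\sd(q)=0$ subcase.
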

\begin{proof}
    Suppose that $b \in \calR$ and $b \neq q$.
    We will prove that $b \notin \shadz(q)$.
    Let $W$ be a witnessing path from $x_0$ to $b$ in $P$.
    We have $b \in \calR$ and $x_0 \notin \Int \calR$.
    Thus, $W$ intersects $\partial \calR$.
    Let $w$ be the last element of $W$ in $\partial\calR$.
    That is, $w[W]b \subset \calR$.
    Since $w \in B$, we get that $w$ lies in $q[W_L(u)]u \cup q[W_R(v)]v$.
    If $w$ lies in $q[W_L(u)]u$, let $W' = q[W_L(u)]w[W]b$ and if $w$ lies in $w[W_R(v)]v$, let $W' = q[W_R(v)]w[W]b$. 
    Since $b\neq q$, the path $W'$ contains at least one edge. 
    Let $e$ be the first edge of $W'$.
    Recall that $e$ lies in $\calR$.

    Let $e^+_L$ and $e^+_R$ be, respectively, the edges following $q$ in $W_L(u)$ and $W_R(v)$.
    If $q = x_0$, we set $e_L^- = e_R^- = e_{-\infty}$.
    Otherwise, we set $e_L^-$ and $e_R^-$ to be, respectively, the edges preceding $q$ in $W_L(u)$ and $W_R(v)$. 
    By~\ref{item:ordering-around-q} and~\ref{items:leaving_regions:x},
    \[e_L^- \prec e_L^+ \preccurlyeq e \preccurlyeq e_R^+ \prec e_R^- \text{ in the $q$-ordering}.\]
    By~\ref{items:leaving_shadows:y}, $e$ does not lie in $\shadz(q)$.
    Since all elements of $W'$ are greater than $q$ in $p$, the element $q$ is the only common element of $W'$ and $\shadz(q)$.
    In particular, $b \notin \shadz(q)$, which ends the proof.
\end{proof}

\begin{proposition}\label{prop:path_does_not_leave_regions}
    % Let $a \in A$, $u,v \in Z(a)$ with $u$ left of $v$, let $U$ be an exposed  witnessing path from $a$ to $u$, let $V$ be an exposed witnessing path from $a$ to $v$, and let $\calR=\calR(a,u,v,U,V)$.
    Let $(a,u,v,U,V,\calR,q,m,\gamma_L,\gamma_R)$ be a region tuple.
    Let $W$ be a witnessing path in $P$ with all elements in $B$.
    If both endpoints of $W$ are in $\calR$, then $W \subset \calR$. 
\end{proposition}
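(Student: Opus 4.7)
I would argue by contradiction. Assume that $W$ has both endpoints in $\calR$ but some edge of $W$ lies in the exterior of $\calR$. Then there exist vertices $w, w'$ of $W$, appearing in this order, such that $w, w' \in \partial \calR$, while the subpath $w[W]w'$ (edges and interior vertices) lies entirely in the exterior of $\calR$. Since all vertices of $W$ lie in $B$ and $U, V$ are exposed witnessing paths, the only $B$-elements on the ``upper'' parts $u[U]m$ and $v[V]m$ of $\partial\calR$ are $u$ and $v$ themselves (recall that $m \notin B$ as $a \notin B$ and because $u \not\leq v$, $v \not\leq u$ in $P$ by~\cref{prop:comparability_implies_shadow_containment} and the definition of ``left''). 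Consequently, both $w$ and $w'$ lie on $q[W_L(u)]u \cup q[W_R(v)]v$, and moreover $w < w'$ in $P$.

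The plan is now to perform a case analysis on the position of $w$, and in each case construct a witnessing path $W^*$ from $x_0$ to $u$ or $v$ which contradicts the minimality of $W_L(\cdot)$ or the maximality of $W_R(\cdot)$. Let $e$ be the first edge of $w[W]w'$, which lies outside $\calR$. If $w$ is strictly on the left side of $\calR$ (including $w = u$), then by \ref{items:leaving_regions:left}, $e \prec e_L^+$ in the $(w, e_L^-)$-ordering. Depending on whether $w'$ lies on $q[W_L(u)]u$ or on $q[W_R(v)]v$, I take $W^* = x_0[W_L(u)]w[W]w'[W_L(u)]u$ or $W^* = x_0[W_L(u)]w[W]w'[W_R(v)]v$. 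In the first case $W^* \prec W_L(u)$ directly, which is absurd. In the second case, $W^* \prec W_L(u)$ by the same divergence argument, and since $u$ is left of $v$ we have $W_L(u) \prec W_L(v)$, so by transitivity of the $(x_0, e_{-\infty})$-ordering (\cref{prop:u_e_is_poset}) $W^* \prec W_L(v)$, again contradicting the leftmost property. The case ``$w$ strictly on the right side'' is entirely symmetric using $W_R$.

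The remaining case $w = q$ is the delicate one and requires splitting the exterior directions at $q$ into three angular sectors using \ref{items:leaving_regions:x} together with \ref{item:ordering-around-q}, which gives $e_R^+ \prec e_R^- \preccurlyeq e_L^- \prec e_L^+$ in the $q$-ordering. If $e \in (e_L^-, e_L^+)$ (the ``upper-left'' sector), the same leftmost-contradiction argument as above applies; if $e \in (e_R^+, e_R^-)$ (the ``upper-right'' sector), the symmetric rightmost contradiction applies.

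The genuinely tricky sub-case, and the main obstacle, is when $e \in [e_R^-, e_L^-]$ (the ``downward'' sector). By \ref{items:leaving_shadows:y} applied to the terminal block $\calB^*$ of $\shadz(q)$, this edge $e$ lies in $\calB^*$, and since $W$ is a witnessing path with $w = q$, the next vertex lies in $\Int\calB^*$. I then claim that the whole subpath $q[W]w'$ is trapped in $\calB^*$: it cannot cross $\partial\calB^* \subset W_L(q) \cup W_R(q)$ because all vertices of that boundary (other than $q$ itself) are strictly below $q$ in $P$, whereas all vertices of $q[W]w'$ after $q$ are strictly above $q$ in $P$. Hence $w' \in \calB^*$. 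However, the $B$-elements $w'$ must lie on $q[W_L(u)]u \cup q[W_R(v)]v$ strictly above $q$, and I argue these portions are disjoint from $\calB^*$: the first edges of $q[W_L(u)]u$ and $q[W_R(v)]v$ out of $q$ are $e_L^+$ and $e_R^+$, which lie outside $\calB^*$, and since $q[W_L(u)]u \setminus\{q\}$ and $q[W_R(v)]v \setminus\{q\}$ are disjoint from $W_L(q) \cup W_R(q) \supset \partial\calB^*$, they cannot re-enter $\calB^*$. This yields the desired contradiction and completes the proof.
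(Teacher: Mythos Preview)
Your proof is correct and follows essentially the same contradiction-and-case-analysis approach as the paper: pick an excursion $w[W]w'$ outside $\calR$, observe $w,w'$ lie on $q[W_L(u)]u\cup q[W_R(v)]v$, and for $w$ strictly on a side build a witnessing path that violates the extremality of $W_L(u)$ or $W_R(v)$; for $w=q$ split the exterior into the three angular sectors given by \ref{item:ordering-around-q}. The only substantive difference is in the ``downward'' sector at $q$: the paper traces $q[W]s[W_L(u)]u$ into the terminal block of $\shadz(q)$ to force $u\in\shadz(q)$ and then invokes \cref{prop:shad-disjoint-from-calR}, whereas you argue directly that $w'\in\calB^*$ while $q[W_L(u)]u$ and $q[W_R(v)]v$ (minus $q$) stay outside $\calB^*$ --- your argument is self-contained but essentially re-derives the relevant piece of that proposition. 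One cosmetic point: your parenthetical ``(including $w=u$)'' is vacuous, since $w=u$ would force $u<w'\le v$ contradicting $u\parallel v$; your $W^*\prec W_L(u)$ comparison would not literally parse in that case, but the case cannot arise.
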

\begin{proof}
    Suppose to the contrary that the endpoints of $W$ are in $\calR$ but $W$ is not contained in $\calR$.
    Let $r$ and $s$ be elements in the intersection of $W$ and $\partial \calR$ with $r \leq s$ in $P$ such that $r[W]s$ is in the exterior of $\calR$ except for the elements $r$ and $s$.
    Let $e$ be the first edge of $r[W]s$.
    In particular, $e$ does not lie in $\calR$. 
    Since all elements of $W$ are in $B$, we have that
    $r$ and $s$ lie in $u[W_L(u)]q[W_R(v)]v$.
    
    First, assume that $r$ and $s$ lie strictly on the left side of $\calR$.
    By~\ref{items:leaving_regions:left}, $x_0[W_L(u)]r[W]s[W_L(u)]u$ is left of $W_L(u)$, which is a contradiction.
    The argument when $r$ and $s$ lie strictly on the right side of $\calR$ is symmetric.
    % Symmetrically, if $u$ and $v$ lie strictly on the right side of $\calR$, by~\ref{items:leaving_regions:right}, $x_0[W_R(z_2)]u[W]v[W_R(z_2)]z_2$ is right of $W_L(z_1)$, which is a contradiction.
    Now, assume that $r$ lies strictly on the left side of $\calR$ and $s$ lies strictly on the right side of $\calR$.
    By~\ref{items:leaving_regions:left}, $x_0[W_L(u)]r[W]s[W_R(v)]v$ is left of $W_L(u)$, which is a contradiction with the fact that $u$ is left of $v$.
    The argument when $r$ lies strictly on the left side of $\calR$ and $s$ lies strictly on the right side of $\calR$ is symmetric.
    % Symmetrically, if $u$ lies strictly on the right side of $\calR$ and $v$ lies strictly on the left side of $\calR$.
    % By~\ref{items:leaving_regions:right}, $x_0[W_R(z_2)]u[W]v[W_L(z_1)]z_1$ is right of $W_R(z_2)$, which is a contradiction with the fact that $z_2$ is right of $z_1$.
    
    Finally, assume that $r = q$ and $s$ lies strictly on the left side of $\calR$.
    (Note that when $r = q$ and $s$ lies strictly on the right side of $\calR$, the proof is symmetric.) 
    Let $e_L^+$ and $e_R^+$ be the edges following $q$ in $W_L(u)$ and $W_R(v)$ respectively. 
    If $q = x_0$, let $e_L^- = e_R^- = e_{-\infty}$, otherwise, let  $e_L^-$ and $e_R^-$ be the edges preceding $q$ in $W_L(u)$ and $W_R(v)$ respectively. 
    We split the reasoning into cases depending on the position of $e$ in the $q$-ordering.

    Since $e$ is outside $\calR$, by~\ref{items:leaving_regions:x}, $e_R^+ \prec e \prec e_L^+$ in the $q$-ordering.
    If $e_L^- \prec e \prec e_L^+$ in the $q$-ordering, then $x_0[W_L(q)]q[W]s[W_L(u)]u$ is left of $W_L(u)$, which is a contradiction.
    If $e_R^+ \prec e \prec e_R^-$ in the $q$-ordering, then $x_0[W_R(q)]q[W]s[W_L(u)]u$ is right of $W_R(v)$, which is a contradiction with the fact that $u$ is left of $v$.
    Finally, assume that $e_R^- \prec e \prec e_L^-$ in the $q$-ordering.
    Then by~\ref{items:leaving_shadows:y}, $e$ lies in the terminal block $\calB$ of $\shadz(q)$.
    In particular, $q[W]s[W_L(u)]u$ lies in $\calB$, and so, $u \in \calB \subset \shadz(q)$, which contradicts~\Cref{prop:shad-disjoint-from-calR}.    
    % By~\eqref{eq:ordering-around-q}, $e_L^+$ does not lie in $\calB$.
    % Let  $r'$ be the other endpoint of $e$ and let $s'$ be the other endpoint of $e_L^+$.
    % In particular, $r' \in \calB$ and $s' \notin\calB$.
    % Note that, $r' \not\leq q$, $s' \not\leq q$, $r' \leq s$, and $s' \leq s$ in $P$.
    % Now, by~\cref{obs:paths_and_blocks} applied twice, $r' \in \shadz(q)$ if and only if $s \in \shadz(q)$ if and only if $s' \in \shadz(q)$.
    % However, as mentioned before, $r' \in \shadz(q)$ and $s' \notin\shadz(q)$, which yields a contradiction.    
\end{proof}

\begin{figure}[tp]
  \begin{center}
    \includegraphics{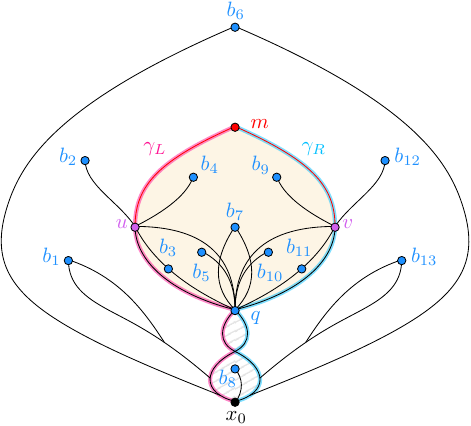}
  \end{center}
  \caption{
    Examples of elements in $B$ in different locations with respect to the region $\calR(a,u,v,U,V)$. 
    % $W_L(b_i)$ is left of $\gamma_L$ for $i\in\set{1,3,7}$. 
    % $W_R(b_i)$ is right of $\gamma_R$ for $i\in\set{7,8,9}$. 
    % The statement of~\cref{prop:b_relative_to_region} seems technical -- compare to the statement of~\cref{cor:sandwitch_b_in_region}.
    % The reason for that is that we insist on equivalence.
    % Elements $b_1$, $b_2$, and $b_8$ satisfy the premise of the corollary.
    % However, as one can see in the figure there are many more possible cases.
    % Elements $b_3$, $b_4$, $b_5$, $b_9$, $b_{10}$, and $b_{11}$ are comparable with $u$ or $v$.
    % Elements $b_6$ and $b_{12}$ are in the shadow of $u$ or $v$.
    % And finally, $b_7$ contains both $u$ and $v$ in its shadow.
  }
  \label{fig:characterization-regions}
\end{figure}

\begin{proposition}\label{prop:b_relative_to_region}
% Let $a \in A$, $u,v \in Z(a)$ with $u$ left of $v$, let $U$ be an exposed  witnessing path from $a$ to $u$, let $V$ be an exposed witnessing path from $a$ to $v$, and let $\calR=\calR(a,u,v,U,V)$.
% Let $m$ and $q$ be respectively the upper-min and the lower-min of $\calR$. 
% Let $\gamma_L = x_0[W_L(u)]u[U]m$ and $\gamma_R = x_0[W_R(v)]v[V]m$.
Let $(a,u,v,U,V,\calR,q,m,\gamma_L,\gamma_R)$ be a region tuple and let $b \in B$ with $b \notin \shadz(q) \setminus \set{q}$. 
Then
\begin{enumerate}
    \item $b \in \partial \calR$ if and only if $b$ lies in $q[W_L(u)]u \cup q[W_R(v)]v$. \label{prop:b_relative_to_region:item:boundary}
    \item $b \in \Int \calR$ if and only if $W_L(b)$ is right of $\gamma_L$ and $W_R(b)$ is left of $\gamma_R$. \label{prop:b_relative_to_region:item:inside}
    \item $b \notin \calR$ if and only if $W_L(b)$ is left of $\gamma_L$ or $W_R(b)$ is right of $\gamma_R$. \label{prop:b_relative_to_region:item:outside}
\end{enumerate}
\end{proposition}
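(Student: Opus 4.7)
The plan is to prove the three equivalences in order, exploiting the fact that they should describe a partition of $B$ into $b \in \partial\calR$, $b \in \Int\calR$, and $b \notin \calR$. I first establish item~(\ref{prop:b_relative_to_region:item:boundary}) directly: since $\partial\calR = q[\gamma_L]m \cup q[\gamma_R]m$ and the exposed witnessing paths $U$, $V$ contain no element of $B$ other than $u$, $v$ respectively, any $b \in B \cap \partial\calR$ must in fact lie on $q[W_L(u)]u \cup q[W_R(v)]v$; the converse is immediate.

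Before tackling (\ref{prop:b_relative_to_region:item:inside}) and (\ref{prop:b_relative_to_region:item:outside}), I verify that the three conditions on the right-hand sides are mutually exclusive, so that it suffices to prove the forward implications. The hypothesis $b \notin \shadz(q) \setminus \{q\}$ is used crucially here: if $W_L(b)$ were a proper prefix of $\gamma_L$, then $b$ would lie on $x_0[W_L(u)]u$; ruling out the sub-path $x_0[W_L(u)]q$ (whose elements lie in $\shadz(q) \setminus \{q\}$) leaves $b$ on $q[W_L(u)]u$, which is case~(\ref{prop:b_relative_to_region:item:boundary}). Moreover, $\gamma_L$ cannot be a proper prefix of $W_L(b)$, since this would force $m \in B$, while $m \in U \cap V$ combined with the exposedness of $U$ and $V$ would give $u = m = v$, contradicting that $u$ is left of $v$. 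Symmetric remarks apply to $\gamma_R$ and $W_R(b)$. Hence $W_L(b)$ is either left of, equal to a prefix of, or right of $\gamma_L$, and similarly for $W_R(b)$ versus $\gamma_R$, and the three conditions partition the cases cleanly.

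For the forward direction of~(\ref{prop:b_relative_to_region:item:inside}), assume $b \in \Int\calR$. Consider $W_L(b)$, which runs from $x_0$ (exterior to $\calR$) to $b$ (interior), so it must meet $\partial\calR$; let $w$ be the last element of $W_L(b) \cap \partial\calR$. Since $w \in B$, item~(\ref{prop:b_relative_to_region:item:boundary}) yields $w \in q[W_L(u)]u \cup q[W_R(v)]v$. In the principal case $w \in q[W_L(u)]u$, the $x_0$-consistency of $W_L(b)$ and $W_L(u)$ (Proposition~\ref{prop:W-consistent}) gives $x_0[W_L(b)]w = x_0[\gamma_L]w$, and the edge following $w$ in $W_L(b)$ lies in $\Int\calR$; the characterizations \ref{items:leaving_regions:x}--\ref{items:leaving_regions:left} together with \ref{item:ordering-around-q} show that this edge is strictly clockwise of the next edge of $\gamma_L$ in the $(w, e^-_L)$-ordering, which forces $\gamma_L \prec W_L(b)$ in the $(x_0, e_{-\infty})$-ordering. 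The symmetric argument applied to $W_R(b)$ gives $W_R(b) \prec \gamma_R$. The delicate case is when the last entry point $w$ for $W_L(b)$ lies on $q[W_R(v)]v$: I expect this to be the main obstacle. To handle it, I would argue that if this happens then $W_L(b)$ must have diverged to the right of $\gamma_L$ at an earlier point (on $W_L(u)$); since $W_L(b)$ and $W_L(u)$ are $x_0$-consistent and cannot rejoin past their greatest common prefix-element, the crossing of $W_L(b)$ into $\calR$ already forces $W_L(b)$ right of $\gamma_L$.  Proposition~\ref{prop:shad-disjoint-from-calR} together with the detailed cyclic inequality \ref{item:ordering-around-q} rules out the problematic subcase where the divergence would go into $\shadz(q)$.

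Finally, the remaining implications follow from the forward direction of~(\ref{prop:b_relative_to_region:item:inside}) combined with~(\ref{prop:b_relative_to_region:item:boundary}) by contraposition and the mutual exclusion of the three RHS conditions already verified: if $W_L(b)$ is left of $\gamma_L$ or $W_R(b)$ is right of $\gamma_R$, then neither $b \in \Int\calR$ (by the forward direction of~(\ref{prop:b_relative_to_region:item:inside})) nor $b \in \partial\calR$ (by~(\ref{prop:b_relative_to_region:item:boundary}) and the subpath characterization noted above) is possible, giving $b \notin \calR$ and thereby both directions of~(\ref{prop:b_relative_to_region:item:outside}); the reverse implication in~(\ref{prop:b_relative_to_region:item:inside}) then follows from the partition of cases.
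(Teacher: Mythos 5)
There is a genuine logical gap in your reduction. You prove (or sketch) only item~(\ref{prop:b_relative_to_region:item:boundary}), the mutual exclusivity of the right-hand sides, and the \emph{forward} direction of~(\ref{prop:b_relative_to_region:item:inside}) ($b \in \Int\calR \Rightarrow$ RHS). From these you can indeed deduce ``RHS of~(\ref{prop:b_relative_to_region:item:outside}) $\Rightarrow b \notin \calR$'' (modulo the point below), but the implications you still owe — ``$W_L(b)$ right of $\gamma_L$ and $W_R(b)$ left of $\gamma_R$ $\Rightarrow b \in \Int\calR$'', equivalently ``$b \notin \calR \Rightarrow$ RHS of~(\ref{prop:b_relative_to_region:item:outside})'' — do \emph{not} follow by contraposition and the partition of cases: nothing you have established rules out the configuration ``$b \notin \calR$ while $W_L(b)$ is right of $\gamma_L$ and $W_R(b)$ is left of $\gamma_R$''. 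Excluding this configuration is exactly the substantive half of the paper's proof: one first shows that $W_L(q)$ and $W_R(q)$ are prefixes of $W_L(b)$ and $W_R(b)$ (using \Cref{prop:inside_pair_implies_containment} and \Cref{prop:paths_directions_in_shadows}, and here the hypothesis $b \notin \shadz(q)\setminus\{q\}$ is essential), and then tracks a witnessing path from $q$ to $b$, checking via \ref{items:leaving_regions:x}--\ref{items:leaving_regions:right}, \ref{item:ordering-around-q} and \ref{items:leaving_shadows:y} that at every boundary contact the outgoing edge stays in $\calR$. The paper's strategy is the mirror image of yours (it proves the two \emph{right-to-left} directions and gets the forward ones for free from the trichotomy); your mirror-image plan would be fine in principle, but then you must prove \emph{both} forward directions, and the forward direction of~(\ref{prop:b_relative_to_region:item:outside}) is missing entirely.

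Two further points in the part you did attempt. In the ``delicate case'' of the forward direction of~(\ref{prop:b_relative_to_region:item:inside}) (last boundary contact of $W_L(b)$ on $q[W_R(v)]v$), your justification — $x_0$-consistency of $W_L(b)$ and $W_L(u)$ prevents rejoining — only excludes re-entry of $W_L(b)$ through the \emph{left} boundary; it does not exclude the scenario where $W_L(b)$ diverges left of $\gamma_L$ and later re-enters $\calR$ through a vertex $w'$ of $W_R(v)$. That scenario must be killed separately, e.g.\ by the paper's trick of forming the witnessing path $x_0[W_L(b)]w'[W_R(v)]v$, which would be left of $W_L(u)$ and hence contradict $u$ being left of $v$ (this is essentially the argument inside \Cref{prop:path_does_not_leave_regions}), or by handling the divergence at $q$ into the $\shadz(q)$ sector via \ref{items:leaving_shadows:y} as you indicate. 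Second, in your final paragraph the step ``$b \in \partial\calR$ is impossible under the RHS of~(\ref{prop:b_relative_to_region:item:outside})'' is not purely formal: if $b$ lies on $q[W_L(u)]u$, the disjunct ``$W_R(b)$ right of $\gamma_R$'' is not ruled out by the subpath characterization alone; you need a short argument (e.g.\ $b \le u$ gives, by \Cref{prop:shortcuts} and transitivity of the $(x_0,e_{-\infty})$-ordering, that $W_R(b)$ cannot be right of $W_R(v)$). These are fixable, but as written the proposal does not constitute a proof of the proposition.
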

\begin{proof}
Recall that the boundary of $\calR$ is the union of four paths: $q[W_L(u)]u$, $q[W_R(v)]v$, $u[U]m$, and $v[V]m$.
Since $b\in B$ and the two paths $U$, $V$ are exposed, item \ref{prop:b_relative_to_region:item:boundary} follows.

Note that exactly one of the following three conditions holds: the right side of \ref{prop:b_relative_to_region:item:inside}, the right side of \ref{prop:b_relative_to_region:item:outside}, $W_L(b)$ is contained in $\gamma_L$ or $W_R(b)$ is contained in $\gamma_R$. 
The last option is possible only if $b \in \partial \calB$ holds or $b\in \shadz(q) \setminus \set{q}$. 
However, we assumed $b\not\in \shadz(q) \setminus \set{q}$.
Therefore, in order to conclude the proof of the proposition, we will show the right-to-left direction of each of the equivalences: \ref{prop:b_relative_to_region:item:inside} and \ref{prop:b_relative_to_region:item:outside}. 
% As discussed above, exactly one of the right sides of~\ref{prop:b_relative_to_region:item:boundary},~\ref{prop:b_relative_to_region:item:inside}, and~\ref{prop:b_relative_to_region:item:outside} holds.
% Therefore, since we have already proved~\ref{prop:b_relative_to_region:item:boundary}, in the proofs of the right-to-left directions of \ref{prop:b_relative_to_region:item:inside} and \ref{prop:b_relative_to_region:item:outside}, we assume that $b \notin \partial\calR$.

% First, we claim that $q$ belongs to $W_L(v)$ (and by symmetry to $W_R(u)$).
% Let $w = \gce(W_L(u),W_L(v))$.
% By \cref{prop:Tleft-and-Tright}.\ref{prop:W-consistent:left}, $x_0[W_L(u)]w = x_0[W_L(v)]w$.
% It suffices to prove that $q \leq w$ in $P$.
% Suppose to the contrary that $w < q$ in $P$.
% Since $u$ is left of $v$, $W_L(u)$ is left of $W_L(v)$.
% It follows that the path $x_0[W_L(u)]q[W_R(v)]v$ is left of $W_L(v)$, which is a contradiction, and concludes the claim.

We proceed with the right-to-left implication in \ref{prop:b_relative_to_region:item:inside}.
Assume that $W_L(b)$ is right of $\gamma_L$ and $W_R(b)$ is left of $\gamma_R$.
Note that this does not hold for $b = q$, so $b \neq q$.
Since we have already proved~\ref{prop:b_relative_to_region:item:boundary}, we may additionally assume $b \notin \partial \calR$.
Our first claim is that $W_L(q)$ is a subpath of $W_L(b)$ and $W_R(q)$ is a subpath of $W_R(b)$.
Recall that $W_L(q)$ is a subpath of $\gamma_L$ and $W_R(q)$ is a subpath of $\gamma_R$. 
Therefore, we have
\begin{itemize}
    \item either $W_L(q)$ is a subpath of $W_L(b)$ or $W_L(q)$ is left of $W_L(b)$, and
    \item either $W_R(q)$ is a subpath of $W_R(b)$ or $W_R(q)$ is right of $W_R(b)$.
\end{itemize}
The situation, where in both bullets the first option occurs, is exactly the statement of our claim.
We will prove that this is the only possible situation.
If $W_L(q)$ is a subpath of $W_L(b)$, then in particular, $q \in \shadz(b)$, which by \cref{prop:paths_directions_in_shadows}.\ref{prop:paths_directions_in_shadows:right} yields that $W_R(q)$ is not right of $W_R(b)$, and so, $W_R(q)$ is a subpath of $W_R(b)$.
Symmetrically, if $W_R(q)$ is a subpath of $W_R(b)$, then $W_L(q)$ is a subpath of $W_L(b)$.
Finally, assume that in both bullets the second option occurs, that is, $W_L(q)$ is left of $W_L(b)$ and $W_R(q)$ is right of $W_R(b)$.
It follows that $(b,q)$ is an inside pair.
Therefore, by~\Cref{prop:inside_pair_implies_containment}, $b \in \shadz(q)$, which is a contradiction.
In summary, we proved that in both bullets the first option occurs.
%In particular, $x_0[W_L(z_1)]q = x_0[W_L(z_2)]q = x_0[W_L(b)]q$ and $x_0[W_R(z_1)]q = x_0[W_R(z_2)]q = x_0[W_R(b)]q$

Let $W$ be a witnessing path from $q$ to $b$ in $P$.
If all elements of $W$ are in $\calR$, then $b \in \calR$, and so, $b \in \Int \calR$ as desired. 
Now consider an element $w$ in $W$ in the boundary of $\calR$.
Since $b \notin \partial \calR$, we have $w \neq b$.
Let $e$ be the edge of $W$ following $w$.
%Note that since $b \notin \partial \calR$, we have $w \neq b$.
We claim that $e$ lies in $\calR$, which will conclude the proof.
Since $w \in B$, $w$ lies in $W_L(u) \subset \gamma_L$ or in $W_R(v) \subset \gamma_R$.
If $w$ is an element of $\gamma_L$, then let $e_L^-,e_L^+$ be the edges immediately preceding and following $w$ in $\gamma_L$ (when $w = x_0$, we set $e_L^- = e_{-\infty}$).
If $w$ is an element of $\gamma_R$, then let $e_R^-,e_R^+$ be the edges immediately preceding and following $w$ in $\gamma_R$ (when $w = x_0$, we set $e_R^- = e_{-\infty}$).
First, suppose that $w = q$.
Recall that $e_R^+ \prec e_R^- \preccurlyeq e_L^- \prec e_L^+$ in the $q$-ordering (by~\ref{item:ordering-around-q}).
The path $W_L(u)$ is either left of $W_L(b)$ or is a subpath of $W_L(b)$.
The path $W_L(b)$ is either left of $W' = x_0[W_L(b)]q[W]b$ or is equal to $W'$.
It follows that $W_L(u)$ is either left of $W'$ or is a subpath of $W'$.
Symmetrically $W_R(v)$ is either right of $W'' = x_0[W_R(b)]q[W]b$ or is a subpath of $W''$.
In particular, $e_L^- \prec e_L^+ \preccurlyeq e$ and $e \preccurlyeq e_R^+ \prec e_R^-$ in the $q$-ordering.
Therefore, either $e_L^+ \preccurlyeq e \preccurlyeq e_R^+$ or $e_R^- \preccurlyeq e \preccurlyeq e_L^-$ in the $q$-ordering.
In the latter case, by \ref{items:leaving_shadows:y}, we obtain that the edge $e$ lies in $\shadz(q)$, 
and therefore, the whole path $q[W]b$ lies in $\shadz(q)$, which forces $b \in \shadz(q)$ and contradicts the assumption. 
It follows that $e_L^+ \preccurlyeq e \preccurlyeq e_R^+$ in the $q$-ordering, and so, by \ref{items:leaving_regions:x}, $e$ lies in $\calR$, as desired.
Next, assume that $w$ is strictly on the left side (the case, where $w$ is strictly on the right side is symmetric). 
The path $\gamma_L$ is left of $W_L(b)$, which is equal to or left of the path $x_0[W_L(u)]w[W]b$.
Since in all three paths the same edge, namely $e_L^-$, precedes $w$, we have $e_L^+ \preccurlyeq e \prec e_L^-$ in the $w$-ordering, and so, by \ref{items:leaving_regions:left}, $e$ lies in $\calR$ as desired.
This ends the proof of the right-to-left implication in~\ref{prop:b_relative_to_region:item:inside}.

Finally, we prove the right-to-left implication in \ref{prop:b_relative_to_region:item:outside}.
That is, if $W_L(b)$ is left of $\gamma_L$ or $W_R(b)$ is right of $\gamma_R$, then $b \notin \calR$.
We will prove that $W_L(b)$ left of $\gamma_L$ implies $b \notin \calR$.
The proof that $W_R(b)$ right of $\gamma_R$ implies $b \notin \calR$ is symmetric.

Assume that $W_L(b)$ is left of $\gamma_L$.
Let $w = \gce(W_L(b),\gamma_L)$ and let $e = ww'$ be the edge following $w$ in $W_L(b)$.
We claim that the interior of $e$ is disjoint from $\calR$.
Let $e_L^-,e_L^+$ be the edges immediately preceding and following $w$ in $\gamma_L$ (when $w = x_0$, we set $e_L^- = e_{-\infty}$).
In the case where $w < q$ in $P$, we have $w \in \shadz(q)$ and $w \neq q$.
Thus, by~\Cref{prop:shad-disjoint-from-calR}, $w \notin \calR$, hence, the interior of $e$ is disjoint from $\calR$ as desired.
Next, assume that $q \leq w$ in $P$.
Since $W_L(b)$ is left of $\gamma_L$, we have $e_L^- \prec e \prec e_L^+$ in the $q$-ordering.
Therefore, by \ref{items:leaving_regions:x} or \ref{items:leaving_regions:left}, the interior of $e$ is disjoint from $\calR$.
However, if $b \in \calR$, then by~\Cref{prop:path_does_not_leave_regions}, $w[W_L(b)]b$ lies in $\calR$, and so, $e$ lies in $\calR$.
This shows that $b \notin \calR$ as desired.
% and let $x$ be the element of $w'[W_L(b)]b$ on the boundary of $\calR$.
% Note that by \cref{prop:Tleft-and-Tright}.\ref{prop:W-consistent:left}, $w'[W_L(b)]b$ is disjoint from $W_L(u)$.
% Moreover, by definition of exposed paths, $x$ has to be an element of $W_R(v)$.
% However, the path $x_0[W_L(u)]w[W_L(b)]x[W_R(v)]v$ is left of $W_L(u)$, which is left of $W_L(v)$, which is a contradiction, and so, $b \notin \calR$, as desired.
\end{proof}

For future reference, we state the following useful corollary.

% \begin{corollary}\label{cor:b-in-region}
%     % Let $a \in A$, $u,v \in Z(a)$ with $u$ left of $v$, let $U$ be an exposed  witnessing path from $a$ to $u$, let $V$ be an exposed witnessing path from $a$ to $v$, and let $\calR=\calR(a,u,v,U,V)$.
%     % Let $m$ and $q$ be respectively the upper-min and the lower-min of $\calR$. 
%     % Let $\gamma_L = x_0[W_L(u)]u[U]m$ and $\gamma_R = x_0[W_R(v)]v[V]m$.
%     Let $(a,u,v,U,V,\calR,q,m,\gamma_L,\gamma_R)$ be a region tuple and let $b \in B$ with $b \in \calR$. 
%     \begin{enumerate}
%         \myitem{(L)} $W_L(b)$ is not left of $W_L(u)$. \label{cor:b-in-region:left}
%         \myitem{(R)} $W_R(b)$ is not right of $W_R(v)$.\label{cor:b-in-region:right}
%     \end{enumerate}
% \end{corollary}

\begin{corollary}
    \label{cor:sandwitch_b_in_region}
    %Let $a \in A$, $u,v \in Z(a)$ with $u$ left of $v$, let $U$ be an exposed  witnessing path from $a$ to $u$, let $V$ be an exposed witnessing path from $a$ to $v$, and let $\calR=\calR(a,u,v,U,V)$.
    Let $(a,u,v,U,V,\calR,q,m,\gamma_L,\gamma_R)$ be a region tuple and let $b \in B$.
\begin{enumerate}
    \item If $u$ is left of $b$ and $b$ is left of $v$, then, $b \in \Int \calR$. \label{cor:sandwitch_b_in_region:int}
    \item If $b$ is left of $u$ or $v$ is left of $b$, then $b \notin \calR$.\label{cor:sandwitch_b_in_region:out}
\end{enumerate}
\end{corollary}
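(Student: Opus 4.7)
The plan is to reduce both parts to \Cref{prop:b_relative_to_region} by translating ``left of $u$'' and ``right of $v$'' into the comparisons with $\gamma_L$ and $\gamma_R$ required by that proposition, and by verifying its side hypothesis $b \notin \shadz(q) \setminus \{q\}$.

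First I would record two preliminary observations. The prefix observation: $W_L(u)$ is a prefix of $\gamma_L$, so for any witnessing path $W$ starting at $x_0$ whose greatest common prefix with $W_L(u)$ ends strictly before $u$, the comparison of $W$ with $\gamma_L$ in the $(x_0,e_{-\infty})$-ordering matches the comparison of $W$ with $W_L(u)$; the symmetric statement holds for $\gamma_R$ and $W_R(v)$. The shadow observation: since $q$ lies on both $W_L(u)$ and $W_R(v)$, we have $q \leq u$ and $q \leq v$ in $P$, so \Cref{prop:comparability_implies_shadow_containment} gives $\shadz(q) \subset \shadz(u) \cap \shadz(v)$.

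For part \ref{cor:sandwitch_b_in_region:int}, assume $u$ is left of $b$ and $b$ is left of $v$. Then $W_L(u)$ is left of $W_L(b)$ and their common prefix ends strictly before $u$, so by the prefix observation $\gamma_L$ is left of $W_L(b)$, i.e., $W_L(b)$ is right of $\gamma_L$. Symmetrically $W_R(b)$ is left of $\gamma_R$. Since ``$u$ left of $b$'' forces $b \notin \shadz(u)$, the shadow observation yields $b \notin \shadz(q)$, so a fortiori $b \notin \shadz(q) \setminus \{q\}$. Applying \Cref{prop:b_relative_to_region}.\ref{prop:b_relative_to_region:item:inside} gives $b \in \Int \calR$.

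For part \ref{cor:sandwitch_b_in_region:out}, I would handle the two subcases symmetrically. If $b$ is left of $u$, then $W_L(b)$ is left of $W_L(u)$, and applying the prefix observation gives $W_L(b)$ left of $\gamma_L$; moreover $b \notin \shadz(u) \supseteq \shadz(q)$ yields $b \notin \shadz(q) \setminus \{q\}$, so \Cref{prop:b_relative_to_region}.\ref{prop:b_relative_to_region:item:outside} gives $b \notin \calR$. If $v$ is left of $b$, then $W_R(v)$ is left of $W_R(b)$, hence $W_R(b)$ is right of $\gamma_R$, and $b \notin \shadz(v) \supseteq \shadz(q)$ completes the same argument. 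There is no real obstacle here: the only point requiring care is the side hypothesis of \Cref{prop:b_relative_to_region}, and it follows immediately from the fact that being ``left of'' another element of $B$ rules out membership in the corresponding shadow, combined with $\shadz(q) \subset \shadz(u) \cap \shadz(v)$.
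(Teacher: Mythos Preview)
Your proposal is correct and follows essentially the same approach as the paper's proof: both reduce directly to \Cref{prop:b_relative_to_region} after checking $b \notin \shadz(q)$ via $\shadz(q) \subset \shadz(u) \cap \shadz(v)$ and translating the left/right relations on $b$ into comparisons with $\gamma_L,\gamma_R$ using that $W_L(u)$ and $W_R(v)$ are prefixes of $\gamma_L$ and $\gamma_R$. Your ``prefix observation'' just makes explicit a step the paper leaves implicit.
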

\begin{proof}
    % Let $m$ and $q$ be respectively the upper-min and the lower-min of $\calR$. 
    % Let $\gamma_L = x_0[W_L(u)]u[U]m$ and $\gamma_R = x_0[W_R(v)]v[V]m$.
    Since $q < u$ in $P$, we have $\shadz(q) \subset \shadz(u)$ (by \cref{prop:comparability_implies_shadow_containment}).
    Assume that $u$ is left of $b$.
    In particular, $b \notin \shadz(u)$, and so, $b \notin \shadz(q)$. 
    Since $W_L(u)$ is left of $W_L(b)$, we have that $\gamma_L$ is left of $W_L(b)$. 
    Since $b$ is left of $v$, we have that $W_R(b)$ is left of $\gamma_R$.
    Therefore, by \cref{prop:b_relative_to_region}.\ref{prop:b_relative_to_region:item:inside}, $b \in \Int \calR$, which completes the proof of~\ref{cor:sandwitch_b_in_region:int}.

    Similarly as above, if $b$ is left of $u$, $b \notin \shadz(u)$, and $b \notin \shadz(q)$; and if $v$ is left of $b$, then $b \notin \shadz(v)$, and $b \notin \shadz(q)$.
    If $b$ is left of $u$, then $W_L(b)$ is left of $\gamma_L$, and $b \notin \calR$ by~\cref{prop:b_relative_to_region}.\ref{prop:b_relative_to_region:item:outside}.
    Symmetrically, if $b$ is right of $v$, then $W_R(b)$ is right of $\gamma_R$, and $b \notin \calR$ by~\cref{prop:b_relative_to_region}.\ref{prop:b_relative_to_region:item:outside}.
    This gives~\ref{cor:sandwitch_b_in_region:out}.
\end{proof}

We conclude this section with a technical statement that will be used repetitively in the proof of~\Cref{lem:coloring} (Coloring Lemma).

\begin{proposition}
\label{prop:a-in-some-region}
    Let $(a,b),(a',b') \in I$ and $d \in B$ with $b$ left of $d$ and $d$ left of $b'$.
    \begin{enumerate}
        \myitem{(L)} \label{prop:a-in-some-region:L} Assume that $a < d$, $a \parallel b'$, and $a' \parallel d$ in $P$.
        Let $z' \in Z(a')$ with $z' \leq b$ in $P$.
        Let $U$ and $V$ be exposed witnessing paths from $a'$ to $z'$ and from $a'$ to $z_R(a')$ in $P$, respectively.
        Then, $z'$ is left of $z_R(a')$ and
            \[a \in \Int\calR(a',z',z_R(a'),U,V).\]
        \myitem{(R)} \label{prop:a-in-some-region:R} Assume that $a' < d$, $a' \parallel b$, and $a \parallel d$ in $P$.
        Let $z \in Z(a)$ with $z \leq b'$ in $P$.
        Let $U$ and $V$ be exposed witnessing paths from $a$ to $z_L(a)$ and from $a$ to $z$ in $P$, respectively.
        Then, $z_L(a)$ is left of $z$ and
            \[a' \in \Int\calR(a,z_L(a),z,U,V).\]
    \end{enumerate}
\end{proposition}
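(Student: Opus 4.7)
I write up (L); part (R) follows by a completely symmetric left--right argument (swap the roles of $a, a'$ and $b, b'$, and replace $W_L/M_L/z_L$ by $W_R/M_R/z_R$). Throughout, set $\calR = \calR(a', z', z_R(a'), U, V)$ with boundary paths $\gamma_L, \gamma_R$ and common points $q$ (lower-min), $m$ (upper-min).

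For the first assertion, that $z'$ is left of $z_R(a')$, I chain ``left of'' relations. Transitivity (\cref{prop:left_porders_bs}) on $b$ left of $d$ and $d$ left of $b'$ gives $b$ left of $b'$. Since $(a',b') \in I$ and $a' < z' \leq b$, \cref{cor:left_is_preserved_from_y_to_z}(L) yields $z'$ left of $b'$. By \cref{prop:z_L_b_z_R}, $b'$ is left of $z_R(a')$, and one more application of transitivity finishes.

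For the second assertion, I first show $d \in \Int \calR$ by applying \cref{cor:sandwitch_b_in_region}(i) with $(u,v) = (z', z_R(a'))$ and $b = d$. The sandwich requires ``$z'$ left of $d$'' and ``$d$ left of $z_R(a')$''. The right inequality is transitivity on $d$ left of $b'$ left of $z_R(a')$. The left inequality comes from \cref{prop:left_is_preserved_from_y_to_z}(L) with the ``$a$''-role played by $a'$, ``$b$''-role by $d$, $y = b$, $y' = z'$: its hypotheses ($a' \parallel d$; $a' \notin \shadz(d)$; $b, z' \in Y(a')$; $z' \leq b$; $b$ left of $d$) all hold either by direct assumption, because $z' \in Z(a') \subseteq Y(a')$ with $a' < z' \leq b$, or via \cref{prop:d_not_in_shadow_when_dangerous} applied to the dangerous pair $(a',b') \in I$ together with $b$ left of $b'$ and $d$ left of $b'$.

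Next, to show $a \in \Int \calR$, I first check $a \notin \partial \calR$: $a \notin B$ rules out $a$ on the $B$-parts $W_L(z'), W_R(z_R(a'))$; if $a$ lay on $U$ then $a' \leq a \leq z' \leq b$ would contradict $a \parallel b$, and if $a$ lay on $V$ then $a' \leq a < d$ would contradict $a' \parallel d$. Assuming for contradiction that $a$ lies in the exterior of $\calR$, any witnessing path $W$ from $a$ to $d$ must cross $\partial \calR$; let $u$ be the vertex of $W$ on $\partial \calR$ closest to $d$, so $a \leq u \leq d$. I distinguish cases on the location of $u$: if $u \in \gamma_L$ (on $W_L(z')$ or on $U$), then $u \leq z' \leq b$ so $a \leq b$, contradicting $a \parallel b$; if $u \in \gamma_R \setminus B$ (so $u$ lies on $V$), then $a' \leq u \leq d$ contradicts $a' \parallel d$; and if $u \in \gamma_R \cap B$ (so $u$ lies on $W_R(z_R(a'))$ above $q$) with $a \parallel z_R(a')$, then $a \leq u \leq z_R(a')$ together with \cref{obs:equivalence_for_shadows} forces $a \in \shadz(z_R(a'))$, contradicting $a \notin \shadz(z_R(a'))$ (itself obtained from \cref{prop:dangerous-implies-in-Y} via $b$ left of $z_R(a')$). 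The remaining sub-case, $u \in \gamma_R \cap B$ with $a < z_R(a')$ in $P$, is the main technical obstacle; I plan to resolve it by selecting $z \in Z(a)$ with $z \leq u$ and analyzing its position relative to $z_R(a)$ and $z_R(a')$ using \cref{prop:order_on_Ms_induces_order_in_zs}(R), \cref{prop:z_L_b_z_R} (which places $z_R(a)$ to the right of $b$), and the $x_0$-consistency of rightmost paths (\cref{prop:W-consistent}(R)), ultimately forcing a contradiction with one of the dangerous witnesses \ref{items:dangerous_b''} for $(a,b)$ or with the maximality condition \ref{item:instance:maximal}.
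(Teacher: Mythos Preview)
Your chain for ``$z'$ left of $z_R(a')$'' is correct, and showing $d \in \Int\calR$ via \cref{cor:sandwitch_b_in_region} is fine. The problem is the final subcase of your case analysis: when the crossing point $u$ lies on $W_R(z_R(a'))$ and $a < z_R(a')$ in $P$. This case genuinely can occur, and your sketched resolution is not convincing. Invoking $z_R(a)$, the dangerous witness \ref{items:dangerous_b''}, or the maximality axiom \ref{item:instance:maximal} is a red flag --- none of these are needed, and it is unclear how they would force a contradiction here.

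The paper's approach avoids the case split entirely by a single change: instead of tracking a witnessing path from $a$ to $d$, it first picks $z \in Z(a)$ with $z \le d$ (such $z$ exists because $d \in Y(a)$, which follows from \cref{prop:dangerous-implies-in-Y} using $b$ left of $d$ and $a < d$). One then shows $z' \text{ left of } z$ and $z \text{ left of } z_R(a')$ by the same kind of chaining you already used (first $z'$ left of $d$, then \cref{prop:left_is_preserved_from_y_to_z} with $a \parallel z'$ and $a \notin \shadz(z')$ gives $z'$ left of $z$; and $z$ left of $b'$ left of $z_R(a')$). By \cref{cor:sandwitch_b_in_region}, $z \in \Int\calR$. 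Now take an \emph{exposed} witnessing path $W$ from $a$ to $z$. If $a \notin \Int\calR$, then $W$ crosses $\partial\calR$; but since $W$ is exposed and its only $B$-vertex $z$ lies in $\Int\calR$, the crossing point $u$ is not in $B$ and hence lies on $U \cup V$. Either way $a' \le u < z \le d$ in $P$, contradicting $a' \parallel d$. No boundary case analysis is needed.
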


    \begin{figure}[tp]
      \begin{center}
        \includegraphics{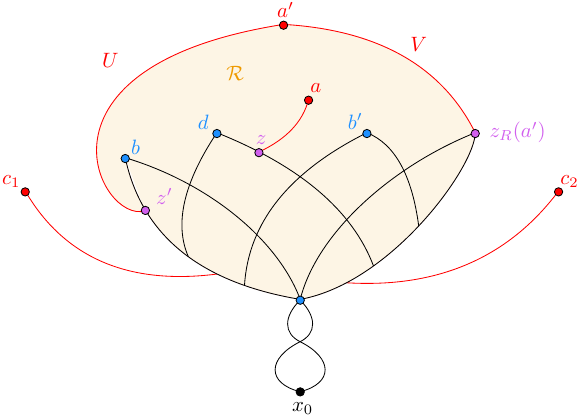}
      \end{center}
      \caption{
      An illustration of the statement of~\Cref{prop:a-in-some-region}.\ref{prop:a-in-some-region:L}.
      For simplicity, we do not always mark such elements.
      Note that the assumptions $a \parallel b$, $a \parallel b'$, and $a' \parallel d$ in $P$ are necessary.
      The necessity of the first two is depicted with elements $c_1$ and $c_2$ respectively.
      For the last, note that if $a' \leq d$ in $P$, and element from outside the region could send a comparability to $d$ through $V$.
      }
      \label{fig:prop-a-in-some-region}
    \end{figure}

\begin{proof}
    We prove only~\ref{prop:a-in-some-region:L} as the proof of~\ref{prop:a-in-some-region:R} is symmetric.
    See~\Cref{fig:prop-a-in-some-region}.
    Recall that $b$ is left of $d$, $d$ is left of $b'$, and the pairs $(a,b)$ and $(a',b')$ are dangerous (by~\ref{item:instance:dangerous}).
    Therefore, by~\Cref{prop:d_not_in_shadow_when_dangerous} (and transitivity -- \Cref{prop:left_porders_bs}), 
    \[a \notin \shadz(b')\text{, } a \notin \shadz(d)\text{, } a' \notin \shadz(b)\text{, and }a' \notin \shadz(d).\]
    In particular, $d \in Y(a)$, and so, we can fix $z \in Z(a)$ such that $z \leq d$ in $P$.
    Altogether, we have
    \[z',b \in Y(a') \text{ and } z,d \in Y(a).\]
    Since $\shadz(z') \subset \shadz(b)$ (by~\Cref{prop:comparability_implies_shadow_containment}), and $a \notin \shadz(b)$ (by~\ref{item:instance:not_in_shadow}), we also obtain
    \[a \notin \shadz(z').\]
    
    \begin{claim}\label{claim:z'-d-b'-left}
    $z'$ is left of $d$, $z'$ is left of $z$, and $z$ is left of $b'$.    
    \end{claim}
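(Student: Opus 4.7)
The plan is to handle the three assertions of the claim in the order (i), (iii), (ii), where the first two are quick applications of \Cref{prop:left_is_preserved_from_y_to_z}, and (ii) requires a separate contradiction argument.

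First I would collect the non-shadow facts already listed in the surrounding proof: since the pairs $(a,b)$ and $(a',b')$ are dangerous (by~\ref{item:instance:dangerous}) and $b,d,b'$ form a left-to-left-to-left chain, \Cref{prop:d_not_in_shadow_when_dangerous} (applied together with transitivity from \Cref{prop:left_porders_bs}) gives $a \notin \shadz(d), a \notin \shadz(b'), a' \notin \shadz(b), a' \notin \shadz(d)$. Combined with the hypotheses $a < d$ and $a' < z' \le b$ (so $a' < b$), this places $b, z' \in Y(a')$ and $z, d \in Y(a)$.

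For (i), I apply \Cref{prop:left_is_preserved_from_y_to_z}.\ref{prop:left_is_preserved_from_y_to_z:left} with the element playing the role of $a$ taken as $a'$, the element playing the role of $b$ taken as $d$, $y=b$, $y'=z'$: the hypotheses $a'\parallel d$, $a'\notin\shadz(d)$, $z'\le b$, and $b$ left of $d$ are all available, so $z'$ is left of $d$. For (iii), I apply the same proposition symmetrically with $a,b',d,z$: from $a\parallel b'$, $a\notin\shadz(b')$, $z\le d$, and $d$ left of $b'$, I get that $z$ is left of $b'$.

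The main obstacle is (ii). Here the plan is to use \Cref{prop:in_one_shad_but_not_other_then_left}.\ref{prop:in_one_shad_but_not_other_then_left:right} with $b_1=z'$, $b_2=d$, and the auxiliary element taken to be $z$: the proposition then requires $z'$ left of $d$ (given by (i)), $z\in\shadz(d)$ (immediate from $z\le d$ and \Cref{prop:comparability_implies_shadow_containment}), and $z\notin\shadz(z')$. Only the last is nontrivial, and the whole of the argument rests on it. I would prove $z\notin\shadz(z')$ by contradiction. If $z\in\shadz(z')$, then $z\not\le z'$ — since otherwise $a < z \le z' \le b$ would contradict $(a,b)\in I$ — so $z$ lies in the interior of $\shadz(z')$. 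Then, using \Cref{obs:equivalence_for_shadows} applied to $a\le z$ (and using that $a\not\le z'$, which follows from $z' \le b$ and $a\parallel b$), I conclude that $a$ lies in $\Int\shadz(z')$. Since $\shadz(z')\subseteq\shadz(b)$ by \Cref{prop:comparability_implies_shadow_containment} (from $z'\le b$), this forces $a\in\shadz(b)$, contradicting property~\ref{item:instance:not_in_shadow}. With $z\notin\shadz(z')$ in hand, \Cref{prop:in_one_shad_but_not_other_then_left}.\ref{prop:in_one_shad_but_not_other_then_left:right} yields that $z$ is right of $z'$, which is exactly the remaining assertion $z'$ left of $z$.
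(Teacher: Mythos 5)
Your proof is correct and follows essentially the same route as the paper: the same preliminary shadow/dangerousness facts, and the same two applications of \Cref{prop:left_is_preserved_from_y_to_z} (with the same instantiations) for the assertions that $z'$ is left of $d$ and that $z$ is left of $b'$. For the middle assertion the paper just applies \Cref{prop:left_is_preserved_from_y_to_z}.\ref{prop:left_is_preserved_from_y_to_z:right} with $b:=z'$, $y:=d$, $y':=z$, using the already-recorded fact $a\notin\shadz(z')$; your combination of \Cref{obs:equivalence_for_shadows}, \Cref{prop:comparability_implies_shadow_containment}, and \Cref{prop:in_one_shad_but_not_other_then_left}.\ref{prop:in_one_shad_but_not_other_then_left:right} is exactly that lemma's proof unpacked by hand, so your contradiction argument is an inlined version of the same step rather than a genuinely different route.
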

    \begin{proofclaim}
        First, we prove that $z'$ is left of $d$.
        By assumption, $a' \parallel d$ and $a' < z' \leq b$ in $P$.
        We have already proved that $a' \notin \shadz(d)$ and $z',b \in Y(a')$.
        Finally, $b$ is left of $d$, hence, by~\Cref{prop:left_is_preserved_from_y_to_z}.\ref{prop:left_is_preserved_from_y_to_z:left}, $z'$ is left of $d$.

        Next, we prove that $z'$ is left of $z$.
        Since $a \parallel b$ in $P$, we have $a \parallel z'$ in $P$.
        Moreover, $a < z \leq d$ in $P$.
        We have already proved that $a \notin \shadz(z')$, $z,d \in Y(a)$, and that $z'$ is left of $d$.
        %Since $(a,b) \in I$, by~\ref{item:instance:not_in_shadow}, $a \notin \shadz(b)$, 
        %and we have already proved that $z,d \in Y(a)$ and $z'$ is left of $d$.
        Therefore, by~\Cref{prop:left_is_preserved_from_y_to_z}.\ref{prop:left_is_preserved_from_y_to_z:right}, $z'$ is left of $z$.

        Finally, we prove that $z$ is left of $b'$.
        By assumption, $a \parallel b'$ and $a < z \leq d$ in $P$.
        We have already proved that $a \notin \shadz(b')$ and $z,d \in Y(a)$.
        Finally, $d$ is left of $b'$, hence, by~\Cref{prop:left_is_preserved_from_y_to_z}.\ref{prop:left_is_preserved_from_y_to_z:left}, $z$ is left of $b'$.
        This completes the proof of the claim.
    \end{proofclaim}
    Since $b'$ is left of $z_R(a')$ (by~\Cref{prop:z_L_b_z_R}), \Cref{claim:z'-d-b'-left} yields that $z'$ is left of $z$ and $z$ is left of $z_R(a')$. 
    In particular, $z'$ is left of $z_R(a')$, and so, we can define $\calR = \calR(a',z',z_R(a'),U,V)$.
    Moreover, by~\Cref{cor:sandwitch_b_in_region}.\ref{cor:sandwitch_b_in_region:int}, $z \in \Int \calR$.
    Let $W$ be an exposed witnessing path from $a$ to $z$.
    Suppose to the contrary that $a \notin \Int \calR$.
    Then, $W$ intersects $\partial \calR$ in an element not in $B$, say $u$.
    It follows that $a' \leq u < z \leq d$ in $P$, which is a contradiction.
    This ends the proof.
\end{proof}

\subsection{Regular sequences}\label{ssec:regular}
The goal of the next two subsections is to classify all alternating cycles of size $2$ with pairs in $I$. 
Eventually, we will split them into four types, 
see~\cref{cor:characterize_regular_sacs}.

We say that a sequence $((a_1,b_1),\dots,(a_k,b_k))$ is \emph{regular} if $(a_i,b_i) \in I$ for every $i \in [k]$ and $b_i$ is left of $b_{i+1}$ for every $i \in [k-1]$. 
Since the \q{left of} relation is transitive, see~\cref{prop:left_porders_bs}, we note that in a regular sequence $((a_1,b_1),\dots,(a_k,b_k))$, the elements $b_1,\dots,b_k$ are linearly ordered by the \q{left of} relation.
Observe also that a subsequence of a regular sequence is regular.
% Moreover, if $((a_1,b_1),(a_2,b_2))$ and $((a_2,b_2),(a_3,b_3))$ are regular then the triple $((a_1,b_1),(a_2,b_2),(a_3,b_3))$ is also regular.
% \jedrzej{Check if the last sentence is necessary.}

We list four key properties that a regular sequence $((a_1,b_1),(a_2,b_2))$ may or may not satisfy. 
The first letter of the name of a property indicates if it concerns leftmost paths ($L$) or rightmost paths ($R$).
The ordering of the numbers $1$ and $2$ indicates the relative positions of $a_1$ and $a_2$ in the drawing, reading from left to right.
See \Cref{fig:types_of_2_sequences}. 
Properties labeled without a star are simple, while those with a star are more involved (and in fact imply respective simpler statements, see \Cref{obs:PL2_PR2_basic}).

% Finally, we have two types of properties: weak ones (\ref{L12} and \ref{R12}) and strong ones (\ref{L21} and \ref{R21}).
% Note that one could define 8 properties following this scheme.
% \q{Weak} and \q{strong} can be justified in the following way: \ref{L21} implies the property that would be called ($L$21) if it were defined (as we note in~\Cref{obs:PL2_PR2_basic}).

\begin{enumerateNumLRinout}
    %\myitem{$(L\mathrm{\text{-}in})$}
    \myitem{$(L12)$} 
    %\myitem{$(2\mathrm{\text{ }in\text{ }}1)$} 
    $M_L(a_1)$ is left of $M_L(a_2)$. \label{RPP1} \label{Lin} \label{L12}
             \smallskip\smallskip
    %\myitem{$(L\mathrm{\text{-}out})$} 
    \myitem{$(L21^\star)$} 
    \label{RPP2} \label{Lout} \label{L21}
    %\myitem{$(2\mathrm{\text{ }out\text{ }}1)$} 
    There exist $u \in Z(a_2)$ and an exposed witnessing path $U$ from $a_2$ to $u$ in $P$ such that
        \begin{itemize}
            \item $u$ is an element of both $W_L(z_L(a_1))$ and $W_L(b_1)$,
            \item $x_0[W_L(z_L(a_1))]u[U]a_2$ is left of $W_L(z_L(a_1))$.
        \end{itemize}
\end{enumerateNumLRinout}
\begin{enumerateNumLRinout}
    %\myitem{$(R\mathrm{\text{-}in})$}
    \myitem{$(R12)$} 
    %\myitem{$(1\mathrm{\text{ }in\text{ }}2)$} 
    $M_R(a_2)$ is right of $M_R(a_1)$. \label{RPP3} \label{Rin} \label{R12}
             \smallskip\smallskip
    %\myitem{$(R\mathrm{\text{-}out})$} 
    \myitem{$(R21^\star)$} \label{RPP4} \label{Rout} \label{R21}
    %\myitem{$(1\mathrm{\text{ }out\text{ }}2)$}
    There exist $v \in Z(a_1)$ and an exposed witnessing path $V$ from $a_1$ to $v$ in $P$ such that 
        \begin{itemize}
            \item $v$ is an element of both $W_R(z_R(a_2))$ and $W_R(b_2)$,
            \item $x_0[W_R(z_R(a_2))]v[V]a_1$ is right of $W_R(z_R(a_2))$.
        \end{itemize}
\end{enumerateNumLRinout}

\begin{figure}[tp]
  \begin{center}
    \includegraphics{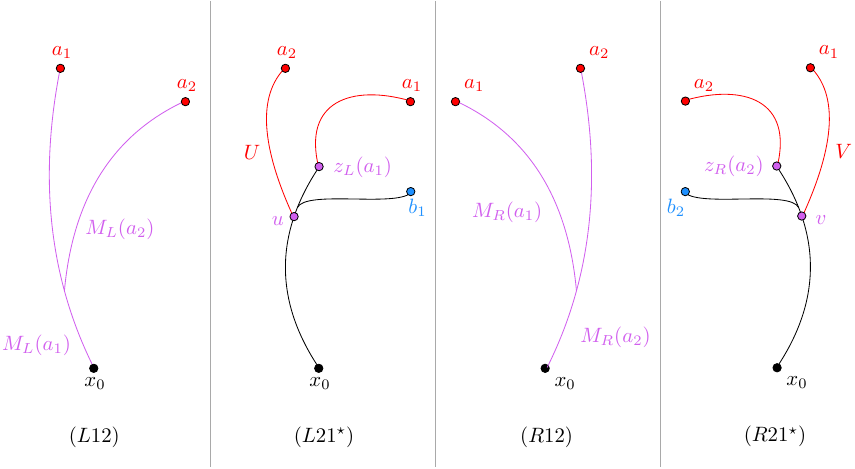}
  \end{center}
  \caption{
  In each part of the figure, a regular sequence $((a_1,b_1),(a_2,b_2))$ satisfies one of the four properties.
%    The lighter curves indicate possible attachments of $a_1$ and $a_2$.
  }
  \label{fig:types_of_2_sequences}
\end{figure}

% \begin{enumerateNumRPP}
%     \item $M_L(a_1)$ is left of $M_L(a_2)$. \label{RPP1}
%     \item There exist $v_2 \in Z(a_2)$ and an exposed path $W_2$ from $a_2$ to $v_2$ such that \label{RPP2}
%         \begin{itemize}
%             \item $v_2$ is an element of both $W_L(z_L(a_1))$ and $W_L(b_1)$,
%             \item $x_0[W_L(z_L(a_1))]v_2[W_2]a_2$ is left of $W_L(z_L(a_1))$.
%         \end{itemize}
%             \smallskip\smallskip
%     \item $M_R(a_2)$ is right of $M_R(a_1)$. \label{RPP3}
%     \item There exist $v_1 \in Z(a_1)$ and an exposed path $W_1$ from $a_1$ to $v_1$ such that \label{RPP4}
%         \begin{itemize}
%             \item $v_1$ is an element of both $W_R(z_R(a_2))$ and $W_R(b_2)$,
%             \item $x_0[W_R(z_R(a_2))]v_1[W_1]a_1$ is left of $W_R(z_R(a_2))$.
%         \end{itemize}
% \end{enumerateNumRPP}

If a regular sequence $ \sigma = ((a_1,b_1),(a_2,b_2))$ satisfies \ref{RPP2}, then for all $u \in Z(a_2)$ and $U$ as in \ref{RPP2}, we say that $(u,U)$ \emph{witnesses} \ref{RPP2} for $\sigma$.
Sometimes, we omit $U$ and just say that $u$ witnesses \ref{RPP2} for $\sigma$.
Symmetrically, if $\sigma$ satisfies \ref{RPP4}, then for all $v \in Z(a_1)$ and $V$ as in \ref{RPP4}, we say that $(v,V)$ \emph{witnesses} \ref{RPP4} for $\sigma$.

Note that for a regular sequence $((a_1,b_1),(a_2,b_2))$, the satisfaction of each of the properties does not depend on both $b_1$ and $b_2$, i.e.\ for each of the properties, one or both of these elements can be replaced by other elements in $B$ so that the sequence is still regular, and then the property is still satisfied.
More precisely, we have the following.

% Note that each of the four properties does not depend on all the elements $a_1,a_2,b_1,b_2$, which is emphasized in the observation below.

\begin{obs}\label{obs:switch_bs_in_RPP}
  Let $\sigma=((a_1,b_1),(a_2,b_2))$ be regular, and let $b_1',b_2' \in B$.
  \begin{enumerate}
    \item If $\sigma$ satisfies \ref{Lin} and $\sigma'=((a_1,b'_1),(a_2,b'_2))$ is regular, then $\sigma'$ satisfies \ref{Lin}. \label{obs:switch_bs_in_RPP:2in1}
    \item If $\sigma$ satisfies \ref{Lout} and $\sigma'=((a_1,b_1),(a_2,b'_2))$ is regular, then $\sigma'$ satisfies \ref{Lout}. \label{obs:switch_bs_in_RPP:2out1}
    \item If $\sigma$ satisfies \ref{Rin} and $\sigma'=((a_1,b'_1),(a_2,b'_2))$ is regular, then $\sigma'$ satisfies \ref{Rin}.  \label{obs:switch_bs_in_RPP:1in2}
    \item If $\sigma$ satisfies \ref{Rout} and $\sigma'=((a_1,b'_1),(a_2,b_2))$ is regular, then $\sigma'$ satisfies \ref{Lout}.  \label{obs:switch_bs_in_RPP:1out2}
  \end{enumerate}
\end{obs}

In some sense, \ref{RPP2} complements \ref{RPP1} as if \ref{RPP2} holds then \ref{RPP1} cannot hold. The same applies to \ref{RPP4} and \ref{RPP3}. 
We highlight this in the next observation.

\begin{obs}\label{obs:PL2_PR2_basic}
    Let $\sigma=((a_1,b_1),(a_2,b_2))$ be regular. 
    \begin{enumerate}
        \myitem{$(L)$} If $\sigma$ satisfies \ref{RPP2}, then $a_2 < b_1$ in $P$ and $M_L(a_2)$ is left of $M_L(a_1)$. \label{obs:PL2_PR2_basic:left}
        \myitem{$(R)$} If $\sigma$ satisfies \ref{RPP4}, then $a_1 < b_2$ in $P$ and $M_R(a_1)$ is right of $M_R(a_2)$. \label{obs:PL2_PR2_basic:right}
    \end{enumerate}
\end{obs}

%Additionally, \cref{obs:PL2_PR2_basic} implies that a regular sequence of length two can not satisfy \ref{RPP1} and \ref{RPP2} simultaneously, as well as it can not satisfy \ref{RPP3} and \ref{RPP4} simultaneously.
%Even more can be inferred for regular sequences satisfying \ref{RPP2} or \ref{RPP4}.

The following proposition shows that \ref{Lout} (and \ref{Rout}) implies a stronger property.
\begin{proposition} \label{prop:PL2_PR2_more}
    Let $\sigma=((a_1,b_1),(a_2,b_2))$ be regular. 
    \begin{enumerate}
        \myitem{$(L)$} 
        If $u\in Z(a_2)$ witnesses \ref{RPP2} for $\sigma$,  
        %satisfies property \ref{RPP2} witnessed by $v \in Z(a_2)$, 
        then for every $z\in Z(a_1)$ such that $z$ is left of $b_1$, the element $u$ lies in $W_L(z)$. \label{prop:PL2_PR2_more:left}
        \myitem{$(R)$} 
        If $v\in Z(a_1)$ witnesses \ref{RPP4} for $\sigma$,
        %satisfies property \ref{RPP4} witnessed by $w \in Z(a_1)$, 
        then for every $z\in Z(a_2)$ such that $z$ is right of~$b_2$, the element $v$ lies in $W_R(z)$. \label{prop:PL2_PR2_more:right}
    \end{enumerate}
\end{proposition}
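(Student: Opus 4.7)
The plan is to prove (L); statement (R) follows by a symmetric argument with $W_R$, $z_R$, and ``right'' replacing $W_L$, $z_L$, and ``left'', and the indices $1$ and $2$ swapped. Fix a witness $u$ for \ref{RPP2} and $z \in Z(a_1)$ left of $b_1$. The goal is to establish $u \in W_L(z)$; recall that by hypothesis $u$ lies on both $W_L(z_L(a_1))$ and $W_L(b_1)$, so $W_L(u)$ is a prefix of each of these two paths.

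By \Cref{prop:W-consistent}.\ref{prop:W-consistent:left}, the paths $W_L(u)$ and $W_L(z)$ are $x_0$-consistent. If $W_L(u)$ is a prefix of $W_L(z)$, then $u \in W_L(z)$ and we are done. If instead $W_L(z)$ is a prefix of $W_L(u)$, then $z \in W_L(u) \subseteq W_L(b_1)$, yielding $z \leq b_1$ in $P$, which contradicts $z$ being left of $b_1$ since left pairs are incomparable by \Cref{prop:shortcuts}. The main work is in the remaining case, where $W_L(u)$ and $W_L(z)$ diverge at $w := \gce(W_L(u), W_L(z))$ with $w < u$ and $w < z$ in $P$.

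In this case, let $e_u$ and $e_z$ denote the outgoing edges at $w$ of $W_L(u)$ and $W_L(z)$ respectively, and let $e^-$ be the edge of $W_L(u)$ entering $w$, with the convention $e^- = e_{-\infty}$ if $w = x_0$. Since $W_L(u)$ is a prefix of both $W_L(z_L(a_1))$ and $W_L(b_1)$, each of these two paths continues along $e_u$ at $w$. The hypothesis that $z$ is left of $b_1$ gives $W_L(z)$ left of $W_L(b_1)$; comparing them at their point of divergence~$w$ forces $e_z \prec e_u$ in the $(w, e^-)$-ordering. The crux is then to pick an exposed witnessing path $V$ from $a_1$ to $z$, which exists since $z \in Z(a_1)$, and form $M := x_0[W_L(z)]z[V]a_1 \in \calM(a_1)$. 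Since $M_L(a_1)$ has $W_L(z_L(a_1))$ as its initial segment in $B$, it uses $e_u$ at $w$, while $M$ uses $e_z$; as $e_z \prec e_u$ at $w$, the path $M$ is left of $M_L(a_1)$ in the $(x_0, e_{-\infty})$-ordering, contradicting the minimality of $M_L(a_1)$ in $\calM(a_1)$.
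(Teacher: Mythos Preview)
Your proof is correct and follows essentially the same strategy as the paper: both reduce to the contradiction that some $M \in \calM(a_1)$ is left of $M_L(a_1)$. The only difference is the case split: the paper compares $W_L(z_L(a_1))$ with $W_L(z)$ (four cases, one of which invokes the sandwich \Cref{prop:sandwiched_paths}), whereas you compare $W_L(u)$ with $W_L(z)$ directly (three cases), which is slightly more streamlined and avoids the sandwich argument altogether.
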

\begin{proof}
    We prove only \ref{prop:PL2_PR2_more:left}.
    The proof of \ref{prop:PL2_PR2_more:right} is symmetric.
    Assume that $\sigma$ satisfies \ref{RPP2} which is witnessed by $u \in Z(a_2)$, and let $z \in Z(a_1)$ such that $z$ is left of $b_1$.
    Recall that $u$ lies in both $W_L(z_L(a_1))$ and $W_L(b_1)$.
    We consider cases of how the paths $W_L(z_L(a_1))$ and $W_L(z)$ relate to each other.

    If $W_L(z_L(a_1))$ is a subpath of $W_L(z)$, then the assertion holds trivially as $u$ lies in $W_L(z_L(a_1))$.
    If $W_L(z)$ is a subpath of $W_L(z_L(a_1))$, then it suffices to argue that $u < z$ in $P$.
    Indeed, otherwise $a_1 < z \leq u \leq b_1$ in $P$, which is a contradiction.
    It follows that $u$ lies in $W_L(z)$.
    
    Next assume that $W_L(z_L(a_1))$ is left of $W_L(z)$.
    Since $z$ is left of $b_1$, in particular, $W_L(z)$ is left of $W_L(b_1)$.
    Again, recall that $u$ lies in both $W_L(z_L(a_1))$ and $W_L(b_1)$.
    All this allows us to apply \cref{prop:sandwiched_paths} (the considered paths are pairwise $x_0$-consistent by~\Cref{prop:W-consistent}.\ref{prop:W-consistent:left}), to obtain that $u$ indeed lies in $W_L(z)$.

    Finally, assume that $W_L(z)$ is left of $W_L(z_L(a_1))$.
    Consider the path $M = x_0[W_L(z)]z[W]a_1$, where $W$ is an exposed witnessing path from $a_1$ to $z$ in $P$.
    Clearly, $M \in \calM(a_1)$.
    Since $x_0[M_L(a_1)]z_L(a_1) = W_L(z_L(a_1))$ by \cref{prop:paths_M_consistent}, the path $M$ is left of $M_L(a_1)$, which contradicts the definition of $M_L(a_1)$, and shows that this case does not occur.
\end{proof}

Next, we show that each of the four properties is transitive.

% ----------------------------
\begin{proposition}\label{prop:transitivity_RPP}
  Let $((a_1,b_1),(a_2,b_2),(a_3,b_3))$ be regular. 
  Let $\sigma_{12} = ((a_1,b_1),(a_{2},b_{2}))$, $\sigma_{23} = ((a_2,b_2),(a_{3},b_{3}))$, and $\sigma_{13} = ((a_1,b_1),(a_{3},b_{3}))$.
  \begin{enumerate}
  \item \label{prop:item:transitivity-Lin}
  If $\sigma_{12}$ and $\sigma_{23}$ satisfy \ref{Lin}, then
  $\sigma_{13}$ satisfies \ref{Lin}.
  \item \label{prop:item:transitivity-Lout}
  If $\sigma_{12}$ and $\sigma_{23}$ satisfy \ref{Lout}, then
  $\sigma_{13}$ satisfies \ref{Lout}.
  \item \label{prop:item:transitivity-Rin}
  If $\sigma_{12}$ and $\sigma_{23}$ satisfy \ref{Rin}, then
  $\sigma_{13}$ satisfies \ref{Rin}.
  \item \label{prop:item:transitivity-Rout}
  If $\sigma_{12}$ and $\sigma_{23}$ satisfy \ref{Rout}, then
  $\sigma_{13}$ satisfies \ref{Rout}.
  \end{enumerate}
\end{proposition}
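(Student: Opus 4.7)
By symmetry (swapping left with right, $W_L$ with $W_R$, $z_L$ with $z_R$, and $M_L$ with $M_R$ throughout), items \ref{prop:item:transitivity-Rin} and \ref{prop:item:transitivity-Rout} reduce to \ref{prop:item:transitivity-Lin} and \ref{prop:item:transitivity-Lout} respectively, so I only need to treat the latter two. Item \ref{prop:item:transitivity-Lin} is immediate: \ref{Lin} asserts exactly that $M_L(a_i) \prec M_L(a_{i+1})$ in the $(x_0, e_{-\infty})$-ordering, which is a partial order by \Cref{prop:u_e_is_poset}, hence transitive.

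For \ref{prop:item:transitivity-Lout}, fix witnesses $(u_{12}, U_{12})$ of \ref{Lout} for $\sigma_{12}$ and $(u_{23}, U_{23})$ of \ref{Lout} for $\sigma_{23}$. My plan is to show that the pair $(u_{23}, U_{23})$ itself witnesses \ref{Lout} for $\sigma_{13}$. To verify the first bullet, I first observe that $u_{12} \in Z(a_2) \subseteq B$ with $a_2 < u_{12} \leq b_1$ in $P$ (the latter because $u_{12}$ lies on $W_L(b_1)$) while $b_1$ is left of $b_2$, so \Cref{cor:left_is_preserved_from_y_to_z}.\ref{cor:left_is_preserved_from_y_to_z:left} applied to $(a_2, b_2)$ gives that $u_{12}$ is left of $b_2$. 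Then \Cref{prop:PL2_PR2_more}.\ref{prop:PL2_PR2_more:left} applied to $\sigma_{23}$ with $z = u_{12} \in Z(a_2)$ yields that $u_{23}$ lies on $W_L(u_{12})$. Since by hypothesis on $\sigma_{12}$ the element $u_{12}$ lies on both $W_L(z_L(a_1))$ and $W_L(b_1)$, the path $W_L(u_{12})$ is a prefix of each, so $u_{23}$ lies on both, as the first bullet for $\sigma_{13}$ requires.

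For the second bullet, I need that $P_{13} := x_0[W_L(z_L(a_1))]u_{23}[U_{23}]a_3$ is left of $W_L(z_L(a_1))$. Because $u_{23}$ lies on both $W_L(z_L(a_1))$ and $W_L(z_L(a_2))$, the initial segments of these two paths up to $u_{23}$ both coincide with $W_L(u_{23})$, so $P_{13}$ equals the path $P_{23} := x_0[W_L(z_L(a_2))]u_{23}[U_{23}]a_3$, which \ref{Lout} for $\sigma_{23}$ guarantees is left of $W_L(z_L(a_2))$. What remains is a local comparison at $u_{23}$ of three edges: the first edge $e$ of $U_{23}$, the edge $e''$ of $W_L(z_L(a_2))$ immediately after $u_{23}$, and the edge $e'$ of $W_L(z_L(a_1))$ immediately after $u_{23}$; I already have $e$ left of $e''$ and need $e$ left of $e'$. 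Here I would invoke \Cref{obs:PL2_PR2_basic}.\ref{obs:PL2_PR2_basic:left} applied to $\sigma_{12}$ to get that $M_L(a_2)$ is left of $M_L(a_1)$; since $u_{23} \leq u_{12} < z_L(a_1)$ and $u_{23} < z_L(a_2)$ strictly (both strict inequalities forced by the fact that \ref{Lout} requires a genuine left-of divergence rather than a prefix relation), both $M_L(a_1)$ and $M_L(a_2)$ pass through $u_{23}$ inside their $W_L$-prefixes, so their first divergence occurs at $u_{23}$ or strictly above $u_{23}$. In the ``strictly above'' case $e' = e''$ and the conclusion is immediate; in the ``at $u_{23}$'' case, $M_L(a_2)$ going left at $u_{23}$ forces $e''$ left of $e'$, and chaining $e$ left of $e''$ left of $e'$ finishes the argument. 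The main obstacle of the whole proof is precisely this local edge-comparison at $u_{23}$, but it reduces to a short case split once the inclusions $u_{23} \in W_L(u_{12}) \subseteq W_L(z_L(a_1)) \cap W_L(b_1)$ have been established via \Cref{prop:PL2_PR2_more}.
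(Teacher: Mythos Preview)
Your proof is correct and follows essentially the same approach as the paper: the same symmetry reduction, the same witness $(u_{23},U_{23})$ for $\sigma_{13}$, and the same use of \Cref{cor:left_is_preserved_from_y_to_z} and \Cref{prop:PL2_PR2_more} to place $u_{23}$ on $W_L(u_{12})\subseteq W_L(z_L(a_1))\cap W_L(b_1)$. The only difference is in verifying the second bullet: the paper chains at the path level, observing that $M':=P_{13}$ is left of $W_L(z_L(a_2))$, that $W_L(z_L(a_2))$ is a subpath of or left of $M:=x_0[W_L(z_L(a_1))]u_{12}[U_{12}]a_2\in\calM(a_2)$ (since $M_L(a_2)$ is leftmost), and that $M$ is left of $W_L(z_L(a_1))$ by \ref{Lout} for $\sigma_{12}$; you instead invoke \Cref{obs:PL2_PR2_basic} to get $M_L(a_2)$ left of $M_L(a_1)$ and do a local edge comparison at $u_{23}$, which is an equally valid route.
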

% ----------------------------
\begin{proof}
The arguments for items \ref{prop:item:transitivity-Lin} and \ref{prop:item:transitivity-Rin} are symmetric so we only include a proof of the former. 
The same applies to items \ref{prop:item:transitivity-Lout} and \ref{prop:item:transitivity-Rout}.

We start with an argument for \ref{prop:item:transitivity-Lin}. 
Suppose that $\sigma_{12}$ and $\sigma_{23}$ satisfy \ref{Lin}. 
Therefore, $M_L(a_1)$ is left of $M_L(a_2)$ and $M_L(a_2)$ is left of $M_L(a_3)$. 
By~\cref{prop:u_e_is_poset}, this relation is transitive so $M_L(a_1)$ is left of $M_L(a_3)$, as desired.

Now we proceed with the proof~\ref{prop:item:transitivity-Lout}. 
Suppose that $\sigma_{12}$ and $\sigma_{23}$ satisfy \ref{Lout}. 
Let $u \in Z(a_2)$ and $U$ be an exposed witnessing path from $a_2$ to $u$ in $P$ such that $(u,U)$ witnesses \ref{Lout} for $\sigma_{12}$ and 
let $u' \in Z(a_3)$ and $U'$ be an exposed witnessing path from $a_3$ to $u'$ in $P$ such that $(u',U')$ witnesses \ref{Lout} for $\sigma_{23}$.
We claim that $(u',U')$ witnesses~\ref{Lout} for $\sigma_{13}$.

Since $(a_2,b_2)\in I$, $a_2 < u \leq b_1$ in $P$, and $b_1$ left of $b_2$, 
we can apply~\cref{cor:left_is_preserved_from_y_to_z}.\ref{cor:left_is_preserved_from_y_to_z:left} and conclude that $u$ is left of $b_2$. 
Now, since $\sigma_{23}$ is regular, 
$u'$ witnesses~\ref{Lout} for $\sigma_{23}$, 
$u\in Z(a_2)$, and $u$ left of $b_2$, we can apply~\cref{prop:PL2_PR2_more}.\ref{prop:PL2_PR2_more:left} to $\sigma_{23}$ and conclude that $u'$ lies in $W_L(u)$.
Since $u$ witnesses \ref{Lout} for $\sigma_{12}$, $u$ lies in both $W_L(z_L(a_1))$ and $W_L(b_1)$, and so, $u'$ lies in both $W_L(z_L(a_1))$ and $W_L(b_1)$.
%Since $u$ witnesses \ref{Lout} for $\sigma_{12}$, $u$ lies in $W_L(z_L(a_1))$.
%In particular, $W_L(u)$ is a subpath of $W_L(z_L(a_1))$. 
%Thus, $u'$ lies in $W_L(z_L(a_1))$. 
%Since $u'$ witnesses \ref{Lout} for $\sigma_{23}$, $u'$ lies in $W_L(b_2)$. 

Finally, to conclude that $(u',U')$ witnesses \ref{Lout} for $\sigma_{13}$ and end the proof, it suffices to show that $M' = x_0[W_L(z_L(a_1))]u'[U']a_3$ is left of $W_L(z_L(a_1))$.
First, note that $M' = x_0[W_L(z_L(a_2))]u'[U']a_3$, thus, by \ref{Lout} for $\sigma_{23}$, $M'$ is left of $W_L(z_L(a_2))$.
By~\cref{prop:paths_M_consistent}, $W_L(z_L(a_2))$ is a subpath of $M_L(a_2)$, hence, $W_L(z_L(a_2))$ is either a subpath of $M = x_0[W_L(z_L(a_1))]u[U]a_2$ or is left of $M$.
By \ref{Lout} for $\sigma_{12}$, $M$ is left of $W_L(z_L(a_1))$.
It follows that $M'$ is left of $W_L(z_L(a_1))$, which completes the proof of~\ref{prop:item:transitivity-Lout}.
\end{proof}
% ----------------------------

% ----------------------------
\subsection{Classifying alternating cycles of size 2}\label{ssec:classification}
% ----------------------------
Let $((a_1,b_1),(a_2,b_2))$ be an alternating cycle in $P$ with both pairs in $I$.
Note that by~\ref{item:instance:sacs} either $((a_1,b_1),(a_2,b_2))$ or $((a_2,b_2),(a_1,b_1))$ is regular. 
The goal of this subsection is to classify regular alternating cycles of size $2$ in $P$ into one of the four types.
To this end, we associate some regions to each regular alternating cycle of size $2$.
First, we need a simple property that is a corollary of~\Cref{prop:dangerous-implies-in-Y}.
% and we use the properties of regular sequences of size $2$ introduced in the previous subsection.

\begin{corollary}\label{cor:regular-Y}
    Let $\sigma = ((a_1,b_1),(a_2,b_2))$ be a regular alternating cycle in $P$.
    Then, $b_2 \in Y(a_1)$ and $b_1 \in Y(a_2)$.
\end{corollary}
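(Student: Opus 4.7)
The plan is to deduce both memberships directly from \cref{prop:dangerous-implies-in-Y}, which was set up precisely to produce elements of $Y(a)$ from dangerous pairs in $I$ and a ``side comparison'' with another element of $B$. Recall the two ingredients I have: since $\sigma$ is an alternating cycle with pairs in $I$ I have $a_1 \le b_2$ and $a_2 \le b_1$ in $P$, while $a_1 \parallel b_1$ and $a_2 \parallel b_2$ in $P$; and since $\sigma$ is regular I have $b_1$ left of $b_2$.

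First I would show $b_2 \in Y(a_1)$. Applying \cref{prop:dangerous-implies-in-Y} to the pair $(a_1,b_1) \in I$ with the choice $d = b_2$, whose hypothesis ``$b_1$ is left of $d$'' is exactly the regularity of $\sigma$, yields $a_1 \notin \shadz(b_2)$. To activate the ``moreover'' clause I need $a_1 < b_2$ in $P$, which I get from $a_1 \le b_2$ together with the observation that $a_1 \in A$, $b_2 \in B$, and $A \cap B = \emptyset$ (this disjointness was established at the start of \cref{sec:topology-good-instance} using that every pair in $I$ is dangerous and \cref{prop:dangerous_a_not_in_B}). Hence $b_2 \in Y(a_1)$.

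Second, the symmetric argument gives $b_1 \in Y(a_2)$: I apply \cref{prop:dangerous-implies-in-Y} to $(a_2,b_2) \in I$ with $d = b_1$; the hypothesis now reads ``$d$ is left of $b$'', which again is exactly that $b_1$ is left of $b_2$. I conclude $a_2 \notin \shadz(b_1)$, and $a_2 < b_1$ in $P$ follows from $a_2 \le b_1$ and $A \cap B = \emptyset$, so the ``moreover'' part delivers $b_1 \in Y(a_2)$.

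There is no real obstacle here; the statement is a bookkeeping corollary that packages the two applications of \cref{prop:dangerous-implies-in-Y} that will be used repeatedly in the next subsection. The only subtlety worth flagging is that one must invoke $A \cap B = \emptyset$ to upgrade $a_i \le b_j$ to a strict inequality, but this was explicitly recorded at the beginning of \cref{sec:topology-good-instance}.
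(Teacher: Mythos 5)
Your proof is correct and follows exactly the route the paper intends: the corollary is stated as an immediate consequence of \cref{prop:dangerous-implies-in-Y}, applied to $(a_1,b_1)$ with $d=b_2$ and to $(a_2,b_2)$ with $d=b_1$, with strictness of $a_i< b_j$ in $P$ obtained from $A\cap B=\emptyset$. Nothing to change.
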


Let $\sigma=((a_1,b_1),(a_2,b_2))$ be a regular alternating cycle in $P$.
By~\Cref{cor:regular-Y}, $b_2 \in Y(a_1)$ and $b_1 \in Y(a_2)$.
Thus, we can fix $z_1 \in Z(a_1)$ and $z_2 \in Z(a_2)$ such that $z_1 \leq b_2$ and $z_2 \leq b_1$ in~$P$.
By~\Cref{prop:z_L_b_z_R}, $z_L(a_1)$ is left of $b_1$ and by~\Cref{cor:left_is_preserved_from_y_to_z}.\ref{cor:left_is_preserved_from_y_to_z:right}, $z_1$ is right of $b_1$.
Therefore, by transitivity, $z_L(a_1)$ is left of $z_1$.
Symmetrically, we obtain that $z_2$ is left of $z_R(a_2)$.
% By~\Cref{cor:left_is_preserved_from_y_to_z}, $z_1$ is right of $b_1$ and $z_2$ is left of $b_2$.
% By~\Cref{prop:z_L_b_z_R}, $z_L(a_1)$ is left of $b_1$ and $z_R(a_2)$ is right of $b_2$.
% Therefore, by transitivity, $z_L(a_1)$ is left of $z_1$ and $z_2$ is left of $z_R(a_2)$.
Let $W_1$ be an exposed witnessing path from $a_1$ to $z_1$ in $P$, and let $W_2$ be an exposed witnessing path from $a_2$ to $z_2$ in $P$.
Let
    \begin{align*}
        \calR_1 &= \calR(a_1,z_L(a_1),z_1,a_1[M_L(a_1)]z_L(a_1),W_1),\\
        \calR_2 &= \calR(a_2,z_2,z_R(a_2),W_2,a_2[M_R(a_2)]z_R(a_2)).
    \end{align*}
    
We say that $\calR_1$ is a \emph{left region} of $\sigma$ and $\calR_2$ is a \emph{right region} of $\sigma$.
Observe that by~\cref{cor:sandwitch_b_in_region}.\ref{cor:sandwitch_b_in_region:int}, 
\begin{equation}\label{eq:b_in_region}
    \text{$b_1 \in \Int \calR_1$ and $b_2 \in \Int\calR_2$.}
\end{equation}
In the next proposition, we connect relative positions of $a_1,a_2$ and $\calR_1,\calR_2$ with the properties \ref{Lin}, \ref{Lout}, \ref{Rin}, and \ref{Rout}.
See \cref{fig:2sacs}.

\begin{figure}[tp]
  \begin{center}
    \includegraphics{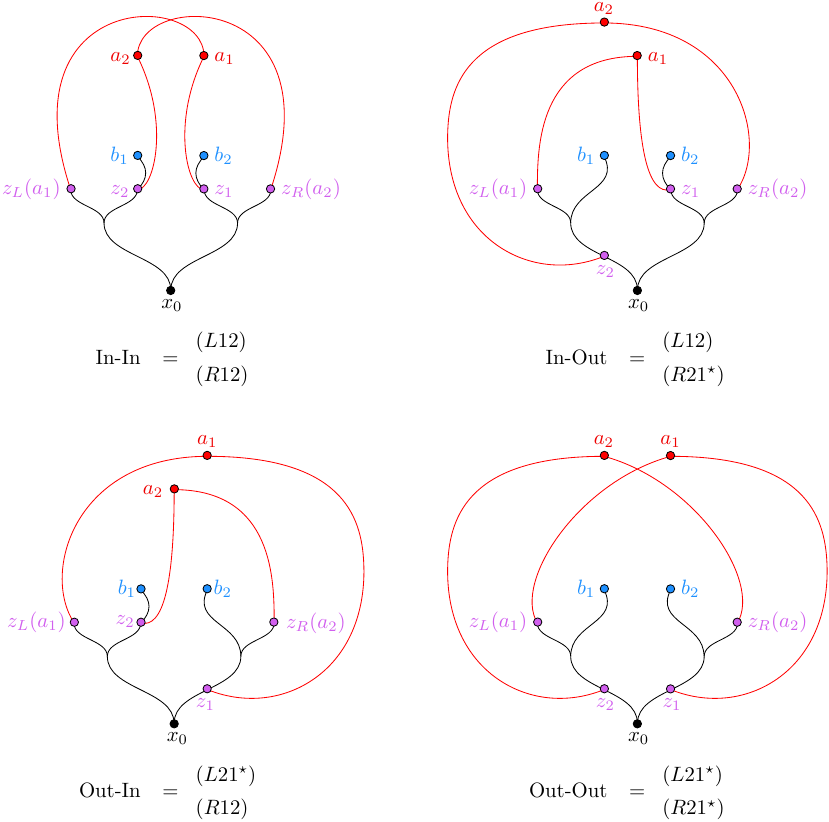}
  \end{center}
  \caption{
    In each part of the figure, we depict a left and a right region for a regular alternating cycle $((a_1,b_1),(a_2,b_2))$.
    We exhaust all possible combinations of the satisfaction of the statements $a_2 \in \calR_1$ and $a_1 \in \calR_2$.
    In this way, we obtain a classification of all alternating cycles of size $2$ in $P$ with both pairs in $I$, see the definitions after~\cref{cor:characterize_regular_sacs}.
    % An illustration of~\cref{prop:sac_four_types} and~\cref{cor:characterize_regular_sacs}.
  }
  \label{fig:2sacs}
\end{figure}

% ----------------------------
\begin{proposition}\label{prop:sac_four_types} 
  Let $\sigma=((a_1,b_1),(a_2,b_2))$ be a regular alternating cycle.
  Let $\calR_1$ be a left region of $\sigma$ and let $\calR_2$ be a right region of $\sigma$.
  % Let $z_1 \in Z(a_1)$ and $z_2 \in Z(a_2)$ such that $z_1 \leq b_2$ and $z_2 \leq b_1$ in $P$, $z_1$ is right of $b_1$ and $z_2$ is left of $b_2$.
  % Let $W_1$ be an exposed witnessing path from $a_1$ to $z_1$ in $P$, and let $W_2$ be an exposed witnessing path from $a_2$ to $z_2$ in $P$.
  % Let
  %   \begin{align*}
  %       \calR_1 &= \calR(a_1,z_L(a_1),z_1,z_L(a_1)[M_L(a_1)]a_1,W_1),\\
  %       \calR_2 &= \calR(a_2,z_2,z_R(a_2),W_2,z_R(a_2)[M_R(a_2)]a_2).
  %   \end{align*}
    \begin{enumerate}
    % \item  There is no element $s$ with $a_2\le_P s\le_P m_2$ such that
    %   $s$ is on the boundary of $\cgR_1$.
    % \item  There is no element $t'$ with $a_1\le_P s'\le_P m_1$ such that
    %   $s'$ is on the boundary of $\cgR_2$.
    \item If $a_2 \in \Int \cgR_1$, then $\sigma$ 
      satisfies \ref{Lin}. \label{prop:sac_four_types:item:Lin}
    \item If $a_2 \notin \cgR_1$, then $\sigma$ 
      satisfies \ref{Lout}.\label{prop:sac_four_types:item:Lout}
    \item If $a_1 \in \Int \cgR_2$, then $\sigma$ satisfies \ref{Rin}.
\label{prop:sac_four_types:item:Rin}
    \item If $a_1 \notin \cgR_2$, then $\sigma$ 
      satisfies \ref{Rout}.\label{prop:sac_four_types:item:Rout}
  \end{enumerate}
\end{proposition}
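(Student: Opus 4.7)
The four items are proved by the same topological strategy; by the symmetries built into the definitions---interchanging \q{left} with \q{right} and swapping the roles of the two indices---items (3) and (4) follow respectively from (1) and (2). I therefore outline only (1) and (2). Throughout, I write $q$, $m$, $\gamma_L$, $\gamma_R$ for the corresponding elements and paths of the region tuple associated with $\calR_1$.

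For (1), assume $a_2 \in \Int \calR_1$. By~\Cref{prop:ML_consistent}.\ref{prop:ML_consistent:left}, $M_L(a_1)$ and $M_L(a_2)$ are $x_0$-consistent. Neither is a subpath of the other: otherwise one of $a_1, a_2$ would sit on the path terminating at the other, forcing $a_1$ and $a_2$ to be comparable in $P$, contradicting the alternating-cycle hypothesis (a short argument using $(a_1,b_1),(a_2,b_2) \in I$ gives $a_1 \parallel a_2$ in $P$). Hence $M_L(a_1)$ and $M_L(a_2)$ share a common prefix and split at a vertex $w^{\star}$, and to derive \ref{Lin} I must show that the edge of $M_L(a_2)$ leaving $w^{\star}$ is right of the edge of $M_L(a_1)$ leaving $w^{\star}$. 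By~\Cref{prop:paths_M_consistent}, the subpath $x_0[M_L(a_1)]m$ coincides with $\gamma_L$, so $M_L(a_1)$ runs along the left boundary of $\calR_1$ (extended back to $x_0$). A case analysis on the location of $w^{\star}$---combined with the characterization of edges lying inside $\calR_1$ at boundary vertices given by~\ref{items:leaving_regions:x}--\ref{items:leaving_regions:right}, and with~\Cref{prop:shad-disjoint-from-calR} to rule out detours through $\shadz(q) \setminus \{q\}$---shows that the edge of $M_L(a_2)$ leaving $w^{\star}$ must turn rightward, since otherwise $M_L(a_2)$ cannot reach $\Int \calR_1$ while remaining $x_0$-consistent with $M_L(a_1)$.

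For (2), assume $a_2 \notin \calR_1$. I plan to construct the witness $(u, U)$ for \ref{Lout}. By~\Cref{cor:regular-Y}, $b_1 \in Y(a_2)$; fix $u' \in Z(a_2)$ with $u' \le b_1$ in $P$, an exposed witnessing path $V'$ from $a_2$ to $u'$, and a witnessing path $W''$ in $B$ from $u'$ to $b_1$. Since $b_1 \in \Int \calR_1$ by~\eqref{eq:b_in_region} while $a_2 \notin \calR_1$, the concatenation $a_2[V']u'[W'']b_1$ must cross $\partial \calR_1$; let $w^{\star}$ be the first crossing. The key claim is that $w^{\star}$ lies on $q[W_L(z_L(a_1))]z_L(a_1)$, and then $u := w^{\star}$ will work. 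Crossings on the right $B$-portion $q[W_R(z_1)]z_1$ can be excluded because $z_L(a_1)$ is left of $b_1$ (\Cref{prop:z_L_b_z_R}), so $W_L(z_L(a_1))$ is left of $W_L(b_1)$; if $W''$ first entered $\calR_1$ from the right, then by~\Cref{prop:path_does_not_leave_regions} rerouting $W_L(b_1)$ through $W_L(z_L(a_1))$ produces a strictly leftward witnessing path from $x_0$ to $b_1$, contradicting the leftmost-ness of $W_L(b_1)$. Crossings on the exposed segments $z_L(a_1)[M_L(a_1)]m$ or $m[W_1]z_1$ must occur within $V'$; tracking $V'$ on to $u'$ and invoking~\Cref{prop:path_does_not_leave_regions} on $W''$ reduces these to the $B$-crossing cases just handled.

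Once $w^{\star} \in q[W_L(z_L(a_1))]z_L(a_1)$ is established, $u = w^{\star}$ lies on $W_L(z_L(a_1))$. Since $u \le b_1$ and the set of common vertices of the $x_0$-consistent paths $W_L(z_L(a_1))$ and $W_L(b_1)$ equals their common prefix, $u \in W_L(z_L(a_1)) \cap W_L(b_1)$. Taking $U$ as the portion of $a_2[V']u'[W'']b_1$ from $a_2$ to $u$ (which is an exposed witnessing path, since only the peak reaches $B$), the \q{left of} condition in \ref{Lout} follows because the edge of $U$ leaving $u$ lies outside $\calR_1$, and by~\ref{items:leaving_regions:x}--\ref{items:leaving_regions:left} it is therefore left of the continuation of $W_L(z_L(a_1))$ past $u$. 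The main obstacle is the case analysis in (2): excluding the possible locations of $w^{\star}$ on $\partial \calR_1$ other than the left $B$-portion, each exclusion requiring a delicate exploitation of the leftmost or rightmost nature of one of the paths $W_L, W_R, M_L, M_R$ defining $\calR_1$, together with the topological constraints encoded in \ref{items:leaving_regions:x}--\ref{items:leaving_regions:right} and~\Cref{prop:path_does_not_leave_regions}.
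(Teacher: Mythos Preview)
Your sketch of item~(i) is on the right track and close to the paper's argument, which traces $M_L(a_2)$ from $a_2$ back towards $x_0$, finds the first point where it meets $\partial\calR_1$, and splits into the case where this point lies before or after the peak $z_L(a_2)$.

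The real gap is in item~(ii). Your exclusion of crossings on the right side of $\calR_1$ is not justified: the ``rerouting $W_L(b_1)$ through $W_L(z_L(a_1))$'' argument does not construct any path contradicting the leftmost-ness of $W_L(b_1)$. The paper's exclusion is far more direct and rests on the incomparabilities of the alternating cycle. It takes an arbitrary witnessing path $W$ from $a_2$ to $b_1$ (there is no need to factor through an element of $Z(a_2)$); then any element $z$ of $W$ on $\gamma_R$ satisfies $a_2 \le z \le z_1 \le b_2$ in~$P$, contradicting $a_2 \parallel b_2$, and any element $z$ of $W$ on the exposed segment $z_L(a_1)[M_L(a_1)]m$ satisfies $a_1 \le z \le b_1$ in~$P$, contradicting $a_1 \parallel b_1$. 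So every boundary crossing of $W$ lies strictly on $q[W_L(z_L(a_1))]z_L(a_1)$.

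You also skip a step the paper treats with care: the first boundary crossing $v$ of $W$ need not a~priori equal the first element $u$ of $W$ in~$B$, and without this your candidate path $U$ need not be exposed. The paper argues that if $u \ne v$ then $u \notin \calR_1$, so by \Cref{prop:b_relative_to_region}.\ref{prop:b_relative_to_region:item:outside} either $W_L(u)$ is left of $\gamma_L$ or $W_R(u)$ is right of $\gamma_R$; in either case, concatenating through $u[W]v$ produces a witnessing path to $z_L(a_1)$ that contradicts the extremality of $W_L(z_L(a_1))$ or $W_R(z_L(a_1))$. Finally, showing that $v$ also lies on $W_L(b_1)$ (not just on $W_L(z_L(a_1))$) needs one more short comparison of $v$ against $\gce(W_L(z_L(a_1)),W_L(b_1))$.
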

% ----------------------------

\begin{proof}
    We only prove statements \ref{prop:sac_four_types:item:Lin} and \ref{prop:sac_four_types:item:Lout}.
    The proofs of the other statements are symmetric.
    Let $z_1 \in Z(a_1)$ such that $z_1 \leq b_2$ in $P$ and let $W_1$ be an exposed witnessing path from $a_1$ to $z_1$ in $P$ such that $\calR_1 = \calR(a_1,z_L(a_1),z_1,a_1[M_L(a_1)]z_L(a_1),W_1)$.
    Let $q$ be the lower-min of $\calR_1$, and let $m$ be the upper-min of $\calR_1$.
    Let $\gamma_L = x_0[W_L(z_L(a_1))]z_L(a_1)[M_L(a_1)]m$ and $\gamma_R = x_0[W_R(z_1)]z_1[W_1]m$.

    We start with an argument for~\ref{prop:sac_four_types:item:Lin}.
    Assume that $a_2 \in \Int \calR_1$.
    We shall prove that $\sigma$ satisfies \ref{Lin}, that is, $M_L(a_1)$ is left of $M_L(a_2)$.
    Since $a_2 \in \Int \calR_1$ and $x_0 \notin \Int \calR_1$, $M_L(a_2)$ contains an element in $\partial\calR_1$.
    Let $u$ be the first such element in $a_2[M_L(a_2)]x_0$.
    First, suppose that $u$ lies in $z_L(a_2)[M_L(a_2)]x_0$ and $u \neq z_L(a_2)$.
    In this case, $z_L(a_2) \in \Int\calR_1$, thus, by~\cref{prop:b_relative_to_region}.\ref{prop:b_relative_to_region:item:inside}, $\gamma_L$ is left of $W_L(z_L(a_2))$, which implies $M_L(a_1)$ left of $M_L(a_2)$ as desired.
    Next, suppose that $u$ lies in $a_2[M_L(a_2)]z_L(a_2)$.
    Note that each element $v$ of $\gamma_R$
    satisfies $v\leq z_1 \leq b_2$ in $P$.
    Therefore, $u$ does not lie in $\gamma_R$ as
    this would imply $a_2 \leq u \leq b_2$ in $P$, which is a contradiction.
    It follows that $u$ lies strictly on the left side of $\calR_1$.
    Since the last edge of $a_2[M_L(a_2)]u$ has its interior contained in $\Int \calR_1$, by \ref{items:leaving_regions:left} and \cref{prop:ML_consistent}.\ref{prop:ML_consistent:left}, $M_L(a_1)$ is left of $M_L(a_2)$.
    This completes the proof of~\ref{prop:sac_four_types:item:Lin}.

    Now, we prove~\ref{prop:sac_four_types:item:Lout}.
    Assume that $a_2 \notin \calR_1$.
    We shall prove that $\sigma$ satisfies \ref{Lout}.
    Since $a_2 < b_1$ in $P$, there is a witnessing path $W$ from $a_2$ to $b_1$ in $P$. 
    Note that $a_2 \notin \calR_1$ and $b_1 \in \Int\calR_1$ (by~\eqref{eq:b_in_region}), so $W$ must intersect $\partial \calR_1$. 
    Note that $W$ is disjoint from $\gamma_R$ as otherwise a common element $z$ of the two paths would certify $a_2\leq z\leq z_1\leq b_2$ in $P$, which is a contradiction. 
    Also, $W$ is disjoint from $m[M_L(a_1)]z_L(a_1)$ as otherwise a common element $z$ of the two paths would certify $a_1\leq m\leq z\leq b_1$ in $P$, which is a contradiction.
    Therefore, $W$ intersects $\partial \calR_1$ only in $W'$, where $W'$ is $q[W_L(z_L(a_1))]z_L(a_1)$ with the elements $q$ and $z_L(a_1)$ removed.

    Let $u$ be the first element of $a_2[W]b_1$ in $B$, 
    and let $v$ be the first element of $a_2[W]b_1$ in $\partial \calR_1$. 
    Thus, as we argued $v$ lies in $W'$. 
    Clearly, $u\leq v$ in $P$.
    We claim that $u = v$.
    Suppose to the contrary that $u \neq v$.
    In particular, since $a_2 \notin \calR_1$, we have $u \notin \calR_1$.
    Then, by \cref{prop:b_relative_to_region}.\ref{prop:b_relative_to_region:item:outside}, $W_L(u)$ is left $\gamma_L$ or $W_R(u)$ is right of $\gamma_R$.
    First, assume that $W_L(u)$ is left of $\gamma_L$.
    Since $v$ lies in $W'$, we have $v < z_L(a_1)$ in $P$, hence, $z_L(a_1)$ does not lie in $W_L(u)$.
    It follows that $W_L(u)$ is left of $W_L(z_L(a_1))$.
    However, the path $x_0[W_L(u)]u[W]v[W_L(z_L(a_1))]z_L(a_1)$ is now left of $W_L(z_L(a_1))$, which is a contradiction.
    Next, assume that $W_R(u)$ is right of $\gamma_R$.
    If $z_1$ lies in $W_R(u)$, then $z_1 \leq u < v < z_L(a_1)$ in $P$, which contradicts that $z_L(a_1)$ is left of $z_1$, thus, $z_1$ does not lie in $W_R(u)$.
    It follows that $W_R(u)$ is right of $W_R(z_1)$.
    Moreover, $x_0[W_R(u)]u[W]v[W_L(z_L(a_1))]z_L(a_1)$ is right of $W_R(z_1)$, which is right of $W_R(z_L(a_1))$.
    This yields existence of a witnessing path from $x_0$ to $z_L(a_1)$ right of $W_R(z_L(a_1))$ in $P$, which is a contradiction.
    We conclude that indeed $u = v$.
    
    Let $V = a_2[W]v$.
    We claim that $(v,V)$ witnesses \ref{Lout} for $\sigma$.
    As proved, $V$ is an exposed witnessing path from $a_2$ to $v$ in $P$, and so, $v \in Z(a_2)$. 
    Since $v$ is an element of $W'$, $v$ lies in $W_L(z_L(a_1))$.
    Let $w = \gce(W_L(z_L(a_1)),W_L(b_1))$.
    If $w < v$ in $P$, then since $W_L(z_L(a_1))$ is left of $W_L(b_1)$, we have $x_0[W_L(z_L(a_1))]v[W]b_1$ left of $W_L(b_1)$, which is false.
    Thus, $v \leq w$ in $P$, which implies that $v$ lies in $W_L(b_1)$.
    Finally, since $v$ is strictly on the left side of $\calR_1$, by~\ref{items:leaving_regions:left}, $x_0[W_L(z_L(a_1))]v[V]a_2$ is left of $W_L(z_L(a_1))$.
    This completes the proof that $\sigma$ satisfies \ref{Lout}.
\end{proof}

Let $\sigma=((a_1,b_1),(a_2,b_2))$ be a regular alternating cycle in $P$. Fix a left region $\calR_1$ of $\sigma$ and a right region $\calR_2$ of $\sigma$.
Since $a_1 \parallel a_2$ in $P$, we have $a_1 \notin \partial \calR_2$ and $a_2 \notin \partial \calR_1$.
Therefore, \cref{prop:sac_four_types} implies that $\sigma$ must satisfy either \ref{Lin} or \ref{Lout} and either \ref{Rin} or \ref{Rout}.
By \cref{obs:PL2_PR2_basic}, $\sigma$ cannot satisfy both \ref{Lin} and \ref{Lout} and symmetrically $\sigma$ cannot satisfy both \ref{Rin} and \ref{Rout}.
All this leads to the following conclusion.

\leqnomode
\begin{corollary} \label{cor:characterize_regular_sacs}
    Let $\sigma=((a_1,b_1),(a_2,b_2))$ be a regular alternating cycle in $P$ with both pairs in~$I$.
        \begin{align}
        \begin{split}
        \textrm{$\sigma$ satisfies \ref{Lin} } &\Longleftrightarrow \textrm{$a_2\in \Int \calR_1$ for every left region $\calR_1$ of $\sigma$}\\
        &\Longleftrightarrow \textrm{$a_2 \in \Int \calR_1$ for some left region $\calR_1$ of $\sigma$.}
        \end{split}
        \tag{i}\label{cor:characterize_regular_sacs:item:Lin}
        \\[1ex]
        \begin{split}
        \textrm{$\sigma$ satisfies \ref{Lout} } &\Longleftrightarrow \textrm{$a_2\not\in \calR_1$ for every left region $\calR_1$ of $\sigma$}\\
        &\Longleftrightarrow \textrm{$a_2 \not\in \calR_1$ for some left region $\calR_1$ of $\sigma$.}
        \end{split}
        \tag{ii}\label{cor:characterize_regular_sacs:item:Lout}
        \\[1ex]
        \begin{split}
        \textrm{$\sigma$ satisfies \ref{Rin} } &\Longleftrightarrow \textrm{$a_1\in \Int \calR_2$ for every right region $\calR_2$ of $\sigma$}\\
        &\Longleftrightarrow \textrm{$a_1 \in \Int \calR_2$ for some right region $\calR_2$ of $\sigma$.}
        \end{split}
        \tag{iii}\label{cor:characterize_regular_sacs:item:Rin}
        \\[1ex]
        \begin{split}
        \textrm{$\sigma$ satisfies \ref{Rout} } &\Longleftrightarrow \textrm{$a_1\not\in \calR_2$ for every right region $\calR_2$ of $\sigma$}\\
        &\Longleftrightarrow \textrm{$a_1 \not\in \calR_2$ for some right region $\calR_2$ of $\sigma$.}
        \end{split}
        \tag{iv}\label{cor:characterize_regular_sacs:item:Rout}
        \end{align}
    % \begin{enumerate}
    %     \item $\sigma$ satisfies \ref{Lout} $\Longleftrightarrow$ $a_2\notin \calR_1$ for every left region $\calR_1$ of $\sigma$ $\Longleftrightarrow$ $a_2 \notin \calR_1$ for some left region $\calR_1$ of $\sigma$.
    %     \item $\sigma$ satisfies \ref{Rin} $\Longleftrightarrow$ $a_1\in \Int \calR_2$ for every right region $\calR_2$ of $\sigma$ $\Longleftrightarrow$ $a_1 \in \Int \calR_2$ for some right region $\calR_2$ of $\sigma$.
    %     \item $\sigma$ satisfies \ref{Rout} $\Longleftrightarrow$ $a_1\notin \calR_2$ for every right region $\calR_2$ of $\sigma$ $\Longleftrightarrow$ $a_1 \notin \calR_2$ for some right region $\calR_2$ of $\sigma$.
    % \end{enumerate}
\end{corollary}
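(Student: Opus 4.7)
The plan is to observe that the corollary is essentially a repackaging of \cref{prop:sac_four_types}, together with the two facts recorded in the paragraph immediately preceding its statement: first, for any left region $\calR_1$ of $\sigma$ we have $a_2 \notin \partial\calR_1$, and symmetrically $a_1 \notin \partial\calR_2$ for any right region; second, $\sigma$ cannot simultaneously satisfy \ref{Lin} and \ref{Lout}, nor \ref{Rin} and \ref{Rout}. The first fact holds because every element of $\partial\calR_1$ either lies on $x_0[W_L(z_L(a_1))]z_L(a_1)$ or on $x_0[W_R(z_1)]z_1$ (hence in $B$) or on $z_L(a_1)[M_L(a_1)]m \cup z_1[W_1]m$ (hence $\ge a_1$ in $P$), while $a_2 \in A$ is disjoint from $B$ by \cref{prop:dangerous_a_not_in_B} and satisfies $a_1 \parallel a_2$ in $P$ by the alternating cycle hypothesis. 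The second fact comes from \cref{obs:PL2_PR2_basic} combined with the antisymmetry of the left-of relation given by \cref{prop:u_e_is_poset}. I would also note that at least one left region and one right region of $\sigma$ exist: by \cref{cor:regular-Y}, $b_2 \in Y(a_1)$ and $b_1 \in Y(a_2)$, so the argument following the definition of $Z(a)$ produces elements $z_1 \in Z(a_1)$ with $z_1 \le b_2$ and $z_2 \in Z(a_2)$ with $z_2 \le b_1$, which yield such regions.

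Having set this up, I would fix an arbitrary left region $\calR_1$ of $\sigma$. The fact $a_2 \notin \partial\calR_1$ gives the dichotomy: either $a_2 \in \Int\calR_1$ or $a_2 \notin \calR_1$. By \cref{prop:sac_four_types}.\ref{prop:sac_four_types:item:Lin} and \ref{prop:sac_four_types}.\ref{prop:sac_four_types:item:Lout}, these alternatives respectively force \ref{Lin} and \ref{Lout}, which by mutual exclusivity cannot both hold. Thus the alternative realized is uniquely determined by which of \ref{Lin}, \ref{Lout} is satisfied; in particular, $\sigma$ satisfies \ref{Lin} iff $a_2 \in \Int\calR_1$ and $\sigma$ satisfies \ref{Lout} iff $a_2 \notin \calR_1$. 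Because neither characterization depends on the choice of $\calR_1$ and at least one left region exists, the quantifiers \emph{for every} and \emph{for some} collapse, establishing the equivalences \ref{cor:characterize_regular_sacs:item:Lin} and \ref{cor:characterize_regular_sacs:item:Lout} at once.

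The equivalences \ref{cor:characterize_regular_sacs:item:Rin} and \ref{cor:characterize_regular_sacs:item:Rout} follow by a word-for-word symmetric argument, using $a_1 \notin \partial\calR_2$, parts \ref{prop:sac_four_types:item:Rin} and \ref{prop:sac_four_types:item:Rout} of \cref{prop:sac_four_types}, and the mutual exclusivity of \ref{Rin} and \ref{Rout} coming from \cref{obs:PL2_PR2_basic}.\ref{obs:PL2_PR2_basic:right}. There is no real obstacle here: the geometric content is already entirely contained in \cref{prop:sac_four_types}, and the remaining work is pure quantifier bookkeeping. The only point that merits explicit mention in the write-up is that at least one left (resp.\ right) region of $\sigma$ exists, so that \emph{some} is not vacuously false.
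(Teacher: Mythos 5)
Your proposal is correct and follows essentially the same route as the paper, which proves the corollary in the discussion immediately preceding it: $a_1\parallel a_2$ (together with $A\cap B=\emptyset$) excludes $a_2\in\partial\calR_1$ and $a_1\in\partial\calR_2$, \cref{prop:sac_four_types} then forces one of \ref{Lin}/\ref{Lout} and one of \ref{Rin}/\ref{Rout}, and \cref{obs:PL2_PR2_basic} gives the mutual exclusivity that makes the quantifiers collapse. Your extra remarks (existence of at least one left/right region via \cref{cor:regular-Y}, and the explicit justification of the boundary exclusion) are exactly the details the paper records just before stating the corollary.
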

\reqnomode
Therefore, all regular alternating cycles of size $2$ can be classified into four types.
We say that a regular alternating cycle $\sigma$ of size $2$ in $P$ with both pairs in $I$ is:
\begin{align*}
    \text{\emph{In-In}} &\text{ if $\sigma$ satisfies \ref{Rin} and \ref{Lin};}\\
    \text{\emph{In-Out}} &\text{ if $\sigma$ satisfies \ref{Rin} and \ref{Lout};}\\
    \text{\emph{Out-In}} &\text{ if $\sigma$ satisfies \ref{Rout} and \ref{Lin};}\\
    \text{\emph{Out-Out}} &\text{ if $\sigma$ satisfies \ref{Rout} and \ref{Lout}.}
\end{align*}
See again \Cref{fig:2sacs}.

\section{Auxiliary oriented graphs: definitions and the coloring lemma}
\label{sec:aux-definitions}
%An \emph{orientation} of a graph $G$ is a function assigning each edge $\set{u,v}$ of $G$, a pair $(u,v)$ or $(v,u)$. 
%An \emph{oriented graph} $H$ is a graph $G$ with a fixed orientation. 
%In this case, the vertex set of $H$ is the vertex set of $G$, while the edge set of $H$ is $\set{(u,v)\mid \set{u,v} \textrm{ is an edge in $G$ mapped to $(u,v)$ by the orientation}}$.

An \emph{orientation} of a graph $G$ is a function assigning to each edge $\set{u,v}$ of $G$ one of the pairs: $(u,v)$ or $(v,u)$. 
An \emph{oriented graph} is a graph with a fixed orientation. 
The \emph{vertex set} of an oriented graph $H$ is the vertex set of the underlying graph, while the \emph{edge set} of $H$ is the set of all $(u,v)$ such that $\{u,v\}$ is an edge in the underlying graph mapped to $(u,v)$ by the orientation.
We say that an edge $(u,v)$ of an oriented graph $H$ is an edge \emph{from $u$ to $v$} in $H$.
%For an edge $e = (u,v)$ of an oriented graph $H$, we say that $e$ is an edge \emph{from $u$ to $v$} in $H$.

A \emph{directed path} is an oriented graph such that $\{v_0,\dots,v_k\}$ is its vertex set and $\{(v_{i-1},v_i) : i \in [k]\}$ is its edge set, where $v_0,\dots,v_k$ are pairwise distinct.
This directed path \emph{starts} in $v_0$ and \emph{ends} in $v_k$.
The number of vertices of a directed path is its \emph{order}.
%In oriented graphs, we consider only directed paths.
A \emph{directed cycle} is an oriented graph with at least three vertices such that removing each of its edges gives a directed path.
An oriented graph is \emph{acyclic} if it contains no directed cycle.

When $H$ is an acyclic oriented graph, we define $\maxpath(H)$ as the maximum order of a directed path in~$H$.
Moreover, for a vertex $v$ of $H$, let $\maxsp(H,v)$ be the maximum order of a directed path in~$H$ starting in $v$.

In this section, we fix a maximal good instance $(P,x_0,G,e_{-\infty},I)$.
We study six auxiliary oriented graphs -- $\HOO$, $\HIIL$, $\HIIR$, $\HIILR$, $\HIO$, and $\HOI$ -- each with vertex set $I$ and edges given by carefully chosen regular alternating cycles of size $2$. 
All six oriented graphs are acyclic. 
Four of them -- 
$\HOO$, $\HIIL$, $\HIIR$, and $\HIILR$ -- 
have the maximum order of a directed path at most $\se_P(I)$. 
The graphs $\HIO$ and $\HOI$ do not admit such a strong bound in general, and instead, we present a much more subtle statement. 
We state each of these results (\cref{prop:HOO,prop:HIIL,prop:HIIR,prop:HIILR,prop:HOO,lemma:HIO}) in this section, however, we postpone the proofs to~\Cref{sec:aux-proofs}. 
These results give ground for the coloring $\kappa$ of $I$ in which $\kappa((a,b))$ for $(a,b)\in I$ is defined according to the order of the longest directed path starting or ending in $(a,b)$ in all six oriented graphs.
The number of colors used by $\kappa$ on $I$ will be in $\mathcal{O}(\se_P(I)^8)$. 
Finally, the key statement of this section, the Coloring Lemma, see~\cref{lem:coloring}, 
shows that pairs in $I$ with the same color under $\kappa$ form a reversible set in $P$. 
This completes the proof of~\cref{thm:maximal_instance_imply_dim_boundedness} up to the postponed proofs.

As mentioned above, $I$ is the vertex set of each of the six auxiliary oriented graphs defined in this section.
Moreover, from the definitions, 
it follows that each edge $((a_1,b_1),(a_2,b_2))$ in these graphs is a regular sequence. 
In particular, $b_1$ is left of $b_2$. 
Therefore, by the transitivity
of the \q{left of} relation (see~\Cref{prop:left_porders_bs}), 
%(recall that $((a_1,b_1),(a_2,b_2)$ is a regular sequence if $(a_i,b_i) \in I$ for each $i \in [2]$ and $b_1$ is left of $b_2$).
each of the six oriented graphs is acyclic.

Let $\HOO$ be the oriented graph with the vertex set $I$ and $\sigma = ((a_1,b_1),(a_2,b_2))$ is an edge in~$\HOO$ if $\sigma$ is a regular Out-Out alternating cycle in $P$.

\begin{proposition} \label{prop:HOO} $\maxpath(\HOO) \leq \se_P(I)$.
\end{proposition}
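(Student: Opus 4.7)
The plan is to take a directed path of maximum order in $\HOO$ and show that its vertex set, viewed as a subset of $I$, induces a standard example in $P$ of the same size. This gives the desired bound immediately.

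First, I would fix a directed path in $\HOO$ of maximum order $k$, with vertices $(a_1,b_1), \ldots, (a_k,b_k)$ in order. If $k \leq 1$ the bound is trivial (since $\se_P(I) \geq 1$ by definition), so I assume $k \geq 2$. By the definition of $\HOO$, each consecutive sequence $\sigma_i = ((a_i,b_i),(a_{i+1},b_{i+1}))$ is a regular Out-Out alternating cycle, hence satisfies both \ref{Lout} and \ref{Rout}. Applying the transitivity results \cref{prop:transitivity_RPP}.\ref{prop:item:transitivity-Lout} and \cref{prop:transitivity_RPP}.\ref{prop:item:transitivity-Rout} inductively along the path, I conclude that for every pair of indices $i < j$ in $[k]$, the sequence $((a_i,b_i),(a_j,b_j))$ is regular and satisfies both \ref{Lout} and \ref{Rout}.

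Next, I would invoke \cref{obs:PL2_PR2_basic}: from \ref{Lout} for $((a_i,b_i),(a_j,b_j))$ we obtain $a_j < b_i$ in $P$, and from \ref{Rout} we obtain $a_i < b_j$ in $P$. Therefore, for all distinct $i,j \in [k]$, we have $a_i < b_j$ in $P$. Since each pair $(a_i,b_i)$ belongs to $I \subseteq \Inc(P)$, we also have $a_i \parallel b_i$ in $P$, so the pairs $(a_1,b_1),\ldots,(a_k,b_k)$ are distinct. Hence the set $\{(a_i,b_i) : i \in [k]\}$ is a $k$-element subset of $I$ that induces a standard example of order $k$ in $P$, which yields $k \leq \se_P(I)$.

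I do not anticipate any real obstacle here: all the combinatorial work has already been done in the transitivity proposition and the basic consequences \cref{obs:PL2_PR2_basic}. The only subtle point worth double-checking is that the cumulative application of transitivity is legitimate because each intermediate triple forms a regular sequence (a subsequence of a regular sequence is regular), so the transitivity hypotheses of \cref{prop:transitivity_RPP} really are met at every step.
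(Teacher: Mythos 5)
Your proposal is correct and follows essentially the same route as the paper: inductive use of \cref{prop:transitivity_RPP} to get \ref{Lout} and \ref{Rout} for every pair of indices $i<j$ along the path, then \cref{obs:PL2_PR2_basic} to extract the comparabilities $a_j<b_i$ and $a_i<b_j$, so that the vertices of the path induce a standard example in $P$ of the same size. The remark about regularity of subsequences is exactly the point the paper also relies on, so there is no gap.
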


Let $\HII$ be the oriented graph with the vertex set $I$ and $\sigma = ((a_1,b_1),(a_2,b_2))$ is an edge in~$\HII$ if $\sigma$ is a regular In-In alternating cycle in $P$.
The argument in the Coloring Lemma requires us to deal with certain supergraphs of $\HII$, namely $\HIIL$ and $\HIIR$.

Let $\HIIL$ be the oriented graph with the vertex set $I$ and $\sigma = ((a_1,b_1),(a_2,b_2))$ is an edge in~$\HIIL$ if
$\sigma$ is an edge in $\HII$ or there exists a \emph{witness} $t \in B$ for $\sigma$ such that
\begin{enumerateNumIIL}
    \item $(a_2,t)\in I$ and $((a_1,b_1),(a_2,t))$ is an edge in $\HII$, \label{items:HIIL-shifted-edgeHII}
    \item $t$ is left of $b_2$, and \label{items:HIIL-shifted-t-left-b}
    \item $a_1\parallel b_2$ in $P$. \label{items:HIIL-shifted-a-parallel-b}
\end{enumerateNumIIL}
% In the former case, $\sigma$ is regular by definition.
% In the latter case, $b_1$ is left of $t$ (as $((a_1,b_1),(a_2,t))$ is an edge in $\HII$) and $t$ is left of $b_2$, hence, $b_1$ is left of $b_2$, and so, $\sigma$ is regular.
% As mentioned before, this implies that $\HIIL$ is acyclic.
% A similar reasoning applies to all the remaining oriented graphs that we will define, we elect to omit it in the text.

\begin{proposition} \label{prop:HIIL} $\maxpath(\HIIL) \leq \se_P(I)$.
\end{proposition}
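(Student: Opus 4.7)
Plan: The plan is to prove that every directed path $(a_1,b_1)\to(a_2,b_2)\to\cdots\to(a_k,b_k)$ in $\HIIL$ yields a $k$-element subset of $I$ inducing a standard example of order $k$ in $P$; this gives $k\le\se_P(I)$ and hence the claimed bound.

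First I would unpack the definition of each edge: for every $i\in[k-1]$, $\HIIL$ supplies a witness $t_i\in B$ (with $t_i=b_{i+1}$ in the unshifted case) such that $(a_{i+1},t_i)\in I$ and $((a_i,b_i),(a_{i+1},t_i))$ is a regular In-In alternating cycle in $P$; in the shifted case additionally $t_i$ is left of $b_{i+1}$ and $a_i\parallel b_{i+1}$ in $P$. Setting $t_0:=b_1$ for uniformity, the candidate set is
\[
J\;:=\;\{(a_i,t_{i-1}) : i\in[k]\}\;\subseteq\;I,
\]
where the inclusion uses \ref{items:HIIL-shifted-edgeHII}. The main verification is then that $a_i<t_{j-1}$ in $P$ for all distinct $i,j\in[k]$; once this holds, the pairs in $J$ are automatically pairwise distinct (since $(a_\ell,t_{\ell-1})\in I$ forces $a_\ell\parallel t_{\ell-1}$, ruling out equality $(a_i,t_{i-1})=(a_j,t_{j-1})$), so $J$ witnesses $\se_P(I)\ge k$.

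I would prove the cross-comparabilities by induction on $|i-j|$. The case $j=i+1$ is immediate from the alternating cycle $((a_i,b_i),(a_{i+1},t_i))$, giving $a_i\le t_i=t_{j-1}$ and $a_{i+1}\le b_i$; the latter yields the $j=1$ instance $a_i<t_0=b_1$ only when $i=2$, and for $i\ge 3$ even the case $|i-j|=1$ is nontrivial because, when the edge $(i-2\to i-1)$ is shifted, $t_{i-2}$ is merely \q{left of} $b_{i-1}$ rather than comparable in $P$. In every such situation I would argue by placing $a_i$ inside a suitable region built from witnessing paths into $t_{j-1}$: the transitivity results \cref{prop:transitivity_RPP}\,\ref{prop:item:transitivity-Lin},\ref{prop:item:transitivity-Rin} propagate \ref{Lin} and \ref{Rin} along the path so that the regular sequence $((a_i,b_i),(a_j,t_{j-1}))$ inherits an In-In-like geometric position, and then \cref{cor:characterize_regular_sacs} combined with \cref{cor:sandwitch_b_in_region} and \cref{prop:a-in-some-region} would place $a_i$ in the interior of the corresponding region. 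Any exposed witnessing path from $a_i$ must escape this region through its upper boundary, which consists of witnessing paths leading into $t_{j-1}$, forcing $a_i<t_{j-1}$ in $P$.

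The main obstacle is precisely the shifted case: because $t_i$ and $b_{i+1}$ are related only by the geometric \q{left of} relation, one cannot naively chain $a$'s through the $b_i$'s, and every step of the induction must be carried out at the level of the witnesses $t_i$. I expect to transport left-of relations between $b_{i+1}$ and $t_i$ via \cref{prop:left_is_preserved_from_y_to_z}, and to derive a contradiction from a hypothetical minimal violation $a_i\not\le t_{j-1}$ by a Jordan Curve argument in the spirit of \cref{prop:path_does_not_leave_regions}, combined with \ref{item:instance:not_in_shadow} of the maximal good instance (so that $a_i$ cannot hide inside $\shadz(t_{j-1})$). A secondary bookkeeping subtlety is to keep track of which of the paths $W_L(\cdot)$, $W_R(\cdot)$, $M_L(\cdot)$, $M_R(\cdot)$ coincide on their initial segments, which is handled by repeated use of \cref{prop:W-consistent}, \cref{prop:paths_M_consistent}, and \cref{prop:ML_consistent}.
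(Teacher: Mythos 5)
Your plan reproduces the paper's target: the set $J=\{(a_1,b_1),(a_2,t_1),\dots,(a_k,t_{k-1})\}$ you build is exactly the sequence the paper produces in \cref{cor:HIIL-HIIR-paths}\,\ref{cor:HIIL-HIIR-paths-left}, where each $b_i$ is replaced by the witness of the incoming edge; the paper then shows this is a path in $\HII$ and invokes the transitivity of regular In-In alternating cycles (\cref{prop:In-In-cycle-transitive}) to conclude that $J$ induces a standard example. So the route is the same in outline, but the step that carries all the weight is missing from your argument.

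The gap is the proof of the cross-comparabilities $a_i<t_{j-1}$. Your sketch — propagate \ref{Lin} and \ref{Rin} along the path via \cref{prop:transitivity_RPP}, then use \cref{cor:characterize_regular_sacs}, \cref{cor:sandwitch_b_in_region} and \cref{prop:a-in-some-region} to place $a_i$ inside a region and let an exposed path escape through its upper boundary — does not go through as stated. Satisfying \ref{Lin} and \ref{Rin} for a regular sequence says nothing about comparabilities in $P$; the whole point is that a pair satisfying these two properties need not be an alternating cycle, and deriving the comparabilities is exactly the content of \cref{prop:In-In-cycle-transitive}, which in turn needs the nontrivial fact that $\calR(a_1)=\calR(a_2)$ for an In-In cycle (\cref{prop:In-In_the_same_region}) and a genuine case split on whether $a_1\in\calS$ or $a_1\notin\calS$ (both cases occur). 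Moreover \cref{cor:characterize_regular_sacs} applies only to sequences already known to be alternating cycles, so it cannot be used to establish the very comparability you are proving, and \cref{prop:a-in-some-region} has hypotheses (an intermediate $d\in B$ with $b$ left of $d$ left of $b'$, plus a specific pattern of comparabilities and incomparabilities) that you have not verified are available at a general step of your induction. Finally, the case you correctly single out as the obstacle — crossing a shifted edge, where $t_{i}$ is only ``left of'' $b_{i+1}$ — is precisely what the paper resolves in \cref{prop:HIILR-switch-two}\,\ref{prop:HIILR-switch-two-left} (showing $a_3<t$ in $P$), a multi-claim geometric argument constructing the regions $\calR$ and $\calS$, proving $a_1\in\Int\calR$, $z_0\notin\calR$, $z_3\in Z(a_1)$, etc.; your proposal names the obstacle but replaces its resolution with the unproved assertion that a suitable region exists and that $a_i$ lies inside it. Without carrying out those arguments (or equivalents of \cref{prop:In-In_the_same_region}, \cref{prop:In-In-cycle-transitive} and \cref{prop:HIILR-switch-two}), the induction does not close.
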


Let $\HIIR$ be the oriented graph with the vertex set $I$ and $\sigma = ((a_1,b_1),(a_2,b_2))$ is an edge in~$\HIIR$ if 
$\sigma$ is an edge in $\HII$ or there exists \emph{witness} $s \in B$ for $\sigma$ such that
\begin{enumerateNumIIR}
    \item $(a_1,s) \in I$ and $((a_1,s),(a_2,b_2))$ is an edge in $\HII$, \label{items:HIIR-shifted-edgeHII}
    \item $b_1$ is left of $s$, and \label{items:HIIR-shifted-t-left-b}
    \item $a_2\parallel b_1$ in $P$. \label{items:HIIR-shifted-a-parallel-b}
\end{enumerateNumIIR}

\begin{proposition} \label{prop:HIIR} $\maxpath(\HIIR) \leq \se_P(I)$.
\end{proposition}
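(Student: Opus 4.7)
The plan is to mirror the proof of \cref{prop:HIIL}, swapping the roles of left and right throughout. Let $\pi = ((a_1, b_1), \ldots, (a_n, b_n))$ be a directed path in $\HIIR$. I would show that $\pi$ produces a standard example of order $n$ in $P$ contained in $I$, giving $n \le \se_P(I)$ and hence the bound $\maxpath(\HIIR) \le \se_P(I)$.

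First I would pass from $\pi$ to a directed path in $\HII$ on the same $a_i$'s, by shifting each $b_i$ to the right. For each $i \in [n-1]$, set $t_i = b_i$ if the edge $((a_i, b_i), (a_{i+1}, b_{i+1}))$ already lies in $\HII$; otherwise let $t_i$ be a witness $s_i \in B$ as in \ref{items:HIIR-shifted-edgeHII}--\ref{items:HIIR-shifted-a-parallel-b}. Put $t_n = b_n$. From the defining conditions, $(a_i, t_i) \in I$, $b_i$ is equal to or left of $t_i$, and $((a_i, t_i), (a_{i+1}, b_{i+1}))$ is a regular In-In alternating cycle for each $i \in [n-1]$. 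Next I would verify that replacing $b_{i+1}$ by $t_{i+1}$ preserves the In-In property: regularity $t_i$ left of $t_{i+1}$ follows from the chain $t_i$ left of $b_{i+1}$ left of $t_{i+1}$ and left-of transitivity (\cref{prop:left_porders_bs}), while the \ref{Lin} and \ref{Rin} conditions are preserved by \cref{obs:switch_bs_in_RPP}.\ref{obs:switch_bs_in_RPP:2in1} and \ref{obs:switch_bs_in_RPP:1in2}. This produces a directed path $((a_1, t_1), \ldots, (a_n, t_n))$ in $\HII$. Applying transitivity of \ref{Lin} and \ref{Rin} (\cref{prop:transitivity_RPP}.\ref{prop:item:transitivity-Lin}, \ref{prop:item:transitivity-Rin}) to consecutive edges, for every $i < j$ the sequence $((a_i, t_i), (a_j, t_j))$ satisfies both \ref{Lin} and \ref{Rin}, i.e., sits in the geometric In-In configuration of \cref{cor:characterize_regular_sacs}.\ref{cor:characterize_regular_sacs:item:Lin},\ref{cor:characterize_regular_sacs:item:Rin}.

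Finally, I would derive the crossing comparabilities $a_i < t_j$ and $a_j < t_i$ in $P$ for every $i \ne j$ by induction on $|j - i|$. The base $|j-i| = 1$ is the alternating cycle condition already in hand. For the inductive step, I form a left region $\calR$ of $((a_i, t_i), (a_j, t_j))$ using $z_L(a_i)$ and some $z \in Z(a_i)$ with $z \le t_j$; \cref{cor:characterize_regular_sacs}.\ref{cor:characterize_regular_sacs:item:Lin} places $a_j \in \Int \calR$. A witnessing path from $a_j$ to $t_j$ stays inside $\calR$ by \cref{prop:path_does_not_leave_regions}, and the key observation is that it must reach $t_j$ via the top-right boundary segment $m[V]z$, where every element $c$ satisfies $a_i \le c \le z \le t_j$, yielding $a_i < t_j$. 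The symmetric right-region argument yields $a_j < t_i$. Consequently $\{(a_i, t_i) : i \in [n]\} \subseteq I$ induces a standard example of order $n$ in $P$, as required.

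The hard part will be excluding the other possible exits of the witnessing path through $\partial \calR$. Exits through the bottom-left or bottom-right shadow sides are ruled out by \cref{prop:d_not_in_shadow_when_dangerous}, since dangerousness of $(a_i, t_i)$ guaranteed by \ref{item:instance:dangerous} prevents $a_j$ from reaching $t_j$ through $\shadz(q)$; an exit through the top-left side of $\calR$ (the segment lying on $M_L(a_i)$) would produce a comparability $a_i \le a_j$, contradicting the regularity of $((a_i, t_i), (a_j, t_j))$. Carefully threading this exit analysis through the nested regions produced by the induction, so that the inductive hypothesis supplies the clearances needed to build each new region, will be the technical core of the argument.
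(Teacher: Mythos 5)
Your high-level plan (shift each $b_i$ to a witness, reduce to a chain of In-In cycles in $\HII$, and read off a standard example) matches the paper's route via \cref{cor:HIIL-HIIR-paths} and \cref{prop:HII}, but the proposal has a genuine gap exactly where the paper does its hard work. When you replace $b_{i+1}$ by $t_{i+1}$ and claim $((a_i,t_i),(a_{i+1},t_{i+1}))$ is an edge of $\HII$, you check regularity and the preservation of \ref{Lin}, \ref{Rin} via \cref{obs:switch_bs_in_RPP} — but an edge of $\HII$ must also be an \emph{alternating cycle}, and the comparability $a_i < t_{i+1}$ (when edge $i{+}1$ is shifted, so $t_{i+1}$ is a witness lying strictly to the right of $b_{i+1}$) does not follow from $a_i < b_{i+1}$, since $b_{i+1} \parallel t_{i+1}$. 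This is precisely the content of the paper's \cref{prop:HIILR-switch-two}.\ref{prop:HIILR-switch-two-right} (showing $a_1 < s$), which consumes a substantial region argument with several auxiliary claims; your later claim that "the base $|j-i|=1$ is the alternating cycle condition already in hand" therefore assumes the missing step rather than proving it.

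The sketched inductive step also does not repair this. Choosing $z \in Z(a_i)$ with $z \le t_j$ already asserts $a_i < z \le t_j$, i.e.\ the conclusion you are trying to prove, so the construction of the left region is circular; and the "witnessing path from $a_j$ to $t_j$" does not exist, because $(a_j,t_j)$ is either a pair of $I$ or $t_j = b_j$, so $a_j \parallel t_j$ in $P$. Moreover \cref{prop:path_does_not_leave_regions} only applies to witnessing paths all of whose elements lie in $B$, which a path starting at $a_j \in A$ is not. In the paper's corresponding arguments (\cref{prop:In-In-cycle-transitive} and \cref{prop:HIILR-switch-two}) the region is anchored at an element whose $Z$-elements are already known to sit below previously established comparabilities (e.g.\ the middle element $a_2$, with $z_1 \le b_1$, $z_3 \le b_3$), and one then case-splits on whether the remaining $a$ lies inside or outside that region and routes a path from it to the boundary. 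Without an argument of that shape supplying both the base comparability $a_i < t_{i+1}$ and the crossing comparabilities, the proposal does not yield the standard example.
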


Let $\sigma$ be an an edge in $\HIIL$ (in $\HIIR$, respectively). 
If $\sigma$ is an edge in $\HII$, then we say that $\sigma$ is a \emph{cycle} edge in $\HIIL$ (in $\HIIR$); otherwise, we say that $\sigma$ is a \emph{shifted} edge in $\HIIL$ (in $\HIIR$).
See~\Cref{fig:edges-HIIL-HIIR} for a schematic drawing of edges in $\HIIL$ and $\HIIR$.
See also~\Cref{fig:edges-HII-real} later in the paper.

\begin{figure}[tp]
  \begin{center}
    \includegraphics{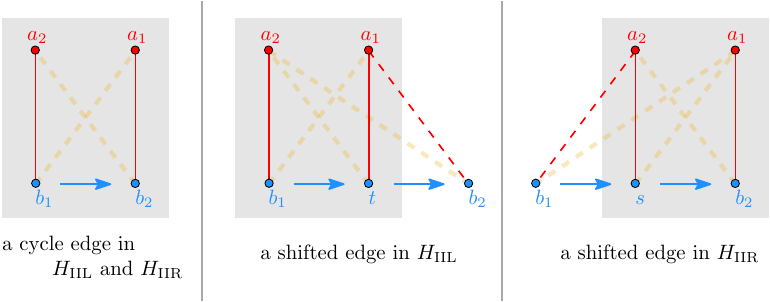}
  \end{center}
  \caption{
    Schematic drawings of edges in $\HIIL$ and $\HIIR$.
    Dashed lines indicate incomparabilities in $P$.
    With yellow lines, we denote incomparable pairs that are surely in $I$.
    Gray regions indicate In-In alternating cycles in~$P$.
    Blue arrows indicate the \q{left of} relation.
    We use the same drawing conventions in several forthcoming figures.
  }
  \label{fig:edges-HIIL-HIIR}
\end{figure}

Let $\HIILR$ be the oriented graph with the vertex set $I$ and $\sigma = ((a_1,b_1),(a_2,b_2))$ is an edge in~$\HIILR$ if there exists a \emph{witness} $(s,t)$ with $s,t \in B$ for $\sigma$ such that
\begin{enumerateNumIILR}
    \item $(a_1,s),(a_2,t)\in I$ and $((a_1,s),(a_2,t))$ is an edge in $\HII$, \label{items:HIILR-edgeHII}
    \item $b_1$ is left of $s$ and $t$ is left of $b_2$, and \label{items:HIILR-b-left-d-t-left-b}
    \item $a_1\parallel b_2$ and $a_2 \parallel b_1$ in $P$. \label{items:HIILR-a-parallel-b}
\end{enumerateNumIILR}

See~\Cref{fig:edges-HIILR} for a schematic drawing of edges in $\HIILR$.

\begin{proposition} $\maxpath(\HIILR) \leq \se_P(I)$.
    \label{prop:HIILR}
\end{proposition}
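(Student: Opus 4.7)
The plan is to show that every directed path of order $k$ in $\HIILR$ produces a subset of $I$ of size $k$ that induces a standard example of order $k$ in $P$; this gives $\maxpath(\HIILR) \le \se_P(I)$ immediately. Fix such a path $(a_1,b_1)\to(a_2,b_2)\to\cdots\to(a_k,b_k)$, and for each $i\in[k-1]$ fix a witness $(s_i,t_i)\in B\times B$ satisfying \ref{items:HIILR-edgeHII}--\ref{items:HIILR-a-parallel-b}. Each $(a_i,s_i),(a_{i+1},t_i)\in I$, and each regular In-In cycle $((a_i,s_i),(a_{i+1},t_i))\in\HII$ yields $a_i<t_i$, $a_{i+1}<s_i$, $a_i\parallel s_i$, $a_{i+1}\parallel t_i$ in $P$, together with $b_i\prec s_i\prec t_i\prec b_{i+1}$ in the ``left of'' order, producing globally $b_1\prec s_1\prec t_1\prec b_2\prec\cdots\prec b_k$. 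Iterating \ref{Lin} and \ref{Rin} across the edges and using transitivity (\cref{prop:u_e_is_poset} and \cref{prop:left_porders_bs}), the paths $M_L(a_1),\ldots,M_L(a_k)$ are linearly ordered left-to-right and, symmetrically, the $M_R(a_i)$'s right-to-left; by \cref{cor:characterize_regular_sacs}.\ref{cor:characterize_regular_sacs:item:Lin}, each $a_{i+1}$ lies in the interior of the left region $\calR_1^{(i)}$ of the $i$-th In-In cycle, giving a nested tower of regions along the chain.

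I then plan to select a subset $J\subseteq I$ of $k$ pairs, one involving each $a_i$, and verify the standard-example conditions. A natural candidate is
\[
J=\{(a_1,s_1)\}\cup\{(a_i,t_{i-1}):2\le i\le k\},
\]
and, writing $d_1:=s_1$ and $d_i:=t_{i-1}$ for $i\ge2$, the task is to check $a_i<d_j$ in $P$ for all distinct $i,j\in[k]$. The \emph{adjacent} comparabilities $a_2<d_1=s_1$ and $a_i<d_{i+1}=t_i$ (for $i<k$) follow immediately from the $k-1$ In-In cycles. The \emph{non-adjacent} comparabilities are the core of the argument and will be obtained by a topological propagation: each $\calR_1^{(i)}$ has upper-min $m_i$ with $a_i\le m_i<z_i\le t_i$ in $P$ for some $z_i\in Z(a_i)$; combining the confinement of witnessing paths in regions and shadow blocks (\cref{prop:path_does_not_leave_regions} and \cref{cor:path-in-block}) with the nested arrangement of the $\calR_1^{(i)}$'s induced by the linear order on the $M_L(a_i)$'s, an induction along the chain should produce explicit witnessing paths from each $a_i$ reaching every $t_j$ with $j>i$ (and symmetrically every $s_j$ with $j<i$). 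If the candidate $J$ above requires a ``backward'' comparability that cannot be established directly, a direction-dependent variant ---using $s_i$-witnesses for small indices and $t_{i-1}$-witnesses for large indices--- will be used instead; distinctness of the $a_i$'s is automatic from the cross-comparabilities.

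The hard part will be the \emph{doubly shifted} nature of $\HIILR$-edges: unlike in $\HIIL$ (\cref{prop:HIIL}) or $\HIIR$ (\cref{prop:HIIR}), where one coordinate of the witness In-In cycle is anchored at the outer path's $b_i$ or $b_{i+1}$, in $\HIILR$ both witness coordinates $s_i,t_i$ may genuinely differ from $b_i,b_{i+1}$. Consequently, a directed path in $\HIILR$ does not reduce to a directed path in $\HIIL$ or $\HIIR$, and one cannot simply invoke the earlier bounds. The topological propagation must therefore carefully handle the ``buffer'' elements $s_i,t_i$ strictly separating $b_i$ from $b_{i+1}$ in the left-to-right order, and show that the containment of each later $a_j$ inside the nested left regions survives as the outer path steps through $(a_j,b_j)$ rather than through the cycle coordinates. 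The dangerousness condition \ref{item:instance:dangerous} and the maximality condition \ref{item:instance:maximal} will be essential throughout: they guarantee the existence of the $Z(a_i)$ elements and exposed paths driving the induction, and they rule out pathological configurations that would break the propagation.
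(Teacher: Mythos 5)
There is a genuine gap. Your plan correctly identifies the target (a standard example of order $k$ inside $I$) and correctly flags the hard part (the non-adjacent comparabilities arising from the doubly shifted witnesses), but that hard part is exactly where the proof lives, and you leave it at the level of ``a topological propagation \ldots should produce explicit witnessing paths'' together with an escape clause (``a direction-dependent variant \ldots will be used instead''). Two specific assertions in your sketch are unsubstantiated and do not follow from what you cite: (i) the claim that the left regions $\calR_1^{(i)}$ of the successive witness In-In cycles form a ``nested tower'' along the chain --- \cref{cor:characterize_regular_sacs} only places $a_{i+1}$ inside the left region of the $i$-th cycle, and nothing in \cref{prop:u_e_is_poset} or \cref{prop:left_porders_bs} gives containment of one such region in the next; and (ii) the claim that this nesting, plus \cref{prop:path_does_not_leave_regions} and \cref{cor:path-in-block}, yields $a_i<t_j$ for all $j>i$ and $a_i<s_j$ for all $j<i$. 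As written, the standard-example conditions for your set $J$ are assumed rather than proved.

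For comparison, the paper does not attack all cross comparabilities at once. Its decisive step is the local statement \cref{prop:change_to_s_HIILR}: for two consecutive $\HIILR$-edges with witnesses $(s_1,t_2)$ and $(s_2,t_3)$, the pair $((a_1,s_1),(a_2,s_2))$ is itself an edge of $\HII$. This converts an entire $\HIILR$-path $((a_1,b_1),\dots,(a_n,b_n))$ into the $\HII$-path $((a_1,s_1),\dots,(a_{n-1},s_{n-1}),(a_n,t_n))$ (the last edge coming directly from \ref{items:HIILR-edgeHII}), after which all non-adjacent comparabilities are delivered in one stroke by transitivity of In-In cycles (\cref{prop:In-In-cycle-transitive}) via $\maxpath(\HII)\le\se_P(I)$ (\cref{prop:HII}). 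Moreover, the regions used to prove \cref{prop:change_to_s_HIILR} are not the left regions of the witness cycles: one chooses $u_{i+1}\in Z(a_{i+1})$ with $u_{i+1}\le s_i$ and $v_i\in Z(a_i)$ with $v_i\le t_{i+1}$, forms $\calS_i=\calR(a_i,u_i,v_i,U_i,V_i)$, proves $a_{i+1}\in\Int\calS_i$ together with $U_{i+1}\subset\Int\calS_i$, propagates this to $a_3\in\Int\calS_1$, and finally extracts $a_1<s_2$ from the fact that the exposed path $U_3$ must cross $\partial\calS_1$ at an element not in $B$, hence on $U_1\cup V_1$. Some such concrete mechanism (this lemma or an equivalent) is what your proposal is missing; without it the argument does not go through.
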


\begin{figure}[tp]
  \begin{center}
    \includegraphics{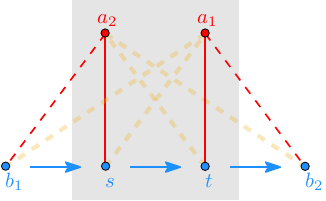}
  \end{center}
  \caption{
    Schematic drawing of an edge in $\HIILR$.
  }
  \label{fig:edges-HIILR}
\end{figure}

Let $\HIO$ be the oriented graph with the vertex set $I$ and $\sigma = ((a_1,b_1),(a_2,b_2))$ is an edge in~$\HIO$ if $\sigma$ is regular and satisfies \ref{Rin} and \ref{Lout}.

Let $\sigma = ((a_1,b_1),(a_2,b_2))$ be an edge in $\HIO$.
Note that by~\cref{obs:PL2_PR2_basic}.\ref{obs:PL2_PR2_basic:left}, $a_2 < b_1$ in $P$.
We say that $\sigma$ is a \emph{cycle} edge in $\HIO$ if $a_1 < b_2$ in $P$, in other words, if $\sigma$ is a regular In-Out alternating cycle in $P$.
We say that $\sigma$ is a \emph{shifted} edge in $\HIO$ if there exists a \emph{witness} $t \in B$ for $\sigma$ such that 
\begin{enumerateNumIO}
    \item $(a_2,t)\in I$ and $((a_1,b_1),(a_2,t))$ is a regular In-Out alternating cycle in $P$, \label{items:HIO-cycle}
    \item $t$ is left of $b_2$, and \label{items:HIO-t-left-b}
    \item $a_1\parallel b_2$ in $P$. \label{items:HIO-a-parallel-b}
\end{enumerateNumIO}
%If $\sigma$ is neither a cycle edge nor a shifted edge in $\HIO$, we say that $\sigma$ is a \emph{plain} edge in $\HIO$.
See~\cref{fig:edges-HIO} for a schematic drawing of edges in $\HIO$.
See also~\Cref{fig:edges-HIO-real} later in the paper.

\begin{figure}[tp]
  \begin{center}
    \includegraphics{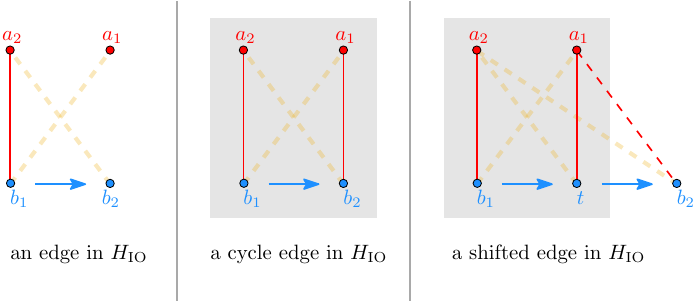}
  \end{center}
  \caption{
  Schematic drawings of edges in $\HIO$.
  Gray regions indicate In-Out alternating cycles in $P$.
  }
  \label{fig:edges-HIO}
\end{figure}

Let $\HOI$ be the directed graph with the vertex set $I$ and $\sigma = ((a_1,b_1),(a_2,b_2))$ is an edge in~$\HOI$ if $\sigma$ satisfies \ref{Lin} and \ref{Rout}.

Let $\sigma = ((a_1,b_1),(a_2,b_2))$ be an edge in $\HOI$.
Note that by~\cref{obs:PL2_PR2_basic}.\ref{obs:PL2_PR2_basic:right}, $a_1 < b_2$ in $P$.
We say that $\sigma$ is a \emph{cycle} edge in $\HOI$ if $a_2 < b_1$ in $P$, in other words, if $\sigma$ is a regular Out-In alternating cycle in $P$.
We say that $\sigma$ is a \emph{shifted} edge in $\HOI$ if there exists a \emph{witness} $t \in B$ for $\sigma$ such that
\begin{enumerateNumOI}
    \item $(a_1,t)\in I$ and $((a_1,t),(a_2,b_2))$ is a regular Out-In alternating cycle in $P$, \label{items:HOI-cycle}
    \item $b_1$ is left of $t$, and \label{items:HOI-b-left-t}
    \item $a_2\parallel b_1$ in $P$. \label{items:HOI-a-parallel-b}
\end{enumerateNumOI}
%If $\sigma$ is neither a cycle edge nor a shifted edge in $\HOI$, we say that $\sigma$ is a \emph{plain} edge in $\HOI$.

Let $H \in \{\HIO,\HOI\}$.
We assign a weight of $0$ or $1$ to each edge of $H$.
All cycle edges and shifted edges in $H$ are of weight $1$ and all the remaining edges in $H$ are of weight $0$.
% Let $H$ be an acyclic oriented graph with weights assigned to each edge.
The \emph{weight} of a path in $H$ is the sum of the weights of its edges.
Furthermore, for a vertex $v$ of $H$, $\maxsw(H,v)$ is the maximum weight of a directed path in $H$ starting in $v$ and $\maxew(H,v)$ is the maximum weight of a directed path in $H$ ending in $v$.

Contrary to the four previously defined oriented graphs, the maximum order of a directed path in $\HIO$ and $\HOI$ cannot be bounded in terms of $\se_P(I)$.
Also, the maximum weight of a directed path cannot be bounded in terms of $\se_P(I)$ (see~\Cref{fig:path-HIO} later in the paper).
Within~\cref{lemma:HIO}, we prove a weaker property that is good enough for our purposes in~\cref{lem:coloring}.
% Given an edge $((a,b),(a',b'))$ of weight $1$ in $\HIO$, 
% we trivially have
% \[
% \maxsw(\HIO,(a,b)) > \maxsw(\HIO,(a',b')).
% \]
% Within~\cref{lemma:HIO} we show that the gap between these two values stays in $\mathcal{O}(\se_P(I)^2)$. 
% A symmetric property holds for $\HOI$.
% This is good enough for our purposes in~\cref{lem:coloring}. 

%Before stating it, we need some notation.
%Let $H$ be an acyclic oriented graph with weights assigned to each edge.
%For an edge $\sigma$ of $H$, we denote by $\weight(\sigma)$ the weight of $\sigma$.
\vbox{
\begin{restatable}{lemma}{hmm}
\label{lemma:HIO}
Let $m = 2\se_P(I) \cdot (2\se_P(I)+6)$.
\begin{enumerate}
    \item \label{lemma:HIO:HIO} If $((a,b),(a',b'))$ is an edge of weight $1$ in $\HIO$, then 
    \[\maxsw(\HIO,(a,b)) \not\equiv \maxsw(\HIO,(a',b')) \bmod m.\]
    \item \label{lemma:HIO:HOI} If $((a,b),(a',b'))$ is an edge of weight $1$ in $\HOI$, then 
    \[\maxew(\HOI,(a,b)) \not\equiv \maxew(\HOI,(a',b')) \bmod m.\]
\end{enumerate}
\end{restatable}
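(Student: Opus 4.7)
By symmetry --- $\HOI$ on the instance $(P,x_0,G,e_{-\infty},I)$ agrees with $\HIO$ on the dual maximal good instance $(P^{-1},x_0,\ldots,I^{-1})$ after swapping leftmost/rightmost paths, and $\maxew$ dualizes to $\maxsw$ --- it suffices to prove item~(\ref{lemma:HIO:HIO}). Fix a weight-$1$ edge $((a,b),(a',b'))$ of $\HIO$. Prepending this edge to any max-weight directed path starting at $(a',b')$ yields the trivial inequality
\[\maxsw(\HIO,(a,b))\ge\maxsw(\HIO,(a',b'))+1,\]
so the nontrivial direction is the upper bound
\[\maxsw(\HIO,(a,b))\le\maxsw(\HIO,(a',b'))+m-1.\]
My plan is to argue this by contradiction: assume the difference is at least $m$ and build a directed path in $\HIO$ starting at $(a',b')$ of weight strictly greater than $\maxsw(\HIO,(a',b'))$, contradicting maximality.

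The construction is a \emph{shifting} argument. Let
\[\pi=((a,b)=(a_0,b_0),(a_1,b_1),\ldots,(a_N,b_N))\]
be a max-weight directed path in $\HIO$ starting at $(a,b)$, and let $i_1<\cdots<i_W$ be the indices at which the edge $((a_{i_r-1},b_{i_r-1}),(a_{i_r},b_{i_r}))$ has weight $1$, so $W=\maxsw(\HIO,(a,b))$. I would process the weight-$1$ edges of $\pi$ in order while maintaining an auxiliary pair $(a^*,b^*)\in I$ initialized to $(a',b')$. At index $i_r$ the current weight-$1$ edge carries a distinguished witness $t_r\in B$ --- namely $b_{i_r}$ in the cycle-edge case, or the element provided by \ref{items:HIO-cycle}--\ref{items:HIO-a-parallel-b} in the shifted case --- with $a_{i_r-1}<t_r$ in $P$ and $t_r$ left of or equal to $b_{i_r}$. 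Using \cref{prop:a-in-some-region} and \cref{cor:sandwitch_b_in_region} I would locate $a^*$ inside the region attached to the current cycle of $\pi$, then invoke maximality of the instance~\ref{item:instance:maximal} to promote the resulting shifted pair to an element of $I$, thereby adjoining a new weight-$1$ edge from $(a^*,b^*)$ and updating $(a^*,b^*)$. Each step that fails to produce such a new weight-$1$ edge would contribute a \emph{stuck element} (the witness or endpoint whose position breaks one of the required conditions).

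The main obstacle is bounding the total number of stuck elements by $m-1=2\se_P(I)(2\se_P(I)+6)-1$. I would stratify stuck elements into at most $2\se_P(I)$ \emph{layers}, indexed by which shifting condition broke (unwanted comparability of $a^*$ to some $b_{i_r}$, failure of the witness to sit correctly in the left-of order, or misplacement of $a^*$ relative to a shadow) combined with a side (left or right) relative to the current region; the factor $\se_P(I)$ appearing here is designed so that reusing any stuck element would force an already-existing standard example in $P$. Whenever more than $2\se_P(I)+6$ stuck elements collect in a single layer, I would extract from them and selected pairs of $\pi$ a subset of $I$ of size $\se_P(I)+1$ inducing a standard example in $P$: the required comparabilities $a_i\le b_j$ come from the witness conditions for cycle or shifted edges, while the required incomparabilities follow from the linear left-of order on $B$ (\cref{prop:left_porders_bs}) and the fact that dangerous pairs avoid each other's shadows (\cref{prop:d_not_in_shadow_when_dangerous} together with~\ref{item:instance:not_in_shadow}). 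Such a standard example contradicts the definition of $\se_P(I)$, completing the contradiction.

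The technically hardest part, where I expect the bulk of the work to live, is verifying within a single layer that the extracted stuck elements really are pairwise incomparable and that their cross-comparabilities to the distinguished $b_{i_r}$'s along $\pi$ fill out the bipartite pattern of a standard example. This is where the region theory of \cref{ssec:regions} does the heavy lifting: \cref{prop:shad-disjoint-from-calR}, \cref{prop:path_does_not_leave_regions}, and \cref{prop:b_relative_to_region} together force stuck elements into prescribed topological positions, which in turn dictate exactly the comparabilities needed. A careful case distinction on the cycle versus shifted nature of each edge of $\pi$, and on whether the stuck element lies inside, on the boundary of, or outside the associated region, will be required.
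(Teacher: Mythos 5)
Your reduction is the right one (treat $\HOI$ by duality, note the trivial strict inequality $\maxsw(\HIO,(a,b))>\maxsw(\HIO,(a',b'))$, and aim to show the difference is less than $m$), but from that point on the proposal is a plan rather than a proof, and its quantitative core does not hold together. The bound is carried entirely by two assertions you never substantiate: that the ``stuck elements'' fall into at most $2\se_P(I)$ layers, and that more than $2\se_P(I)+6$ stuck elements in one layer force a standard example of order $\se_P(I)+1$ in $I$. No mechanism is given for either; indeed the arithmetic is suspicious, since a layer of size $\se_P(I)+1$ ought already to suffice if each stuck element contributes a leg of a standard example, so the factor $2\se_P(I)+6$ is not explained by your scheme. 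Moreover, the iterative ``shifting'' itself is problematic: after you replace the current pair by a shifted pair $(a^*,b^*)$, adjoining one new weight-$1$ edge does not give you a directed path in $\HIO$ of large weight starting at $(a',b')$ --- you would have to re-verify that every subsequent edge of $\pi$ (with its weight) survives the modification, and nothing in \ref{items:HIO-cycle}--\ref{items:HIO-a-parallel-b} or \ref{item:instance:maximal} guarantees this; nor is there any argument that the number of failed steps is bounded at all.

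For comparison, the paper does not shift pairs along the path. It fixes a max-weight path from $(a,b)$, builds elements $u_i$ on the spine $W_L(z_L(a_1))$ and elements $v_i$ for the weight-$1$ edges, attaches a region $\calR_i=\calR(a_i,u_i,v_i,U_i,V_i)$ to each such edge, and $2$-colors pairs $i<j$ according to whether $v_j\in\Int\calR_i$ or $v_j\notin\calR_i$. The ``in'' color yields standard examples, giving $\maxpath(\Hin)\le\se_P(I)$, so among the first $m$ weight-$1$ indices there is an $\Hout$-path of length $2(2\se_P(I)+6)$; along it the regions nest ($\calR_i\subset\calR_{i,j}\subset\calR_j$), and the going-in/going-out claims (\cref{claim:going-in}, \cref{claim:going-out}), applied across two blocks of $\se_P(I)+1$ consecutive indices, locate $a$, then $a'$ and $b'$, inside suitable regions. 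The payoff is a \emph{single} new edge $((a',b'),(a_{j_8},b_{j_8}))$ in $\HIO$, after which the untouched tail of the original path is reused verbatim; this is exactly where the factorization $m=2\se_P(I)\cdot(2\se_P(I)+6)$ comes from. Your proposal is missing this entire mechanism (the dichotomy replacing your ``layers'', the nested regions, and the construction of one edge reaching far down the original path), so as it stands it does not establish the upper bound on the difference.
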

}
We are ready to define the final coloring of $I$.
Let
\begin{align*}
s=\se_P(I) \ \text{ and } \ m=2s \cdot (2s+6).
%m&=2\se_P(I^*) \cdot (2\se_P(I^*)+6).
\end{align*}
We define $\kappa$ to be a function such that for every $(a,b)\in I$,
\begin{align*}
\kappa((a,b))_1 &= \maxsp(\HOO,(a,b)),\\
\kappa((a,b))_2 &= \maxsp(\HIIL,(a,b)),\\
\kappa((a,b))_3 &= \maxsp(\HIIR,(a,b)),\\
\kappa((a,b))_4 &= \maxsp(\HIILR,(a,b)),\\
\kappa((a,b))_5 &= \maxsw(\HIO,(a,b))\bmod m,\\
\kappa((a,b))_6 &= \maxew(\HOI,(a,b))\bmod m,
\end{align*}
and
\[\kappa((a,b)) = (\kappa((a,b))_i)_{i \in [6]}.\]
Let $\Image(\kappa)$ be the image of $\kappa$. 
%Recall that
%\begin{align*}
%\maxpath(\HOO)&\leq \se(P) &&
%\textrm{by~\cref{prop:HOO}},\\ 
%\maxpath(\HII) &\leq \se(P) &&\textrm{by~\cref{cor:H_II_leq_se}},\\ 
%\maxpath(\HIIL) &\leq \se(P) &&
%\textrm{by~\cref{cor:HIIL-HIIR-paths}.\ref{cor:HIIL-HIIR-paths-left}},\\ 
%\maxpath(\HIIR) &\leq \se(P) &&
%\textrm{by~\cref{cor:HIIL-HIIR-paths}.\ref{cor:HIIL-HIIR-paths-right}},\\ 
%\maxpath(\HIILR)&\leq \se(P)+1&& \textrm{by~\cref{cor:maxpath_leq_se_HIILR}}.
%\end{align*}
Altogether we have 
\begin{align*}
|\Image(\kappa)| &\leq \maxpath(\HOO)  \cdot \maxpath(\HIIL)\\
&\qquad\cdot \maxpath(\HIIR) \cdot \maxpath(\HIILR) \cdot m \cdot m\\
&\leq s^4\cdot (2s\cdot (2s+6))^2\\
&\leq 16s^6\cdot(s+3)^2 =\Oh\left(s^8\right),
%&\leq \se_P(I^*)^4\cdot (\se_P(I^*)+1)\cdot (2\se_P(I^*)(2\se_P(I^*)+6))^2\\
%&\leq 16\se_P(I^*)^6\cdot (\se_P(I^*)+1)\cdot(\se_P(I^*)+3)^2 =\Oh\left(\se_P(I^*)^9\right),
\end{align*} 
where the second inequality follows from Propositions~\ref{prop:HOO} to \ref{prop:HIILR}.
The Coloring Lemma, proved below assuming~\cref{lemma:HIO}, states that incomparable pairs from $I$ with the same color under $\kappa$ form a reversible set in $P$.

Altogether, we obtain
%In particular, all the mentioned statements imply that 
\[
\dim_P(I) \leq |\Image(\kappa)| \leq 16\se_P(I)^6  \cdot (\se_P(I)+3)^2,
\]
which will complete the proof of~\Cref{thm:maximal_instance_imply_dim_boundedness}.
%Note that this is equivalent to the statement of~\Cref{thm:maximal_instance_imply_dim_boundedness}.
\begin{lemma}[Coloring lemma]\label{lem:coloring}
    For every $\xi\in\Image(\kappa)$, 
    the set $\set{(a,b)\in I\mid \kappa((a,b))=\xi}$ is reversible.
    % In particular,
    %     \[\dim_P(I) \leq 16\se_P(I)^6 \cdot (\se_P(I)+1) \cdot (\se_P(I)+3)^2.\]
\end{lemma}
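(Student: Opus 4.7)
The plan is a contrapositive argument: assume, towards a contradiction, that the set $I_\xi = \set{(a,b) \in I : \kappa((a,b)) = \xi}$ is not reversible. Then by~\cref{prop:dim-alternating-cycles}, $I_\xi$ contains a strict alternating cycle $C = ((a_1,b_1),\dots,(a_k,b_k))$. The core combinatorial claim driving the argument will be that for any such cycle there exist distinct $\alpha,\beta \in [k]$ such that $((a_\alpha,b_\alpha),(a_\beta,b_\beta))$ is an edge in one of the six auxiliary graphs $\HOO$, $\HIIL$, $\HIIR$, $\HIILR$, $\HIO$, $\HOI$, and of weight $1$ in the last two cases.

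Given this claim, the contradiction is nearly automatic. Each of $\HOO$, $\HIIL$, $\HIIR$, $\HIILR$ is acyclic, so $\maxsp(H,\cdot)$ strictly decreases along every edge; the corresponding coordinate of $\kappa$ therefore distinguishes the two endpoints. For a weight-$1$ edge in $\HIO$, \cref{lemma:HIO} yields $\maxsw(\HIO,(a_\alpha,b_\alpha)) \not\equiv \maxsw(\HIO,(a_\beta,b_\beta)) \pmod{m}$, so the fifth coordinate of $\kappa$ separates them; the symmetric statement for $\HOI$ handles the sixth coordinate. In every case this contradicts $\kappa((a_\alpha,b_\alpha)) = \kappa((a_\beta,b_\beta)) = \xi$.

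It remains to establish the combinatorial claim. By~\ref{item:instance:sacs}, the elements $b_1,\dots,b_k$ are linearly ordered by \q{left of}. For $k = 2$, after reindexing so that $b_1$ is left of $b_2$, the sequence $((a_1,b_1),(a_2,b_2))$ is a regular alternating cycle, and~\cref{cor:characterize_regular_sacs} classifies it as one of In-In, In-Out, Out-In, or Out-Out; each type immediately supplies the required edge (Out-Out in $\HOO$; In-In as a cycle edge in $\HIIL$ and also in $\HIIR$; In-Out as a weight-$1$ cycle edge in $\HIO$; Out-In as a weight-$1$ cycle edge in $\HOI$).

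The main obstacle is the case $k \ge 3$, where no two distinct pairs of $C$ can themselves form a strict alternating cycle of size $2$ (since cyclically $a_i \le b_j$ only when $j \equiv i + 1 \pmod k$), so~\cref{cor:characterize_regular_sacs} is not directly applicable to pairs of $C$ alone. The plan is to exploit the \emph{shifted} edges in $\HIIL$, $\HIIR$, $\HIILR$, $\HIO$, $\HOI$, which were engineered exactly for this situation, by drawing the required witness from a third pair of $C$. Concretely, for indices with $\alpha \equiv \beta + 1$ and $\gamma \equiv \beta + 2 \pmod k$, the relations $a_\beta \le b_\alpha$ and $a_\alpha \le b_\gamma$ hold, so that $((a_\alpha,b_\alpha),(a_\beta,b_\gamma))$ is a strict alternating cycle of size $2$ to which~\cref{cor:characterize_regular_sacs} does apply. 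The maximality condition~\ref{item:instance:maximal} is exactly what guarantees $(a_\beta,b_\gamma) \in I$, so that it serves as a legitimate witness in a shifted-edge definition. Driven by the \q{left of}-ordering of $b_\alpha,b_\beta,b_\gamma$ and the type of the auxiliary $2$-cycle $((a_\alpha,b_\alpha),(a_\beta,b_\gamma))$, a case analysis should produce an appropriate shifted edge between $(a_\alpha,b_\alpha)$ and $(a_\beta,b_\beta)$ in one of the six graphs. The hard part will be verifying all the side conditions built into the shifted-edge definitions---incomparability, escape from $\shadz$, dangerousness, and the precise \q{left of} inequalities---for which the regional machinery of~\cref{ssec:regions}, and~\cref{prop:a-in-some-region} in particular, is expected to be the principal tool.
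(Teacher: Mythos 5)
Your overall skeleton coincides with the paper's proof: suppose a colour class contains a strict alternating cycle, show two of its pairs are adjacent in one of the six auxiliary graphs (with weight $1$ in $\HIO$ or $\HOI$), and contradict equality of the corresponding coordinate of $\kappa$; your treatment of $k=2$ via \cref{cor:characterize_regular_sacs} is exactly \cref{claim:k-neq-2}. The gap is in your plan for $k\ge 3$: you propose to extract the required shifted edge from a \emph{consecutive} triple $\beta,\beta+1,\beta+2$, using the single witness $b_{\beta+2}$ and the auxiliary $2$-cycle built from $a_\beta\le b_{\beta+1}$ and $a_{\beta+1}\le b_{\beta+2}$. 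This works only when $b_{\beta+2}$ lies between the other two $b$'s in the \q{left of} order --- precisely the two configurations that the paper rules out in \cref{claim:no-1-2-k,claim:no-1-3-2}. In the remaining four orderings of a consecutive triple no edge of any of the six graphs can be produced from the triple at all: every cycle or shifted edge needs comparabilities $a_i<b_j$ or a witness that is left of the target $b$, and inside a strict alternating cycle the only comparabilities available within a triple are the consecutive ones, while the witness $b_{\beta+2}$ sits on the wrong side. Such configurations genuinely occur: already for $k=4$ with left-of order $b_1,b_4,b_2,b_3$ every consecutive triple is of one of the four unexcluded types, so your case analysis has nothing to act on, and this is exactly the situation the lemma must handle.

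What is missing is the paper's global step for $k\ge4$. After normalizing $b_1$ to be leftmost, the two excluded-triple claims force $b_1$ left of $b_k$, $b_k$ left of $b_2$, and $b_2$ left of $b_3$, whence there is a \emph{crossing} index $\alpha\in\{3,\dots,k-1\}$ with $b_1$ left of $b_{\alpha+1}$, $b_{\alpha+1}$ left of $b_2$, and $b_2$ left of $b_\alpha$; one then builds an edge of $\HIILR$ between the \emph{non-adjacent} pairs $(a_1,b_1)$ and $(a_\alpha,b_\alpha)$, whose witness is the pair $(b_{\alpha+1},b_2)$ drawn from two different, non-consecutive positions of the cycle (\cref{claim:edge-in-HIILR}), the In-In property of $((a_1,b_{\alpha+1}),(a_\alpha,b_2))$ being verified with both halves of \cref{prop:a-in-some-region}. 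Note also that maximality~\ref{item:instance:maximal} does not by itself give $(a_\beta,b_\gamma)\in I$: one must first verify $a_\beta\notin\shadz(b_\gamma)$ and dangerousness, which is what \cref{claim:pair-in-I} does and which again requires the sandwiching left-of conditions that fail in exactly the triple orderings your analysis would need. Without this four-index $\HIILR$ construction the argument does not close.
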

\begin{proof}[Proof assuming~\Cref{lemma:HIO}]
    Let $\xi\in\Image(\kappa)$ and $\xi=(\xi_i)_{i\in[6]}$. 
    Let $I_\xi=\set{(a,b)\in I\mid \kappa((a,b))=\xi}$. 
    We show that $I_\xi$ is reversible. 
    Suppose otherwise and 
    let $((a_1,b_1),\dots,(a_k,b_k))$ be a strict alternating cycle in~$P$ with all the pairs in $I_\xi$.
    The indices in $[k]$ are considered cyclically, e.g.\ $a_{k+1} = a_1$.

    \begin{claim}\label{claim:no-edge-in-aux-digraph-special}
        For all $\alpha,\beta \in [k]$ and $H \in \set{\HIO, \HOI}$, there is no edge of weight $1$ from $(a_\alpha,b_\alpha)$ to $(a_\beta,b_\beta)$ in $H$.
    \end{claim}
    \begin{proof}
        If there is an edge edge of weight $1$ from $(a_\alpha,b_\alpha)$ to $(a_\beta,b_\beta)$ in some $H \in \{\HIO,\HOI\}$, then by~\Cref{lemma:HIO}, either $\kappa((a_\alpha,b_\alpha))_5 \neq \kappa(a_\beta,b_\beta))_5$ or $\kappa((a_\alpha,b_\alpha))_6 \neq \kappa(a_\beta,b_\beta))_6$, in particular, $\kappa((a_\alpha,b_\alpha)) \neq \kappa((a_\beta,b_\beta))$, which is a contradiction.
    \end{proof}

    \begin{claim}\label{claim:no-edge-in-aux-digraph-normal}
        For all $\alpha,\beta \in [k]$ and $H \in \set{\HOO, \HIIL, \HIIR, \HIILR}$, there is no edge from $(a_\alpha,b_\alpha)$ to $(a_\beta,b_\beta)$ in $H$.
    \end{claim}
    \begin{proofclaim}
        Suppose otherwise, and let $\alpha,\beta \in [k]$ and $H \in \set{\HOO, \HIIL, \HIIR, \HIILR}$ be such that $((a_\alpha,b_\alpha),(a_\beta,b_\beta))$ is an edge in $H$. 
        Clearly,
        \[\maxsp(H, (a_\alpha,b_\alpha)) > \maxsp(H, (a_\beta,b_\beta)).\]
        Let $i\in[4]$ be such that $\kappa((a,b))_i = \maxsp(H,(a,b))$ for every $(a,b) \in I$.
        %correspond to $H$ in the definition of $\kappa$. 
        We obtain,
        \[\xi_i=\kappa((a_\alpha,b_\alpha))_i > \kappa((a_\beta,b_\beta))_i=\xi_i.\]
        This contradiction completes the proof of the claim.
    \end{proofclaim}

    % \begin{proofclaim}
    %     Suppose otherwise, and let $\alpha,\beta \in [k]$ and $H \in \set{\HIO, \HOI}$ be such that $((a_\alpha,b_\alpha),(a_\beta,b_\beta))$ is an edge of weight $1$ in $H$. 
    %     First, assume that $H = \HIO$.
    %     Since $((a_\alpha,b_\alpha),(a_\beta,b_\beta))$ is an edge of weight $1$ in $\HIO$,
    %     \begin{align*}
    %         \maxsw(\HIO, (a_\alpha,b_\alpha)) &> \maxsw(\HIO, (a_\beta,b_\beta)),
    %         \intertext{and since $\kappa((a_\alpha,b_\alpha))_5 = \kappa((a_\beta,b_\beta))_5$,}
    %         \maxsw(\HIO, (a_\alpha,b_\alpha)) &\geq \maxsw(\HIO, (a_\beta,b_\beta)) + m.
    %         \intertext{However, this contradicts ~\Cref{lemma:HIO}.\ref{lemma:HIO:HIO}. 
    %         This completes the proof in the case of $H = \HIO$. 
    %         Now, assume that $H = \HOI$.
    %         Since $((a_\alpha,b_\alpha),(a_\beta,b_\beta))$ is an edge of weight $1$ in $\HOI$,}
    %         \maxew(\HOI, (a_\beta,b_\beta)) &> \maxew(\HOI, (a_\alpha,b_\alpha)),
    %         \intertext{and since $\kappa((a_\alpha,b_\alpha))_6 = \kappa((a_\beta,b_\beta))_6$,}
    %         \maxew(\HOI, (a_\beta,b_\beta)) &\geq \maxew(\HOI, (a_\alpha,b_\alpha)) + m.
    %     \end{align*}
    %     However, this contradicts~\Cref{lemma:HIO}.\ref{lemma:HIO:HOI}
    %     This completes the proof.
    % \end{proofclaim}
    By~\ref{item:instance:sacs}, the set $\set{b_1,\ldots,b_k}$ is linearly ordered by the \q{left of} relation.
    %Without loss of generality, we assume that $b_1$ is left of $b_i$ for all $i\in\set{2,\ldots,k}$.

    \begin{claim}\label{claim:k-neq-2}
        $k \neq 2$.
    \end{claim}
    \begin{proofclaim}
        Suppose that $k=2$. 
        Without loss of generality assume that $b_1$ is left of $b_2$.
        Then, $((a_1,b_1),(a_2,b_2))$ is a regular alternating cycle in $P$.
        Moreover, by~\Cref{cor:characterize_regular_sacs}, $((a_1,b_1),(a_2,b_2))$ is an alternating cycle of one of four types: 
        In-In, In-Out, Out-In, or Out-Out.
        It follows that $((a_1,b_1),(a_2,b_2))$ is an edge in $\HIIL$ (and $\HIIR$) or is an edge in $\HOO$ or is an edge of weight~$1$ in $\HIO$ or is an edge of weight~$1$ in $\HOI$.
        This contradicts~\Cref{claim:no-edge-in-aux-digraph-normal} or~\Cref{claim:no-edge-in-aux-digraph-special}.
    \end{proofclaim}

    %Accordingly, from now on, we assume that $k \geq 3$. 
    Recall that pairs $(a_\alpha,b_\alpha)$ are in $I$, for all  $\alpha\in[k]$. 
    We prove that some pairs $(a_\alpha,b_\beta)$ for $\alpha,\beta\in[k]$ with $\alpha\neq \beta$ are also in $I$. 
    The next claim follows directly from~\Cref{prop:dangerous-implies-in-Y}.

    \begin{claim}\label{claim:not-in-shadow-in-Y}
        For all distinct $\alpha,\beta \in [k]$, we have
        $a_\alpha \notin \shadz(b_\beta)$ and $b_{\alpha+1} \in Y(a_\alpha)$.
    \end{claim}
    % \begin{proofclaim}
    %     Let $\alpha,\beta \in [k]$.
    %     When $\alpha = \beta$ we have $a_\alpha \notin \shadz(b_\beta)$ by~\ref{item:instance:not_in_shadow}.
    %     By~\ref{item:instance:dangerous}, $(a_\alpha,b_\alpha)$ is dangerous.
    %     When $\alpha \neq \beta$, $b_\alpha$ is left of $b_\beta$ or $b_\beta$ is left of $b_\alpha$, and therefore, by~\Cref{prop:d_not_in_shadow_when_dangerous}, $a_\alpha \notin \shadz(b_\beta)$.
    %     The last part of the assertion also follows since $a_\alpha < b_{\alpha+1}$ in $P$.
    % \end{proofclaim}

    \begin{claim}\label{claim:pair-in-I}
        Let $\alpha,\beta \in [k]$.
        If $b_{\alpha+1}$ is left of $b_\beta$ and $b_\beta$ is left of $b_\alpha$, then $(a_\alpha,b_\beta) \in I$.
        Symmetrically, if $b_\alpha$ is left of $b_\beta$ and $b_\beta$ is left of $b_{\alpha+1}$, then $(a_\alpha,b_\beta) \in I$.
        % such that either $b_{\alpha+1}$ is left of $b_\beta$ and $b_\beta$ is left of $b_\alpha$ or $b_\alpha$ is left of $b_\beta$ and $b_\beta$ is left of $b_{\alpha+1}$.
        % Then, $(a_\alpha,b_\beta) \in I$.
    \end{claim}
    \begin{proofclaim}
        Assume that $b_{\alpha+1}$ is left of $b_\beta$ and $b_\beta$ is left of $b_\alpha$.
        The proof in the case where $b_\alpha$ is left of $b_\beta$ and $b_\beta$ is left of $b_{\alpha+1}$ is symmetric.
        Note that $a_\alpha \in \pi_1(I)$ and $b_\beta \in \pi_2(I)$.
        Thus, by~\ref{item:instance:maximal}, it suffices to show that $a_\alpha \notin \shadz(b_\beta)$, and that $(a_\alpha,b_\beta)$ is dangerous.
        The former follows from~\Cref{claim:not-in-shadow-in-Y}.
        % For the proof of the first statement, 
        % recall that $(a_\alpha,b_\alpha) \in I$, 
        % so in particular, $(a_\alpha,b_\alpha)$ is dangerous (by~\ref{item:instance:dangerous}). 
        % Since $b_\beta$ is left of $b_\alpha$ and by~\Cref{prop:d_not_in_shadow_when_dangerous}, $a_\alpha \notin \shadz(b_\beta)$.
        For the latter, note that $b_{\alpha+1}$ and $z_R(a_\alpha)$ witness that $(a_\alpha,b_\beta)$ is a dangerous pair.
        Indeed, $a_\alpha < b_{\alpha+1}$ and $a_\alpha < z_R(a_\alpha)$ in $P$; $b_{\alpha+1}$ is left of $b_\beta$ (by assumption); and $b_\beta$ is left of $z_R(a_\alpha)$ (by~\Cref{prop:z_L_b_z_R} and transitivity).
        Altogether, we obtain that $(a_\alpha,b_\beta) \in I$.
    \end{proofclaim}

    \begin{claim}\label{claim:no-1-2-k}
        There is no $\alpha \in [k]$ such that $b_{\alpha+1}$ is left of $b_{\alpha+2}$ and $b_{\alpha+2}$ is left of $b_\alpha$.
    \end{claim}

    \begin{figure}[tp]
      \begin{center}
        \includegraphics{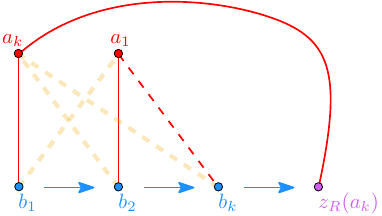}
      \end{center}
      \caption{
      Schematic drawing of the situation in~\Cref{claim:no-1-2-k}.
      In the proof, we show that $(a_k,b_2) \in I$.
      Next, we show that $((a_{1},b_{1}),(a_{k},b_{k}))$ is a shifted edge in $\HIIL$ or a shifted edge in $\HIO$, which is witnessed by $b_2$.
      }
      \label{fig:claim:no-1-2-k}
    \end{figure}
    
    \begin{proofclaim}
        Suppose to the contrary that there exists $\alpha \in [k]$ witnessing that the claim is false.
        Without loss of generality, assume that $\alpha = k$.
        Namely, $b_{1}$ is left of $b_{2}$ and $b_{2}$ is left of $b_k$. 
        See~\Cref{fig:claim:no-1-2-k}.
        We aim to show that $\sigma = ((a_{1},b_{1}),(a_{k},b_{k}))$ is a shifted edge in $\HIIL$ witnessed by~$b_2$ or a shifted edge in $\HIO$ witnessed by~$b_2$.

        Since $b_1$ is left of $b_2$ and $b_2$ is left of $b_k$, by~\Cref{claim:pair-in-I}, we obtain $(a_k,b_2) \in I$.
        % First, we show that $(a_k,b_2) \in I$.
        % Recall that $I$ is a maximal good instance.
        % Therefore, it suffices to show that $x_0 \leq b_2$ in $P$, $a_k \notin \shadz(b_2)$, and $(a_k,b_2)$ is dangerous.
        % The first statement is clear.
        % For the second recall that $(a_k,b_k) \in I$, so, $(a_k,b_k)$ is dangerous (by~\ref{item:instance:dangerous}). 
        % Since $b_2$ is left of $b_k$ and by~\Cref{prop:d_not_in_shadow_when_dangerous}, $a_k \notin \shadz(b_2)$.
        % For the last statement, note that $b_1$ is left of $b_2$ and $b_2$ is left of $z_R(a_k)$ (by~\Cref{prop:z_L_b_z_R}). 
        % Thus, $b_1$ and $z_R(a_k)$ witness that $(a_k,b_2)$ is a dangerous pair.
        % Altogether, we obtain that $(a_2,b_k) \in I$.
        Note that $\sigma' = ((a_1,b_1),(a_k,b_2))$ is a regular alternating cycle in $P$.
        Since $((a_1,b_1),\ldots,(a_k,b_k))$ is strict, we have $a_1 \parallel b_k$ in $P$ (as $k\neq2$ by~\Cref{claim:k-neq-2}).
        Next, since $b_2$ is left of $b_k$ and $a_1\parallel b_k$ in $P$, items~\ref{items:HIIL-shifted-t-left-b},~\ref{items:HIIL-shifted-a-parallel-b},~\ref{items:HIO-t-left-b}, and~\ref{items:HIO-a-parallel-b} are satisfied for $\sigma$ witnessed by $b_2$.
        We show that either~\ref{items:HIIL-shifted-edgeHII} or~\ref{items:HIO-cycle} is also satisfied for $\sigma$ witnessed by $b_2$.
        Namely, we show that $\sigma'$ is either an In-In alternating cycle or an In-Out alternating cycle.

       We prove that $\sigma'$ satisfies~\ref{Rin}.
       By~\Cref{claim:not-in-shadow-in-Y}, $b_1 \in Y(a_k)$, and so, we can fix $z_k \in Z(a_k)$ such that $z_k \leq b_{1}$ in $P$.
        We have $(a_{1},b_{1}),(a_k,b_k) \in I$, $b_{2} \in B$, $b_1$ left of $b_2$, $b_2$ left of $b_k$, and $a_1 < b_2$, $a_1 \parallel b_k$, $a_k \parallel b_2$ in $P$.
        Altogether, we can apply~\Cref{prop:a-in-some-region}.\ref{prop:a-in-some-region:L} to obtain that $z_k$ is left of $z_R(a_k)$ and $a_1 \in \Int \calR$ where $\calR = \calR(a_k,z_k,z_R(a_k),U,a_k[M_R(a_k)]z_R(a_k))$ and $U$ is an exposed witnessing path from $a_k$ to $z_k$ in $P$.
        Note that $\calR$ is a right region of $\sigma'$.
        Therefore, by~\Cref{prop:sac_four_types}.\ref{prop:sac_four_types:item:Rin}, $\sigma'$ satisfies \ref{Rin}.

        Finally, by~\Cref{cor:characterize_regular_sacs}, $\sigma'$ satisfies either~\ref{Lin} or~\ref{Lout}.
        In the former case, $\sigma'$ is an In-In alternating cycle and in the latter case, $\sigma'$ is an In-Out alternating cycle.
        As discussed before, this shows that $\sigma=((a_1,b_1),(a_k,b_k))$ is a shifted edge in $\HIIL$ or a shifted edge in $\HIO$.
        The former contradicts~\Cref{claim:no-edge-in-aux-digraph-normal} and the latter contradicts~\Cref{claim:no-edge-in-aux-digraph-special}.
    \end{proofclaim}

    The next claim is a symmetric statement to~\cref{claim:no-1-2-k}.
    \begin{claim}\label{claim:no-1-3-2}
        There is no $\alpha \in [k]$ such that $b_{\alpha}$ is left of $b_{\alpha+2}$ and $b_{\alpha+2}$ is left of $b_{\alpha+1}$.
    \end{claim}

    \begin{figure}[tp]
      \begin{center}
        \includegraphics{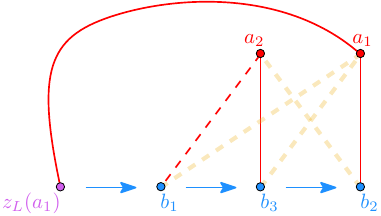}
      \end{center}
      \caption{
      Schematic drawings of the situation in~\Cref{claim:no-1-3-2}.
      In the proof, we show that $(a_1,b_3) \in I$.
      Next, we show that $((a_{1},b_{1}),(a_{2},b_{2}))$ is a shifted edge in $\HIIR$ or a shifted edge in $\HOI$, which is witnessed by $b_3$.
      }
      \label{fig:claim:no-1-3-2}
    \end{figure}

    \begin{proofclaim}
        Suppose to the contrary that there exists $\alpha \in [k]$ witnessing that the claim is false.
        Without loss of generality, assume that $\alpha = 1$.
        Namely, $b_{1}$ is left of $b_{3}$ and $b_{3}$ is left of $b_2$. 
        See~\Cref{fig:claim:no-1-3-2}.
        We aim to show that $\sigma = ((a_{1},b_{1}),(a_{2},b_{2}))$ is a shifted edge in $\HIIR$ witnessed by~$b_3$ or a shifted edge in $\HOI$ witnessed by~$b_3$.

        Since $b_1$ is left of $b_3$ and $b_3$ is left of $b_2$, by~\Cref{claim:pair-in-I}, we obtain $(a_1,b_3) \in I$.
        Note that $\sigma' = ((a_1,b_3),(a_2,b_2))$ is a regular alternating cycle in $P$.
        Since $((a_1,b_1),\ldots,(a_k,b_k))$ is strict, we have $a_2 \parallel b_1$ in $P$ (as $k\neq2$ by~\Cref{claim:k-neq-2}).
        Next, since $b_1$ is left of $b_3$ and $a_2\parallel b_1$ in $P$, items~\ref{items:HIIR-shifted-t-left-b},~\ref{items:HIIR-shifted-a-parallel-b},~\ref{items:HOI-b-left-t}, and~\ref{items:HOI-a-parallel-b} are satisfied for $\sigma$ witnessed by $b_3$.
        We show that either~\ref{items:HIIR-shifted-edgeHII} or~\ref{items:HOI-cycle} is also satisfied for $\sigma$ witnessed by $b_3$.
        Namely, we show that $\sigma'$ is either an In-In alternating cycle or an Out-In alternating cycle.

       We prove that $\sigma'$ satisfies~\ref{Lin}.
       By~\Cref{claim:not-in-shadow-in-Y}, $b_2 \in Y(a_1)$, and so, we can fix $z_1 \in Z(a_1)$ such that $z_1 \leq b_{2}$ in $P$.
        We have $(a_{1},b_{1}),(a_2,b_2) \in I$, $b_{3} \in B$, $b_1$ left of $b_3$, $b_3$ left of $b_2$, and $a_2 < b_3$, $a_2 \parallel b_1$, $a_1 \parallel b_3$ in $P$.
        Altogether, we can apply~\Cref{prop:a-in-some-region}.\ref{prop:a-in-some-region:R} to obtain that $z_L(a_1)$ is left of $z_1)$ and $a_2 \in \Int \calR$ where $\calR = \calR(a_1,z_L(a_1),z_1,a_1[M_L(a_1)]z_L(a_1),V)$ and $V$ is an exposed witnessing path from $a_1$ to $z_1$ in $P$.
        Note that $\calR$ is a left region of $\sigma'$.
        Therefore, by~\Cref{prop:sac_four_types}.\ref{prop:sac_four_types:item:Lin}, $\sigma'$ satisfies \ref{Lin}.

        Finally, by~\Cref{cor:characterize_regular_sacs}, $\sigma'$ satisfies either~\ref{Rin} or~\ref{Rout}.
        In the former case, $\sigma'$ is an In-In alternating cycle and in the latter case, $\sigma'$ is an Out-In alternating cycle.
        As discussed before, this shows that $\sigma=((a_1,b_1),(a_2,b_2))$ is a shifted edge in $\HIIR$ or a shifted edge in $\HOI$.
        The former contradicts~\Cref{claim:no-edge-in-aux-digraph-normal} and the latter contradicts~\Cref{claim:no-edge-in-aux-digraph-special}.
    \end{proofclaim}

    \begin{figure}[tp]
      \begin{center}
        \includegraphics{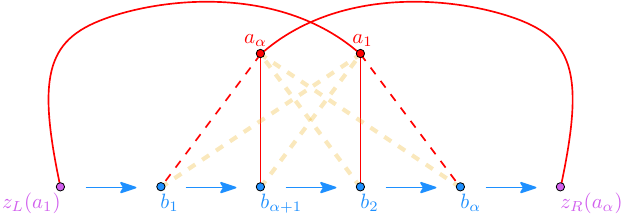}
      \end{center}
      \caption{
      Schematic drawing of the situation in~\Cref{claim:edge-in-HIILR}.
      In the proof, we show that $(a_1,b_{\alpha+1}),(a_\alpha,b_2) \in I$.
      Next, we show that $((a_{1},b_{1}),(a_{\alpha},b_{\alpha}))$ is an edge in $\HIILR$ which is witnessed by $(b_{\alpha+1},b_2)$.
      }
      \label{fig:wrap-up}
    \end{figure}

    We continue the proof, assuming without loss of generality that $b_1$ is left of $b_i$ for every $i \in [k]$ with $i \neq 1$.
\begin{claim}\label{claim:k-not-equal-3}
$k\neq3$.
\end{claim}
\begin{proofclaim}
    Suppose to the contrary that $k = 3$.
    Note that either $b_2$ is left of $b_3$ or $b_3$ is left of $b_2$.
    The former contradicts~\Cref{claim:no-1-2-k} and the latter contradicts~\Cref{claim:no-1-3-2}.
\end{proofclaim}
    Therefore, from now on, we assume $k \geq 4$.
    Note that by~\Cref{claim:no-1-3-2}, 
    $b_3$ is not left of $b_2$ and 
    by~\Cref{claim:no-1-2-k}, 
    $b_2$ is not left of $b_k$. 
    Hence, $b_1$ is left of $b_k$ and $b_k$ is left of $b_2$ and $b_2$ is left of $b_3$.
    It follows that there exists $\alpha \in \{3,\dots,k-1\}$ such that 
    \begin{align}\label{eq:b-ordering}
        \textrm{$b_1$ is left of $b_{\alpha+1}$, $b_{\alpha+1}$ left of $b_2$, and $b_2$ left of $b_\alpha$.}
    \end{align}
    \begin{claim}\label{claim:edge-in-HIILR}
        $((a_1,b_1),(a_\alpha,b_\alpha))$ is an edge in $\HIILR$.
    \end{claim}
    \begin{proofclaim}
        See~\Cref{fig:wrap-up}.
        We show that $(b_{\alpha+1},b_2)$ is a witness that $((a_1,b_1),(a_\alpha,b_\alpha))$ is an edge in $\HIILR$.
        Since $b_1$ is left of $b_{\alpha+1}$ and $b_2$ is left of $b_\alpha$,~\ref{items:HIILR-b-left-d-t-left-b} is satisfied.
        Since $\alpha \notin \{2,k\}$, we have $a_\alpha \parallel b_1$ and $a_1 \parallel b_\alpha$ in $P$, and thus,~\ref{items:HIILR-a-parallel-b} is satisfied.
        It remains to show that~\ref{items:HIILR-edgeHII} holds, that is, $(a_1,b_{\alpha+1}),(a_\alpha,b_2) \in I$ and $\sigma' = ((a_1,b_{\alpha+1}),(a_\alpha,b_2))$ is a regular In-In alternating cycle.
        The first part follows from~\eqref{eq:b-ordering} and~\Cref{claim:pair-in-I}.
        In particular, $\sigma'$ is a regular alternating cycle in $P$.
        We continue arguing that $\sigma'$ satisfies \ref{Lin} and \ref{Rin}.

        By~\Cref{claim:not-in-shadow-in-Y}, $b_{\alpha+1} \in Y(a_\alpha)$, and so, we can fix $z_\alpha \in Z(a_\alpha)$ with $z_\alpha \leq b_{\alpha+1}$ in $P$.
        We have $(a_{1},b_{\alpha+1}),(a_\alpha,b_\alpha) \in I$, $b_{2} \in B$ with $b_{\alpha+1}$ left of $b_2$ and $b_2$ left of $b_\alpha$, and $a_1 \leq b_2$ in $P$.
        Since $\alpha \notin \{1,2\}$, we have $a_1 \parallel b_\alpha$ and $a_\alpha \parallel b_2$ in $P$.
        Altogether, we can apply~\Cref{prop:a-in-some-region}.\ref{prop:a-in-some-region:L} to obtain that $z_\alpha$ is left of $z_R(a_\alpha)$ and $a_1 \in \Int \calR$ where $\calR = \calR(a_\alpha,z_\alpha,z_R(a_\alpha),U,a_\alpha[M_R(a_\alpha)]z_R(a_\alpha))$ and $U$ is an exposed witnessing path from $a_\alpha$ to $z_\alpha$ in $P$.
        Note that $\calR$ is a right region of $\sigma'$.
        Therefore, by~\Cref{prop:sac_four_types}.\ref{prop:sac_four_types:item:Rin}, $\sigma'$ satisfies \ref{Rin}.

        By~\Cref{claim:not-in-shadow-in-Y}, $b_{2} \in Y(a_1)$, and so, we can fix $z_1 \in Z(a_1)$ with $z_1 \leq b_2$ in $P$.
        We have $(a_1,b_1),(a_\alpha,b_2) \in I$, $b_{\alpha+1} \in B$ with $b_1$ left of $b_{\alpha+1}$ and $b_{\alpha+1}$ left of $b_2$, and $a_\alpha \leq b_{\alpha+1}$ in $P$.
        Since $\alpha \notin \{1,k\}$, we have $a_\alpha \parallel b_1$ and $a_1 \parallel b_{\alpha+1}$ in $P$.
        Altogether, we can apply~\Cref{prop:a-in-some-region}.\ref{prop:a-in-some-region:R} to obtain that $z_L(a_1)$ is left of $z_1$ and $a_\alpha \in \Int \calR$ where $\calR = \calR(a_1,z_L(a_1),z_1, a_1[M_L(a_1)]z_L(a_1),V)$ and $V$ is an exposed witnessing path from $a_1$ to $z_1$ in $P$.
        Note that $\calR$ is a left region of $\sigma'$.
        Therefore, by~\Cref{prop:sac_four_types}.\ref{prop:sac_four_types:item:Lin}, $\sigma'$ satisfies \ref{Lin}. 

        This completes the proof that $\sigma'=((a_1,b_{\alpha+1}),(a_{\alpha},b_2))$ is a regular In-In alternating cycle. Thus, $(b_{\alpha+1},b_2)$ is a witness that $((a_1,b_1),(a_{\alpha},b_{\alpha}))$ is an edge in $\HIILR$.
    \end{proofclaim}
    Claims~\ref{claim:edge-in-HIILR} and~\ref{claim:no-edge-in-aux-digraph-normal} contradict each other. 
    This completes the proof of the lemma.
\end{proof}

\section{Auxiliary oriented graphs: the proofs}
\label{sec:aux-proofs}
The goal of this section is to prove all the statements given without proofs in~\Cref{sec:aux-definitions} (\cref{prop:HOO,prop:HIIL,prop:HIIR,prop:HIILR,prop:HOO,lemma:HIO}).
To this end, we again fix a maximal good instance $(P,x_0,G,e_{-\infty},I)$ throughout the section.

\subsection{More on regions}
In this subsection, we collect technical statements on regions that are used later.

\begin{proposition} \label{prop:z_LR_are_outside}
    % Let $a \in A$, $u,v \in Z(a)$ with $u$ left of $v$, let $U$ be an exposed  witnessing path from $a$ to $u$, let $V$ be an exposed witnessing path from $a$ to $v$, and let $\calR=\calR(a,u,v,U,V)$.
        Let $(a,u,v,U,V,\calR,q,m,\gamma_L,\gamma_R)$ be a region tuple.
    \begin{enumerate}

      \myitem{$(L)$} $z_L(a) \notin \Int \calR$, and moreover, if $z_L(a) \not\leq u$ in~$P$, then  $z_L(a) \notin \calR$. \label{prop:z_LR_are_outside:left}
      \myitem{$(R)$} $z_R(a) \notin \Int \calR$, and moreover, if $z_R(a) \not\leq v$ in~$P$, then  $z_R(a) \notin \calR$. \label{prop:z_LR_are_outside:right}
  \end{enumerate}
\end{proposition}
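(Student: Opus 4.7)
The plan is to prove statement~(L); statement~(R) follows by a symmetric argument, swapping left/right and interchanging $u \leftrightarrow v$, $M_L \leftrightarrow M_R$, $W_L \leftrightarrow W_R$, etc. Fix the region tuple. First I would invoke Proposition~\ref{prop:order_on_Ms_induces_order_in_zs}.\ref{prop:order_on_Ms_induces_order_in_zs:left}: either $z_L(a)$ is left of $u$, or $z_L(a)$ and $u$ are comparable in~$P$. In the first case, since $u$ is left of $v$, transitivity (Proposition~\ref{prop:left_porders_bs}) gives $z_L(a)$ left of $v$, and Proposition~\ref{prop:shortcuts} forces $z_L(a) \parallel u$, so $z_L(a) \not\le u$ in~$P$; Corollary~\ref{cor:sandwitch_b_in_region}.\ref{cor:sandwitch_b_in_region:out} then yields $z_L(a) \notin \calR$, settling both conclusions at once. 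The subcase $z_L(a) = u$ is immediate: $z_L(a) \in \partial\calR \setminus \Int\calR$, and the \q{moreover} hypothesis fails.

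The two interesting subcases are $z_L(a) < u$ and $u < z_L(a)$ in~$P$. In both, the crucial tool is the extremality of $M_L(a)$: the path $M' = x_0[W_L(u)]u[U]a$ lies in $\calM(a)$ with peak~$u$, so since $M_L(a)$ is leftmost and $z_L(a) \neq u$, the relation $M_L(a) \prec M'$ holds strictly. Using Proposition~\ref{prop:paths_M_consistent} to identify $x_0[M_L(a)]z_L(a)$ with $W_L(z_L(a))$, and $x_0$-consistency of leftmost paths (Proposition~\ref{prop:W-consistent}.\ref{prop:W-consistent:left}), this translates into a controlled comparison of $W_L(z_L(a))$ and $W_L(u)$ at their first point of divergence.

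For $z_L(a) < u$: the \q{moreover} hypothesis is vacuous, so I only need $z_L(a) \notin \Int\calR$. Suppose for contradiction $z_L(a) \in \Int\calR$. After checking $z_L(a) \notin \shadz(q) \setminus \{q\}$ (this uses Proposition~\ref{prop:properties_of_Z}.\ref{prop:properties_of_Z:item:not_interior_of_y} together with $a \notin \shadz(q)$, which follows from $q \le u$ in~$P$ and Proposition~\ref{prop:comparability_implies_shadow_containment}), Proposition~\ref{prop:b_relative_to_region}.\ref{prop:b_relative_to_region:item:inside} would force $W_L(z_L(a))$ to be strictly right of $\gamma_L$, i.e., strictly right of $W_L(u)$ at the branching point. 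But then $M_L(a)$ would branch strictly right of $M'$ at that same point, contradicting $M_L(a) \prec M'$. For $u < z_L(a)$: since $u \in Z(a)$ and $a \notin \shadz(z_L(a))$, Proposition~\ref{prop:properties_of_Z}.\ref{prop:properties_of_Z:item:not_interior_of_y} gives $u \in \partial\shadz(z_L(a)) \subseteq W_L(z_L(a)) \cup W_R(z_L(a))$. If $u \in W_L(z_L(a))$, then $W_L(u)$ is a prefix of $W_L(z_L(a))$, and comparing $M_L(a)$ with $M'$ at~$u$ shows the edge of $W_L(z_L(a))$ immediately after $u$ is strictly left of the first edge of~$U$ at~$u$; by \ref{items:leaving_regions:y} (if $u = m$) or \ref{items:leaving_regions:left} (if $u$ is strictly on the left side), this edge is outside $\calR$, and Proposition~\ref{prop:path_does_not_leave_regions} (the segment of $W_L(z_L(a))$ past $u$ consists of elements in~$B$) rules out re-entry, giving $z_L(a) \notin \calR$. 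If instead $u \in W_R(z_L(a)) \setminus W_L(z_L(a))$, then the strict comparison $M_L(a) \prec M'$ forces $W_L(z_L(a))$ to be strictly left of $W_L(u)$ (hence of $\gamma_L$) at their divergence point, and Proposition~\ref{prop:b_relative_to_region}.\ref{prop:b_relative_to_region:item:outside} again yields $z_L(a) \notin \calR$.

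The main obstacle will be the case $u < z_L(a)$: one has to carefully combine the extremal property of $M_L(a)$ with the local characterization~\ref{items:leaving_regions:x}--\ref{items:leaving_regions:right} of which edges leave~$\calR$ at a boundary vertex, and then use Proposition~\ref{prop:path_does_not_leave_regions} to prevent the tail of $W_L(z_L(a))$ from re-entering $\calR$ after it escapes at $u$. The $z_L(a) < u$ case is a cleaner \q{extremality vs.\ topology} contradiction, while the $u < z_L(a)$ case requires branching on whether $u$ lies on $W_L(z_L(a))$ or only on $W_R(z_L(a))$.
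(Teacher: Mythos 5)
Your argument is correct, and for the first assertion ($z_L(a)\notin\Int\calR$) it is essentially the paper's proof: both you and the paper get the contradiction from \cref{prop:b_relative_to_region}.\ref{prop:b_relative_to_region:item:inside} together with \cref{prop:paths_M_consistent} and the leftmost-ness of $M_L(a)$ against $M'=x_0[W_L(u)]u[U]a\in\calM(a)$. For the \q{moreover} part your route genuinely differs. The paper avoids any case analysis on how $z_L(a)$ and $u$ relate: it notes that $z_L(a)\in\partial\calR$ together with $z_L(a)\not\le u$ forces $z_L(a)$ onto $W_R(v)$, hence $z_L(a)\in\shadz(v)$, while $z_L(a)\notin\shadz(u)$ by \cref{prop:properties_of_Z}.\ref{prop:properties_of_Z:item:not_interior_of_y}; then \cref{prop:in_one_shad_but_not_other_then_left}.\ref{prop:in_one_shad_but_not_other_then_left:right} gives that $z_L(a)$ is right of $u$, so $M'$ is left of $M_L(a)$ -- a one-line contradiction with minimality. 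You instead split via \cref{prop:order_on_Ms_induces_order_in_zs}, dispose of the \q{left of} and \q{$<u$} cases quickly (the \q{left of} case via \cref{cor:sandwitch_b_in_region} even yields the stronger conclusion there), and in the case $u<z_L(a)$ work at the level of edges around $u$: the local characterization \ref{items:leaving_regions:left} plus \cref{prop:path_does_not_leave_regions} when $u$ lies on $W_L(z_L(a))$, and \cref{prop:b_relative_to_region}.\ref{prop:b_relative_to_region:item:outside} when $u$ lies only on $W_R(z_L(a))$. The paper's argument is shorter because it leverages the already-proved shadow/left-right machinery; yours is more hands-on but self-contained at the topological level, and it incidentally shows $z_L(a)\notin\calR$ outright in the cases $z_L(a)$ left of $u$ and $u<z_L(a)$. (Note that $u=m$ and $u=q$ cannot occur, so $u$ is automatically strictly on the left side of $\calR$, as you effectively use.)

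Two small points to tighten. Each invocation of \cref{prop:b_relative_to_region} carries the hypothesis $z_L(a)\notin\shadz(q)\setminus\{q\}$, and your stated justification via \cref{prop:properties_of_Z}.\ref{prop:properties_of_Z:item:not_interior_of_y} only excludes the interior of $\shadz(q)$, not its boundary. The fix is routine: under the standing assumption $z_L(a)\in\Int\calR$ (resp.\ in the subcase where you apply item \ref{prop:b_relative_to_region:item:outside}), \cref{prop:shad-disjoint-from-calR} rules out $z_L(a)\in\shadz(q)\setminus\{q\}$ directly, or alternatively combine $u\notin\shadz(q)$ (again \cref{prop:shad-disjoint-from-calR}) with \cref{obs:equivalence_for_shadows} using $u\not\le q$ and $u\le z_L(a)$; and if $z_L(a)\in\shadz(q)\setminus\{q\}$ did hold, \cref{prop:shad-disjoint-from-calR} would give $z_L(a)\notin\calR$ at once, so nothing is lost. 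The same check should be recorded before citing \cref{prop:b_relative_to_region}.\ref{prop:b_relative_to_region:item:outside} in your last subcase, where you currently omit it.
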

\begin{proof}
    We prove only \ref{prop:z_LR_are_outside:left} as the proof of \ref{prop:z_LR_are_outside:right} is symmetric.
    Suppose to the contrary that $z_L(a) \in \Int \calR$.
    Then, by \cref{prop:b_relative_to_region}.\ref{prop:b_relative_to_region:item:inside}, $W_L(z_L(a))$ is right of $\gamma_L$.
    It follows that $W_L(z_L(a))$ is right of $M = x_0[W_L(u)]u[U]a$.
    Hence, by \cref{prop:paths_M_consistent}, $M_L(a)$ is right of $M$.
    Since $M \in \calM(a)$, this is a contradiction, and thus, we obtain $z_L(a) \notin \Int \calR$.

    Now, assume that $z_L(a) \not\leq u$ in~$P$, and suppose to the contrary that $z_L(a) \in \calR$.
    We already proved that $z_L(a) \notin \Int \calR$, hence, $z_L(a) \in \partial \calR$.
    Moreover, since $z_L(a) \not \leq u$ in~$P$, the element $z_L(a)$ does not lie in $W_L(u)$, which implies that $z_L(a)$ lies in $W_R(v)$.
    In particular, $z_L(a) \in \shadz(v)$.
    On the other hand, we claim that $z_L(a) \notin \shadz(u)$.
    Indeed, since $u \in Z(a)$, $a \notin \shadz(u)$, and so, by \cref{prop:properties_of_Z}.\ref{prop:properties_of_Z:item:not_interior_of_y}, $z_L(a)$ does not lie in the interior of $\shadz(u)$ (since $z_L(a) \not\leq u$ in~$P$, $z_L(a)$ is not on the boundary of $\shadz(u)$ either). 
    Therefore, since $u$ is left of $v$, by \cref{prop:in_one_shad_but_not_other_then_left}.\ref{prop:in_one_shad_but_not_other_then_left:right}, $z_L(a)$ is right of $u$.
    In particular, $M = x_0[W_L(u)]u[U]a$ is left of $M_L(a)$, which along with $M \in \calM(a)$ is a contradiction, and ends the proof.
\end{proof}

\begin{proposition}\label{prop:q-in-W_L(b)-and-W_R(b)}
Let $(a,u,v,U,V,\calR,q,m,\gamma_L,\gamma_R)$ be a region tuple.
Let $b \in B$ with $b \in \calR$.
Then, $q$ lies in both $W_L(b)$ and $W_R(b)$.
\end{proposition}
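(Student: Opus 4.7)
The plan is to follow the leftmost path $W_L(b)$ from $x_0$ and examine its first point of intersection $w$ with $\partial \calR$. Since $b \in \calR$ and (when $q \neq x_0$) $x_0$ lies outside $\calR$, the point $w$ exists. As $W_L(b) \subseteq B$, and as the upper boundary $u[U]m[V]v$ of $\calR$ contains no $B$-elements except $u$ and $v$ (the paths $U, V$ are exposed, and $m \notin B$, since otherwise $m$ would coincide with both endpoints $u$ and $v$, which are distinct), $w$ must lie on the lower boundary $q[W_L(u)]u \cup q[W_R(v)]v$. The case $q = x_0$ is immediate. The conclusion $q \in W_R(b)$ will then follow by a fully symmetric argument using rightmost paths and \cref{prop:shortcuts}.\ref{prop:item:shortcuts-right-tree}.

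In the easy case $w \in q[W_L(u)]u$, the $x_0$-consistency of leftmost witnessing paths (the comment following \cref{prop:W-consistent}) yields $W_L(w) = x_0[W_L(u)]w$, so $q \in W_L(w)$. Since $w \in W_L(b)$, the same comment gives that $W_L(w)$ is a prefix of $W_L(b)$, and hence $q \in W_L(b)$.

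The main obstacle is the case $w \in q[W_R(v)]v \setminus \{q\}$, because a priori $q$ lies on $W_R(v)$ rather than on any leftmost path to $w$. I propose to suppose $q \notin W_L(w)$ and derive a contradiction. Since $q < w$ in $P$, \cref{prop:shortcuts}.\ref{prop:item:shortcuts-left-tree} forbids $W_L(q)$ being strictly left of $W_L(w)$; combined with the fact that neither of $W_L(q), W_L(w)$ is a prefix of the other (the reverse would place $w$ on $W_L(u)$, contradicting the case assumption), this forces $W_L(w)$ to be strictly left of $W_L(q) = x_0[W_L(u)]q$. Since $W_L(q)$ is a prefix of $W_L(u)$, we deduce $W_L(w)$ is strictly left of $W_L(u)$. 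Because $u$ is left of $v$, $W_L(u)$ is strictly left of $W_L(v)$, and transitivity of the $(x_0, e_{-\infty})$-ordering (\cref{prop:u_e_is_poset}) yields $W_L(w)$ strictly left of $W_L(v)$. But $w$ lies on $W_R(v)$, so $w \leq v$ in $P$, contradicting \cref{prop:shortcuts}.\ref{prop:item:shortcuts-left-tree} once more. Therefore $q \in W_L(w)$, and since $W_L(w)$ is a prefix of $W_L(b)$, we conclude $q \in W_L(b)$.

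The topological upshot is that a leftmost path cannot enter $\calR$ through the right-lower boundary while avoiding $q$, as any such attempt would yield a witnessing path to $v$ strictly left of $W_L(v)$.
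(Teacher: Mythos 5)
Your proof is correct and follows essentially the same route as the paper's: pick an element $w$ of $W_L(b)$ on the lower boundary $u[W_L(u)]q[W_R(v)]v$ (the upper boundary has no $B$-elements besides $u,v$), settle the case $w\in q[W_L(u)]u$ by $x_0$-consistency, and in the case $w\in q[W_R(v)]v\setminus\{q\}$ derive a contradiction from the leftmost-path ordering. The only difference is bookkeeping in the hard case: you compare $W_L(q)$ with $W_L(w)$ and apply \cref{prop:shortcuts} twice (plus transitivity), while the paper compares $W_L(q)$ with $W_L(b)$ against the auxiliary path $x_0[W_L(q)]q[W_R(v)]w[W_L(b)]b$; both arguments bottom out in the same contradiction that a witnessing path to $v$ would end up left of $W_L(u)$, contradicting $u$ being left of $v$.
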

\begin{proof}
    We only prove that $q$ lies in $W_L(b)$ as the proof that $q$ lies in $W_R(b)$ is symmetric.
    Since $b \in \calR$ and $x_0 \notin \Int \calR$, the path $W_L(b)$ intersects $\partial \calR$, and so, it intersects $u[W_L(u)]q[W_R(v)]v$.
    Let $w$ be an element of $W_L(b)$ in $\partial \calR$.
    If $w$ lies in $u[W_L(u)]q$, then by~\cref{prop:W-consistent}.\ref{prop:W-consistent:left}, $x_0[W_L(u)]w = x_0[W_L(b)]w$, and so, $q$ lies in $W_L(b)$ as desired.
    Thus, assume that $w$ lies in $q[W_R(v)]v$ and $w \neq q$.
    Consider the path $W = x_0[W_L(q)]q[W_R(v)]w[W_L(b)]b$.
    If $W_L(b) = W$, then the assertion holds, thus, assume that $W_L(b) \neq W$.
    It follows that $W_L(b)$ is left of $W$.

    Next, we study the relation between $W_L(q)$ and $W_L(b)$.
    Since $q \leq b$ in~$P$, it is not possible that $W_L(b)$ is a strict subpath of $W_L(q)$.
    If $W_L(q)$ is a subpath of $W_L(b)$, then $q$ lies in $W_L(b)$, as desired. 
    If $W_L(q)$ is left of $W_L(b)$, then $W$ is left of $W_L(b)$, which is false.
    Finally, if $W_L(b)$ is left of $W_L(q)$, the path $x_0[W_L(b)]w[W_R(v)]v$ is left of $W_L(u) = x_0[W_L(q)]q[W_L(u)]u$, which contradicts the fact that $u$ is left of $v$.
    This ends the proof.
\end{proof}

\begin{corollary}\label{prop:uv_in_R_implies_path}
Let $(a,u,v,U,V,\calR,q,m,\gamma_L,\gamma_R)$ be a region tuple.
Let $u',v' \in B$ and let $q'$ be the maximal common element of $W_L(u')$ and $W_R(v')$ in~$P$.
If $u',v' \in \calR$, then, $u'[W_L(u')]q'[W_R(v')]v' \subset \calR$.
\end{corollary}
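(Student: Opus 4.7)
The plan is to reduce this to two applications of Proposition~\ref{prop:path_does_not_leave_regions} (paths with endpoints in $\calR$ stay in $\calR$), with Proposition~\ref{prop:q-in-W_L(b)-and-W_R(b)} used as the bridge to control where $q'$ sits.

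First I would locate $q'$ with respect to $q$. Since $u',v' \in B$ and both lie in $\calR$, Proposition~\ref{prop:q-in-W_L(b)-and-W_R(b)} gives that $q$ lies on $W_L(u')$ and on $W_R(v')$. Hence $q$ is a common element of $W_L(u')$ and $W_R(v')$, and by the maximality of $q'$ we get $q\le q'$ in $P$. In particular, $q'$ itself lies on $W_L(u')$ (so $q'$ appears on the sub-path $q[W_L(u')]u'$) and on $W_R(v')$ (so $q'$ appears on $q[W_R(v')]v'$).

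Next I would show that the two halves of the concatenation lie in $\calR$. Consider the witnessing path $q[W_L(u')]u'$: its endpoints are $q$, which lies in $\partial\calR\subseteq\calR$, and $u'$, which lies in $\calR$ by hypothesis; moreover every vertex of this path lies in $B$ because it is a sub-path of $W_L(u')$ and every element of a leftmost witnessing path is above $x_0$ in $P$. Proposition~\ref{prop:path_does_not_leave_regions} therefore yields $q[W_L(u')]u'\subset\calR$, and in particular the sub-path $u'[W_L(u')]q'$ is contained in $\calR$. A symmetric argument applied to the witnessing path $q[W_R(v')]v'$ gives $q'[W_R(v')]v'\subset\calR$. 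Taking the union of the two sub-paths gives $u'[W_L(u')]q'[W_R(v')]v'\subset\calR$, as required.

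There is no real obstacle here; the only thing to be careful with is verifying that the hypotheses of Proposition~\ref{prop:path_does_not_leave_regions} apply cleanly, i.e.\ that the sub-paths of $W_L(u')$ and $W_R(v')$ between $q$ and the respective endpoints are genuine witnessing paths with all vertices in $B$, which is immediate from the definition of a witnessing path and the fact that $x_0\le q$ in $P$.
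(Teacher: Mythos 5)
Your proof is correct and follows essentially the same route as the paper's: apply Proposition~\ref{prop:q-in-W_L(b)-and-W_R(b)} to place $q$ on both $W_L(u')$ and $W_R(v')$ (so $q\le q'$ in $P$), and then apply Proposition~\ref{prop:path_does_not_leave_regions} to the subpaths $q[W_L(u')]u'$ and $q[W_R(v')]v'$. The extra verifications you spell out (that $q\in\partial\calR\subseteq\calR$ and that all vertices of these subpaths lie in $B$) are exactly the implicit hypotheses the paper's terser argument relies on.
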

\begin{proof}
    Assume that $u',v' \in \calR$.
    By~\Cref{prop:q-in-W_L(b)-and-W_R(b)}, $q$ lies in both $W_L(u')$ and $W_R(v')$.
    In particular, $q \leq q'$ in~$P$ and by~\cref{prop:path_does_not_leave_regions}, both paths $q[W_L(u')]u'$ and $q[W_R(v')]v'$ are contained in $\calR$.
    This completes the proof.
\end{proof}

\begin{proposition}
\label{prop:path-leaving-the-right-side-of-a-region}
Let $(a,u,v,U,V,\calR,q,m,\gamma_L,\gamma_R)$ be a region tuple.
% Let $a\in A$, $u,v \in Z(a)$ with $u$ left of $v$, 
% let $U$ be an exposed witnessing path from $a$ to $u$ in~$P$, 
% let $V$ be an exposed witnessing path from $a$ to $v$ in~$P$, and 
% let $\calR=\calR(a,u,v,U,V)$. 
% Let $m$ and $q$ be respectively the upper-min and the lower-min of $\calR$. 
% Let $\gamma_L = x_0[W_L(u)]u[U]m$ and $\gamma_R = x_0[W_R(v)]v[V]m$.
Let $d\in B$ with $d\not\in\calR$ and let $W$ be a witnessing path from $x_0$ to $d$ in~$P$ that intersects $\calR$. 
Let $w$ be the last element of $W$ in $\calR$.
    \begin{enumerate}
      \myitem{$(L)$} If $u$ is left of $d$, then $w$ lies in $\gamma_{R}$  and $\gamma_R$ is left of $x_0[W_R(v)]w[W]d$.\label{prop:path-leaving-the-right-side-of-a-region:L}
      \myitem{$(R)$} If $v$ is right of $d$, then $w$ lies in $\gamma_{L}$ and $\gamma_L$ is right of $x_0[W_L(u)]w[W]d$.\label{prop:path-leaving-the-right-side-of-a-region:R}
  \end{enumerate}
\end{proposition}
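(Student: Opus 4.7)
By the symmetry of the statement, I focus on (L); part (R) follows from the same argument after swapping left with right, $W_L$ with $W_R$, and $u$ with $v$. Assume $u$ is left of $d$. The first step is to pin down where $w$ can lie on $\partial\calR$. Since $W$ is a witnessing path starting at $x_0$, we have $w\in B$. Because $U$ and $V$ are exposed witnessing paths (so their only elements in $B$ are $u$ and $v$), and $m$ is an internal vertex of both $U$ and $V$ (so $m\notin B$), the elements of $\partial\calR$ lying in $B$ are exactly those of $q[W_L(u)]u\cup q[W_R(v)]v$. Hence $w$ lies on one of these two subpaths.

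I then rule out $w\in q[W_L(u)]u\setminus\{q\}$. The case $w=u$ is immediate: $w\in W$ forces $u\leq d$ in $P$; since $u\in Z(a)$ gives $\sd(u)=0$ by \cref{prop:properties_of_Z}.\ref{prop:properties_of_Z:item:sd_of_z}, \cref{prop:comparability_implies_shadow_containment} yields $u\in\shadz(u)\subseteq\shadz(d)$, contradicting $u$ left of $d$. For $w$ strictly on the left side of $\calR$, let $e$ be the edge of $W$ immediately after $w$; since $w$ is the last element of $W$ in $\calR$, $e$ lies outside $\calR$, and \ref{items:leaving_regions:left} yields $e\prec e^+_L$ in the $(w,e^-_L)$-ordering. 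Consequently the witnessing path $x_0[W_L(u)]w[W]d$ is left of $W_L(u)$, so $W_L(d)$ is left of $W_L(u)$, contradicting $u$ left of $d$.

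So $w$ lies on $q[W_R(v)]v$, giving $w\in\gamma_R$. For the second conclusion, when $w$ is strictly on the right side of $\calR$ or $w=v$ (treated by \ref{items:leaving_regions:right} as a point on the right side), the edge $e$ of $W$ after $w$ lies outside $\calR$, so $e^+_R\prec e$ in the $(w,e^-_R)$-ordering, which is exactly the statement that $\gamma_R$ is left of $x_0[W_R(v)]w[W]d$. The main obstacle is the corner case $w=q$: here \ref{items:leaving_regions:x} together with the cyclic ordering \ref{item:ordering-around-q} places $e$ in one of three open arcs of the $q$-ordering, namely $(e^-_L,e^+_L)$, $(e^-_R,e^-_L)$, or $(e^+_R,e^-_R)$. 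The first arc reproduces the argument above to force $W_L(d)$ left of $W_L(u)$, a contradiction. The second arc places $e$ in the terminal block of $\shadz(q)$ by \ref{items:leaving_shadows:y}, hence $d\in\shadz(q)\subseteq\shadz(u)$ (using $q\leq u$ and \cref{prop:comparability_implies_shadow_containment}), again contradicting $u$ left of $d$. Only the third arc survives, and $e^+_R\prec e$ in the $(q,e^-_R)$-ordering yields exactly $\gamma_R$ left of $x_0[W_R(v)]q[W]d$, as desired.
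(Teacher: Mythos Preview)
Your proof is correct and follows essentially the same approach as the paper's: a case analysis on the position of $w$ on $\partial\calR$, ruling out the left side via the leftness of $u$ relative to $d$, and handling the corner case $w=q$ by splitting into arcs determined by \ref{items:leaving_regions:x} and \ref{item:ordering-around-q}, with the $(e_R^-,e_L^-)$ arc eliminated via \ref{items:leaving_shadows:y} and shadow containment. The only cosmetic difference is that you treat $w=u$ separately (and somewhat indirectly through $u\in\shadz(d)$), whereas the paper absorbs it into the general left-side argument, noting that $W_L(u)$ being a subpath of $x_0[W_L(u)]w[W]d$ already contradicts $u$ left of $d$ since $u$ and $d$ must be incomparable.
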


\begin{figure}[tp]
  \begin{center}
    \includegraphics{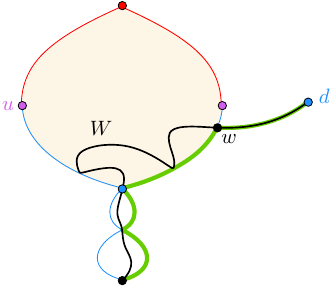}
  \end{center}
  \caption{
    An illustration of the statement of~\Cref{prop:path-leaving-the-right-side-of-a-region}.\ref{prop:path-leaving-the-right-side-of-a-region:L}.
    We show that $w$ lies in $\gamma_R$ and $\gamma_R$ is left of the path marked in green in the figure.
  }
  \label{fig:prop-path-leaving-region}
\end{figure}

\begin{proof}
We prove only~\ref{prop:path-leaving-the-right-side-of-a-region:L} as the proof of~\ref{prop:path-leaving-the-right-side-of-a-region:R} is symmetric.
See~\Cref{fig:prop-path-leaving-region} for an illustration of the statement.
Note that by definition $w \in B$ and $w \in \partial \calR$.
Let $e$ be the edge following $w$ in $W$.
Note that $e$ does not lie in $\calR$.
If $w$ is an element of $\gamma_L$, then let $e_L^-$, $e_L^+$ be the edges immediately preceding and following $w$ in $\gamma_L$ (when $w = x_0$, we set $e_L^- = e_{-\infty}$).
If $w$ is an element of $\gamma_R$, then let $e_R^-,e_R^+$ be the edges immediately preceding and following $w$ in $\gamma_R$ (when $w = x_0$, we set $e_R^- = e_{-\infty}$).

First, suppose that $w$ lies in $W_L(u)$ and $e_L^- \prec e \prec e_L^+$ in the $w$-ordering. 
This implies that $W' = x_0[W_L(u)]w[W]d$ is left of $\gamma_L$.
In particular, either $W_L(u)$ is a subpath of $W'$ or $W'$ is left of $W_L(u)$.
In both cases, there is a contradiction with the fact that $u$ is left of $d$.

Second, note that if $w$ lies in $W_R(v)$ and $e_R^+ \prec e \prec e_R^-$ in the $w$-ordering, then, $\gamma_R$ is left of $x_0[W_R(v)]w[W]d$, so the statement of the proposition holds.

Since $w \in B$ and $w\in\partial\calR$ we conclude that $w$ lies in  $u[W_L(u)]q[W_R(v)]v$.
If $w$ lies in $u[W_L(u)]q$ and $w \neq q$, then by~\ref{items:leaving_regions:left}, $e_L^- \prec e \prec e_L^+$ in the $w$-ordering, which leads to a contradiction as discussed above.
If $w$ lies in $q[W_R(v)]v$ and $w \neq q$, then by~\ref{items:leaving_regions:right}, $e_R^+ \prec e \prec e_R^-$ in the $w$-ordering, which yields the statement as discussed above.
Therefore, the only remaining case to consider is when $w = q$. 
In this case, all four edges $e_L^-$, $e_L^+$, $e_R^-$, $e_R^+$ are well-defined and $e \notin \{e_L^-, e_L^+, e_R^-, e_R^+\}$.
We have four possibilities: 
$e_L^- \prec e \prec e_L^+$, 
$e_L^+ \prec e \prec e_R^+$, 
$e_R^+ \prec e \prec e_R^-$, 
$e_R^- \prec e \prec e_L^-$ in the $w$-ordering.
The first case leads to a contradiction, while the third case yields the statement as discussed in a paragraph above. 
The second case contradicts~\ref{items:leaving_regions:x}.
Finally, in the fourth case, by \ref{items:leaving_shadows:y}, we obtain that the edge $e$ lies in $\shadz(q)$, and in particular, the whole path $q[W]d$ lies in $\shadz(q)$, which forces $d \in \shadz(q) \subset \shadz(u)$ and contradicts the assumption that $u$ is left of $d$. 
\end{proof}

\begin{proposition}
\label{prop:elements-not-in-shad-w}
    Let $(a,u,v,U,V,\calR,q,m,\gamma_L,\gamma_R)$ be a region tuple.
    Let $c \in A$ and $b \in B$ with $b,c \in \calR$.
    Let $z \in Z(c)$ and $W$ be an exposed witnessing path from $c$ to $z$ in~$P$ such that $W \subset \calR$.
    \begin{enumerate}
      \myitem{$(L)$} Let $w$ be an element of $W_R(v)$ with $q \leq w$ in~$P$.\label{prop:elements-not-in-shad-w:left}
    \begin{enumerate}
        \item If $b \notin \shadz(w)$, then either $W_L(w) $ is a subpath of $ W_L(b)$ or $W_L(b)$ is left of $W_L(w)$.\label{prop:elements-not-in-shad-w:left:i}
        \item If $c \notin \shadz(w)$, then $W_L(w)$ is not left of $W_L(z)$.\label{prop:elements-not-in-shad-w:left:ii}
        \item If $c \notin \shadz(w)$, then $\gce(W_L(u),W_L(w))$ lies in $W_L(z)$.\label{prop:elements-not-in-shad-w:left:iii}
    \end{enumerate}
      \myitem{$(R)$} Let $w$ be an element of $W_L(u)$ with $q \leq w$ in~$P$.
    \begin{enumerate}\label{prop:elements-not-in-shad-w:right}
        \item \mbox{If $b \notin \shadz(w)$, then either $W_R(w)$ is a subpath of $W_R(b)$ or $W_R(b)$ is right of $W_R(w)$.}\label{prop:elements-not-in-shad-w:right:i}
        \item If $c \notin \shadz(w)$, then $W_R(w)$ is not right of $W_R(z)$.\label{prop:elements-not-in-shad-w:right:ii}
        \item If $c \notin \shadz(w)$, then $\gce(W_R(v),W_R(w))$ lies in $W_R(z)$.\label{prop:elements-not-in-shad-w:right:iii}
    \end{enumerate}
    \end{enumerate}
\end{proposition}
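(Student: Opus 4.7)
The proof handles part (L); part (R) follows by a fully symmetric argument (swapping $u \leftrightarrow v$, $\gamma_L \leftrightarrow \gamma_R$, $W_L \leftrightarrow W_R$, etc.). The three items are proved in sequence, with (iii) relying on (ii).

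For item (i), I argue by contradiction. The paths $W_L(w)$ and $W_L(b)$ are $x_0$-consistent by Proposition~\ref{prop:W-consistent}.\ref{prop:W-consistent:left}, so the only alternatives to the stated disjunction are $W_L(b) \subsetneq W_L(w)$ and $W_L(w)$ strictly left of $W_L(b)$. The first gives $b < w$ in $P$, hence $b \in \shadz(b) \subseteq \shadz(w)$ by Proposition~\ref{prop:comparability_implies_shadow_containment}, contradicting the hypothesis $b \notin \shadz(w)$. For the second, $b \in \calR$ combined with Proposition~\ref{prop:q-in-W_L(b)-and-W_R(b)} places $q$ on $W_L(b)$; since $q \in W_R(v)$ and $q \leq w$, the path $W' = x_0[W_L(w)]w[W_R(v)]v$ is a witnessing path from $x_0$ to $v$. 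Let $r = \gce(W_L(w), W_L(b))$. A case analysis on whether $r$ lies strictly below $q$ on $W_L(b)$ (so $r$ also lies on the common prefix $W_L(q)$ of $W_L(u)$ and $W_R(v)$) or weakly above $q$ --- using in the latter case Proposition~\ref{prop:path_does_not_leave_regions} applied to $q[W_L(b)]b \subseteq \calR$ to track where the first edge of $W_L(w)$ past $r$ exits $\calR$ --- shows in both subcases that $W'$ is strictly left of $W_L(v)$ (also using $u$ left of $v$, so $W_L(u) \prec W_L(v)$). This contradicts the leftmostness of $W_L(v)$.

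For item (ii), suppose $W_L(w)$ is strictly left of $W_L(z)$. Since $W \subseteq \calR$ is an exposed witnessing path from $c$ to $z$ and $c \notin \shadz(w)$, I construct a path $M' \in \calM(c)$ by following $W_L(w)$ to $w$, then joining to $W$ via an exposed segment extracted from $\calR$, and finally following $W$ in reverse back to $c$; the peak of $M'$ is a suitable element $z' \in Z(c)$, and $M'$ is strictly left of $M_L(c)$ by construction (the divergence of $W_L(w)$ from $W_L(z)$ propagates to a divergence of $M'$ from $M_L(c)$ via the prefix $W_L(z_L(c))$, compared with $W_L(z)$ using Proposition~\ref{prop:order_on_Ms_induces_order_in_zs}.\ref{prop:order_on_Ms_induces_order_in_zs:left}), contradicting the definition of $M_L(c)$ as the leftmost element of $\calM(c)$.

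For item (iii), let $p = \gce(W_L(u), W_L(w))$. By item (ii), $W_L(w)$ is not strictly left of $W_L(z)$; combined with the position of $z$ in $\calR$ (via Proposition~\ref{prop:b_relative_to_region}), one also obtains that $W_L(u)$ is either a prefix of $W_L(z)$ or left of $W_L(z)$. Thus $W_L(u)$, $W_L(z)$, $W_L(w)$ --- pairwise $x_0$-consistent by Proposition~\ref{prop:W-consistent}.\ref{prop:W-consistent:left} --- are linearly ordered with $W_L(z)$ sandwiched between $W_L(u)$ and $W_L(w)$, and Proposition~\ref{prop:sandwiched-paths-basic} forces $p$ onto $W_L(z)$. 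The principal obstacle is the subcase $r \geq q$ in item (i), where the topological bookkeeping around $\partial \calR$ and the leftmost paths to $u$ and $v$ is most delicate.
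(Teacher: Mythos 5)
There is a genuine gap, and it sits at the base of your argument, in item~\ref{prop:elements-not-in-shad-w:left:i}. First, a small sign of confusion: since $b,w \in \calR$, \cref{prop:q-in-W_L(b)-and-W_R(b)} puts $q$ on both $W_L(b)$ and $W_L(w)$, so $r=\gce(W_L(w),W_L(b))$ satisfies $q\le r$ and your case ``$r$ strictly below $q$'' is vacuous. More seriously, in the remaining case your mechanism does not exist: by \cref{prop:path_does_not_leave_regions} (applied to $q,w\in\calR$, all elements of $W_L(w)$ being in $B$) the whole segment $q[W_L(w)]w$ lies in $\calR$, so ``the first edge of $W_L(w)$ past $r$ exits $\calR$'' never happens, and nothing in your sketch produces the claimed conclusion. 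Moreover the conclusion you aim for cannot be the right target: the statement ``$W'=x_0[W_L(w)]w[W_R(v)]v$ is left of $W_L(v)$'' does not mention $b$ at all, while your hypothesis for contradiction does; and since $w\le v$, \cref{prop:shortcuts}.\ref{prop:item:shortcuts-left-tree} forces $W_L(v)$ to be left of $W_L(w)$ or $w$ to lie on $W_L(v)$, so $W'$ is never left of $W_L(v)$ and no legitimate derivation of that statement from ``$W_L(w)$ left of $W_L(b)$'' is possible along the lines you describe. What actually kills this case (and is the heart of the paper's proof) is a trapping argument: cut $\calR$ along $u'[W_L(w)]w$ with $u'=\gce(W_L(u),W_L(w))$ into two subregions $\Gamma_1$ and $\Gamma_2$, note $\Gamma_2\subset\shadz(w)$ and $\shadz(w)=\shadz(q)\cup\Gamma_2$, and show via the cyclic edge order at the last crossing point of $q[W_L(b)]b$ with $u'[W_L(w)]w$ that $W_L(b)$ branches into $\Gamma_1$, i.e.\ $W_L(b)$ is left of $W_L(w)$ (equivalently: if $W_L(w)$ were left of $W_L(b)$, then $b$ would be confined to $\Gamma_2\subset\shadz(w)$, contradicting $b\notin\shadz(w)$).

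Item~\ref{prop:elements-not-in-shad-w:left:ii} is also not proved as written. Your path $M'$ is supposed to ``follow $W_L(w)$ to $w$, then join to $W$ via an exposed segment extracted from $\calR$,'' but no such segment is available: $W$ is exposed, so its only element in $B$ is $z$, which need not lie on $W_L(w)$, and there is no given exposed witnessing path from $c$ to $w$ or to any element of $W_L(w)$; the peak $z'\in Z(c)$ of $M'$ is never identified, so the contradiction with the leftmostness of $M_L(c)$ is not established, and it is also unclear where your hypothesis $c\notin\shadz(w)$ enters. The paper's route is different and much shorter: using the same $\Gamma_1/\Gamma_2$ decomposition one shows $W\subset\Gamma_1$, deduces $z\notin\shadz(w)$, and then applies item~\ref{prop:elements-not-in-shad-w:left:i} with $b:=z$ to contradict ``$W_L(w)$ left of $W_L(z)$.'' Your item~\ref{prop:elements-not-in-shad-w:left:iii}, via \cref{prop:b_relative_to_region}, \cref{prop:shad-disjoint-from-calR} and the sandwiching statement \cref{prop:sandwiched-paths-basic}, is a plausible alternative to the paper's intersection argument, but it rests on~\ref{prop:elements-not-in-shad-w:left:ii}, so as it stands the whole proof is not salvaged. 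The missing idea throughout is precisely the decomposition of $\calR$ along $u'[W_L(w)]w$ (equivalently, the identification of $\shadz(w)\cap\calR$), which is what lets one locate $b$, $z$ and the crossing points of their leftmost paths.
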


\begin{figure}[tp]
  \begin{center}
    \includegraphics{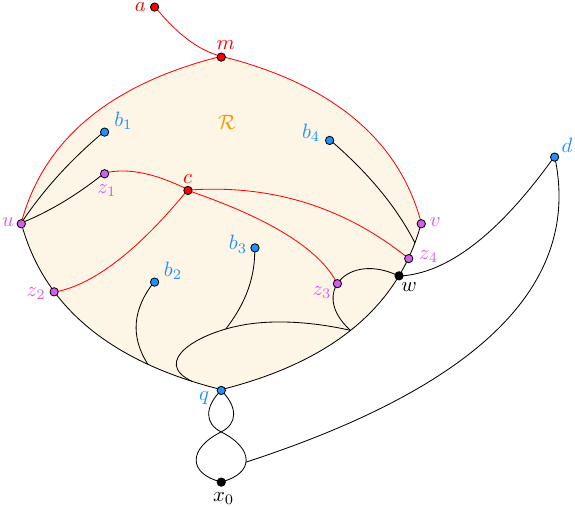}
  \end{center}
  \caption{
    An illustration of the statement of~\Cref{prop:elements-not-in-shad-w}.\ref{prop:elements-not-in-shad-w:left}.
    We depict several possibilities of how $z$ ($z_1$, $z_2$, $z_3$, and $z_4$) and $b$ ($b_1$, $b_2$, $b_3$, and $b_4$) can be positioned in the region.
    Later, in~\Cref{prop:shad-d-equals-shad-w}, we show that every element of $B$ common to $\calR$ and $\shadz(d)$ lies in $\shadz(w)$.
  }
  \label{fig:elements-not-in-shad-w}
\end{figure}

\begin{proof}
We prove only~\ref{prop:elements-not-in-shad-w:left} as the proof of~\ref{prop:elements-not-in-shad-w:right} is symmetric.
See~\Cref{fig:elements-not-in-shad-w} for an illustration of the statement.

Let $u' = \gce(W_L(u),W_L(w))$.
Note that by~\Cref{prop:q-in-W_L(b)-and-W_R(b)}, $q$ lies in $W_L(w)$, and so, $q \leq u'$ in~$P$.
Let
    \begin{align*}
        \gamma_1 &= u'[W_L(w)]w[\gamma_R]m[\gamma_L]u' \text{ and }\\
        \gamma_2 &= u'[W_L(u)]q[W_R(v)]w[W_L(w)]u'.
    \end{align*}
    Moreover, let $\Gamma_1$ be the region of $\gamma_1$ and $\Gamma_2$ be the region of $\gamma_2$.
    See~\Cref{fig:regions-into-gammas}.
    Note that all segments of $\gamma_1$ and $\gamma_2$ in the definition above, except $u'[W_L(w)]w$ are contained in $\partial \calR$. 
    Since $u'[W_L(w)]w \subset \calR$ (by~\Cref{prop:path_does_not_leave_regions}), we have $\gamma_1,\gamma_2 \subset \calR$, and so, by~\Cref{obs:region_containment}, $\Gamma_1,\Gamma_2 \subset \calR$.
    Observe also that $\Gamma_1 \cup \Gamma_2 = \calR$ and $\Gamma_1 \cap \Gamma_2 = u'[W_L(w)]w$.
    Note that $\gamma_2 = q[W_R(w)]w[W_L(w)]q$, and so, $\Gamma_2 \subset \shadz(w)$. 
    Moreover, since $q$ is a common element of $w$ we have that 
    $\shadz(w) = \shadz(q) \cup \Gamma_2$. 

\begin{figure}[tp]
  \begin{center}
    \includegraphics{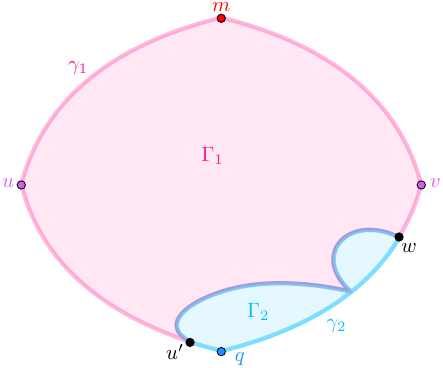}
  \end{center}
  \caption{
    The curves $\gamma_1$ and $\gamma_2$ and the regions $\Gamma_1$ and $\Gamma_2$.
    % \jedrzej{Consider adding edges to this figure (two possibilities). Also, move $u'$ slightly up.}
    % \jedrzej{I think that we can skip this, too many cases.}
  }
  \label{fig:regions-into-gammas}
\end{figure}

    First, we prove~\ref{prop:elements-not-in-shad-w:left:i}. 
    Assume that $b\not\in\shadz(w)$. 
    Since $b\in \calR$ and $b \notin \shadz(w)$, we have $b\in \Gamma_1$.
    We will prove that either $W_L(w)$ is a subpath of $W_L(b)$ or $W_L(b)$ is left of $W_L(w)$.
    
    We have $b \in \Gamma_1$ and $q \in \Gamma_2$.
    By~\Cref{prop:q-in-W_L(b)-and-W_R(b)}, $q$ lies in $W_L(b)$ and by~\Cref{prop:path_does_not_leave_regions}, $q[W_L(b)]b \subset \calR$.
    Therefore, $q[W_L(b)]b$ intersects $\Gamma_1 \cap \Gamma_2 = u'[W_L(w)]w$ -- let $w'$ be the maximal element of $P$ in this intersection.
    %Since both witnessing paths in this intersection are leftmost paths, 
    The paths $W_L(b)$ and $W_L(w)$ are $x_0$-consistent (by~\cref{prop:W-consistent}.\ref{prop:W-consistent:left}), and so, $w' = \gce(W_L(b),W_L(w))$. 
    
    If $w' = w$, then $W_L(w)$ is a subpath of $W_L(b)$ as desired. 
    Hence, assume $w' \neq w$.
    Note that $w' \in \shadz(w)$, thus, $w' \neq b$.
    Let $e$ be the edge following $w'$ in $W_L(b)$.
    Observe that $e$ lies in $\Gamma_1$.
    If $w' = q = x_0$, then let $e^- = e_{- \infty}$.
    Otherwise, let $e^-$ be the edge immediately preceding $w'$ in $W_L(w)$.
    Since $w' \neq w$, we can define $e^+$ to be the edge immediately following $w'$ in $W_L(w)$.
    We claim that 
    \begin{align}\label{eq:e-ee+}
        \text{$e^- \prec e \prec e^+$ in the $w'$-ordering.}
    \end{align}
    Note that~\cref{eq:e-ee+} and $w' = \gce(W_L(b),W_L(w))$ implies that $W_L(b)$ is left of $W_L(w)$ which will complete the proof of~\ref{prop:elements-not-in-shad-w:left:i}.
    
    Note that $u' < u$ in~$P$ as $u$ and $v$ are incomparable in~$P$.
    We define $f^+$ to be the edge immediately following $u'$ in $W_L(u)$.
    Finally, let $g^+$ be the first edge of $q[W_R(v)]v$.
    When $w' \neq u'$, $e^+$ follows $e^-$ in $\gamma_1$, and so, we obtain~\eqref{eq:e-ee+} by~\Cref{obs:when_in_gamma}.
    Now, suppose that $w' = u'$.
    In this case, $e^+$ follows $f^+$ in $\gamma_1$, and so, again by~\Cref{obs:when_in_gamma}, we have 
    \begin{align}\label{eq:f+ee+}
        \text{$f^+ \prec e \prec e^+$ in the $w'$-ordering.}
    \end{align}
    Since $w'[W_L(w)]w \subset \calR$, the edge $e^+$ lies in $\calR$.
    If $w' \neq q$, then by~\ref{items:leaving_regions:left}, we have $f^+ \prec e^+ \prec e^-$ in the $w'$-ordering, and combining with~\eqref{eq:f+ee+}, we obtain~\eqref{eq:e-ee+}.
    Finally, assume that $w' = u' = q$.
    Then, $e^-$ does not lie in $\calR$, and so, by~\ref{items:leaving_regions:x}, we have $g^+ \prec e^- \prec f^+$ in the $w'$-ordering and $f^+ \prec e^+ \prec g^+$ in the $w'$-ordering.
    It follows that $e^+ \prec e^- \prec f^+$ in the $w'$-ordering.
    Again combining with~\eqref{eq:f+ee+}, we obtain~\eqref{eq:e-ee+}.
    This completes the proof of~\eqref{eq:e-ee+} and as discussed of~\ref{prop:elements-not-in-shad-w:left:i}.

\begin{comment}
    Next, assume that (3) holds.
    When $w = q$, we immediately obtain $w \leq b$ in~$P$, hence, we assume $w \neq q$.
    It follows that $W_L(w)$ is not a subpath of $W_L(u)$.
    Clearly, $W_L(u)$ is not a subpath of $W_L(w)$ since otherwise $u \leq v$ in~$P$.
    Therefore, either $W_L(u)$ is left of $W_L(w)$ or $W_L(w)$ is left of $W_L(u)$.
    Since $w \in \shadz(v)$ and $w \leq v$ in~$P$, by~\Cref{prop:paths_directions_in_shadows}.\ref{prop:paths_directions_in_shadows:left}, either $W_L(w)$ is a subpath of $W_L(v)$ or $W_L(v)$ is left of $W_L(w)$.
    Finally, we know that $W_L(u)$ is left of $W_L(v)$.
    Altogether, we obtain that $W_L(u)$ is left of $W_L(w)$.
    Recall that in the considered case, $W_L(u')$ is a proper subpath of $W_L(b)$.
    Since $u' = \gce(W_L(u),W_L(w))$, we obtain that $W_L(b)$ is left of $W_L(w)$.
    This completes the proof of~\ref{prop:elements-not-in-shad-w:left:i}.
\end{comment}

    For the proof of~\ref{prop:elements-not-in-shad-w:left:ii} and~\ref{prop:elements-not-in-shad-w:left:iii} assume that $c \notin \shadz(w)$.
    First, we claim that
    \begin{align}\label{eq:W-in-Gamma1}
        W \subset \Gamma_1.
    \end{align}
    Indeed, since $c \in \Gamma_1$, $W$ is an exposed path contained in $\calR$, $\Gamma_1 \subset \calR$, and the parts of the boundaries of $\Gamma_1$ and $\calR$ that are not in $B$ are the same, we have $W \subset \Gamma_1$, as desired.

    Now, we prove~\ref{prop:elements-not-in-shad-w:left:ii}.
    %Assume that $c \notin \shadz(w)$.
    Since $c \in \calR$ and $c \notin \shadz(w)$, we have $c \in \Gamma_1$.
    Suppose to the contrary that $W_L(w)$ is left of $W_L(z)$.
    We claim that $z \notin \shadz(w)$.
    Since $W \subset \Gamma_1$ (by~\eqref{eq:W-in-Gamma1}), either $z \in \Int \Gamma_1$ or $z \in \partial \Gamma_1$.
    First, suppose that $z \in \Int \Gamma_1$.
    It follows that $z \notin \Gamma_2$.
    Additionally, by~\cref{prop:shad-disjoint-from-calR}, $z \notin \shadz(q)$.
    Recall that $\shadz(w) = \shadz(q) \cup \Gamma_2$.
    This implies $z \notin \shadz(w)$, so the claim holds in this case.
    Next, assume that $z \in \partial \Gamma_1$.
    If $z \in \shadz(w)$, then $z \in \partial\Gamma_1 \cap \shadz(w) = \partial\Gamma_1 \cap \Gamma_2 = u'[W_L(w)]w$, which yields that $z$ lies in $W_L(w)$, which contradicts $W_L(w)$ left of $W_L(z)$.
    This shows that indeed $z \notin \shadz(w)$.
    Now, we can apply~\ref{prop:elements-not-in-shad-w:left:i} for $b = z$.
    It follows that either $W_L(w)$ is a subpath of $W_L(z)$ or $W_L(z)$ is left of $W_L(w)$.
    In both cases, we obtain a contradiction, which completes the proof of~\ref{prop:elements-not-in-shad-w:left:ii}.

    Finally, we prove~\ref{prop:elements-not-in-shad-w:left:iii}, namely, we prove that $u'$ lies in $W_L(z)$.
    Since $W \subset \Gamma_1$ (by~\eqref{eq:W-in-Gamma1}), we have $z \in \Gamma_1$.
    On the other hand, $q \in \Gamma_2$.
    By~\Cref{prop:q-in-W_L(b)-and-W_R(b)}, $q$ lies in $W_L(z)$ and by~\Cref{prop:path_does_not_leave_regions}, $q[W_L(z)]z \subset \calR$.
    Therefore, $q[W_L(z)]z$ intersects $\Gamma_1 \cap \Gamma_2 = u'[W_L(w)]w$.
    In particular, by $x_0$-consistency of $W_L(w)$ and $W_L(z)$ (\Cref{prop:W-consistent}.\ref{prop:W-consistent:left}), $u'$ lies in $W_L(z)$, which completes the proof.
\end{proof}

\begin{proposition}\label{prop:shad-d-equals-shad-w}
    Let $(a,u,v,U,V,\calR,q,m,\gamma_L,\gamma_R)$ be a region tuple.
    Let $d \in B$ with $d \notin \calR$.
    \begin{enumerate}
      % \myitem{$(L)$} If $u$ is left of $d$ and $w = \gce(W_L(d),W_R(v))$, then for every $b \in B$ with $b \in \calR$ and $b \in \shadz(d)$, we have $b \in \shadz(w)$.\label{prop:shad-d-equals-shad-w:L}
      \myitem{$(L)$} Let $u$ be left of $d$ and let $w$ be the maximal common element of $W_L(d)$ and $W_R(v)$ in~$P$.
      For every $b \in B$ with $b \in \calR$ and $b \in \shadz(d)$, we have $b \in \shadz(w)$.
      Moreover, if such $b \in B$ exists, then $q \leq w$ in~$P$.\label{prop:shad-d-equals-shad-w:L}
      % \myitem{$(R)$} If $v$ is right of $b$ and $w = \gce(W_R(d),W_L(u))$, then for every $b \in B$ with $b \in \calR$ and $b \in \shadz(d)$, we have $b \in \shadz(w)$.\label{prop:shad-d-equals-shad-w:R}
      \myitem{$(R)$} Let $v$ be right of $d$ and $w$ be the maximal common element of $W_R(d)$ and $W_L(u)$ in~$P$.
      For every $b \in B$ with $b \in \calR$ and $b \in \shadz(d)$, we have $b \in \shadz(w)$.
      Moreover, if such $b \in B$ exists, then $q \leq w$ in~$P$.\label{prop:shad-d-equals-shad-w:R}
  \end{enumerate}
\end{proposition}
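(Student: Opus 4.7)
To prove (L) (the case (R) is symmetric), I fix $b \in B$ with $b \in \calR$ and $b \in \shadz(d)$. The argument has two main moves: first control the behaviour of $W_L(b)$ and $W_R(b)$ inside $\calR \cap \shadz(d)$, then transfer that information to $W_L(d) \cap W_R(v)$ to locate $w$. By \cref{prop:q-in-W_L(b)-and-W_R(b)}, $q \in W_L(b) \cap W_R(b)$, and since $q,b \in \calR$ and both paths have all vertices in $B$, \cref{prop:path_does_not_leave_regions} gives $q[W_L(b)]b,\ q[W_R(b)]b \subseteq \calR$; by \cref{cor:path-in-shadow} applied with endpoints $x_0$ (the initial element of $\shad_0(d)$) and $b$, we also get $W_L(b),W_R(b) \subseteq \shadz(d)$. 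Note that $x_0[W_L(b)]q = x_0[W_L(u)]q$ runs from $x_0$ to $q$ through the exterior of $\calR$.

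For the ``moreover'' statement, I claim $q \leq w$. By \cref{prop:W-consistent}.\ref{prop:W-consistent:left} the paths $W_L(b)$ and $W_L(d)$ share a common prefix ending at $r := \gce(W_L(b), W_L(d))$. If $r \geq q$ in~$P$ then $q$ lies on this common prefix, hence $q \in W_L(d) \cap W_R(v)$ and $q \leq w$. Otherwise $r$ lies on $x_0[W_L(u)]q$, so $r \in W_L(u) \cap W_L(d)$ and in fact $r = r_{ud} := \gce(W_L(u), W_L(d))$. Because $W_R(v)$ coincides with $W_L(u)$ up to $q$ and $u$ is left of $d$, the edge following $r_{ud}$ in $W_R(v)$ is strictly left of the edge following $r_{ud}$ in $W_L(d)$. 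Therefore, if $W_L(d)$ meets $W_R(v)$ at any element $w' > r_{ud}$, then $w' \geq q$ (else $w' < q$ would put $w'$ in $W_L(u) \cap W_L(d)$ strictly above $r_{ud}$, contradicting the definition of $r_{ud}$). So it is enough to rule out the scenario in which $W_L(d)$ never re-meets $W_R(v)$ after $r_{ud}$: in that situation $W_L(d)$ departs to the right of $W_R(v)$ and stays in the exterior of $\calR$ beyond $r_{ud}$, so $\shadz(d)$, bounded on the left by $W_L(d)$, can only intersect $\calR$ along pieces of $x_0[W_L(u)]r_{ud}$, which are themselves outside $\calR$ -- contradicting the existence of $b \in \calR \cap \shadz(d)$.

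With $q \leq w$ in hand, I derive $b \in \shadz(w)$ by splitting on the position of $b$ in $\shadz(d)$. Applying \cref{prop:paths_directions_in_shadows}.\ref{prop:paths_directions_in_shadows:left} to the terminal element $y_d$ of $\shad_0(d)$ (for which $\sd(y_d) = 0$ by \cref{prop:shadow-comp}.\ref{prop:shadow-comp:item:sd_of_base}) and using that $W_L(y_d)$ is a prefix of $W_L(d)$ yields that $W_L(b)$ is not left of $W_L(d)$, and symmetrically $W_R(b)$ is not right of $W_R(d)$. Invoking \cref{prop:path-leaving-the-right-side-of-a-region}.\ref{prop:path-leaving-the-right-side-of-a-region:L} I extract the last element $w_0$ of $W_L(d)$ in $\calR$, which lies in $\gamma_R \cap B \subseteq x_0[W_R(v)]v$, so $w_0 \in W_L(d) \cap W_R(v)$ and $w_0 \leq w$. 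If $b \in W_L(d)$, then $b$ appears on $W_L(d)$ no later than $w_0$, giving $b \leq w_0 \leq w$ and $\shadz(b) \subseteq \shadz(w)$ by \cref{prop:comparability_implies_shadow_containment}. If $b \in W_R(d)$, the symmetric application of \cref{prop:path-leaving-the-right-side-of-a-region}.\ref{prop:path-leaving-the-right-side-of-a-region:L} to $W_R(d)$ yields $b \leq w$ again. If $b$ lies in the interior of $\shadz(d)$, then $W_R(w) \subseteq W_R(v)$ is a prefix, and the containments $W_L(b),W_R(b) \subseteq \calR \cap \shadz(d)$ together with $q \leq w$ force $(b,w)$ to be either comparable in $P$ or an inside pair, and \cref{prop:comparability_implies_shadow_containment} or \cref{prop:inside_pair_implies_containment} closes the case.

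The main obstacle is the sub-case $r_{ud} < q$ in the argument that $q \leq w$, specifically ruling out the scenario where $W_L(d)$ never re-meets $W_R(v)$ after $r_{ud}$. This requires a delicate local planar argument: the key point is that if $W_L(d)$ is pushed strictly to the right of $W_R(v)$ just above $r_{ud}$, then without re-crossing $W_R(v)$ it cannot re-enter $\calR$; one then has to argue that $\shadz(d)$ itself has no access to the interior of $\calR$, using that the only potential shared arc, $x_0[W_L(u)]r_{ud}$, lies outside $\calR$.
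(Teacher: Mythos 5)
Your overall plan (first prove the ``moreover'' part $q\le w$, then place $b$ relative to $w$) is reasonable, but both halves have genuine gaps. In the ``moreover'' part, the sub-case $r<q$ rests on the claim that ``$W_R(v)$ coincides with $W_L(u)$ up to $q$''. That is false in general: $q$ is only the maximal \emph{common element} of $W_L(u)$ and $W_R(v)$; below $q$ these paths are $W_L(q)$ and $W_R(q)$ respectively, which may be different (the prefix $x_0[W_L(u)]q$ is not part of $\partial\calR$ and need not lie on $W_R(v)$). Consequently $r_{ud}$ need not lie on $W_R(v)$ at all, the step ``$w'<q$ would put $w'$ in $W_L(u)\cap W_L(d)$ above $r_{ud}$'' fails, and the concluding topological sketch (which you yourself flag as the main obstacle) is not a proof. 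In fact this sub-case is vacuous, but establishing that is where the content lies: since $b\in\calR$, $q$ lies on $W_L(b)$ (\cref{prop:q-in-W_L(b)-and-W_R(b)}), and since $b\in\shadz(d)$, $W_L(b)$ is not left of $W_L(d)$; as $W_L(q)$ is a subpath of $W_L(u)$, which is left of $W_L(d)$, the only surviving option is that $W_L(b)$ is a subpath of $W_L(d)$, whence $q\in W_L(d)\cap W_R(v)$ and $q\le w$. This is exactly how the paper argues, and your route does not substitute for it.

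The main claim has a deeper gap: your case analysis skips precisely the hard configuration. In the case $b\in W_R(d)$, the element $w_0'$ you extract from \cref{prop:path-leaving-the-right-side-of-a-region} is a common element of $W_R(d)$ and $W_R(v)$, not of $W_L(d)$ and $W_R(v)$, so $b\le w_0'$ does not yield $b\le w$ without further argument. In the interior case, the dichotomy ``$(b,w)$ comparable or an inside pair'' is asserted rather than proven, and even granting it, the comparable sub-case with $w<b$ is not closed by \cref{prop:comparability_implies_shadow_containment}, which then gives $\shadz(w)\subseteq\shadz(b)$ --- the wrong direction. The situation ``$w<b$ and $b\notin\shadz(w)$'' is exactly what the paper's proof spends its effort excluding: assuming $b\notin\shadz(w)$, it uses \cref{prop:elements-not-in-shad-w} to reduce to $w<b$, takes a witnessing path $W$ from $w$ to $b$, and by an edge-ordering analysis around $w$ (combining \ref{items:leaving_regions:left}, \ref{items:leaving_regions:x}, \ref{items:leaving_shadows:y} with the ordering supplied by \cref{prop:path-leaving-the-right-side-of-a-region}) shows the first edge of $W$ either lies in $\shadz(w)$, forcing $b\in\shadz(w)$, or yields a witnessing path to $b$ that is left of $W_L(d)$, contradicting $b\in\shadz(d)$ via \cref{prop:paths_directions_in_shadows}. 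Your proposal contains no argument playing this role, so the proof is incomplete at its core.
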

\begin{proof}
We prove only~\ref{prop:shad-d-equals-shad-w:L} as the proof of~\ref{prop:shad-d-equals-shad-w:R} is symmetric.
See again~\Cref{fig:elements-not-in-shad-w}.
% Assume that $u$ is left of $d$ and $w = \gce(W_L(d),W_R(v))$.
Let $b \in B$ with $b \in \calR$ and $b \in \shadz(d)$.

First, we show the \q{moreover} part, that is, $q \leq w$ in~$P$.
%Again, we argue by contradiction, supposing that $w < q $ in~$P$.
Since $u$ is left of $d$, we have $W_L(u)$ left of $W_L(d)$.
Moreover, $W_L(q)$ is a subpath of $W_L(u)$, hence, either $W_L(q)$ is left of $W_L(d)$ or $W_L(q)$ is a subpath of $W_L(d)$.
In the latter case, $q$ lies in both $W_L(d)$ and $W_L(w)$, hence, $q \leq w$ in~$P$, as desired.
Thus, we may assume that $W_L(q)$ is left of $W_L(d)$.
Since $b \in \shadz(d)$, by~\Cref{prop:paths_directions_in_shadows}.\ref{prop:paths_directions_in_shadows:left}, $W_L(b)$ is not left of $W_L(d)$.
This translates into one of the three options:
$W_L(d)$ is a subpath of $W_L(b)$ or $W_L(b)$ is a subpath of $W_L(d)$ or $W_L(d)$ is left of $W_L(b)$. 
Since $b \in \calR$, by~\Cref{prop:q-in-W_L(b)-and-W_R(b)}, $q$ lies in $W_L(b)$, and so, $W_L(q)$ is a subpath of $W_L(b)$.
Each of the options \q{$W_L(d)$ is a subpath of $W_L(b)$} and \q{$W_L(d)$ is left of $W_L(b)$} implies that $W_L(q)$ is left of $W_L(b)$, which is false.  %(since $W_L(q)$ is left of $W_L(d)$).
%This is false, 
Thus, the third option holds: $W_L(b)$ is a subpath of $W_L(d)$.
It follows that $W_L(q)$ is a subpath of $W_L(d)$, and so, $q \leq w$ in~$P$, as desired.
%If $W_L(d)$ is a subpath of $W_L(b)$ or $W_L(d)$ is left of $W_L(b)$, then $W_L(q)$ is left of $W_L(b)$ (since $W_L(q)$ is left of $W_L(d)$), which is a contradiction.
% If $W_L(b)$ is a subpath of $W_L(d)$, then $W_L(q)$ is a subpath of $W_L(d)$, and so, $q \leq w$ in~$P$, which is a contradiction.
% These contradictions give that indeed $q \leq w$ in~$P$.

Now, we prove the main statement.
To this end, suppose to the contrary that $b \notin \shadz(w)$.
By~\Cref{prop:elements-not-in-shad-w}.\ref{prop:elements-not-in-shad-w:left}.\ref{prop:elements-not-in-shad-w:left:i}, either $w < b$ in~$P$ or $W_L(b)$ is left of $W_L(w)$.
In the latter case, we obtain $W_L(b)$ left of $W_L(d)$, which contradicts $b \in \shadz(d)$ by~\Cref{prop:paths_directions_in_shadows}.\ref{prop:paths_directions_in_shadows:left}.
Thus, assume that $w < b$ in~$P$ and let $W$ be a witnessing path from $w$ to $b$ in~$P$.

Note that $w \neq d$ as $w \in \calR$ and $d \notin \calR$.
Let $e^-$ and $e^+$ be the edges immediately preceding and following $w$ in $W_L(d)$ (when $w = x_0$, we set $e^- = e_{-\infty}$).
Let $f^-$ and $f^+$ be the edges immediately preceding and following $w$ in $\gamma_R$ (when $w = x_0$, we set $f^- = e_{-\infty}$).
Let $e$ be the edge of $W$ incident to $w$.
See~\Cref{fig:prop-shad-w-edges}.

\begin{figure}[tp]
  \begin{center}
    \includegraphics{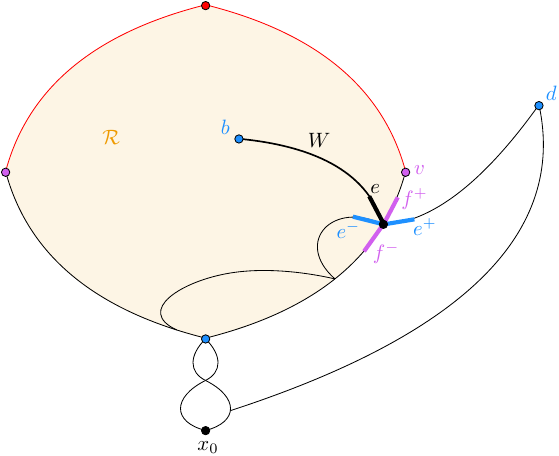}
  \end{center}
  \caption{
    We prove that the edges $e$, $e^-$, $e^+$, $f^-$, and $f^+$ are in the order depicted in the figure around $w$.
  }
  \label{fig:prop-shad-w-edges}
\end{figure}

Note that
\begin{align}
\label{eq:ordering:1} f^- \prec f^+ \prec e^+\ &\textrm{in the $w$-ordering}&&\textrm{by~\Cref{prop:path-leaving-the-right-side-of-a-region}.\ref{prop:path-leaving-the-right-side-of-a-region:L},}\\
\label{eq:ordering:2} f^- \prec e \preccurlyeq f^+\ &\textrm{in the $w$-ordering}&&\textrm{by~\ref{items:leaving_regions:left} or~\ref{items:leaving_regions:x}.}
\end{align}
Next, we claim that
%\begin{align}
\begin{align}
\label{eq:ordering:3} f^- \preccurlyeq e^- \prec f^+\ &\textrm{in the $w$-ordering.}
\end{align}
First, assume that $q < w$ in~$P$.
Then, since $q,w \in \calR$, by~\Cref{prop:path_does_not_leave_regions}, $q[W_L(w)]w \subset \calR$, and so, $e^- \subset \calR$.
Thus, by~\ref{items:leaving_regions:right},~\eqref{eq:ordering:3} follows.
Finally, assume that $q = w$.
Then, $f^+$ does not lie in $\shadz(q)$, and so, by~\ref{items:leaving_shadows:y},~\eqref{eq:ordering:3} follows.

By~\eqref{eq:ordering:2} and~\eqref{eq:ordering:3}, either $f^- \prec e \prec e^-$ in the $w$-ordering or $e^- \prec e \preccurlyeq f^+$ in the $w$-ordering.
The former implies (by~\ref{items:leaving_shadows:y}) that $e$ lies in $\shadz(w)$, which yields $b \in \shadz(w)$, which is a contradiction.
Therefore, the latter holds.
Combining it with~\cref{eq:ordering:1}, \cref{eq:ordering:2}, and \cref{eq:ordering:3}, we obtain $e^- \prec e \preccurlyeq f^+ \prec e^+$ in the $w$-ordering.
This implies that $x_0[W_L(w)]w[W]b$ is left of $W_L(d)$, and thus, $W_L(b)$ is left of $W_L(d)$.
However, this is a contradiction with $b \in \shadz(d)$ by~\Cref{prop:paths_directions_in_shadows}.\ref{prop:paths_directions_in_shadows:left}.
This completes the proof.
\end{proof}

\subsection{Out-Out oriented graphs}
In this subsection, we prove~\Cref{prop:HOO}, which states that $\maxpath(\HOO) \leq \se_P(I)$.
See an example of a path in $\HOO$ in \Cref{fig:path_HOO}.

\begin{proof}[Proof of~\cref{prop:HOO}]
    If $\maxpath(\HOO) \leq 1$, then there is nothing to prove as $\se_P(I)\geq1$.
    %If $\maxpath(\HOO) = 2$, then $P$ contains an alternating cycle of size $2$, hence, $\se(P) \geq 2$.
    Thus, we can assume $\maxpath(\HOO)\geq 2$. 
    Let $((a_1,b_1),\dots,(a_n,b_n))$ be a path in $\HOO$ where $n$ is an integer with $n \geq 2$.
    By~\cref{prop:transitivity_RPP}, $((a_i,b_i),(a_{j},b_{j}))$ satisfies \ref{Lout} and \ref{Rout} for every $i,j \in [n]$ with $i < j$.
    Therefore, by \cref{obs:PL2_PR2_basic}, $a_{j} < b_i$ and $a_i < b_{j}$ in~$P$.
    It follows that $J = \{(a_i,b_i) : i \in [n]\}$ induces a standard example in~$P$.
    Since $J \subset I$ and $|J| = n$, we obtain $n \leq \se_P(I)$, as desired.
\end{proof}

\begin{figure}[tp]
  \begin{center}
    \includegraphics{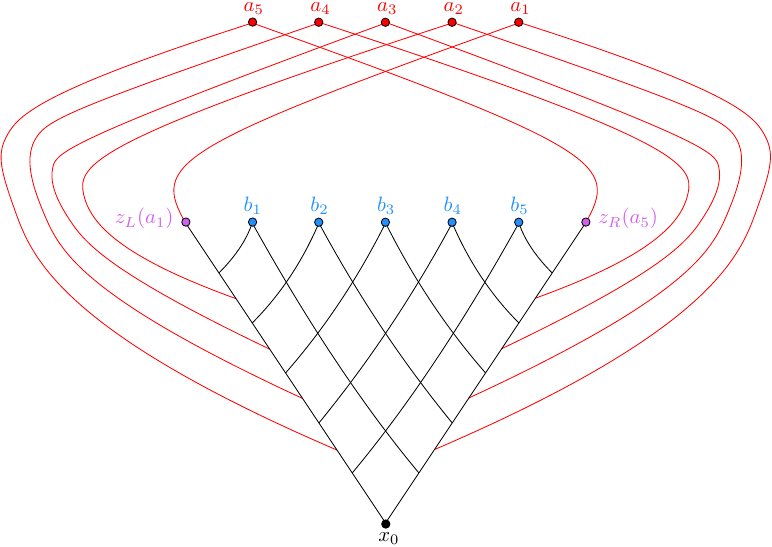}
  \end{center}
  \caption{
    $((a_1,b_1),(a_2,b_2),(a_3,b_3),(a_4,b_4),(a_5,b_5))$ is a path in $\HOO$.
  }
  \label{fig:path_HOO}
\end{figure}

\subsection{In-In oriented graphs}
In this subsection, we prove~\Cref{prop:HIIL,prop:HIIR,prop:HIILR}.
Namely, we prove that $\maxpath(H) \leq \se_P(I)$ for each $H \in \{ \HIIL, \HIIR, \HIILR\}$.
In fact, we will prove that $\maxpath(H)\leq\maxpath(\HII)$, and then that $\maxpath(\HII)\leq\se_P(I)$.
We begin with the latter, see~\Cref{prop:HII}.

For every $a \in A$, by~\cref{prop:z_L_b_z_R}, $z_L(a)$ is left of $z_R(a)$, and hence, we can define 
\[
  \cgR(a)=\cgR(a,z_L(a),z_R(a), z_L(a)[M_L(a)]a, z_R(a)[M_R(a)]a).
\]

\begin{proposition} \label{prop:In-In_the_same_region}
    Let $((a_1,b_1),(a_2,b_2))$ be a regular In-In alternating cycle. Then, $\calR(a_1) = \calR(a_2)$.
\end{proposition}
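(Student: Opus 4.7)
The plan is to show that $\calR(a_1)$ and $\calR(a_2)$ are bounded by the same simple closed curve in the plane. The argument has two phases: first, I establish the equalities of peaks $z_L(a_1) = z_L(a_2) =: z_L$ and $z_R(a_1) = z_R(a_2) =: z_R$; second, I argue that the upper portions of the two bounding curves, although they pass through the distinct elements $a_1$ and $a_2$, trace the same subset of the plane.

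For $z_L(a_1) = z_L(a_2)$, I would apply \cref{cor:characterize_regular_sacs}.\ref{cor:characterize_regular_sacs:item:Lin}: fix any $z_1 \in Z(a_1)$ with $z_1 \le b_2$ in $P$ (which exists since $a_1 \le b_2$ in $P$) and form the corresponding left region $\calR_1$ of $\sigma$, so that $a_2 \in \Int\calR_1$. The boundary $\partial\calR_1$ decomposes into the arc $W_L(z_L(a_1))$, the exposed upper-left arc $z_L(a_1)[M_L(a_1)]a_1$, the exposed upper-right arc from $a_1$ to $z_1$, and $W_R(z_1)$. I would then track where $M_L(a_2)$ first enters $\calR_1$ and case-split on which arc it crosses. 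Crossings through the exposed upper-right arc or through $W_R(z_1)$ strictly below $z_1$ would produce a comparability $a_2 \le b_2$ in $P$ via $z_1 \le b_2$ in $P$, contradicting $a_2 \parallel b_2$ in $P$. Crossings strictly inside $W_L(z_L(a_1))$ below $z_L(a_1)$, or in the interior of the upper-left exposed arc, force via $x_0$-consistency of leftmost paths (\cref{prop:ML_consistent}.\ref{prop:ML_consistent:left}) together with \cref{prop:paths_M_consistent}, the paths $M_L(a_1)$ and $M_L(a_2)$ to share a prefix that passes through $z_L(a_1)$, giving $z_L(a_1) \le z_L(a_2)$ in $P$; running the symmetric argument with the roles of $a_1$ and $a_2$ reversed yields the reverse inequality, and hence $z_L(a_1) = z_L(a_2)$. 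The analogous argument using \cref{cor:characterize_regular_sacs}.\ref{cor:characterize_regular_sacs:item:Rin} on the right side proves $z_R(a_1) = z_R(a_2)$.

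With the common peaks $z_L$ and $z_R$, both bounding curves share the same lower arcs $W_L(z_L)$ and $W_R(z_R)$ and hence the same lower-min $q$. For the upper halves, $x_0$-consistency of $M_L(a_1)$ with $M_L(a_2)$ and of $M_R(a_1)$ with $M_R(a_2)$ (\cref{prop:ML_consistent}) forces each pair to share a common prefix from $x_0$ through the peak $z_L$, respectively $z_R$, all the way to some divergence point past the peak, after which the two paths split into disjoint tree-like branches terminating at $a_1$ and $a_2$. Because these branches appear in the closed bounding curves of both $\calR(a_1)$ and $\calR(a_2)$, the two curves enclose the same bounded component of the plane, which gives $\calR(a_1) = \calR(a_2)$.

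The main obstacle is the exhaustive boundary-crossing analysis in Phase~1 that funnels $M_L(a_2)$ through the single point $z_L(a_1)$. The upper-left exposed arc is the most delicate case, because an entry point there produces a common vertex of $M_L(a_1)$ and $M_L(a_2)$ lying in $A$ after their peaks; one must carefully invoke $x_0$-consistency and the extremality of the $M_L$-paths to conclude that such a common vertex forces $z_L(a_1) = z_L(a_2)$, rather than a genuine branching of the two leftmost paths at separate peaks.
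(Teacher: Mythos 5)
Your route is genuinely different from the paper's (the paper never isolates the peaks first: it shows directly that the upper-min $m_2$ of $\calR(a_2)$ lies on $M_L(a_1)$ and that $m_1$ lies on $M_R(a_2)$, then deduces $m_1=m_2$ and the equality of the boundary prefixes), but as written your argument has a real gap, and it sits exactly where the paper does its work. In Phase~2 you claim that once $z_L(a_1)=z_L(a_2)$ and $z_R(a_1)=z_R(a_2)$, the two bounding curves coincide because ``the branches terminating at $a_1$ and $a_2$ appear in the closed bounding curves of both $\calR(a_1)$ and $\calR(a_2)$.'' That is false: the boundary of $\calR(a_i)$ stops at the upper-min $m_i$, the maximal common element of $z_L(a_i)[M_L(a_i)]a_i$ and $z_R(a_i)[M_R(a_i)]a_i$; the segment from $m_i$ down to $a_i$ is not part of the boundary, and a branch ending at $a_1$ certainly cannot lie on $\partial\calR(a_2)$. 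Equal peaks plus $x_0$-consistency (\cref{prop:ML_consistent}, \cref{prop:paths_M_consistent}) only tell you that $M_L(a_1),M_L(a_2)$ share a prefix up to some divergence point, and likewise for $M_R(a_1),M_R(a_2)$; nothing in your argument rules out that one of these divergences happens \emph{above} $m_1$ or $m_2$, in which case $m_1\neq m_2$ and the regions differ even though the peaks agree. Proving that the divergences happen only below the common upper-min — equivalently $m_2\in M_L(a_1)$, $m_1\in M_R(a_2)$, hence $m_1=m_2$ — is the whole point of the paper's proof: it uses $a_1\in\Int\calR(a_2)$ (from \ref{Rin}) and \ref{Lin} to build, from a hypothetical early divergence, a competing path $M\in\calM(a_2)$ that is left of $M_L(a_2)$ (with the delicate subcases where the re-entry vertex $v$ is or is not in $B$, the latter requiring $z_L(a_1)\in Z(a_2)$). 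None of that, or a substitute for it, appears in your proposal.

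Phase~1 also needs repair, though it is the lesser problem. If $M_L(a_2)$ meets $W_L(z_L(a_1))$ at a vertex $w$ strictly below $z_L(a_1)$, $x_0$-consistency gives you agreement up to $w$ but does not force $M_L(a_2)$ to continue through $z_L(a_1)$: it may branch off $W_L(z_L(a_1))$ at $w$ into the interior of $\calR_1$, which is to the right of $M_L(a_1)$ and hence perfectly compatible with \ref{Lin}; excluding this again requires a competing-path argument, not consistency alone. Moreover, your ``symmetric argument with the roles reversed'' is not available verbatim: the hypotheses are not symmetric in $a_1,a_2$ (regularity puts $b_1$ left of $b_2$, and In-In places $a_2$ inside a \emph{left} region of $\sigma$ and $a_1$ inside a \emph{right} region; there is no left region around $a_2$ containing $a_1$), so the reverse inequality $z_L(a_2)\le z_L(a_1)$ needs its own argument. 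In the paper both the peak equalities and the boundary equalities drop out simultaneously from $m_1=m_2$; I would recommend proving that statement rather than trying to assemble the region equality from peak equality.
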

\begin{proof}
    Let $\sigma=((a_1,b_1),(a_2,b_2))$ and let $m_1$ and $m_2$ be the upper-mins of $\calR(a_1)$ and $\calR(a_2)$, respectively.
    First, we argue that
    \begin{align}\label{eq:m-lies-in-M}
        \text{$m_2$ lies in $M_L(a_1)$} \ \ \  \text{and} \ \ \ \text{$m_1$ lies in $M_R(a_2)$.}
    \end{align}
    We will prove only the first part of the statement: $m_2$ lies in $M_L(a_1)$, as the proof of the second part is symmetric.
    Let $u = \gce(M_L(a_1), M_L(a_2))$.
    Note that both $m_2$ and $u$ lie in $M_L(a_2)$. 
    There are two exclusive possibilities: 
    $x_0[M_L(a_2)]m_2$ is a subpath of $x_0[M_L(a_2)]u$ or
    $x_0[M_L(a_2)]u$ is a proper subpath of $x_0[M_L(a_2)]m_2$.
    We argue that the former always holds. 
    Assume to the contrary that $x_0[M_L(a_2)]u$ is a proper subpath of $x_0[M_L(a_2)]m_2$. 
    Since $\sigma$ satisfies~\ref{Lin}, $M_L(a_1)$ is left of $M_L(a_2)$. 
    Let $e$ be the edge following $u$ in $M_L(a_1)$ -- it exists because $a_1 \parallel a_2$ in~$P$. 
    
    We claim that 
    \begin{align}\label{eq:e-notin-R2}
        \text{$e$ does not lie in $\calR(a_2)$.}
    \end{align}
    Let $q_2$ be the lower-min of $\calR(a_2)$.
    Note that both $u$ and $q_2$ lie in $M_L(a_2)$.
    If $u < q_2$ in~$P$, then by~\cref{prop:shad-disjoint-from-calR}, $u \notin \calR(a_2)$, and so, $e$ does not lie in $\calR(a_2)$, as claimed.
    If $u = q_2$, then since $M_L(a_1)$ is left of $M_L(a_2)$, by~\ref{items:leaving_regions:x}, $e$ does not lie in $\calR(a_2)$, as claimed.
    Finally, otherwise, $u$ lies strictly on the left side of $\calR(a_2)$, and so, by~\ref{items:leaving_regions:left}, $e$ does not lie in $\calR(a_2)$, as claimed.
    This way, we obtain~\eqref{eq:e-notin-R2}.
    
    Since $\sigma$ is In-In, it satisfies~\ref{Rin} and since $\calR(a_2)$ is a right region of $\sigma$, by \cref{cor:characterize_regular_sacs}.\eqref{cor:characterize_regular_sacs:item:Rin}, we have $a_1 \in \Int \calR(a_2)$. 
    Since $e$ does not lie in $\calR(a_2)$ (by~\eqref{eq:e-notin-R2}) and $a_1\in\Int \calR(a_2)$, the path $u[M_L(a_1)]a_1$ intersects $\partial \calR(a_2)$ in an element distinct from $u$, let $v$ be such an element.
    Since the paths $M_L(a_1)$ and $M_L(a_2)$ are $x_0$-consistent (by \cref{prop:ML_consistent}.\ref{prop:ML_consistent:left}), $v$ does not lie in $M_L(a_2)$, and hence, it lies in $M_R(a_2)$.
    Let $M = x_0[M_L(a_2)]u[M_L(a_1)]v[M_R(a_2)]a_2$ and note that $M$ is left of $M_L(a_2)$.
    To conclude~\eqref{eq:m-lies-in-M}, it suffices to show that
    \begin{align}\label{eq:M-in-calM}
        M \in \calM(a_2).
    \end{align}
    Indeed, if $M \in \calM(a_2)$, then $M$ being left of $M_L(a_2)$ contradicts the definition of $M_L(a_2)$ and completes the proof of~\eqref{eq:m-lies-in-M}.
    
    We split the proof of~\cref{eq:M-in-calM} into two cases: 
    $v \in B$ and $v\not\in B$. 
    First, suppose that $v\in B$. 
    Then, $M = x_0[M_L(a_1)]v[M_R(a_2)]z_R(a_2)[M_R(a_2)]a_2$.
    Since $z_R(a_2) \in Z(a_2)$, $x_0[M_L(a_1)]v[M_R(a_2)]z_R(a_2)$ is a witnessing path in~$P$, and $z_R(a_2)[M_R(a_2)]a_2$ is an exposed witnessing path in~$P$, $M \in \calM(a_2)$.
    Next, suppose that $v \notin B$.
    We have $M = x_0[M_L(a_1)]z_L(a_1)[M_L(a_1)]v[M_R(a_2)]a_2$.
    Note that $x_0[M_L(a_1)]z_L(a_1)$ is a witnessing path in~$P$ and $z_L(a_1)[M_L(a_1)]v[M_R(a_2)]a_2$ is an exposed witnessing path in~$P$.
    To obtain that $M \in \calM(a_2)$, it suffices to show that $z_L(a_1) \in Z(a_2)$.
    It is clear that there is an exposed witnessing path from $a_2$ to $z_L(a_1)$ in~$P$ (the path $a_2[M_R(a_2)]v[M_L(a_1)]z_L(a_1)$).
    Thus, it remains to prove that $a_2 \notin \shadz(z_L(a_1))$.
    Observe that none of the elements of the paths $a_1[M_L(a_1)]v$ and $a_2[M_R(a_2)]v$ are in $B$.
    Therefore, these paths do not intersect $\partial \shadz(z_L(a_1))$.
    In particular, either all $a_1$, $v$, and $a_2$ are in $\shadz(z_L(a_1))$ or none of them.
    Since $z_L(a_1) \in Z(a_1)$ by definition, the latter holds.
    We conclude that $a_2 \notin \shadz(z_L(a_1))$, and so, $z_L(a_1) \in Z(a_2)$, and so, $M \in \calM(a_2)$, and finally,~\eqref{eq:M-in-calM} holds.
    This completes the proof of~\eqref{eq:m-lies-in-M}.

    Since $M_L(a_1),M_L(a_2)$ and $M_R(a_1),M_R(a_2)$ are $x_0$-consistent (by~\Cref{prop:ML_consistent}), we obtain
    \begin{align} \label{eq:claim:in:In-In_the_same_region}
        x_0[M_L(a_2)]m_2 = x_0[M_L(a_1)]m_2\quad \textrm{and}\quad x_0[M_R(a_1)]m_1 = x_0[M_R(a_2)]m_1.
    \end{align}
    Since $m_1$ and $m_2$ lie in $M_L(a_1)$, $m_1$ and $m_2$ are comparable in~$P$.

    Suppose first that $m_1 < m_2$ in~$P$.
    Recall that $m_2$ and $m_1$ lie in $M_R(a_2)$. 
    Since $m_1<m_2$ in~$P$ we conclude that $m_2$ lies in $x_0[M_R(a_2)]m_1$, 
    which by \eqref{eq:claim:in:In-In_the_same_region} implies that $m_2$ lies in $M_R(a_1)$. 
    All this together means that $m_2$ is a common element of $M_L(a_1)$ and $M_R(a_1)$. 
    However, this and $m_1<m_2$ in~$P$ contradicts the choice of~$m_1$. 
    Similarly, the assumption of $m_2<m_1$ in~$P$ also leads to a contradiction. 
    We conclude that $m_1 = m_2$, and so, $x_0[M_L(a_1)]m_1 =x_0[M_L(a_2)]m_2$ and $x_0[M_R(a_1)]m_1 = x_0[M_R(a_2)]m_2$.
    Thus, $\calR(a_1) = \calR(a_2)$.
\end{proof}

Now, we prove the transitivity of regular In-In alternating cycles, which will give~\Cref{prop:HII}, that is, $\maxpath(\HII) \leq \se_P(I)$.
See examples of paths in $\HII$ in \Cref{fig:path_HII}.

\begin{figure}[tp]
     \centering
     \begin{subfigure}[b]{1\textwidth}
         \centering
         \includegraphics{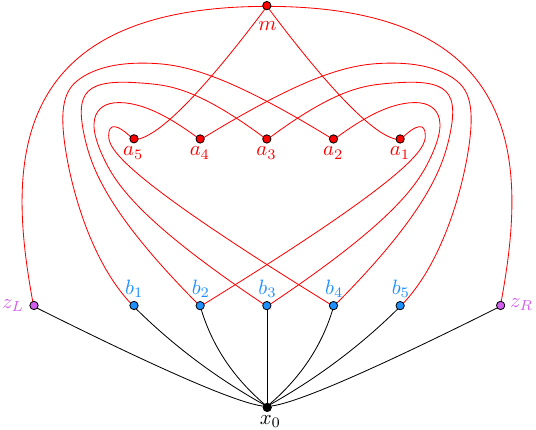}
     \end{subfigure}
     \begin{subfigure}[t]{1\textwidth}
         \centering
         \includegraphics{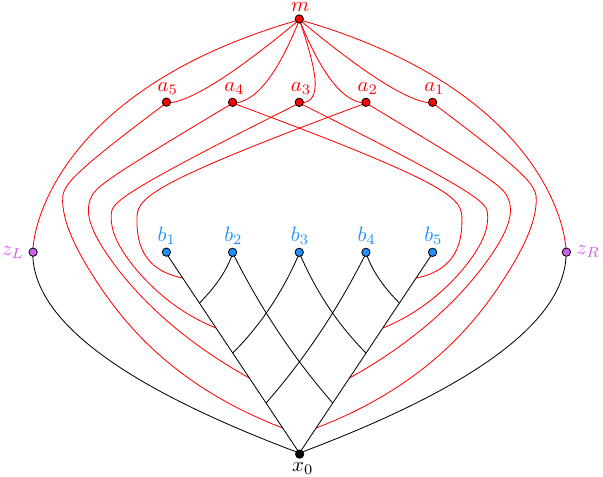}
     \end{subfigure}
  \caption{
    In both figures $((a_1,b_1),(a_2,b_2),(a_3,b_3),(a_4,b_4),(a_5,b_5))$ is a path in $\HII$.
    We also have $\calR(a_1) = \calR(a_2) = \calR(a_3) = \calR(a_4) = \calR(a_5)$ in both figures -- recall~\cref{prop:In-In_the_same_region}.
    In the proof of~\cref{prop:In-In-cycle-transitive}, there are two cases: $a_1 \in \calS$ and $a_1 \notin \calS$.
    Both are possible, we depict the former in the top part of the figure and the latter in the bottom part.
    }
  \label{fig:path_HII}
\end{figure}

\begin{proposition}\label{prop:In-In-cycle-transitive}
    Let $((a_1,b_1),(a_2,b_2))$ and $((a_2,b_2),(a_3,b_3))$ be regular In-In alternating cycles. 
    Then, $((a_1,b_1),(a_3,b_3))$ is a regular  In-In alternating cycle.
\end{proposition}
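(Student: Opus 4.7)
The plan is to verify each defining condition of a regular In-In alternating cycle for the composite sequence $((a_1,b_1),(a_3,b_3))$. First I would apply \cref{prop:In-In_the_same_region} to each of the two given cycles to obtain a common region $\calS := \calR(a_1) = \calR(a_2) = \calR(a_3)$; this common region, together with the fact that $a_1,a_2,a_3$ all lie on (or are closely tied to) its boundary via their exposed path data, is the central object of the argument.

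The regularity of the composite sequence is immediate: since $b_1$ is left of $b_2$ and $b_2$ is left of $b_3$, transitivity (\cref{prop:left_porders_bs}) gives $b_1$ left of $b_3$. Likewise, the properties \ref{Lin} and \ref{Rin} transfer by transitivity. Applying \cref{prop:transitivity_RPP}\ref{prop:item:transitivity-Lin} to $M_L(a_1)$ left of $M_L(a_2)$ and $M_L(a_2)$ left of $M_L(a_3)$ yields $M_L(a_1)$ left of $M_L(a_3)$, i.e., the composite satisfies \ref{Lin}; symmetrically for \ref{Rin} via \cref{prop:transitivity_RPP}\ref{prop:item:transitivity-Rin}.

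The main obstacle is showing that the composite is an alternating cycle, namely $a_1 \leq b_3$ and $a_3 \leq b_1$ in $P$. By symmetry I focus on $a_3 \leq b_1$. I first rule out $a_3 > b_1$: combined with $a_3 \leq b_2$ from the second cycle, this would force $b_1 < b_2$, contradicting $b_1 \parallel b_2$ (a consequence of $b_1$ being left of $b_2$). So it suffices to rule out $a_3 \parallel b_1$. The plan here is to exploit the common region $\calS$ together with the witness $a_2 \leq b_1$ supplied by cycle~1. Since $b_1$ lies in the interior of every right region of cycle~1 (by~\eqref{eq:b_in_region}) and $a_2,a_3$ are both tied to $\calS$ via $\calR(a_2)=\calR(a_3)=\calS$, a witnessing path from $a_2$ to $b_1$ is confined to a controlled subregion by the path-containment results of \cref{ssec:regions} (\cref{prop:path_does_not_leave_regions,prop:uv_in_R_implies_path,prop:shad-d-equals-shad-w}). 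Using that $a_3$ is positioned strictly between $a_2$ and the right boundary of $\calS$ (since $M_R(a_2) \prec M_R(a_3)$), I would reroute this path through $a_3$ to produce a witnessing path from $a_3$ to $b_1$.

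The hardest part is this rerouting, and as hinted by Figure~\ref{fig:path_HII} it naturally splits into two subcases: either $a_1 \in \calS$, where the path from $a_2$ to $b_1$ can be intercepted along an exposed prefix of $M_R(a_1)$ that $a_3$ reaches via its own exposed path to $z_L(a_3)$; or $a_1 \notin \calS$, where an extra detour through $z_L(a_3)$ and along the left boundary of $\calS$ is needed before descending to $b_1$. In both subcases the admissibility of the constructed path as a witnessing path in $P$ relies on the exposedness of the $M_L, M_R$ paths and on the fact that $a_1, a_2, a_3$ are pairwise incomparable in $P$, ensuring no unintended comparabilities are introduced. Combining this with the symmetric construction of a witnessing path from $a_1$ to $b_3$ completes the verification, and the composite $((a_1,b_1),(a_3,b_3))$ is then a regular In-In alternating cycle as required.
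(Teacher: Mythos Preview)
Your opening is correct and matches the paper: regularity via \cref{prop:left_porders_bs}, and \ref{Lin}/\ref{Rin} via \cref{prop:transitivity_RPP}. The gap is in the alternating-cycle part.

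You set $\calS := \calR(a_1)=\calR(a_2)=\calR(a_3)$, the common region from \cref{prop:In-In_the_same_region}, and then invoke the case split ``$a_1\in\calS$ vs.\ $a_1\notin\calS$'' suggested by Figure~\ref{fig:path_HII}. But that case split in the paper is \emph{not} about the common $\calR(a_i)$; the paper's $\calS$ is a different region, namely $\calR(a_2,z_1,z_3,W_1,W_3)$ with $z_1,z_3\in Z(a_2)$ chosen so that $z_1\le b_1$ and $z_3\le b_3$. With your definition of $\calS$ the dichotomy loses its meaning (the position of $a_1$ relative to $\calR(a_1)$ is not a free parameter in the way you need).

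The ``rerouting through $a_3$'' step also does not work as stated. Since $\calR(a_1)=\calR(a_2)=\calR(a_3)$, all three $M_R(a_i)$ share the segment from the common upper-min $m$ to $z_R$; so ``$a_3$ lies strictly between $a_2$ and the right boundary of $\calS$'' is neither well-defined nor implied by $M_R(a_2)$ being left of $M_R(a_3)$. More seriously, a witnessing path from $a_2$ to $b_1$ has no reason to meet anything comparable to $a_3$: $a_2\parallel a_3$, and nothing in the setup forces such a path to pass over $a_3$.

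The paper's mechanism (done for $a_1<b_3$; the other inequality is symmetric) is to build $\calS=\calR(a_2,z_1,z_3,W_1,W_3)$ from $a_2$'s exposed paths, so that the non-$B$ part of $\partial\calS$ consists of $W_1$ (elements $\le z_1\le b_1$) and $W_3$ (elements $\le z_3\le b_3$). If $a_1\in\calS$, one uses the common $z=z_L(a_1)=z_L(a_2)=z_L(a_3)$ (this is the \emph{only} place \cref{prop:In-In_the_same_region} is invoked) and shows $z\notin\calS$ via \cref{prop:z_LR_are_outside}; an exposed path from $a_1$ to $z$ must then cross $\partial\calS$, and since $a_1\parallel b_1$ the crossing lies on $W_3$, yielding $a_1<z_3\le b_3$. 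If $a_1\notin\calS$, one instead sends a path from $a_1$ to $b_2\in\Int\calS$; since $a_1\parallel b_1$ and $a_2\parallel b_2$, the crossing is forced onto $W_R(z_3)$, again giving $a_1\le z_3\le b_3$. The key idea you are missing is this custom region built from $a_2$ whose two sides are engineered to certify comparabilities with $b_1$ and $b_3$.
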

\begin{proof} 
    Let $\sigma=((a_1,b_1),(a_3,b_3))$.
    Since $b_1$ is left of $b_2$ and $b_2$ is left of $b_3$, $b_1$ is left of $b_3$ (by~\Cref{prop:left_porders_bs}), hence, $\sigma$ is regular.
    By \cref{prop:transitivity_RPP}.\ref{prop:item:transitivity-Lin} and \ref{prop:item:transitivity-Rin}, $\sigma$ satisfies \ref{Lin} and \ref{Rin}.
    Therefore, it suffices to prove that $\sigma$ is an alternating cycle, that is, $a_1 < b_3$ and $a_3 < b_1$ in~$P$.
    We only prove that $a_1 < b_3$ in~$P$, the proof of the other inequality is symmetric.

    By~\Cref{cor:regular-Y}, $b_1,b_3 \in Y(a_2)$, thus, we can fix $z_1,z_3 \in Z(a_2)$ such that $z_1 \leq b_1$ and $z_3 \leq b_3$ in~$P$.
    Let $W_1$ and $W_3$ be exposed witnessing paths in~$P$ from $a_2$ to $z_1$ and $z_3$ respectively.
    Since $b_1$ is left of $b_2$ and $b_2$ is left of $b_3$, by \cref{cor:left_is_preserved_from_y_to_z}, $z_1$ is left of $b_2$ and $b_2$ is left of $z_3$.
    In particular, $z_1$ is left of $z_3$, and so, we can define the region $\calS = \calR(a_2,z_1,z_3,W_1,W_3)$.
    We consider two cases: either $a_1 \in \calS$ or $a_1 \notin \calS$.
    Note that neither of the cases leads to a contradiction, see~\cref{fig:in-in-transitivity}.

\begin{figure}[tp]
  \begin{center}
    \includegraphics{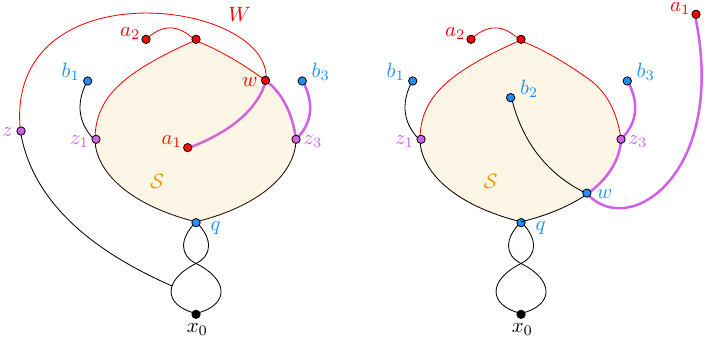}
  \end{center}
  \caption{
    An illustration of the proof of~\Cref{prop:In-In-cycle-transitive}.
    On the left, we depict the case where $a_1 \in \calS$ and on the right, we depict the case where $a_1 \notin \calS$.
    We mark with a purple curve the final comparability $a_1 < b_3$ in $P$.
  }
  \label{fig:in-in-transitivity}
\end{figure}

    First, assume that $a_1 \in \calS$.
    By \cref{prop:In-In_the_same_region}, $\calR(a_1) = \calR(a_2) = \calR(a_3)$, and so, $z_L(a_1) = z_L(a_2) = z_L(a_3) =: z$.
    Note that $a_1,a_2,a_3 \leq z$ in~$P$, hence, $z \not\leq b_1,b_3$ in~$P$, and thus, $z \not\leq z_1,z_3$ in~$P$.
    Therefore, by \cref{prop:z_LR_are_outside}, $z \notin \calS$.
    Let $W$ be an exposed witnessing path from $a_1$ to $z$ in~$P$.
    Since $a_1 \in \calS$ and $z \notin \calS$, there is an element $w$ of $W$ in $\partial \calS$.
    Since $W$ is an exposed path and $a_1 \parallel b_1$ in~$P$, $w$ lies in $W_3$.
    It follows that $a_1 < w < z_3 \leq b_3$ in~$P$, as desired.

    Finally, assume that $a_1 \notin \calS$.
    Since $z_1$ is left of $b_2$ and $b_2$ is left of $z_3$, by \cref{cor:sandwitch_b_in_region}.\ref{cor:sandwitch_b_in_region:int}, we have $b_2 \in \Int \calS$.
    Let $W$ be a witnessing path from $a_1$ to $b_2$ in~$P$.
    Since $a_1 \notin \calS$ and $b_2 \in \calS$, there is an element $w$ of $W$ in $\partial \calS$.
    Since $a_1 \parallel b_1$ and $a_2 \parallel b_2$ in~$P$, $w$ lies in $W_R(z_3)$.
    It follows that $a_1 < w \leq z_3 \leq b_3$ in~$P$.
\end{proof}

\begin{proposition} \label{prop:HII} $\maxpath(\HII) \leq \se_P(I)$.
\end{proposition}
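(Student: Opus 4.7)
The plan is to mimic the strategy used for \cref{prop:HOO}, but replacing transitivity of Out-Out cycles (\cref{prop:transitivity_RPP}) with the transitivity of regular In-In alternating cycles that we just established in \cref{prop:In-In-cycle-transitive}. Concretely, I would take a directed path $((a_1,b_1),\ldots,(a_n,b_n))$ in $\HII$ with $n \geq 2$, and prove that $J := \{(a_i,b_i) : i \in [n]\} \subseteq I$ induces a standard example in $P$ of order $n$, which yields $n \leq \se_P(I)$ by definition of $\se_P(I)$.

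For the key step, fix $i,j \in [n]$ with $i < j$. By iterating \cref{prop:In-In-cycle-transitive} along the subpath $((a_i,b_i),(a_{i+1},b_{i+1}),\ldots,(a_j,b_j))$, I obtain that $((a_i,b_i),(a_j,b_j))$ is itself a regular In-In alternating cycle in $P$. In particular, it is an alternating cycle, so $a_i \leq b_j$ and $a_j \leq b_i$ in $P$. These inequalities must actually be strict: if, say, $a_i = b_j$ held, then from $a_j \leq b_i = a_i$ (if we instead assumed $a_j = b_i$, a symmetric argument works) combined with $(a_j, b_j) \in \Inc(P)$ we would derive $a_j \leq a_i$ and then $a_j \parallel b_j = a_i$ would contradict $a_j \leq a_i$. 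Hence $a_i < b_j$ and $a_j < b_i$ in $P$ for all distinct $i,j \in [n]$, which is exactly the condition for $J$ to induce a standard example in $P$.

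Since $J \subseteq I$ and $|J| = n$, the definition of $\se_P(I)$ immediately gives $n \leq \se_P(I)$. Taking the maximum over all directed paths in $\HII$ yields $\maxpath(\HII) \leq \se_P(I)$, completing the proof. I do not expect a genuine obstacle here: all the conceptual work was done in \cref{prop:In-In-cycle-transitive} (which required the somewhat delicate \cref{prop:In-In_the_same_region} to show that all $\calR(a_i)$ coincide along an $\HII$-path). The only minor care needed is the strictness upgrade above, which is a one-line consequence of the pairs lying in $\Inc(P)$.
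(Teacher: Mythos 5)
Your proposal follows the paper's proof essentially verbatim: iterate \cref{prop:In-In-cycle-transitive} along the $\HII$-path to conclude that all pairs pairwise form (regular In-In) alternating cycles, hence the $n$ pairs induce a standard example in $P$ and $n \leq \se_P(I)$.

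The one place you deviate is the strictness upgrade, and that step is not correct as written. From the assumption $a_i = b_j$ no contradiction follows from the order relations alone: the paper explicitly allows $a_i = b_{i+1}$ in (even strict) alternating cycles, and indeed a four-element poset with $a_j \le b_i$, $a_i = b_j$, and all other pairs incomparable realizes this configuration abstractly. Your displayed chain ``$a_j \leq b_i = a_i$'' also asserts $b_i = a_i$, which contradicts $(a_i,b_i) \in \Inc(P)$, so the derivation of $a_j \le a_i$ does not go through. The correct (and easy) fix uses the instance, not the abstract poset: in a maximal good instance every pair in $I$ is dangerous by \ref{item:instance:dangerous}, so by \cref{prop:dangerous_a_not_in_B} we have $a_i \in A$, $b_j \in B$, and $A \cap B = \emptyset$, whence $a_i \neq b_j$ and the inequalities are strict. (The paper leaves this point implicit.) With that repair your argument coincides with the paper's.
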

\begin{proof}
    If $\maxpath(\HII) \leq 1$, then there is nothing to prove as $\se_P(I)\geq1$.
    Thus, we can assume $\maxpath(\HII)\geq 2$. 
    Let $((a_1,b_1),\dots,(a_n,b_n))$ be a path in $\HII$ where $n$ is an integer with $n \geq 2$.
    By~\cref{prop:In-In-cycle-transitive}, $((a_i,b_i),(a_{j},b_{j}))$ is an alternating cycle for every $i,j \in [n]$ with $i < j$.
    It follows that $J = \{(a_i,b_i) : i \in [n]\}$ induces a standard example in~$P$.
    Since $J \subset I$ and $|J| = n$, we obtain $n \leq \se_P(I)$, as desired.
\end{proof}

Next, we show that every directed path in $\HIIL$ and $\HIIR$ can be \q{transformed} into a directed path in $\HII$.
We gave a schematic illustration of edges of these directed graphs in~\Cref{fig:edges-HIIL-HIIR}.
See~\Cref{fig:edges-HII-real} for a more precise illustration.
See also~\Cref{fig:path-HIIL} for an example of a path in $\HIIL$ consisting of only shifted edges.

\begin{figure}[tp]
  \begin{center}
    \includegraphics{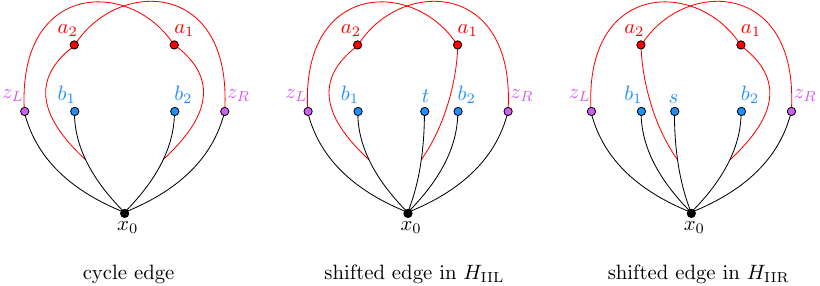}
  \end{center}
  \caption{
    Edges in $\HIIL$ and $\HIIR$.
    Note that in all the three cases $z_L = z_L(a_1) = z_L(a_2)$ and $z_R = z_R(a_1) = z_R(a_2)$ as shown in~\Cref{prop:In-In_the_same_region}.
  }
  \label{fig:edges-HII-real}
\end{figure}

\begin{figure}[tp]
  \begin{center}
    \includegraphics{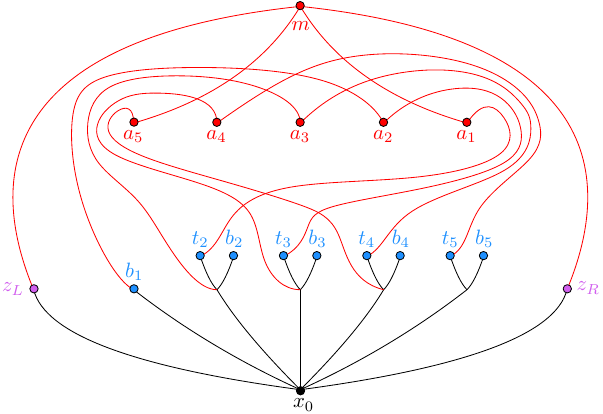}
  \end{center}
  \caption{
  $((a_1,b_1),(a_2,b_2),(a_3,b_3),(a_4,b_4),(a_5,b_5))$ is a path in $\HIIL$.
  All the edges are shifted edges.
    For every $i \in \{2,3,4,5\}$, $t_{i}$ is a witness for $((a_{i-1},b_{i-1}),(a_{i},b_{i}))$.
    Note that $\{(a_1,b_1),(a_2,t_2),(a_3,t_3),(a_4,t_4),(a_5,t_5)\}$ induces a standard example of order $5$ in~$P$.
  }
  \label{fig:path-HIIL}
\end{figure}

\begin{proposition}\label{prop:HIILR-switch-two}
\hfill
  \begin{enumerate}
      \myitem{$(L)$}  Let $((a_1,b_1),(a_2,b_2))$ be a shifted edge in $\HIIL$ with a witness $t \in B$ and let $((a_2,b_2),(a_3,b_3))$ be a cycle edge in $\HIIL$.
    Then,
  $((a_2,t),(a_3,b_3))$ is a cycle edge in $\HIIL$. \label{prop:HIILR-switch-two-left}
    \myitem{$(R)$} Let $((a_1,b_1),(a_2,b_2))$ be a cycle edge in $\HIIR$ and let $((a_2,b_2),(a_3,b_3))$ be a shifted edge in $\HIIR$ with a witness $s \in B$.
    Then,
  $((a_1,b_1),(a_2,s))$ is a cycle edge in $\HIIR$. \label{prop:HIILR-switch-two-right}
  \end{enumerate}
\end{proposition}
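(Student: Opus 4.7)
I would prove part~(L); part~(R) follows by a symmetric argument. Set $\sigma_{23} := ((a_2,t),(a_3,b_3))$. The plan is to verify that $\sigma_{23}$ is a regular In-In alternating cycle in $P$, which makes it a cycle edge in $\HIIL$.

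The routine conditions first. The sequence $\sigma_{23}$ is regular since $(a_2,t),(a_3,b_3)\in I$ and \cref{prop:left_porders_bs} combines $t$ left of $b_2$ (from \ref{items:HIIL-shifted-t-left-b}) with $b_2$ left of $b_3$ (from the cycle edge) into $t$ left of $b_3$. Properties \ref{Lin} and \ref{Rin} transfer from $((a_2,b_2),(a_3,b_3))$ to $\sigma_{23}$ via \cref{obs:switch_bs_in_RPP}.\ref{obs:switch_bs_in_RPP:2in1} and \ref{obs:switch_bs_in_RPP:1in2}. The comparability $a_2 < b_3$ is inherited from $((a_2,b_2),(a_3,b_3))$ being an alternating cycle.

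The heart of the argument is to show $a_3 < t$. The case $t < a_3$ is impossible since $a_3 \in A$, $t \in B$, and $A \cap B = \emptyset$, so it suffices to rule out $a_3 \parallel t$. Assuming $a_3 \parallel t$ for contradiction, I would apply \cref{prop:In-In_the_same_region} to both given In-In cycles to obtain the common region $\calR := \calR(a_1) = \calR(a_2) = \calR(a_3)$ with shared peaks $z_L := z_L(a_i)$ and $z_R := z_R(a_i)$. Fix $z_t \in Z(a_2)$ with $z_t \leq t$ and an exposed witnessing path $W_t$ from $a_2$ to $z_t$. Using that $z_L$ is left of $t$ (from $z_L$ left of $b_1$ left of $t$) together with \cref{prop:paths_directions_in_shadows} and $a_2 \notin \shadz(z_L)$, one rules out the comparable cases and concludes that $z_L$ is strictly left of $z_t$, so the region $\calS := \calR(a_2, z_L, z_t, a_2[M_L(a_2)]z_L, W_t)$ is well-defined and sits inside $\calR$.

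The final step mimics the proof of \cref{prop:In-In-cycle-transitive}, splitting on whether $a_3 \in \calS$. In the case $a_3 \in \calS$, the exposed path $a_3[M_L(a_3)]z_L$, which sits on the left boundary of $\calR$, must cross $\partial\calS$ at some element $w$. Using the assumed incomparabilities $a_3 \parallel b_1$ and $a_3 \parallel t$ together with the structure of $\partial\calS$, I would argue that $w$ must lie on $W_t$, yielding $a_3 \leq w \leq z_t \leq t$, contradicting $a_3 \parallel t$. In the case $a_3 \notin \calS$, I would first verify via \cref{cor:sandwitch_b_in_region}.\ref{cor:sandwitch_b_in_region:int} that $b_1 \in \Int \calS$ (using a shadow-theoretic argument to show $b_1$ is left of $z_t$), then pick a witnessing path from $a_3$ to $b_1$ (existing since the alternating-cycle structure forces $a_3 < b_1$) and argue that this path must enter $\calS$ on $W_t$, giving the same contradiction. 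The hardest step will be this geometric case analysis: specifically, ruling out that the crossing $w$ can lie on $W_L(z_L)$, $W_R(z_t)$, or the exposed path $a_2[M_L(a_2)]z_L$, and forcing $w$ onto $W_t$. A secondary technical difficulty is establishing the shadow-containment facts needed for the well-definedness of $\calS$ and the placement of $b_1$, which rely on combining \cref{prop:d_not_in_shadow_when_dangerous}, \cref{prop:paths_directions_in_shadows}, and \cref{cor:notin_shads_implies_left_or_right}.
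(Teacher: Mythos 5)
Your reduction of the problem to showing $a_3 < t$ in $P$, and the routine verifications (regularity, transfer of \ref{Lin} and \ref{Rin} via \cref{obs:switch_bs_in_RPP}, inheriting $a_2 < b_3$), all match the paper. But the core construction has a fatal flaw: you fix $z_t \in Z(a_2)$ with $z_t \leq t$. No such element exists. By \ref{items:HIIL-shifted-edgeHII} we have $(a_2,t) \in I \subset \Inc(P)$, so $a_2 \parallel t$ in $P$; since every $z \in Z(a_2) \subset Y(a_2)$ satisfies $a_2 < z$ in $P$, an element $z_t \in Z(a_2)$ with $z_t \leq t$ would force $a_2 < t$, a contradiction. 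Consequently your region $\calS = \calR(a_2, z_L, z_t, a_2[M_L(a_2)]z_L, W_t)$ is built on a nonexistent peak and the whole case analysis collapses. The paper's proof avoids this by taking $z_t \in Z(a_1)$ (legitimate because $((a_1,b_1),(a_2,t))$ is an In-In alternating cycle, so $a_1 < t$ and $t \in Y(a_1)$ by \cref{cor:regular-Y}), and it needs an intermediate step you do not have: it first builds a region $\calR = \calR(a_2,z_1,z_3,W_1,W_3)$ with peaks $z_1 \leq b_1$, $z_3 \leq b_3$ in $Z(a_2)$, shows $a_1 \in \Int\calR$ and $z_0 = z_L(a_1) \notin \calR$ (using \cref{prop:In-In_the_same_region}), and uses the resulting crossing point $v$ on $W_3$ to certify $z_3 \in Z(a_1)$; only then can it form the final region $\calS = \calR(a_1, z_t, z_3, U, a_1[V]v[W_3]z_3)$ around $a_1$.

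There is a second gap in your case $a_3 \notin \calS$: you route a witnessing path from $a_3$ to $b_1$, asserting that ``the alternating-cycle structure forces $a_3 < b_1$.'' This comparability is not among the hypotheses and is not established anywhere — the given data only yield $a_2 < b_1$, $a_1 < t$ (from the shifted edge) and $a_2 < b_3$, $a_3 < b_2$ (from the cycle edge $((a_2,b_2),(a_3,b_3))$). The paper instead uses the available comparability $a_3 < b_2$ together with $b_2 \in \Int\calS$ (which follows from $z_t$ left of $b_2$ left of $z_3$ and \cref{cor:sandwitch_b_in_region}), and in both cases forces the crossing element $w$ onto the left side of $\calS$, yielding $a_3 \leq w \leq z_t \leq t$. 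So while your high-level strategy (build a region, split on the location of $a_3$, derive $a_3 < t$ from a boundary crossing) is in the right spirit, the specific regions and comparabilities you rely on are either impossible or unjustified, and repairing them essentially requires the paper's two-region argument.
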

\begin{proof}
    We only prove~\ref{prop:HIILR-switch-two-left} as the proof of~\ref{prop:HIILR-switch-two-right} is symmetric.
    See~\cref{fig:proof-HIIL} for an illustration of the proof.
    Let $\sigma = ((a_2,t),(a_3,b_3))$. 
    By~\ref{items:HIIL-shifted-edgeHII}, $(a_2,t)\in I$.
    Since $((a_1,b_1),(a_2,t))$ and $((a_2,b_2),(a_3,b_3))$ are edges in $\HII$, in particular $b_1$ is left of $t$ and $b_2$ is left of $b_3$. 
    By~\ref{items:HIIL-shifted-t-left-b}, $t$ is left of $b_2$. Altogether,
    \[
    \textrm{$b_1$ is left of $t$, $t$ is left of $b_2$, and $b_2$ is left of $b_3$.}
    \]
    In particular, $\sigma$ is regular.
    Since $((a_2,b_2),(a_3,b_3))$ is an In-In alternating cycle in~$P$, by~\cref{obs:switch_bs_in_RPP}.\ref{obs:switch_bs_in_RPP:2in1} and \ref{obs:switch_bs_in_RPP:1in2}, $\sigma$ satisfies \ref{Lin} and \ref{Rin}. 
    It remains to argue that $\sigma$ is an alternating cycle in~$P$, that is, $a_2 < b_3$ and $a_3 < t$ in~$P$. 
    The former follows from the fact that $((a_2,b_2),(a_3,b_3))$ is an alternating cycle in~$P$.
    Therefore, to conclude the proof it suffices to show that $a_3 < t$ in~$P$.
    Suppose to the contrary that $a_3 \parallel t$~in~$P$.

    \begin{figure}[tp]
      \begin{center}
        \includegraphics{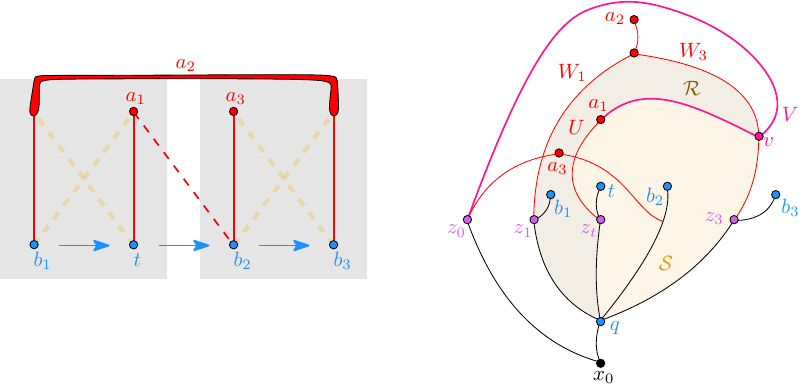}
      \end{center}
      \caption{
        An illustration of the proof of~\cref{prop:HIILR-switch-two}.
        On the left-hand side, we show a schematic summary of the assumptions (drawing conventions are the same as in~\cref{fig:edges-HIIL-HIIR}).
        On the right-hand side, we illustrate the objects occurring in the proof of the proposition.
      }
      \label{fig:proof-HIIL}
    \end{figure}

    % Recall that for every $a \in A$, we have 
    % $Y(a)=\set{b\in B:\ a<b\textrm{ in~$P$ and $a \notin \shadz(b)$}}$.
    % \begin{claim}\label{claim:prop:HIILR-switch-two:in_Y}
    %     $b_1,b_3 \in Y(a_2)$ and $t \in Y(a_1)$.
    % \end{claim}
    % \begin{proofclaim}
    %     Since $b_1$ is left of $b_2$, $b_2$ is left of $b_3$, and $(a_2,b_2)$ is dangerous by~\ref{item:instance:dangerous}, \cref{prop:d_not_in_shadow_when_dangerous} implies that $a_2 \notin \shadz(b_1)$ and $a_2 \notin \shadz(b_3)$.
    %     Similarly, $b_1$ is left of $t$ and $(a_2,t)$ \heather{This should be $(a_1, b_1)$ to get the desired conclusion.} is dangerous by~\ref{item:instance:dangerous}, hence by~\cref{prop:d_not_in_shadow_when_dangerous}, $a_1 \notin \shadz(t)$.
    % \end{proofclaim}
    
    By~\cref{cor:regular-Y}, $b_1,b_3 \in Y(a_2)$ and $t \in Y(a_1)$.
    Thus, we can fix $z_1,z_3 \in Z(a_2)$  and $z_t \in Z(a_1)$ such that $a_2< z_1 \leq b_1$, $a_2<z_3 \leq b_3$, and $a_1<z_t \leq t$ in~$P$.

    \begin{claim}\label{claim:prop:HIILR-switch-two:z-left-right}
        $b_1$ is left of $z_t$, 
        %$z_1$ is left of $t$, 
        $z_1$ is left of $z_t$, $z_t$ is left of $b_2$, and $b_2$ is left of $z_3$.
        %\jedrzej{Do we really need $z_1$ left of $t$?}\jedrzej{No}
    \end{claim}
    \begin{proofclaim}
    Since $(a_1,b_1) \in I$ and $a_1 < z_t \leq t$ in~$P$, by~\cref{cor:left_is_preserved_from_y_to_z}.\ref{cor:left_is_preserved_from_y_to_z:right}, $b_1$ is left of $z_t$.
    Since $(a_2,b_2) \in I$ and $a_2 < z_3 \leq b_3$ in~$P$, by~\cref{cor:left_is_preserved_from_y_to_z}.\ref{cor:left_is_preserved_from_y_to_z:right}, $b_2$ is left of $z_3$.
     
    We have $(a_1,b_1) \in I$ and $b_1$ left of $b_2$, thus, by~\cref{prop:dangerous-implies-in-Y}, $a_1 \notin \shadz(b_2)$.
    Additionally, $a_1 \parallel b_2$ in~$P$ (by~\ref{items:HIIL-shifted-a-parallel-b}) and $t$ is left of $b_2$.
    Therefore, by~\cref{prop:left_is_preserved_from_y_to_z}.\ref{prop:left_is_preserved_from_y_to_z:right}, $z_t$ is left of $b_2$.

    It remains to show that $z_1$ is left of $z_t$.
    Since $a_2 \parallel t$ and $z_t \leq t$ in~$P$, we have $a_2 \parallel z_t$ in~$P$.
    Since $(a_2,t) \in I$, we have $a_2 \notin \shadz(t)$ by~\ref{item:instance:not_in_shadow}.
    By~\cref{prop:comparability_implies_shadow_containment}, $\shadz(z_t) \subset \shadz(t)$, and so, $a_2 \notin \shadz(z_t)$.
    Moreover, $z_1,b_1 \in Y(a_2)$.
    Thus, \cref{prop:left_is_preserved_from_y_to_z}.\ref{prop:left_is_preserved_from_y_to_z:left} gives that $z_1$ is left of~$z_t$.
    \qedhere

    \end{proofclaim}

    Let $W_1$ be an exposed witnessing path from $a_2$ to $z_1$ in~$P$ and let $W_3$ be an exposed witnessing path from $a_2$ to $z_3$ in~$P$.
    By~\cref{claim:prop:HIILR-switch-two:z-left-right} we have $z_1$ is left of $z_3$, thus the region $\calR = \calR(a_2,z_1,z_3,W_1,W_3)$ is well defined.

    \begin{claim}\label{claim:prop:HIILR-switch-two:a_1_in_IntR}
        $a_1 \in \Int \calR$.
    \end{claim}
    \begin{proofclaim}
        Since $z_1$ is left of $z_t$ and $z_t$ is left of $z_3$ (by~\cref{claim:prop:HIILR-switch-two:z-left-right})~\cref{cor:sandwitch_b_in_region}.\ref{cor:sandwitch_b_in_region:int} implies $z_t\in\Int \calR$.
        Suppose to the contrary that $a_1 \notin \Int \calR$ and let $U$ be an exposed witnessing path from $a_1$ to $z_t$ in~$P$.
        Since $z_t \in \Int \calR$ and $a_1 \notin \Int \calR$, there exists an element $u$ of $U$ with $u \in \partial \calR$.
        Since $U$ is an exposed path, $u \notin B$.
        If $u$ lies on the left side of $\calR$, then $a_1 \leq u < z_1 \leq b_1$ in~$P$, which is false.
        If $u$ lies on the right side of $\calR$, then $a_2 \leq u < z_t \leq t$ in~$P$, which is also false.
%        Therefore, $a_1 \in \Int \calR$.
    \end{proofclaim}

    By~\ref{items:HIIL-shifted-edgeHII}, $((a_1,b_1),(a_2,t))$ is an edge in $\HII$ and by assumption $((a_2,b_2),(a_3,b_3))$ is an edge in $\HII$.
    Hence, by~\cref{prop:In-In_the_same_region}, $\calR(a_1) = \calR(a_2) = \calR(a_3)$.
    Let $z_0 = z_L(a_1) = z_L(a_2) = z_L(a_3)$.

    \begin{claim}\label{claim:prop:HIILR-switch-two:z_0_notin_R}
        $z_0 \notin \calR$.
    \end{claim}
    \begin{proofclaim}
        Note that $z_0 \not\leq z_1$ in $P$.
        Indeed, otherwise, $a_1 < z_L(a_1) = z_0 \leq z_1 \leq b_1$ in $P$.
        Since $z_0 = z_L(a_2)$, by~\Cref{prop:z_LR_are_outside}.\ref{prop:z_LR_are_outside:left}, $z_0 \notin \calR$, as desired.
        % Let $q$ be the lower-min of $\calR$.
        % By~\cref{prop:z_L_b_z_R}, $z_0 = z_L(a_1)$ is left of $b_1$, and so, $z_0 \notin \shadz(b_1)$.
        % Since $q \leq z_1 \leq b_1$ in~$P$, by~\cref{prop:comparability_implies_shadow_containment}, $\shadz(q) \subset \shadz(z_1) \subset \shadz(b_1)$.
        % It follows that $z_0 \notin \shadz(q)$ and $z_0 \not\leq z_1$ in~$P$.         
        % Consider the path $M = x_0[W_L(z_1)]z_1[W_1]a_2$.
        % Clearly, $M \in \calM(a_2)$.
        % The peak of $M_L(a_2)$ is $z_0 = z_L(a_2)$, the peak of $M$ is $z_1$, and $z_0 \neq z_1$, thus, $M_L(a_2) \neq M$, and so, $M_L(a_2)$ is left of $M$.
        % Let $w = \gce(M_L(a_2),M)$.
        % Since the peaks are distinct, $w \in B$, and since $z_0 \not\leq z_1$ in~$P$, $w < z_0$ in~$P$.
        % In particular, $W_L(z_0)$ is left of $M$, which by~\cref{prop:b_relative_to_region}.\ref{prop:b_relative_to_region:item:outside} (recall that $z_0 \notin \shadz(q)$), implies $z_0 \notin \calR$.
    \end{proofclaim}

    Let $U$ and $V$ be exposed witnessing paths in $P$ from $a_1$ to $z_t$ and from $a_1$ to $z_0$, respectively. 
    By~\cref{claim:prop:HIILR-switch-two:z_0_notin_R,claim:prop:HIILR-switch-two:a_1_in_IntR}, $a_1 \in \Int \calR$ $z_0 \notin \calR$, hence, $V$ intersects $\partial \calR$.
    Since $V$ is exposed and $a_1 \parallel z_1$ in~$P$, $V$ intersects $W_3$.
    Let $v$ be an element of this intersection.
    Note that $v \notin B$.

    \begin{claim}\label{claim:z3-inZa1}
        $z_3 \in Z(a_1)$.
    \end{claim}
    \begin{proofclaim}
    The path $a_1[V]v[W_3]z_3$ is an exposed witnessing path in $P$, thus, it suffices to show that $a_1 \notin \shadz(z_3)$.
    Observe that none of the elements of the paths $a_2[W_3]v$ and $a_1[V]v$ are in $B$.
    Therefore, these paths do not intersect $\partial \shadz(z_3)$.
    In particular, either all $a_1$, $v$, and $a_2$ are in $\shadz(z_3)$ or none of them.
    Since $z_3 \in Z(a_2)$, the latter holds.
    We conclude that $a_1 \notin \shadz(z_3)$, and so, $z_3 \in Z(a_1)$.
    \end{proofclaim}

    Since $z_t$ is left of $z_3$ (by~\Cref{claim:prop:HIILR-switch-two:z-left-right}), $z_t \in Z(a_1)$, and $z_3 \in Z(a_1)$ (by~\Cref{claim:z3-inZa1}), we can define the region 
    \[\calS = \calR(a_1,z_t,z_3,U, a_1[V]v[W_3]z_3).\]

Since $z_t$ is left of $b_2$ and $b_2$ is left of $z_3$ (by~\cref{claim:prop:HIILR-switch-two:z-left-right}), by~\cref{cor:sandwitch_b_in_region}.\ref{cor:sandwitch_b_in_region:int}, $b_2 \in \Int \calS$.
Since $z_0$ is left of $z_t$ (by~\cref{claim:prop:HIILR-switch-two:z-left-right} and~\Cref{prop:left_porders_bs}), by~\cref{cor:sandwitch_b_in_region}.\ref{cor:sandwitch_b_in_region:out}, $z_0 \notin \calS$.
We consider two cases based on the location of $a_3$. 

First, assume that $a_3 \in \Int \calS$.\footnote{In fact, with more careful analysis, one can show that $a_3 \in \Int \calS$ leads to a contradiction.}
Consider a witnessing path $W$ from $a_3$ to $z_0$ in~$P$.
Since $a_3 \in \Int \calS$ and $z_0 \notin \calS$, the path $W$ has an element $w$ in $\partial\calS$.
If $w$ lies on the right side of $\calS$, then $a_3 \leq w \leq z_3 \leq b_3$ in~$P$, which is false.
Therefore, $w$ lies on the left side of $\calS$, which implies $a_3 \leq w \leq z_t \leq t$ in~$P$, as desired.

Finally, assume that $a_3 \notin \Int \calS$.
Consider a witnessing path $W$ from $a_3$ to $b_2$ in~$P$. 
Since $a_3\not\in\Int \calS$ and $b_2\in\Int\calS$, the path $W$ has an element $w$ in $\partial\calS$.
If $w$ lies on the right side of $\calS$, then $a_3\leq w \leq z_3\leq b_3$ in~$P$, which is false.
Therefore, $w$ lies on the left side of $\calS$, which implies $a_3\leq w \leq z_t \leq t$ in~$P$. 
%Since $a_3\in A$ and $t\in B$, then $a_3\neq t$. So $a_3<t$, completing the proof in this case.
This completes the proof.
\end{proof}

\begin{corollary}
    \label{cor:HIIL-HIIR-paths}
\hfill
  \begin{enumerate}
      \myitem{$(L)$}  For all positive integers $n$ and for each path $((a_1,b_1),\dots ,(a_n,b_n))$ in $\HIIL$, there exist $t_2,\dots,t_n \in B$ such that $((a_1,b_1),(a_2,t_2),\dots,(a_n,t_n))$ is a path in $\HII$. \label{cor:HIIL-HIIR-paths-left}
      % In particular, 
      % \[ \maxpath(\HIIL) \leq \maxpath(\HII) \leq \se_P(I).\]
    \myitem{$(R)$} For all positive integers $n$ and for each path $((a_1,b_1),\dots ,(a_n,b_n))$ in $\HIIR$, there exist $s_1,\dots,s_{n-1} \in B$ such that $((a_1,s_1),\dots,(a_{n-1},s_{n-1}),(a_n,b_n))$ is a path in $\HII$. \label{cor:HIIL-HIIR-paths-right}
      % In particular, 
      % \[ \maxpath(\HIIR) \leq \maxpath(\HII) \leq \se_P(I).\]
  \end{enumerate}
\end{corollary}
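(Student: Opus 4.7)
The plan is to derive both parts by induction on $n$, using Proposition~\ref{prop:HIILR-switch-two} as the sole nontrivial tool. Since every edge in $\HIIL$ (resp.\ $\HIIR$) is either a cycle edge, which already lies in $\HII$, or a shifted edge, whose witness produces a cycle edge after replacing the second (resp.\ first) pair's $b$-coordinate, the induction amounts to a careful bookkeeping argument. The two parts are symmetric, so I describe only (L) in detail.

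For the base case $n=1$, the statement is vacuous. For the inductive step of (L), I would process the path from left to right, peeling off the first edge. Apply the induction hypothesis to the subpath $((a_2,b_2),\ldots,(a_n,b_n))$ in $\HIIL$ to obtain $t_3,\ldots,t_n\in B$ with $((a_2,b_2),(a_3,t_3),\ldots,(a_n,t_n))$ a path in $\HII$. Now consider the first edge $((a_1,b_1),(a_2,b_2))$. If it is a cycle edge, set $t_2=b_2$; the resulting sequence $((a_1,b_1),(a_2,b_2),(a_3,t_3),\ldots,(a_n,t_n))$ is a path in $\HII$. If instead this edge is shifted with witness $t\in B$, set $t_2=t$. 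Then $((a_1,b_1),(a_2,t))$ is a cycle edge by item~\ref{items:HIIL-shifted-edgeHII} in the definition of a shifted edge in $\HIIL$, and the concatenating edge $((a_2,t),(a_3,t_3))$ is a cycle edge by Proposition~\ref{prop:HIILR-switch-two}.\ref{prop:HIILR-switch-two-left}, applied to the shifted edge $((a_1,b_1),(a_2,b_2))$ and the cycle edge $((a_2,b_2),(a_3,t_3))$ supplied by the induction hypothesis. The remaining edges $((a_i,t_i),(a_{i+1},t_{i+1}))$ for $i\ge 3$ are cycle edges by the induction hypothesis, so the whole sequence is a path in $\HII$.

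Statement (R) follows by the symmetric argument: process the path from right to left, peeling off the last edge $((a_{n-1},b_{n-1}),(a_n,b_n))$, invoking Proposition~\ref{prop:HIILR-switch-two}.\ref{prop:HIILR-switch-two-right} to switch the penultimate cycle edge supplied by induction whenever the last edge is shifted. There is no serious obstacle: the topological and ordering work is packaged inside Proposition~\ref{prop:HIILR-switch-two}, so only the routine induction and the dichotomy cycle/shifted need to be tracked.
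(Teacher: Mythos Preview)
Your proposal is correct and follows essentially the same approach as the paper: induction on $n$, peeling off the first edge for (L), distinguishing cycle versus shifted edges, and invoking Proposition~\ref{prop:HIILR-switch-two}.\ref{prop:HIILR-switch-two-left} to repair the second edge when the first is shifted. The paper's proof is nearly word-for-word the same, including the symmetric treatment of (R).
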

\begin{proof}
    We only prove~\ref{cor:HIIL-HIIR-paths-left} as the proof of~\ref{cor:HIIL-HIIR-paths-right} is symmetric.
    We proceed by induction on $n$.
    When $n=1$, the statement is vacuously satisfied.
    Suppose that $n \geq 2$ and let $((a_1,b_1),\dots ,(a_n,b_n))$ be a path in $\HIIL$.
    By induction, there exist $t_3, \dots t_n$ such that $((a_2,b_2),(a_3,t_3),\dots,(a_n,t_n))$ is a path in $\HII$. 
    If $((a_1,b_1),(a_2,b_2))$ is a cycle edge in $\HIIL$, then it suffices to set $t_2 = b_2$.
    Thus, assume that $((a_1,b_1),(a_2,b_2))$ is a shifted edge in $\HIIL$, say witnessed by $t \in B$.
    We claim that the assertion is satisfied for $t_2 = t$.
    By~\ref{items:HIIL-shifted-edgeHII}, $((a_1,b_1),(a_2,t))$ is an edge in $\HII$.
    Hence, when $n = 2$, the statement holds.
    When $n \geq 3$, by~\cref{prop:HIILR-switch-two}.\ref{prop:HIILR-switch-two-left}, $((a_2,t),(a_3,t_3))$ is an edge in $\HII$.
    This ends the proof.
\end{proof}

\begin{proof}[Proof of \Cref{prop:HIIL,prop:HIIR}]
    \Cref{cor:HIIL-HIIR-paths} implies that 
    \[\maxpath(\HIIL) \leq \maxpath(\HII) \ \text{ and } \ \maxpath(\HIIR) \leq \maxpath(\HII).\] 
    \Cref{prop:HII} states that $\maxpath(\HII) \leq \se_P(I)$, and so, we obtain
    \[\maxpath(\HIIL) \leq \se_P(I) \ \text{ and } \ \maxpath(\HIIR) \leq \se_P(I).\qedhere\]
\end{proof}

In the last part of this subsection, we study $\HIILR$.
In~\cref{fig:path-HIILR}, we show an example of a path in $\HIILR$.
Also, recall a schematic drawing of an edge in $\HIILR$ in~\Cref{fig:edges-HIILR}.
We aim to prove~\Cref{prop:HIILR}, namely, that $\maxpath(\HIILR) \leq \se_P(I)$.

\begin{figure}[tp]
  \begin{center}
    \includegraphics{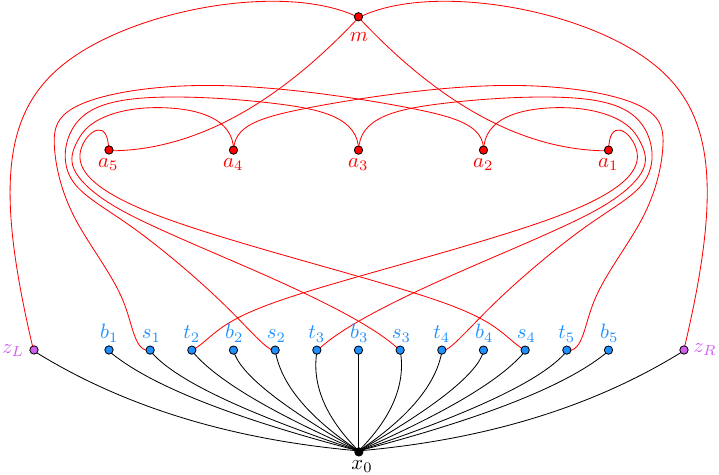}
  \end{center}
  \caption{
  $((a_1,b_1),(a_2,b_2),(a_3,b_3),(a_4,b_4),(a_5,b_5))$ is a path in $\HIILR$.
    For every $i \in [4]$, $(s_{i},t_{i+1})$ is a witness for $((a_{i},b_{i}),(a_{i+1},b_{i+1}))$.
    Note that $\{(a_1,s_1),(a_2,s_2),(a_3,s_3),(a_4,s_4)\}$ induces a standard example of order $4$ in~$P$.
  }
  \label{fig:path-HIILR}
\end{figure}

\begin{proposition}\label{prop:change_to_s_HIILR}
    Let $\sigma_{12} = ((a_1,b_1),(a_2,b_2))$ and $\sigma_{23} = ((a_2,b_2),(a_3,b_3))$ be two edges in $\HIILR$ witnessed by $(s_1,t_2)$ and $(s_2,t_3)$ respectively.
    Then, $\sigma = ((a_1,s_1),(a_2,s_2))$ is an edge in $\HII$.
\end{proposition}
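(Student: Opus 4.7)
The plan is to show that $\sigma := ((a_1, s_1), (a_2, s_2))$ is a regular In-In alternating cycle in~$P$, and hence an edge of $\HII$. Several ingredients come immediately from the setup: $(a_1, s_1), (a_2, s_2) \in I$ by \ref{items:HIILR-edgeHII} applied to $\sigma_{12}$ and $\sigma_{23}$; regularity follows by chaining $s_1$ left of $t_2$ (because $((a_1, s_1), (a_2, t_2))$ is an In-In, hence regular, alternating cycle), $t_2$ left of $b_2$ and $b_2$ left of $s_2$ (both instances of \ref{items:HIILR-b-left-d-t-left-b}), and applying transitivity \Cref{prop:left_porders_bs}; the properties \ref{Lin} and \ref{Rin} depend only on the paths $M_L(a_1), M_L(a_2), M_R(a_1), M_R(a_2)$ and so are inherited from $((a_1, s_1), (a_2, t_2))$; and $a_2 < s_1$ in~$P$ is part of $((a_1, s_1), (a_2, t_2))$ being an alternating cycle.

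The substantive step is to prove $a_1 < s_2$ in~$P$. First, I will apply \Cref{prop:In-In_the_same_region} to the two given In-In cycles to get the crucial equality $\calR(a_1) = \calR(a_2) = \calR(a_3)$, in particular common peaks $z_L = z_L(a_i)$ and $z_R = z_R(a_i)$ for $i \in \{1,2,3\}$. From $a_3 < s_2$ (supplied by $((a_2, s_2), (a_3, t_3))$ being an alternating cycle) together with $a_3 \notin \shadz(s_2)$ (via \Cref{prop:dangerous-implies-in-Y} applied to $(a_3, t_3) \in I$ using $s_2$ left of $t_3$), we have $s_2 \in Y(a_3)$, so I may pick $z_3 \in Z(a_3)$ with $z_3 \leq s_2$ and an exposed witnessing path $W$ from $a_3$ to $z_3$.

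Given $z_3$ and $W$, I will construct a region $\calS^{\star} = \calR(a_3, z_L, z_3, a_3[M_L(a_3)]z_L, W)$ anchored at $a_3$, after checking that $z_L$ is left of $z_3$ using \Cref{prop:order_on_Ms_induces_order_in_zs} together with the already established left-of relations propagated down via \Cref{cor:left_is_preserved_from_y_to_z}. Then, mirroring the proof of \Cref{prop:In-In-cycle-transitive}, I will split into two cases. If $a_1 \in \calS^{\star}$, an exposed witnessing path from $a_1$ to $z_R$ (which lies outside $\calS^{\star}$) must cross $\partial \calS^{\star}$; exposedness combined with the incomparabilities produced by $(a_1, s_1), (a_2, t_2), (a_3, t_3) \in I$ excludes every exit point except one on the segment $z_3[W_R(z_3)]q$ of the right boundary, yielding $a_1 < w \leq z_3 \leq s_2$ as desired. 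If $a_1 \notin \calS^{\star}$, I will use the witnessing path from $a_1$ to $t_2$ (available because $a_1 < t_2$ in $\sigma'_{12}$) together with the fact that $t_2 \in \Int \calS^{\star}$ (checked by propagating left-of relations and applying \Cref{cor:sandwitch_b_in_region}), which again forces a boundary crossing $w$ satisfying $a_1 < w \leq z_3 \leq s_2$.

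The main obstacle is precisely this construction: because $(a_2, s_2) \in I$ forces $a_2 \parallel s_2$, no element of $Z(a_2)$ lies below $s_2$, so the natural $a_2$-based region used in \Cref{prop:In-In-cycle-transitive} cannot be capped at $s_2$. The region must instead be anchored at $a_3$, and it is precisely the equality $\calR(a_1) = \calR(a_3)$ supplied by \Cref{prop:In-In_the_same_region} that lets us locate $a_1$ usefully with respect to this $a_3$-based region. A further delicate point will be handling the potential comparability of $z_L$ and $z_3$ (which, if it occurred with $z_L \leq z_3$, would force $a_2 \leq s_2$, contradicting $a_2 \parallel s_2$, and which, if it occurred with $z_3 \leq z_L$, must be accommodated by reselecting the left-side peak on $W_L(z_L)$ above $\gce(W_L(z_3), W_L(z_L))$), and carefully excluding the wrong exit points on $\partial \calS^{\star}$ using exposedness.
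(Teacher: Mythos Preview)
Your setup is fine up to the point where you need to prove $a_1 < s_2$, but the region $\calS^{\star} = \calR(a_3, z_L, z_3, a_3[M_L(a_3)]z_L, W)$ is anchored on the wrong element for the exit argument to go through. In Case~1, the exposed path from $a_1$ to $z_R$ can exit $\calS^{\star}$ through the \emph{left} exposed boundary $z_L[M_L(a_3)]m$: a crossing at such a $w$ gives $a_1 \le w$ (from the path) and $a_3 \le w \le z_L$ (from the boundary), and neither of these contradicts anything---you already know $a_1 < z_L$ and $a_3 < z_L$---nor does it yield $a_1 < s_2$. The incomparabilities you cite from $(a_1,s_1),(a_2,t_2),(a_3,t_3) \in I$ do not touch this side. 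In Case~2 the same failure occurs: the path from $a_1$ to $t_2$ may cross either $q[W_L(z_L)]z_L$ or the left exposed side, and again you only learn $a_1 \le w \le z_L$, which is useless. Compare with \Cref{prop:In-In-cycle-transitive}, where the left side of the region is below $b_1$, so $a_1 \parallel b_1$ kills the left-side exit; here your left side is below $z_L$, and $a_1 < z_L$ is known, so nothing is excluded.

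The paper fixes this by reversing the roles: it anchors the regions at $a_1$ (and $a_2$) rather than at $a_3$. Concretely, for $i \in \{1,2\}$ it picks $u_{i+1} \in Z(a_{i+1})$ with $u_{i+1} \le s_i$ and $v_i \in Z(a_i)$ with $v_i \le t_{i+1}$, sets $u_1 = z_L(a_1)$, and defines $\calS_i = \calR(a_i, u_i, v_i, U_i, V_i)$. It then shows $a_{i+1} \in \Int \calS_i$ and $U_{i+1} \subset \Int \calS_i$, and via $\calS_2$ pushes $a_3$ into $\Int \calS_1$. Finally it picks $u_3 \in Z(a_3)$ with $u_3 \le s_2$, checks $v_1$ is left of $u_3$ so that $u_3 \notin \calS_1$, and lets the exposed path $U_3$ from $a_3$ to $u_3$ exit $\calS_1$. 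Now \emph{both} exposed sides of $\partial \calS_1$ come from $a_1$, so any exit point $w$ satisfies $a_1 \le w < u_3 \le s_2$, regardless of which side is hit. The point is that the region whose boundary you cross should be built from the element you want on the small side of the inequality.
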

\begin{proof}
See~\Cref{fig:proof-HIILR} for an illustration of the proof. 
By~\ref{items:HIILR-edgeHII}, $(a_1,s_1) \in I$ and $(a_2,s_2) \in I$.
By~\cref{obs:switch_bs_in_RPP}.\ref{obs:switch_bs_in_RPP:2in1} and \ref{obs:switch_bs_in_RPP:1in2}, $((a_1,s_1),(a_2,s_2))$ satisfies \ref{Lin} and \ref{Rin}.
By~\ref{items:HIILR-edgeHII} and~\ref{items:HIILR-b-left-d-t-left-b}, for each $i \in [2]$,
    \[\text{$b_{i}$ is left of $s_{i}$, $s_{i}$ is left of $t_{i+1}$, and $t_{i+1}$ is left of $b_{i+1}$.}\]
In particular, $s_1$ is left of $s_{2}$.
By~\ref{items:HIILR-edgeHII}, $a_{2} < s_1$ in~$P$.
Therefore, it suffices to show that $a_1 < s_2$ in~$P$.

% \begin{claim}\label{claim:HIILR-Y}
%     $s_{i} \in Y(a_{i+1})$ and $t_{i+1} \in Y(a_{i})$ for every $i \in [2]$.
% \end{claim}
% \begin{proofclaim}
%     Let $i \in [2]$.
%     Recall that $(a_{i},b_{i})$ and $(a_{i+1},b_{i+1})$ are dangerous by~\ref{item:instance:dangerous}.
%     Since $s_{i}$ is left of $b_{i+1}$, by~\cref{prop:d_not_in_shadow_when_dangerous}, $s_{i} \in Y(a_{i+1})$.
%     Similarly, since $b_{i}$ is left of $t_{i+1}$, we have $t_{i+1} \in Y(a_{i})$. 
% \end{proofclaim}

Let $i \in [2]$.
By~\Cref{cor:regular-Y}, $t_{i+1} \in Y(a_{i})$ and  $s_{i} \in Y(a_{i+1})$.
Thus, we can fix $u_{i+1} \in Z(a_{i+1})$ and $v_{i} \in Z(a_i)$ such that $u_{i+1} \leq s_{i}$ and  $v_i \leq t_{i+1}$ in~$P$.
Additionally, let $u_1 = z_L(a_1)$ and $v_3 = z_R(a_3)$.
For every $i \in [3]$, let $U_{i}$ and $V_{i}$ be exposed witnessing paths in~$P$ from $a_{i}$ to $u_{i}$ and from $a_i$ to $v_i$ respectively.
By~\Cref{prop:z_L_b_z_R},
\begin{align}\label{eq:u-b-b-v}
    \text{$u_1$ is left of $b_1$ and $b_3$ is left of $v_3$.}
\end{align}

\begin{claim}\label{claim:HIILR-left-right}
    $b_i$ is left of $u_{i+1}$, 
    $u_{i+1}$ is left of $v_i$, and 
    $v_i$ is left of $b_{i+1}$,
    % $u_i$ is left of $v_{i-1}$, 
    % $v_{i-1}$ is left of $b_i$, and 
    % $b_i$ is left of $u_{i+1}$, 
    for every $i \in [2]$.
\end{claim}
\begin{proofclaim}
    Let $i\in[2]$.
    %Since $(a_i,s_i),(a_{i+1},t_{i+1}) \in I$, by~\ref{item:instance:not_in_shadow}, $a_i \notin \shadz(s_i)$ and $a_{i+1} \notin \shadz(t_{i+1})$.
    Since $(a_i,b_i),(a_{i+1},b_{i+1}) \in I$ and $b_i$ is left $b_{i+1}$, by~\Cref{prop:dangerous-implies-in-Y},
        \[\text{$a_{i} \notin \shadz(b_{i+1})$ and $a_{i+1} \notin \shadz(b_i)$.} \]
    Since $a_{i+1} \parallel b_i$ in $P$ (by~\ref{items:HIILR-a-parallel-b}), $u_{i+1},s_{i} \in Y(a_{i+1})$, $u_{i+1} \leq s_{i}$ in~$P$, and $b_{i}$ is left of $s_i$, by~\cref{prop:left_is_preserved_from_y_to_z}.\ref{prop:left_is_preserved_from_y_to_z:right}, $b_i$ is left of $u_{i+1}$.
    %Since $u_{i+1},s_{i} \in Y(a_{i+1})$, $u_{i+1} \leq s_{i}$ in~$P$, and $s_{i}$ is left of $t_{i+1}$, by~\cref{prop:left_is_preserved_from_y_to_z}.\ref{prop:left_is_preserved_from_y_to_z:left}, $u_{i+1}$ is left of $t_{i+1}$.    
    Since $a_i \parallel b_{i+1}$ in $P$ (by~\ref{items:HIILR-a-parallel-b}), $v_{i},t_{i+1} \in Y(a_{i})$, $v_{i} \leq t_{i+1}$ in~$P$, and $t_{i+1}$ is left of $b_{i+1}$, by~\cref{prop:left_is_preserved_from_y_to_z}.\ref{prop:left_is_preserved_from_y_to_z:left}, $v_{i}$ is left of $b_{i+1}$.

    It remains to show that $u_{i+1}$ is left of $v_i$.
    Since $(a_{i+1},t_{i+1}) \in I$, $a_{i+1} < u_{i+1} \leq s_i$ in~$P$, and $s_{i}$ is left of $t_{i+1}$ by~\cref{cor:left_is_preserved_from_y_to_z}.\ref{cor:left_is_preserved_from_y_to_z:left}, $u_{i+1}$ is left of $t_{i+1}$.
    Since $a_i \parallel s_i$ and $u_{i+1} \leq s_{i}$ in~$P$, we have $a_{i} \parallel u_{i+1}$ in~$P$.
    Since $(a_i,s_i) \in I$, we have $a_i \notin \shadz(s_i)$.
    By~\cref{prop:comparability_implies_shadow_containment}, $\shadz(u_{i+1}) \subset \shadz(s_{i})$, and so, $a_{i} \notin \shadz(u_{i+1})$.
    Moreover, $v_{i},t_{i+1} \in Y(a_{i})$.
    It follows that we can apply~\cref{prop:left_is_preserved_from_y_to_z}.\ref{prop:left_is_preserved_from_y_to_z:right} to obtain that $u_{i+1}$ is left of~$v_{i}$.
\end{proofclaim}

\begin{figure}[tp]
  \begin{center}
    \includegraphics{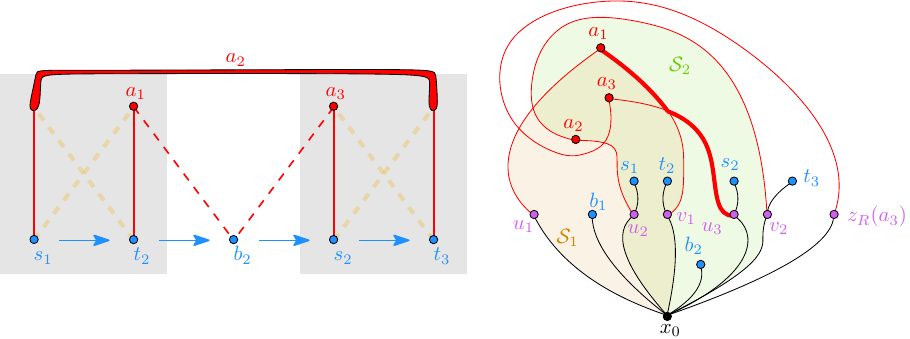}
  \end{center}
  \caption{
        An illustration of the proof of~\cref{prop:change_to_s_HIILR}.
        On the left-hand side, we show a schematic summary of the assumptions (drawing conventions are the same as in~\cref{fig:edges-HIIL-HIIR}).
        On the right-hand side, we illustrate the objects occurring in the proof of the proposition.
  }
  \label{fig:proof-HIILR}
\end{figure}

Since $u_1$ is left of $b_1$ (by~\eqref{eq:u-b-b-v}) and $b_1$ is left of $v_1$ (by~\cref{claim:HIILR-left-right}), we obtain that $u_1$ is left of $v_1$.
Moreover, by~\cref{claim:HIILR-left-right}, $u_2$ is left of $v_2$.
Thus, for each $i\in[2]$ we can define regions
\[
    \calS_i =\calR(a_i,u_i,v_i,U_i,V_i).
\]
\begin{claim}\label{claim:HIILR-1}
    $a_{i+1} \in \Int \calS_i$ and $U_{i+1} \subset \Int \calS_i$ for every $i \in [2]$.
\end{claim}
\begin{proofclaim}
    Let $i \in [2]$.
    By~\eqref{eq:u-b-b-v} and~\cref{claim:HIILR-left-right}, $u_i$ is left of $u_{i+1}$ and $u_{i+1}$ is left of $v_i$.
    Thus, by~\cref{cor:sandwitch_b_in_region}.\ref{cor:sandwitch_b_in_region:int}, $u_{i+1} \in \Int \calS_i$.
    Note that to prove the claim it suffices to show that $U_{i+1} \subset \Int \calS_i$.
    Since $u_{i+1} \in \Int \calS_i$, it is enough to show that $U_{i+1}$ is disjoint from $\partial\calS_i$.
    Suppose to the contrary that $w$ is an element in the intersection of $U_{i+1}$ and $\partial\calS_i$.
    Since $U_{i+1}$ is an exposed path, we have $w \notin B$, which implies that $w$ lies in $U_i \cup V_i$.
    This yields $a_i \leq w < u_{i+1} \leq s_i$ in~$P$, which is false.
\end{proofclaim}

\begin{claim}\label{claim:HIILR-2}
    $a_{3} \in \Int \calS_1$.
\end{claim}
\begin{proofclaim}
    Note that $v_{2}$ is left of $b_{3}$ (by~\cref{claim:HIILR-left-right}), and $b_{3}$ is left of $v_3$ (by~\cref{eq:u-b-b-v}).
    Hence, $v_{2}$ is left of $v_3$ (by~\cref{prop:left_porders_bs}), and so by~\cref{cor:sandwitch_b_in_region}.\ref{cor:sandwitch_b_in_region:out}, $v_3 \notin \calS_{2}$.
    We have $a_{3} \in \Int \calS_{2}$ (by~\cref{claim:HIILR-1}) and $v_3 \notin \calS_{2}$, hence, $V_3$ must intersect $\partial \calS_{2}$, say in $w$.
    Since $V_3$ is exposed, $w$ lies either in $U_{2}$ or in $V_{2}$.
    The latter is not possible as otherwise, $a_{3} \leq w < v_{2} \leq t_{3}$ in~$P$.
    Thus, $w$ lies in $U_{2}$.
    By~\cref{claim:HIILR-1}, $U_{2} \subset \Int \calS_1$, hence, $w \in \Int \calS_1$.
    We argue that $a_3[V_3]w \subset \Int \calS_1$, which suffices to conclude the proof of the claim.
    Otherwise, there exists $w'$ in $a_3[V_3]w$ such that $w'\in\partial \calS_1$.
    Moreover, $w' \notin B$ as $V_3$ is exposed.
    It follows that $a_1 \leq w' \leq w < u_{2}\leq s_1$ in~$P$, which is a contradiction.
\end{proofclaim}

Recall that we needed to prove that $a_1 < s_{2}$ in~$P$.
Since $v_1$ is left of $u_{3}$ (by \cref{claim:HIILR-left-right}), by~\cref{cor:sandwitch_b_in_region}.\ref{cor:sandwitch_b_in_region:out}, $u_{3} \notin \calS_{1}$.
However, by~\cref{claim:HIILR-2}, $a_{3} \in \Int \calS_1$, hence, $U_{3}$ intersects $\partial \calS_1$, say in $w$.
Since $U_{3}$ is exposed, $w$ lies in $U_1 \cup V_1$.
In particular, $a_1 \leq w < u_{3} \leq s_{2}$ in~$P$.
\end{proof}

\begin{proof}[Proof of~\Cref{prop:HIILR}]
If $\maxpath(\HIILR) \leq 1$, then the assertion is clear.
Thus, we can assume that $\maxpath(\HIILR) \geq 2$.
Let $((a_1,b_1),\dots,(a_n,b_n))$ be a path in $\HIILR$ where $n$ is an integer with $n \geq 2$.
For each $i\in[n-1]$, let $(s_i, t_{i+1})$ with $s_i,t_{i+1} \in B$ be a witness for the edge $((a_i,b_i),(a_{i+1},b_{i+1}))$ in $\HIILR$. 
By~\cref{prop:change_to_s_HIILR}, for each $i \in [n-2]$, $((a_i,s_i),(a_{i+1},s_{i+1}))$ is an edge in $\HII$.
Moreover, $((a_{n-1},s_{n-1}),(a_n,t_n))$ is an edge in $\HII$ by~\ref{items:HIILR-edgeHII}.
Therefore, $((a_1,s_1),\dots,(a_{n-1},s_{n-1}),(a_n,t_n))$ is a path in $\HII$, and so, by~\cref{prop:HII}, $\maxpath(\HIILR) \leq \maxpath(\HII) \leq \se_P(I)$, which completes the proof.
\end{proof}

\subsection{In-Out and Out-In oriented graphs}
In this subsection, we prove~\Cref{lemma:HIO}, which concerns $\HIO$ and $\HOI$.
We restate the lemma below.
Note that we give a proof only in the case of $\HIO$, and the proof for $\HOI$ is symmetric.
In~\Cref{fig:edges-HIO}, we gave a schematic drawing of edges in $\HIO$
See~\Cref{fig:edges-HIO-real} for a more precise illustration.
In~\Cref{fig:path-HIO}, we give an example of a path in $\HIO$.

\begin{figure}[tp]
  \begin{center}
    \includegraphics{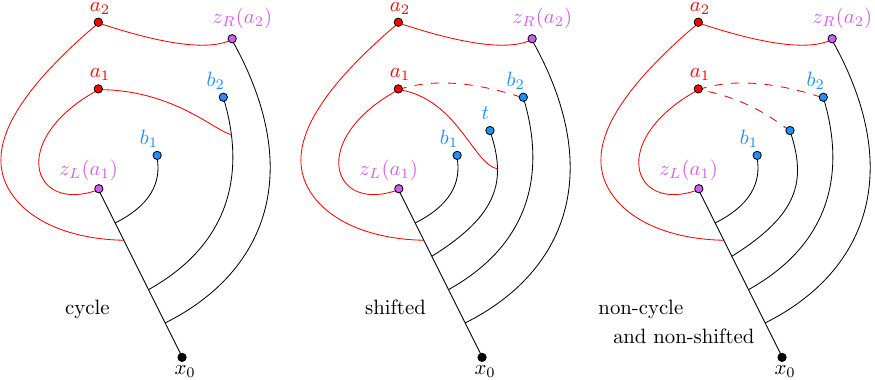}
  \end{center}
  \caption{
    $((a_1,b_1),(a_2,b_2))$ are edges in $\HIO$.
  }
  \label{fig:edges-HIO-real}
\end{figure}

\begin{figure}[tp]
  \begin{center}
    \includegraphics{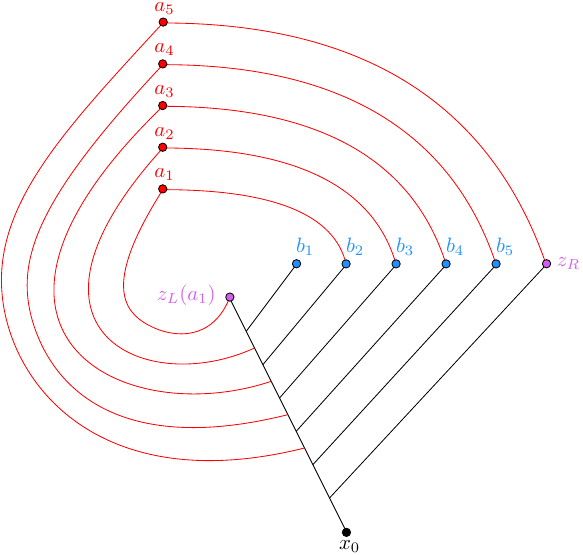}
  \end{center}
  \caption{
  $((a_1,b_1),(a_2,b_2),(a_3,b_3),(a_4,b_4),a_5,b_5))$ is a path in $\HIO$. 
  Note that in this poset there is no $S_3$.
  }
  \label{fig:path-HIO}
\end{figure}
% Note that if $\sigma=((a,b),(a',b'))$ is an edge in $H$, then 
% \begin{align*}
% \maxsw(H,(a,b)) &\geq \maxsw(H,(a',b')) + \weight(\sigma),\\
% \maxew(H,(a',b')) &\geq \maxew(H,(a,b)) + \weight(\sigma).
% \end{align*}

\vbox{
\hmm*
}

\begin{comment}
\begin{lemma}\label{lemma:HIO}
Let $C = 2\se(P)(2\se(P)+6)$.
\begin{enumerate}
    \item \label{lemma:HIO:HIO} If $((a,b),(a',b'))$ is an edge of weight $1$ in $\HIO$, then 
    \[\maxsw(\HIO,(a',b')) > \maxsw(\HIO,(a,b)) - C.\]
    \item \label{lemma:HIO:HOI} If $((a,b),(a',b'))$ is an edge of weight $1$ in $\HOI$, then 
    \[\maxew(\HOI,(a,b)) > \maxew(\HOI,(a',b')) - C.\]
\end{enumerate}

    \jedrzej{Maybe a figure showing that the edge has to be of weight $1$?}
\end{lemma}
\end{comment}

\begin{proof}
We only prove the statement for $\HIO$ (i.e.~\ref{lemma:HIO:HIO}), the proof for $\HOI$ (i.e.~\ref{lemma:HIO:HOI}) is symmetric.
Let $((a,b),(a',b'))$ be an edge of weight $1$ in $\HIO$.
Clearly,
    \[\maxsw(\HIO,(a,b)) > \maxsw(\HIO,(a,b)).\]
It suffices to show that
    \begin{align}\label{eq:lem:HIO:old}
        \maxsw(\HIO,(a',b')) > \maxsw(\HIO,(a,b)) - m.
    \end{align}
See~\Cref{fig:lemma-HIO} for a high-level idea of the proof.

\begin{figure}[tp]
  \begin{center}
    \includegraphics{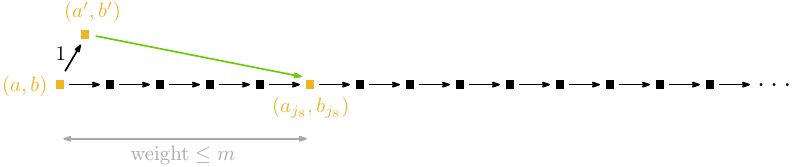}
  \end{center}
  \caption{
    We depict a high-level idea of the proof of~\Cref{lemma:HIO}.
    The arrows indicate edges in $\HIO$.
    We fix a heavy path starting in $(a,b)$.
    The main goal is to find a pair on this path that is \q{quite close in the path} to $(a,b)$, and there is an edge (in green) from $(a',b')$ to this pair.
    In the figure, this pair is called $(a_{j_8},b_{j_8})$, which is the notation following the proof.
  }
  \label{fig:lemma-HIO}
\end{figure}

If $\maxsw(\HIO,(a,b)) \leq m - 1$, then the assertion is clear. 
Thus, assume that $\maxsw(\HIO,(a,b)) \geq m$.
Let 
\[\text{$((a_1,b_1),\dots,(a_n,b_n))$ be a path in $\HIO$}\]
witnessing $\maxsw(\HIO,(a,b))$, that is, $(a_1,b_1) = (a,b)$ and the number of edges of weight one in this path is $\maxsw(\HIO,(a,b))$. 
Note that $n$ can be arbitrarily large.
For each $i \in [n-1]$, let $\sigma_i = ((a_i,b_i),(a_{i+1},b_{i+1}))$ and if $\sigma_i$ is a shifted edge in $\HIO$, then let $t_{i+1} \in B$ be a witness for $\sigma_i$.

For convenience, we define
    \[\spine = W_L(z_L(a_1)).\]
For every $d \in B$, define
    \[\pi(d) = \gce(\spine,W_L(d)).\]
Note that the element $\pi(d)$ has an equivalent description when $d$ is right of $z_L(a_1)$, see~\cref{claim:HIO:equivalent_pi_def}.

We inductively construct elements $u_1,\dots,u_{n-1}$ and witnessing paths $U_1,\dots,U_{n-1}$ in~$P$ such that for every $i \in [n-1]$,
\begin{enumorig}[label=(u\arabic*)]
    \item $u_1 = z_L(a_1)$, \label{items-u:u_1}
    \item $u_i \in Z(a_i)$,\label{items-u:Z}
    \item $u_i$ lies in $\spine$,\label{items-u:W_L}
    \item $u_i$ is left of $b_i$,\label{items-u:left-d}
    \item $u_{i} \leq \pi(b_{i-1})$ in~$P$ if $i > 1$,\label{items-u:u<pi}
    \item $U_i$ is an exposed witnessing path from $a_i$ to $u_i$ in~$P$,\label{items-u:path}
    \item $x_0[\spine]u_i[U_i]a_i$ is left of $\spine$ if $i > 1$.\label{items-u:path-left} 
\end{enumorig}
See \cref{fig:elements-u}.
Let $i \in [n-1]$.
Since $u_i$ and $\pi(b_i)$ both lie in $\spine$ (by~\ref{items-u:u_1} and~\ref{items-u:W_L}), the elements $u_i$ and $\pi(b_i)$ are comparable in~$P$. 
However, $u_i\leq \pi(b_i)$ in~$P$ leads to a contradiction: $a_i< u_i \leq \pi(b_i)\leq b_i$ in $P$ (the first comparability follows from~\ref{items-u:Z}). 
Thus,~\ref{items-u:u_1}--\ref{items-u:W_L} imply that for every $i \in [n-1]$,
\begin{enumorig}[label=(u\arabic*), resume]
    \item $\pi(b_i) < u_i$ in~$P$.\label{items-u:pi<u}
\end{enumorig}

\begin{figure}[tp]
  \begin{center}
    \includegraphics{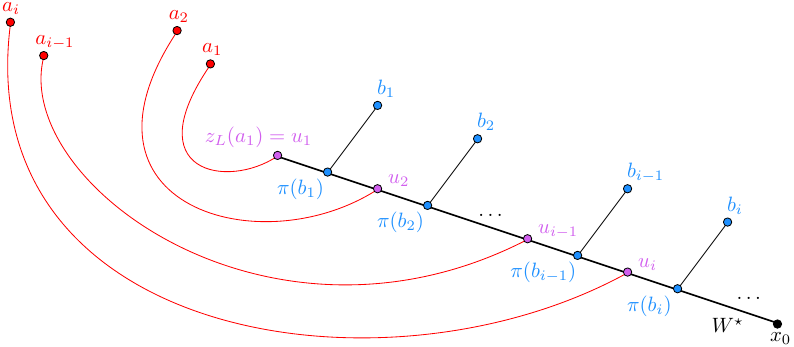}
  \end{center}
  \caption{
  An illustration of items~\ref{items-u:u_1}--\ref{items-u:pi<u}.
  }
  \label{fig:elements-u}
\end{figure}

We proceed with the construction of $u_1,\ldots, u_{n-1}$ and $U_1,\ldots,U_{n-1}$. 
First, let $u_1 = z_L(a_1)$ and let $U_1$ be an exposed witnessing path from $a_1$ to $u_1$ in~$P$.
Items~\ref{items-u:u_1}, \ref{items-u:Z}, \ref{items-u:W_L}, and \ref{items-u:path} are clearly satisfied for $i=1$ and items~\ref{items-u:u<pi} and~\ref{items-u:path-left} are vacuously true for $i=1$.
Since $(a_1,b_1) \in I$, by~\cref{prop:z_L_b_z_R}, $u_1 = z_L(a_1)$ is left of $b_1$, so~\ref{items-u:left-d} also holds.
%By definition, $\pi(b_1) \leq u_1$ in~$P$, and $\pi(b_1) \neq u_1$ as otherwise $a_1 < u_1 = \pi(b_1) \leq b_1$ in~$P$, which is false, thus~\ref{items-u:pi<u} is satisfied.

Let $i \in [n-1]$ with $i > 1$ and assume that $u_1,\dots,u_{i-1}, U_1,\dots,U_{i-1}$ are already defined and satisfy all the items.
Let $u_i \in Z(a_{i})$ and let $U_i$ be an exposed witnessing path from $a_i$ to $u_i$ in~$P$ such that $(u_i,U_i)$ witnesses~\ref{Lout} for $\sigma_{i-1}$.
Immediately from this definition items~\ref{items-u:Z} and~\ref{items-u:path} hold.

Since $u_{i-1}\in Z(a_{i-1})$ (by~\ref{items-u:Z}), $u_{i-1}$ is left of $b_{i-1}$ (by~\ref{items-u:left-d}),
and $\sigma_{i-1}$ satisfies~\ref{Lout}, \cref{prop:PL2_PR2_more}.\ref{prop:PL2_PR2_more:left} implies that $u_i$ lies in $W_L(u_{i-1})$. 
Since $u_{i-1}$ lies in $\spine$ (by~\ref{items-u:W_L}), $W_L(u_{i-1})$ is a subpath of $\spine$. 
In particular, $u_i$ lies in $\spine$ so~\ref{items-u:W_L} holds for $i$.
Since $(a_i,b_i) \in I$, $a_i \leq u_i \leq b_{i-1}$ in~$P$, and $b_{i-1}$ is left of $b_i$, by~\cref{cor:left_is_preserved_from_y_to_z}.\ref{cor:left_is_preserved_from_y_to_z:left}, we obtain that $u_i$ is left of $b_i$, thus,~\ref{items-u:left-d} holds for $i$.
% Since $u_i$ witnesses~\ref{Lout} for $\sigma_{i-1}$, $u_i$ lies in $W_L(b_{i-1})$, and so, $u_i \leq \pi(b_{i-1})$ in~$P$, and so~\ref{items-u:u<pi} holds for $i$. 

%Both elements $\pi(b_i)$ and $u_i$ lie in $\spine$, thus, they are comparable.
%However, if $u_i \leq \pi(b_i)$ in~$P$, then $a_i < u_i \leq \pi(b_i) \leq b_i$ in~$P$, which is false.
%Hence, $\pi(b_i) < u_i$ in~$P$, and so~\ref{items-u:pi<u} holds.

Recall that $(u_i,U_i)$ witnesses~\ref{Lout} for $\sigma_{i-1}$.
Since $u_i$ lies in $\spine$ (by~\ref{items-u:W_L}) and $u_i$ lies in $W_L(b_{i-1})$, we obtain that $u_i \leq \pi(b_{i-1})$ in~$P$, and so,~\ref{items-u:u<pi} holds for $i$. 
The element $u_i$ also lies in $W_L(z_L(a_{i-1}))$.
Thus, $x_0[W_L(z_L(a_{i-1}))]u_i = x_0[\spine]u_i$.
Finally, $M = x_0[W_L(z_L(a_{i-1}))]u_i[U_i]a_i = x_0[\spine]u_i[U_i]a_i$ is left of $W_L(z_L(a_{i-1}))$ (again by~\ref{Lout}).
Item~\ref{items-u:path-left} is equivalent to saying that $M$ is left of $\spine$.
Thus, we obtained~\ref{items-u:path-left} for $i$ when $i = 2$.
Next, we assume that $i > 2$.
Since $u_{i-1} \in Z(a_{i-1})$ (by~\ref{items-u:Z}), $W_L(z_L(a_{i-1}))$ is either left of $M' = x_0[W_L(u_{i-1})]  u_{i-1}[U_{i-1}]a_{i-1} = x_0[\spine]u_{i-1}[U_{i-1}]a_{i-1}$ or is a subpath of $M'$. 
If $W_L(z_L(a_{i-1}))$ is left of $M'$, then by transitivity, $M$ is left of $M'$.
Similarly, if $W_L(z_L(a_{i-1}))$ is a subpath of $M'$, then $M$ is left of $M'$ as well.
Item~\ref{items-u:path-left} for $i$ follows now from the fact that by~\ref{items-u:path-left} for $i-1$ (recall $i > 2$), $M'$ is left of $\spine$.
This completes the construction.

\begin{claim}\label{claim:HIO:u}
    For all $i,j \in [n-1]$ with $i < j$,
    \begin{enumerate}
        \item $u_j < u_i$ in~$P$, \label{claim:HIO:u:u_j<u_i}
        %\item $U_i$ and $U_j$ are disjoint. \label{claim:HIO:u:U_j-and-U_i}
        \item $W_L(u_j)$ is a proper subpath of $W_L(u_i)$, \label{claim:HIO:u:WL-supath-WL}
        \item $a_i \parallel u_j$ in~$P$. \label{claim:HIO:u:a_i-parallel-u_j}
    \end{enumerate}
\end{claim}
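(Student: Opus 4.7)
The plan is to prove the three items in order, relying on the properties \ref{items-u:u_1}--\ref{items-u:pi<u} recorded during the inductive construction. The first item is the engine that drives the other two, so I will spend most of the effort there; once it is in hand, items~(ii) and~(iii) are short arguments.

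For~(i), the key observation is that the construction forces a strict descending chain along the $u_k$'s. Indeed, for every $k$ with $2 \le k \le n-1$, property~\ref{items-u:u<pi} gives $u_k \le \pi(b_{k-1})$ in $P$, and property~\ref{items-u:pi<u} (applied at index $k-1$) gives $\pi(b_{k-1}) < u_{k-1}$ in $P$. Concatenating these yields $u_k < u_{k-1}$ in $P$ for each such $k$. Iterating this inequality from $k=j$ down to $k=i+1$, transitivity of $<$ in $P$ gives $u_j < u_i$ in $P$, as desired.

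For~(ii), recall that by~\ref{items-u:W_L} every $u_k$ lies on $\spine$. Hence $W_L(u_k)$ is the prefix of $\spine$ from $x_0$ to $u_k$ (using that $\spine = W_L(z_L(a_1))$ and that leftmost witnessing paths are consistent along a common spine, see \cref{prop:W-consistent}.\ref{prop:W-consistent:left}). Since~(i) gives $u_j < u_i$ and both elements lie on the same path $\spine$, the element $u_j$ precedes $u_i$ strictly along $\spine$, so $W_L(u_j)$ is a proper prefix, and hence a proper subpath, of $W_L(u_i)$.

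For~(iii), I will argue by contradiction. Suppose $a_i$ and $u_j$ are comparable in $P$. Since $u_j \in Z(a_j) \subseteq B$ while $a_i \notin B$ (as $A \cap B = \emptyset$, because every pair of $I$ is dangerous by~\ref{item:instance:dangerous} and hence $a_i \notin B$ by \cref{prop:dangerous_a_not_in_B}), the only possibility is $a_i \le u_j$ in $P$. Since $j \ge i+1$, item~(i) gives $u_j \le u_{i+1}$ in $P$ (with equality exactly when $j = i+1$); item~\ref{items-u:u<pi} applied at index $i+1$ gives $u_{i+1} \le \pi(b_i)$; and by definition $\pi(b_i)$ lies on $W_L(b_i)$, so $\pi(b_i) \le b_i$ in $P$. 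Chaining, $a_i \le u_j \le u_{i+1} \le \pi(b_i) \le b_i$ in $P$, contradicting $(a_i,b_i) \in I$.

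The main obstacle is purely bookkeeping: making sure the indices line up so that each invoked property holds (e.g.\ \ref{items-u:u<pi} needs $i+1 > 1$, which is automatic here), and confirming that $u_i < u_{i-1}$ is strict rather than merely $u_i \le u_{i-1}$ — this strictness is the whole point of combining~\ref{items-u:u<pi} with~\ref{items-u:pi<u} instead of only using that $u_i$ lies on $W_L(u_{i-1})$.
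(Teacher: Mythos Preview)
Your proof is correct and follows essentially the same approach as the paper: for (i) you combine \ref{items-u:u<pi} and \ref{items-u:pi<u} to get $u_k < u_{k-1}$ and iterate; for (ii) you use that both elements lie on $\spine$; for (iii) you rule out $u_j \le a_i$ via $a_i \notin B$ and then chain $a_i \le u_j \le u_{i+1} \le \pi(b_i) \le b_i$ to contradict $(a_i,b_i) \in I$. Your write-up is slightly more explicit in a couple of places (e.g.\ citing \cref{prop:dangerous_a_not_in_B} for $a_i \notin B$), but the argument is the same.
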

\begin{proofclaim}
    Let $i,j \in [n-1]$ with $i < j$.
    By~\ref{items-u:u<pi} and~\ref{items-u:pi<u}, $u_j \leq \pi(b_{j-1}) < u_{j-1}$ in~$P$, thus,~\ref{claim:HIO:u:u_j<u_i} follows from a simple induction.
    % For the proof of~\ref{claim:HIO:u:U_j-and-U_i} 
    % suppose that $U_i$ and $U_j$ intersect in an element $u$.  Then, $a_i \leq u \leq u_j \leq u_{i+1} \leq \pi(b_{i}) < b_i$ in~$P$, where the third inequality follows from~\ref{claim:HIO:u:u_j<u_i} and the fourth inequality follows from~\ref{items-u:pi<u}.
    % Clearly, $a_i<b_i$ in~$P$ is a contradiction, which completes the proof.
    Item~\ref{claim:HIO:u:WL-supath-WL} follows from the fact that both $u_i$ and $u_j$ lie in $\spine$ (by~\ref{items-u:W_L}) and~\ref{claim:HIO:u:u_j<u_i}.
    For the proof of~\ref{claim:HIO:u:a_i-parallel-u_j} suppose to the contrary that $a_i < u_j$ or $u_j \leq a_i$ in~$P$.
    Note that the latter can not hold as $a_i \notin B$ and $u_j \in B$.
    Thus, we assume $a_i < u_j$ in~$P$.
    Then, $a_i < u_j \leq u_{i+1} \leq \pi(b_{i}) \leq b_i$ in~$P$, where the second inequality follows from~\ref{claim:HIO:u:u_j<u_i} and the third inequality follows from~\ref{items-u:pi<u}.
    Clearly, $a_i<b_i$ in~$P$ is a contradiction, which completes the proof.
\end{proofclaim}

% For all $a \in A$ and $y\in Y(a)$, let $\calM(a,y)$ be the family of all the paths in $\calM(a)$ whose peak $z$ satisfies $z\leq y$ in~$P$. 
% In other words, $\calM(a,y)$ is the family of all the paths of the form $M = x_0[W]z[U]a$, where
% \begin{itemize}
%     \item $z \in Z(a)$, and $z\leq y$ in~$P$, 
%     \item $W$ is a witnessing path from $x_0$ to $z$ in~$P$,
%     \item $U$ is an exposed witnessing path from $a$ to $z$ in~$P$.
% \end{itemize}
% Since every path in $\calM(a,y)$ starts from $x_0$ and ends in $a$, no path in $\calM(a,y)$ contains another path from $\calM(a,y)$ as a subpath.
% Therefore, by \cref{obs:ue_ordering_is_usually_linear}, the relation of being left linearly orders $\calM(a,y)$.
% Since $y\in Y(a)$, the family $\calM(a,y)$ is non-empty.
% There exist the minimal and the maximal elements in $\calM(a,y)$. 
% Let $M_L(a,y) \in \calM(a)$ be such that $M_L(a,y)$ is left of $M$ for every $M \in \calM(a,y)$ with $M\neq M_L(a,y)$, and let $M_R(a,y) \in \calM(a,y)$ be such that $M_R(a,y)$ is right of $M$ for every $M \in \calM(a,y)$ with $M\neq M_R(a,y)$. 
% Moreover, denote by $z_L(a,y)$ the peak of $M_L(a,y)$ and by $z_R(a,y)$ the peak of $M_R(a,y)$.
% See~\cref{fig:paths_M_a_y}.
% \begin{figure}[!h]
%   \begin{center}
%     \includegraphics[scale=0.1]{photos/458185268_392755683841821_3651014593842408059_n.jpg}
%   \end{center}
%   \caption{
%     \fig 
%   }
%   \label{fig:paths_M_a_y}
% \end{figure}

The next claim helps us to define a handful of useful regions.

\begin{claim}\label{claim:HIO:Y}
    For all $i \in [n-1]$,
    \begin{enumerate}
        \item $a_i \notin \shadz(b_{i+1})$, \label{claim:HIO:Y:notin}
        \item $b_{i+1} \in Y(a_i)$ if $\sigma_i$ is a cycle edge, \label{claim:HIO:Y:cycle}
        \item $t_{i+1}\in Y(a_i)$ if $\sigma_i$ is a shifted edge. \label{claim:HIO:Y:matching}
    \end{enumerate}
\end{claim}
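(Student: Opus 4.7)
The plan is to observe that this claim is essentially a direct unpacking of the definitions of cycle and shifted edges in $\HIO$ together with a single application of \cref{prop:dangerous-implies-in-Y}. Recall that every $\sigma_i = ((a_i,b_i),(a_{i+1},b_{i+1}))$ is an edge in $\HIO$, hence a regular sequence, so in particular $b_i$ is left of $b_{i+1}$, and $(a_i, b_i) \in I$. These two facts will be the engine feeding \cref{prop:dangerous-implies-in-Y}.

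For item~(i), I would split on the type of edge. If $\sigma_i$ is a cycle edge, then by definition $(a_{i+1},b_{i+1}) \in I$ and $a_i < b_{i+1}$ in~$P$. Regardless, the conclusion $a_i \notin \shadz(b_{i+1})$ will follow by applying \cref{prop:dangerous-implies-in-Y} to the dangerous pair $(a_i,b_i) \in I$ together with the element $d = b_{i+1}$, which is left of $d$ by regularity (in fact $b_i$ is left of $b_{i+1}$). If $\sigma_i$ is a shifted edge, the same application works verbatim since $b_i$ is still left of $b_{i+1}$.

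For item~(ii), assume $\sigma_i$ is a cycle edge. Then $a_i < b_{i+1}$ in~$P$ comes straight from the fact that $\sigma_i$ is then a regular In-Out alternating cycle (as ensured by the definition of cycle edge in $\HIO$). Combined with item~(i), the definition of $Y(a_i)$ is met, so $b_{i+1} \in Y(a_i)$.

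For item~(iii), assume $\sigma_i$ is a shifted edge witnessed by $t_{i+1} \in B$. By \ref{items:HIO-cycle}, $((a_i,b_i),(a_{i+1},t_{i+1}))$ is a regular In-Out alternating cycle, so in particular $a_i < t_{i+1}$ in~$P$ and $b_i$ is left of $t_{i+1}$. Applying \cref{prop:dangerous-implies-in-Y} to the dangerous pair $(a_i,b_i)\in I$ and $d = t_{i+1}$ yields $a_i \notin \shadz(t_{i+1})$, and together with $a_i < t_{i+1}$ this gives $t_{i+1} \in Y(a_i)$. There is no real obstacle here; the main thing to be careful about is simply keeping track of which \q{$b$} the regularity yields the \q{left of} relation for in each case (namely $b_{i+1}$ in the cycle case and $t_{i+1}$ in the shifted case), so that \cref{prop:dangerous-implies-in-Y} applies cleanly.
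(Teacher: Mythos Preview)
Your proof is correct and follows essentially the same approach as the paper: apply \cref{prop:dangerous-implies-in-Y} to the dangerous pair $(a_i,b_i)$ with $d=b_{i+1}$ (and, in the shifted case, also with $d=t_{i+1}$), using that regularity gives $b_i$ left of $b_{i+1}$ and \ref{items:HIO-cycle} gives $b_i$ left of $t_{i+1}$; then combine with the comparability $a_i<b_{i+1}$ or $a_i<t_{i+1}$ coming from the alternating-cycle structure. The only cosmetic difference is that you split item~(i) into cycle and shifted cases, whereas the paper handles it uniformly since the argument is identical in both.
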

\begin{proofclaim}
    Let $i \in [n-1]$.
    Recall that $(a_i,b_i) \in I$.
    The element $b_i$ is left of $b_{i+1}$ and when $\sigma_i$ is a shifted edge, $b_i$ is left of $t_{i+1}$ (by~\ref{items:HIO-cycle}).
    Therefore, by~\cref{prop:dangerous-implies-in-Y}, $a_i \notin \shadz(b_{i+1})$ and when $\sigma_i$ is a shifted edge, $a_i \notin \shadz(t_{i+1})$.
    Items~\ref{claim:HIO:Y:cycle} and~\ref{claim:HIO:Y:matching} follow immediately since $a_i< b_{i+1}$ in~$P$ if $\sigma_i$ is a cycle edge and $a_i< t_{i+1}$ in~$P$ if $\sigma_i$ is a shifted edge.
\end{proofclaim}

Let
\[
E=\set{i \in [n-1] : \textrm{ $\sigma_i$ is of weight $1$}}.
\]
\Cref{claim:HIO:Y} allows for the following definition.
For each $i \in E$, let $v_i$ be an element of $P$ such that
% \begin{align*}
%     \text{$v_i \in Z(a_i)$ and $z_i \leq b_{i+1}$ in~$P$}&\text{ if $\sigma_i$ is a cycle edge and }\\
%     \text{$v_i \in Z(a_i)$ and $z_i \leq t_{i+1}$ in~$P$}&\text{ if $\sigma_i$ is a matching edge.}
% \end{align*}
\[
v_i \in Z(a_i) \text{ and }\begin{cases}
v_i \leq b_{i+1} \text{ in~$P$}&\textrm{if $\sigma_i$ is a cycle edge},\\
v_i \leq t_{i+1} \text{ in~$P$}&\textrm{if $\sigma_i$ is a shifted edge}.\\
\end{cases}
\]
% Let $v_1,\ldots,v_{n-1}$ be elements of $B$ defined as follows
% \jedrzej{Do we really need $z_L...$?}
% \jedrzej{Maybe we don't have to define this for plain edges.}
% \[
% v_i=\begin{cases}
% z_L(a_i,b_{i+1})&\textrm{if $\sigma_i$ is a cycle edge},\\
% z_L(a_i,t_{i+1})&\textrm{if $\sigma_i$ is a matching edge},\\
% z_R(a_i)&\textrm{if $\sigma_i$ is a plain edge},
% \end{cases}
% \]
% for every $i\in[n-1]$. Additionally, we define $v_n=z_R(a_n)$. \jedrzej{Make sure that we use $v_n$.}
Additionally, for every $i \in E$, let $V_i$ be an exposed witnessing path from $a_i$ to $v_i$ in~$P$.
See an illustration in~\Cref{fig:v-elements}.

\begin{claim}\label{claim:HIO:v}
% For all $i\in E$, $b_i$ is left of $v_i$.
% Moreover, if $\sigma_i$ is a matching edge, $v_i$ is left of $b_{i+1}$.
For all $i \in E$,
\begin{enumerate}
    \item $b_i$ is left of $v_i$, \label{claim:HIO:v:b_left_v}
    \item $u_i$ is left of $v_i$, \label{claim:HIO:v:u_left_v}
    \item $v_i$ is left of $b_{i+1}$ if $\sigma_i$ is a shifted edge, \label{claim:HIO:v:v_left_b}
    \item $a_{i+1} \parallel v_i$ in~$P$ and $a_{i+1} \notin \shadz(v_i)$. \label{claim:HIO:v:a_parallel_v}
\end{enumerate}

\end{claim}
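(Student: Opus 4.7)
The plan is to verify the four items in order, at each step splitting on whether $\sigma_i$ is a cycle edge or a shifted edge, which determines whether $v_i \le b_{i+1}$ or $v_i \le t_{i+1}$ in $P$. All four items will follow by combining earlier general lemmas on the ``left of'' relation with the structural properties of edges in $\HIO$, items~\ref{items:HIO-cycle}--\ref{items:HIO-a-parallel-b}. None of the arguments is intricate; the only care needed is to plug the correct objects into the hypotheses of \Cref{prop:left_is_preserved_from_y_to_z} or \Cref{cor:left_is_preserved_from_y_to_z} in each case.

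For item~\ref{claim:HIO:v:b_left_v}, I would invoke \Cref{cor:left_is_preserved_from_y_to_z}.\ref{cor:left_is_preserved_from_y_to_z:right} applied to the pair $(a_i,b_i)\in I$. In the cycle case one takes $y=b_{i+1}$, $y'=v_i$ and uses that $b_{i+1}$ is right of $b_i$ because $\sigma_i$ is regular. In the shifted case one takes $y=t_{i+1}$, $y'=v_i$ and uses that $b_i$ is left of $t_{i+1}$, which is part of the definition of a shifted edge via \ref{items:HIO-cycle}. Item~\ref{claim:HIO:v:u_left_v} is then immediate: \ref{items-u:left-d} gives $u_i$ left of $b_i$, item~\ref{claim:HIO:v:b_left_v} gives $b_i$ left of $v_i$, and transitivity of the left-of relation, \Cref{prop:left_porders_bs}, yields $u_i$ left of $v_i$ (all three elements lie in $B$ as needed).

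For item~\ref{claim:HIO:v:v_left_b}, assuming $\sigma_i$ is shifted, I would apply \Cref{prop:left_is_preserved_from_y_to_z}.\ref{prop:left_is_preserved_from_y_to_z:left} with $a=a_i$, $b=b_{i+1}$, $y=t_{i+1}$, $y'=v_i$. The required hypotheses are all in hand: $a_i\parallel b_{i+1}$ in $P$ is \ref{items:HIO-a-parallel-b}, $a_i\notin\shadz(b_{i+1})$ is \Cref{claim:HIO:Y}.\ref{claim:HIO:Y:notin}, both $v_i$ and $t_{i+1}$ lie in $Y(a_i)$ by the definition of $v_i$ combined with \Cref{claim:HIO:Y}.\ref{claim:HIO:Y:matching}, $v_i\le t_{i+1}$ in $P$ holds by the choice of $v_i$, and $t_{i+1}$ left of $b_{i+1}$ is \ref{items:HIO-t-left-b}.

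Item~\ref{claim:HIO:v:a_parallel_v} splits into two pieces. The incomparability $a_{i+1}\parallel v_i$ in $P$ will come from ruling out both directions: $v_i\le a_{i+1}$ in $P$ would force $a_{i+1}\in B$ via $x_0\le v_i$, contradicting $A\cap B=\emptyset$ (\Cref{prop:dangerous_a_not_in_B}), and $a_{i+1}<v_i$ in $P$ combined with the chosen upper bound of $v_i$ would contradict the incomparability of $a_{i+1}$ with $b_{i+1}$ (in the cycle case) or with $t_{i+1}$ (in the shifted case), since $(a_{i+1},b_{i+1})\in I$ and, in the shifted case, $(a_{i+1},t_{i+1})\in I$ is guaranteed by \ref{items:HIO-cycle}. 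For the shadow assertion, \Cref{prop:comparability_implies_shadow_containment} gives $\shadz(v_i)\subseteq\shadz(b_{i+1})$ in the cycle case and $\shadz(v_i)\subseteq\shadz(t_{i+1})$ in the shifted case, and in both cases \ref{item:instance:not_in_shadow} applied to the relevant pair in $I$ gives $a_{i+1}\notin\shadz(v_i)$. The main obstacle, such as it is, is purely bookkeeping: one must consistently invoke $(a_{i+1},t_{i+1})\in I$ rather than the unavailable $(a_{i+1},b_{i+1})$ arrow when arguing in the shifted case.
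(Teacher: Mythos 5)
Your proposal is correct and follows essentially the same route as the paper: the same case split on cycle versus shifted edges, \Cref{cor:left_is_preserved_from_y_to_z}.\ref{cor:left_is_preserved_from_y_to_z:right} for item~\ref{claim:HIO:v:b_left_v}, transitivity with \ref{items-u:left-d} for item~\ref{claim:HIO:v:u_left_v}, \Cref{prop:left_is_preserved_from_y_to_z}.\ref{prop:left_is_preserved_from_y_to_z:left} with the hypotheses from \Cref{claim:HIO:Y} for item~\ref{claim:HIO:v:v_left_b}, and the $A\cap B=\emptyset$ plus $v_i\le d$ argument together with \Cref{prop:comparability_implies_shadow_containment} and \ref{item:instance:not_in_shadow} for item~\ref{claim:HIO:v:a_parallel_v}. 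The only cosmetic difference is that the paper packages the two cases of the last item into a single element $d\in\{b_{i+1},t_{i+1}\}$, which changes nothing substantive.
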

\begin{proofclaim}
Let $i \in E$.
%By~\ref{items-u:left-d}, $u_i$ is left of $b_i$.
%If $\sigma_i$ is a plain edge, then since $(a_i,b_i) \in I$, $b_i$ is left of $v_i$ by~\cref{prop:z_L_b_z_R}.
If $\sigma_i$ is a cycle edge, then since $(a_i,b_i) \in I$, $a_i < v_i \leq b_{i+1}$ in~$P$, and $b_i$ is left of $b_{i+1}$, by~\cref{cor:left_is_preserved_from_y_to_z}.\ref{cor:left_is_preserved_from_y_to_z:right}, $b_i$ is left of $v_i$.
Assume that $\sigma_i$ is a shifted edge.
Since $(a_i,b_i) \in I$, $a_i < v_i \leq t_{i+1}$ in~$P$, and $b_i$ is left of $t_{i+1}$ (by~\ref{items:HIO-cycle}), by~\cref{cor:left_is_preserved_from_y_to_z}.\ref{cor:left_is_preserved_from_y_to_z:right}, $b_i$ is left of $v_i$.
This completes the proof of~\ref{claim:HIO:v:b_left_v}, and also~\ref{claim:HIO:v:u_left_v} since $u_i$ is left of $b_i$ by~\ref{items-u:left-d}.

For the proof of~\ref{claim:HIO:v:v_left_b}, assume that $\sigma_i$ is a shifted edge.
By~\ref{items:HIO-a-parallel-b}, $a_i \parallel b_{i+1}$ in~$P$ and by~\cref{claim:HIO:Y}.\ref{claim:HIO:Y:notin}, $a_i \notin \shadz(b_{i+1})$.
By~\cref{claim:HIO:Y}.\ref{claim:HIO:Y:matching}, $v_i,t_{i+1} \in Y(a_i)$.
Altogether, by~\cref{prop:left_is_preserved_from_y_to_z}.\ref{cor:left_is_preserved_from_y_to_z:left}, $v_i$ is left of $b_{i+1}$, which implies~\ref{claim:HIO:v:v_left_b}.

Let $d = b_{i+1}$ when $\sigma_i$ is a cycle edge and $d = t_{i+1}$ when $\sigma_i$ is a shifted edge.
Note that $(a_{i+1},d) \in I$ (by~\ref{items:HIO-cycle}) and $v_i \leq d$ in~$P$.
In particular, $a_{i+1} \parallel d$ in~$P$ and $a_{i+1} \notin \shadz(d)$ (by~\ref{item:instance:not_in_shadow}).
Since $a_{i+1} \notin B$ and $v_i \in B$, we have $v_i \not\leq a_{i+1}$ in~$P$.
Since $v_i \leq d$ in $P$, we also have $a_{i+1} \not\leq v_i$ in $P$.
Therefore, $a_{i+1} \parallel v_i$ in $P$.
Since $\shadz(v_i) \subset \shadz(d)$ (by~\cref{prop:comparability_implies_shadow_containment}), we also obtain $a_{i+1} \notin \shadz(v_i)$.
This completes the proof of~\ref{claim:HIO:v:a_parallel_v}.
\end{proofclaim}

\begin{comment}
\piotr{Better names?}
For all $a \in A$ and $z \in B$, \jedrzej{$z \in Z(a)$} let $\calM'(a,z)$ be the family of all the paths in $\calM(a)$ whose peak is $z$.
Since every path in $\calM'(a,z)$ starts from $x_0$ and ends in $a$, no path in $\calM'(a,z)$ contains another path from $\calM'(a,z)$ as a subpath.
Therefore, by \cref{obs:ue_ordering_is_usually_linear}, the relation of being left linearly orders $\calM'(a,z)$.
Note also that if $z\in Z(a)$ then $\calM'(a,z)$ is nonempty. 
Thus, let $z\in Z(a)$.
Then, there exist the minimal and the maximal elements in $\calM'(a,z)$. 
Let $M_L'(a,z) \in \calM(a)$ be such that $M_L'(a,z)$ is left of $M$ for every $M \in \calM'(a,z)$ with $M\neq M_L'(a,z)$, and let $M_R'(a,z) \in \calM'(a,z)$ be such that $M_R'(a,z)$ is right of $M$ for every $M \in \calM'(a,z)$ with $M\neq M_R'(a,z)$.
Moreover, let $N_L(a,z) = z[M_L'(a,z)]a$ and $N_R(a,z) = z[M_R'(a,z)]a$.
See~\cref{fig:paths_N_a_z}.

\begin{figure}[!h]
  \begin{center}
    \includegraphics[scale=0.1]{photos/457596887_1061105802031104_7216985247609231664_n.jpg}
  \end{center}
  \caption{
    \fig 
  }
  \label{fig:paths_N_a_z}
\end{figure}
\end{comment}

The following is a simple corollary of \Cref{claim:HIO:v}.
\begin{claim}\label{claim:HIO:left-right}
    For all $i \in [n-1]$ and $j \in E$ with $i \leq j$,
    \begin{enumerate}
        \item $u_i$ is left of $b_j$,\label{claim:HIO:left-right:u_left_b}
        \item $b_i$ is left of $v_j$,\label{claim:HIO:left-right:b_left_v}
        \item $u_i$ is left of $v_j$.\label{claim:HIO:left-right:u_left_v}
    \end{enumerate}
\end{claim}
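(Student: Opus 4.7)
The plan is to derive all three parts of \cref{claim:HIO:left-right} directly from the transitivity of the ``left of'' relation (\cref{prop:left_porders_bs}), combined with \ref{items-u:left-d}, \cref{claim:HIO:v}.\ref{claim:HIO:v:b_left_v}, and the fact that the sequence $b_1,\ldots,b_n$ is linearly ordered by ``left of''. This last fact is a consequence of the sequence $((a_1,b_1),\ldots,(a_n,b_n))$ being a path in $\HIO$: each edge $\sigma_k = ((a_k,b_k),(a_{k+1},b_{k+1}))$ is regular, so $b_k$ is left of $b_{k+1}$, and transitivity extends this to $b_i$ left of $b_j$ whenever $i < j$.

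\textbf{Key steps.} First I would record the preliminary observation that $b_i$ is left of $b_j$ whenever $1 \leq i < j \leq n$, obtained by iterating the regularity of the edges $\sigma_i,\sigma_{i+1},\ldots,\sigma_{j-1}$ and applying \cref{prop:left_porders_bs}.

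For item (1), if $i=j$ then $u_i$ is left of $b_i=b_j$ by \ref{items-u:left-d}; if $i < j$, then \ref{items-u:left-d} gives $u_i$ left of $b_i$ and the preliminary observation gives $b_i$ left of $b_j$, so transitivity yields $u_i$ left of $b_j$.

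For item (2), \cref{claim:HIO:v}.\ref{claim:HIO:v:b_left_v} gives $b_j$ left of $v_j$ (this uses $j \in E$). If $i=j$ we are done; if $i < j$, combine $b_i$ left of $b_j$ (from the preliminary observation) with $b_j$ left of $v_j$ via transitivity.

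For item (3), combine item (1) ($u_i$ left of $b_j$) with $b_j$ left of $v_j$ (\cref{claim:HIO:v}.\ref{claim:HIO:v:b_left_v}) through transitivity to get $u_i$ left of $v_j$.

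\textbf{Main obstacle.} There is essentially no obstacle here; the claim is a bookkeeping corollary that packages the previously established inequalities into one convenient statement for repeated use in what follows. The only thing to be careful about is to distinguish the cases $i=j$ and $i<j$, since the former uses only the earlier items directly while the latter additionally invokes transitivity through the chain $b_i,\ldots,b_j$.
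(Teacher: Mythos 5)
Your proof is correct and matches the paper's argument: both derive the claim by combining \ref{items-u:left-d}, \cref{claim:HIO:v}.\ref{claim:HIO:v:b_left_v}, and the fact that $b_i$ is left of (or equal to) $b_j$ along the $\HIO$-path, then applying transitivity of the \q{left of} relation (\cref{prop:left_porders_bs}). Your explicit case split $i=j$ versus $i<j$ is just a slightly more verbose phrasing of the same bookkeeping.
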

\begin{proofclaim}
    Let $i \in [n-1]$ and $j \in E$ with $i \leq j$.
    By~\ref{items-u:left-d}, $u_i$ is left of $b_i$ and by~\cref{claim:HIO:v}.\ref{claim:HIO:v:b_left_v}, $b_j$ is left of $v_j$.
    Additionally, $b_i$ is left of or is equal to $b_j$.
    Now, the transitivity of the \q{left of} relation (\cref{prop:left_porders_bs}) gives all the items.
\end{proofclaim}

By~\cref{claim:HIO:v}.\ref{claim:HIO:v:u_left_v}, $u_i$ is left of $v_i$ for every $i \in E$.
Moreover, by~\ref{items-u:Z} and the definition of $v_i$, we have $u_i,v_i \in Z(a_i)$.
Therefore, for every $i \in E$, we can define the region
    \[\calR_i = \calR(a_i,u_i,v_i,U_i,V_i).\]
Next, for each $i \in E$, let $q_i$ and $m_i$ be the lower-min and the upper-min of $\calR_i$ respectively, and let $\gamma_{L,i} = x_0[W_L(u_i)]u_i[U_i]m_i$ and $\gamma_{R,i} = x_0[W_R(v_i)]v_i[V_i]m_i$.
See~\Cref{fig:v-elements} again.

\begin{figure}[tp]
  \begin{center}
    \includegraphics{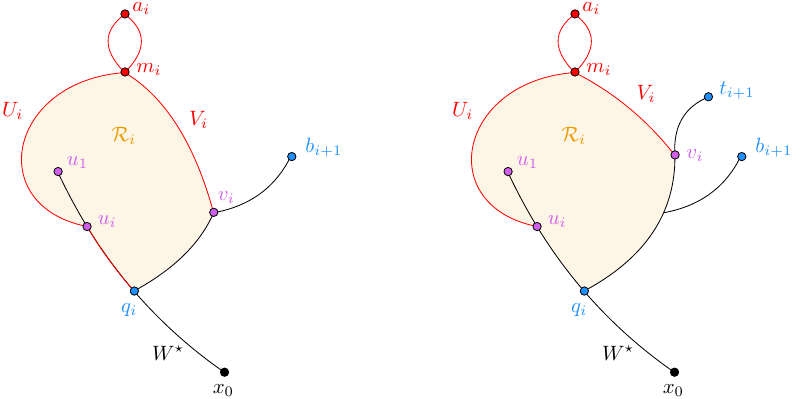}
  \end{center}
  \caption{
    The region $\calR_i$: on the left when $\sigma_i$ is a cycle edge, and on the right when $\sigma_i$ is a shifted edge.
    We prove in~\Cref{claim:HIO:u_i-in-R_j} that $u_i[\spine]u_1$ indeed lies in $\calR_i$ as depicted.
  }
  \label{fig:v-elements}
\end{figure}

\begin{claim}\label{claim:HIO:u_i-in-R_j}
    For all $j \in E$, we have $u_{j}[\spine]u_1\subseteq \Int\calR_j \cup \{u_j\}$. 
    In particular, for all $i\in[n-1]$ and $j \in E$ with $i<j$, we have $u_i\in\Int\calR_j$. 
        %\[u_j[W_L(u_{1})]u_{i} \subset \calR_j \text{ and } u_{i} \in \Int \calR_j.\]
%    For all $i \in [n-1]$ and $j \in E$ with $i < j$, we have, $u_{i} \in \Int \calR_j$.
        %\[u_j[W_L(u_{1})]u_{i} \subset \calR_j \text{ and } u_{i} \in \Int \calR_j.\]
\end{claim}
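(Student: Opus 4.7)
The plan is to show that every element of $u_j[\spine]u_1$ distinct from $u_j$ lies in $\Int\calR_j$, and then upgrade this to a statement about edges and edge interiors, with $u_j$ playing the role of the unique traversed boundary point. The key structural fact driving everything is item~\ref{items-u:path-left} for $j$: the left boundary $\gamma_{L,j} = x_0[\spine]u_j[U_j]m_j$ turns strictly to the left of $\spine$ at $u_j$, which forces the continuation of $\spine$ above $u_j$ into $\calR_j$.

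The decisive auxiliary step is to prove that no element $w$ of $u_j[\spine]u_1$ other than $u_j$ belongs to $W_R(v_j)$. Indeed, such a $w$ would satisfy $u_j < w \le v_j$ in~$P$; since $v_j \in Z(a_j)$, \cref{prop:properties_of_Z}.\ref{prop:properties_of_Z:item:sd_of_z} gives $\sd(v_j) = 0$, hence $W_R(v_j) \subseteq \shadz(v_j)$ and in particular $w \in \shadz(v_j)$. On the other hand, $u_j$ is left of $v_j$ by~\cref{claim:HIO:v}.\ref{claim:HIO:v:u_left_v}, so $u_j \notin \shadz(v_j)$; combined with $u_j \not\le v_j$ and $u_j \le w$, \cref{obs:equivalence_for_shadows} forces $w \notin \shadz(v_j)$, a contradiction.

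With this in hand, I will apply \cref{prop:b_relative_to_region}.\ref{prop:b_relative_to_region:item:inside} to each element $w$ of $u_j[\spine]u_1$ with $w \ne u_j$. Since $q_j \le v_j$, the inclusion $\shadz(q_j) \subseteq \shadz(v_j)$ combined with the same shadow argument rules out $w \in \shadz(q_j)$, while $q_j \le u_j < w$ forces $w \ne q_j$. Because $w$ lies on $\spine = W_L(u_1)$, the path $W_L(w)$ coincides with $x_0[\spine]w$; it shares the prefix $x_0[\spine]u_j$ with $\gamma_{L,j}$ and then continues along $\spine$, while item~\ref{items-u:path-left} for $j$ places the first edge of $U_j$ strictly left of the next $\spine$-edge at $u_j$, so $W_L(w)$ is right of $\gamma_{L,j}$. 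For the right boundary, $w \notin \shadz(v_j)$ excludes $w \le v_j$ by a prefix argument on $\spine$ (otherwise $w$ would be on $W_L(v_j)$ and $\sd(v_j)=0$ would place $w$ in $\shadz(v_j)$), and $v_j \notin \shadz(u_1)$, since $u_1$ is left of $v_j$ by \cref{claim:HIO:left-right}.\ref{claim:HIO:left-right:u_left_v}, together with $\shadz(w) \subseteq \shadz(u_1)$ excludes $v_j \le w$; then \cref{cor:notin_shads_implies_left_or_right} together with $\spine = W_L(u_1)$ being left of $W_L(v_j)$ force $(w, v_j)$ to be a left pair, giving $W_R(w)$ left of $W_R(v_j) \subseteq \gamma_{R,j}$. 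Thus $w \in \Int\calR_j$.

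Finally, each edge of $u_j[\spine]u_1$ whose two endpoints lie in $\Int\calR_j$ has its interior in $\Int\calR_j$ by planarity, and the first $\spine$-edge $e$ leaving $u_j$ has its far endpoint in $\Int\calR_j$ by the previous step; item~\ref{items-u:path-left} places $e$ strictly on the $\calR_j$-side of $\gamma_{L,j}$ at $u_j$, while the auxiliary step rules out $e$ lying on $\gamma_{R,j}$. The main obstacle I anticipate is the corner case $u_j = q_j$, where the angle analysis at $u_j$ has to combine~\ref{item:ordering-around-q} with \cref{prop:W_L-left-of-W_R} applied to the left pair $(u_1, v_j)$, rather than use~\ref{items:leaving_regions:left} directly. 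The \q{in particular} clause then follows at once, because \cref{claim:HIO:u}.\ref{claim:HIO:u:u_j<u_i} gives $u_j < u_i$ in~$P$ for every $i \in [n-1]$ with $i < j$, so each such $u_i$ is a non-$u_j$ element of $u_j[\spine]u_1$ and therefore lies in $\Int\calR_j$.
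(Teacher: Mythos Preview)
Your proof is correct, but it takes a more laborious route than the paper's. The paper applies \cref{prop:b_relative_to_region}.\ref{prop:b_relative_to_region:item:inside} only once, to the single element $u_1$: it checks $u_1 \notin \shadz(q_j)$ via $u_j \notin \shadz(q_j)$ and \cref{obs:equivalence_for_shadows}; it gets $\gamma_{L,j}$ left of $W_L(u_1) = \spine$ directly from~\ref{items-u:path-left}; and it gets $W_R(u_1)$ left of $\gamma_{R,j}$ from $u_1$ being left of $v_j$ (\cref{claim:HIO:left-right}.\ref{claim:HIO:left-right:u_left_v}). Having placed $u_1 \in \Int\calR_j$, the paper then uses a one-line connectedness argument: the path $u_j[\spine]u_1$ cannot meet $q_j[W_R(v_j)]v_j$ (any common element would witness $u_j \le v_j$), cannot meet $q_j[W_L(u_j)]u_j$ except at $u_j$ (directed-cycle contradiction), and cannot meet the exposed portions of $\partial\calR_j$ since all its elements are in $B$; hence the path minus $u_j$ stays in the interior.

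Your approach instead reproves the interior membership vertex by vertex, which forces you to run the ``left pair'' machinery through \cref{cor:notin_shads_implies_left_or_right} for every $w$ to obtain $W_R(w)$ left of $W_R(v_j)$. This is valid but redundant, since the single instance $w = u_1$ together with the connectedness argument already suffices. Two small remarks: your justification ``otherwise $w$ would be on $W_L(v_j)$'' is incomplete as phrased---the clean argument is simply that $w \le v_j$ would give $w \in \shadz(w) \subseteq \shadz(v_j)$ by \cref{prop:comparability_implies_shadow_containment}, contradicting your auxiliary step; and the corner case $u_j = q_j$ you worry about cannot occur, since $q_j \le v_j$ and $u_j \parallel v_j$ force $q_j < u_j$.
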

\begin{proofclaim}
    Let $j \in E$.
    Note that $q_j < u_j < u_1$ in $P$ (by~\cref{claim:HIO:u}.\ref{claim:HIO:u:u_j<u_i}).
    Since $u_j \in \calR_j$, by~\Cref{prop:shad-disjoint-from-calR}, $u_j \notin \shadz(q_j)$.
    In particular, $u_j < u_1$ and $u_j \not\leq q_j$ in~$P$, thus, by~\Cref{obs:equivalence_for_shadows}, we have $u_1 \notin \shadz(q_j)$.
    We argue that $\gamma_{L,j}$ is left of $\spine = W_L(u_1)$ and $W_R(u_1)$ is left of $\gamma_{R,j}$.
    This will show that $u_1 \in \Int \calR_j$ by~\cref{prop:b_relative_to_region}.\ref{prop:b_relative_to_region:item:inside}.
    The path $\gamma_{L,j}$ is left of $\spine$ by~\ref{items-u:path-left}.
    By~\cref{claim:HIO:left-right}.\ref{claim:HIO:left-right:u_left_v}, $u_1$ is left of $v_j$, thus, $W_R(u_1)$ is left of $W_R(v_j)$, and so, $W_R(u_1)$ is left of $\gamma_{R,j}$.
    As noted before, we obtain that $u_1 \in \Int \calR_j$.
    Consider the path $u_j[\spine]u_1$.
    This path is disjoint from $q_j[\gamma_{R,j}]v_j$ as $u_j \parallel v_j$ in~$P$ (by~\Cref{claim:HIO:left-right}.\ref{claim:HIO:left-right:u_left_v}). 
    Moreover, this path does not intersect $q_j[\gamma_{L,j}]u_j$
    in any element other than $u_j$ as this would yield a directed cycle in~$P$.
    Since all elements of $u_j[\spine]u_1$ are in $B$, it follows that $W \subset \Int \calR_j \cup \{u_j\}$, as desired.
    The \q{in particular} statement follows since for all $i \in [n-1]$ with $i < j$, the element $u_i$ lies in $u_j[\spine]u_1$ and $u_i \neq u_j$ since $u_i$ lies in $\spine$ (by~\ref{items-u:W_L}) and $u_j<u_i$ in $P$ (by~\cref{claim:HIO:u}.\ref{claim:HIO:u:u_j<u_i}).
    \end{proofclaim}

In the next claim, we show that for all $i,j \in [n-1]$ with $i < j$, if $\sigma_i$ is of weight $1$, then either $v_j \in \Int \calR_i$ or $v_j \notin \calR_i$, in other words, we show that $v_j\not\in\partial\calR_i$.
Later, we use this to define a $2$-coloring of such pairs of indices.
This coloring will enable us to break down the problem into two subproblems with stronger assumptions.

\begin{claim}\label{claim:HIO:classification}
    For all $i,j \in E$ with $i < j$,
    \begin{enumerate}
        \item $v_j \not\leq v_i$ in~$P$; \label{claim:HIO:classification:v_j_notleq_v_i}
        \item if $\sigma_i$ is a shifted edge, then $v_j \notin \calR_i$;\label{claim:HIO:classification:matching}
        \item if $\sigma_i$ is a cycle edge, then $v_j \notin \partial\calR_i$; \label{claim:HIO:classification:cycle-partial} 
        \item if $\sigma_i$ is a cycle edge and $v_j \in \Int \calR_i$, then $v_i$ lies in $W_R(v_j)$; \label{claim:HIO:classification:cycle-int-w}
        \item if $\sigma_i$ is a cycle edge and $v_j \in \Int \calR_i$, then $v_i$ lies in $W_R(b_{i+1})$. \label{claim:HIO:classification:cycle-int-b}
    \end{enumerate}
\end{claim}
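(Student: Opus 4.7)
The plan is to prove the five parts in the order (1), (2), (3), (4), (5), each building on its predecessors. Part (1) is the foundational step: suppose $v_j \leq v_i$ in $P$. In the cycle case, $v_j \leq v_i \leq b_{i+1}$ gives $v_j \in \shadz(b_{i+1})$ via \cref{prop:comparability_implies_shadow_containment} applied to $v_j \leq b_{i+1}$, together with the fact that $v_j \in \shadz(v_j)$ (since $\sd(v_j) = 0$ by \cref{prop:properties_of_Z}.\ref{prop:properties_of_Z:item:sd_of_z}). But \cref{claim:HIO:v}.\ref{claim:HIO:v:b_left_v} applied to $j$ says $b_j$ is left of $v_j$, and regularity of the sequence gives $b_{i+1}$ left of $b_j$ (or $b_{i+1} = b_j$ when $j=i+1$), so $b_{i+1}$ is left of $v_j$; by the definition of ``left of'' this forces $v_j \notin \shadz(b_{i+1})$, a contradiction. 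The shifted case is analogous, using $v_i \leq t_{i+1}$ combined with $t_{i+1}$ left of $b_{i+1}$ (\ref{items:HIO-t-left-b}) and transitivity of the ``left of'' order to obtain $t_{i+1}$ left of $v_j$.

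Parts (2) and (3) follow cleanly. For (2), in the shifted case apply \cref{prop:left_is_preserved_from_y_to_z}.\ref{prop:left_is_preserved_from_y_to_z:right} with $a = a_i$, $b = v_j$, $y = t_{i+1}$, $y' = v_i$: the prerequisites $a_i \parallel v_j$ and $a_i \notin \shadz(v_j)$ follow from \cref{claim:HIO:left-right}.\ref{claim:HIO:left-right:b_left_v} together with \cref{prop:dangerous-implies-in-Y}, and $t_{i+1}, v_i \in Y(a_i)$ hold by \cref{claim:HIO:Y}.\ref{claim:HIO:Y:matching} and $v_i \in Z(a_i)$. The conclusion ``$v_i$ left of $v_j$'' then yields $v_j \notin \calR_i$ via \cref{cor:sandwitch_b_in_region}.\ref{cor:sandwitch_b_in_region:out}. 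For (3) in the cycle case, the $B$-elements of $\partial \calR_i$ form the two chains $q_i[W_L(u_i)]u_i$ and $q_i[W_R(v_i)]v_i$; membership on the $W_R$-chain forces $v_j \leq v_i$, contradicting Part (1), while membership on the $W_L$-chain places $v_j$ on $W_L(u_i)$. But $u_j$ also lies on $\spine$ (item \ref{items-u:W_L}) and $W_L(u_j)$ is a subpath of $W_L(u_i)$ (since $u_j < u_i$ by \cref{claim:HIO:u}.\ref{claim:HIO:u:u_j<u_i}), so $u_j$ and $v_j$ would be comparable in $P$, contradicting \cref{claim:HIO:v}.\ref{claim:HIO:v:u_left_v} and the incomparability of left pairs (\cref{prop:shortcuts}).

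Parts (4) and (5) analyse $v_j$ in the interior. For (4), I would apply \cref{prop:elements-not-in-shad-w}.\ref{prop:elements-not-in-shad-w:left} to the region tuple associated with $\calR_i$, with $c = a_j$, $z = v_j$, $w = v_i$, and an exposed witnessing path $W$ from $a_j$ to $v_j$ contained in $\calR_i$; the existence of such $W$ uses \cref{claim:HIO:u_i-in-R_j} to locate $a_j$ in $\Int\calR_i$, then a planar argument ruling out crossings of the exposed segments $U_i \cup V_i$ via the incomparabilities forced by $a_j \parallel b_i$ and $a_i \parallel b_j$. After verifying $a_j \notin \shadz(v_i)$ from dangerousness of $(a_j,b_j)$, parts (ii) and (iii) of \cref{prop:elements-not-in-shad-w}.\ref{prop:elements-not-in-shad-w:left} combined with \cref{prop:properties_of_Z}.\ref{prop:properties_of_Z:item:not_interior_of_y} applied to $a_i$, $v_j$, $v_i$ (using $a_i \notin \shadz(v_j)$ from \cref{claim:HIO:left-right}.\ref{claim:HIO:left-right:b_left_v}) force $v_i \in W_R(v_j)$. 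For (5), since $a_i \notin \shadz(b_{i+1})$ by \cref{prop:dangerous-implies-in-Y} applied to $(a_i,b_i) \in I$ and $b_i$ left of $b_{i+1}$, \cref{prop:properties_of_Z}.\ref{prop:properties_of_Z:item:boundary_of_y} gives $v_i \in \partial\shadz(b_{i+1}) \subseteq W_L(b_{i+1}) \cup W_R(b_{i+1})$. To exclude $W_L(b_{i+1})$, combine Part (4) (so $W_R(v_i)$ is a subpath of $W_R(v_j)$) with ``$b_{i+1}$ left of $v_j$'' (whence $W_L(b_{i+1})$ is left of $W_L(v_j)$) and apply \cref{prop:W_L-left-of-W_R} at the common element $v_i$ to derive an inconsistent edge ordering around $v_i$, forcing $v_i \in W_R(b_{i+1})$. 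The main obstacle is Part (4), namely producing a well-controlled exposed path inside $\calR_i$ so that \cref{prop:elements-not-in-shad-w} applies; the rest reduces to clean shadow and leftmost/rightmost bookkeeping.
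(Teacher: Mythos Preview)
Your arguments for parts (i)--(iii) are essentially correct, though for (ii) you take an unnecessary detour: to obtain ``$v_i$ left of $v_j$'' in the shifted case, the paper simply chains \cref{claim:HIO:v}.\ref{claim:HIO:v:v_left_b} ($v_i$ left of $b_{i+1}$) with $b_{i+1}$ left of~$v_j$ via transitivity, avoiding your call to \cref{prop:left_is_preserved_from_y_to_z}, whose hypothesis $a_i \parallel v_j$ you have not established.

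Your approach to (iv), however, has a genuine gap. You invoke \cref{claim:HIO:u_i-in-R_j} to place $a_j$ in $\Int\calR_i$, but that claim gives the \emph{opposite} containment: it asserts $u_i \in \Int\calR_j$ for $i<j$, not that anything with index~$j$ lies in~$\calR_i$. In fact there is no reason $a_j$ must lie in $\calR_i$, and your claimed incomparability ``$a_j \parallel b_i$'' is false: by \cref{claim:HIO:u}.\ref{claim:HIO:u:u_j<u_i} and~\ref{items-u:u<pi} we have $a_j < u_j \le u_{i+1} \le \pi(b_i) \le b_i$ in~$P$. So the exposed path $V_j$ may well cross $U_i$ or $V_i$, you cannot produce an exposed path from $a_j$ to $v_j$ inside $\calR_i$, and \cref{prop:elements-not-in-shad-w} is inapplicable here.

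The paper's argument for (iv) is direct and avoids all of this: from $v_j \in \Int\calR_i$, \cref{prop:b_relative_to_region}.\ref{prop:b_relative_to_region:item:inside} gives $W_R(v_j)$ left of $\gamma_{R,i}$. If $\gce(W_R(v_j),\gamma_{R,i}) < v_i$, then $W_R(v_j)$ is left of $W_R(v_i)$; extending $W_R(v_i)$ by a witnessing path from $v_i$ to $b_{i+1}$ shows $W_R(v_j)$ is left of $W_R(b_{i+1})$, contradicting $b_{i+1}$ left of~$v_j$ (\cref{claim:HIO:left-right}.\ref{claim:HIO:left-right:b_left_v}). Part (v) then follows by the same comparison: if $v_i \notin W_R(b_{i+1})$ then $W_R(v_i)$ is left of $W_R(b_{i+1})$, which is left of $W_R(v_j)$, contradicting~(iv). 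No shadow-boundary analysis or edge-ordering argument at~$v_i$ is needed.
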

\begin{proofclaim}
    See~\cref{fig:vj-in-or-out} for an illustration.
    Let $i,j \in E$ with $i < j$.
    Suppose to the contrary that $v_j \leq v_i$ in~$P$.
    Let $d_{i+1} = b_{i+1}$ when $\sigma_i$ is a cycle edge and $d_{i+1} = t_{i+1}$ when $\sigma_i$ is a shifted edge. 
    If $i + 1 = j$ then  
    $a_{i+1} < v_{i+1} \leq v_i \leq d_{i+1}$ in~$P$, which is a clear contradiction. 
    Thus, we assume $i+1<j$.
    Now we have that $d_{i+1}$ is left of $b_j$ (by~\ref{items:HIO-t-left-b} when $d_{i+1} = t_{i+1}$).
    Since $v_j \leq v_i$ in~$P$, we have, $v_j \in \shadz(v_i) \subset \shadz(d_{i+1})$ (by~\cref{prop:comparability_implies_shadow_containment}).
    By~\cref{claim:HIO:v}.\ref{claim:HIO:v:b_left_v}, $b_j$ is left of $v_j$, thus, $v_j \notin \shadz(b_j)$. 
    Therefore, we can apply~\cref{prop:in_one_shad_but_not_other_then_left}.\ref{prop:in_one_shad_but_not_other_then_left:left}, to obtain that $v_j$ is left of $b_j$, which is again a contradiction that completes the proof of~\ref{claim:HIO:classification:v_j_notleq_v_i}.

    For the proof of~\ref{claim:HIO:classification:matching} observe that if $\sigma_i$ is a shifted edge, then $v_i$ is left of $b_{i+1}$ (by~\cref{claim:HIO:v}.\ref{claim:HIO:v:v_left_b}), $b_{i+1}$ is either left of $b_j$ or equal to $b_j$, and $b_j$ is left of $v_j$ (by~\cref{claim:HIO:v}.\ref{claim:HIO:v:b_left_v}), hence, $v_i$ is left of $v_j$.
    Therefore, by~\cref{cor:sandwitch_b_in_region}.\ref{cor:sandwitch_b_in_region:out}, $v_j \notin \calR_i$.

    From now on, assume that $\sigma_i$ is a cycle edge.
    All elements of $\partial\calR_i$ that are in $B$ are in $W_L(u_i) \cup W_R(v_i)$.
    Since $u_i \parallel v_j$ in~$P$ (by~\cref{claim:HIO:left-right}.\ref{claim:HIO:left-right:u_left_v}), $v_j$ does not lie in $W_L(u_i)$.
    By~\ref{claim:HIO:classification:v_j_notleq_v_i}, $v_j$ does not lie in $W_R(v_i)$.
    This completes the proof of~\ref{claim:HIO:classification:cycle-partial}.

    \begin{figure}[tp]
      \begin{center}
        \includegraphics{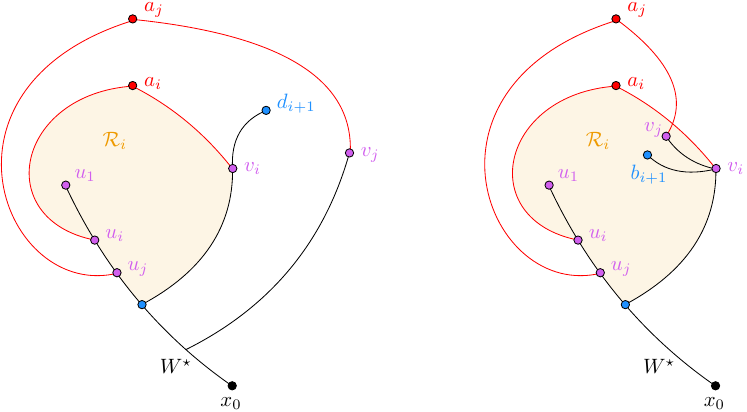}
      \end{center}
      \caption{
        An illustration of the statement of~\Cref{claim:HIO:classification}.
        Note that both cases $v_j \notin \calR_i$ and $v_j \in \calR_i$ are possible.
        However, the latter is possible only in the case of a cycle edge. 
      }
      \label{fig:vj-in-or-out}
    \end{figure}

    From now on, assume also that $v_j \in \Int \calR_i$.
    % Version that uses more "recent" statements:
    % Since $(a_i,b_i)\in I$, we have that $(a_i,b_i)$ is dangerous (by~\ref{item:instance:dangerous}). 
    % Recall that either $i+1=j$ so $b_{i+1}=b_j$, or $b_{i+1}$ is left of $b_j$, which is left of $w_j$ (by~\Cref{claim:HIO:v-left-d-left-w}). 
    % In both cases, $b_{i+1}$ is left of $w_j$, hence, by~\Cref{prop:d_not_in_shadow_when_dangerous} 
    % we obtain $a_i\not\in \shadz(w_j)$. 
    % By~\Cref{prop:properties_of_Z}.\ref{prop:properties_of_Z:item:not_interior_of_y} we conclude that $w_i$ does not lie in the interior of $\shadz(w_j)$. 
    % Thus, either $w_i\not\in\shadz(w_j)$ or $w_i<w_j$ in~$P$. 
    % We are going to show that the former leads to a contradiction. 
    % Suppose that $w_i\notin\shadz(w_j)$. 
    % Since $w_i\in \shadz(b_{i+1})$ (by~\cref{prop:comparability_implies_shadow_containment}) and $b_{i+1}$ is left of $w_j$, \Cref{prop:in_one_shad_but_not_other_then_left}.\ref{prop:in_one_shad_but_not_other_then_left:left} implies that $w_i$ left of $w_j$.
    % However, since $w_j \in \Int \calR_i$, this is a contradiction with \cref{cor:sandwitch_b_in_region}.\ref{cor:sandwitch_b_in_region:int}.
    Let $w = \gce(W_R(v_j),\gamma_{R,i})$.
    In particular, $w$ lies in $W_R(v_i)$.
    If $w = v_i$, then the assertion holds.
    Thus, suppose to the contrary that $w < v_i$ in~$P$.
    Since $v_j \in \Int \calR_i$, by~\cref{prop:b_relative_to_region}.\ref{prop:b_relative_to_region:item:inside}, $W_R(v_j)$ is left of $\gamma_{R,i}$, and so, of $W_R(v_i)$.
    Let $W$ be a witnessing path from $v_i$ to $b_{i+1}$ in~$P$.
    Note that $x_0[W_R(v_i)]v_i[W]b_{i+1}$ is either left of or equal to $W_R(b_{i+1})$.
    It follows that $W_R(v_j)$ is left of $W_R(b_{i+1})$.
    However, $b_{i+1}$ is left of $v_j$ by~\cref{claim:HIO:left-right}.\ref{claim:HIO:left-right:b_left_v}, thus, we obtained a contradiction.
    This proves~\ref{claim:HIO:classification:cycle-int-w}.

    %Again, assume that $\sigma_i$ is a cycle edge and $v_j \in \Int \calR_i$.
    The path $x_0[W_R(v_i)]v_i[W]b_{i+1}$ is either left of or equal to $W_R(b_{i+1})$.
    Observe that either $v_i$ lies in $W_R(b_{i+1})$ or $W_R(v_i)$ is left of $W_R(b_{i+1})$.
    Thus, suppose the latter.
    Recall that $b_{i+1}$ is left of $v_j$, and in particular, $W_R(b_{i+1})$ is left of $W_R(v_j)$.
    We obtain that $W_R(v_i)$ is left of $W_R(v_j)$, which is false by~\ref{claim:HIO:classification:cycle-int-w}.
    This completes the proof of~\ref{claim:HIO:classification:cycle-int-b}.
\end{proofclaim}

Recall that $|E| \geq m$ and let $E_0$ be the $m$ least integers of $E$.
Let $i,j \in E_0$ with $i < j$.
By~\cref{claim:HIO:classification}, either $v_j \in \Int \calR_i$ or $v_j \notin \calR_i$. 
See~\Cref{fig:vj-in-or-out} again.
Let
\[
\phi(i,j) = \begin{cases}
                \colorin &\textrm{if $v_j \in \Int \calR_i$,}\\
                \colorout &\textrm{if $v_j \notin \calR_i$.} 
            \end{cases}
\]
The mapping $\phi$ can be seen as a $2$-coloring  of $\binom{E_0}{2}$.
% This coloring is transitive in the following sense.
% Let $i,j,k \in E$ with $i < j < k$.
% By~\cref{claim:HIO:transitivity}.\ref{claim:HIO:transitivity:in}, if $\phi(i,j) = \phi(j,k) = \colorin$, then $\phi(i,k) = \colorin$.
% Similarly, by~\cref{claim:HIO:transitivity}.\ref{claim:HIO:transitivity:out}, if $\phi(i,j) = \phi(j,k) = \colorout$, then $\phi(i,k) = \colorout$. \jedrzej{This sentence is probably to remove.}
Consider two auxiliary digraphs $\Hin$ and $\Hout$ with vertex sets $E_0$.
For all $i,j \in E_0$, $(i,j)$ is an edge in $\Hin$ if $i < j$ and $\phi(i,j) = \colorin$, and  $(i,j)$ is an edge in $\Hout$ if $i < j$ and $\phi(i,j) = \colorout$.
Note that $\Hin$ and $\Hout$ are acyclic.
Next, for every $i \in E_0$, let
    \[\phi'(i) = (\maxsp(\Hin, i), \maxsp(\Hout, i)).\]
We claim that $\phi'$ is injective.
Indeed, let $i,j \in E_0$ with $i < j$.
If $\phi(i,j) = \colorin$, then for every path $j_1\cdots j_m$ in $\Hin$ with $j_1 = j$, $i j_1 \cdots j_m$ is also a path in $\Hin$.
It follows that $\maxsp(\Hin, i) > \maxsp(\Hin, j)$.
Analogously, if $\phi(i,j) = \colorout$, we obtain $\maxsp(\Hout, i) > \maxsp(\Hout, j)$.
This shows that $\phi'$ is injective.

Since $\phi'$ is an injective function from $E_0$ to $[\maxpath(\Hin)]\times[\maxpath(\Hout)]$ we have 
\begin{align*}
    |E_0| \leq \maxpath(\Hin) \cdot \maxpath(\Hout). 
\end{align*}
In the next claim, we show that $\maxpath(\Hin) \leq \se_P(I)$.
This together with $m = |E_0|$ and the inequality above imply
\begin{align}\label{eq:max-path-Hout}
     2(2\se_P(I)+6) = m \slash \se_P(I) \leq \maxpath(\Hout).
\end{align}

\begin{figure}[tp]
  \begin{center}
    \includegraphics{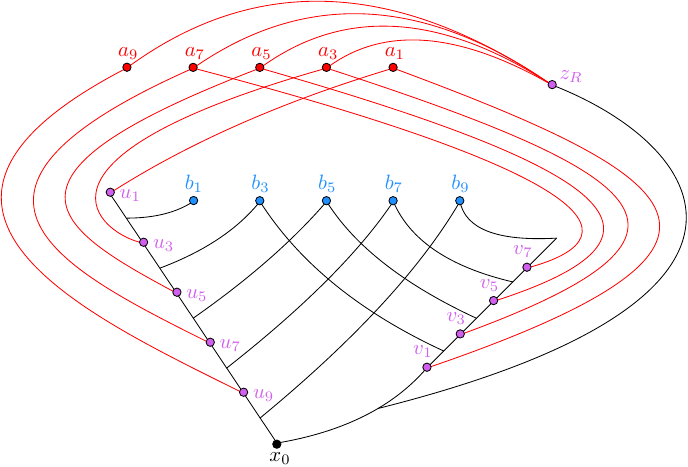}
  \end{center}
  \caption{
  $13579$ is a path in $\Hin$.
  We do not draw other elements of the poset to keep the figure clearer.
  Note that $\{(a_i,b_i) : i \in \{1,3,5,7,9\}\}$ induces a standard example in $P$.
  }
  \label{fig:Hin-path}
\end{figure}

\begin{claim}\label{claim:HIO:maxpath_out_leq_se}
    $\maxpath(\Hin) \leq \se_P(I)$.
\end{claim}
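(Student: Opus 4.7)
The plan is to show that every directed path $i_1 < i_2 < \cdots < i_k$ in $\Hin$ gives rise to a subset $\{(a_{i_\alpha}, b_{i_\alpha}) : \alpha \in [k]\}$ of $I$ that induces a standard example of size $k$ in $P$; the desired inequality $\maxpath(\Hin) \le \se_P(I)$ then follows immediately from the definition of $\se_P(I)$.

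First I would collect what the path already provides. Claim~\ref{claim:HIO:classification}.\ref{claim:HIO:classification:matching} forces $\sigma_{i_\alpha}$ to be a cycle edge in $\HIO$ for every $\alpha < k$, so each $((a_{i_\alpha}, b_{i_\alpha}), (a_{i_\alpha+1}, b_{i_\alpha+1}))$ is a regular In-Out alternating cycle in $P$; in particular $a_{i_\alpha} < b_{i_\alpha+1}$ and $a_{i_\alpha+1} < b_{i_\alpha}$ in $P$. Iterating Claim~\ref{claim:HIO:classification}.\ref{claim:HIO:classification:cycle-int-w} along the path also yields a chain $v_{i_1} \le v_{i_2} \le \cdots \le v_{i_k}$ in $P$, with $W_R(v_{i_\alpha})$ a prefix of $W_R(v_{i_\beta})$ whenever $\alpha < \beta$. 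Claim~\ref{claim:HIO:u_i-in-R_j} provides a complementary symmetric piece: $u_{i_\alpha} \in \Int \calR_{i_\beta}$ for all $\alpha < \beta$.

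The heart of the argument would be a cross-comparability lemma: for every $\alpha < \beta$ in $[k]$, we have $a_{i_\alpha} < b_{i_\beta}$ and $a_{i_\beta} < b_{i_\alpha}$ in $P$. This I would prove in two stages. First, I propagate the interior condition along the path to conclude $v_{i_\beta} \in \Int \calR_{i_\alpha}$ for every $\alpha < \beta$, by combining the hypothesis on consecutive edges with Claim~\ref{claim:HIO:u_i-in-R_j} and the path-containment Proposition~\ref{prop:uv_in_R_implies_path}. Second, knowing $v_{i_\beta} \in \Int \calR_{i_\alpha}$, I analyze the exposed witnessing path $V_{i_\beta}$ from $a_{i_\beta}$ to $v_{i_\beta}$: either $a_{i_\beta}$ already lies inside $\calR_{i_\alpha}$, or $V_{i_\beta}$ must cross $\partial \calR_{i_\alpha}$ along one of the non-$B$ arcs $U_{i_\alpha}$ or $V_{i_\alpha}$ (the $B$-parts of the boundary being ruled out since $V_{i_\beta}$ is exposed). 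Coupled with the sandwich location of $b_{i_\beta}$ inside $\calR_{i_\alpha}$ (via Corollary~\ref{cor:sandwitch_b_in_region} and the left-of ordering from Claim~\ref{claim:HIO:left-right}) together with the In-Out structure of $\sigma_{i_\alpha}$, this should yield the required cross-comparabilities.

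I expect the main obstacle to be the second stage, specifically the deduction of $a_{i_\beta} < b_{i_\alpha}$ in $P$ from $v_{i_\beta} \in \Int \calR_{i_\alpha}$. A direct crossing analysis on $V_{i_\beta}$ only yields $a_{i_\beta} < u_{i_\alpha}$ or $a_{i_\beta} < v_{i_\alpha} \le b_{i_\alpha+1}$, and since $b_{i_\alpha}$ and $b_{i_\alpha+1}$ are incomparable in $P$, neither by itself suffices. Closing this gap will require a more refined argument exploiting that $b_{i_\alpha}$ itself lies in $\Int \calR_{i_\alpha}$ (as the left-region target of $\sigma_{i_\alpha}$), likely by instead tracing a witnessing path from $a_{i_\beta}$ to $b_{i_\beta}$ and using Proposition~\ref{prop:path_does_not_leave_regions} to force it through elements comparable with $b_{i_\alpha}$. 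Once the cross-comparability lemma is established, the set $\{(a_{i_\alpha}, b_{i_\alpha})\}$ is by construction a subset of $I$ inducing a standard example of size $k$ in $P$, completing the proof.
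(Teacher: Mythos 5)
Your overall plan coincides with the paper's: a directed path $i_1<\dots<i_p$ in $\Hin$ should produce a standard example $\{(a_{i_\alpha},b_{i_\alpha})\}$ in $I$, so $\maxpath(\Hin)\le\se_P(I)$. However, both halves of your cross-comparability lemma are left with genuine gaps, and in each case the missing step is exactly where the paper's proof does something you did not anticipate. For the ``backward'' comparabilities $a_{i_\beta}<b_{i_\alpha}$ ($\alpha<\beta$), which you single out as the main obstacle, no region argument is needed at all: the spine machinery gives it in one line, $a_{i_\beta}<u_{i_\beta}\le u_{i_\alpha+1}\le\pi(b_{i_\alpha})\le b_{i_\alpha}$ in $P$, using \ref{items-u:Z}, \cref{claim:HIO:u}.\ref{claim:HIO:u:u_j<u_i}, \ref{items-u:u<pi}, and the fact that $\pi(b_{i_\alpha})$ lies on $W_L(b_{i_\alpha})$. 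Your proposed repair---tracing a witnessing path from $a_{i_\beta}$ to $b_{i_\beta}$ and invoking \cref{prop:path_does_not_leave_regions}---cannot work as stated, since that proposition requires all elements of the path to lie in $B$ (and both endpoints in the region), while $a_{i_\beta}\notin B$; nor is it clear how confinement to $\calR_{i_\alpha}$ would ever force comparability with $b_{i_\alpha}$.

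For the ``forward'' comparabilities $a_{i_\alpha}<b_{i_\beta}$, your crossing analysis of $V_{i_\beta}$ against $\partial\calR_{i_\alpha}$ at best yields $a_{i_\alpha}\le w\le v_{i_\beta}$ for a crossing point $w$ on $U_{i_\alpha}\cup V_{i_\alpha}$, i.e.\ $a_{i_\alpha}<v_{i_\beta}$; but $v_{i_\beta}$ is incomparable with $b_{i_\beta}$ (indeed $b_{i_\beta}$ is left of $v_{i_\beta}$ by \cref{claim:HIO:v}.\ref{claim:HIO:v:b_left_v}), and $v_{i_\beta}\le b_{i_\beta+1}$ (or $t_{i_\beta+1}$) does not transfer to $b_{i_\beta}$, so the needed inequality is not reached. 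Moreover, your appeal to \cref{cor:sandwitch_b_in_region} to place $b_{i_\beta}$ in $\Int\calR_{i_\alpha}$ would require ``$b_{i_\beta}$ left of $v_{i_\alpha}$'', which is not available: \cref{claim:HIO:left-right} only gives the opposite orientation ($b_i$ left of $v_j$ for $i\le j$). The paper closes this direction by a different mechanism that your sketch omits: it combines the $W_R$-prefix chain you did extract from \cref{claim:HIO:classification}.\ref{claim:HIO:classification:cycle-int-w} with \cref{claim:HIO:classification}.\ref{claim:HIO:classification:cycle-int-b} (each $v_{i_j}$ lies on $W_R(b_{i_j+1})$), and then applies \cref{prop:sandwiched_paths} to the three pairwise $x_0$-consistent paths $W_R(b_{i_\alpha+1})$, $W_R(b_{i_\beta})$, $W_R(b_{i_\beta+1})$ to conclude that the \emph{earlier} element $v_{i_\alpha}$ already lies on $W_R(b_{i_\beta})$, whence $a_{i_\alpha}<v_{i_\alpha}\le b_{i_\beta}$ in $P$. (Propagating $v_{i_\beta}\in\Int\calR_{i_\alpha}$ to non-consecutive indices, which your first stage calls for, is never needed in this argument.) Without these two ingredients your proof does not go through.
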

\begin{proofclaim}
    Let $i_1\cdots i_p$ be a path in $\Hin$. 
    An example of such a path is shown in~\cref{fig:Hin-path}.
    We argue that $\set{(a_{i_j},b_{i_j}) : j\in[p]}$ induces a standard example in~$P$, i.e., $a_{i_k} < b_{i_\ell}$ in~$P$ for all distinct $k,\ell\in[p]$.
    This will show that $p \leq \se_P(I)$
    By~\cref{claim:HIO:u}.\ref{claim:HIO:u:u_j<u_i} and~\ref{items-u:u<pi}, for all $k,\ell \in [p]$ with $k < \ell$, we have $a_{i_\ell} < u_{i_\ell} \leq u_{i_k+1} \leq \pi(b_{i_k}) \leq b_{i_k}$ in~$P$.
    It remains to argue that 
    $a_{i_k} < b_{i_\ell}$ in~$P$ for all $k,\ell \in [p]$ with $k < \ell$.

    Let $j \in [p-1]$. 
    Since $(i_j,i_{j+1})$ is an edge in $\Hin$, we have $v_{i_{j+1}} \in \Int \calR_{i_j}$.
    By~\cref{claim:HIO:classification}.\ref{claim:HIO:classification:matching} and since $i_j \in E$ (that is, $\sigma_{i_j}$ is of weight $1$), $\sigma_{i_j}$ is a cycle edge.
    Therefore, by~\cref{claim:HIO:classification}.\ref{claim:HIO:classification:cycle-int-w}, $v_{i_j}$ lies in $W_R(v_{i_{j+1}})$.
    In other words, $W_R(v_{i_j})$ is a subpath of $W_R(v_{i_{j+1}})$.
    Now, let $k,\ell \in [p]$ with $k < \ell$.
    By transitivity of \q{being a subpath} relation, $W_R(v_{i_k})$ is a subpath of $W_R(v_{i_{\ell}})$, and in particular, $v_{i_k}$ lies in $W_R(v_{i_{\ell}})$.
    By~\cref{claim:HIO:classification}.\ref{claim:HIO:classification:cycle-int-b}, $v_{i_k}$ lies in $W_R(b_{i_k+1})$ and $v_{i_\ell}$ lies in $W_R(b_{i_\ell+1})$.
    If $i_k+1 = i_\ell$, we obtain $a_{i_k} < v_{i_k} \leq b_{i_k+1} = b_{i_\ell}$ in~$P$, which yields the assertion, thus, suppose that $i_k+1 < i_\ell$. 
    Since $v_{i_k}$ lies in $W_R(v_{i_{\ell}})$ and $v_{i_\ell}$ lies in $W_R(b_{i_\ell+1})$, $v_{i_k}$ lies in $W_R(b_{i_\ell+1})$.
    Consider the paths $W_R(b_{i_k+1})$, $W_R(b_{i_\ell})$, and $W_R(b_{i_\ell+1})$.
    They are pairwise $x_0$-consistent (by~\cref{prop:W-consistent}.\ref{prop:W-consistent:right}).
    Moreover, since $b_{i_k+1}$ is left of $b_{i_\ell}$ and $b_{i_\ell}$ is left of $b_{i_\ell+1}$, we have that $W_R(b_{i_k+1})$ is left of $W_R(b_{i_\ell})$ and $W_R(b_{i_\ell})$ is left of $W_R(b_{i_\ell+1})$.
    We argued that $v_{i_k}$ lies in $W_R(b_{i_k+1})$ and $W_R(b_{i_\ell+1})$.
    Therefore, by~\cref{prop:sandwiched_paths}, $v_{i_k}$ lies in $W_R(b_{i_\ell})$.
    This implies that $a_{i_k} < v_{i_k} \leq b_{i_\ell}$ in~$P$, which ends the proof.
\end{proofclaim}

In the next part of the proof, we assign a region to each edge in $\Hout$.
These regions are helpful in studying the structure of paths in $\Hout$.
Let $(i,j)$ be an edge in $\Hout$.\footnote{In fact, for many statements and definitions we only need $i,j \in E$ with $i < j$. However, there is no point in such generality. We mark the moment when we use the fact that $(i,j)$ is an edge in $\Hout$ with another footnote.}
In the next paragraph, we define $v_{i,j}$ and a witnessing path $V_{i,j}$ from $a_{i+1}$ to $v_{i,j}$ in~$P$.
We split the definition into cases, where the case with a higher number excludes cases with lower numbers.
See~\Cref{fig:def-v-ij}.
%for the depiction of the non-trivial cases.

Case 1: $j = i+1$.

We set $v_{i,j} = v_{i+1} = v_j$ and $V_{i,j} = V_{i+1}=V_{j}$.

Case 2: $U_{i+1} \cap V_j \neq \emptyset$.

Let $v$ be the first element of $u_{i+1}[U_{i+1}]a_{i+1}$ in $V_j$.
We set $v_{i,j} = v_j$ and $V_{i,j} = a_{i+1}[U_{i+1}]v[V_{j}]v_j$.

Case 3: $a_{i+1}[M_R(a_{i+1})]z_R(a_{i+1}) \cap V_j \neq \emptyset$.

Let $v$ be the first element of $a_{i+1}[M_R(a_{i+1})]z_R(a_{i+1})$ in $V_j$.
We set $v_{i,j} = v_j$ and $V_{i,j} = a_{i+1}[M_R(a_{i+1})]v[V_{j}]v_j$.

Case 4: otherwise.

We set $v_{i,j} = z_R(a_{i+1})$ and $V_{i,j} = a_{i+1}[M_R(a_{i+1})]z_R(a_{i+1})$.

This completes the definition of $v_{i,j}$ and $V_{i,j}$.
Note that $v_{i,j} \in \{v_j,z_R(a_{i+1})\}$ and $V_{i,j}$ is an exposed witnessing path from $a_{i+1}$ to $v_{i,j}$ in~$P$.
Moreover, we have the following.
\begin{claim}\label{claim:HIO:u_left_v}
    Let $(i,j)$ be an edge in $\Hout$.
    \begin{enumerate}
        \item $u_i$ is left of $v_{i,j}$, \label{claim:HIO:u_left_v:u_i}
        \item $u_{i+1}$ is left of $v_{i,j}$, \label{claim:HIO:u_left_v:u_i+1}
        \item $v_{i,j} \in Z(a_{i+1})$. \label{claim:HIO:u_left_v:Z}
    \end{enumerate}
\end{claim}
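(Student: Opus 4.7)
The plan is to split the analysis by which of the four defining cases produced $v_{i,j}$, since $v_{i,j}\in\set{v_j, z_R(a_{i+1})}$.

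For Items (i) and (ii), I work purely by transitivity of the \q{left of} relation (\cref{prop:left_porders_bs}). When $v_{i,j}=v_j$ (Cases 1--3), \cref{claim:HIO:left-right}.\ref{claim:HIO:left-right:u_left_v} applied to the pairs $(i,j)$ and $(i+1,j)$ yields respectively that $u_i$ and $u_{i+1}$ are left of $v_j$; the requirement $i+1\le j$ is satisfied because $(i,j)$ is an edge in $\Hout$. When $v_{i,j}=z_R(a_{i+1})$ (Case 4), \cref{prop:z_L_b_z_R} applied to $(a_{i+1},b_{i+1})\in I$ gives that $b_{i+1}$ is left of $z_R(a_{i+1})$; combined with $u_{i+1}$ left of $b_{i+1}$ from~\ref{items-u:left-d} this yields (ii), and chaining with $u_i$ left of $b_i$ (again~\ref{items-u:left-d}) together with $b_i$ left of $b_{i+1}$ (regularity of $\sigma_i$) yields (i).

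The bulk of the work is Item (iii). The path $V_{i,j}$ is a concatenation of pieces that, by construction, do not visit $B$ internally, so it is an exposed witnessing path from $a_{i+1}$ to $v_{i,j}$; hence it suffices to show $a_{i+1}\notin\shadz(v_{i,j})$. In Case 1, $v_{i,j}=v_j$ with $j=i+1$, so $v_j\in Z(a_j)=Z(a_{i+1})$ is immediate. In Case 4, $v_{i,j}=z_R(a_{i+1})\in Z(a_{i+1})$ by definition of $z_R$. The interesting situations are Cases 2 and 3: here the initial segment of $V_{i,j}$ joining $a_{i+1}$ to the meeting vertex $v$ lies along either $U_{i+1}$ or along $a_{i+1}[M_R(a_{i+1})]z_R(a_{i+1})$, both of whose interiors avoid $B$ (the former because $U_{i+1}$ is exposed, the latter because it is the exposed part of $M_R(a_{i+1})$ from $a_{i+1}$ to its peak). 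The remaining segment $v[V_j]v_j$ also avoids $B$ except at the endpoint $v_j$, since $V_j$ is exposed. Consequently the concatenation $a_{i+1}[V_{i,j}]v_j$ is disjoint from $\partial\shadz(v_j)$ except possibly at $v_j$ itself, which forces $a_{i+1}$ and $a_j$ to lie on the same side of $\partial\shadz(v_j)$; since $a_j\notin\shadz(v_j)$ (as $v_j\in Z(a_j)$), we conclude $a_{i+1}\notin\shadz(v_j)$, as required.

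The subtlety I expect to matter most is the verification of Item (iii) in Cases 2 and 3: one must be careful that every interior vertex of $V_{i,j}$ lies outside $B$, so that the topological in/out alternative for $\shadz(v_j)$ really does transfer from $a_j$ to $a_{i+1}$. Everything else is a straightforward invocation of the structural facts about the elements $u_i$, $v_j$, the paths $M_L$, $M_R$, and the ordering properties already established in \cref{prop:z_L_b_z_R}, \cref{prop:left_porders_bs}, and \cref{claim:HIO:left-right}.
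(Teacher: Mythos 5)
Your proof of items (i) and (ii) is in substance the paper's proof: both reduce to $u_k$ being left of $v_j$ (via \cref{claim:HIO:left-right}.\ref{claim:HIO:left-right:u_left_v}) and left of $z_R(a_{i+1})$ (via \ref{items-u:left-d}, regularity, \cref{prop:z_L_b_z_R}, and transitivity from \cref{prop:left_porders_bs}); your case split by the definition of $v_{i,j}$ versus the paper's uniform treatment of both candidate values is cosmetic. For item (iii) you take a genuinely different route on the only non-trivial point, namely $a_{i+1}\notin\shadz(v_j)$ in Cases 2 and 3. The paper gets this in one line from the dangerousness machinery: $(a_{i+1},b_{i+1})\in I$ and $b_{i+1}$ is left of $v_j$ (by \cref{claim:HIO:left-right}.\ref{claim:HIO:left-right:b_left_v}), so \cref{prop:dangerous-implies-in-Y} applies directly. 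You instead transfer the property from $a_j$ topologically: the segment of $V_{i,j}$ from $a_{i+1}$ to the meeting vertex $v$ and the segment $a_j[V_j]v$ both avoid $B$, hence avoid $\partial\shadz(v_j)$, so $a_{i+1}$ and $a_j$ lie on the same side of $\shadz(v_j)$, and $v_j\in Z(a_j)$ finishes it. This is the same style of argument the paper uses elsewhere (e.g.\ in the proofs of \cref{prop:In-In_the_same_region} and of \cref{prop:HIILR-switch-two}), so it is perfectly admissible; the paper's version is shorter and avoids any topology, while yours avoids invoking \ref{item:instance:dangerous} through \cref{prop:dangerous-implies-in-Y}. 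One small point you should patch: your same-side argument needs the meeting vertex $v$ to lie outside $B$. In Case 2 this follows since $v=u_{i+1}$ would force $u_{i+1}=v_j$, contradicting item (ii); in Case 3, however, $v$ could a priori be $z_R(a_{i+1})$, which would force $z_R(a_{i+1})=v_j$ — then your transfer through $a_j$ breaks down, but the conclusion is immediate anyway because $v_{i,j}=v_j=z_R(a_{i+1})\in Z(a_{i+1})$ by definition. Adding that one sentence closes the gap.
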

\begin{proofclaim}
    For the proofs of~\ref{claim:HIO:u_left_v:u_i} and~\ref{claim:HIO:u_left_v:u_i+1}, let $k \in \{i,i+1\}$.
    We have $u_{k}$ left of $v_j$ by~\cref{claim:HIO:left-right}.\ref{claim:HIO:left-right:u_left_v}.
    Furthermore, $b_{k}$ is left of or equal to $b_{i+1}$ and $b_{i+1}$ is left of $z_R(a_{i+1})$ (by~\cref{prop:z_L_b_z_R}). Since $u_k$ is left of $b_k$ (by~\ref{items-u:left-d}), $u_{k}$ is left of $z_R(a_{i+1})$, as desired.
    For the proof of~\ref{claim:HIO:u_left_v:Z}, note that $z_R(a_{i+1}) \in Z(a_{i+1})$ by definition, and so, we assume that $v_{i,j} = v_j$.
    Since $(a_{i+1},b_{i+1}) \in I$ and $b_{i+1}$ is left of $v_j$ (by~\cref{claim:HIO:left-right}.\ref{claim:HIO:left-right:b_left_v}), by~\cref{prop:dangerous-implies-in-Y}, $a_{i+1} \notin \shadz(v_j)$.
    Finally, recall that $V_{i,j}$ is an exposed witnessing path from $a_{i+1}$ to $v_{j}$ in~$P$, and so, $v_{j} \in Z(a_{i+1})$, as desired.
\end{proofclaim}
Let $(i,j)$ be an edge in $\Hout$.
Recall that $u_{i+1}$ is left of $v_{i,j}$ (by~\cref{claim:HIO:u_left_v}.\ref{claim:HIO:u_left_v:u_i+1}), 
$u_{i+1}\in Z(a_{i+1})$ (by \ref{items-u:Z}), and 
$v_{i,j} \in Z(a_{i+1})$ (by~\cref{claim:HIO:u_left_v}.\ref{claim:HIO:u_left_v:Z}). 
Therefore, we can define 
    \[\calR_{i,j} = \calR(a_{i+1},u_{i+1},v_{i,j},U_{i+1},V_{i,j}).\]
See \Cref{fig:def-v-ij} again.
Let $q_{i,j}$ and $m_{i,j}$ be the lower-min and the upper-min of $\calR_{i,j}$.
The goal of the next part of this section is to show that if $(i,j)$ is an edge in $\Hout$, then $\calR_i \subset \calR_{i,j} \subset \calR_j$.

\begin{figure}[tp]
  \begin{center}
    \includegraphics{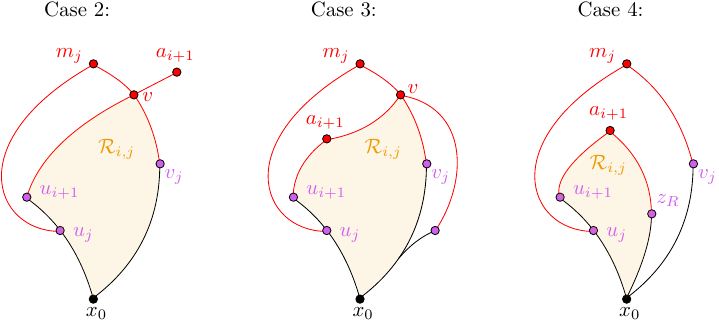}
  \end{center}
  \caption{
  Illustration of some cases of the definition of $\calR_{i,j}$.
  }
  \label{fig:def-v-ij}
\end{figure}

\begin{claim}\label{claim:HIO:R_ij_subset_R_j}
    Let $(i,j)$ be an edge in $\Hout$.
    Then,
        \[\calR_{i,j} \subset \calR_j.\]
\end{claim}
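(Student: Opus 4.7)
The plan is to apply Proposition~\ref{obs:region_containment}: in order to show $\calR_{i,j}\subseteq\calR_j$, it suffices to verify that $\partial\calR_{i,j}\subset\calR_j$. The boundary of $\calR_{i,j}$ naturally decomposes into four segments, namely $q_{i,j}[W_L(u_{i+1})]u_{i+1}$, $u_{i+1}[U_{i+1}]m_{i,j}$, $m_{i,j}[V_{i,j}]v_{i,j}$, and $v_{i,j}[W_R(v_{i,j})]q_{i,j}$, and I will show each one lies in $\calR_j$. The trivial sub-case $j=i+1$ (Case~1 in the definition of $V_{i,j}$) is immediate since then $\calR_{i,j}=\calR_j$, so from now on I assume $j>i+1$, which by Claim~\ref{claim:HIO:u_i-in-R_j} gives $u_{i+1}\in\Int\calR_j$.

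For the left side $q_{i,j}[W_L(u_{i+1})]u_{i+1}$, I will use Claim~\ref{claim:HIO:u}.\ref{claim:HIO:u:WL-supath-WL}, which tells us that $W_L(u_j)$ is a proper prefix of $W_L(u_{i+1})$, so $u_j$ lies on $W_L(u_{i+1})$. The sub-segment $u_j[W_L(u_{i+1})]u_{i+1}$ consists entirely of $B$-elements and has both endpoints in $\calR_j$ ($u_j\in\partial\calR_j$ and $u_{i+1}\in\Int\calR_j$), so Proposition~\ref{prop:path_does_not_leave_regions} places it inside $\calR_j$. The rest of the segment from $q_{i,j}$ to $u_j$ is a subpath of $W_L(u_j)$, which is part of $\partial\calR_j\subset\calR_j$. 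A symmetric argument handles the right side $v_{i,j}[W_R(v_{i,j})]q_{i,j}$ once $v_{i,j}\in\calR_j$ is known: using Claim~\ref{claim:HIO:u_left_v}.\ref{claim:HIO:u_left_v:u_i+1} (for Case~4 where $v_{i,j}=z_R(a_{i+1})$) or the fact that $v_j\in\partial\calR_j$ (for Cases~2,3), the segment of $W_R(v_{i,j})$ in question either coincides with a portion of $W_R(v_j)\subset\partial\calR_j$ or is a $B$-witnessing path with endpoints in $\calR_j$, to which Proposition~\ref{prop:path_does_not_leave_regions} applies.

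For the upper-left segment $U_{i+1}$, I will first argue $a_{i+1}\in\calR_j$: since $u_{i+1}\in\Int\calR_j$ and $U_{i+1}$ is an exposed witnessing path, all intermediate vertices lie in $A$ (hence outside $B$), so any exit from $\calR_j$ must occur at a point of $U_j$ or $V_j$. Should $U_{i+1}$ exit through $U_j$, by following the trace of $U_{i+1}$ past the exit point one could construct an exposed path producing the comparability $a_j<a_{i+1}$ in $P$ (or similar), contradicting Claim~\ref{claim:HIO:u}.\ref{claim:HIO:u:a_i-parallel-u_j} and the incomparability $a_j\parallel a_{i+1}$ in $P$; the exit through $V_j$ is precisely what the case analysis of $V_{i,j}$ is designed to absorb. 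Consequently $U_{i+1}\subset\calR_j$, and in particular $a_{i+1}\in\calR_j$.

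The remaining piece $V_{i,j}$ is handled by the four-way case split used to define it. In Cases~2 and~3, $V_{i,j}$ is the concatenation of a prefix of $U_{i+1}$ (respectively of $a_{i+1}[M_R(a_{i+1})]z_R(a_{i+1})$) with a suffix of $V_j$: the first part lies in $\calR_j$ by the previous step (or the analogous argument for $M_R(a_{i+1})$, using that $M_R(a_{i+1})$ is exposed and hits $\calR_j$ through non-$B$ vertices only), and the second part lies in $V_j\subset\partial\calR_j$. Case~4 is the genuinely delicate one, and I expect it to be the main obstacle: here $v_{i,j}=z_R(a_{i+1})$ is defined globally rather than in terms of $\calR_j$, and the entire path $a_{i+1}[M_R(a_{i+1})]z_R(a_{i+1})$ must be shown to lie in $\calR_j$. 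The lever is the non-intersection hypothesis of Case~4, combined with the fact that this path is exposed (so it can only meet $\partial\calR_j$ at vertices of $U_j$ or $V_j$); by Case~4 it does not meet $V_j$, and an encounter with $U_j$ would again force a comparability contradicting $a_j\parallel a_{i+1}$ in $P$. Once each piece is confirmed, Proposition~\ref{obs:region_containment} yields $\calR_{i,j}\subset\calR_j$, completing the claim.
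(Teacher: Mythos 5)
Your overall route is the same as the paper's: reduce via \cref{obs:region_containment} to showing $\partial\calR_{i,j}\subset\calR_j$, dispose of the two lower segments with \cref{prop:path_does_not_leave_regions} (the paper packages this as \cref{prop:uv_in_R_implies_path}), anchor at $u_{i+1}\in\Int\calR_j$ from \cref{claim:HIO:u_i-in-R_j}, and control the two upper segments by exposedness, the case analysis defining $V_{i,j}$, and the incomparability from \cref{claim:HIO:u}. However, there is a concrete gap in your treatment of the upper-right segment. You conclude in Cases~2 and~3 that "the second part lies in $V_j\subset\partial\calR_j$", but the inclusion $V_j\subset\partial\calR_j$ is false in general: only $m_j[V_j]v_j$ lies on $\partial\calR_j$, while the portion of $V_j$ below $m_j$ need not even be contained in $\calR_j$ (indeed $a_j$ itself can lie in the exterior of $\calR_j$). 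So for the boundary piece $m_{i,j}[V_{i,j}]v_{i,j}=v[V_j]v_j$ to be in $\calR_j$ you must verify that the switch-over element $v$ satisfies $m_j<v$ in $P$. This is exactly the short verification the paper makes: if $v\le m_j$, then $a_{i+1}\le v\le m_j<u_j$ in $P$, contradicting $a_{i+1}\parallel u_j$, i.e.\ \cref{claim:HIO:u}.\ref{claim:HIO:u:a_i-parallel-u_j} applied to the indices $i+1<j$. Without this step your argument breaks precisely where the piece of $V_j$ you are using might dip below $m_j$.

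Two smaller points. First, the incomparability you should invoke when $U_{i+1}$ (or $a_{i+1}[M_R(a_{i+1})]z_R(a_{i+1})$) meets $U_j$ is $a_{i+1}\parallel u_j$ in $P$ (a common element $w$ gives $a_{i+1}\le w\le u_j$); the relation "$a_j\parallel a_{i+1}$" that you name is never established in this setting and is not what \cref{claim:HIO:u}.\ref{claim:HIO:u:a_i-parallel-u_j} says. Second, the blanket conclusion "$U_{i+1}\subset\calR_j$, and in particular $a_{i+1}\in\calR_j$" is only justified in Cases~3 and~4 (where $U_{i+1}\cap V_j=\emptyset$); in Case~2 the part of $U_{i+1}$ below the first intersection $v$ with $V_j$ may well leave $\calR_j$, so the claimed membership of $a_{i+1}$ does not follow from your argument. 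This does not hurt you there, since in Case~2 the boundary of $\calR_{i,j}$ only contains $u_{i+1}[U_{i+1}]m_{i,j}$ with $m_{i,j}=v$, but the statement should be restricted accordingly (as the paper does by working only with $M=u_{i+1}[U_{i+1}]m_{i,j}[V_{i,j}]v_{i,j}$).
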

\begin{proofclaim}
    If $j=i+1$, then $\calR_{i,j}=\calR_j$ and the claim holds.
    Thus, we assume $j > i+1$. 
    In order to prove the claim, we show that $\partial \calR_{i,j} \subset \calR_j$, which suffices by~\cref{obs:region_containment}.
    % Furthermore, if $u_{i+1}[U_{i+1}]m_{i,j}[V_{i,j}]v_{i,j} \subset \calR_j$, then by~\cref{prop:uv_in_R_implies_path}, $\partial\calR_{i,j} \subset \calR_j$.
    % \jedrzej{Alternative version of the previous sentence:}
    Furthermore, if $u_{i+1}[U_{i+1}]m_{i,j}[V_{i,j}]v_{i,j} \subset \calR_j$, then in particular, $u_{i+1},v_{i,j} \in \cgR_j$, so by~\cref{prop:uv_in_R_implies_path}, $u_{i+1}[W_L(u_{i+1})]q_{i,j}[W_R(v_{i,j})]v_{i,j} \subset \cgR_j$, and therefore, $\partial\calR_{i,j} \subset \calR_j$.
    Let 
    \[M = u_{i+1}[U_{i+1}]m_{i,j}[V_{i,j}]v_{i,j}.\]
    As discussed, it suffices to argue that 
        \[M \subset \calR_j.\]
    % \heather{In order to use~\cref{prop:uv_in_R_implies_path}, we need to establish $u_{i+1}, v_{i,j} \in \calR_j$. We have $u_{i+1}\in \calR_j$ by~\cref{claim:HIO:u_i-in-R_j}. And if $v_{i,j}=v_j$, then clearly $v_{i,j}\in \calR_j$. But if we are in Case 4, why do we have $z_R(a_{i+1)}\in \calR_j$? We basically give this argument in Case 4 below.}
    % \jedrzej{We write: "if $M \subset \cgR_j$, then by~\cref{prop:uv_in_R_implies_path}". But note that if $M \subset \cgR_j$, then $u_{i+1},v_{i,j} \in \cgR_j$. So I think that everything is fine as it is and we don't need to argue anything more. I added an alternative version that is probably more precise.} \heather{Oh, I understand. I like your alternate version.}
    We split the rest of the proof into cases depending on the applied case in the definition of $\calR_{i,j}$.
    Recall that we assumed $j>i+1$ so Case 1 does not hold.

    Case 2.
    We have
        \[M = u_{i+1}[U_{i+1}]v[V_j]v_j,\]
        %\[\partial \calR_{i,j} = q_j[W_R(v_j)]v_j[V_j]v[U_{i+1}]u_{i+1}[W_L(u_{1})]u_j[\spine]q_j,\]
    where $v$ is the first element of $u_{i+1}[U_{i+1}]a_{i+1}$ in $V_j$.
    Note that $m_j < v$ in~$P$ as otherwise, we have $a_{i+1}<v\leq m_j < u_j$ in~$P$, which contradicts~\cref{claim:HIO:u}.\ref{claim:HIO:u:a_i-parallel-u_j}.
    %Since $v[V_j]v_j \subset \partial \calR_j$ 
        Thus, $v[V_j]v_j \subset \partial \calR_j$, and so, it suffices to show that
    \[
    \text{$u_{i+1}[U_{i+1}]v \subset\calR_j$.}
    \]
    By~\cref{claim:HIO:u_i-in-R_j}, $u_{i+1} \in \Int \calR_j$, thus, all we need to show is that 
    $u_{i+1}[U_{i+1}]v$ intersects with $\partial\calR_j$ only in $v$. 
    By~\cref{claim:HIO:u}.\ref{claim:HIO:u:a_i-parallel-u_j}, $a_{i+1} \parallel u_j$ in~$P$, hence, $U_{i+1}$ is disjoint from $U_j$.
    Since $U_{i+1}$ is an exposed witnessing path in $P$ and $u_{i+1} \in \Int \calR_j$, $U_{i+1}$ is disjoint from $u_j[W_L(u_j)]q_j[W_R(v_j)]v_j$.
    This and the definition of $v$ imply that the only common element of $u_{i+1}[U_{i+1}]v$ and $\partial\calR_j$ is $v$, as desired.

    Case 3.
    We have
        \[M = u_{i+1}[U_{i+1}]m[M_R(a_{i+1})]v[V_j]v_j,\]
        % \[\partial \calR_{i,j} = q_j[W_R(v_j)]v_j[V_j]v[M_R(a_{i+1})]m[U_{i+1}]u_{i+1}[W_L(u_{1})]u_j[\spine]q_j,\]
    where $v$ is the first element of $a_{i+1}[M_R(a_{i+1})]z_R(a_{i+1})$ in $V_j$ and $m$ is the last common element of $a_{i+1}[M_R(a_{i+1})]z_R(a_{i+1})$ and $a_{i+1}[U_{i+1}]u_{i+1}$.
    Note that $m_j < v$ in~$P$ as otherwise, we have $a_{i+1}<v\leq m_j < u_j$ in~$P$, which contradicts~\cref{claim:HIO:u}.\ref{claim:HIO:u:a_i-parallel-u_j}.
    %Since $v[V_j]v_j \subset \partial \calR_j$ 
    Thus, $v[V_j]v_j \subset \partial \calR_j$, and so, it suffices to show that
    \[
    \text{$W = u_{i+1}[U_{i+1}]m[M_R(a_{i+1})]v\subset\calR_j$.}
    \]
    By~\cref{claim:HIO:u_i-in-R_j}, $u_{i+1} \in \Int \calR_j$, thus, all we need to show is that 
    $W$ intersects with $\partial\calR_j$ only in~$v$. 
    By~\cref{claim:HIO:u}.\ref{claim:HIO:u:a_i-parallel-u_j}, $a_{i+1} \parallel u_j$ in~$P$, hence, $W$ is disjoint from $U_j$.
    Since the only element of $W$ in $B$ is $u_{i+1}$ and $u_{i+1} \in \Int \calR_j$, $W$ is disjoint from $u_j[W_L(u_j)]q_j[W_R(v_j)]v_j$.
    The path $[U_{i+1}]$ is disjoint from $V_j$ as Case 2 does not hold.
    All this and the definition of $v$ imply that the only common element of $W$ and $\partial\calR_j$ is $v$, as desired.

    Case 4.
    We have
        \[M = u_{i+1}[U_{i+1}]m[M_R(a_{i+1})]z_R(a_{i+1}),\]
    where $m$ is the maximal in~$P$ common element of $a_{i+1}[M_R(a_{i+1})]z_R(a_{i+1})$ and $U_{i+1}$.
    By~\cref{claim:HIO:u_i-in-R_j}, $u_{i+1} \in \Int \calR_j$, thus, all we need to show is that 
    $M$ intersects with $\partial\calR_j$ only in $z_R(a_{i+1})$. 
    Since Cases 2 and 3 do not hold and by~\cref{claim:HIO:u}.\ref{claim:HIO:u:a_i-parallel-u_j}, $M$ is disjoint from $u_j[U_j]m_j[V_j]v_j$.
    It follows that $M$ can intersect $\partial \calR_j$ only in $u_j[W_L(u_j)]q_j[W_R(v_j)]v_j$.
    Since $M$ is the union of two exposed witnessing paths in $P$ and $u_{i+1} \in \Int \calR_j$, this intersection may occur only in $z_R(a_{i+1})$, as desired.
    This completes the proof.
    % Next, we show that 
    %     \[\text{$W_2 = u_{i+1}[\spine]q[W_R(z_R(a_{i+1}))]z_R(a_{i+1}) \subset\calR_j$.}\]
    % Since $z_R(a_{i+1}) \in \calR_j$ and $x_0 \notin \Int \calR_j$, the path $W_3 = x_0[W_R(z_R(a_{i+1}))]z_R(a_{i+1})$ intersects $\partial\calR_j$, and so, it intersects $u_j[\spine]q_j[W_R(v_j)]v_j$.
    % Let $w$ be the greatest element of $W_3$ in $u_j[\spine]q_j[W_R(v_j)]v_j$.
    % If $w$ lies in $u_j[\spine]q_j$, then $w \leq q$ in~$P$ and more precisely, $q$ lies in $w[\spine]u_{i+1}$, which implies that $W_2$ lies in $\calR_j$.
    % If $w$ lies in $q_j[W_R(v_j)]v_j$, then by~\cref{prop:W-consistent}.\ref{prop:W-consistent:right}, $x_0[W_R(v_j)]w = x_0[W_3]w$, and in particular, $q_j[W_R(v_j)]w = q_j[W_3]w$, which implies that $q = q_j$ and $W_2$ lies in $\calR_j$.
\end{proofclaim}

\begin{claim} \label{claim:HIO:R_i_subset_R_j_helpers}
    Let $(i,j)$ be an edge in $\Hout$.
    We have
    \begin{enumerate}
        %\item $x_0[W_L(u_{i+1})]u_{i+1}[U_{i+1}]a_{i+1}$ is left of $W_L(u_i)$, \label{claim:HIO:R_i_subset_R_j_helpers:gamma_left_W_L}
        %\item $v_{i,j} \notin \calR_i$,\label{claim:HIO:R_i_subset_R_j_helpers:v_ij_notin_R_i}
        \item $\gamma_{R,i}$ is left of $W_R(v_{i,j})$,\label{claim:HIO:R_i_subset_R_j_helpers:gamma_left_W_R}
        %\item $u_{i+1}$ is left of $v_{i}$,\label{claim:HIO:R_i_subset_R_j_helpers:u_left_vi} \jedrzej{It seems that we never call this.}
        \item $u_i \in \Int\calR_{i,j}$.\label{claim:HIO:R_i_subset_R_j_helpers:first_segment}
    \end{enumerate}
\end{claim}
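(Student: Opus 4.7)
The plan is to apply~\cref{prop:b_relative_to_region} twice: to the element $v_j$ in the region $\calR_i$ for part~(1), and to the element $u_i$ in the region $\calR_{i,j}$ for part~(2).

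For part~(1), I would first establish the intermediate statement that $\gamma_{R,i}$ is left of $W_R(v_j)$. The defining property of the edge $(i,j)\in E(\Hout)$ is that $v_j\notin\calR_i$, and I would combine this with $u_i$ left of $v_j$ (from~\cref{claim:HIO:left-right}.\ref{claim:HIO:left-right:u_left_v}) together with $\shadz(q_i)\subseteq\shadz(u_i)$ (via $q_i\le u_i$ in~$P$ and~\cref{prop:comparability_implies_shadow_containment}) to conclude $v_j\notin\shadz(q_i)$. This enables the application of~\cref{prop:b_relative_to_region}.\ref{prop:b_relative_to_region:item:outside}, yielding the alternative: $W_L(v_j)$ is left of $\gamma_{L,i}$, or $W_R(v_j)$ is right of $\gamma_{R,i}$. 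The first alternative is ruled out as follows: since $W_L(v_j)\subset B$ and the non-$B$ portion of $\gamma_{L,i}$ is $U_i$ (with $m_i\notin B$ since $U_i$ and $V_i$ are exposed), the greatest common prefix element lies on $W_L(u_i)$ and must be strictly below $u_i$ in~$P$ (else $u_i\le v_j$ in~$P$), forcing $W_L(v_j)$ left of $W_L(u_i)$ and so $v_j$ left of $u_i$, a contradiction.

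In Cases~2 and~3 of the definition of $V_{i,j}$ we have $v_{i,j}=v_j$ and part~(1) follows. In Case~4, $v_{i,j}=z_R(a_{i+1})$, and I would compare $v_j$ and $z_R(a_{i+1})$ via~\cref{prop:order_on_Ms_induces_order_in_zs}.\ref{prop:order_on_Ms_induces_order_in_zs:right} (with $v_j$ viewed as an element of $Z(a_{i+1})$, which I would justify from $a_{i+1}\notin\shadz(v_j)$ and a witnessing path from $a_{i+1}$ to $v_j$ built from $M_R(a_{i+1})$). The proposition yields \q{$z_R(a_{i+1})$ right of $v_j$} (so $W_R(v_j)$ left of $W_R(z_R(a_{i+1}))$) or comparability in~$P$; the subcase $v_j\le z_R(a_{i+1})$ makes $W_R(v_j)$ a prefix of $W_R(z_R(a_{i+1}))$, and the remaining subcase $z_R(a_{i+1})<v_j$ contradicts the failure of Case~3: a witnessing path from $z_R(a_{i+1})$ to $v_j$ concatenated with $a_{i+1}[M_R(a_{i+1})]z_R(a_{i+1})$ would force an intersection of $a_{i+1}[M_R(a_{i+1})]z_R(a_{i+1})$ with $V_j$.

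For part~(2), I would apply~\cref{prop:b_relative_to_region}.\ref{prop:b_relative_to_region:item:inside} to $u_i$ in $\calR_{i,j}$. The condition $W_R(u_i)$ left of $\gamma_{R,i,j}$ is immediate from $u_i$ left of $v_{i,j}$ (\cref{claim:HIO:u_left_v}.\ref{claim:HIO:u_left_v:u_i}), giving $W_R(u_i)$ left of $W_R(v_{i,j})$, a prefix of $\gamma_{R,i,j}$. The condition $W_L(u_i)$ right of $\gamma_{L,i,j}$: since $u_{i+1}<u_i$ in~$P$ (by~\cref{claim:HIO:u}.\ref{claim:HIO:u:u_j<u_i}) with both elements on $\spine$, the element $u_{i+1}$ lies on $W_L(u_i)\subset\spine$, and $\gce(W_L(u_i),\gamma_{L,i,j})=u_{i+1}$; item~\ref{items-u:path-left} applied to $i+1$ says that $x_0[\spine]u_{i+1}[U_{i+1}]a_{i+1}$ is left of $\spine$, which translates at $u_{i+1}$ to: the first edge of $U_{i+1}$ (continuation of $\gamma_{L,i,j}$) is locally strictly left of the next edge of $\spine$ (continuation of $W_L(u_i)$).

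The main obstacle is the shadow condition $u_i\notin\shadz(q_{i,j})\setminus\set{q_{i,j}}$. Since $q_{i,j}\le u_{i+1}<u_i$ in~$P$ gives $u_i\neq q_{i,j}$, it suffices to rule out $u_i\in\shadz(q_{i,j})$. The plan is to trace $\spine$ from $q_{i,j}$ upward: the subpath $q_{i,j}[\spine]u_{i+1}$ lies on $W_L(u_{i+1})\subseteq\partial\shadz(q_{i,j})$. Since $\sd(u_{i+1})=0$ by~\cref{prop:properties_of_Z}.\ref{prop:properties_of_Z:item:sd_of_z}, $u_{i+1}$ is the terminal element of $\shadz(u_{i+1})$; by~\ref{items:leaving_shadows:y} and an edge-angle analysis at $u_{i+1}$, the next edge of $\spine$ exits $\shadz(u_{i+1})\supseteq\shadz(q_{i,j})$. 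All subsequent elements of $\spine$ are strictly greater than $u_{i+1}$ in~$P$ and therefore cannot lie on $W_L(u_{i+1})\cup W_R(u_{i+1})$, so $\spine$ cannot re-enter $\shadz(u_{i+1})$, and in particular $u_i\notin\shadz(q_{i,j})$. The edge-angular bookkeeping at $u_{i+1}$ is the most technically delicate step.
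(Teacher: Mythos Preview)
Your approach to part~(2) is essentially the same as the paper's, though you work much harder than necessary on the shadow condition: since $u_i$ is left of $v_{i,j}$ (by~\cref{claim:HIO:u_left_v}.\ref{claim:HIO:u_left_v:u_i}) you get $u_i\notin\shadz(v_{i,j})$ directly from the definition of ``left of'', and $\shadz(q_{i,j})\subseteq\shadz(v_{i,j})$ since $q_{i,j}\le v_{i,j}$ in~$P$. Your $\spine$-tracing argument is not wrong, but it is an overcomplication.

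The real gap is in your treatment of Case~4 of part~(1), where $v_{i,j}=z_R(a_{i+1})$. You propose to view $v_j$ as an element of $Z(a_{i+1})$ and then invoke~\cref{prop:order_on_Ms_induces_order_in_zs}.\ref{prop:order_on_Ms_induces_order_in_zs:right}, but you have not justified (and cannot, in general) that $a_{i+1}<v_j$ in~$P$, let alone that there is an \emph{exposed} witnessing path from $a_{i+1}$ to $v_j$. Your remark that such a path can be ``built from $M_R(a_{i+1})$'' presupposes exactly what is at issue. Likewise, in the sub\-case $z_R(a_{i+1})<v_j$, the claimed forced intersection with $V_j$ does not follow: $V_j$ is a specific exposed path from $a_j$ to $v_j$, and there is no reason a witnessing path through $z_R(a_{i+1})$ must cross it.

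The paper avoids this entirely by a direct comparison of $\gamma_{R,i}$ with $W_R(z_R(a_{i+1}))$, without ever relating $v_j$ to $z_R(a_{i+1})$. Since $a_{i+1}\parallel v_i$ in~$P$ (\cref{claim:HIO:v}.\ref{claim:HIO:v:a_parallel_v}), the element $z_R(a_{i+1})$ does not lie on $W_R(v_i)$, hence not on $\gamma_{R,i}$; so either $\gamma_{R,i}$ is left of $W_R(z_R(a_{i+1}))$ (and we are done) or $W_R(z_R(a_{i+1}))$ is left of $\gamma_{R,i}$. In the latter case, since $\gamma_{R,i}$ is left of or a subpath of $M_R(a_i)$, one gets $M_R(a_{i+1})$ left of $M_R(a_i)$, contradicting that $\sigma_i$ satisfies~\ref{Rin}. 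This is the key idea you are missing: the property~\ref{Rin} of $\sigma_i$ (which every edge of $\HIO$ carries) is precisely what controls the right side in this case.
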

\begin{proofclaim}
    In order to prove~\ref{claim:HIO:R_i_subset_R_j_helpers:gamma_left_W_R}, recall that $v_{i,j} \in \{v_j,z_R(a_{i+1})\}$.
    First, assume that $v_{i,j} = v_j$.
    By definition of edges in $\Hout$, $v_j \notin \calR_i$.\footnote{Note that this is the only place, in this section of the proof, where we call the definition of edges in $\Hout$.}
    Thus, by~\cref{prop:b_relative_to_region}.\ref{prop:b_relative_to_region:item:outside}, either $W_L(v_j)$ is left of $\gamma_{L,i}$ or $\gamma_{R,i}$ is left of $W_R(v_j)$.
    Since the latter is the desired statement, let us show that the former can not hold.
    By~\cref{claim:HIO:u}.\ref{claim:HIO:u:WL-supath-WL}, $W_L(u_j)$ is a subpath of $W_L(u_i)$.
    Moreover, recall that $W_L(u_i)$ is a subpath of $\gamma_{L,i}$.
    It follows that if $W_L(v_j)$ is left of $\gamma_{L,i}$, then either $W_L(v_j)$ is left of $W_L(u_j)$ or $W_L(u_j)$ is a subpath of $W_L(v_j)$.
    Both are false, since $u_j$ is left of $v_j$ (by~\cref{claim:HIO:v}.\ref{claim:HIO:v:u_left_v}), and so, $W_L(u_j)$ is left of $W_L(v_j)$.
    This completes the proof of~\ref{claim:HIO:R_i_subset_R_j_helpers:gamma_left_W_R} in the case, where $v_{i,j} = v_j$.
    
    Next, assume that $v_{i,j} = z_R(a_{i+1})$.
    Since $a_{i+1} \parallel v_i$ in~$P$ (by~\cref{claim:HIO:v}.\ref{claim:HIO:v:a_parallel_v}), $z_R(a_{i+1})$ does not lie in $W_R(v_i)$, and in particular, does not lie in $\gamma_{R,i}$.
    Therefore, either $\gamma_{R,i}$ is left of $W_R(z_R(a_{i+1}))$ or $W_R(z_R(a_{i+1}))$ is left of $\gamma_{R,i}$.
    Since the former is the desired statement, let us show that the latter can not hold.
    Suppose to the contrary that $W_R(z_R(a_{i+1}))$ is left of $\gamma_{R,i}$.
    The path $\gamma_{R,i}$ is either left of $M_R(a_i)$ or is a subpath of $M_R(a_i)$.
    It follows that $W_R(z_R(a_{i+1}))$ is left of $M_R(a_i)$.
    In particular, $M_R(a_{i+1})$ is left of $M_R(a_i)$, however, this is a contradiction with~\ref{Rin} for $\sigma_i$, and so, the proof of~\ref{claim:HIO:R_i_subset_R_j_helpers:gamma_left_W_R} is completed.

    % Recall that by~\cref{claim:HIO:v}.\ref{claim:HIO:v:a_parallel_v}, $a_{i+1} \parallel v_i$ in~$P$ and $a_{i+1} \notin \shadz(v_i)$.
    % Since $a_{i+1} < u_{i+1}$ in~$P$ and $a_{i+1} \parallel v_i$ in~$P$, we have $a_{i+1} \in \shadz(v_i)$ if and only if $u_{i+1} \in \shadz(v_i)$ by~\cref{obs:equivalence_for_shadows}.
    % In particular, we obtain $u_{i+1} \notin \shadz(v_i)$.
    % On the other hand, since $W_L(u_{i+1})$ is a subpath of $W_L(u_i)$ (by~\cref{claim:HIO:u}.\ref{claim:HIO:u:WL-supath-WL}), $u_{i+1} \in \shadz(u_i)$.
    % Therefore, since $u_i$ is left of $v_i$ (by~\cref{claim:HIO:v}.\ref{claim:HIO:v:u_left_v}), by~\cref{prop:in_one_shad_but_not_other_then_left}.\ref{prop:in_one_shad_but_not_other_then_left:left}, we obtain that $u_{i+1}$ is left of $v_i$, as desired in~\ref{claim:HIO:R_i_subset_R_j_helpers:u_left_vi}.

    Let $\gamma_L = x_0[W_L(u_{i+1})]u_{i+1}[U_{i+1}]m_{i,j}$ and $\gamma_R = x_0[W_R(v_{i,j})]v_{i,j}[V_{i,j}]m_{i,j}$.
    We argue that $\gamma_{L}$ is left of $W_L(u_i)$ and $W_R(u_i)$ is left of $\gamma_{R}$.
    This will show that $u_i \in \Int \calR_{i,j}$ by~\cref{prop:b_relative_to_region}.\ref{prop:b_relative_to_region:item:inside}.
    By~\Cref{claim:HIO:u}.\ref{claim:HIO:u:WL-supath-WL}, $W_L(u_{i+1})$ is a proper subpath of $W_L(u_i)$ and $W_L(u_i)$ is a subpath of $\spine$.
    Hence, by~\ref{items-u:path-left}, $\gamma_{L}$ is left of $W_L(u_i)$.
    By~\cref{claim:HIO:u_left_v}.\ref{claim:HIO:u_left_v:u_i}, $u_{i}$ is left of $v_{i,j}$, thus, $W_R(u_{i})$ is left of $W_R(v_{i,j})$, and so, $W_R(u_i)$ is left of $\gamma_{R}$. 
    As discussed, this ends the proof of~\ref{claim:HIO:R_i_subset_R_j_helpers:first_segment}.
\end{proofclaim}

\begin{claim}\label{claim:HIO:R_i_subset_R_ij}
    Let $(i,j)$ be an edge in $\Hout$.
    Then,
        \[\calR_{i} \subset \calR_{i,j}.\]
\end{claim}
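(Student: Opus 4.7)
My plan is to show that $\partial \calR_i \subset \calR_{i,j}$ and then invoke~\cref{obs:region_containment}. The boundary $\partial \calR_i$ decomposes into its \q{top} part $T_i := u_i[U_i]m_i[V_i]v_i$ and its \q{bottom} part $B_i := u_i[W_L(u_i)]q_i[W_R(v_i)]v_i$. I would first establish $T_i \subset \calR_{i,j}$; this in particular yields $u_i, v_i \in \calR_{i,j}$, from which~\cref{prop:uv_in_R_implies_path} immediately gives $B_i \subset \calR_{i,j}$, since $W_L(u_i)$ and $W_R(v_i)$ are $x_0$-consistent (by~\cref{prop:W-consistent}) and $q_i$ is their maximal common element in~$P$.

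To show $T_i \subset \calR_{i,j}$, I start from $u_i \in \Int \calR_{i,j}$, which is~\cref{claim:HIO:R_i_subset_R_j_helpers}.\ref{claim:HIO:R_i_subset_R_j_helpers:first_segment}, and trace along $T_i$ looking for a first element $w \in \partial \calR_{i,j}$. Since $T_i$ has no internal elements in~$B$, and the only elements of $\partial \calR_{i,j}$ lying in $B$ are on $u_{i+1}[W_L(u_{i+1})]q_{i,j}[W_R(v_{i,j})]v_{i,j}$, either $w \in \{u_i, v_i\}$ (which is the desired outcome --- the trace ending at $v_i$ is exactly what is needed) or $w \notin B$ and $w$ lies on the exposed paths $U_{i+1}$ or $V_{i,j}$ used to build $\calR_{i,j}$. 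I would rule out the latter by a case analysis: a common element of $U_i$ and $U_{i+1}$ would give $a_i \le w \le u_{i+1} \le \pi(b_i) \le b_i$ in~$P$ by~\ref{items-u:u<pi} and~\ref{items-u:pi<u}, contradicting $a_i \parallel b_i$. Common elements of $U_i$ or $V_i$ with $V_{i,j}$ are ruled out similarly using that $v_{i,j} \in \{v_j, z_R(a_{i+1})\}$, together with the incomparabilities $a_i \parallel v_j$ (which follows from $(a_j, b_j) \in I$ together with the $\parallel$-structure along the path in $\HIO$, via~\ref{items:HIO-a-parallel-b}) and $a_i \parallel z_R(a_{i+1})$ (otherwise one obtains $a_i \le z_R(a_{i+1})$, forcing $a_i$ to be related to $a_{i+1}$'s region in a way that contradicts~\cref{claim:HIO:u}.\ref{claim:HIO:u:a_i-parallel-u_j} or $a_i \parallel b_i$).

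The main obstacle I expect is the case analysis ruling out the various exposed intersections, because $V_{i,j}$ is constructed in four separate cases, one of which routes through $M_R(a_{i+1})$ rather than starting at $a_{i+1}$ directly. In every case, however, $V_{i,j}$ is exposed from $a_{i+1}$ to $v_{i,j}$, so any common non-$B$-element $w$ with $U_i$ or $V_i$ produces a comparability chain $a_i < w < x$ with $x \in \{u_{i+1}, v_j, z_R(a_{i+1})\}$, each leading (via the above $\parallel$-relations together with~\ref{items-u:u<pi} and~\ref{items-u:pi<u}) to a contradiction. Once these intersections are excluded, $T_i$ never crosses $\partial \calR_{i,j}$ before reaching $v_i$, so $T_i \subset \calR_{i,j}$, and the conclusion follows as outlined.
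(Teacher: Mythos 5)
Your overall skeleton (show $\partial\calR_i\subset\calR_{i,j}$, reduce to the top part $u_i[U_i]m_i[V_i]v_i$ via \cref{prop:uv_in_R_implies_path}, then apply \cref{obs:region_containment}, starting from $u_i\in\Int\calR_{i,j}$) coincides with the paper's, and your exclusions of $U_{i+1}\cap(U_i\cup V_i)$ and $V_i\cap V_{i,j}$ are correct (they follow from $a_i\parallel u_{i+1}$, \cref{claim:HIO:u}.\ref{claim:HIO:u:a_i-parallel-u_j}, and $a_{i+1}\parallel v_i$, \cref{claim:HIO:v}.\ref{claim:HIO:v:a_parallel_v}). The gap is the remaining intersection: you cannot rule out $U_i\cap V_{i,j}\neq\emptyset$. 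A common element $w$ of $u_i[U_i]m_i$ and $V_{i,j}$ only gives $a_{i+1}\le w\le u_i$ and $a_i\le w\le v_{i,j}$ in $P$, and neither is contradictory: $a_{i+1}<u_{i+1}<u_i$ holds by \ref{items-u:Z}, \ref{items-u:u<pi} and \ref{items-u:pi<u}, and there is no incomparability $a_i\parallel v_j$ or $a_i\parallel z_R(a_{i+1})$ in general --- item \ref{items:HIO-a-parallel-b} only gives $a_i\parallel b_{i+1}$ for a shifted edge $\sigma_i$, and since $(a_i,b_i)$ is dangerous, $a_i$ genuinely lies below elements to the right of $b_i$ (e.g.\ $a_i<b_{i+1}$ for a cycle edge), so $a_i<v_{i,j}$ is perfectly possible. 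Your case analysis therefore treats as impossible a configuration that actually occurs, and the proof breaks exactly there.

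This is not a cosmetic omission: the paper's proof of the claim spends most of its length on precisely this case. When $u_i[U_i]m_i$ meets $v_{i,j}[V_{i,j}]m_{i,j}$ at a first element $w$, the paper first shows $u_i\in Z(a_{i+1})$ (using that $u_i$ is left of $b_{i+1}$ and \cref{prop:dangerous-implies-in-Y}), then builds the auxiliary region
$\calS=\calR(a_{i+1},u_i,v_{i,j},a_{i+1}[V_{i,j}]w[U_i]u_i,a_{i+1}[V_{i,j}]v_{i,j})$,
proves $\calS\subset\calR_{i,j}$, and then shows $w[U_i]m_i[V_i]v_i\subset\calS$; the latter needs $v_i\in\calS$, which rests on \cref{claim:HIO:R_i_subset_R_j_helpers}.\ref{claim:HIO:R_i_subset_R_j_helpers:gamma_left_W_R} ($\gamma_{R,i}$ left of $W_R(v_{i,j})$) together with \cref{prop:b_relative_to_region}, plus a separate argument that the first edge of $v_i[V_i]m_i$ enters $\calS$ when $v_i$ sits on its right side. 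None of this machinery appears in your plan, and without it the containment in the intersecting case is unproven. Your first case (the two paths disjoint) is fine and matches the paper; to repair the proposal you would need to add the second case along these lines rather than attempting to exclude it.
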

\begin{proofclaim}
    In order to prove the claim, we show that $\partial \calR_i \subset \calR_{i,j}$, which suffices by~\cref{obs:region_containment}.
    Moreover, if $u_i[U_i]m_i[V_i]v_i \subset \calR_{i,j}$, then by~\cref{prop:uv_in_R_implies_path}, $\partial \calR_{i} \subset \calR_{i,j}$. Thus, we only argue that
    \begin{equation}\label{eq:U_iV_i_subset_R_ij}
        u_i[U_i]m_i[V_i]v_i \subset \calR_{i,j}.
    \end{equation}
    By~\cref{claim:HIO:u}.\ref{claim:HIO:u:a_i-parallel-u_j}, we have $a_i \parallel u_{i+1}$ in~$P$ and by~\cref{claim:HIO:v}.\ref{claim:HIO:v:a_parallel_v}, $a_{i+1} \parallel v_i$ in~$P$.
    In particular,
    \begin{equation}\label{eq:disjoint_paths}
        U_{i+1} \cap (U_i \cup V_i) = \emptyset \text{ and } V_i \cap (U_{i+1} \cup V_{i,j}) = \emptyset.
    \end{equation}
    By~\cref{claim:HIO:R_i_subset_R_j_helpers}.\ref{claim:HIO:R_i_subset_R_j_helpers:first_segment}, $u_i \in\Int\calR_{i,j}$.
    Note that we will use this fact extensively.
    
    First, assume that $u_i[U_i]m_i$ and $v_{i,j}[V_{i,j}]m_{i,j}$ are disjoint.
    Since $U_i$ and $V_i$ are exposed witnessing paths in~$P$, by~\eqref{eq:disjoint_paths}, $u_i[U_i]m_i[V_i]v_i$ can intersect $\partial \calR_{i,j}$ only in $v_i$.
    %In particular, we have $u_i \in \Int \calR_{i,j}$ and $u_i[U_i]m_i[V_i]v_i$ can intersect $\partial\calR_{i,j}$ only in $v_i$. 
    This implies~\eqref{eq:U_iV_i_subset_R_ij}, which concludes the proof in this case.

\begin{figure}[tp]
  \begin{center}
    \includegraphics{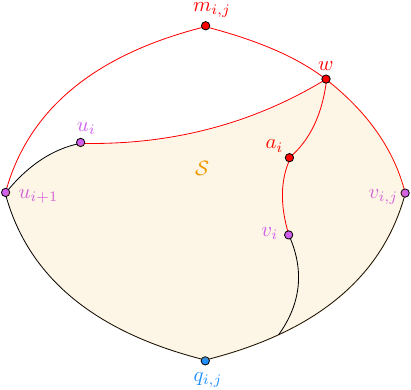}
  \end{center}
  \caption{
    Region $\calR_{i,j}$ along with objects appearing in the proof of~\Cref{claim:HIO:R_i_subset_R_ij}.
  }
  \label{fig:def-R-subset-R-proof}
\end{figure}

    For the rest of the proof, we assume that $u_i[U_i]m_i$ and $v_{i,j}[V_{i,j}]m_{i,j}$ intersect.
    See an illustration in~\cref{fig:def-R-subset-R-proof}.
    Let $w$ be the first element of $u_i[U_i]m_i$ in $v_{i,j}[V_{i,j}]m_{i,j}$.
    In order to define a new auxiliary region we argue that 
    \begin{equation}\label{eq:u_in_Z}
        u_i \in Z(a_{i+1}).
    \end{equation}
    We have $u_i$ left of $b_i$ (by~\ref{items-u:left-d}) and $b_i$ left of $b_{i+1}$.
    Now, since $(a_{i+1},b_{i+1}) \in I$ and $u_i$ is left of $b_{i+1}$, by~\cref{prop:dangerous-implies-in-Y}, $a_{i+1} \notin \shadz(u_i)$.
    Moreover, $a_{i+1}[V_{i,j}]w[U_i]u_i$ is an exposed witnessing path from $a_{i+1}$ to $u_i$ in~$P$, as desired. 
    This completes the proof of~\eqref{eq:u_in_Z}. 

    Since $u_i$ is left of $v_{i,j}$ (by~\cref{claim:HIO:u_left_v}.\ref{claim:HIO:u_left_v:u_i}), $u_i \in Z(a_{i+1})$ (by~\eqref{eq:u_in_Z}), and $v_{i,j} \in Z(a_{i+1})$ (by~\cref{claim:HIO:u_left_v}.\ref{claim:HIO:u_left_v:Z}) we can define
        \[\calS = \calR(a_{i+1},u_i,v_{i,j}, a_{i+1}[V_{i,j}]w[U_i]u_i, a_{i+1}[V_{i,j}]v_{i,j}).\]
    Note that by definition, $w$ is the upper-min of $\calS$.
    Since $u_i \in \Int \calR_{i,j}$, also by the definition of $w$ and~\eqref{eq:disjoint_paths}, we have $u_i[U_i]w \subset \calR_{i,j}$.
    On the other hand, $w[V_{i,j}]v_{i,j} \subset \partial \calR_{i,j} \subset \calR_{i,j}$.
    Since $u_i,v_{i,j} \in \calR_{i,j}$,~\cref{prop:uv_in_R_implies_path} gives $\partial \calS \subset \calR_{i,j}$, and subsequently,~\Cref{obs:region_containment} gives 
    \begin{equation*} \label{eq:S_subset_R}
        \calS \subset \calR_{i,j}.
    \end{equation*}
    As noted above, $u_i[U_i]w \subset \calR_{i,j}$.
    In particular, if $w[U_i]m_i[V_i]v_i \subset \calS$, then $w[U_i]m_i[V_i]v_i \subset \calR_{i,j}$, and so,~\eqref{eq:U_iV_i_subset_R_ij} follows.
    Hence, all we need to prove that is 
    \begin{equation}\label{eq:wUmVv_in_S}
        w[U_i]m_i[V_i]v_i \subset \calS.
    \end{equation}
    First, we show that
    \begin{equation}\label{eq:v_i_in_S}
        v_i \in \calS.
    \end{equation}
    Let $q_{\calS}$ be the lower-min of $\calS$.\footnote{In fact, one can prove that $q_{\calS} = q_{i,j}$ but we do not need this equality.}
    If $v_i \in \partial \calS$, then~\eqref{eq:v_i_in_S} clearly holds, thus, assume that $v_i \notin \partial\calS$.
    Since $q_{\calS} < u_i$ in~$P$, ~\cref{prop:comparability_implies_shadow_containment} implies that $\shadz(q_{\calS})\subset \shadz(u_i)$. 
    Since $u_i$ is left of $v_i$, $v_i \notin \shadz(u_i)$, and so, $v_i \notin \shadz(q_{\calS})$.
    We assumed~$v_i \notin \partial\calS$, thus,~\cref{claim:HIO:R_i_subset_R_j_helpers}.\ref{claim:HIO:R_i_subset_R_j_helpers:gamma_left_W_R} implies that $W_R(v_{i})$ is left of $W_R(v_{i,j})$.
    On the other hand, $W_L(u_{i})$ is left of $W_L(v_i)$ by~\cref{claim:HIO:v}.\ref{claim:HIO:v:u_left_v}.
    Therefore, by~\cref{prop:b_relative_to_region}.\ref{prop:b_relative_to_region:item:inside}, $v_i \in \Int \calS$, which concludes the proof of~\eqref{eq:v_i_in_S}.

    Let $e$ be the first edge of $v_i[V_i]m_i$.
    Next, we show that
    \begin{equation}\label{eq:interior_of_e_in_S}
        \text{the interior of $e$ lies in $\calS$.}
    \end{equation}
    Recall that $v_i \in \calS$ (by~\ref{eq:v_i_in_S}).
    If $v_i \in \Int \calS$, then~\eqref{eq:interior_of_e_in_S} clearly follows, hence, assume that $v_i \in \partial \calS$.
    In other words, $v_i$ is an element of $u_i[W_L(u_i)]q_{\calS}[W_R(v_{i,j})]v_{i,j}$.
    Since $u_i$ is left of $v_i$, $v_i$ does not lie in $W_L(u_i)$, and so, $v_i$ lies in $W_R(v_{i,j})$ and $v_i \neq q_{\calS}$.
    In other words, $v_i$ lies strictly on the right side of $\calS$.
    Therefore, by~\cref{claim:HIO:R_i_subset_R_j_helpers}.\ref{claim:HIO:R_i_subset_R_j_helpers:gamma_left_W_R} and~\ref{items:leaving_regions:right}, the edge $e$ lies in $\calS$ and~\eqref{eq:interior_of_e_in_S} follows.

    Finally, we are ready to argue~\eqref{eq:wUmVv_in_S}.
    By~\eqref{eq:v_i_in_S} and~\eqref{eq:interior_of_e_in_S}, it suffices to show that $W = v_i[V_i]m_i[U_i]w$ intersects $\partial \calS$ at most in the endpoints.
    Since $U_i$ and $V_i$ are exposed witnessing paths in~$P$, the only element in $B$ of $W$ is $v_i$, and so, it suffices to show that $W$ intersects $u_i[U_i]w[V_{i,j}]v_{i,j}$ only in $w$.
    The path $m_i[U_i]w$ does not intersect $u_i[U_i]w[V_{i,j}]v_{i,j}$ in an element distinct from $w$ as this yields a directed cycle in~$P$.
    The path $v_i[V_i]m_i$ is disjoint from $V_{i,j}$ by~\eqref{eq:disjoint_paths}.
    Finally, $v_i[V_i]m_i$ does not intersect $u_i[U_i]w$ in an element distinct from $w$ as this contradicts the definition of $m_i$.
    We showed~\eqref{eq:wUmVv_in_S}, and so the proof of the claim is complete.
\end{proofclaim}

Next, we develop two claims about paths leaving regions: \q{going in} claim and \q{going out} claim (see~\cref{fig:going-in-out}).

\begin{figure}[tp]
  \begin{center}
    \includegraphics{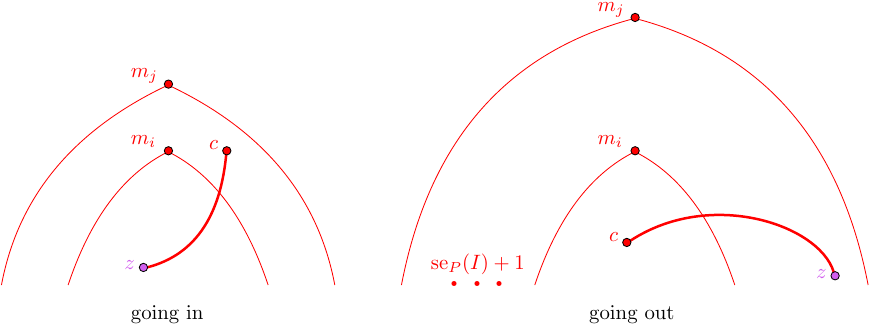}
  \end{center}
  \caption{
    \q{Going in} claim (\Cref{claim:going-in}) and \q{going out} claim (\Cref{claim:going-out}) state that exposed witnessing paths can not escape too far while going in or out, respectively.
    In the figure, we show a schematic drawing of the statements.
  }
  \label{fig:going-in-out}
\end{figure}

\begin{claim}[Going in]\label{claim:going-in}
    Let $(i,j)$ be an edge of $\Hout$.
    Let $c \in A$ be such that $c \parallel u_i$ in~$P$ and let $z \in Z(c)$.
    If $z \in \Int\calR_i$, then $c \in \Int \calR_j$.
\end{claim}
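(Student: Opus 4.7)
The plan is a proof by contradiction. Combining \Cref{claim:HIO:R_i_subset_R_ij} and \Cref{claim:HIO:R_ij_subset_R_j}, we have $\calR_i \subseteq \calR_{i,j} \subseteq \calR_j$ as closed topological disks in the plane, so their interiors nest: $\Int\calR_i \subseteq \Int\calR_{i,j} \subseteq \Int\calR_j$ (any point in the interior of the smaller disk has a neighborhood inside the smaller disk, hence inside the larger one). In particular $z \in \Int\calR_{i,j}$, and it therefore suffices to prove the sharper statement $c \in \Int\calR_{i,j}$, which will immediately give $c \in \Int\calR_j$.

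Fix an exposed witnessing path $W$ from $c$ to $z$ in $P$; since $W$ is exposed and $c \notin B$, the only element of $W$ lying in $B$ is $z$. Suppose, for contradiction, that $c \notin \Int\calR_{i,j}$. Then $W$ must cross $\partial\calR_{i,j}$, so let $w$ be the element of $W \cap \partial\calR_{i,j}$ closest to $z$ along $W$, so that $w[W]z \subseteq \Int\calR_{i,j} \cup \{w\}$. Since $w$ is either an interior vertex of $W$ or equals $c$, we have $w \notin B$, hence $w$ lies in the non-$B$ part of $\partial\calR_{i,j}$, namely $\{a_{i+1}, m_{i,j}\} \cup \Int(U_{i+1}) \cup \Int(V_{i,j})$.

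The driving principle is that $w$ lies on $W$ and hence $c \le w$ in $P$. Combined with $u_{i+1} < u_i$ (\Cref{claim:HIO:u}.\ref{claim:HIO:u:u_j<u_i}) and the hypothesis $c \parallel u_i$, any bound $w \le u_{i+1}$ would yield $c \le u_{i+1} < u_i$, a contradiction. This immediately disposes of $w \in \{a_{i+1}, m_{i,j}\} \cup \Int(U_{i+1})$, and after unpacking the four cases in the definition of $V_{i,j}$ it also disposes of $w$ lying on the $U_{i+1}$-sub-segment of $V_{i,j}$ (case 2 of the definition, giving $w \le u_{i+1}$) or on the $M_R(a_{i+1})$-sub-segment (cases 3 and 4, giving $w \le a_{i+1} < u_{i+1}$). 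The only surviving possibility is that $w$ lies in the interior of the $V_j$-sub-segment of $V_{i,j}$ (appearing in cases 1, 2, and 3 of the definition), in which case $a_j \le w \le v_j$ and $c \le v_j$ in $P$.

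The main obstacle is eliminating this last case. The plan here is to exploit simultaneously the hypothesis $z \in \Int\calR_i$ and the defining property of an edge of $\Hout$, namely $v_j \notin \calR_i$, which place $z$ and $v_j$ on opposite sides of $\partial\calR_i$. From the concatenated witnessing path $c[W]w[V_j]v_j$ I extract the first $B$-element $z'$ starting from $c$; the prefix up to $z'$ is an exposed witnessing path from $c$ to $z'$, and $z' \le v_j$ in $P$. Either $c \in \shadz(z')$, in which case \Cref{cor:path-in-shadow} traps the path $c[W]z$ inside $\shadz(z')$ and \Cref{prop:paths_directions_in_shadows} together with $z \in \Int\calR_i$ and $z' \le v_j \notin \calR_i$ should produce an inconsistency between the leftmost/rightmost paths of $z'$ and the boundaries $\gamma_{L,i}, \gamma_{R,i}$; or $z' \in Z(c)$, and then comparing $z'$ with $z_R(c)$ via \Cref{prop:order_on_Ms_induces_order_in_zs} and placing $z'$ inside or on $\partial\calR_i$ via \Cref{prop:b_relative_to_region} should force the sub-path of $W$ from $c$ into $\calR_i$ to cross $\partial\calR_i$ on $U_i \cup \{a_i\}$, returning the bound $c \le u_i$ and contradicting $c \parallel u_i$. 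This final planar sub-case, where one must juggle the three nested regions $\calR_i \subseteq \calR_{i,j} \subseteq \calR_j$ together with $\shadz(z')$, is where I expect the bulk of the technical work to lie.
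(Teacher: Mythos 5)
Your opening reduction is the same as the paper's: using \Cref{claim:HIO:R_i_subset_R_ij} and \Cref{claim:HIO:R_ij_subset_R_j} to get $\calR_i\subseteq\calR_{i,j}\subseteq\calR_j$ and then proving the sharper statement $c\in\Int\calR_{i,j}$. After that, however, there are two genuine problems. First, your dismissal of the case where $w$ lies on the $M_R(a_{i+1})$-portion of $V_{i,j}$ (Cases~3 and~4 of its definition) rests on the inequality $w\le a_{i+1}$, which is backwards: that portion is a witnessing path ascending \emph{from} $a_{i+1}$, so its elements satisfy $a_{i+1}\le w$, and nothing there contradicts $c\parallel u_i$. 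Second, the case you single out as the main one ($w$ on the $V_j$-portion, giving $c\le v_j$) is only a plan: the dichotomy $c\in\shadz(z')$ versus $z'\in Z(c)$, the comparison with $z_R(c)$, and the assertion that the crossing of $\partial\calR_i$ would be forced onto $U_i\cup\{a_i\}$ are all left at the level of ``should''; in particular nothing you state forces that crossing to avoid $V_i$, so no contradiction with $c\parallel u_i$ is actually reached, and it is unclear the route closes at all.

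The idea you are missing is to track a \emph{second} crossing, on $\partial\calR_i$, and to aim the contradiction at $a_{i+1}\parallel v_i$ rather than at $c\parallel u_i$ or at $v_j\notin\calR_i$. Since $z\in\Int\calR_i$ and $c\notin\Int\calR_{i,j}\supseteq\Int\calR_i$, the exposed path $W$ contains elements $w_{i,j}\in\partial\calR_{i,j}$ and $w_i\in\partial\calR_i$ with $c\le w_{i,j}\le w_i<z$ in $P$, and both lie outside $B$, hence on $u_{i+1}[U_{i+1}]m_{i,j}[V_{i,j}]v_{i,j}$ and $u_i[U_i]m_i[V_i]v_i$ respectively. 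The hypothesis $c\parallel u_i$ excludes $w_i\in U_i$, and together with $u_{i+1}<u_i$ (\Cref{claim:HIO:u}.\ref{claim:HIO:u:u_j<u_i}) it gives $c\parallel u_{i+1}$, excluding $w_{i,j}\in U_{i+1}$. Hence $w_{i,j}\in V_{i,j}$ and $w_i\in V_i$, so $a_{i+1}\le w_{i,j}\le w_i\le v_i$ in $P$, contradicting \Cref{claim:HIO:v}.\ref{claim:HIO:v:a_parallel_v}. This argument is uniform over all four cases of the definition of $V_{i,j}$ (only $a_{i+1}\le w_{i,j}$ is needed), so your case analysis, and any appeal to the $\Hout$-edge property $v_j\notin\calR_i$, becomes unnecessary.
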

\begin{proofclaim}
    Assume that $z \in \Int \calR_i$.
    By~\cref{claim:HIO:R_ij_subset_R_j} and \cref{claim:HIO:R_i_subset_R_ij}, we have
    \[\calR_i\subseteq \calR_{i,j} \subset \calR_j.
    \]
    In particular, it suffices to show that $c \in \Int \calR_{i,j}$.
    Suppose to the contrary that $c \notin \Int \calR_{i,j}$.
    Let $W$ be an exposed witnessing path from $c$ to $z$ in~$P$.
    Since $\calR_i \subset \calR_{i,j}$, $z\in\Int \calR_i$ and $c\not\in\Int\calR_{i,j}$, 
    there exist elements $w_i$ and $w_{i,j}$ of $W$ such that 
    \[w_i\in\partial\calR_i,\ w_{i,j}\in\partial\calR_{i,j},\ \textrm{and
    $c \leq w_{i,j} \leq w_i < z$ in~$P$}.
    \]
    The last inequality is strict as $z$ lies in the interior of $\calR_i$.
    In particular, since $W$ is an exposed witnessing path in~$P$, we have $w_i,w_{i,j} \notin B$. 
    Hence, $w_i\in\partial\calR_i$ implies that $w_i$ lies in $u_i[U_i]m_i[V_i]v_i$ and $w_{i,j}\in\partial\calR_{i,j}$ implies that $w_{i,j}$ lies in $u_{i+1}[U_{i+1}]m_{i,j}[V_{i,j}]v_{i,j}$.
    Recall that we assumed $c \parallel u_i$ in~$P$.
    It follows that $w_i$ does not lie in $U_i$. 
    Also by~\cref{claim:HIO:u}.\ref{claim:HIO:u:u_j<u_i}, $u_{i+1} < u_i$ in~$P$.
    In particular, $c \not\leq u_{i+1}$ in~$P$ and since $c\in A$ and $u_{i+1}\in B$ we also have $u_{i+1}\not\leq c$ in~$P$. 
    Hence $c\parallel u_{i+1}$ in~$P$. 
    It follows that $w_{i,j}$ does not lie in $U_{i+1}$.
    Therefore, $w_i$ lies in $V_i$ and $w_{i,j}$ lies in $V_{i,j}$.
    We obtain
        \[a_{i+1} \leq w_{i,j} \leq w_i \leq v_i \text{ in~$P$}.\]
    This contradicts~\cref{claim:HIO:v}.\ref{claim:HIO:v:a_parallel_v} and completes the proof.
\end{proofclaim}

The next series of claims leads to the \q{going out} claim, which is crucial in the final steps of the proof.
At the very beginning of the proof of~\cref{lemma:HIO}, for each $d \in B$, we defined $\pi(d) = \gce(\spine,W_L(d))$.
This element has an equivalent description given that $u_1$ is left of~$d$.

\begin{claim}\label{claim:HIO:equivalent_pi_def}
    Let $d \in B$ such that $u_1$ is left of $d$.
    Then,
        \[\pi(d) = \text{the maximal element of $\spine$ that is less than $d$ in~$P$}.\]
\end{claim}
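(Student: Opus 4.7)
The plan is to establish the two inequalities separately. The easy direction is that $\pi(d)$ itself is an element of $\spine$ satisfying $\pi(d)<d$ in $P$: by definition of $\gce$ we have $\pi(d)\in\spine\cap W_L(d)$, and since $W_L(d)$ is a witnessing path from $x_0$ to $d$, every vertex on it is at most $d$ in $P$. The inequality is strict because $u_1$ left of $d$ forces $d\notin\spine$: otherwise $W_L(d)$ would be a prefix of $\spine=W_L(u_1)$, which is incompatible with the left pair condition defining \q{$u_1$ left of $d$}.

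For the reverse direction, I would take an arbitrary $v\in\spine$ with $v\le d$ in $P$ and show $v\le\pi(d)$ in $P$. By $x_0$-consistency of $\spine$ and $W_L(d)$ (\cref{prop:W-consistent}.\ref{prop:W-consistent:left}), the common vertices of these two paths are exactly the vertices on their common prefix $x_0[\spine]\pi(d)=x_0[W_L(d)]\pi(d)$. Hence it suffices to show that $v$ lies in $W_L(d)$, and the desired inequality will then follow from comparing positions of $v$ and $\pi(d)$ along $W_L(d)$.

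The crux of the proof is therefore to establish $v\in W_L(d)$. Fix any witnessing path $W'$ from $v$ to $d$ in $P$ and consider $W=x_0[\spine]v[W']d$, a witnessing path from $x_0$ to $d$. If $W=W_L(d)$, we are done, so assume $W\neq W_L(d)$, whence $W_L(d)$ is strictly left of $W$ by the leftmost property (they end at the same vertex, so one cannot be a proper prefix of the other). Now suppose for contradiction that $\pi(d)<v$ in $P$. Then $\pi(d)$ lies in the $\spine$-portion of $W$, so the edge of $W$ immediately following $\pi(d)$ coincides with the edge of $\spine$ immediately following $\pi(d)$. Since $u_1$ is left of $d$ we have $\spine$ strictly left of $W_L(d)$, and their branching vertex is precisely $\pi(d)$; hence the edge of $\spine$ just after $\pi(d)$ is strictly left of the edge of $W_L(d)$ just after $\pi(d)$ in the appropriate $(\pi(d),e)$-ordering. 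Consequently $W$ is left of $W_L(d)$, contradicting that $W_L(d)$ is left of $W$.

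Thus $v\le\pi(d)$ in $P$, which together with $\pi(d)\le v$ in $P$ (by maximality of $v$, or directly once we know $\pi(d)$ is a candidate) yields the claim. The only subtle step is the last paragraph, and even there the work reduces to carefully unfolding the definitions of $\gce$, $x_0$-consistency, and \q{left of} at the branching vertex $\pi(d)$; no further ideas beyond \cref{prop:W-consistent} and the definition of the $(u,e)$-ordering are needed.
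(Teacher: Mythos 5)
Your proof is correct and takes essentially the same route as the paper: both establish $\pi(d)\le\pi'(d)$ from the fact that $\pi(d)$ lies on $\spine$ with $\pi(d)<d$, and then rule out a strictly larger candidate $v$ by forming the witnessing path $x_0[\spine]v[W']d$, which branches from $W_L(d)$ at $\pi(d)$ along $\spine$ and is therefore left of $W_L(d)$, contradicting the leftmost property. Your extra case analysis and the explicit check that $d\notin\spine$ only spell out steps the paper treats as immediate.
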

\begin{proofclaim}
    Let $\pi'(d)$ be the maximal element of $\spine$ that is less than $d$ in~$P$.
    The aim is to prove that $\pi(d) = \pi'(d)$.
    Clearly, both $\pi(d)$ and $\pi'(d)$ lie in $\spine$ and $\pi(d) \leq \pi'(d)$ in~$P$.
    Suppose to the contrary that $\pi(d) < \pi'(d)$ in~$P$.
    Let $W$ be a witnessing path from $\pi'(d)$ to $d$ in~$P$.
    Consider the path $W' = x_0[\spine]\pi'(d)[W]d$.
    Since $u_1$ is left of $d$ and $\pi(d) = \gce(W_L(u_1),W_L(d))$, we obtain that $W'$ is left of $W_L(d)$, which is a contradiction.
\end{proofclaim}

\begin{claim} \label{claim:HIO:b_notin_region_then_pi_b_leq_pi_v} 
    Let $i \in E$ and $d \in B$ such that $u_1$ is left of $d$, $u_i \parallel d$ in~$P$, and $d \notin \calR_i$.
    Then, $\pi(d) \leq \pi(v_i)$ in~$P$.
\end{claim}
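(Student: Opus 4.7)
The plan is to locate $\pi(d)$ and $\pi(v_i)$ on a common prefix of $\spine$ by applying in succession \cref{prop:path-leaving-the-right-side-of-a-region}, \cref{prop:shad-d-equals-shad-w}, and \cref{prop:elements-not-in-shad-w} to the region $\calR_i$, and then chaining the resulting comparabilities.

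First I would verify that $u_i$ is left of $d$. Since $u_i \le u_1$ in~$P$ and $u_i$ lies on $\spine$, the path $W_L(u_i)$ is a prefix of $\spine = W_L(u_1)$, and together with $u_1$ left of $d$ and $u_i \parallel d$ in~$P$ this forces $W_L(u_i)$ to be left of $W_L(d)$. The non-containment $d \notin \shadz(u_i)$ follows from $\shadz(u_i) \subseteq \shadz(u_1)$ (\cref{prop:comparability_implies_shadow_containment}) applied to $d \notin \shadz(u_1)$, and $u_i \notin \shadz(d)$ follows from \cref{prop:paths_directions_in_shadows}.\ref{prop:paths_directions_in_shadows:left} applied to the terminal element of $\shadz(d)$; then \cref{cor:notin_shads_implies_left_or_right} yields the conclusion. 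Next, $u_i \parallel d$ in~$P$ forces $u_i \notin W_L(d)$, so $\pi(d) < u_i$ in~$P$ and hence $\pi(d) \in x_0[\spine]u_i \subset \partial\calR_i$. Applying \cref{prop:path-leaving-the-right-side-of-a-region}.\ref{prop:path-leaving-the-right-side-of-a-region:L} to $W = W_L(d)$ then produces the last element $w_0$ of $W$ in $\calR_i$ with $w_0 \in \gamma_{R,i}$; since every element of $W_L(d)$ is in $B$ and $V_i$ is exposed, $w_0$ must lie on $W_R(v_i)$, so $\pi(d) \le w_0 \le v_i$ in~$P$.

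Because $\pi(d) \in B \cap \calR_i$ is in $\shadz(d)$ (from $\pi(d) \le d$ in~$P$ via \cref{prop:comparability_implies_shadow_containment}), the \q{moreover} clause of \cref{prop:shad-d-equals-shad-w}.\ref{prop:shad-d-equals-shad-w:L} provides $q_i \le w_0$ in~$P$ (noting that $w_0$ coincides with the maximal common element of $W_L(d)$ and $W_R(v_i)$). With $q_i \le w_0$ in hand, the hypotheses of \cref{prop:elements-not-in-shad-w}.\ref{prop:elements-not-in-shad-w:left}.\ref{prop:elements-not-in-shad-w:left:iii} are satisfied for $c = a_i$, $z = v_i$, $W = V_i$, $w = w_0$; in particular $a_i \notin \shadz(w_0)$ holds since $w_0 \le v_i$ forces $\shadz(w_0) \subseteq \shadz(v_i)$ and $v_i \in Z(a_i)$ gives $a_i \notin \shadz(v_i)$. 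The conclusion is that $p := \gce(W_L(u_i), W_L(w_0))$ lies on $W_L(v_i)$.

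To finish, $p \in W_L(u_i) \subset \spine$ together with $p \in W_L(v_i)$ places $p$ on the common prefix of $\spine$ and $W_L(v_i)$, so $p \le \pi(v_i)$ in~$P$. On the other hand, $\pi(d) \le w_0$ in~$P$ combined with the $x_0$-consistency of $W_L(d)$ and $W_L(w_0)$ (\cref{prop:W-consistent}.\ref{prop:W-consistent:left}) gives $\pi(d) \in W_L(w_0)$, while $\pi(d) \in W_L(u_i)$ is clear from $\pi(d) \le u_i$ in~$P$ and $\pi(d) \in \spine$; hence $\pi(d)$ lies on the common prefix of $W_L(u_i)$ and $W_L(w_0)$, so $\pi(d) \le p$. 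Chaining delivers $\pi(d) \le p \le \pi(v_i)$ as required. The main obstacle in this plan is verifying the hypothesis $q_i \le w_0$ for \cref{prop:elements-not-in-shad-w}: this is not evident from a direct geometric inspection of $\calR_i$ and requires the two-step detour through \cref{prop:path-leaving-the-right-side-of-a-region} (to pin $w_0$ onto $W_R(v_i)$) and \cref{prop:shad-d-equals-shad-w} (to promote the fact $\pi(d) \in \calR_i \cap \shadz(d)$ into the desired inequality).
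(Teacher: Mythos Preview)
Your argument contains a genuine error and then takes a long, unnecessary detour that introduces a second gap.

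The error is the containment $x_0[\spine]u_i \subset \partial\calR_i$. The boundary $\partial\calR_i$ contains only $q_i[W_L(u_i)]u_i$, not all of $x_0[W_L(u_i)]u_i$; the segment $x_0[\spine]q_i$ lies in $\shadz(q_i)\setminus\{q_i\}$, and by \cref{prop:shad-disjoint-from-calR} it is outside $\calR_i$. So when $\pi(d) < q_i$ you cannot conclude $\pi(d)\in\calR_i$, and in that case $W_L(d)$ need not intersect $\calR_i$ at all, so \cref{prop:path-leaving-the-right-side-of-a-region} does not apply. The paper handles this by an explicit case split: if $\pi(d)\le q_i$ then the claim is immediate because $q_i\in\spine$ and $q_i\le v_i$ give $q_i\le\pi(v_i)$ via \cref{claim:HIO:equivalent_pi_def}. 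You must insert this case.

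The detour through \cref{prop:shad-d-equals-shad-w} and \cref{prop:elements-not-in-shad-w} is superfluous and itself broken. Once you have $\pi(d)\le w_0\le v_i$ in~$P$ with $\pi(d)\in\spine$, \cref{claim:HIO:equivalent_pi_def} (applicable since $u_1$ is left of $v_i$ by \cref{claim:HIO:left-right}) gives $\pi(d)\le\pi(v_i)$ directly; there is no need to produce $p=\gce(W_L(u_i),W_L(w_0))$. Worse, your application of \cref{prop:elements-not-in-shad-w}.\ref{prop:elements-not-in-shad-w:left}.\ref{prop:elements-not-in-shad-w:left:iii} with $c=a_i$ requires $a_i\in\calR_i$, which is not guaranteed (the caption of \cref{fig:mixed_regions} explicitly notes that $a$ can be in the exterior of $\calR$).

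The paper's proof is short and avoids all three heavy propositions: assume $q_i<\pi(d)<u_i$, so $\pi(d)$ lies strictly on the left side of $\calR_i$; since $\spine$ is left of $W_L(d)$, the first edge of $\pi(d)[W_L(d)]d$ lies in $\calR_i$ by~\ref{items:leaving_regions:left}; as $d\notin\calR_i$ this path must exit through $\partial\calR_i$, and by $x_0$-consistency with $W_L(u_i)$ it can only do so through $W_R(v_i)$, giving $\pi(d)<v_i$ and hence $\pi(d)\le\pi(v_i)$.
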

\begin{proofclaim}
    Recall that $W_L(u_i)$ is a subpath of $\spine$ (by~\ref{items-u:W_L}).
    Note that since $u_i \parallel d$ in~$P$, $\pi(d)$ does not lie in $u_i[\spine]u_1$.
    By~\cref{claim:HIO:equivalent_pi_def}, $q_i \leq \pi(v_i)$ in~$P$.
    Thus, if $\pi(d) \leq q_i$ in~$P$, the assertion holds, and so, we assume that $q_i < \pi(d) < u_i$ in~$P$.
    In particular, $\pi(d)$ lies strictly on the left side of $\calR_i$.
    Since $\spine$ is left of $W_L(d)$, the first edge of $\pi(d)[W_L(d)]d$ lies in $\calR_i$ (by~\ref{items:leaving_regions:left}).
    However, $d \notin \calR_i$, and so, $\pi(d)[W_L(d)]d$ intersects $\partial \calR_i$ in an element distinct from $\pi(d)$.
    Since $\pi(d)[W_L(d)]d$ has all elements in $B$, this path must intersect $u_i[W_L(u_i)]q_i[W_R(v_i)]v_i$.
    The paths $W_L(u_i)$ and $W_L(d)$ are $x_0$-consistent (by~\cref{prop:W-consistent}.\ref{prop:W-consistent:left}), thus, $\pi(d)[W_L(d)]d$ does not intersect $W_L(u_i)$ in an element distinct from $\pi(d)$.
    It follows that $\pi(d)[W_L(d)]d$ intersects $W_R(v_i)$.
    In particular, $\pi(d) < v_i$ in~$P$.
    By~\cref{claim:HIO:equivalent_pi_def}, this implies that $\pi(d) \leq \pi(v_i)$ in~$P$, which ends the proof.
\end{proofclaim}

\begin{claim}\label{claim:HOI:two_edges_Hout_pi}
    Let $i,j,k \in [n-1]$ such that $i < j < k$ and $(i,j)$ is an edge of $\Hout$.
    Then,
    \begin{enumerate}
        %\item $\pi(v_j) \leq \pi(v_i)$ in~$P$, \label{claim:HOI:two_edges_Hout_pi:pi_v_j_pi_v_i} \jedrzej{Is this needed?}
        \item $b_k \notin \cgR_i$,\label{claim:HOI:two_edges_Hout_pi:b_k-notin-R_i}
        \item $\pi(b_k) \leq \pi(v_i)$ in~$P$, \label{claim:HOI:two_edges_Hout_pi:pi_b_k_pi_v_i}
        \item $a_{k+1} < v_i$ in~$P$. \label{claim:HOI:two_edges_Hout_pi:a_v}
    \end{enumerate}
\end{claim}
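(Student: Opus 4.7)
The plan is to prove item~(1) as the main step and deduce items~(2) and~(3) via earlier claims. For item~(1), I will invoke~\cref{prop:b_relative_to_region}.\ref{prop:b_relative_to_region:item:outside}; the hypothesis $b_k\notin\shadz(q_i)\setminus\set{q_i}$ holds because $\shadz(q_i)\subseteq\shadz(u_i)$ (since $q_i\le u_i$ in~$P$, by~\cref{prop:comparability_implies_shadow_containment}) and $u_i$ is left of $b_k$ (by~\ref{items-u:left-d} combined with $b_i$ left of $b_k$ and transitivity,~\cref{prop:left_porders_bs}), so $b_k\notin\shadz(u_i)$ and $b_k\neq q_i$. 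It then suffices to show $\gamma_{R,i}$ is left of $W_R(b_k)$. Since $(i,j)$ is an edge of $\Hout$, $v_j\notin\calR_i$, and reading off the argument from the proof of~\cref{claim:HIO:R_i_subset_R_j_helpers}.\ref{claim:HIO:R_i_subset_R_j_helpers:gamma_left_W_R} (where the alternative \q{$W_L(v_j)$ left of $\gamma_{L,i}$} is excluded), $\gamma_{R,i}$ is left of $W_R(v_j)$.

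Upgrading from $W_R(v_j)$ to $W_R(b_k)$ will be done by a case analysis on $\sigma_j$. If $\sigma_j$ is a shifted edge, then~\cref{claim:HIO:v}.\ref{claim:HIO:v:v_left_b} gives $v_j$ left of $b_{j+1}$; combined with $b_{j+1}$ being left of or equal to $b_k$, transitivity yields $v_j$ left of $b_k$, so $W_R(v_j)$ is strictly left of $W_R(b_k)$ in the $(x_0,e_{-\infty})$-ordering. If $\sigma_j$ is a cycle edge, then $v_j\le b_{j+1}$ in~$P$, so for any witnessing path $W$ from $v_j$ to $b_{j+1}$ in~$P$, the path $x_0[W_R(v_j)]v_j[W]b_{j+1}$ is either equal to $W_R(b_{j+1})$ or left of it; using $x_0$-consistency (\cref{prop:W-consistent}.\ref{prop:W-consistent:right}) together with $W_R(b_{j+1})$ being left of or equal to $W_R(b_k)$, we conclude that $W_R(v_j)$ is either a prefix of $W_R(b_k)$ or strictly left of it. In either case, the greatest common prefix element of $\gamma_{R,i}$ and $W_R(v_j)$ witnessing \q{$\gamma_{R,i}$ is left of $W_R(v_j)$} also witnesses \q{$\gamma_{R,i}$ is left of $W_R(b_k)$}, completing item~(1).

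Item~(2) is a direct application of~\cref{claim:HIO:b_notin_region_then_pi_b_leq_pi_v} with $d=b_k$: $u_1$ is left of $b_k$ by~\cref{prop:z_L_b_z_R} and transitivity, $u_i\parallel b_k$ in~$P$ follows from $u_i$ left of $b_k$ (already shown), and $b_k\notin\calR_i$ is item~(1). For item~(3), I will chain $a_{k+1}<u_{k+1}\le\pi(b_k)\le\pi(v_i)\le v_i$ in~$P$, where $u_{k+1}\le\pi(b_k)$ comes from~\ref{items-u:u<pi} (with $u_{k+1}$ obtained analogously via \ref{Lout} for $\sigma_k$ if $k+1>n-1$), $\pi(b_k)\le\pi(v_i)$ is item~(2), and $\pi(v_i)\le v_i$ in~$P$ from $\pi(v_i)=\gce(\spine,W_L(v_i))$; strict inequality $a_{k+1}<v_i$ in~$P$ follows from $A\cap B=\emptyset$. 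The main obstacle is the cycle-edge case of item~(1), where $W_R(v_j)$ may only be a prefix of $W_R(b_k)$ (not strictly left of it) and transitivity of the $(x_0,e_{-\infty})$-ordering does not directly apply — one must inspect the common prefix element realizing the left-of relation.
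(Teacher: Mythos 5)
Your proposal is correct and takes essentially the same route as the paper's own proof: item (1) by applying \cref{prop:b_relative_to_region}.\ref{prop:b_relative_to_region:item:outside} to $b_k$ after deducing from $v_j\notin\calR_i$ that $\gamma_{R,i}$ is left of $W_R(v_j)$ and then upgrading through $b_{j+1}$ to $W_R(b_k)$ with the shifted/cycle case split, item (2) by \cref{claim:HIO:b_notin_region_then_pi_b_leq_pi_v} with $d=b_k$, and item (3) by the chain $a_{k+1}<u_{k+1}\le\pi(b_k)\le\pi(v_i)< v_i$ in $P$. The two extra points you flag (the prefix subtlety when $W_R(v_j)$ is only a prefix of $W_R(b_{j+1})$, and defining $u_{k+1}$ via \ref{Lout} when $k+1=n$) are glossed over in the paper and your handling of them is sound.
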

\begin{proofclaim}
    %Note that $v_j \in B$, $u_1$ is left of $v_j$ and $u_i \parallel v_j$ in~$P$ (by~\cref{claim:HIO:left-right}.\ref{claim:HIO:left-right:u_left_v}), and $v_j \notin \calR_i$ (as $(i,j)$ is an edge in $\Hout$). Therefore,~\ref{claim:HOI:two_edges_Hout_pi:pi_v_j_pi_v_i} follows directly from~\cref{claim:HIO:b_notin_region_then_pi_b_leq_pi_v}.
    We start with the proof of~\ref{claim:HOI:two_edges_Hout_pi:b_k-notin-R_i}. 
    We are going to call~\cref{prop:b_relative_to_region} twice for the region $\calR_i$ and elements $b_k$ and $v_j$.
    To make these calls simpler, we note beforehand that
    \begin{equation}
    b_k, v_j \not\in\shadz(q_i).
    \label{eq:elements-not-in-shadz-qi}
    \end{equation}
    Since $q_i < u_i$ in $P$, 
    we have $\shadz(q_i) \subset \shadz(u_i)$ (by~\Cref{prop:comparability_implies_shadow_containment}).
    Since $u_i$ left of $b_k$ (by~\cref{claim:HIO:left-right}.\ref{claim:HIO:left-right:u_left_b}) and $u_i$ left of $v_j$ (by~\cref{claim:HIO:left-right}.\ref{claim:HIO:left-right:u_left_v}),~\eqref{eq:elements-not-in-shadz-qi} holds.

    We will argue that
    \begin{align}\label{eq:gamma-WR}
        \text{$\gamma_{R,i}$ left of $W_R(b_{j+1}).$}
    \end{align}
    Note that~\eqref{eq:gamma-WR} quickly implies~\ref{claim:HOI:two_edges_Hout_pi:b_k-notin-R_i}. 
    Indeed, $W_R(b_{j+1})$ is left of or equal to $W_R(b_k)$ (since $j+1\leq k$), hence,~\eqref{eq:gamma-WR} implies that $\gamma_{R,i}$ is left of $W_R(b_k)$. Thus, by~\cref{prop:b_relative_to_region}.\ref{prop:b_relative_to_region:item:outside}, $b_k \notin \calR_i$, as desired.

    Now, we prove~\eqref{eq:gamma-WR}.
    Since $u_i$ is left of $v_j$ (by~\Cref{claim:HIO:left-right}.\ref{claim:HIO:left-right:u_left_v}) and $v_j \notin \calR_i$ (as $(i,j)$ is an edge in $\Hout$), by~\Cref{prop:b_relative_to_region}.\ref{prop:b_relative_to_region:item:outside}, 
    \begin{align}\label{eq:gamma-WRv}
        \text{$\gamma_{R,i}$ is left of $W_R(v_j)$.}
    \end{align}
    Note that $j \in E$ as $(i,j)$ is an edge of $\Hout$.
    In other words, $\sigma_j$ is an edge of weight $1$.
    If $\sigma_j$ is a shifted edge, then $v_j$ is left of $b_{j+1}$ (by~\Cref{claim:HIO:v}.\ref{claim:HIO:v:v_left_b}), and so, $W_R(v_j)$ is left of $W_R(b_{j+1})$.
    This and~\eqref{eq:gamma-WRv} imply~\eqref{eq:gamma-WR}.
    If $\sigma_j$ is a cycle edge, then $v_j \leq b_{j+1}$ in $P$, and so, either $W_R(v_j)$ is a subpath of $W_R(b_{j+1})$ or $W_R(v_j)$ is left of $W_R(b_{j+1})$ (by~\Cref{prop:shortcuts}.\ref{prop:item:shortcuts-right-tree}).
    This and~\eqref{eq:gamma-WRv} again imply~\eqref{eq:gamma-WR}.
    This completes the proof of~\ref{claim:HOI:two_edges_Hout_pi:b_k-notin-R_i}.

    For the proof of ~\ref{claim:HOI:two_edges_Hout_pi:pi_b_k_pi_v_i}, note that 
    $b_k \in B$, $u_1$ is left of $b_k$ and $u_i \parallel b_k$ in~$P$ (by~\cref{claim:HIO:left-right}.\ref{claim:HIO:left-right:u_left_b}). Therefore,~\ref{claim:HOI:two_edges_Hout_pi:pi_b_k_pi_v_i} follows from~\cref{claim:HIO:b_notin_region_then_pi_b_leq_pi_v}.

    Finally, we have
        \[a_{k+1} < u_{k+1} \leq \pi(b_k) \leq \pi(v_i) < v_i \text{ in~$P$}.\]
    The inequalities follow respectively from: \ref{items-u:Z}, \ref{items-u:u<pi}, \ref{claim:HOI:two_edges_Hout_pi:pi_b_k_pi_v_i}, and the definition of $\pi$.
    This way, we obtain~\ref{claim:HOI:two_edges_Hout_pi:a_v}.
\end{proofclaim}

 In~\Cref{claim:HOI:two_edges_Hout_pi}, we showed some properties of elements with indices $i,k \in [n-1]$ with $i<k$ such that there exists $j \in [n-1]$ with
 $i<j<k$ and $(i,j)\in\Hout$. 
 We would like to have these properties for every pair of indices.
 To this end, we fix a long path in $\Hout$ and focus on every other element of the path. 

Fix a path in $\Hout$ witnessing $\maxpath(\Hout)$, and let $E_1$ be its vertex set.
\cref{claim:HIO:R_i_subset_R_ij,claim:HIO:R_ij_subset_R_j} immediately imply the following.

\begin{claim}\label{claim:region-i-in-region-j}
Let $i,j\in E_1$ with $i<j$. Then, $\calR_i \subseteq \calR_j$.
\end{claim}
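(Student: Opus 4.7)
The plan is to derive the inclusion from the two inclusions already established in Claims~\ref{claim:HIO:R_i_subset_R_ij} and~\ref{claim:HIO:R_ij_subset_R_j}, combined with a straightforward induction along the fixed path in $\Hout$. The key observation is that consecutive vertices of $E_1$ along this path automatically form edges of $\Hout$, so the two existing inclusions apply to each consecutive pair.

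More precisely, recall that $E_1$ was defined as the vertex set of a fixed path in $\Hout$ witnessing $\maxpath(\Hout)$. Enumerate the vertices of this path in the order dictated by the path as $i_1 < i_2 < \cdots < i_r$ (the orientation of edges in $\Hout$ is consistent with the natural ordering on $[n-1]$, so the path listed in its directed order is also increasing). Given $i,j \in E_1$ with $i < j$, write $i = i_p$ and $j = i_q$ with $p < q$. For every $s \in \{p, p+1, \ldots, q-1\}$, the pair $(i_s, i_{s+1})$ is an edge of $\Hout$, and therefore Claims~\ref{claim:HIO:R_i_subset_R_ij} and~\ref{claim:HIO:R_ij_subset_R_j} give
\[
  \calR_{i_s} \;\subseteq\; \calR_{i_s, i_{s+1}} \;\subseteq\; \calR_{i_{s+1}}.
\]

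The conclusion $\calR_i = \calR_{i_p} \subseteq \calR_{i_q} = \calR_j$ then follows by chaining these inclusions from $s = p$ to $s = q-1$ (formally, by a trivial induction on $q - p$, with base case $q - p = 0$ giving equality and step case applying the displayed inclusion). There is no obstacle here beyond unpacking the definition of $E_1$; all the geometric content has already been absorbed into Claims~\ref{claim:HIO:R_i_subset_R_ij} and~\ref{claim:HIO:R_ij_subset_R_j}, and the remainder is a one-line transitivity argument.
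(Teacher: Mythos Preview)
Your proof is correct and follows the same approach as the paper, which simply states that Claims~\ref{claim:HIO:R_i_subset_R_ij} and~\ref{claim:HIO:R_ij_subset_R_j} immediately imply the result. You have spelled out explicitly the transitivity step along the path in $\Hout$ that the paper leaves implicit.
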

Next, let $E_2$ be every other element of $E_1$ starting from the first one. 
In particular, by~\eqref{eq:max-path-Hout}
\begin{align}
    |E_2| &\geq |E_1| \slash 2 = \maxpath(\Hout) \slash 2 \geq 2\se_P(I) + 6.\label{eq:E_2}
\end{align}
From \Cref{claim:HOI:two_edges_Hout_pi} we obtain the following.

\begin{claim}\label{claim:HIO:cor-a_leq_v}
    Let $i,j \in E_2$ with $i < j$.
    Then,
    \begin{enumerate}
        \item $b_j \notin \calR_i$, \label{claim:HOI:cor-a_leq_v:b_k-notin-R_i}
        \item $\pi(b_j) \leq \pi(v_i)$ in~$P$, \label{claim:HOI:cor-a_leq_v:pi_b_k_pi_v_i}
        \item $a_{j+1} < v_i$ in~$P$. \label{claim:HOI:cor-a_leq_v:a_v}
    \end{enumerate}
\end{claim}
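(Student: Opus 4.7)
The plan is to derive \cref{claim:HIO:cor-a_leq_v} directly from \cref{claim:HOI:two_edges_Hout_pi} by exploiting how $E_2$ is positioned inside $E_1$. Recall that $E_1$ is the vertex set of a path in $\Hout$ witnessing $\maxpath(\Hout)$, and that every edge of $\Hout$ goes from a smaller index in $[n-1]$ to a larger one; writing $E_1$ in increasing order as $i_1<i_2<\cdots<i_p$, the path structure therefore gives that $(i_\ell,i_{\ell+1})$ is an edge of $\Hout$ for every $\ell\in[p-1]$. By the definition in the paper, $E_2=\{i_1,i_3,i_5,\ldots\}$ is the subsequence at odd positions, and by \eqref{eq:max-path-Hout} it is nonempty and in fact large enough to contain many pairs $i<j$.

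Given $i,j\in E_2$ with $i<j$, write $i=i_a$ and $j=i_b$ with $a$ and $b$ both odd and $a<b$, so that $b\geq a+2$. Set $j':=i_{a+1}$. Then $i<j'<j$ and $(i,j')$ is an edge of $\Hout$. Consequently the triple $(i,j',j)$ satisfies the hypotheses of \cref{claim:HOI:two_edges_Hout_pi}, with $j'$ and $j$ playing the roles of the variables $j$ and $k$ there. The three conclusions of that claim, applied to this triple, read $b_j\notin\cgR_i$, $\pi(b_j)\leq\pi(v_i)$ in $P$, and $a_{j+1}<v_i$ in $P$, which are exactly the three assertions of \cref{claim:HIO:cor-a_leq_v}.

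I do not anticipate a real obstacle here: the purpose of passing from $E_1$ to $E_2$ by keeping only every other vertex along the fixed $\Hout$-path is precisely to guarantee, for any chosen pair $i<j$ in $E_2$, the existence of the intermediate index $j'\in E_1$ with $(i,j')\in E(\Hout)$ that is required to invoke \cref{claim:HOI:two_edges_Hout_pi}. The short proof is therefore a bookkeeping step that packages \cref{claim:HOI:two_edges_Hout_pi} into a more convenient two-index form, so that later arguments can work with arbitrary pairs from $E_2$ without having to produce a third witness index each time.
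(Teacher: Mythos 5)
Your proposal is correct and is exactly the argument the paper intends: the paper states the claim with the single remark that it follows from Claim~\ref{claim:HOI:two_edges_Hout_pi}, and the intermediate index you construct (the $E_1$-vertex between two $E_2$-vertices along the fixed $\Hout$-path, which is an out-neighbour of the smaller one since $\Hout$-edges increase indices) is precisely the witness the paper relies on, e.g.\ in the later proof of the going-out claim. No gap; your write-up just makes the bookkeeping explicit.
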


Finally, we are ready to state and prove the \q{going out} claim.

\begin{claim}[Going out]\label{claim:going-out}
    Let $i,j \in E_2$ with $i < j$ such that $|\{k \in E_2 : i \leq k \leq j\}| \geq \se_P(I) + 1$.
    Let $c \in A$, $z \in Z(c)$, and let $W$ be an exposed witnessing path from $c$ to $z$ in~$P$. 
    If $c \in \calR_i$, then $W \subset \calR_j$ (in particular, $z \in \calR_j$).
    % \begin{enumerate}
    %     \item if $c \in \calR_i$, then $W \subset \calR_j$ (in particular, $z \in \calR_j$); \label{claim:going-out:inR_j}
    %     \item if $c \in \calR_i$ and $c \notin \shadz(v_j)$, then $\pi(v_j) \leq \pi(z)$ in~$P$. \label{claim:going-out:pi}
    % \end{enumerate}
\end{claim}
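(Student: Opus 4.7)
The plan is to argue by contradiction: suppose $c \in \calR_i$ but $W \not\subset \calR_j$, and produce a standard example of size $\se_P(I)+1$ inside $I$, which is absurd. Enumerate $\{k \in E_2 : i \leq k \leq j\} = \{k_0 < \cdots < k_r\}$ with $k_0 = i$, $k_r = j$, and $r \geq \se_P(I)$. By \cref{claim:region-i-in-region-j} the regions nest as $\calR_{k_0} \subseteq \calR_{k_1} \subseteq \cdots \subseteq \calR_{k_r}$, so $c$ lies in every $\calR_{k_\ell}$, and because the exterior of $\calR_{k_\ell}$ contains the exterior of $\calR_{k_r}$, $W$ also exits every $\calR_{k_\ell}$. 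For each $\ell$ I define $w_\ell$ as the first internal vertex of $W$ whose outgoing edge leaves $\calR_{k_\ell}$; this vertex belongs to $\partial \calR_{k_\ell}$, and since $W$ is exposed, $w_\ell \notin B$, so $w_\ell$ lies on $U_{k_\ell}$ or $V_{k_\ell}$. The nesting forces $w_0, w_1, \dots, w_r$ to occur in this order along $W$, whence $a_{k_\ell} \leq w_\ell \leq w_{\ell'} \leq z$ in $P$ for every $\ell \leq \ell'$.

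For the set $\{(a_{k_\ell}, b_{k_\ell}) : \ell\in\{0,\dots,r\}\}$ to induce a standard example of size $r+1 \geq \se_P(I)+1$, I need the $a_{k_\ell}$'s pairwise incomparable, the $b_{k_\ell}$'s pairwise incomparable, and both $a_{k_{\ell'}} < b_{k_\ell}$ and $a_{k_\ell} < b_{k_{\ell'}}$ for $\ell < \ell'$. The $b_{k_\ell}$'s are pairwise incomparable by the strict left-of order and \cref{prop:shortcuts}. The $a_{k_\ell}$'s are pairwise incomparable: any comparability $a_{k_\ell} \leq a_{k_{\ell'}}$ with $\ell < \ell'$ would combine with $a_{k_{\ell'}} < b_{k_\ell}$ to contradict $(a_{k_\ell}, b_{k_\ell}) \in I$, while $a_{k_{\ell'}} < a_{k_\ell}$ would give $a_{k_{\ell'}} < u_{k_\ell}$, contradicting \cref{claim:HIO:u}.\ref{claim:HIO:u:a_i-parallel-u_j}. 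The \q{easy} direction $a_{k_{\ell'}} < b_{k_\ell}$ for $\ell < \ell'$ is free from \cref{claim:HIO:u}.\ref{claim:HIO:u:u_j<u_i} combined with \ref{items-u:u<pi}:
\[a_{k_{\ell'}} < u_{k_{\ell'}} \leq u_{k_\ell+1} \leq \pi(b_{k_\ell}) \leq b_{k_\ell}.\]

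The hard part — which is where the hypothesis $W \not\subset \calR_j$ enters — is the \q{missing} direction $a_{k_\ell} < b_{k_{\ell'}}$ for $\ell < \ell'$. I plan to use \cref{prop:path-leaving-the-right-side-of-a-region} to force every $w_\ell$ to lie on $V_{k_\ell}$: an exit on $U_{k_\ell}$ would extend $c[W]w_\ell$ by $w_\ell[U_{k_\ell}]u_{k_\ell}$ into a witnessing path from $c$ to $u_{k_\ell}$ on $\spine$, and combining this with $a_1[M_L(a_1)]u_1[\spine]u_{k_\ell}$ in the opposite direction yields a witnessing path left of $\spine$, contradicting either \ref{items-u:path-left} or the minimality of $M_L(a_1)$. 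Once every $w_\ell$ sits on $V_{k_\ell}$, I obtain $a_{k_\ell} \leq w_\ell \leq w_{\ell'} \leq v_{k_{\ell'}}$, and the chain $v_{k_{\ell'}} \leq d_{k_{\ell'}+1} \in \{b_{k_{\ell'}+1}, t_{k_{\ell'}+1}\}$ combined with the left-of ordering $b_{k_\ell} \prec b_{k_{\ell'}} \prec d_{k_{\ell'}+1}$ and \cref{prop:in_one_shad_but_not_other_then_left} will propagate the comparability $a_{k_\ell} < d_{k_{\ell'}+1}$ through the shadows to deliver $a_{k_\ell} < b_{k_{\ell'}}$ (in the shifted case the pair $(a_{k_{\ell'}+1}, t_{k_{\ell'}+1}) \in I$ plays the auxiliary role). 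The resulting standard example of size $r+1 \geq \se_P(I)+1$ contradicts the definition of $\se_P(I)$, forcing $W \subseteq \calR_j$.
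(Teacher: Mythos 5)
Your overall strategy — assume $c\in\calR_i$ and $W\not\subset\calR_j$, intersect the exposed path with the boundaries of the nested regions, and manufacture a standard example of size at least $\se_P(I)+1$ inside $I$ — is the same as the paper's, but two of your key steps do not hold up, and they are exactly the points the paper's extra machinery is built to handle. First, your exclusion of exits through the $U$-side is unsupported: a crossing $w_\ell$ on $U_{k_\ell}$ only yields a witnessing path from $c$ to $u_{k_\ell}$, i.e.\ $c<u_{k_\ell}$ in $P$, and nothing in the hypotheses forbids this (unlike the \q{going in} claim, there is no assumption $c\parallel u_{k_\ell}$ here); moreover \q{combining with $a_1[M_L(a_1)]u_1[\spine]u_{k_\ell}$ in the opposite direction} is not a legitimate construction — you cannot traverse a witnessing path backwards, no path left of $\spine$ arises, and neither \ref{items-u:path-left} nor the minimality of $M_L(a_1)$ is violated. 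In the paper a $U$-side exit is ruled out only for a \emph{later} crossing and only relative to an \emph{earlier} crossing anchored at $a_{k+1}$: one first records a crossing of $\partial\calR_{k,k'}$ (whose non-$B$ part consists of pieces of $U_{k+1}$ and $V_{k,k'}$, both starting at $a_{k+1}$), giving $a_{k+1}\le w_{k,k'}$, and then an exit of the later boundary through $U_\ell$ would force $a_{k+1}\le w_\ell\le u_\ell\le u_{k+2}\le\pi(b_{k+1})\le b_{k+1}$ (via \cref{claim:HIO:u}.\ref{claim:HIO:u:u_j<u_i} and \ref{items-u:u<pi}), contradicting $(a_{k+1},b_{k+1})\in\Inc(P)$. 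This is precisely why the intermediate regions $\calR_{k,k'}$ — and hence the passage from $E_1$ to every other index in $E_2$ — are needed; your argument never produces an anchor at $a_{k+1}$.

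Second, and more fundamentally, even if every exit lay on a $V$-side, the comparabilities your construction produces are $a_{k_\ell}\le w_{\ell'}\le v_{k_{\ell'}}\le d_{k_{\ell'}+1}$, i.e.\ comparabilities to $b_{k_{\ell'}+1}$ or $t_{k_{\ell'}+1}$, elements lying to the \emph{right} of $b_{k_{\ell'}}$ — not to $b_{k_{\ell'}}$ itself. The proposed conversion of $a_{k_\ell}<d_{k_{\ell'}+1}$ into $a_{k_\ell}<b_{k_{\ell'}}$ via the left-of ordering and \cref{prop:in_one_shad_but_not_other_then_left} is not a valid inference: being below an element further to the right says nothing about comparability with $b_{k_{\ell'}}$ (an In-Out edge is exactly the configuration $a<d$, $a\parallel b$, with $b$ left of $d$), and that proposition concerns membership in shadows, not transport of comparabilities leftwards. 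The paper sidesteps this by building the standard example on the \emph{shifted} pairs $(a_{k+1},d_{k+1})$, $k\in X$, which are in $I$ (by \ref{items:HIO-cycle} in the shifted case): for these pairs the hard cross-comparability is $a_{k+1}<d_{\ell+1}$, exactly what the crossing chain delivers, while the easy one, $a_{\ell+1}<v_k\le d_{k+1}$, comes from \cref{claim:HIO:cor-a_leq_v}.\ref{claim:HOI:cor-a_leq_v:a_v}. With your unshifted family $(a_{k_\ell},b_{k_\ell})$ the required comparabilities simply are not what the geometry gives, so the argument cannot be completed along the route you sketch.
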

\begin{proofclaim}
    Assume that $c \in \calR_i$. 
    %Let $W$ be an exposed witnessing path from $c$ to $z$ in~$P$. 
    %We show that 
    %\begin{equation}\label{eq:W-contained-in_Rj}
    %    W \subset \calR_j.
    %\end{equation}
    Suppose to the contrary that $W \not\subset \calR_j$.
    Let $X = \{k \in E_2 : i \leq k \leq j\}$ and for each $k \in X$, let $d_{k+1} = b_{k+1}$ when $\sigma_k$ is a cycle edge and $d_{k+1} = t_{k+1}$ when $\sigma_k$ is a shifted edge.
    Recall that $(a_{k+1},d_{k+1}) \in I$ for each $k \in X$ (by~\ref{items:HIO-cycle} when $\sigma_k$ is a shifted edge).
    We show that for all $k,\ell \in X$ with $k < \ell$, we have
        \[a_{k+1} < d_{\ell+1} \text{ and } a_{\ell+1} < d_{k+1} \text{ in } P.\]
    Therefore, $\{(a_{k+1},d_{k+1}) : k\in X\}$ induces a standard example of order $|X|$ in~$P$ with all the incomparable pairs in $I$, which is a contradiction as $|X| \geq \se_P(I)+1$.
    
    Fix some $k,\ell \in X$ with $k < \ell$.
    First, note that by~\cref{claim:HIO:cor-a_leq_v}.\ref{claim:HOI:cor-a_leq_v:a_v}, $a_{\ell+1} < v_k \leq d_{k+1}$ in~$P$.
    The rest of the proof is devoted to arguing that $a_{k+1} < d_{\ell+1}$ in~$P$.
    Since $k,\ell \in E_2$, we can fix $k' \in E_1$ such that $k < k' < \ell$ and $(k,k')$ is an edge of $\Hout$.
    By~\cref{claim:HIO:R_i_subset_R_ij,claim:HIO:R_ij_subset_R_j,claim:region-i-in-region-j},
        \[\calR_i \subset \calR_k \subset \calR_{k,k'} \subset \calR_{k'} \subset \calR_{\ell} \subset \calR_j.\]
    %In particular, $a \in \calR_{k,k'}$ and $z \notin \calR_{\ell}$.
    %Let $W$ be an exposed witnessing path from $a$ to $z$ in~$P$.
    Since $c \in \calR_i$, $W \not\subset \calR_j$, and $W$ is exposed, there exist elements $w_{k,k'}$, $w_{\ell}$, and $w_j$ of $W$ not in $B$ such that
        \[w_{k,k'}\in\partial\calR_{k,k'},\ w_{\ell}\in\partial\calR_{\ell},\ w_j \in \partial \calR_j, \ \textrm{and
        $c \leq w_{k,k'} \leq w_\ell \leq w_j < z$ in~$P$}.
        \]
    %The last inequality is strict as $z$ does not lie in $\calR_{\ell}$.
    %In particular, $w_{k,k'}$ and $w_{\ell}$ lie in the interior of an exposed witnessing path in~$P$, so $w_{k,k'},w_{\ell} \notin B$. 
    The element $w_{k,k'}$ lies in $u_{k+1}[U_{k+1}]m_{k,k'}[V_{k,k'}]v_{k,k'}$ and the element $w_{\ell}$ lies in $u_{\ell}[U_{\ell}]m_{\ell}[V_{\ell}]v_{\ell}$.
    The former yields $a_{k+1} \leq m_{k,k'} \leq w_{k,k'}$ in~$P$.
    Since $k,\ell \in E_2$ and $k < \ell$, we have $k+1 < \ell$.
    %Therefore, $a_{k+1} \parallel u_\ell$ in~$P$ (as $u_\ell \leq u_{k+2} \leq b_{k+1}$ in~$P$ by~\cref{claim:HIO:u}.\ref{claim:HIO:u:u_j<u_i} and~\ref{items-u:u<pi}).
    If $w_\ell$ lies in $U_\ell$, then $a_{k+1} \leq w_{k,k'} \leq w_\ell \leq u_\ell \leq u_{k+2} \leq b_{k+1}$ in $P$ (the last two comparabilities follow from~\cref{claim:HIO:u}.\ref{claim:HIO:u:u_j<u_i} and~\ref{items-u:u<pi} respectively), which is a false.
    Thus, $w_\ell$ lies in $V_\ell$.
    This yields $w_\ell < v_\ell$ in~$P$.
    Altogether, we obtain
        \[a_{k+1} \leq w_{k,k'} \leq w_\ell < v_\ell \leq d_{\ell+1} \text{ in } P,\]
    as desired. 
\end{proofclaim}

We have $|E_2| \geq 2\se_P(I)+6 = 1 + (\se_P(I) + 1) + 1 + (\se_P(I) + 1) + 2$ (by~\eqref{eq:E_2}).
Hence, we can define the following numbers.
Let $j_1,\dots,j_8 \in E_2$ be such that
\begin{align*}
    &j_1 < j_2 < j_3 < j_4 < j_5 < j_6 < j_7 < j_8, \\
    &|\{k \in E_2 : j_2 \leq k \leq j_3\}| \geq \se_P(I)+1, \\
    &|\{k \in E_2 : j_5 \leq k \leq j_6\}| \geq \se_P(I)+1. %, \\
    %&|\{k \in E_2 : j_1 \leq k \leq j_8\}| = 2\se_P(I)+6.
\end{align*}
% \begin{align*}
%     j_1, j_2 &\text{ be el
%ements of $E_2$ such that $j_1<j_2$},\\
%     j_3 &\text{ be an element of $E_2$ such that } |\{k \in E_2 \mid j_2 \leq k \leq j_3\}| \geq \se(P)+1,\\
%     j_4, j_5 &\text{ be elements of $E_2$ such that $j_3<j_4<j_5$,} \\
%     j_6 &\text{ be an element of $E_2$ such that } |\{k \in E_2 \mid j_5 \leq k \leq j_6\}| \geq \se(P)+1,\\
%     j_7, j_8 &\text{ be elements of $E_2$ such that $j_6<j_7<j_8$.}
% \end{align*}
%Note that $|\{k \in E_2 \mid j_1 \leq k \leq j_8\}| \geq 2\se(P) + 6$.
The plan is to show %that $a',b' \in \calR_{j_8}$ and to use this fact to prove 
that $((a',b'),(a_{j_8},b_{j_8}))$ is an edge in $\HIO$.
This will conclude the proof of~\Cref{lemma:HIO}, 
see~\eqref{eq:final-HIO}.
Recall that $(a,b) = (a_1,b_1)$.

\begin{claim}\label{claim:a_in_Int_R_j_1}
    $a \in \calR_{j_2}$.
\end{claim}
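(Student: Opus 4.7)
The plan is to apply the Going In claim (Claim~\ref{claim:going-in}) to a suitable edge $(p, j_2)$ of $\Hout$ with $c = a_1$ and $z = u_1$; this will directly give $a = a_1 \in \Int \calR_{j_2}$.

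To locate $p$, recall that $E_1$ is the vertex set of a fixed path in $\Hout$ witnessing $\maxpath(\Hout)$. Because $\Hout$ is acyclic (edges in $\Hout$ go from smaller to larger indices via the \q{left of} relation), listing the vertices of the path in increasing order recovers the path itself. Since $j_1 \in E_1$ and $j_1 < j_2$, the vertex $j_2$ has an immediate predecessor $p \in E_1$ along this path, so $(p, j_2)$ is an edge of $\Hout$. Because $E_2$ is formed by taking every other element of $E_1$ starting from the first, the consecutive elements $j_1, j_2$ of $E_2$ are separated in $E_1$ by at least one element; hence $p > j_1 \geq 1$, i.e.\ $p \geq 2$.

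I now verify the three hypotheses of Claim~\ref{claim:going-in} applied to the edge $(p, j_2)$, with $c = a_1$ and $z = u_1$. First, $u_1 = z_L(a_1) \in Z(a_1)$ by item~\ref{items-u:u_1}. Second, since $p \in E$ and $1 < p$, the \q{in particular} part of Claim~\ref{claim:HIO:u_i-in-R_j} gives $u_1 \in \Int \calR_p$. Third, $a_1 \parallel u_p$ in $P$ by Claim~\ref{claim:HIO:u}.\ref{claim:HIO:u:a_i-parallel-u_j}, since $1 < p$. The conclusion of Claim~\ref{claim:going-in} is then $a_1 \in \Int \calR_{j_2}$, which in particular gives $a \in \calR_{j_2}$.

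The only mildly delicate point is the lower bound $p \geq 2$, which is needed to guarantee $a_1 \parallel u_p$ (the relation $a_1 \parallel u_1$ would fail since $a_1 < u_1$). This inequality follows from the construction of $E_2$ as alternate elements of $E_1$; the remaining steps are mechanical verifications of hypotheses of previously proved claims.
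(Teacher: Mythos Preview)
Your proof is correct and takes essentially the same approach as the paper: both fix the predecessor of $j_2$ along the $\Hout$-path (the paper calls it $\ell$ with $j_1 < \ell < j_2$, you call it $p$), observe it exceeds $1$, and then apply Claim~\ref{claim:HIO:u_i-in-R_j}, Claim~\ref{claim:HIO:u}.\ref{claim:HIO:u:a_i-parallel-u_j}, and Claim~\ref{claim:going-in} with $c = a_1$, $z = u_1$.
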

\begin{proofclaim}
    Let $\ell \in E_1$ be such that $j_1 < \ell < j_2$ and $(\ell,j_2)$ is an edge in $\Hout$.
    In particular, $1 < \ell$, and so, by~\Cref{claim:HIO:u_i-in-R_j}, $u_1 \in \Int \calR_{\ell}$.
    Additionally, by~\Cref{claim:HIO:u}.\ref{claim:HIO:u:a_i-parallel-u_j}, $a \parallel u_\ell$ in~$P$.
    Since $u_1 \in Z(a)$ (by~\ref{items-u:Z}), we can apply~\Cref{claim:going-in} to obtain $a \in \Int \calR_{j_2}$.
\end{proofclaim}

\begin{figure}[tp]
  \begin{center}
    \includegraphics{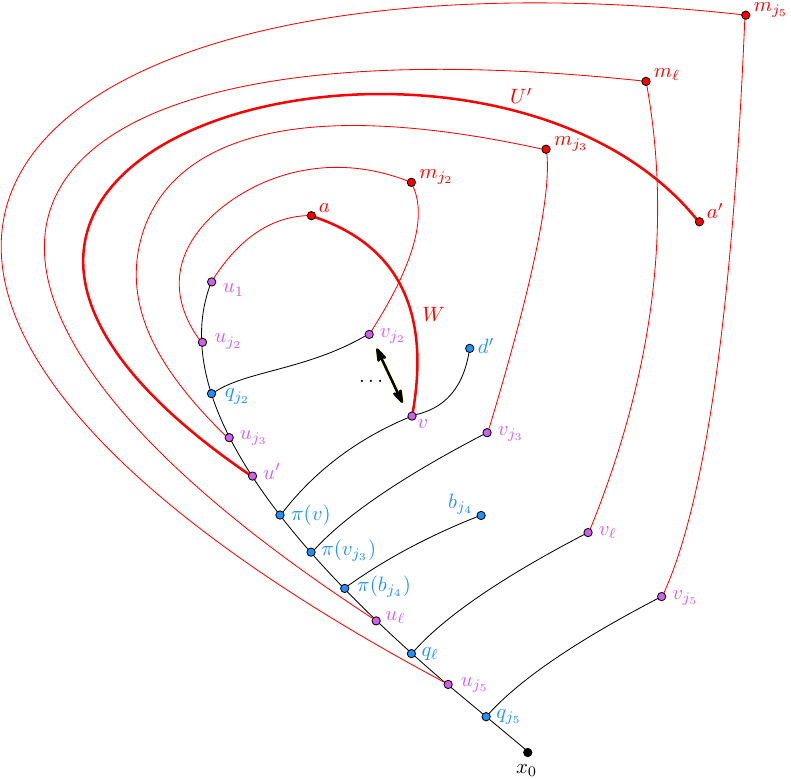}
  \end{center}
  \caption{
    An illustration of the proof of~\cref{claim:a'}.
    We denote by $d'$ either $b'$ or $t'$, depending on the case.
  }
  \label{fig:a'}
\end{figure}

%Let $u' \in Z(a')$ and let $U'$ be an exposed witnessing path from $a'$ to $u'$ in~$P$ be such that $(u',U')$ witnesses~\ref{Lout} for $((a_1,b_1),(a',b'))$. 
%Let $\ell \in E_1$ be such that $j_4 < \ell < j_5$ and $(\ell,j_5)$ is an edge in $\Hout$.

Let $u' \in Z(a')$ and let $U'$ be an exposed witnessing path from $a'$ to $u'$ in~$P$ be such that $(u',U')$ witnesses~\ref{Lout} for $((a_1,b_1),(a',b'))$.
Note that $u'$ lies in $\spine$.

\begin{claim}\hfill\label{claim:a'}
    \begin{enumerate}
        \item $a' \in \calR_{j_5}$.\label{claim:a':in-region}
        \item $u_{j_5}$ lies in $W_L(u')$. \label{claim:a':uj5}
    \end{enumerate}
\end{claim}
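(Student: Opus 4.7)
The plan for proving~\ref{claim:a':in-region} is to imitate the strategy of~\Cref{claim:a_in_Int_R_j_1}. Because $E_2$ was obtained from $E_1$ by taking every other element starting from the first, there is exactly one vertex $\ell \in E_1$ strictly between $j_4$ and $j_5$, and consecutive vertices of the path $E_1$ are joined by an edge in $\Hout$, so $(\ell, j_5)$ is an edge of $\Hout$. We then apply the Going In Claim~\cref{claim:going-in} with $c = a'$, $z = u'$, and this edge, which will deliver $a' \in \Int \calR_{j_5}$. The hypothesis $u' \in Z(a')$ is immediate from the definition of $u'$. The hypothesis $u' \in \Int \calR_\ell$ reduces, via~\cref{claim:HIO:u_i-in-R_j}, to the strict inequality $u_\ell < u'$ in $P$: once known, $u'$ lies in $u_\ell[\spine]u_1 \setminus \{u_\ell\} \subseteq \Int \calR_\ell$. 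This inequality will be an instance of~\ref{claim:a':uj5} applied at the index $\ell$ in place of $j_5$ (the same argument goes through, since $\ell > j_4$ inherits enough of the gap structure). Finally, $a' \parallel u_\ell$ holds because $u_\ell \in B$ and $a' \notin B$ preclude $u_\ell \le a'$, while $a' \le u_\ell$ would produce an element of $Z(a')$ below $u'$ on $\spine$, contradicting the \ref{Lout}-witness role of $u'$.

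For~\ref{claim:a':uj5}, the assertion that $u_{j_5}$ lies in $W_L(u')$ is equivalent to $u_{j_5} \le u'$ in $P$, since both elements lie on $\spine$ and are hence comparable in $P$. We plan to argue by contradiction: suppose $u' < u_{j_5}$ in $P$. By the strictly decreasing chain $u_1 > u_2 > \cdots > u_{j_5}$ in $P$ (\cref{claim:HIO:u}.\ref{claim:HIO:u:u_j<u_i}), we would have $u' < u_i$ in $P$ for every $i \le j_5$. Combining with $u_{i+1} \le \pi(b_i) < u_i$ (from~\ref{items-u:u<pi} and~\ref{items-u:pi<u}), we would deduce $u' < \pi(b_i) < b_i$, and hence $a' < u' < b_i$ in $P$, for every $i \in \{1,\dots,j_5 - 1\}$. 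The plan is then to combine these new comparabilities with the in-out alt-cycle comparabilities $a_i < b_{i+1}$ and $a_{i+1} < b_i$ (or their shifted-edge analogues involving witnesses $t_{i+1}$) arising from weight-one edges $\sigma_i$, together with the large gap $|\{k \in E_2 : j_2 \le k \le j_3\}| \ge \se_P(I) + 1$, in order to harvest a subfamily of $I$ that induces a standard example of order strictly greater than $\se_P(I)$, thereby contradicting the definition of $\se_P(I)$.

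The main obstacle lies in~\ref{claim:a':uj5}: while the ``lower-triangular'' comparabilities $a_i < b_k$ for $k < i$ are easily obtained from the chain $u_1 > u_2 > \cdots$ and $a_i < u_i$, the ``upper-triangular'' comparabilities $a_i < b_k$ for $k > i$ needed to complete the standard example are not automatic. We expect to recover them by restricting to every-other index within the $(j_2, j_3)$-gap (so that adjacent in-out alt cycles supply the missing cross comparabilities among the selected indices) and, if necessary, invoking the maximality condition~\ref{item:instance:maximal} of the instance to promote the derived comparabilities $a' < b_i$ into bona-fide pairs of the form $(a_i, b_{i'})$ inside $I$. Once~\ref{claim:a':uj5} is established, \ref{claim:a':in-region} follows by the Going In argument outlined above.
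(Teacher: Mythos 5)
Your outline for item~\ref{claim:a':in-region} follows the paper's skeleton (pick $\ell\in E_1$ with $j_4<\ell<j_5$ and $(\ell,j_5)$ an edge of $\Hout$, get $u'\in\Int\calR_\ell$ from \cref{claim:HIO:u_i-in-R_j}, then apply \cref{claim:going-in} with $c=a'$, $z=u'$), but everything hinges on the inequality $u_\ell<u'$ in $P$ together with $a'\parallel u_\ell$, and neither is established. Your route to~\ref{claim:a':uj5} --- assume $u'<u_{j_5}$, deduce $a'<b_i$ for all $i<j_5$, and extract a standard example of order greater than $\se_P(I)$ --- is only a sketch, and the missing piece you acknowledge (the ``upper-triangular'' crossings among the pairs $(a_i,b_i)$) is exactly the part that cannot be supplied: the extra hypothesis $u'<u_{j_5}$ only yields comparabilities emanating from the single element $a'$ (and these are comparabilities, so the pairs $(a',b_i)$ are not even incomparable pairs), while pairwise crossings along a long weight-one stretch of a path in $\HIO$ are not available in general --- the paper's Figure with a long path in $\HIO$ inside a poset with no $S_3$ shows that $\maxpath(\HIO)$ is not bounded by $\se_P(I)$, which is precisely why the $\Hin$/$\Hout$ dichotomy and the modulus $m$ exist. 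The mechanism that does produce such a standard example in \cref{claim:going-out} needs an exposed path escaping a nested family of regions, and no such escaping path is hypothesized in your contradiction. In addition, your justification of $a'\parallel u_\ell$ is invalid: membership of $u_\ell$ in $Z(a')$ would require an exposed path and a shadow condition, and even then nothing in the definition of the \ref{Lout}-witness $(u',U')$ makes $u'$ minimal on $\spine$ among elements of $Z(a')$, so no contradiction would follow. Finally, your reduction of~\ref{claim:a':in-region} to ``\ref{claim:a':uj5} applied at index $\ell$'' is circular, since no proof of~\ref{claim:a':uj5} at any index is given.

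The paper proves both items by a projection argument in which the weight-one hypothesis on $((a,b),(a',b'))$ enters through an element $v\in Z(a)$ with $v\le b'$ (cycle edge) or $v\le t'$ (shifted edge with witness $t'$). Since $a\in\calR_{j_2}$ (\cref{claim:a_in_Int_R_j_1}) and $|\{k\in E_2: j_2\le k\le j_3\}|\ge\se_P(I)+1$, \cref{claim:going-out} keeps an exposed path from $a$ to $v$ inside $\calR_{j_3}$; then \cref{prop:elements-not-in-shad-w}.\ref{prop:elements-not-in-shad-w:left}.\ref{prop:elements-not-in-shad-w:left:iii} gives $\pi(v_{j_3})\le\pi(v)$, and together with \cref{claim:HIO:cor-a_leq_v}.\ref{claim:HOI:cor-a_leq_v:pi_b_k_pi_v_i}, \ref{items-u:u<pi}, and \cref{claim:HIO:u}.\ref{claim:HIO:u:u_j<u_i} one obtains the chain $u_\ell\le u_{j_4+1}\le\pi(b_{j_4})\le\pi(v_{j_3})\le\pi(v)<u'$ in $P$, the last inequality strict because $u'$ and $\pi(v)$ lie on $\spine$ and $a'\parallel v$ in $P$. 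This chain yields $u_{j_5}\le u_\ell<u'$ on $\spine$, which is item~\ref{claim:a':uj5}, and it also yields $a'\parallel u_\ell$ (since $a'\le u_\ell\le\pi(v)\le v$ would contradict $a'\parallel v$), after which the Going In step you describe completes item~\ref{claim:a':in-region}. Your proposal is missing this use of $v$ and the projection chain, so it does not close.
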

 % \begin{claim}\label{claim:a'-in-R-j5}
 %     $a' \in \calR_{j_5}$.
 % \end{claim}
\begin{proofclaim}
    The proof is illustrated in~\Cref{fig:a'}.
    Recall that $((a,b),(a',b'))$ is an edge of weight $1$ in $\HIO$.
    If it is a shifted edge, let $t' \in B$ be a witness for the edge.
    Let $v \in B$ be such that
    \[
        v \in Z(a) \text{ and }\begin{cases}
        v \leq b' \text{ in~$P$}&\textrm{if $((a,b),(a',b'))$ is a cycle edge},\\
        v \leq t' \text{ in~$P$}&\textrm{if $((a,b),(a',b'))$ is a shifted edge}.\\
        \end{cases}
    \]
    %In particular, $a' \parallel v$ in~$P$ (by~\ref{items:HIO-cycle}).
    By~\cref{claim:a_in_Int_R_j_1}, we have $a \in \calR_{j_2}$. 
    Let $W$ be an exposed witnessing path from $a$ to $v$ in~$P$. 
    Since $|\{k \in E_2 : j_2 \leq k \leq j_3\}| \geq \se_P(I)+1$, by~\Cref{claim:going-out}, $W\subseteq \calR_{j_3}$.
    Moreover, since $(a,b) \in I$ and $b$ is left of $v_{j_3}$ (by~\Cref{claim:HIO:left-right}.\ref{claim:HIO:left-right:b_left_v}), by~\Cref{prop:dangerous-implies-in-Y}, $a \notin \shadz(v_{j_3})$.    
    Since $W\subseteq \calR_{j_3}$ and $a \notin \shadz(v_{j_3})$, by~\Cref{prop:elements-not-in-shad-w}.\ref{prop:elements-not-in-shad-w:left}.\ref{prop:elements-not-in-shad-w:left:iii}, we obtain that $\gce(W_L(u_{j_3}),W_L(v_{j_3}))$ lies in $W_L(v)$.
    Since $W_L(u_{j_3})$ is a subpath of $\spine$ (by~\cref{claim:HIO:u}.\ref{claim:HIO:u:WL-supath-WL}) and $u_{j_3} \parallel v_{j_3}$ in $P$ (by~\Cref{claim:HIO:v}.\ref{claim:HIO:v:u_left_v}), we have $\pi(v_{j_3}) = \gce(W_L(u_{j_3}),W_L(v_{j_3}))$, and so,
    \begin{equation}\label{eq:application-of-going-out}
        %v \in \calR_{j_3} \text{ and } 
        \pi(v_{j_3}) \leq \pi(v) \text{ in } P.
    \end{equation}
    %Let $u' \in B$ witness \ref{Lout} for $((a,b),(a',b'))$.
    Let $\ell \in E_1$ be such that $j_4 < \ell < j_5$ and $(\ell,j_5)$ is an edge in $\Hout$. 
    We have
    \begin{equation} \label{eq:u-leq-u-ell}
        u_\ell \leq u_{j_4+1} \leq \pi(b_{j_4}) \leq \pi(v_{j_3}) \leq \pi(v) < u' \text{ in } P,
    \end{equation}
    where the first comparability follows from~\cref{claim:HIO:u}.\ref{claim:HIO:u:u_j<u_i}, the second follows from~\ref{items-u:u<pi}, the third follows from~\Cref{claim:HIO:cor-a_leq_v}.\ref{claim:HOI:cor-a_leq_v:pi_b_k_pi_v_i}, the fourth follows from~\eqref{eq:application-of-going-out}, and the fifth follows from the facts that $u'$ and $\pi(v)$ lie in $\spine$ and $a' \parallel v$ in~$P$ (by~\ref{items:HIO-cycle} in the case of a shifted edge).
    Note that this already implies~\ref{claim:a':uj5} since $u_{j_5} \leq u_\ell < u'$ in $P$ (by~\eqref{claim:HIO:u}.\ref{claim:HIO:u:u_j<u_i} and~\eqref{eq:u-leq-u-ell}) and all of these elements lie in $\spine$ (by~\ref{items-u:W_L}).
    
    By~\Cref{claim:HIO:u_i-in-R_j}, $u_\ell[\spine]u_1 \subset \Int \calR_\ell \cup \{u_\ell\}$. 
    By~\eqref{eq:u-leq-u-ell}, $u'$ lies in  $u_\ell[\spine]u_1$ and $u' \neq u_\ell$, so we conclude that $u' \in \Int \calR_\ell$.
    Since $u_\ell \leq \pi(v)$ in~$P$ (by~\eqref{eq:u-leq-u-ell}), we have $a' \parallel u_\ell$ in~$P$.
    Therefore, by~\Cref{claim:going-in}, $a' \in \Int \calR_{j_5}$.
    This completes the proof of~\ref{claim:a':in-region} and the claim.
\end{proofclaim}

\begin{claim}\label{claim:a'-to-z-path}
    Let $z \in Z(a')$ and let $W$ be a witnessing path from $a'$ to $z$ in $P$.
    Then, $W \subset \calR_{j_6}$.
    In particular, $z \in \calR_{j_6}$.
\end{claim}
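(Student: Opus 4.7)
The plan is to split $W$ at its first vertex in $B$ and invoke two known tools: the Going Out claim (\cref{claim:going-out}) applied to the exposed prefix, and \cref{prop:path_does_not_leave_regions} applied to the remaining segment, which I will show lies entirely in $B$.

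First I would let $w_1$ be the first element of $W$ lying in $B$; such an element exists because $z \in W$ and $z \in B$. Since every element of $W$ strictly preceding $w_1$ avoids $B$, the prefix $a'[W]w_1$ is an exposed witnessing path from $a'$ to $w_1$ in $P$. A brief check shows that $w_1 \in Z(a')$: clearly $a' < w_1$ in $P$, and since $w_1 \leq z$ in $P$ with $z \in Z(a') \subseteq Y(a')$, \cref{prop:comparability_implies_shadow_containment} gives $\shadz(w_1) \subseteq \shadz(z)$, whence $a' \notin \shadz(w_1)$ and so $w_1 \in Y(a')$. Combined with the exposed prefix, this yields $w_1 \in Z(a')$.

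By \cref{claim:a'}.\ref{claim:a':in-region}, $a' \in \calR_{j_5}$, and by the choice of $j_5, j_6 \in E_2$ we have $|\{k \in E_2 : j_5 \leq k \leq j_6\}| \geq \se_P(I)+1$, so the Going Out claim is available. I would apply it twice: first to the exposed path $a'[W]w_1$ to conclude $a'[W]w_1 \subset \calR_{j_6}$, and second to any exposed witnessing path from $a'$ to $z$ (such a path exists because $z \in Z(a')$) to deduce $z \in \calR_{j_6}$.

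For the remaining segment $w_1[W]z$, observe that $w_1 \in B$ means $x_0 \leq w_1$ in $P$; since every element of $W$ appearing after $w_1$ lies above $w_1$ in $P$, it also satisfies $x_0 \leq \cdot$ in $P$ and therefore belongs to $B$. Hence $w_1[W]z$ is a witnessing path all of whose elements lie in $B$, with both endpoints in $\calR_{j_6}$, so \cref{prop:path_does_not_leave_regions} forces $w_1[W]z \subset \calR_{j_6}$. Concatenating the two segments yields $W \subset \calR_{j_6}$. The only genuinely delicate step is the verification that $w_1 \in Z(a')$, which is what legitimises the single use of Going Out; the remaining pieces are routine bookkeeping about the $B$-closure of upsets in $P$.
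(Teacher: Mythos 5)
Your proof is correct, and its core is the same as the paper's: combine \cref{claim:a'}.\ref{claim:a':in-region} ($a' \in \calR_{j_5}$) with the choice $|\{k \in E_2 : j_5 \leq k \leq j_6\}| \geq \se_P(I)+1$ and invoke the Going Out claim (\cref{claim:going-out}). The difference is that you are more careful about the generality of the statement: the claim allows an arbitrary witnessing path $W$ from $a'$ to $z$, while \cref{claim:going-out} is formulated only for \emph{exposed} witnessing paths, and the paper's one-line proof simply cites Going Out (which strictly speaking covers only the exposed case — the only case that is actually used later in the paper). Your extra step — splitting $W$ at its first vertex $w_1$ in $B$, checking $w_1 \in Z(a')$ via $\shadz(w_1) \subseteq \shadz(z)$ (\cref{prop:comparability_implies_shadow_containment}), applying Going Out to the exposed prefix and to an exposed path to $z$, and then finishing the all-$B$ segment $w_1[W]z$ with \cref{prop:path_does_not_leave_regions} — is sound and legitimately covers the non-exposed case, so your write-up is in fact more complete than the paper's terse proof, at the cost of a little extra bookkeeping.
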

\begin{proofclaim}
    By~\Cref{claim:a'}.\ref{claim:a':in-region}, $a' \in \calR_{j_5}$. 
    Since $j_5,j_6\in E_2$ and 
    $|\set{k\in E_2: j_5\leq k \leq j_6}|\geq \se_P(I)+1$, \Cref{claim:going-out} 
    implies that $a'\in \calR_{j6}$.
\end{proofclaim}

\begin{claim}\label{claim:b'-in-R-j6}
    $b' \in \Int \calR_{j_6}$.
\end{claim}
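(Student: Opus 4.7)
The plan is to apply \Cref{prop:b_relative_to_region}.\ref{prop:b_relative_to_region:item:inside}, so I need to check that $b'\notin\shadz(q_{j_6})\setminus\set{q_{j_6}}$, that $\gamma_{L,j_6}$ is left of $W_L(b')$, and that $W_R(b')$ is left of $\gamma_{R,j_6}$.

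The shadow condition is straightforward: $q_{j_6}<u_{j_6}\leq u_1 = z_L(a)$ in $P$, so $\shadz(q_{j_6})\subset\shadz(z_L(a))$ by \Cref{prop:comparability_implies_shadow_containment}. By \Cref{prop:z_L_b_z_R}, $z_L(a)$ is left of $b$, and $b$ is left of $b'$ because $((a,b),(a',b'))$ is regular; \Cref{prop:left_porders_bs} then gives $z_L(a)$ left of $b'$, which implies $b'\notin\shadz(z_L(a))\supset\shadz(q_{j_6})$. Moreover $b'\ne q_{j_6}$, since otherwise $b'\leq z_L(a)< b$ would contradict $b$ left of $b'$.

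The left condition follows by transitivity of the $(x_0,e_{-\infty})$-ordering on paths starting at $x_0$ (\Cref{prop:u_e_is_poset}): item \ref{items-u:path-left} in the construction of the $u_i$'s yields $\gamma_{L,j_6}$ left of $\spine = W_L(z_L(a))$, while $z_L(a)$ left of $b'$ yields $\spine$ left of $W_L(b')$; composing these gives $\gamma_{L,j_6}$ left of $W_L(b')$.

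The right condition is the main obstacle and I would argue it through $z_R(a')$. By \Cref{claim:a'-to-z-path}, $z_R(a')\in\calR_{j_6}$, and by \Cref{prop:z_L_b_z_R}, $b'$ is left of $z_R(a')$, so $W_R(b')$ is left of $W_R(z_R(a'))$. If $z_R(a')\in\Int\calR_{j_6}$, then \Cref{prop:b_relative_to_region}.\ref{prop:b_relative_to_region:item:inside} applied to $z_R(a')$ (noting $z_R(a')\notin\shadz(q_{j_6})\setminus\set{q_{j_6}}$ by \Cref{prop:shad-disjoint-from-calR}) gives $W_R(z_R(a'))$ left of $\gamma_{R,j_6}$, and transitivity completes the argument. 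If $z_R(a')\in\partial\calR_{j_6}$, then since $z_R(a')\in B$ and $U_{j_6},V_{j_6}$ are exposed, $z_R(a')$ must lie in $W_L(u_{j_6})\cup W_R(v_{j_6})$. The delicate subcase $z_R(a')\in W_L(u_{j_6})$ is ruled out as follows: it would give $z_R(a')\leq u_{j_6}\leq u'\leq b$ in $P$, hence $z_R(a')\in\shadz(b)$ by \Cref{prop:comparability_implies_shadow_containment}; combined with $z_R(a')\notin\shadz(b')$ (from $b'$ left of $z_R(a')$) and $b$ left of $b'$, \Cref{prop:in_one_shad_but_not_other_then_left}.\ref{prop:in_one_shad_but_not_other_then_left:left} would force $z_R(a')$ left of $b'$, contradicting $z_R(a')$ right of $b'$ via the antisymmetry in \Cref{prop:left_porders_bs}. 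Thus $z_R(a')\in W_R(v_{j_6})$, making $W_R(z_R(a'))$ a subpath of $W_R(v_{j_6})\subset\gamma_{R,j_6}$, and the right condition follows.
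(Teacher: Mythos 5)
Your proof is correct, and its core matches the paper's: verify $b'\notin\shadz(q_{j_6})$, then check the two conditions of \cref{prop:b_relative_to_region}.\ref{prop:b_relative_to_region:item:inside}, using \cref{claim:a'-to-z-path} and \cref{prop:z_L_b_z_R}. The execution differs in two places. For the left condition you go through the spine, combining \ref{items-u:path-left} with ``$z_L(a)$ left of $b'$''; the paper instead uses $z_L(a')\in\calR_{j_6}$ together with \cref{prop:b_relative_to_region}.\ref{prop:b_relative_to_region:item:outside} (contrapositive), exactly mirroring its right-hand argument. For the right condition your case split on $z_R(a')\in\Int\calR_{j_6}$ versus $z_R(a')\in\partial\calR_{j_6}$, including the elimination of the subcase $z_R(a')$ on $W_L(u_{j_6})$ via \cref{prop:in_one_shad_but_not_other_then_left}, is correct but longer than necessary: since $z_R(a')\in\calR_{j_6}$, item \ref{prop:b_relative_to_region:item:outside} already says $W_R(z_R(a'))$ is not right of $\gamma_{R,j_6}$, so (as $\gamma_{R,j_6}$ cannot be a prefix of $W_R(z_R(a'))$, its last element being outside $B$) either $W_R(z_R(a'))$ is left of $\gamma_{R,j_6}$ or a prefix of it, and both cases finish with ``$W_R(b')$ left of $W_R(z_R(a'))$'' — this is the paper's ``symmetric'' one-liner, with no need to locate $z_R(a')$ on the boundary.

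One small slip: in justifying $b'\neq q_{j_6}$ you write ``$b'\le z_L(a)<b$'', but $z_L(a)<b$ is false ($z_L(a)$ is left of $b$, hence incomparable to it by \cref{prop:shortcuts}). The check is anyway not needed, since the hypothesis of \cref{prop:b_relative_to_region} only requires $b'\notin\shadz(q_{j_6})\setminus\set{q_{j_6}}$; and if you do want it, $b'=q_{j_6}$ would give $b'<z_L(a)$ in $P$, contradicting that $z_L(a)$ is left of $b'$ (left pairs are incomparable). So this is cosmetic, not a gap.
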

\begin{proofclaim}
    We have $q_{j_6} < u_{j_6} < u_2 \leq \pi(b) \leq b$ in $P$ by~\Cref{claim:HIO:u}.\ref{claim:HIO:u:u_j<u_i} and~\ref{items-u:u<pi}.
    Thus, $\shadz(q_{j_6}) \subset \shadz(b)$ (by~\Cref{prop:comparability_implies_shadow_containment}), and so, since $b$ is left of $b'$, we obtain that $b' \notin \shadz(q_{j_6})$.
    Therefore, to show that $b' \in \Int \calR_{j_6}$, it suffices (by~\Cref{prop:b_relative_to_region}.\ref{prop:b_relative_to_region:item:inside}) to prove that $\gamma_{L,j_6}$ is left of $W_L(b')$ and $W_R(b')$ is left of $\gamma_{R,j_6}$.
    We only prove the first statement, the proof of the second one is symmetric.
    By~\Cref{claim:a'-to-z-path}, we have $z_L(a') \in \calR_{j_6}$.
    By~\Cref{prop:z_L_b_z_R}, $z_L(a')$ is left of $b'$.
    Since $z_L(a') \in \calR_{j_6}$, by~\Cref{prop:b_relative_to_region}.\ref{prop:b_relative_to_region:item:outside}, either $\gamma_{L,j_6}$ is left of $W_L(z_L(a'))$ or $W_L(z_L(a'))$ is a subpath of $\gamma_{L,j_6}$.
    In both cases, it follows that $\gamma_{L,j_6}$ is left of $W_L(b')$ as $W_L(z_L(a'))$ is left of $W_L(b')$.
    This completes the proof of the claim.
\end{proofclaim}

\begin{claim}\label{claim:uj6}
$u_{j_6}$ lies in $W_L(z_L(a'))$.
    % \hfill\begin{enumerate}
    %     % \item $u_{j_6}$ lies in $W_L(u')$.\label{claim:uj6:u'} \jedrzej{This looks redundant, doesn't it follow from the proof of~\Cref{claim:a'-in-R-j5}?}
    %     \item \label{claim:uj6:zLa'}
    % \end{enumerate}
\end{claim}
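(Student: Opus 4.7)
The plan is to apply Proposition~\ref{prop:elements-not-in-shad-w}.\ref{prop:elements-not-in-shad-w:left}.\ref{prop:elements-not-in-shad-w:left:iii} to the region $\calR_{j_6}$ with $c = a'$, $b = b'$, $z = z_L(a')$. All the hypotheses of that proposition are already at hand: Claim~\ref{claim:a'-to-z-path} produces an exposed witnessing path from $a'$ to $z_L(a')$ contained in $\calR_{j_6}$ (in particular $a', z_L(a') \in \calR_{j_6}$), and Claim~\ref{claim:b'-in-R-j6} gives $b' \in \Int\calR_{j_6}$.

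The starting observation is that the inequality chain used to prove Claim~\ref{claim:a'}.\ref{claim:a':uj5} in fact yields $u_{j_6} \le u_{j_5} < u'$ with all three elements lying on the spine $\spine = W_L(z_L(a))$; consequently $W_L(u_{j_6})$ is a prefix of $W_L(u')$, so $u_{j_6}$ lies in $W_L(u')$. The task is therefore reduced to showing that $u' \in W_L(z_L(a'))$, for then $W_L(u')$ is a prefix of $W_L(z_L(a'))$ and $u_{j_6}$ inherits membership in $W_L(z_L(a'))$. To establish $u' \in W_L(z_L(a'))$, I use that $u' \in Z(a')$ so that $M := x_0[W_L(u')]u'[U']a'$ belongs to $\calM(a')$, giving $M_L(a') \preceq M$ in the $(x_0, e_{-\infty})$-ordering by Proposition~\ref{prop:paths_M_consistent}. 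If equality holds then $z_L(a') = u'$; otherwise let $w^\dagger = \gce(M_L(a'), M)$ be the divergence element, and the argument proceeds by ruling out the case $w^\dagger \in W_L(u')$ strictly below $u'$ (the case where $w^\dagger$ lies in the exposed segment $U'$ of $M$ already gives $W_L(u')$ as a prefix of $W_L(z_L(a'))$).

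To rule out the problematic case I would invoke the conclusion of Proposition~\ref{prop:elements-not-in-shad-w}.\ref{prop:elements-not-in-shad-w:left:iii} for a suitable choice of $w \in W_R(v_{j_6})$, combined with Claim~\ref{claim:HIO:u_i-in-R_j}, which places $u'$ in $\Int\calR_{j_6}$ via the inclusion $u_{j_6}[\spine]u_1 \subseteq \Int\calR_{j_6} \cup \set{u_{j_6}}$. Specifically, a divergence $w^\dagger < u_{j_6}$ on the spine would force $W_L(z_L(a'))$ to leave the left boundary $q_{j_6}[W_L(u_{j_6})]u_{j_6}$ of $\calR_{j_6}$ through an edge strictly left of the spine, producing (after concatenation with $z_L(a')[M_L(a')]a'$ and an exposed ascent to a spine element above $w^\dagger$) a witnessing path strictly left of $\gamma_{L,j_6}$ reaching a point of $\calR_{j_6}$; this contradicts the geometric position of $a'$ given by Claim~\ref{claim:a'} and the edge-orderings~\ref{items:leaving_regions:x},~\ref{items:leaving_regions:left}. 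The hardest step I anticipate is making this descent-ascent construction rigorous, since one must carefully keep the constructed path exposed until it rejoins the spine, using Proposition~\ref{prop:path_does_not_leave_regions} and Proposition~\ref{prop:paths_M_consistent} to control where $M_L(a')$ meets the boundary of $\calR_{j_6}$.
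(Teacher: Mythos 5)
Your reduction of the claim to the stronger statement ``$u'\in W_L(z_L(a'))$'' is where the argument breaks. That statement is not a consequence of the setup and can fail while the claim still holds: nothing prevents $M_L(a')$ from branching off the spine $\spine=W_L(u')\cdots$ at a divergence element $w^\dagger$ with $u_{j_6}\le w^\dagger< u'$ (equivalently, $z_L(a')$ may satisfy $z_L(a')<u'$ in $P$, or $W_L(z_L(a'))$ may be left of $W_L(u')$ with the branch point at or above $u_{j_6}$). In all of those configurations $u'\notin W_L(z_L(a'))$ by $x_0$-consistency of leftmost paths, yet $u_{j_6}$ does lie in $W_L(z_L(a'))$ precisely because the branch point is at or above $u_{j_6}$ --- so the claim is true for a reason your plan never records. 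Your case analysis only proposes to rule out a divergence $w^\dagger<u_{j_6}$; in the range $u_{j_6}\le w^\dagger<u'$ there is no contradiction to be found (the configuration is consistent with \cref{claim:a'}, \cref{claim:a'-to-z-path} and \cref{claim:b'-in-R-j6}), and your intermediate goal is simply false there, so the proof cannot be completed along this route. The correct trichotomy, which is what the paper runs, is on the relation between $W_L(z_L(a'))$ and $W_L(u_{j_6})$ (not $W_L(u')$): ``$W_L(u_{j_6})$ is a subpath of $W_L(z_L(a'))$'' is the desired conclusion, ``$W_L(z_L(a'))$ left of $W_L(u_{j_6})$'' is excluded because $z_L(a')\in\calR_{j_6}$ via \cref{prop:b_relative_to_region}, and ``$W_L(z_L(a'))$ a proper subpath of $W_L(u_{j_6})$'' is excluded by the edge-ordering argument \ref{items:leaving_regions:left} at $z_L(a')$, using $z_L(a')[M_L(a')]a'\subset\calR_{j_6}$ and $W_L(u_{j_6})$ being a prefix of $W_L(u')$ to contradict the leftmostness of $M_L(a')$ against $x_0[W_L(u')]u'[U']a'$.

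Two smaller points. First, \cref{prop:elements-not-in-shad-w}.\ref{prop:elements-not-in-shad-w:left}.\ref{prop:elements-not-in-shad-w:left:iii} is the wrong tool here: its hypotheses and conclusion concern an element $w$ of the \emph{right} boundary $W_R(v_{j_6})$ and the element $\gce(W_L(u_{j_6}),W_L(w))$, so it does not address the left-boundary configuration this claim is about (it is used in \cref{claim:a'} and \cref{claim:b'-left-bj7}, not here). Second, even for the genuinely bad case $w^\dagger<u_{j_6}$ your ``descent--ascent'' construction is left unspecified at exactly the delicate step; the available short argument is the one above, which needs no new path construction, only $z_L(a')\in\calR_{j_6}$ and the orderings \ref{items:leaving_regions:x}/\ref{items:leaving_regions:left} at $z_L(a')$.
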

\begin{proofclaim}
    % First, we show~\ref{claim:uj6:u'}.
    % Recall that $u'$ lies in $W_L(z_L(a_1))$.
    % Since $u_{j_6}$ also lies in $W_L(z_L(a_1)) = \spine$ (by~\ref{items-u:W_L}), $u'$ and $u_{j_6}$ are comparable in~$P$. 
    % Suppose to the contrary that $u' < u_{j_6}$ in~$P$.
    % Observe that $q_{j_6} < u'$ in~$P$ since otherwise $a' < u' \leq q_{j_6} < b'$ in~$P$ where the last comparability follows from~\Cref{claim:b'-in-R-j6} and~\cref{prop:q-in-W_L(b)-and-W_R(b)}. 
    % Let $M = x_0[W_L(z_L(a_1))]u'[U']a'$. 
    % Since $(u',U')$ witnesses~\ref{Lout} for $((a_1,b_1),(a',b'))$, $M$ is left of~$W_L(z_L(a_1))$. 
    % Recall that we have $q_{j_6} < u' < u_{j_6}$ in~$P$ and all these elements lie in $W_L(z_L(a_1))$.
    % Let $e^-$ and $e^+$ be the edges of $W_L(u_{j_6})$ immediately preceding and immediately following $u'$, respectively. 
    % Both of these edges lie in $\gamma_{L,j_6}$. 
    % Let $f$ be the edge of $U'$ incident to $u'$. 
    % Now the fact that $M$ is left of~$W_L(z_L(a_1))$ translates into $e^-\prec f\prec e^+$ in the $u'$-ordering. 
    % Since $a' \in \calR_{j_5}$ (by~\Cref{claim:a'-in-R-j5}) by~\Cref{claim:going-out}, $U' \subset \calR_{j_6}$.
    % In particular, $f\subseteq \calR_{j_6}$. 
    % By~\cref{obs:when_in_gamma}, we have that $e^-\prec e^+ \prec f$ in the $u'$-ordering. 
    % This contradiction completes the proof of~\ref{claim:uj6:u'}.
    %We proceed with the proof of \ref{claim:uj6:zLa'}.
    Let $U'' = z_L(a')[M_L(a')]a'$ and note that $U'' \subset \calR_{j_6}$ by~\Cref{claim:a'-to-z-path}.
    % Since $U''$ is an exposed witnessing path from $a'$ to $z_L(a')$ in~$P$, by~\Cref{claim:going-out}, we have $U'' \subset \calR_{j_6}$.
    In particular, $z_L(a') \in \calR_{j_6}$, and so, by~\Cref{prop:b_relative_to_region}, $W_L(z_L(a'))$ is not left of $W_L(u_{j_6})$.
    If $W_L(u_{j_6})$ is a subpath of $W_L(z_L(a'))$, then the assertion follows.
    We will show that all the other cases lead to a contradiction.
    Since $u_{j_6} < u_{j_5}$ in $P$ (by~\cref{claim:a'}.\ref{claim:a':uj5}) and $W_L(u_{j_5})$ is a subpath of $W_L(u')$ (by~\cref{claim:a'}.\ref{claim:a':uj5}), we obtain that
    \begin{align}\label{eq:uj6-subpath-u'}
        \text{$W_L(u_{j_6})$ is a subpath of $W_L(u')$}.
    \end{align}
    If $W_L(u_{j_6})$ is left of $W_L(z_L(a'))$, then by~\eqref{eq:uj6-subpath-u'}, $W_L(u')$ is left of $W_L(z_L(a'))$, which contradicts the definition of $M_L(a')$.
    Finally, suppose that $W_L(z_L(a'))$ is a proper subpath of $W_L(u_{j_6})$.
    Note that $q_{j_6} < z_L(a')$ as otherwise $a' < z_L(a') \leq q_{j_6} \leq b'$ in~$P$ (by~\Cref{claim:b'-in-R-j6} and~\cref{prop:q-in-W_L(b)-and-W_R(b)}), which is false. 
    Altogether, we have $q_{j_6} < z_L(a') < u_{j_6}$ in~$P$.
    Let $e^-$ and $e^+$ be the edges of $W_L(u_{j_6})$ immediately preceding and immediately following $z_L(a')$, respectively. 
    Let $f$ be the edge of $U''$ incident to $z_L(a')$.
    Since $U'' \subset \calR_{j_6}$, by~\ref{items:leaving_regions:left}, $e^+ \prec f \prec e^-$ in the $z_L(a')$-ordering.
    It follows that $W_L(u_{j_6})$ is left of $M_L(a')$.
    Furthermore, by~\eqref{eq:uj6-subpath-u'}, $W_L(u')$ is left of $M_L(a')$, and so, $x_0[W_L(u')]u'[U']a'$ is left of $M_L(a')$, which is false.
    This contradiction ends the proof.
\end{proofclaim}

\begin{claim}\label{claim:b'-left-bj7}
    $b'$ is left of $b_{j_7}$.
\end{claim}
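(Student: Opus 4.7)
The plan is to verify three facts in order: (A) $W_R(b')$ is left of $W_R(b_{j_7})$; (B) $b_{j_7} \notin \shadz(b')$; and (C) $b' \notin \shadz(b_{j_7})$. Combining (A), (B), (C) with Corollary \ref{cor:notin_shads_implies_left_or_right} will then conclude that $b'$ is left of $b_{j_7}$.

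For (A), I would invoke Claim \ref{claim:b'-in-R-j6} to obtain $b' \in \Int \calR_{j_6}$, so by Proposition \ref{prop:b_relative_to_region}.\ref{prop:b_relative_to_region:item:inside} the path $W_R(b')$ is left of $\gamma_{R,j_6}$. Using Claim \ref{claim:HIO:cor-a_leq_v}.\ref{claim:HOI:cor-a_leq_v:b_k-notin-R_i}, $b_{j_7} \notin \calR_{j_6}$. Since $u_{j_6}$ is left of $b_{j_7}$ by Claim \ref{claim:HIO:left-right}.\ref{claim:HIO:left-right:u_left_b}, the path $W_L(u_{j_6})$ (which is a prefix of $\gamma_{L,j_6}$) is left of $W_L(b_{j_7})$, so the first alternative of Proposition \ref{prop:b_relative_to_region}.\ref{prop:b_relative_to_region:item:outside} is impossible, forcing $\gamma_{R,j_6}$ left of $W_R(b_{j_7})$. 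Transitivity gives (A). Fact (B) then follows: if $b_{j_7} \in \shadz(b')$, Proposition \ref{prop:paths_directions_in_shadows}.\ref{prop:paths_directions_in_shadows:right}, applied to the innermost shadow of $b'$ containing $b_{j_7}$, yields $W_R(b_{j_7})$ not right of $W_R(b')$, contradicting (A).

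The main obstacle is (C). Assuming for contradiction $b' \in \shadz(b_{j_7})$, I would apply Proposition \ref{prop:shad-d-equals-shad-w}.\ref{prop:shad-d-equals-shad-w:L} to the region tuple for $\calR_{j_6}$ with $d = b_{j_7}$ (using $u_{j_6}$ left of $b_{j_7}$ and $b_{j_7} \notin \calR_{j_6}$). This gives $b' \in \shadz(w^*)$ with $q_{j_6} \leq w^* < v_{j_6}$ in $P$, where $w^*$ is the maximal common element of $W_L(b_{j_7})$ and $W_R(v_{j_6})$; the strict inequality $w^* < v_{j_6}$ comes from $v_{j_6}$ being left of $b_{j_7}$ (via $v_{j_6} \leq b_{j_6+1}$ or $v_{j_6} \leq t_{j_6+1}$ and the regularity of the path). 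A subsequent application of Proposition \ref{prop:elements-not-in-shad-w}.\ref{prop:elements-not-in-shad-w:left} with $c = a'$, $z = z_L(a')$, and $w = w^*$, combined with Claim \ref{claim:uj6} ($u_{j_6}$ lies in $W_L(z_L(a'))$), Proposition \ref{prop:z_L_b_z_R} ($z_L(a')$ left of $b'$), and Proposition \ref{prop:q-in-W_L(b)-and-W_R(b)} (giving $q_{j_6}$ on $W_L(b')$), should force $W_L(b')$ into a position incompatible with $b' \in \shadz(w^*)$: the leftmost path to $b'$ enters $\calR_{j_6}$ at $q_{j_6}$ and must lie strictly to the right of $q_{j_6}[W_L(w^*)]w^*$, yet $b' \in \shadz(w^*)$ demands the opposite. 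The hard part is turning this intuition into a clean topological contradiction.

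Finally, combining (A), (B), and (C) with Corollary \ref{cor:notin_shads_implies_left_or_right} forces $b'$ to be left of $b_{j_7}$, completing the proof.
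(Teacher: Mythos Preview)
Your overall plan—(A), (B), (C), then Corollary~\ref{cor:notin_shads_implies_left_or_right}—is exactly the paper's (the paper just proves them in the order (C), (A), (B)). Parts (A) and (B) are correct; in fact you give more detail for (A) than the paper does, explicitly ruling out the first alternative of Proposition~\ref{prop:b_relative_to_region}.\ref{prop:b_relative_to_region:item:outside}.

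For (C), your starting move (Proposition~\ref{prop:shad-d-equals-shad-w}.\ref{prop:shad-d-equals-shad-w:L}) is right, but there are two real gaps. First, to apply Proposition~\ref{prop:elements-not-in-shad-w}.\ref{prop:elements-not-in-shad-w:left} with $c=a'$ you must check $a'\notin\shadz(w^*)$; the paper does this by first showing $u'\notin\shadz(w^*)$ (otherwise, via Claim~\ref{claim:a'}.\ref{claim:a':uj5} and Proposition~\ref{prop:comparability_implies_shadow_containment}, $u_{j_6}\in\shadz(u_{j_5})\subset\shadz(u')\subset\shadz(w^*)\subset\shadz(v_{j_6})$, contradicting $u_{j_6}$ left of $v_{j_6}$), and then observing that an exposed path from $a'$ to $u'$ would have to cross $\partial\shadz(w^*)$ in an element of $B$.

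Second—and this is the ``hard part'' you flagged—your choice $z=z_L(a')$ does not deliver the contradiction. The paper applies part~\ref{prop:elements-not-in-shad-w:left:ii} with $z=z_R(a')$ instead (the exposed path lying in $\calR_{j_6}$ by Claim~\ref{claim:a'-to-z-path}), obtaining that $W_L(w^*)$ is not left of $W_L(z_R(a'))$. The reason this works is that $b'$ is \emph{left} of $z_R(a')$ (Proposition~\ref{prop:z_L_b_z_R}), so $W_L(b')$ is left of $W_L(z_R(a'))$. Now either $W_L(w^*)$ is a subpath of $W_L(z_R(a'))$, in which case $\shadz(w^*)\subset\shadz(z_R(a'))$ and $b'\in\shadz(z_R(a'))$ contradicts $b'$ left of $z_R(a')$; or $W_L(z_R(a'))$ is left of, or a subpath of, $W_L(w^*)$, in which case transitivity gives $W_L(b')$ left of $W_L(w^*)$, contradicting $b'\in\shadz(w^*)$ via Proposition~\ref{prop:paths_directions_in_shadows}.\ref{prop:paths_directions_in_shadows:left}. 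With $z_L(a')$ you only know $W_L(z_L(a'))$ is left of $W_L(b')$, and placing $W_L(w^*)$ relative to $W_L(z_L(a'))$ gives no control over its relation to $W_L(b')$.
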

\begin{proofclaim}
    The proof is illustrated in~\Cref{fig:b'-left-bj7}.
    First, we prove that
    \begin{align}\label{eq:shad-containment-b'-bj7}
        b' \notin \shadz(b_{j_7}).
    \end{align}
    Let $w$ be the maximal common element of $W_R(v_{j_6})$ and $W_L(b_{j_7})$ in~$P$.
    Note that $u' \notin \shadz(w)$ as otherwise by~\cref{claim:a'}.\ref{claim:a':uj5} and~\Cref{prop:comparability_implies_shadow_containment} applied multiple times, 
    $u_{j_6} \in \shadz(u_{j_5}) \subset \shadz(u') \subset \shadz(w) \subset \shadz(v_{j_6})$, which contradicts~\Cref{claim:HIO:v}.\ref{claim:HIO:v:u_left_v}.
    If $a' \in \shadz(w)$, then an exposed witnessing path from $a'$ to $u'$ in~$P$ intersects $\partial\shadz(w)$ in an element of $B$, and so, exactly in $u'$, which is a contradiction since $u' \notin \shadz(w)$.
    Therefore, $a' \notin \shadz(w)$.
    Let $W$ be an exposed witnessing path from $a'$ to $z_R(a')$ in $P$.
    By~\Cref{claim:a'-to-z-path}, we have $W \subset \calR_{j_6}$. 
    Therefore, we can apply~\Cref{prop:elements-not-in-shad-w}.\ref{prop:elements-not-in-shad-w:left}.\ref{prop:elements-not-in-shad-w:left:ii} to obtain that $W_L(w)$ is not left of $W_L(z_R(a'))$.
    It follows that one of the following options holds:
    (a) $W_L(z_R(a'))$ is left of $W_L(w)$, 
    (b) $W_L(z_R(a'))$ is a subpath of $W_L(w)$,
    (c) $W_L(w)$ is a subpath of $W_L(z_R(a'))$.    

    Recall that the goal is to prove~\eqref{eq:shad-containment-b'-bj7}.
    Suppose to the contrary that $b' \in \shadz(b_{j_7})$.
    By~\Cref{claim:b'-in-R-j6}, $b' \in \calR_{j_6}$.
    By~\Cref{claim:HIO:left-right}.\ref{claim:HIO:left-right:u_left_b}, $u_{j_6}$ is left of $b_{j_7}$.
    By~\Cref{claim:HIO:cor-a_leq_v}.\ref{claim:HOI:cor-a_leq_v:b_k-notin-R_i}, $b_{j_7} \notin \calR_{j_6}$.
    It follows that we can apply~\Cref{prop:shad-d-equals-shad-w}.\ref{prop:shad-d-equals-shad-w:L} to obtain that $q_{j_6} \leq w$ in~$P$ and $b' \in \shadz(w)$.

    If $W_L(w)$ is a subpath of $W_L(z_R(a'))$ (option (c)), then $b' \in \shadz(w) \subset \shadz(z_R(a'))$ (by~\Cref{prop:comparability_implies_shadow_containment}), which contradicts $b'$ being left of $z_R(a')$ ($(a',b') \in I$ and~\Cref{prop:z_L_b_z_R}).
    Therefore, we assume that $W_L(z_R(a'))$ is left of $W_L(w)$ (option (a)) or $W_L(z_R(a'))$ is a subpath of $W_L(w)$ (option (b)).
    Since $W_L(b')$ is left of $W_L(z_R(a'))$ (by~\Cref{prop:z_L_b_z_R}), we obtain that $W_L(b')$ is left of $W_L(w)$.
    Recall that the supposition that we made (i.e.\ $b' \in \shadz(b_{j_7})$) implies $b' \in \shadz(w)$.
    However,~\Cref{prop:paths_directions_in_shadows}.\ref{prop:paths_directions_in_shadows:left} yields that in this case $W_L(b')$ is not left of $W_L(w)$, which is a final contradiction that ends the proof of~\eqref{eq:shad-containment-b'-bj7}.
    
    \begin{figure}[tp]
      \begin{center}
        \includegraphics{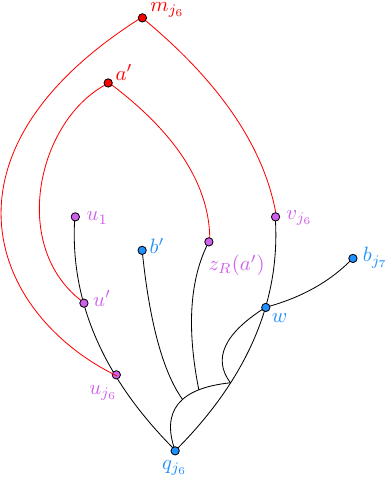}
      \end{center}
      \caption{
        An illustration of the proof of~\cref{claim:b'-left-bj7}.
      }
      \label{fig:b'-left-bj7}
    \end{figure}

    Next, we argue that
    \begin{align}\label{eq:W_Rb'-left-W_Rbj7}
        W_R(b') \text{ is left of } W_R(b_{j_7}).
    \end{align}
    Since $b' \in \Int \calR_{j_6}$ (by~\Cref{claim:b'-in-R-j6}), by~\Cref{prop:b_relative_to_region}.\ref{prop:b_relative_to_region:item:inside}, $W_R(b')$ is left of $\gamma_{R,j_6}$.
    Since $b_{j_7} \notin \calR_{j_6}$ (\Cref{claim:HIO:cor-a_leq_v}.\ref{claim:HOI:cor-a_leq_v:b_k-notin-R_i}), by~\Cref{prop:b_relative_to_region}.\ref{prop:b_relative_to_region:item:outside}, $\gamma_{R,j_6}$ is left of $W_R(b_{j_7})$.
    Altogether, we obtain $W_R(b')$ is left of $W_R(b_{j_7})$, hence,~\eqref{eq:W_Rb'-left-W_Rbj7} follows.
    Finally,~\eqref{eq:W_Rb'-left-W_Rbj7} and~\Cref{prop:paths_directions_in_shadows}.\ref{prop:paths_directions_in_shadows:right} imply that $b_{j_7} \notin \shadz(b')$.
    Furthermore, by~\Cref{cor:notin_shads_implies_left_or_right} implies that $b'$ is left of $b_{j_7}$.
\end{proofclaim}

\begin{claim}\label{claim:final-HIO-Rin} $((a',b'),(a_{j_8},b_{j_8}))$ satisfies \ref{Rin}.
\end{claim}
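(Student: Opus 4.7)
The plan is to reduce the claim to a geometric comparison inside the region $\calR_{j_8}$. First I would verify that $\sigma := ((a',b'),(a_{j_8},b_{j_8}))$ is regular: both pairs lie in $I$ (they are vertices of $\HIO$), and $b'$ is left of $b_{j_7}$ by \cref{claim:b'-left-bj7} while $b_{j_7}$ is left of $b_{j_8}$ by transitivity of ``left of'' (\cref{prop:left_porders_bs}) applied along the regular path $((a_1,b_1),\ldots,(a_n,b_n))$ in $\HIO$. Hence property \ref{Rin} for $\sigma$ reduces to showing that $M_R(a_{j_8})$ is right of $M_R(a')$.

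The key idea is to use $\calR_{j_8}$ as a topological wedge separating the two rightmost paths. Following the pattern of \cref{claim:a'}, whose proof actually yields $a' \in \Int \calR_{j_5}$ via \cref{claim:going-in}, combined with the nesting $\calR_{j_5}\subseteq \calR_{j_8}$ from \cref{claim:region-i-in-region-j}, I obtain $a' \in \Int \calR_{j_8}$. Moreover, by \cref{claim:a'-to-z-path} applied to $z_R(a') \in Z(a')$ and an exposed witnessing path $N$ from $a'$ to $z_R(a')$, the terminal segment $a'[M_R(a')]z_R(a') = N$ lies in $\calR_{j_6}\subseteq \calR_{j_8}$, so in particular $z_R(a') \in \calR_{j_8}$.

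Next, I would observe that $\gamma_{R,j_8} := x_0[W_R(v_{j_8})]v_{j_8}[V_{j_8}]a_{j_8}$ belongs to $\calM(a_{j_8})$, because $V_{j_8}$ is an exposed witnessing path from $a_{j_8}$ to $v_{j_8}\in Z(a_{j_8})$ (\cref{claim:HIO:u_left_v}.\ref{claim:HIO:u_left_v:Z}). By the defining maximality of $M_R(a_{j_8})$ in $\calM(a_{j_8})$, $M_R(a_{j_8})$ equals or is right of $\gamma_{R,j_8}$; by transitivity of the $(x_0,e_{-\infty})$-ordering (\cref{prop:u_e_is_poset}), it then suffices to prove that $\gamma_{R,j_8}$ is right of $M_R(a')$.

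The hard part will be this last comparison. I would set $w := \gce(\gamma_{R,j_8}, M_R(a'))$. Since $z_R(a') \in \calR_{j_8}$, \cref{prop:q-in-W_L(b)-and-W_R(b)} forces $q_{j_8}$ to lie in $W_R(z_R(a')) \subseteq M_R(a')$; as $q_{j_8}$ also lies in $\gamma_{R,j_8}$, the common prefix $w$ reaches at least $q_{j_8}$ along both paths. I would then case-analyze the position of $w$ on $\partial\calR_{j_8}$ (strictly inside $W_R(v_{j_8})$, at $v_{j_8}$, strictly inside $v_{j_8}[V_{j_8}]a_{j_8}$, or equal to $q_{j_8}$), and in each case invoke the appropriate one among \ref{items:leaving_regions:x}, \ref{items:leaving_regions:right}, and \ref{item:ordering-around-q} to argue that the edge of $M_R(a')$ immediately after $w$ goes into $\calR_{j_8}$ while the edge of $\gamma_{R,j_8}$ after $w$ continues along the right boundary; this places the latter strictly to the right of the former in the local $(w,e^-)$-ordering, yielding $\gamma_{R,j_8}$ right of $M_R(a')$. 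The degenerate cases in which one of the two paths is a subpath of the other are ruled out by the incomparability of $a'$ and $a_{j_8}$ in $P$ (which follows since $a' \in \Int \calR_{j_8}$ while $a_{j_8}$ is on its boundary) together with the exposedness of $V_{j_8}$ and of the terminal segment $z_R(a')[M_R(a')]a'$.
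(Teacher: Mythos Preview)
Your overall strategy—place $a'$ inside $\calR_{j_8}$, note that the path $x_0[W_R(v_{j_8})]v_{j_8}[V_{j_8}]a_{j_8}$ lies in $\calM(a_{j_8})$, and then compare it to $M_R(a')$ via a boundary case analysis—can be made to work, but differs from the paper's route and your sketch has a real gap. The paper instead uses $\calR_{j_6}$ (not $\calR_{j_8}$) as a separator: since $z_R(a')$ together with the exposed path from $a'$ lies in $\calR_{j_6}$ (\cref{claim:a'-to-z-path}), one gets $M_R(a')$ left of $\gamma_{R,j_6}$; since $b_{j_8}\notin\calR_{j_6}$ (\cref{claim:HIO:cor-a_leq_v}), one gets $\gamma_{R,j_6}$ left of $W_R(b_{j_8})$; and then $W_R(b_{j_8})$ is left of $W_R(z_R(a_{j_8}))\subset M_R(a_{j_8})$ by \cref{prop:z_L_b_z_R}. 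Chaining these avoids any delicate boundary case split.

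The gap in your argument is the case where $w=\gce(\gamma_{R,j_8},M_R(a'))$ lies on $a_{j_8}[V_{j_8}]m_{j_8}$, i.e.\ at $m_{j_8}$ or strictly past it toward $a_{j_8}$. Such a $w$ is \emph{not} on $\partial\calR_{j_8}$, so none of \ref{items:leaving_regions:x}, \ref{items:leaving_regions:right}, \ref{item:ordering-around-q} apply. This case must be excluded separately: reaching $m_{j_8}$ along the common prefix forces $z_R(a')=v_{j_8}$ and $a'\le m_{j_8}$ via the exposed part of $M_R(a')$, whence $a'\le m_{j_8}\le u_{j_8}\le \pi(b')\le b'$ in $P$ (using \ref{items-u:u<pi} and the chain of inequalities in the proof of \cref{claim:final-HIO-Lout}), a contradiction. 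Your justification for the degenerate subpath cases is also off: you assert that $a_{j_8}$ lies on $\partial\calR_{j_8}$, but only $m_{j_8}$ does; $a_{j_8}$ may sit strictly inside or outside. What you can use instead is that $a'\in\Int\calR_{j_5}$ and $\calR_{j_5}\subset\calR_{j_8}$ force $a'\in\Int\calR_{j_8}$ (an interior point of a subset cannot lie on the boundary of a superset region), together with $a'\notin B$ and the contradiction above when the prefix reaches the segment $a_{j_8}[V_{j_8}]m_{j_8}$. Finally, your citation for $v_{j_8}\in Z(a_{j_8})$ is wrong: this holds directly by the construction of $v_i$ for $i\in E$, not by \cref{claim:HIO:u_left_v}.
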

\begin{proofclaim}
We need to prove that $M_R(a')$ is left of $M_R(a_{j_8})$.
To this end, we show that
\begin{align}\label{eq:M_R(a')-left-gamma}
    M_R(a') \text{ is left of } \gamma_{R,j_6}.
\end{align}
Let $W'$ be an exposed witnessing path from $a'$ to $z_R(a')$ in~$P$.
By~\Cref{claim:a'-to-z-path}, we have $W' \subset \calR_{j_6}$.
In particular, $z_R(a') \in \calR_{j_6}$.
If $z_R(a') \in \Int \calR_{j_6}$, then by~\Cref{prop:b_relative_to_region}.\ref{prop:b_relative_to_region:item:inside}, $W_R(a')$ is left of $\gamma_{R,j_6}$, which implies~\eqref{eq:M_R(a')-left-gamma}.
Thus, we can assume that $z_R(a') \in \partial \calR_{j_6}$.
If $z_R(a')$ lies in $q_{j_6}[W_L(u_{j_6})]u_{j_6}$, then $z_R(a') \leq u_{j_6} \leq z_L(a')$ in $P$ (by~\Cref{claim:uj6}), which contradicts $z_L(a')$ being left of $z_R(a')$ (by~\Cref{prop:z_L_b_z_R}).
It follows that $z_R(a')$ lies strictly on the right side of $\calR_{j_6}$.
Recall that $W' \subset \calR_{j_6}$.
Finally, by~\ref{items:leaving_regions:right}, $x_0[W_R(v_{j_6})]z_R(a')[W']a'$ is left of $\gamma_{R,j_6}$, which again gives~\eqref{eq:M_R(a')-left-gamma}.

By~\Cref{claim:HIO:cor-a_leq_v}.\ref{claim:HOI:cor-a_leq_v:b_k-notin-R_i}, $b_{j_8} \notin \calR_{j_6}$.
It follows that by~\Cref{prop:b_relative_to_region}.\ref{prop:b_relative_to_region:item:outside}, $\gamma_{R,j_6}$ is left of $W_R(b_{j_8})$.
This and~\eqref{eq:M_R(a')-left-gamma} gives that $M_R(a')$ is left of $W_R(b_{j_8})$.
Finally, $W_R(b_{j_8})$ is left of $W_R(z_R(a_{j_8}))$ by~\Cref{prop:z_L_b_z_R}, thus, $M_R(a')$ is left of $M_R(a_{j_8})$.
\end{proofclaim}

\begin{claim}\label{claim:final-HIO-Lout} $((a',b'),(a_{j_8},b_{j_8}))$ satisfies \ref{Lout}.
\end{claim}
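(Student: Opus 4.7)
The plan is to exhibit $(u_{j_8}, U_{j_8})$ as a witness of~\ref{Lout} for the pair $((a', b'), (a_{j_8}, b_{j_8}))$. Note first that this pair is regular: indeed, by~\cref{claim:b'-left-bj7} we have $b'$ left of $b_{j_7}$, and along the fixed path in $\HIO$ the element $b_{j_7}$ is left of $b_{j_8}$, so transitivity (\Cref{prop:left_porders_bs}) gives $b'$ left of $b_{j_8}$. Three ingredients remain to be verified: that $u_{j_8} \in Z(a_{j_8})$ with $U_{j_8}$ an exposed witnessing path from $a_{j_8}$ to $u_{j_8}$ in $P$ (immediate from~\ref{items-u:Z} and~\ref{items-u:path}); that $u_{j_8}$ lies on both $W_L(z_L(a'))$ and $W_L(b')$; and that $x_0[W_L(z_L(a'))]u_{j_8}[U_{j_8}]a_{j_8}$ is left of $W_L(z_L(a'))$.

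For $u_{j_8} \in W_L(z_L(a'))$ I will combine~\cref{claim:uj6}, which places $u_{j_6}$ on $W_L(z_L(a'))$, with $u_{j_8} < u_{j_6}$ in~$P$ from~\Cref{claim:HIO:u}.\ref{claim:HIO:u:u_j<u_i}: then $W_L(u_{j_8})$ is a prefix of $W_L(u_{j_6})$, itself a subpath of $W_L(z_L(a'))$, so $u_{j_8}$ lies on $W_L(z_L(a'))$. For $u_{j_8} \in W_L(b')$, I first observe that $u_{j_8} \leq u_{j_7+1} \leq \pi(b_{j_7})$ in~$P$ by~\Cref{claim:HIO:u}.\ref{claim:HIO:u:u_j<u_i} and~\ref{items-u:u<pi}; since $\pi(b_{j_7})$ lies on $W_L(b_{j_7})$, so does $u_{j_8}$. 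The chain of leftmost paths $W_L(z_L(a')) \prec W_L(b') \prec W_L(b_{j_7})$, whose first step is~\Cref{prop:z_L_b_z_R} applied to $(a', b') \in I$ and whose second step is~\cref{claim:b'-left-bj7}, is pairwise $x_0$-consistent by~\Cref{prop:W-consistent}.\ref{prop:W-consistent:left}. Hence the sandwich lemma~\Cref{prop:sandwiched_paths} applied to $u_{j_8}$, which is common to $W_L(z_L(a'))$ and $W_L(b_{j_7})$, places $u_{j_8}$ on $W_L(b')$.

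The delicate step is to transport the ``left of $\spine$'' conclusion supplied by~\ref{items-u:path-left} to the ``left of $W_L(z_L(a'))$'' conclusion required by~\ref{Lout}. Here I will exploit that $u_{j_6}$ lies on both $\spine$ (by~\ref{items-u:W_L}) and $W_L(z_L(a'))$ (by~\cref{claim:uj6}), so by $x_0$-consistency (\Cref{prop:W-consistent}.\ref{prop:W-consistent:left}) the prefixes $x_0[\spine]u_{j_6}$ and $x_0[W_L(z_L(a'))]u_{j_6}$ coincide. Since $u_{j_8} < u_{j_6}$ in~$P$, the element $u_{j_8}$ lies in this common prefix and the edge immediately following $u_{j_8}$ is literally the same edge in $\spine$ and in $W_L(z_L(a'))$. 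Consequently the strict inequality in the relevant $(u_{j_8}, e^-)$-ordering that witnesses ``$x_0[\spine]u_{j_8}[U_{j_8}]a_{j_8}$ is left of $\spine$'' is simultaneously a witness that $x_0[W_L(z_L(a'))]u_{j_8}[U_{j_8}]a_{j_8}$ is left of $W_L(z_L(a'))$, completing the proof. The main obstacle is purely bookkeeping: keeping track of the nearby but distinct leftmost paths ($\spine$, $W_L(z_L(a'))$, $W_L(b')$, $W_L(b_{j_7})$, $W_L(u_{j_6})$) and justifying each containment or sandwich step by the correct earlier claim, but no new structural argument is needed beyond the infrastructure already built in~\Cref{claim:uj6,claim:b'-left-bj7,claim:HIO:u} and the basic lemmas~\Cref{prop:W-consistent,prop:sandwiched_paths,prop:z_L_b_z_R}.
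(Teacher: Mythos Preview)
Your proof is correct and follows essentially the same approach as the paper: exhibit $(u_{j_8},U_{j_8})$ as the witness for~\ref{Lout}, verify $u_{j_8}$ lies on both $W_L(z_L(a'))$ and $W_L(b')$, and transport the ``left of $\spine$'' conclusion of~\ref{items-u:path-left} to ``left of $W_L(z_L(a'))$'' using that the two paths coincide up through $u_{j_6}$. The only minor variation is in showing $u_{j_8}\in W_L(b')$: the paper sandwiches $\pi(b_{j_7})$ between $\spine$ and $W_L(b_{j_7})$ to get $\pi(b_{j_7})\leq\pi(b')$, while you sandwich $u_{j_8}$ itself between $W_L(z_L(a'))$ and $W_L(b_{j_7})$; both are valid applications of~\Cref{prop:sandwiched_paths}.
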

\begin{proofclaim}
    Let $u = u_{j_8}$ and $U = U_{j_8}$.
    We claim that $(u,U)$ witnesses~\ref{Lout} for $((a',b'),(a_{j_8},b_{j_8}))$.
    Note that $u \in Z(a_{j_8})$ (by~\ref{items-u:Z}) and $U$ is an exposed witnessing path from $a_{j_8}$ to $u$ in~$P$ (by~\ref{items-u:path}).
    Recall that by~\ref{items-u:W_L}, $u$ and $u_{j_6}$ lie in $\spine$.
    First, we show that
    \begin{align}\label{pi-bj7-leq-pib'}
        \pi(b_{j_7}) \leq \pi(b') \text{ in } P.
    \end{align}
    By~\Cref{prop:z_L_b_z_R}, $\spine = W_L(z_L(a))$ is left of $W_L(b)$.
    Thus, since $W_L(b)$ is left of $W_L(b')$, we have $\spine$ left of $W_L(b')$.
    By~\Cref{claim:b'-left-bj7}, $W_L(b')$ is left of $W_L(b_{j_7})$.
    Note that by definition, $\pi(b_{j_7})$ lies in both $\spine$ and $W_L(b_{j_7})$.
    The paths $\spine$, $W_L(b')$, and $W_L(b_{j_7})$ are pairwise $x_0$-consistent (by~\Cref{prop:W-consistent}.\ref{prop:W-consistent:left}).
    Therefore, by~\Cref{prop:sandwiched_paths},  $\pi(b_{j_7})$ also lies in $W_L(b')$, and so, $\pi(b_{j_7}) \leq \pi(b')$ in~$P$, which yields~\eqref{pi-bj7-leq-pib'}.

    To get the assertion, we have to prove that $u$ lies in both $W_L(b')$ and $W_L(z_L(a'))$, and that $M = x_0[W_L(u)]u[U]a_{j_8}$ is left of $W_L(z_L(a'))$.
    Note that by~\ref{items-u:u<pi}, \cref{claim:HIO:u}.\ref{claim:HIO:u:u_j<u_i}, and~\eqref{pi-bj7-leq-pib'} we have $u < u_{j_7+1} \leq \pi(b_{j_7}) \leq \pi(b')$ in~$P$. 
    This implies that $u$ lies in $W_L(b')$, as desired.
    By~\Cref{claim:uj6}, $u_{j_6}$ lies in $W_L(z_L(a'))$.
    Since $u < u_{j_6}$ in~$P$ (by~\Cref{claim:HIO:u}.\ref{claim:HIO:u:u_j<u_i}), we obtain that $u$ lies in $W_L(z_L(a'))$, as desired.
    Finally, by~\ref{items-u:path-left}, $M$ is left of $\spine$.
    Since $W_L(u)$ is a proper subpath of $W_L(u_{j_6})$ (by~\Cref{claim:HIO:u}.\ref{claim:HIO:u:WL-supath-WL}), $W_L(u_{j_6})$ is a subpath of $\spine$, and $W_L(u_{j_6})$ is a subpath of $W_L(z_L(a'))$ (by~\Cref{claim:uj6}), we obtain that $M$ is left of $W_L(z_L(a'))$, as desired.
    This ends the proof of the claim.
\end{proofclaim}

By definition, $(a',b'),(a_{j_8},b_{j_8}) \in I$. 
Moreover, by~\Cref{claim:b'-left-bj7} we have $b'$ is left of $b_{j_7}$ which is left of $b_{j_8}$, and therefore $((a',b'),(a_{j_8},b_{j_8}))$ is a regular sequence. 
Finally,~\cref{claim:final-HIO-Rin,claim:final-HIO-Lout} imply that $((a',b'),(a_{j_8},b_{j_8}))$ is an edge in $\HIO$.
The following concludes the proof~\eqref{eq:lem:HIO:old}, and so, of~\cref{lemma:HIO}.
\begin{equation}
\begin{aligned} 
    \maxsw(\HIO, (a',b')) 
    &\geq|\set{i : i\in\set{j_8,\ldots,n-1} \textrm{ and $\sigma_i$ is of weight $1$}}|\\
    &= |E| - |E \cap [j_8-1]|\\
    &= |E| - |E \cap [j_8]|+1\\
    &\geq |E| - |E_0| + 1\\
    &>\maxsw(\HIO,(a,b)) - m.
\end{aligned}
\label{eq:final-HIO}
\end{equation}
The first inequality follows from the fact that $((a',b'),(a_{j_8},b_{j_8}),(a_{j_8+1},b_{j_8+1}),\ldots,(a_n,b_n))$ is a path in $\HIO$. 
The third and fourth lines follow by $j_8\in E_2\subseteq E_0$ and therefore $E\cap [j_8]\subseteq E_0$.
\end{proof}

\section{Proof of the main result}
\label{sec:proof}
In this section, we wrap up the proof of the main theorem. 
In fact, we prove a more general statement (\Cref{thm:technical}), which immediately
implies that the class of posets that are subposets of posets with planar cover graphs is also dim-bounded (\Cref{cor:subposets-dim-bounded}).
Note that a subposet of a poset with a planar cover graph does not necessarily have a planar cover graph, e.g.\ standard examples. 
Finally, note that~\Cref{thm:technical} implies~\Cref{thm:cover-graph_se} by taking $I = \Inc(P)$.
\begin{theorem}
\label{thm:technical}
For every poset $P$ with a planar cover graph and
for every $I\subseteq\Inc(P)$, 
\[\dim_P(I)\leq 64s^6 \cdot (s+3)^2 + 12,\]
where $s = \se_P(\Inc(P) \cap (\pi_1(I) \times \pi_2(I))$.
\end{theorem}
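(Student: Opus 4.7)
The plan is to chain together the three reductions developed earlier in the paper and then invoke~\Cref{thm:maximal_instance_imply_dim_boundedness}. Starting from a poset $P$ with planar cover graph and $I \subseteq \Inc(P)$, I would first apply~\Cref{cor:poset-to-instance} to obtain an instance $(P_1, x_1, G_1, e_{-\infty,1}, I_1)$ with $\dim_P(I) \leq 2\dim_{P_1}(I_1)$, where either $I_1 \subseteq I$ and $P_1 - x_1$ is a convex subposet of $P$, or $I_1 \subseteq I^{-1}$ and $P_1 - x_1$ is a convex subposet of $P^{-1}$. Next,~\Cref{cor:interface} yields a good instance $(P_2, x_2, G_2, e_{-\infty,2}, I_2)$ with $I_2 \subseteq I_1$, $P_2$ a convex subposet of $P_1$, and $\dim_{P_1}(I_1) \leq 2\dim_{P_2}(I_2) + 6$. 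Then~\Cref{prop:good_instance_to_maximal_good_instance} produces a maximal good instance $(P_2, x_2, G_2, e_{-\infty,2}, I_2^+)$ satisfying $I_2 \subseteq I_2^+ \subseteq \Inc(P_2) \cap (\pi_1(I_2) \times \pi_2(I_2))$. Setting $s' = \se_{P_2}(I_2^+)$ and applying~\Cref{thm:maximal_instance_imply_dim_boundedness} gives $\dim_{P_2}(I_2) \leq \dim_{P_2}(I_2^+) \leq 16\,{s'}^{6}(s'+3)^2$. Combining the chain produces
\[\dim_P(I) \leq 2\bigl(2\cdot 16\,{s'}^{6}(s'+3)^2 + 6\bigr) = 64\,{s'}^{6}(s'+3)^2 + 12,\]
so the task reduces to showing $s' \leq s$.

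For this bound, I would first observe that the pairs produced by~\Cref{cor:poset-to-instance} are drawn from $I$ or $I^{-1}$ by construction, so their coordinates are original elements of $P$, whereas the distinguished element $x_1$ may be a new vertex introduced by the contraction inside~\Cref{lem:PlanarCoverGraphReduction}. Hence $x_1 \notin \pi_1(I_1) \cup \pi_2(I_1)$, and every element appearing in a pair from $I_2^+ \subseteq \pi_1(I_2) \times \pi_2(I_2) \subseteq \pi_1(I_1) \times \pi_2(I_1)$ belongs to the ground set of $P_1 - x_1$, which is a convex subposet of $P$ or of $P^{-1}$. Since $P_2$ is also a convex subposet of $P_1$, the order relations among such elements coincide in $P_2$, in $P_1 - x_1$, and in the corresponding orientation of $P$.

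Let $J \subseteq I_2^+$ be a subset of maximum size inducing a standard example in $P_2$, so $|J| = s'$. In the case $I_1 \subseteq I$, by the preservation of comparabilities above, $J$ itself induces a standard example in $P$; moreover $\pi_1(J) \subseteq \pi_1(I_1) \subseteq \pi_1(I)$ and $\pi_2(J) \subseteq \pi_2(I)$, so $J \subseteq \Inc(P) \cap (\pi_1(I) \times \pi_2(I))$ and therefore $s' = |J| \leq s$. In the case $I_1 \subseteq I^{-1}$, the set $J$ induces a standard example in $P^{-1}$, hence $J^{-1}$ induces one of the same size in $P$; and $\pi_1(J^{-1}) = \pi_2(J) \subseteq \pi_2(I^{-1}) = \pi_1(I)$ with symmetrically $\pi_2(J^{-1}) \subseteq \pi_2(I)$, so again $s' = |J^{-1}| \leq s$. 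Substituting $s' \leq s$ into the inequality above produces the desired bound. All of the substantive combinatorial content is already absorbed into~\Cref{thm:maximal_instance_imply_dim_boundedness}; the only obstacle here is carefully tracking the possible dualization introduced by the unfolding step, which is precisely what the case distinction handles.
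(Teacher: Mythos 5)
Your proposal is correct and follows essentially the same route as the paper: the same chain of reductions via \Cref{cor:poset-to-instance}, \Cref{cor:interface}, and \Cref{prop:good_instance_to_maximal_good_instance}, capped by \Cref{thm:maximal_instance_imply_dim_boundedness}, with identical arithmetic. Your explicit tracking of a maximum standard example back into $\Inc(P)\cap(\pi_1(I)\times\pi_2(I))$ (including the dual case and the observation that the contracted element is never a coordinate of a pair) simply unpacks the monotonicity of $\se$ that the paper asserts in its chain of inequalities, so no gap remains.
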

\begin{proof}
    Let $P$ be a poset with a planar cover graph and let $I \subset \Inc(P)$.
    By~\Cref{cor:poset-to-instance}, there exists an instance $(P',x_0',G',e_{-\infty}',I')$ such that $P' - x_0'$ is a convex subposet of $P$ and $I' \subset I$, or $P' - x_0'$ is a convex subposet of $P^{-1}$ and $I' \subset I^{-1}$, and
    \begin{align*}
        \dim_P(I) &\leq 2 \dim_{P'}(I').\\
        \intertext{
        In particular, since $P'-x_0'$ is a subposet of $P$  and $I'\subseteq I$
    or $P'-x_0'$ is a subposet of $P^{-1}$ and $I'\subseteq I^{-1}$  
    we have
        }
        \se_{P'}(\Inc(P') \cap (\pi_1(I') \times \pi_2(I')) &\leq \se_{P}(\Inc(P) \cap (\pi_1(I) \times \pi_2(I)).
    \end{align*}
    By \cref{cor:interface}, there exists a good instance $(Q,x_0,G,e_{-\infty},J)$ such that $Q$ is a convex subposet of $P'$, $J \subset I'$, and 
    \begin{align*}
        \dim_{P'}(I') &\leq 2\dim_{Q}(J) + 6.\\
        \intertext{In particular, since $Q$ is a subposet of $P'$ and $J \subset I'$}
        \se_{Q}(\Inc(Q) \cap (\pi_1(J) \times \pi_2(J)) &\leq \se_{P'}(\Inc(P') \cap (\pi_1(I') \times \pi_2(I')).
    \end{align*}
    %The inequality on the standard example numbers follows from the fact that $Q$ is a subposet of $P'$ and $J' \subset I'$.
    By~\cref{prop:good_instance_to_maximal_good_instance}, there exists $J \subset J^+ \subset \Inc(Q) \cap (\pi_1(J) \times \pi_2(J))$ such that $(Q,x_0,G,e_{-\infty},J^+)$ is a maximal good instance.
    In particular, 
    \begin{align*}
        \dim_{Q}(J) &\leq \dim_{Q}(J^+) \text{ and }\\
        \se_{Q}(J^+) &\leq \se_{Q}(\Inc(Q) \cap (\pi_1(J) \times \pi_2(J)).
    \end{align*}
    %In~\cref{sec:aux-proofs}, we proved Propositions~\ref{prop:HOO}, \ref{prop:HIIL}, \ref{prop:HIIR}, \ref{prop:HIILR}, and~\Cref{lemma:HIO}, hence, \cref{lem:coloring} (and so~\Cref{thm:maximal_instance_imply_dim_boundedness}) follows.
    %In particular,
    By~\Cref{thm:maximal_instance_imply_dim_boundedness},
    \[ \dim_{Q}(J^+) \leq 16\se_{Q}(J^+)^6 \cdot (\se_{Q}(J^+)+3)^2. \]
    %where $s = \se_{Q}(\Inc(Q) \cap (\pi_1(J) \times \pi_2(J))$.
    Observe that
        \begin{align*}
            \se_{Q}(J^+) & \leq \se_{Q}(\Inc(Q) \cap (\pi_1(J) \times \pi_2(J)) \\
            & \leq \se_{P'}(\Inc(P') \cap (\pi_1(I') \times \pi_2(I'))\\
            & \leq \se_{P}(\Inc(P) \cap (\pi_1(I) \times \pi_2(I)).
        \end{align*}
    Summarizing, we obtain
    \begin{align*}
    \dim_P(I) &\leq 2 \dim_{P'}(I') \\
            &\leq 4\dim_{Q}(J) + 12\\
            &\leq 4\dim_{Q}(J^+) + 12\\
            &\leq 4(16\se_{Q}(J^+)^6 \cdot (\se_{Q}(J^+)+3)^2) + 12\\
            &\leq 64s^6 \cdot (s+3)^2 + 12,
    \end{align*}
    where $s = \se(\Inc(P) \cap (\pi_1(I) \times \pi_2(I))$.
\end{proof}

\begin{corollary} \label{cor:subposets-dim-bounded}
The class of posets that are subposets of posets with planar cover graphs is $\dim$-bounded.
\end{corollary}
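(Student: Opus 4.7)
The plan is to derive the corollary directly from Theorem~\ref{thm:technical}, exploiting the fact that the statement of that theorem is phrased not just in terms of $\se(P)$ but in terms of the standard example number restricted to the pairs we actually care about, namely $\se_P(\Inc(P) \cap (\pi_1(I) \times \pi_2(I)))$. Let $Q$ be a subposet of a poset $P$ whose cover graph is planar, and let $X$ denote the ground set of $Q$. I would set $I = \Inc(Q)$, and first check that $I \subseteq \Inc(P)$: this is the tautology that if $a,b \in X$ are incomparable in the induced subposet $Q$, they are also incomparable in $P$, because the order on $Q$ is the restriction of the order on $P$.

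Next I would compare the dimensions. The key observation is that if $R \subseteq \Inc(Q)$ is reversible in $P$, witnessed by a linear extension $L$ of $P$, then the restriction of $L$ to $X$ is a linear extension of $Q$ reversing every pair of $R$, so $R$ is reversible in $Q$. Consequently, any covering of $\Inc(Q)$ by sets reversible in $P$ is also a covering by sets reversible in $Q$, giving the inequality
\[
\dim(Q) = \dim_Q(\Inc(Q)) \le \dim_P(\Inc(Q)).
\]

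The next step is to identify the standard example quantity appearing in Theorem~\ref{thm:technical}. Both $\pi_1(\Inc(Q))$ and $\pi_2(\Inc(Q))$ are subsets of $X$, and using again that $Q$ is an induced subposet of $P$, the intersection $\Inc(P) \cap (\pi_1(\Inc(Q)) \times \pi_2(\Inc(Q)))$ coincides with the set of pairs $(a,b)$ with $a \in \pi_1(\Inc(Q))$, $b \in \pi_2(\Inc(Q))$, and $a \parallel b$ in $Q$. A subset of this intersection induces a standard example in $P$ if and only if it induces one in $Q$, because all the required comparabilities $a_i < b_j$ involve only elements of $X$, and these comparabilities are preserved between $P$ and $Q$. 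Hence
\[
\se_P\bigl(\Inc(P) \cap (\pi_1(\Inc(Q)) \times \pi_2(\Inc(Q)))\bigr) \le \se(Q).
\]

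Applying Theorem~\ref{thm:technical} to $P$ with the set $I = \Inc(Q)$ and combining the two inequalities then yields $\dim(Q) \le 64\, \se(Q)^6 (\se(Q)+3)^2 + 12$, which is a bound depending only on $\se(Q)$, establishing the corollary. The argument is essentially bookkeeping; the only thing to be careful about is the restriction-of-linear-extension step and the identification of the right standard example number, so I expect no serious obstacle beyond writing this out cleanly.
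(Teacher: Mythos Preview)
Your proposal is correct and follows essentially the same approach as the paper's proof: set $I=\Inc(Q)$, invoke Theorem~\ref{thm:technical} on $P$ with this $I$, and identify $\dim_P(I)$ and $\se_P(\Inc(P)\cap(\pi_1(I)\times\pi_2(I)))$ with the corresponding quantities for $Q$. The paper states the equalities $\dim_Q(I)=\dim_P(I)$ and $\se_Q(I)=\se_P(I)$ directly, while you justify the needed inequalities via restriction of linear extensions and preservation of comparabilities, which is entirely adequate.
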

\begin{proof}
    Let $f(s) = 64s^6 \cdot (s+3)^2 + 12$ for every positive integer $s$. 
    Let $P$ be a poset with a planar cover graph and 
    let $Q$ be a subposet of $P$. 
    Let $I \subset \Inc(P)$ be the set of all incomparable pairs $(a,b)$ in $P$ such that both $a$ and $b$ are elements of $Q$.
    By definition, $I = \Inc(P) \cap (\pi_1(I) \times \pi_2(I))$.
    In particular, $I = \Inc(Q)$, $\dim_Q(I) = \dim_P(I)$, and $\se_Q(I) = \se_P(I)$.
    Therefore, by~\Cref{thm:technical},
    \[\dim(Q) = \dim_Q(I) = \dim_P(I) \leq f(\se_P(I)) = f(\se_Q(I)) = f(\se(Q)).\qedhere\]
\end{proof}

\bibliographystyle{plain}
\bibliography{bibliography}

@article{DM41,
	author = {Dushnik, Ben and Miller, Edwin W.},
	journal = {American Journal of Mathematics},
	pages = {600--610},
	title = {Partially ordered sets},
	volume = {63},
	year = {1941}
}

@article{Dilworth1950,
  title = {A Decomposition Theorem for Partially Ordered Sets},
  volume = {51},
  ISSN = {0003-486X},
  url = {http://dx.doi.org/10.2307/1969503},
  DOI = {10.2307/1969503},
  number = {1},
  journal = {Annals of Mathematics},
  publisher = {JSTOR},
  author = {Dilworth,  Robert P.},
  year = {1950},
  pages = {161}
}

@article{FT00,
  title={Dimension, graph and hypergraph coloring},
  author={Felsner, Stefan and Trotter, William T},
  journal={Order},
  volume={17},
  pages={167--177},
  year={2000},
  publisher={Springer}
}

@article{H55,
    title={On the dimension of orders},
    author={Toshiro Hiraguchi},
    journal={The {S}cientific {R}eports of {K}anazawa {U}niversity},
    volume={4},
    pages={1-20},
    year = {1955}
}

@article{TM77,
	author = {Trotter, Jr., William T. and Moore, Jr., John I.},
	journal = {Journal of Combinatorial Theory, Series B},
	number = {1},
	pages = {54--67},
	title = {The dimension of planar posets},
	volume = {22},
	year = {1977},
        note = {\href{https://doi.org/10.1016/0095-8956(77)90048-X}{doi.org/10.1016/0095-8956(77)90048-X}}
}

@article{T78,
	author={William T.\ Trotter},
	title={Order preserving embeddings of aographs},
	journal={Congressus Numerantium},
	volume={642},
	year={1978},
	pages={572-579},
}

@article{K81,
        title = {On the dimension of partially ordered sets},
        journal = {Discrete Mathematics},
        volume = {35},
        number = {1},
        pages = {135-156},
        year = {1981},
        author = {David Kelly},
        note = {Special Volume on Ordered Sets, \href{https://doi.org/10.1016/0012-365X(81)90203-X}{doi.org/10.1016/0012-365X(81)90203-X}}
}

@article{Y82,
        author = {Yannakakis, Mihalis},
        title = {The Complexity of the Partial Order Dimension Problem},
        journal = {SIAM Journal on Algebraic Discrete Methods},
        volume = {3},
        number = {3},
        pages = {351-358},
        year = {1982},
        doi = {10.1137/0603036}
}

@article{S89,
	doi = {10.1007/bf00353652},
	url = {https://doi.org/10.1007/bf00353652},
	year = {1989},
	publisher = {Springer Science and Business Media {LLC}},
	volume = {5},
	number = {4},
	pages = {323--343},
	author = {Walter Schnyder},
	title = {Planar graphs and poset dimension},
	journal = {Order}
}

@article{FHRT91,
        author = {Füredi, Zoltán and Hajnal, Péter and Rödl, Vojtěch and Trotter, William T.  },
        title = {Interval orders and shift graphs},
        journal = {Sets, Graphs and Numbers},
        volume = {60},
        pages = {297-313},
        year = {1991}
}

@inproceedings{CLN13,
	doi = {10.1137/1.9781611973105.112},
	url = {https://doi.org/10.1137/1.9781611973105.112},
	year = {2013},
	author = {Parinya Chalermsook and Bundit Laekhanukit and Danupon Nanongkai},
	title = {Graph Products Revisited: Tight Approximation Hardness of Induced Matching, Poset Dimension and More},
	booktitle = {Proceedings of the Twenty-Fourth Annual {ACM}-{SIAM} Symposium on Discrete Algorithms (SODA)},
    note = {\href{https://arxiv.org/abs/1212.4129}{arXiv:1212.4129}}
}

@article{FTW14,
        doi = {10.1007/s00373-014-1430-4},
        url = {https://doi.org/10.1007/s00373-014-1430-4},
        year = {2014},
        publisher = {Springer Science and Business Media {LLC}},
        volume = {31},
        number = {4},
        pages = {927--939},
        author = {Stefan Felsner and William T. Trotter and Veit Wiechert},
        title = {The Dimension of Posets with Planar Cover Graphs},
        journal = {Graphs and Combinatorics}
}

@article{ST14,
        title = {Dimension and height for posets with planar cover graphs},
        doi = {10.1016/j.ejc.2013.06.017},
        journal = {European Journal of Combinatorics},
        volume = {35},
        publisher = {Elsevier {BV}},
        pages = {474-489},
        year = {2014},
        doi = {https://doi.org/10.1016/j.ejc.2013.06.017},
        url = {https://www.sciencedirect.com/science/article/pii/S0195669813001273},
        author = {Noah Streib and William T. Trotter},
        note={\href{https://doi.org/10.1016/j.ejc.2013.06.017}{doi:10.1016/j.ejc.2013.06.017}}
}

@article{JMMTWW16,
	author = {Joret, Gwena\"{e}l and Micek, Piotr and Milans, Kevin G. and Trotter, William T. and Walczak, Bartosz and Wang, Ruidong},
	journal = {Combinatorica},
	note = {\href{http://arxiv.org/abs/1301.5271}{arXiv:1301.5271}},
	title = {Tree-width and dimension},
	year = {2016}, 
        volume = {36}, 
        number = {4}, 
        pages = {431-450},
}

@article{FMP17,
        author = {Felsner, Stefan and Musta\c{t}\u{a}, Irina and Pergel, Martin},
        title = {The Complexity of the Partial Order Dimension Problem -- Closing the Gap},
        journal = {SIAM Journal on Discrete Mathematics},
        volume = {31},
        number = {1},
        pages = {172-189},
        year = {2017},
        doi = {10.1137/15M1007720},
        note = {\href{https://arxiv.org/abs/1501.01147}{arXiv:1501.01147}}
}

@article{JMW17,
        doi = {10.1137/17m111300x},
        url = {https://doi.org/10.1137/17m111300x},
        year = {2017},
        publisher = {Society for Industrial {\&} Applied Mathematics ({SIAM})},
        volume = {31},
        number = {4},
        pages = {2754--2790},
        author = {Gwenaël Joret and Piotr Micek and Veit Wiechert},
        note = {\href{http://arxiv.org/abs/1612.07540}{arXiv:1612.07540}},
        title = {Planar Posets Have Dimension at Most Linear in Their Height},
        journal = {{SIAM} Journal on Discrete Mathematics}
}

@article{MW17,
	author = {Micek, Piotr and Wiechert, Veit},
	journal = {Journal of Graph Theory}, 
        year = {2017},
        volume  = {86},
        number = {3},  
        pages  = {295--314},
	note = {\href{https://arxiv.org/abs/1504.07388}{arXiv:1504.07388}},
	title = {Topological minors of cover graphs and dimension}
}

@article{W17,
        author  = {Walczak, Bartosz},
        title   = {Minors and dimension},
        journal = {Journal of Combinatorial Theory, Series B},
        volume  = {122},
        year    = {2017},
        pages   = {668--689},
        note = {Extended abstract in Proceedings of the Twenty-Sixth Annual {ACM}-{SIAM} Symposium on Discrete Algorithms (SODA '15). \href{https://arxiv.org/abs/1407.4066}{arXiv:1407.4066}}
}

@article{JMW18,
        author={Joret, Gwena{\"e}l and Micek, Piotr and Wiechert, Veit},
        title={Sparsity and Dimension},
        journal={Combinatorica},
        year={2018},
        volume={38},
        number={5},
        pages={1129-1148},
        note = {Extended abstract in Proceedings of the Twenty-Seventh Annual {ACM}-{SIAM} Symposium on Discrete Algorithms (SODA '16). \href{http://arxiv.org/abs/1507.01120}{arXiv:1507.01120}}
}

@article{JMOdMW19,
	doi = {10.1007/s00493-019-3892-8},
	url = {https://doi.org/10.1007/s00493-019-3892-8},
	year = 2019,
	publisher = {Springer Science and Business Media {LLC}},
	volume = {39},
	number = {5},
	pages = {1055--1079},
	author = {Gwenaël Joret and Piotr Micek and Patrice Ossona de Mendez and Veit Wiechert},
	title = {Nowhere Dense Graph Classes and Dimension},
	journal = {Combinatorica},
	note = {\href{http://arxiv.org/abs/1708.05424}{arXiv:1708.05424}},
}

@article{KMT19,
    title = {Dimension is polynomial in height for posets with planar cover graphs},
    journal = {Journal of Combinatorial Theory, Series B},
    volume = {165},
    pages = {164-196},
    year = {2024},
    issn = {0095-8956},
    doi = {https://doi.org/10.1016/j.jctb.2023.10.009},
    url = {https://www.sciencedirect.com/science/article/pii/S0095895623000916},
    author = {Jakub Kozik and Piotr Micek and William T. Trotter},
    keywords = {Planar graph, Poset, Cover graph, Height, Dimension},
    note={\href{https://arxiv.org/abs/1907.00380}{arXiv:1907.00380}}
}

@article{FMM20,
        author={Felsner, Stefan and Mészáros, Tamás and Micek, Piotr},
        title={Boolean Dimension and Tree-Width},
        journal={Combinatirica},
        volume={40},
        year={2020},
        pages={655-677},
        note={\href{https://arxiv.org/abs/1707.06114}{arXiv:1707.06114}}
}

@article{S20,
        title = {Improved bound for the dimension of posets of treewidth two},
        journal = {Discrete Mathematics},
        volume = {343},
        number = {1},
        pages = {111605},
        year = {2020},
        doi = {https://doi.org/10.1016/j.disc.2019.111605},
        url = {https://www.sciencedirect.com/science/article/pii/S0012365X19302675},
        author = {Michał T. Seweryn},
        note = {\href{https://arxiv.org/abs/1902.01189}{arXiv:1902.01189}}
}

@misc{GS21,
	author={Gorsky, Maximilian and Seweryn, Michał T.},
	title={Posets with $k$-outerplanar cover graphs have bounded dimension},
	year={2021},
	note={\href{https://arxiv.org/abs/2103.15920}{arXiv:2103.15920}},
}

@article{HJMSW21,
	doi = {10.1007/s00493-021-4592-8},
	url = {https://doi.org/10.1007/s00493-021-4592-8},
	year = {2021},
	publisher = {Springer Science and Business Media {LLC}},
	volume = {42},
	number = {3},
	pages = {405--432},
	author = {Tony Huynh and Gwenaël Joret and Piotr Micek and Micha{\l} T. Seweryn and Paul Wollan},
	title = {Excluding a Ladder},
	journal = {Combinatorica},
        note = {\href{https://arxiv.org/abs/2002.00496}{arXiv:2002.00496}}
}

@misc{BMT22,
        author = {Heather Smith Blake and Piotr Micek and William T. Trotter},
        title = {Boolean dimension and dim-boundedness: Planar cover graph with a zero},
        year = {2022},
        note = {\href{https://arxiv.org/abs/2206.06942}{arXiv:2206.06942}}
}

@inproceedings{JMPW24,
  title = {Cliquewidth and Dimension},
  ISBN = {9781611977912},
  url = {http://dx.doi.org/10.1137/1.9781611977912.58},
  DOI = {10.1137/1.9781611977912.58},
  booktitle = {Proceedings of the 2024 Annual {ACM}-{SIAM} Symposium on Discrete Algorithms (SODA)},
  publisher = {Society for Industrial and Applied Mathematics},
  author = {Joret,  Gwenaël and Micek,  Piotr and Pilipczuk,  Michał and Walczak,  Bartosz},
  year = {2024},
  pages = {1437–1446},
        note={\href{https://arxiv.org/abs/2308.11950}{arXiv:2308.11950}}
}

@article{SS20,
        author = {Scott, Alex and Seymour, Paul},
        year = {2020},
        title = {A survey of $\chi$‐boundedness},
        volume = {95},
        journal = {Journal of Graph Theory},
        doi = {10.1002/jgt.22601},
        note = {\href{https://arxiv.org/abs/1812.07500}{arXiv:1812.07500}}
}

@book{Tro-book,
	Address = {Baltimore, MD},
	Author = {Trotter, William T.},
	Isbn = {0-8018-4425-8},
	Mrclass = {06-02 (05-02 06A06 06A07)},
	Mrnumber = {1169299 (94a:06001)},
	Mrreviewer = {K. P. Bogart},
	Pages = {xvi+307},
	Publisher = {Johns Hopkins University Press},
	Series = {Johns Hopkins Series in the Mathematical Sciences},
	Title = {Combinatorics and partially ordered sets: Dimension theory},
	Year = {1992},
}

@book{GJ79,
  author       = {Michael R. Garey and
                  David S. Johnson},
  title        = {Computers and Intractability: {A} Guide to the Theory of NP-Completeness},
  publisher    = {W. H. Freeman},
  year         = {1979},
  isbn         = {0-7167-1044-7},
  timestamp    = {Thu, 29 Mar 2007 16:16:40 +0200},
  biburl       = {https://dblp.org/rec/books/fm/GareyJ79.bib},
  bibsource    = {dblp computer science bibliography, https://dblp.org}
}

@book{Diestel-book,
  author       = {Reinhard Diestel},
  title        = {Graph Theory, 6th Edition},
  series       = {Graduate texts in mathematics},
  volume       = {173},
  publisher    = {Springer},
  year         = {2024},
  isbn         = {978-3-662-53621-6 },
}

\end{document}